\theoremstyle{plain}
\newtheorem{theoIntro}{Théorème}
\newtheorem{coroIntro}[theoIntro]{Corollaire}
\theoremstyle{remark}
\newtheorem{nota}{Notation}
\newtheorem{exple}{Exemple}[chapter]
\author{Guillaume Deltour}
\email{guillaume.deltour@math.univ-montp2.fr}
\date{\today}
\newcommand{\N}{\mathbb{N}}
\newcommand{\Z}{\mathbb{Z}}
\newcommand{\Q}{\mathbb{Q}}
\newcommand{\R}{\mathbb{R}}
\newcommand{\C}{\mathbb{C}}
\newcommand{\got}[1]{\mathfrak{#1}}
\newcommand{\Ad}{\mathop{\mathrm{Ad}}}
\newcommand{\ad}{\mathop{\mathrm{ad}}}
\newcommand{\tr}{\mathop{\mathrm{Tr}}}
\newcommand{\diag}{\mathrm{diag}}
\newcommand{\re}{\mathop{\mathrm{Re}}}
\newcommand{\imag}{\mathop{\mathrm{Im}}}
\newcommand{\Orb}{\mathcal{O}}
\newcommand{\M}{\mathcal{M}}
\newcommand{\Pn}{\mathcal{P}_n}
\newcommand{\Chol}{\mathcal{C}_{\mathrm{hol}}}
\newcommand{\Pic}{\mathrm{Pic}}
\newcommand{\PicG}{\mathrm{Pic}^{\KC}}
\newcommand{\PicQ}{\mathrm{Pic}_{\Q}}
\newcommand{\PicGQ}{\mathrm{Pic}_{\Q}^{\KC}}
\newcommand{\Xss}{X^{\mathrm{ss}}}
\newcommand{\XssL}{X^{\mathrm{ss}}(\mathcal{L})}
\newcommand{\id}{\mathrm{id}}
\newcommand{\KC}{K_{\C}}
\newcommand{\TC}{T_{\C}}
\newcommand{\hKC}{\hat{K}_{\C}}
\newcommand{\hTC}{\hat{T}_{\C}}
\newcommand{\tKC}{\tilde{K}_{\C}}
\newcommand{\tTC}{\tilde{T}_{\C}}
\newcommand{\pt}{[\mathrm{pt}]}
\newcommand{\halpha}{\hat{\alpha}}
\newcommand{\WT}{\mathcal{W}_{\TC}}
\newcommand{\DGIT}[1][K\cdot\Lambda\times E]{\Delta_K^{\mathrm{GIT}}(#1)}
\newcommand{\bmin}{\beta_{\mathrm{min}}}
\newcommand{\bmax}{\beta_{\mathrm{max}}}
\newcommand{\quotient}{/\!/}
\newcommand{\Csa}{C_{\Q}(X)^+}
\newcommand{\Ca}{C_{\Q}(X)^{++}}
\newcommand{\car}{\mathcal{X}}
\newcommand{\XEC}{X_{E\oplus\C}}
\newcommand{\CQ}{\mathcal{C}_{\mathbb{Q}}}
\newcommand{\CR}{\mathcal{C}_{\mathbb{R}}}
\newcommand{\Lsym}{L_{\mathrm{sym}}^+}
\newcommand{\Lasym}{L_{\mathrm{asym}}}
\newcommand{\glr}{\got{gl}_r(\C)}
\newcommand{\Glambda}{G\cdot\Lambda}
\newcommand{\Klambda}{K\cdot\Lambda}
\newcommand{\coh}[1]{\mathrm{H}^*(#1,\Z)}
\newcommand{\homol}[1]{\mathrm{H}_*(#1,\Z)}
\DeclareMathAlphabet{\mathpzc}{OT1}{pzc}{m}{it}
\numberwithin{equation}{section}
\begin{document}


\thispagestyle{empty}

\begin{center}
{\bf TH\`ESE}

pour obtenir le titre de

{\bf \large Docteur en Sciences}

de l'Université Montpellier II

\vspace{16pt}

Spécialité : {MATH\'EMATIQUES}

\vspace{24pt}

présentée par

\vspace{16pt}

{\bf \large Guillaume DELTOUR}

\vspace{80pt}

{\large \bf PROPRI\'ET\'ES SYMPLECTIQUES ET HAMILTONIENNES DES ORBITES COADJOINTES HOLOMORPHES}

\vspace{80pt}

Thèse dirigée par {\bf Paul-\'Emile PARADAN}
\end{center}

\vspace{32pt}
\noindent Soutenue le 10 décembre 2010 devant le jury composé de :

\vspace{16pt}

\begin{center}
\begin{tabular}{ll}
Michel BRION & Directeur de recherche à l'Université de Grenoble I \\
Michel DUFLO & Professeur émérite à l'Université Paris VII \\
Gert HECKMAN & Professeur à l'Université de Nijmegen, Pays-Bas\\
Paul-\'Emile PARADAN & Professeur à l'Université Montpellier II \\
Nicolas RESSAYRE & Maître de conférences à l'Université Montpellier II  \\
Michèle VERGNE & Membre de l'Académie des Sciences
\end{tabular}
\end{center}

\vspace{16pt}

\noindent Au vu des rapports de Michel BRION et Reyer SJAMAAR.

\vfill

\begin{center}
Université Montpellier II

\vspace{12pt}

Institut de Mathématiques et de Modélisation de Montpellier

Place Eugène Bataillon, 34095 MONTPELLIER Cedex, France
\end{center}

\frontmatter

\thispagestyle{empty}

\begin{center}
{\bf TH\`ESE}

pour obtenir le titre de

{\bf \large Docteur en Sciences}

de l'Université Montpellier II

\vspace{16pt}

Spécialité : {MATH\'EMATIQUES}

\vspace{24pt}

présentée par

\vspace{16pt}

{\bf \large Guillaume DELTOUR}

\vspace{80pt}

{\large \bf PROPRI\'ET\'ES SYMPLECTIQUES ET HAMILTONIENNES DES ORBITES COADJOINTES HOLOMORPHES}

\vspace{80pt}

Thèse dirigée par {\bf Paul-\'Emile PARADAN}
\end{center}

\vspace{32pt}
\noindent Soutenue le 10 décembre 2010 devant le jury composé de :

\vspace{16pt}

\begin{center}
\begin{tabular}{ll}
Michel BRION & Directeur de recherche à l'Université de Grenoble I \\
Michel DUFLO & Professeur émérite à l'Université Paris VII \\
Gert HECKMAN & Professeur à l'Université de Nijmegen, Pays-Bas\\
Paul-\'Emile PARADAN & Professeur à l'Université Montpellier II \\
Nicolas RESSAYRE & Maître de conférences à l'Université Montpellier II  \\
Michèle VERGNE & Membre de l'Académie des Sciences
\end{tabular}
\end{center}

\vspace{16pt}

\noindent Au vu des rapports de Michel BRION et Reyer SJAMAAR.

\vfill

\begin{center}
Université Montpellier II

\vspace{12pt}

Institut de Mathématiques et de Modélisation de Montpellier

Place Eugène Bataillon, 34095 MONTPELLIER Cedex, France
\end{center}

\newpage
~
\thispagestyle{empty}
\newpage
~
\thispagestyle{empty}

\vfill

\begin{center}
{\large PROPRI\'ET\'ES SYMPLECTIQUES ET HAMILTONIENNES DES ORBITES COADJOINTES HOLOMORPHES}

\vspace{24pt}

{\large Guillaume DELTOUR}
\end{center}

\vfill


\newpage
~
\thispagestyle{empty}


\begin{center}
{\bf \large REMERCIEMENTS}
\end{center}

\thispagestyle{empty}

\vspace{36pt}

Je tiens à remercier en tout premier lieu mon directeur de thèse P-E. Paradan, qui a accepté d'encadrer cette thèse et m'a initié à la géométrie hamiltonienne. Il a su me laisser une grande liberté dans mon travail tout en restant présent et disponible lorsque j'en avais besoin, et en cela je lui suis très reconnaissant. Merci également à lui pour ses conseils dans la rédaction de ce manuscrit, dont la qualité et la clarté auraient été bien moindres sans ceux-ci.

Je remercie M. Brion et R. Sjamaar d'avoir accepté de rapporter cette thèse et pour l'intérêt qu'ils ont porté à mon travail. Je leur suis très reconnaissant pour leurs commentaires et suggestions qui m'ont permis d'améliorer la qualité de ce manuscrit. Je remercie également les autres membres du jury, M. Duflo, G. Heckman, N. Ressayre et M. Vergne, qui ont accepté d'examiner ce mémoire. Je voudrais remercier en particulier N. Ressayre de m'avoir expliqué ses travaux et suggéré de les utiliser dans ma thèse.

Cette thèse a pu se dérouler dans des conditions favorables grâce à la bonne ambiance régnant au laboratoire I3M, entretenue par les permanents mais également par le personnel administratif. Je remercie I. Badulescu et M. Herzlich de m'avoir fait confiance en me permettant d'enseigner dans la préparation à l'agrégation de Montpellier.

Ces trois années auraient été bien difficiles sans la présence et le soutien des thésards du bâtiment 9. J'ai passé des moments inoubliables avec eux, que ce soit au labo ou à l'extérieur. Je remercie tous les thésards, les anciens comme les nouveaux, pour tous ces excellents moments.

Parmi les thésards, ma première pensée va à ma co-bureau Chloé (alias M) et à nos discussions sur les mathématiques et sur bien d'autres sujets. Tous ces moments partagés en 007 et après m'ont été très agréables et m'auront beaucoup appris humainement. Je voudrais également remercier Soffana avec qui j'ai partagé la gestion du séminaire des doctorants de l'I3M pendant ma deuxième année. Sa bonne humeur et son sourire m'ont beaucoup aidé dans les moments difficiles. Merci à Olivier pour ses jeux de mots légendaires et son humour, les sorties vélo et toutes nos discussions sur la musique, la guitare et le cinéma (pour ne citer que ça). Pour compléter l'équipe des \og{}Mini-Pouces\fg{}, je tiens à remercier Benoît d'avoir été un très bon colocataire pendant deux années de cette thèse, et pour toute son aide et toutes ses explications mathématiques qui ont toujours été au \og{}taupe\fg{}. Je remercie également Afaf et Julien, mes dignes successeurs à l'organisation du séminaire des doctorants, et tous les autres doctorants : Paul, Florence, Jonathan, Bruno, Frédéric, Junior, Anthony, Matthieu S., Pierre, et les nouveaux arrivants Claudia, Mathieu C., Christophe et Vincent. J'ajoute une pensée aux anciens thésards et ATER Thomas L., Hilde, Rémi Servien, Rémy Sart, Chady, Nadia, Romain et Elamine. Je tiens également à remercier Gwladys, Boris, \'Etienne, Vanessa et Simon qui ont su nous intégrer dans la jeune équipe de recherche de l'I3M.

Je voudrais également remercier mes amis Coralie et Benjamin, Didier, Sylvain et les autres, sur qui j'ai toujours pu compter.

Enfin, je remercie ma famille, et tout particulièrement ma mère et ma soeur, pour leur soutien indéfectible et leur affection.


\tableofcontents




\chapter*{Introduction}
\label{chap:introduction}

Cette thèse étudie deux aspects différents des orbites coadjointes holomorphes de groupes de Lie réels simples non compacts : la structure symplectique canonique d'orbite coadjointe d'une part; les propriétés de convexité de la projection d'orbite d'autre part.

\bigskip

Dans la théorie des actions de groupes de Lie, les orbites coadjointes se sont révélées être, au cours du 20\ieme{} siècle, des objets fondamentaux et disposant de propriétés géométriques très intéressantes.

L'étude des orbites coadjointes est un véritable atout en particulier en théorie des représentations. En effet, lorsque le groupe considéré est compact, elles permettent tout simplement de classifier les représentations irréductibles du groupe de Lie. Plus exactement, ce sont certaines orbites, les orbites coadjointes entières ou dites également \og{}préquantifiables\fg{}, qui paramètrent les représentations irréductibles du groupe, à isomorphisme près. Ce résultat est connu sous le nom de Théorème de Borel-Weil.

\smallskip

Une des propriétés importantes des orbites coadjointes est qu'elles sont naturellement munies d'une structure symplectique invariante. Cette structure symplectique invariante canonique se trouve être également une structure hamiltonienne, dont l'application moment est l'injection de l'orbite coadjointe dans le dual de l'algèbre de Lie du groupe.

L'étude de la structure hamiltonienne d'une orbite coadjointe prend tout son sens lorsque l'on regarde l'action d'un sous-groupe sur cette même orbite et la structure hamiltonienne qui en est induite. L'application moment standard qui est associée à l'action hamiltonienne de ce sous-groupe est appelée \emph{projection d'orbite}. 

Pour fixer les idées, notons $G$ un groupe de Lie connexe et $K$ un sous-groupe de Lie fermé connexe. Notons $\got{g}$ et $\got{k}$ leurs algèbres de Lie respectives. Fixons $\Lambda\in\got{g}^*$ et considérons l'orbite coadjointe $\Orb_{\Lambda}:=G\cdot\Lambda$ de $G$. La projection d'orbite est la projection $\Phi_{\Orb_{\Lambda}}:\Orb_{\Lambda}\subset\got{g}^*\rightarrow\got{k}^*$.

L'intérêt de cette projection d'orbite est qu'elle permet, en quelque sorte, d'obtenir des informations sur les représentations irréductibles du sous-groupe $K$ apparaissant dans une représentation irréductible de $G$, tout du moins lorsque les groupes $G$ et $K$ sont compacts.

En effet, lorsque $G$ (et donc $K$) est compact et $\Lambda$ est un poids dominant de $G$, la représentation irréductible de $G$ de plus haut poids $\Lambda$, notée $V_{\Lambda}^G$, se décompose en somme de représentations irréductibles de $K$. La question qui se pose alors est:
\smallskip
\begin{center}
\begin{minipage}{13cm}
Peut-on déterminer les représentations irréductibles $V_{\mu}^K$ de $K$ qui apparaissent dans la représentation irréductible $V_{\Lambda}^G$ de $G$?
\end{minipage}
\end{center}
\smallskip
La réponse à cette question est partielle. Les poids dominants $\mu$ de $K$ dont la représentation irréductible associée $V_{\mu}^K$ apparaît dans $V_{\Lambda}^G$ ont leur orbite coadjoite $K\cdot\mu$ qui est dans la projection de l'orbite $\Orb_{\Lambda}$. En revanche, la réciproque est une question bien plus difficile, et est par ailleurs fausse en général.

Cependant, nous avons tout de même une information asymptotique: si l'orbite coadjointe $K\cdot\mu$ de $K$ est dans la projection de l'orbite $\Orb_{\Lambda}$ du groupe $G$, alors pour un entier $N>0$ suffisamment grand, on a $V_{N\mu}^K\subset V_{N\Lambda}^G$.

Il existe malgré tout de bons cas pour lesquels la projection d'orbite détermine précisément les représentations irréductibles de $K$ qui apparaissent dans $V_{\Lambda}^G$. C'est par exemple le cas pour le groupe unitaire $K=U(n)$ qui s'injecte diagonalement dans le produit de $s$ copies $G=U(n)^s$ de $U(n)$. Ici, on dit que $U(n)$ vérifie une propriété de \emph{saturation} (pour le produit tensoriel de représentations irréductibles), prouvée par Knutson-Tao \cite{knutson_tao}.

\bigskip

La projection d'orbite s'avère donc un outil fondamental dans la théorie des représentations. En particulier, c'est la connaissance des orbites coadjointes de $K$ appartenant à l'image de la projection d'orbite qui est importante.

\bigskip

L'étude de la projection d'orbites coadjointes, initiée par Kostant \cite{kostant} dans les années 1970 et développée par Heckman \cite{heckman} au début des années 1980, a été également le point de départ de nombreuses recherches dans le domaine de la géométrie hamiltonienne. Le résultat le plus remarquable qui a pu ainsi être découvert est la propriété de convexité de l'image de l'application moment.

Les précurseurs dans ce domaine sont Atiyah \cite{atiyah} ainsi que Guillemin et Sternberg \cite{guillemin82}. Au début des années 1980, ils ont prouvé que l'image d'une application moment pour l'action d'un tore sur une variété symplectique compacte est un polytope convexe dont les sommets sont les images des points de la variété fixés par l'action du tore.

Cette propriété de convexité a été généralisée peu de temps après par Kirwan \cite{kirwan} dans le cas d'un groupe compact, connexe, non forcément abélien, agissant sur une variété compacte. Dans le cas non abélien, c'est plus exactement l'intersection de l'image de l'application moment avec une chambre de Weyl fixée qui est un polytope convexe. Ce polytope est appelé \emph{polytope moment} ou \emph{polytope de Kirwan}.

Cependant, dans le cas non abélien, la perte de l'hypothèse de commutativité du groupe entraîne la disparition de l'information sur les sommets du polytope moment. Il est alors en général difficile de déterminer les sommets, ou bien les équations affines, du polytope moment. Or, connaître les équations du polytope moment a un très grand intérêt pour les projections d'orbites, puisque ce polytope permet de déterminer asymptotiquement les représentations irréductibles du sous-groupe $K$ apparaissant dans une représentation irréductible du groupe $G$. L'exemple de la projection d'orbite pour les groupes $K=U(n)$ et $G=U(n)^2$, qui est connu sous le nom de \emph{problème de Horn}, s'intéresse au lien entre les spectres de deux matrices hermitiennes avec le spectre de la somme de ces deux matrices. L'énoncé de ce problème a été formulé au dix-neuvième siècle et Horn \cite{horn62} y a apporté une réponse en petites dimensions (dimension inférieure ou égale à $5$). Il a fallu attendre 2001 pour obtenir une réponse complète par Knutson et Tao \cite{knutsontao2001}.

\bigskip

L'étude des équations du polytope moment dans le cadre de variétés non compactes se révèle être beaucoup plus marginale. Cependant, la propriété de convexité hamiltonienne reste valide pour l'action d'un groupe compact connexe sur une variété symplectique avec application moment propre \cite{lerman98}. L'image de l'application moment n'étant plus compacte, on parle alors de polyèdre moment. Cependant, il n'est généralement que localement polyédral.

Hilgert, Neeb et Plank \cite{hilgert} ont déterminé le polyèdre moment de la projection de certaines orbites coadjointes elliptiques sur une sous-algèbre de Cartan compacte d'une algèbre de Lie réelle. Ce polyèdre est décrit comme la somme de la projection de l'orbite compacte sous-jacente, dont le polytope moment a été étudié par Kostant \cite{kostant}, et du cône convexe engendré par certaines racines non compactes de l'algèbre de Lie.

Eshmatov et Foth \cite{eshmatov_foth} ont, quant à eux, étudié la somme de deux orbites coadjointes elliptiques non compactes, admissibles au sens de \cite{hilgert}, pour le cas d'algèbres de Lie réelles semi-simples quasi-hermitiennes. Ce résultat est une version non compacte du problème de Horn.

Duflo, Heckman et Vergne \cite{duflo84} proposent une autre étude de projection d'orbites coadjointes non compactes, même si celle-ci n'est pas destinée, à proprement parler, à obtenir des équations d'un polyèdre moment. En effet, ils ont calculé l'image de la mesure de Liouville par la projection de certaines orbites coadjointes elliptiques. La formule de Duflo-Heckman-Vergne peut alors permettre de déterminer le polyèdre moment de la projection d'orbite, puisque ce polyèdre constitue le support de la mesure donnée par cette même formule. Malheureusement, cette formule ressemble à la formule de Kostant, c'est-à-dire qu'il s'agit d'une somme alternée de mesures positives avec une combinatoire mal comprise. Cela entraîne une utilisation délicate lorsqu'on souhaite l'appliquer à des cas pratiques.

\bigskip

Nous avons mentionné plus tôt le lien entre projection d'orbites coadjointes du groupe de Lie compact $G$, par rapport au sous-groupe $K$, et décomposition de représentations irréductibles de $G$ en somme directe de représentations irréductibles de $K$. Qu'en est-il pour le cas où $G$ est non compact, $K$ restant quant à lui compact?

Malheureusement, ce lien n'existe pas nécessairement dans le cas où $G$ n'est pas compact. Cependant, dans les années 1950, lorsque Harish-Chandra a voulu étendre la méthode des orbites au cadre non compact, il s'est rendu compte qu'un analogue au Théorème de Borel-Weil existait pour certaines orbites coadjointes de groupes de Lie très particuliers. Ces orbites coadjointes correspondent aux séries discrètes holomorphes de Harish-Chandra. On les appelle les \emph{orbites coadjointes holomorphes}.

Précisons un peu le cadre de définition de ces orbites coadjointes. Nous devons considérer ici un groupe de Lie $G$ réel, semi-simple, connexe, non compact et à centre fini, et $K$ un sous-groupe compact maximal. Nous demandons, de plus, que l'espace symétrique $G/K$ soit hermitien, c'est-à-dire, tel que la variété $G/K$ soit munie d'une structure complexe pour laquelle l'action à gauche de $G$ soit holomorphe.

%
%
%

L'espace symétrique hermitien $G/K$ a de nombreuses propriétés, dont celle d'être une variété kählerienne à pôle (c'est-à-dire qu'il existe un point $p\in G/K$ au-dessus duquel l'application exponentielle est un difféomorphisme de $T_p G/K$ sur $G/K$) et à courbure sectionnelle négative. Ces variétés kählériennes ont pour particularité d'avoir une structure symplectique simple. En effet, McDuff \cite{mcduff} a prouvé qu'une telle variété est symplectomorphe à l'espace vectoriel symplectique $(\got{p},\Omega_{\got{p}})$. Ici, $\got{p}$ désigne la partie non compacte de la décomposition de Cartan $\got{g} = \got{k}\oplus\got{p}$. La forme symplectique $\Omega_{\got{p}}$ est la forme symplectique linéaire $K$-invariante standard (voir paragraphe \ref{section:symplecto_mcduff}). La variété symplectique $(\got{p},\Omega_{\got{p}})$ est également hamiltonienne, avec pour polyèdre moment associé un cône convexe polyédral engendré par des sommes de certaines racines non compactes positives (cf \cite[Section 5]{Paradan}).



Les orbites coadjointes holomorphes du groupe de Lie $G$ généralisent l'espace symétrique hermitien $G/K$. Ce sont les orbites coadjointes elliptiques qui sont munies d'une structure kählerienne naturelle. Ces orbites vérifient des propriétés symplectiques analogues à l'espace symétrique hermitien $G/K$, nous permettant de déterminer les équations de leurs projections par rapport à l'action de $K$.

\bigskip

Cette thèse débute par deux chapitres dont l'objectif principal est de regrouper des références bibliographiques et de poser certaines notations.

\bigskip

Le Chapitre 1 propose de brefs rappels sur la notion d'action hamiltonienne, le Théorème de Convexité hamiltonienne et le lien entre polytope moment hamiltonien et polytope moment algébrique. Ce chapitre se termine sur la définition de la projection d'orbite et sur quelques exemples fondamentaux, dont le fameux problème de Horn.

\bigskip

Dans le Chapitre \ref{chap:espaces_symétriques_hermitiens}, nous rappelons la définition d'espace symétrique hermitien et introduisons la notion d'orbite coadjointe holomorphe, dont la projection d'orbite est le thème central de cette thèse. Le Théorème de Schmid est ensuite énoncé pour les espaces symétriques hermitiens irréductibles. Nous posons également les notations pour les systèmes de racines des groupes $Sp(2n,\R)$, $SO^*(2n)$, $SU(p,q)$ et $SO(p,2)$, qui sont les groupes classiques de la classification des espaces symétriques hermitiens irréductibles.


\bigskip

Nous appliquons ensuite, dans le Chapitre 3, deux méthodes différentes permettant de donner plusieurs exemples de calculs de projections d'orbites coadjointes holomorphes.

La première méthode consiste à utiliser la formule de Duflo-Heckman-Vergne afin de déterminer la projection d'orbite par le calcul du support de l'image de la mesure de Liouville. 
Cette étude n'est réalisée qu'en petit rang, pour les groupes $SU(2,1)$ et $Sp(4,\R)$, car les calculs sont très vite difficiles.

La seconde méthode revient à déterminer le polyèdre de la projection d'orbite par ses points rationnels. Ceux-ci sont décrits en termes de produits tensoriels de représentations irréductibles du sous-groupe compact maximal $K$. On calcule les projections d'orbites pour les groupes classiques simples $Sp(2n,\R)$ et $SU(n,1)$ pour $n\geqslant 2$, ainsi que $SO^*(6)$, en appliquant les résultats de Knutson et Tao \cite{knutsontao2001} résolvant le problème de Horn, et de Klyachko \cite{klyachko98} pour sa version en termes de produits tensoriels de représentations irréductibles.

\bigskip

Le Chapitre \ref{chap:symplecto_mcduff} apporte une généralisation du symplectomorphisme de McDuff au cas des orbites coadjointes holomorphes. Pour toute orbite coadjointe elliptique $\Orb_{\Lambda} := G\cdot\Lambda$, avec $\Lambda\in\got{k}^*$, la décomposition de Cartan donne un difféomorphisme $K$-équivariant entre l'orbite $\Orb_{\Lambda}$ et la variété produit $K\cdot\Lambda\times\got{p}$. Ces deux variétés possèdent chacune une structure symplectique privilégiée. Celle de l'orbite coadjointe non compacte $\Orb_{\Lambda}$ est la forme symplectique de Kirillov-Kostant-Souriau $\Omega_{\Orb_{\Lambda}}$. Quant à $K\cdot\Lambda\times\got{p}$, on la munit du produit direct de la forme symplectique de Kirillov-Kostant-Souriau sur l'orbite compacte $K\cdot\Lambda$ et de la forme symplectique linéaire $\Omega_{\got{p}}$ sur $\got{p}$. Ces deux variétés symplectiques sont liées par le théorème suivant.

\begin{theoIntro}
\label{theoIntro:symplecto_mcduff}
Soit $\Lambda\in\got{k}^*$ tel que l'orbite $\Orb_{\Lambda}$ soit holomorphe. Alors il existe un symplectomorphisme $K$-équivariant entre les variétés symplectiques $(\Glambda,\Omega_{G\cdot\Lambda})$ et $(\Klambda\times\got{p},\Omega_{K\cdot\Lambda\times\got{p}})$ qui envoie $(k\Lambda,0)\in\Klambda\times\got{p}$ sur $k\Lambda\in G\cdot\Lambda$, pour tout $k\in K$.
\end{theoIntro}

L'espace symétrique hermitien peut être identifié à une certaine orbite holomorphe $\Orb_{\Lambda_0}$, où $\Lambda_0$ est un élément de $\got{k}^*$ fixé par l'action de $K$ (il correspond donc à un élément du centre de $\got{k}$). Par cette identification, on retrouve bien le Théorème de McDuff \cite{mcduff}.

Le Théorème \ref{theoIntro:symplecto_mcduff} permet alors de donner une autre description du polyèdre moment de la projection de l'orbite $\Orb_{\Lambda}$. La notation $\Delta_K(\Glambda)$ (resp. $\Delta_K(\Klambda\times\got{p})$) désigne le polyèdre moment de la projection de l'orbite $\Orb_{\Lambda}$ sur $\got{k}^*$ (resp. le polyèdre moment associé à la variété hamiltonienne $K\cdot\Lambda\times\got{p}$).

\begin{coroIntro}
\label{coroIntro:égalité_polyèdresmoments}
Les polyèdres moments $\Delta_K(\Orb_{\Lambda})$ et $\Delta_K(\Klambda\times\got{p})$ sont égaux.
\end{coroIntro}

Ce dernier résultat a été également prouvé par Paradan \cite{Paradan}, dans le cas où l'élément $\Lambda$ est un poids d'un tore maximal de $K$, par des méthodes de quantification géométrique, faisant intervenir la théorie de l'indice.

\bigskip

Le Corollaire \ref{coroIntro:égalité_polyèdresmoments} nous amène à étudier, dans le Chapitre \ref{chap:projectiondorbitecoadjointe+GIT}, le polytope moment des variétés hamiltoniennes de la forme $(K\cdot\Lambda\times E, \Omega_{K\cdot\Lambda\times E}, \Phi_{K\cdot\Lambda\times E})$, pour $E$ une représentation complexe de $K$. La forme symplectique $\Omega_{K\cdot\Lambda\times E}$ est obtenue comme produit de la forme symplectique de Kirillov-Kostant-Souriau sur l'orbite coadjointe $K\cdot\Lambda$, et d'une forme symplectique linéaire $K$-invariante $\Omega_E$ sur $E$. L'application $\Phi_{K\cdot\Lambda\times E}:K\cdot\Lambda\times E\rightarrow\got{k}^*$ désigne ici l'application moment canonique qui en découle et $\Phi_E:E\rightarrow\got{k}^*$ désigne l'application moment canonique pour $(E,\Omega_E)$ (voir section \ref{section:polydremoment_KLambdaE}).

Lorsque $\Phi_E$ est propre, le Théorème de Convexité hamiltonienne implique que l'ensemble, noté $\DGIT$, des points rationnels de $\Delta_K(\Klambda\times E)$ est dense dans le polyèdre moment $\Delta_K(\Klambda\times E)$. Par des propriétés de quantification géométrique sur $K\cdot\Lambda\times E$, on obtient la description suivante de $\DGIT$. Dans l'énoncé ci-dessous, $\wedge^*$ désigne le réseau des poids d'un tore maximal $T$ fixé dans $K$ et $\wedge^*_{\Q,+}$ désigne l'ensemble des poids rationnels dominants pour le choix d'une chambre de Weyl $\got{t}_+^*$ dans $\got{t}^*$, dual de l'algèbre de Lie de $T$.

\begin{theoIntro}
\label{theoIntro:theoC}
Supposons que $\Phi_E: E\rightarrow \got{k}^*$ est propre. Pour tout $\Lambda\in\wedge^*_{\Q,+}$, on a
\[
\Delta_K^{\mathrm{GIT}}(K\cdot\Lambda\times E) = \left\{\mu\in\wedge^*_{\Q,+} \left|
\begin{array}{l}
\exists N>0 \text{ t.q. } (N\mu,N\Lambda)\in(\wedge^*)^2\mbox{ et}  \\
\left(V^{K*}_{N\mu}\otimes V^K_{N\Lambda}\otimes \C[E]\right)^K \neq 0
\end{array}\right.\right\}.
\]
\end{theoIntro}


Cette nouvelle présentation de l'ensemble $\DGIT$ nous conduit à étudier le cône
\[
\Pi_{\Q}(E) = \left\{ (\mu,\nu)\in(\wedge^*_{\Q,+})^2 \left|
\begin{array}{l}
\exists N>0 \text{ t.q. } (N\mu,N\nu)\in(\wedge^*)^2\mbox{ et} \\
\left(V_{N\mu}^{\KC*}\otimes V_{N\nu}^{\KC*}\otimes \C[E]\right)^{\KC} \neq 0
\end{array}\right.\right\}
\]
où $\KC$ est le complexifié de $K$.

Dans ce cadre, l'ensemble $\Pi_{\Q}(E)$ est la projection linéaire d'un objet provenant de la Théorie Géométrique des Invariants (abrégé par l'acronyme \emph{GIT} en anglais). Cet objet est le cône semi-ample $C_{\Q}(\XEC)^+$ de la variété projective $\XEC := \KC/B\times\KC/B\times\mathbb{P}(E\oplus\C)$, où $B$ est un sous-groupe de Borel contenant le complexifié $\TC$ du tore maximal $T$. De manière plus générale, si $X$ est une variété algébrique projective, $\Csa$ est le cône engendré par les classes d'isomorphisme des fibrés semi-amples $\KC$-linéarisés $\mathcal{L}$ dont l'ensemble $\Xss(\mathcal{L})$ des points semi-stables, associé à $\mathcal{L}$, est non vide.
L'ensemble $C_{\Q}(\XEC)^+$ est un cône convexe polyédral de l'espace vectoriel rationnel $\PicG(X)_{\Q}$, nous permettant d'affirmer que $\Pi_{\Q}(E)$ et $\DGIT$ sont des polyèdres convexes.

Les résultats de Ressayre \cite{ressayre08} s'appliquent ici, nous donnant un ensemble d'équations déterminant complètement le cône semi-ample $C_{\Q}(\XEC)^+$. Ses équations sont indexées par les paires bien couvrantes de la variété $\XEC$ définies dans \cite{ressayre08}. Il s'agit des paires de la forme $(C(w,w',m),\lambda)$, où $\lambda$ est un sous-groupe à un paramètre de $\TC$, $(w,w',m)\in W\times W\times\Z$ avec $W$ le groupe de Weyl de $\KC$ relativement au tore maximal $\TC$ et $C(w,w',m)$ est une composante irréductible des points de $\XEC$ fixés par $\lambda$. On en déduit un ensemble d'équations du cône convexe polyédral $\Pi_{\Q}(E)$.

On note $\mathcal{P}_0(E)$ le sous-ensemble des paires bien couvrantes $(C(w,w',m),\lambda)$ de $\XEC$ telles que $m=0$ et $\lambda$ est dominant indivisible et $E$-admissible. La $E$-admissibilité est une propriété liée aux poids de l'action de $\TC$ sur $E$.

\begin{theoIntro}
\label{theo:équations_PiQ}
Soit $\zeta:\KC\rightarrow GL(E)$ une représentation de noyau fini, telle que l'application moment associée $\Phi_E:E\rightarrow\got{k}^*$ soit propre. Un couple $(\mu,\nu)\in(\wedge^*_{\Q,+})^2$ appartient à $\Pi_{\Q}(E)$ si et seulement si, pour toute paire $(C(w,w',0),\lambda)$ de $\mathcal{P}_0(E)$, on a
\[
\langle w\lambda,\mu\rangle + \langle w'\lambda,\nu\rangle \leqslant 0.
\]
\end{theoIntro}

Du Théorème \ref{theo:équations_PiQ}, on obtient directement un ensemble d'équations pour le polyèdre $\DGIT$.

\begin{coroIntro}
\label{coroIntro:équationsgénérales_DGIT}
Soit $\zeta:\KC\rightarrow GL(E)$ une représentation de noyau fini, telle que l'application moment associée $\Phi_E:E\rightarrow\got{k}^*$ soit propre. Si $\Lambda\in\wedge^*_{\Q,+}$, alors on a
\[
\DGIT = \left\{\xi\in\wedge_{\Q,+}^*;\ \langle w\lambda,\xi\rangle \leqslant \langle w_0w'\lambda,\Lambda\rangle, \forall (C(w,w',0),\lambda)\in\mathcal{P}_0(E)\right\}.
\]
\end{coroIntro}

Ces équations donnent aussi les équations de $\Delta_K(K\cdot\Lambda\times E)$.

%

\bigskip

Le Chapitre \ref{chap:PairesBienCouvrantes} est consacré à l'amélioration de la condition nécessaire et suffisante, donnée dans \cite{ressayre08}, pour qu'une paire $(C(w,w',m),\lambda)$ de $X_M$ soit bien couvrante, où $M$ est un $\KC$-module et $X_M=\KC/B\times\KC/B\times\mathbb{P}(M)$. Cette nouvelle équivalence se décompose en deux conditions : une condition cohomologique, en terme de produits cup de classes de Schubert de la variété des drapeaux $\KC/B$, et une condition linéaire sur $\lambda$ faisant intervenir $w$, $w'$ et $m$.

Fixons quelques notations. On note $W_{\lambda}$ le stabilisateur de $\lambda$ dans $W$, et $W^{\lambda}$ l'ensemble des représentants de longueur maximale des classes de $W/W_{\lambda}$. Le plus long élément de $W$ (resp. $W_{\lambda}$) sera noté $w_0$ (resp. $w_{\lambda}$). \`A chaque $w$ appartenant à $W$, on pourra associer une sous-variété $\overline{BwB/B}$ de $\KC/B$, appelée variété de Schubert. Les classes fondamentales des variétés de Schubert forment une base du $\Z$-module libre $\homol{\KC/B}$, permettant de définir une base duale $\{\sigma_{w}^B;w\in W\}$ dans $\coh{\KC/B}$. On pose $\Theta:\wedge^*\rightarrow\mathrm{H}^2(\KC/B,\Z)$ le morphisme qui envoie un poids $\mu$ de $\TC$ sur la première classe de Chern du fibré en droites $\mathcal{L}_{\mu}$ sur $\KC/B$ de poids $\mu$. Pour tout entier relatif $m$, on pose
\[
M_{<m} := \bigoplus_{k<m}\{v\in M; \forall t\in\C^*, \lambda(t)\cdot v = t^k v\}.
\]
De plus, $\WT(M_{<m})$ désigne l'ensemble des poids de l'action de $\TC$ sur $M_{<m}$. Enfin, $\rho$ dénote la demi-somme des racines positives de $\got{k}_{\C}$.

Nous prouvons le théorème suivant.

\begin{theoIntro}
\label{theoIntro:cns_pairebiencouvrante}
Soit $(w,w',m)\in W^{\lambda}\times W^{\lambda}\times\Z$ tel que $C(w,w',m)$ est non vide. La paire $(C(w,w',m),\lambda)$ de $X_{M}$ est bien couvrante si et seulement si, soit $w' = w_0ww_{\lambda}$ et $M_{< m} = 0$, soit les deux assertions suivantes sont simultanément vérifiées:
\begin{enumerate}
\item[(\emph{i})] $\sigma_{w_0w}^B\,.\,\sigma_{w_0w'}^B\,.\,\prod_{\beta\in\WT(M_{< m})}\Theta(-\beta)^{n_{\beta}}= \sigma_{w_0w_{\lambda}}^B$,
\item[(\emph{ii})] $\langle w\lambda + w'\lambda,\rho\rangle + \sum_{k<m}(m-k)\dim_{\C}(M_{\lambda,k}) = 0$.
\end{enumerate}
\end{theoIntro}

Par ailleurs, un corollaire du Théorème \ref{theoIntro:cns_pairebiencouvrante} est utilisé pour prouver le Théorème \ref{theo:équations_PiQ}. Cet argument est indispensable pour permettre un bon comportement des équations lors de la projection linéaire, pour passer des équations du cône semi-ample à celles de $\Pi_{\Q}(E)$.
%

\bigskip

Le Chapitre \ref{chap:calcul_exemples_par_pairesbiencouvrantes} termine l'étude des équations du polyèdre moment associé à la projection d'orbites coadjointes holomorphes. Nous y considérons toujours un groupe de Lie $G$ réel, semi-simple, connexe, non compact et à centre fini, et $K$ un sous-groupe compact maximal, donné par la décomposition de Cartan au niveau des algèbres de Lie $\got{g} = \got{k}\oplus\got{p}$.

Dans le cas où l'espace symétrique hermitien $G/K$ est irréductible, c'est-à-dire que $G$ est simple, le Corollaire \ref{coroIntro:équationsgénérales_DGIT} s'applique à la projection d'orbites coadjointes holomorphes de $G$ et les équations obtenues sont décrites dans l'énoncé du théorème suivant.

En effet, l'espace vectoriel réel $\got{p}$ possède alors des structures complexes $K$-invariantes données par la représentation adjointe sur $\got{p}$ de certains éléments du centre de $\got{k}$. Rappelons qu'ici, l'action de $K$ sur $\got{p}$ est induite par l'action adjointe. On note $\got{p}^+$ l'espace vectoriel complexe $\got{p}$ pour le choix d'une telle structure complexe $K$-invariante. On désigne alors par $\got{p}^-$ l'espace vectoriel $\got{p}$ muni de la structure complexe opposée à celle de $\got{p}^+$. Dans l'énoncé ci-dessous, $\mathcal{P}_0(\got{p}^-)$ désigne l'ensemble des paires bien couvrantes $\mathcal{P}_0(E)$ pour $E=\got{p}^-$.

\begin{theoIntro}[\'Equations de $\Delta_K(\Orb_{\Lambda})$]
\label{theoIntro:équations_DeltaK_G0Lambda}
On suppose que $G$ est un groupe de Lie réel, simple, connexe, non compact, à centre fini et que $G/K$ est hermitien. Soit $\Lambda\in\wedge^*_{\Q}$ tel que l'orbite $\Orb_{\Lambda}$ est holomorphe. Alors, un élément $\xi$ de $\got{t}_+^*$ appartient à $\Delta_K(\Orb_{\Lambda})$ si et seulement s'il vérifie les équations
\[
\langle w\lambda,\xi\rangle \leqslant \langle w_0w'\lambda,\Lambda\rangle
\]
pour toute paire $(C(w,w',0),\lambda)$ de $\mathcal{P}_0(\got{p}^-)$.
\end{theoIntro}

La structure complexe sur $\got{p}$, considérée dans le Théorème \ref{theoIntro:équations_DeltaK_G0Lambda} pour pouvoir appliquer le Corollaire \ref{coroIntro:équationsgénérales_DGIT}, est intimement liée à la forme symplectique $\Omega_{\got{p}}$ intervenant dans le Théorème \ref{theoIntro:symplecto_mcduff} et le Corollaire \ref{coroIntro:égalité_polyèdresmoments}. Il est à noter que les poids de l'action de $\TC$ sur $\got{p}^-$ forment, dans cette situation, un système de racines non compactes négatives de $\got{g}$.

Le Théorème \ref{theoIntro:équations_DeltaK_G0Lambda} donne une méthode générale pour trouver un ensemble d'équations caractérisant la projection d'une orbite holomorphe lorsque $G$ est simple. Cependant, le calcul effectif de cet ensemble d'équations sur un exemple précis exige de déterminer les paires bien couvrantes associées. Cela consiste en deux étapes :
\begin{enumerate}
\item Déterminer tous les sous-groupes à un paramètre $\lambda$ de $\TC$ qui sont dominants, indivisibles et $\got{p}$-admissibles. Nous réalisons le calcul systématique de ces $\lambda$ pour tous les groupes classiques de la classification, c'est-à-dire pour les groupes $Sp(2n,\R)$, $SO^*(2n)$, $SU(p,q)$ et $SO(2p,2)$.
\item Déterminer, pour chacun de ces sous-groupes à un paramètre $\lambda$ obtenus ci-dessus, les paires associées qui sont bien couvrantes. Nous effectuons ce calcul pour les groupes $Sp(2n,\R)$, $SU(n,1)$, $SO^*(6)$, $SO^*(8)$ et aussi pour $SU(2,2)$.
\end{enumerate}
Ces calculs concluent la partie principale de la thèse.

\bigskip

Le travail d'amélioration du critère de Ressayre, pour le compte du Théorème \ref{theoIntro:cns_pairebiencouvrante}, nécessite de connaître certains produits cup entre des classes de Schubert bien spécifiques, dans la variété des drapeaux complète de $GL_n(\C)$. L'Annexe \ref{chap:combinatoire_gpdeWeyl_de_GL_r(C)} recueille tous les calculs de combinatoire dans le groupe de Weyl de $GL_n(\C)$ nécessaires et introduit la formule de Chevalley, qui sera ensuite appliquée pour déterminer les produits cup utilisés au Chapitre \ref{chap:PairesBienCouvrantes}.

L'Annexe \ref{chap:classefondamentale} présente la définition de la classe fondamentale d'une variété projective éventuellement singulière. Elle est utilisée pour définir proprement les classes de Schubert, qui sont largement utilisées dans le Chapitre \ref{chap:PairesBienCouvrantes} et le Chapitre \ref{chap:calcul_exemples_par_pairesbiencouvrantes}.

Cette thèse se clôt sur l'Annexe \ref{chap:exemples_de_triplets_pour_Horn}, prouvant que des triplets $(I,J,L)$ très particuliers apparaissent comme triplets d'indices des équations de Horn. Ces triplets interviennent dans le calcul de la projection d'orbites coadjointes holomorphes de $SU(n,1)$, effectué au Chapitre \ref{chap:premières_tentatives}.

\mainmatter


\chapter*{Notations}
\label{chap:notations}

Sauf mention du contraire, $G$ désignera un groupe de Lie réel connexe généralement non compact, $K$ un groupe de Lie compact connexe et $T$ un tore maximal de $K$.

On notera $\KC$ le groupe de Lie (et algébrique) réductif obtenu par complexification de $K$ et $\TC$ le tore maximal de $\KC$ tel que $\TC\cap K = T$. La lettre $B$ désignera toujours un sous-groupe de Borel de $\KC$.

On dénotera par $\got{g}$, $\got{k}$, $\got{t}$, $\got{k}_{\C}$, $\got{t}_{\C}$ et $\got{b}$ leurs algèbres de Lie respectives.

Le groupe $W$ sera le groupe de Weyl $W:=N_K(T)/T = N_{\KC}(\TC)/\TC$ de $K$ relativement à son tore maximal $T$. On fixe une chambre de Weyl $\got{t}_+^*$ de $K$ dans $\got{t}^*$.

On désignera par $\wedge^*$ le réseau des poids de $\got{t}^*$. On prend la convention suivante: c'est l'ensemble des éléments $\frac{1}{i}\alpha$, où $\alpha$ est la différentielle en l'identité d'un caractère du tore $T$. Les éléments de $\wedge^*$ seront appelés les éléments entiers (ou poids) de $\got{t}^*$. On notera $\wedge^*_{\Q}:=\wedge^*\otimes_{\Z}\Q$ le sous-espace vectoriel rationnel engendré par le réseau $\wedge^*$. Les éléments dominants seront les poids (resp. poids rationnels) de $\wedge_+^*:=\wedge^*\cap\got{t}_+^*$ (resp. $\wedge_{\Q,+}^*:=\wedge_{\Q}^*\cap\got{t}_+^*$).

Pour tout $\mu\in\wedge_+^*$, la représentation irréductible de $K$ de plus haut poids $\mu$ sera notée $V_{\mu}^K$, voire tout simplement $V_{\mu}$ s'il n'y a pas d'ambiguïté sur le groupe.

Le groupe de Weyl $W$ a un unique plus long élément, noté $w_0$. Pour tout poids dominant $\mu$, $\mu^*$ désignera le poids dual de $\mu$, c'est-à-dire, l'unique poids dominant qui vérifie $V_{\mu^*}^K \cong V_{\mu}^{K*}$ en tant que $K$-modules. De manière générale, on notera $\mu^* := -w_0\mu\in\got{t}_+^*$ l'élément dual de $\mu\in\got{t}_+^*$.



\chapter{Polytope moment et projection d'orbite}

Ce premier chapitre rappelle les notions d'action hamiltonienne, d'application moment et de polytope moment associée. Il introduit également la version algébrique du polytope moment et fait le lien entre les deux cadres. Nous terminons le chapitre en donnant l'exemple fondamental de la projection d'orbite coadjointe, qui sera le sujet central de cette thèse.

\section{Polytope moment: le cadre hamiltonien}

\subsection{Actions hamiltoniennes et Théorème de convexité}

Soient $G$ un groupe de Lie réel, et $(M,\Omega)$ une variété symplectique munie d'une action de $G$. Tout élément $g$ du groupe $G$ définit un difféomorphisme $m\in M\mapsto g\cdot m\in M$. La forme symplectique est dite $G$-invariante si elle est préservée par les difféomorphismes de $M$ dans $M$ définis par les éléments de $G$, c'est-à-dire :
\[
g^*\Omega = \Omega, \quad\text{pour tout $g\in G$}.
\]
Le groupe $G$ agit alors par symplectomorphismes, et on dit que l'action de $G$ sur $(M,\Omega)$ est \emph{symplectique}\index{Action symplectique}.

\begin{defi}
Une \emph{application moment}\index{Application moment} pour une variété symplectique $(M,\Omega)$ munie d'une action symplectique du groupe de Lie $G$ est une application $\Phi : M\rightarrow\got{g}^*$ lisse, $G$-équivariante pour l'action coadjointe sur $\got{g}^*$ et qui vérifie
\[
d\langle\Phi,X\rangle = -\imath(X_M)\Omega \quad\text{pour tout $X\in\got{g}$},
\]
où $\langle\Phi,X\rangle$ désigne la fonction $m\in M\mapsto \langle\Phi(m),X\rangle\in\R$ et $X_M$ est le champ de vecteurs fondamental associé à $X\in\got{g}$.
\end{defi}

\begin{defi}
Une action symplectique du groupe de Lie $G$ sur une variété symplectique $(M,\Omega)$ est dite \emph{hamiltonienne}\index{Action hamiltonienne} s'il existe une application moment $\Phi: M\rightarrow\got{g}^*$ pour cette action.
\end{defi}

\begin{exple}
Soit $(V,\Omega)$ un $\R$-espace vectoriel symplectique. Soit $G$ un groupe de Lie réel agissant linéairement sur $V$ tel que $\Omega$ soit $G$-invariante. Cette action est alors hamiltonienne. Une application moment pour cette action est
\[
\begin{array}{cccl}
\Phi_V : & V & \longrightarrow & \got{g}^* \\
& v& \longmapsto & (X\in\got{g}\mapsto \frac{1}{2}\Omega(v,Xv)\in\R),
\end{array}
\]
où $Xv$ désigne l'action infinitésimale de $X\in\got{g}$ sur l'élément $v$ de $V$.
\end{exple}

\begin{exple}
\label{exmple:espacevectorielhermitien_est_variétéhamiltonienne}
Soit maintenant $(V,h)$ un espace vectoriel hermitien. La partie imaginaire $\imag h$ définit une forme symplectique sur $V$ et sa partie réelle $\re h$ un produit scalaire. Cette forme symplectique sur $V$ induit une structure symplectique sur l'espace projectif $\mathbb{P}(V)$ des droites vectorielles complexes de $V$, notée $\omega_{\mathbb{P}(V)}$, définie comme suit : l'espace tangent à $\mathbb{P}(V)$ en $[v]$, $v\in V\setminus\{0\}$ s'identifie naturellement avec l'orthogonal hermitien à $[v]$. La forme symplectique est alors donnée par
\[
\omega_{\mathbb{P}(V)}|_{[v]}(v_1,v_2) = \frac{\imag h(v_1,v_2)}{h(v,v)}.
\]
Si $G$ est un groupe de Lie qui agit linéairement sur $V$ en préservant la forme hermitienne $h$, alors l'action induite sur $\mathbb{P}(V)$ est aussi hamiltonienne. Elle bénéficie d'une application moment canonique
\[
\langle\Phi_{\mathbb{P}(V)}([v]),X\rangle = \frac{\imag h(v,Xv)}{h(v,v)}.
\]
De plus, la structure de $G$-variété hamiltonienne sur $\mathbb{P}(V)$ induit par restriction une structure de $G$-variété hamiltonienne sur toute sous-variété complexe $G$-invariante $X\subset\mathbb{P}(V)$.
\end{exple}

\begin{exple}
Soit $S^2=\{x^2+y^2+z^2=1\}$ la sphère unité de $\R^3$. Notons $\Omega$ la $2$-forme sur $\R^3$ $xdy\wedge dz - ydx\wedge dz + zdx\wedge dz = \det\bigl((x,y,z),\cdot,\cdot\bigr)$, qui se restreint en une forme symplectique sur $S^2$ invariante par rotation dans $\R^3$. Cette action induit une action de $S^1$ sur $S^2$ par rotation autour de l'axe des $z$. Elle est hamiltonienne, d'application moment $(x,y,z)\in S^2\mapsto z\in\R$.
\end{exple}

Dans différentes situations, la structure hamiltonienne a une propriété géométrique très forte : l'image de l'application moment, ou plus exactement, un certain domaine fondamental canonique de l'image de l'application moment, est convexe, voire, dans certains bons cas, un polyèdre convexe.

La situation que l'on rencontrera le plus souvent fait intervenir un groupe de Lie compact connexe $G=K$ et une variété symplectique connexe. On fixe $T$ un tore maximal de $K$ et $\got{t}_+^*$ une chambre de Weyl du dual $\got{t}^*$ de l'algèbre de Lie de $T$.

\begin{theo}[Atiyah, Guillemin-Sternberg, Kirwan, Lerman et al.]
\label{theo:convexitéhamiltonienne}
\index{Théorème de convexité hamiltonienne}
Soit $(M, \Omega)$ une variété connexe munie d'une action hamiltonienne d'un groupe de Lie compact $K$, avec une application moment propre $\Phi: M \rightarrow\got{k}^*$.
\begin{enumerate}
\item L'ensemble $\Delta_K(M) := \Phi(M)\cap\got{t}^*$ est un ensemble convexe localement polyédral. En particulier, si $M$ est compact, $\Delta_K(M)$ est un polytope convexe.
\item Chaque fibre de l'application moment $\Phi$ est connexe.
\end{enumerate}
\end{theo}

La version abélienne, c'est-à-dire lorsque $K=T$ est un tore, avec l'hypothèse de compacité sur la variété, a été prouvée simultanément par Atiyah \cite{atiyah} d'un côté, Guillemin et Sternberg \cite{guillemin82} de l'autre. Ensuite, Kirwan \cite{kirwan} a généralisé ce résultat au cas non abélien, mais toujours avec une variété compact. D'autres versions on été proposées ensuite, comme \cite{hilgert,weinstein} par exemple. L'énoncé ci-dessus a été prouvé par Lerman, Meinrenken, Tolman et Woodward dans \cite{lerman98}.

\subsection{Orbites coadjointes}
\label{subsection:orbites_coadjointes}

Soit $G$ un groupe de Lie réel connexe. Le groupe $G$ agit sur $\got{g}^*$, dual de son algèbre de Lie, par l'action coadjointe. Ses orbites sont appelées \emph{orbites coadjointes}.

Soit $\Lambda\in\got{g}^*$. Notons $G_{\Lambda}$ le stabilisateur de $\Lambda$ dans $G$ et $\got{g}_{\Lambda}$ son algèbre de Lie. L'application $g\in G\mapsto g\cdot\Lambda\in\got{g}^*$ induit un difféomorphisme $G$-équivariant de $G/G_{\Lambda}$ sur l'orbite coadjointe $\Orb_{\Lambda} := G\cdot\Lambda$.

L'espace vectoriel quotient $\got{g}/\got{g}_{\Lambda}$ est muni d'une action de $G_{\Lambda}$ induite par l'action adjointe. Il est connu que nous avons un isomorphisme de $G$-fibrés vectoriels sur $G/G_{\Lambda}\simeq G\cdot\Lambda$
\begin{equation}
\label{eq:difféo_fibrétangent_espacehomogène}
\begin{array}{cccl}
& G\times_{G_{\Lambda}}\got{g}/\got{g}_{\Lambda} & \longrightarrow &  T\Orb_{\Lambda} \\
& [g,\overline{X}] & \longmapsto & \frac{d}{dt}(g\exp(tX)\cdot\Lambda)|_{t=0},
\end{array}
\end{equation}
où $\overline{X}$ désigne la classe de $X\in\got{g}$ modulo $\got{g}_{\Lambda}$. Pour tout $g\in G$, nous noterons $l_g:h\cdot\Lambda\in\Orb_{\Lambda}\rightarrow (gh)\cdot\Lambda\in\Orb_{\Lambda}$ le difféomorphisme induit par la multiplication à gauche par $g$ dans $G$. On peut aisément vérifier que la différentielle de cette application satisfait la propriété suivante,
\begin{equation}
\label{eq:propriété_différentielle_de_lg_générale}
dl_g(h\cdot\Lambda)[h,\overline{X}] = [gh,\overline{X}],
\end{equation}
pour tous $g,h\in G$ et tout $X\in\got{g}$. En particulier, on a la relation
\begin{equation}
\label{eq:propriété_différentielle_de_lg}
dl_g(\Lambda)[e,\overline{X}] = [g,\overline{X}], 
\end{equation}
pour tout $g\in G$ et tout $X\in\got{g}$.

La forme symplectique de Kirillov-Kostant-Souriau est une forme symplectique définie canoniquement sur une orbite coadjointe $\Orb_{\Lambda}$ de $\got{g}^*$.

Dans l'énoncé suivant, $\Omega^p(\Orb_{\Lambda})$ désigne l'espace vectoriel des $p$-formes différentielles sur la variété $\Orb_{\Lambda}$, et $\Omega^p(\got{g}/\got{g}_{\Lambda})$ l'ensemble des formes $p$-linéaires alternées sur l'espace vectoriel $\got{g}/\got{g}_{\Lambda}$.

\begin{prop}
Pour tout $p\geqslant 0$, l'application restriction $\omega\in\Omega^p(\Orb_{\Lambda})\mapsto \omega|_{\Lambda}\in\Omega^p(\got{g}/\got{g}_{\Lambda})$ induit un isomorphisme entre l'espace des $p$-formes différentielles $G$-invariantes sur $\Orb_{\Lambda}$ et l'espace des formes $p$-linéaires alternées $G_{\Lambda}$-invariantes sur $\got{g}/\got{g}_{\Lambda}$.
\end{prop}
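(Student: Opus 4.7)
Le plan consiste � r�duire l'�tude des $p$-formes diff�rentielles $G$-invariantes sur $\Orb_{\Lambda}$ � un probl�me purement alg�brique portant sur la fibre $\got{g}/\got{g}_{\Lambda}$, en exploitant la description du fibr� tangent $T\Orb_{\Lambda}\simeq G\times_{G_{\Lambda}}\got{g}/\got{g}_{\Lambda}$ �tablie ci-dessus. Sous cette identification, le fibr� des $p$-formes altern�es s'�crit $\wedge^p T^*\Orb_{\Lambda}\simeq G\times_{G_{\Lambda}}\wedge^p(\got{g}/\got{g}_{\Lambda})^*$, et un r�sultat g�n�ral sur les fibr�s associ�s aux espaces homog�nes affirme que les sections $G$-invariantes d'un tel fibr� sont en bijection canonique avec les invariants $\bigl(\wedge^p(\got{g}/\got{g}_{\Lambda})^*\bigr)^{G_{\Lambda}}$, cette bijection �tant pr�cis�ment donn�e par l'�valuation dans la fibre au-dessus de $\Lambda$. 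Le travail revient donc � expliciter et v�rifier cette bijection dans le cas pr�sent.

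Pour l'injectivit�, soit $\omega$ une $p$-forme $G$-invariante. La transitivit� de l'action de $G$ sur $\Orb_{\Lambda}$, combin�e � l'identit� $l_g^*\omega = \omega$, montre que $\omega|_{g\cdot\Lambda}$ est enti�rement d�termin�e par $\omega|_{\Lambda}$ via le transport par $dl_g(\Lambda)$; en particulier, $\omega|_{\Lambda} = 0$ entra�ne $\omega\equiv 0$. De plus, pour tout $h\in G_{\Lambda}$, le diff�omorphisme $l_h$ fixe $\Lambda$, et sa diff�rentielle en $\Lambda$ s'identifie, gr�ce aux deux formules explicites pour $dl_g$ obtenues juste avant l'�nonc�, � l'action adjointe induite de $h$ sur $\got{g}/\got{g}_{\Lambda}$. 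La $G$-invariance de $\omega$ impose donc que $\omega|_{\Lambda}$ appartienne � $\wedge^p(\got{g}/\got{g}_{\Lambda})^{*,G_{\Lambda}}$.

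Pour la surjectivit�, �tant donn�e $\omega_0\in\wedge^p(\got{g}/\got{g}_{\Lambda})^{*,G_{\Lambda}}$, je pose
\[
\omega|_{g\cdot\Lambda}\bigl([g,\overline{X}_1],\ldots,[g,\overline{X}_p]\bigr) := \omega_0\bigl(\overline{X}_1,\ldots,\overline{X}_p\bigr).
\]
Le point d�licat � v�rifier est l'ind�pendance vis-�-vis du choix du repr�sentant $g$ : si $g\cdot\Lambda = g'\cdot\Lambda$, alors $h := g^{-1}g'$ appartient � $G_{\Lambda}$, et la relation $dl_h(\Lambda)[e,\overline{X}] = [h,\overline{X}] = [e,\overline{\Ad(h)X}]$ (obtenue en combinant les deux formules pour $dl_g$) permet de r��crire la prescription en $g'\cdot\Lambda$ � partir de celle en $g\cdot\Lambda$ via $\Ad(h)$, et l'invariance de $\omega_0$ sous $G_{\Lambda}$ assure la co�ncidence des deux valeurs. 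La lissit� et la $G$-invariance de la forme ainsi construite sont automatiques par construction.

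L'obstacle principal, de nature essentiellement v�rificationnelle, r�side dans le contr�le pr�cis de l'identification entre l'action diff�rentielle de l'isotropie $G_{\Lambda}$ sur $T_{\Lambda}\Orb_{\Lambda}$ et l'action adjointe quotient sur $\got{g}/\got{g}_{\Lambda}$; mais ce point se ram�ne � une application directe des deux formules explicites pour $dl_g$ d�j� �tablies dans le texte, et n'exige aucun calcul non trivial.
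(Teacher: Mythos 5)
Your proposal is correct and follows essentially the same route as the paper: both use the identification $T\Orb_{\Lambda}\simeq G\times_{G_{\Lambda}}\got{g}/\got{g}_{\Lambda}$ and the explicit formulas for $dl_g$ to send an invariant form to its value at $\Lambda$ and to reconstruct it from a $G_{\Lambda}$-invariant alternating form by the very same formula $\omega|_{g\Lambda}([g,\overline{X}_1],\ldots,[g,\overline{X}_p]):=b(X_1,\ldots,X_p)$. You are merely more explicit than the paper on the well-definedness of this prescription (independence of the representative $g$ via $[h,\overline{X}]=[e,\overline{\Ad(h)X}]$ for $h\in G_{\Lambda}$), a point the paper leaves implicit.
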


\begin{proof}
Clairement, $\omega|_{\Lambda}$ est $G_{\Lambda}$-invariante si $\omega$ est $G$-invariante, puisque $\Lambda$ est fixé par l'action induite de $G_{\Lambda}$.

Inversement, si une forme $p$-linéaire alternée $b:(\got{g}/\got{g}_{\Lambda})^p\rightarrow\R$ est $G_{\Lambda}$-invariante, on peut définir une $p$-forme différentielle $\omega$ sur $\Orb_{\Lambda}$, grâce au difféomorphisme \eqref{eq:difféo_fibrétangent_espacehomogène}, en posant
\[
\omega|_{g\Lambda}([g,\overline{X}_1],\ldots,[g,\overline{X}_p]) := b(X_1,\ldots,X_p), \quad \text{pour tous $X_1,\ldots,X_p\in\got{g}$}.
\]
Or, la relation \eqref{eq:propriété_différentielle_de_lg} nous donne alors
\[
(l_g^*\omega)|_{\Lambda}([e,\overline{X}_1],\ldots,[e,\overline{X}_p]) = \omega|_{g\Lambda}([g,\overline{X}_1],\ldots,[g,\overline{X}_p]) = b(X_1,\ldots,X_p),
\]
pour tous $X_1,\ldots,X_p\in\got{g}$. La forme différentielle $\omega$ est donc bien $G$-invariante grâce à la relation générale \eqref{eq:propriété_différentielle_de_lg_générale}.
\end{proof}

\begin{rema}
Ceci est encore vrai quand on regarde plus généralement les espaces de $(p,q)$-tenseurs. Par exemple, toute métrique riemannienne $G$-invariante sur $\Orb_{\Lambda}$ provient d'un unique produit scalaire $G_{\Lambda}$-invariant sur $\got{g}/\got{g}_{\Lambda}$.
\end{rema}

\begin{defi}
\label{defi:formesymplectique_kirillovkostantsouriau}
Soit $\Lambda$ un élément de $\got{g}^*$. La \emph{forme symplectique de Kirillov-Kostant-Souriau}\index{Forme symplectique de Kirillov-Kostant-Souriau} est l'unique $2$-forme différentielle sur l'orbite coadjointe $\Orb_{\Lambda}$, notée $\Omega_{\Orb_{\Lambda}}$, telle que
\[
(\Omega_{\Orb_{\Lambda}})|_{\Lambda}([e,\overline{X}],[e,\overline{Y}]) = \langle\Lambda,[X,Y]\rangle
\]
pour tous $X,Y\in\got{g}$. La $2$-forme $\Omega_{\Orb_{\Lambda}}$ est fermée, cette propriété se vérifiant grâce à l'identité de Jacobi sur l'algèbre de Lie $\got{g}$, cf \cite{audin} par exemple.
\end{defi}

La forme de Kirillov-Kostant-Souriau étant $G$-invariante, l'action induite de $G$ sur l'orbite coadjointe $(\Orb_{\Lambda},\Omega_{\Orb_{\Lambda}})$ est symplectique. Cette action est aussi hamiltonienne, avec pour application moment l'inclusion de l'orbite $\Orb_{\Lambda}$ dans $\got{g}^*$:
\[
\Phi_G : g\cdot\Lambda \in \Orb_{\Lambda} \longmapsto g\cdot\Lambda\in \got{g}^*.
\]

\begin{rema}
\label{rema:inclusionorbitepropre_ssi_orbitefermée}
L'application moment $\Phi_G$ est propre si et seulement si l'orbite coadjointe $\Orb_{\Lambda}$ est fermée dans $\got{g}^*$.
\end{rema}

\subsection{Préquantification}

Afin d'établir mathématiquement la notion de quantification utilisée par les physiciens, Kostant et Souriau ont introduit la préquantification des variétés hamiltoniennes.

Soit $G$ un groupe de Lie connexe et $(M,\Omega,\Phi)$ une $G$-variété hamiltonienne.

\begin{defi}
La $G$-variété hamiltonienne $(M,\Omega,\Phi)$ est préquantifiée si elle est pourvue d'un fibré en droites complexes hermitien $G$-équivariant $\mathcal{L}$ sur $M$ et d'une connexion $G$-invariante $\nabla$ dont la courbure est $-i\Omega$, tels que la condition de Kostant
\[
L_X - \nabla_{X_M} = i\langle\Phi,X\rangle
\]
soit vérifiée pour tout $X\in\got{g}$. On dit alors que $\mathcal{L}$ est \emph{un fibré de Kostant-Souriau}.
\end{defi}

Si $(M,\Omega,\Phi)$ est préquantifiée par le fibré de Kostant-Souriau $\mathcal{L}$, alors la première classe de Chern de $\mathcal{L}$ est la classe de $\frac{\Omega}{2\pi}$. De plus, l'existence d'un tel fibré implique que la classe de cohomologie $[\frac{\Omega}{2\pi}]$ est entière, c'est-à-dire, $[\frac{\Omega}{2\pi}]$ est dans l'image du morphisme $H_{\text{Cech}}(M,\Z)\rightarrow H_{\text{DeRham}}(M,\R)$ \cite{kostant70}.

\begin{exple}
\label{exple:fibré_kostantsouriau_pour_orbitecoadjointe}
Soient $\Lambda\in\got{g}^*$ et $\Orb_{\Lambda}\subset\got{g}^*$ l'orbite coadjointe de $G$ correspondante. On dit que la forme linéaire $\Lambda$ est \emph{entière} si $i\Lambda$ est la différentielle d'un caractère $\chi$ de $G_{\Lambda}$. Si ce caractère $\chi$ existe, il n'est pas forcément unique. Cependant, lorsque $G_{\Lambda}$ est connexe, on a unicité de $\chi$. Remarquons que, dans la suite, les éléments $\Lambda$ de $\got{g}^*$ considérés vérifieront toujours $G_{\Lambda}$ connexe. C'est le cas lorsque par exemple $G$ est compact connexe, ainsi que pour les orbites coadjointes holomorphes qui seront introduites dans le Chapitre \ref{chap:espaces_symétriques_hermitiens}.

Définissons $\C_{\chi}$ la représentation de $G_{\Lambda}$ dans $\C$ par multiplication par $\chi$. Le fibré en droites $G\times_{G_{\Lambda}}\C_{\chi}$ sur $G/G_{\Lambda}\cong\Orb_{\Lambda}$ est un fibré de Kostant-Souriau. En fait, par \cite{kostant70}, il s'agit de l'unique fibré de Kostant-Souriau sur $\Orb_{\Lambda}$. De plus, seules les orbites coadjointes $\Orb_{\Lambda}$ associées à une forme entière $\Lambda$ de $\got{g}^*$ sont munies d'un fibré de Kostant-Souriau.
\end{exple}

Remarquons que, lorsque $G$ est compact connexe et $T$ un tore maximal de $G$, les orbites préquantifiables (c'est-à-dire entières) paramètrent les représentations irréductibles de $G$. C'est le Théorème de Borel-Weil.

\section{Polytope moment: le cadre algébrique}
\label{section:polytopemoment_algébrique}

Nous allons maintenant décrire brièvement la version algébrique du polytope moment, introduite par Brion dans \cite{brion87}.

%

Soit $K$ un groupe de Lie compact connexe et $T$ un tore maximal de $K$. Prenons $X$ une variété algébrique projective complexe sur laquelle le groupe algébrique réductif $\KC$ agit rationnellement. Soit $\mathcal{L}$ un fibré en droites ample $\KC$-linéarisé sur $X$, c'est-à-dire, on a une action de $\KC$ sur l'espace total de $\mathcal{L}$ qui relève l'action de $\KC$ sur la base $X$ du fibré (la notion de fibré $\KC$-linéarisé sera plus détaillée dans le Chapitre \ref{chap:projectiondorbitecoadjointe+GIT}). Ainsi, l'espace des sections globales $H^0(X,\mathcal{L})$ de $\mathcal{L}$ est un $\KC$-module. Le but est de déterminer comment se décompose ce $\KC$-module en somme de représentations irréductibles de $\KC$.


\begin{defi}
On appelle \emph{polytope moment algébrique} associé à $(X,\mathcal{L})$ l'ensemble
\[
\Delta_K^{\text{alg}}(X,\mathcal{L}) := \{\mu\in\wedge^*_{\Q,+}; V_{N\mu}^{\KC}\subset H^0(X,\mathcal{L}^{\otimes N}) \text{ pour un certain entier } N\geqslant 1\}.
\]
\end{defi}

La principale propriété géométrique de cet ensemble est donnée par le résultat suivant.

\begin{prop}[\cite{brion87}]
L'ensemble $\Delta_K^{\mathrm{alg}}(X,\mathcal{L})$ est un polytope convexe de $\wedge^*_{\Q}$.
\end{prop}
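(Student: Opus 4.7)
Le plan consiste � ramener l'�nonc� � une propri�t� combinatoire d'un semi-groupe finiment engendr�, en passant par l'alg�bre gradu�e des sections. Je consid�rerais l'anneau gradu�
\[
R := \bigoplus_{N\geqslant 0} H^0(X,\mathcal{L}^{\otimes N}).
\]
Comme $\mathcal{L}$ est ample et $X$ projective, $R$ est une $\C$-alg�bre gradu�e finiment engendr�e et int�gre (quitte � supposer $X$ irr�ductible), avec $R_0 = \C$. La $\KC$-lin�arisation de $\mathcal{L}$ fait de $R$ un $\KC$-module rationnel, gradu�. Je fixerais alors un sous-groupe de Borel $B = \TC U$ de $\KC$, et consid�rerais la sous-alg�bre $R^U$ des $U$-invariants. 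Par un th�or�me classique de Hadziev, cons�quence du caract�re quasi-affine de $\KC/U$ et du th�or�me de finitude de Hilbert-Nagata appliqu� � l'alg�bre $(R\otimes\C[\KC/U])^{\KC}$, la $\C$-alg�bre $R^U$ est finiment engendr�e.

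Comme $U$ est normalis� par $\TC$, l'alg�bre $R^U$ se d�compose en espaces de poids et h�rite d'une bigraduation $R^U = \bigoplus_{(N,\mu)}(R^U)_{N,\mu}$, o� $(R^U)_{N,\mu}$ est l'espace des vecteurs de plus haut poids de poids $\mu$ dans $H^0(X,\mathcal{L}^{\otimes N})$. Sa dimension est exactement la multiplicit� de $V_{\mu}^{\KC}$ dans $H^0(X,\mathcal{L}^{\otimes N})$. Je poserais alors
\[
S := \{(N,\mu)\in\mathbb{N}\times\wedge^*_+ ;\ (R^U)_{N,\mu}\neq 0\}.
\]
L'int�grit� de $R$ entra�ne que le produit de deux vecteurs de plus haut poids non nuls est encore non nul, de sorte que $S$ est un sous-semi-groupe de $\mathbb{N}\times\wedge^*_+$. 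La finitude de g�n�ration de $R^U$ assure alors celle de $S$ comme semi-groupe.

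Notons $\CQ$ le c�ne convexe rationnel engendr� par $S$ dans $\Q\times\wedge^*_{\Q}$. C'est un c�ne poly�dral rationnel, et par d�finition m�me on a
\[
\Delta_K^{\mathrm{alg}}(X,\mathcal{L}) = \{\mu\in\wedge^*_{\Q,+} ;\ (1,\mu)\in\CQ\},
\]
c'est-�-dire la tranche du c�ne $\CQ$ � hauteur $N=1$. Pour conclure qu'il s'agit d'un polytope et non seulement d'un poly�dre, j'utiliserais la propri�t� $R_0 = \C$ : le seul �l�ment de $S$ de premi�re coordonn�e nulle est $(0,0)$. Ainsi, si $(N_i,\mu_i)_{1\leqslant i\leqslant k}$ sont des g�n�rateurs de $S$, on a $N_i \geqslant 1$ pour tout $i$ tel que $\mu_i \neq 0$, et la tranche $\Delta_K^{\mathrm{alg}}(X,\mathcal{L})$ s'identifie alors � l'enveloppe convexe finie des points $\mu_i/N_i$, donc � un polytope convexe rationnel.

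Le point d�licat de ce plan est l'application du th�or�me de Hadziev assurant la finitude de g�n�ration de $R^U$, ainsi que l'usage correct de l'int�grit� de $R$ pour munir $S$ d'une structure de semi-groupe; l'hypoth�se d'amplitude de $\mathcal{L}$ intervient de mani�re essentielle deux fois, d'une part pour garantir la finitude de g�n�ration de $R$, d'autre part pour assurer la bornitude de la tranche via $R_0 = \C$.
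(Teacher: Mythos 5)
Votre d\'emonstration est correcte et suit pour l'essentiel l'argument original de Brion, que la th\`ese se contente de citer (\cite{brion87}) sans le reproduire : g\'en\'eration finie de $R^U$ par le th\'eor\`eme de Hadziev, structure de semi-groupe de type fini sur l'ensemble des couples $(N,\mu)$ de plus hauts poids, puis identification de $\Delta_K^{\mathrm{alg}}(X,\mathcal{L})$ \`a la tranche \`a hauteur $N=1$ du c\^one rationnel engendr\'e. Seule petite retouche : la bornitude via $R_0=\C$ provient de la projectivit\'e (et de l'irr\'eductibilit\'e) de $X$, non de l'amplitude de $\mathcal{L}$, dont le r\^ole se limite \`a la g\'en\'eration finie de l'alg\`ebre des sections $R$.
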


La terminologie de \og{}polytope moment algébrique\fg{} n'est pas innocente. En effet, le polytope convexe rationnel $\Delta_K^{\text{alg}}(X,\mathcal{L})$ est fortement lié au polytope moment associé à une certaine $K$-variété hamiltonienne.

Soit $V$ un $\KC$-module rationnel de dimension finie et $X$ une sous-variété fermée, irréductible et $\KC$-stable de $\mathbb{P}(V)$. D'après l'Exemple \ref{exmple:espacevectorielhermitien_est_variétéhamiltonienne}, le choix d'un produit scalaire hermitien $h$ sur $V$, invariant par $K$, induit une structure de $K$-variété hamiltonienne sur $X$, d'application moment $\Phi_{X}$ donnée par
\[
\langle\Phi_{X}([v]),Y\rangle = \frac{\imag h(v,Yv)}{h(v,v)}\quad\text{pour tout $[v]\in X\subset\mathbb{P}(V)$ et tout $Y\in\got{k}$}.
\]

\begin{theo}[\cite{brion87}]
\label{theo:égalité_polymomenthamiltonien_et_polymomentalgébrique}
Le polytope moment algébrique $\Delta_K^{\mathrm{alg}}(X,\mathcal{O}_{X}(1))$ est l'ensemble des points rationnels de $\Delta_K(X)=\Phi_{X}(X)\cap\got{t}_+^*$. En particulier, $\Delta_K(X)$ est l'enveloppe convexe d'un nombre fini de points rationnels de $\wedge_{\Q,+}^*$ et
\[
\Delta_K(X) = \overline{\Delta_K^{\mathrm{alg}}(X,\mathcal{O}_X(1))}.
\]
\end{theo}

\section{Projection d'orbite}
\label{section:proj_orbites_defi_et_exemples}

L'étude des structures symplectique et hamiltonienne des orbites coadjointes, définies dans les paragraphes précédents, est très intéressante et apporte de nombreuses informations en particulier pour la théorie des représentations.

Lorsqu'on considère l'action d'un sous-groupe de $G$ sur les orbites coadjointes de $\got{g}^*$, la structure hamiltonienne restreinte à ce sous-groupe se décrit en terme de projection d'orbites, notion que nous allons expliquer dans cette section, en proposant quelques exemples fondamentaux.

\subsection{Définition générale}

Considérons toujours un groupe de Lie réel connexe $G$ et prenons maintenant $H$ un sous-groupe de Lie fermé connexe de $G$. Soit $\got{h}$ son algèbre de Lie.

Pour tout $\Lambda\in\got{g}^*$, l'action de $G$ sur $\Orb_{\Lambda}$ induit une action de son sous-groupe $H$ sur cette même orbite coadjointe. L'action de $H$ laisse évidemment invariante la forme symplectique $\Omega_{\Orb_{\Lambda}}$. On note $\Phi_H$ la composée de l'application moment $\Phi_G:\Orb_{\Lambda}\hookrightarrow\got{g}^*$ pour l'action hamiltonienne de $G$ sur $\Orb_{\Lambda}$ et de la projection linéaire $\got{g}^*\rightarrow\got{h}^*$ obtenue en transposant l'injection canonique $\got{h}\hookrightarrow\got{g}$. L'application $\Phi_H$ est une application moment de $(\Orb_{\Lambda},\Omega_{\Orb_{\Lambda}})$ pour l'action de $H$, c'est la \emph{projection de l'orbite coadjointe}\index{Projection d'orbite coadjointe} $\Orb_{\Lambda}$ relativement au sous-groupe de Lie $H$. Cette dernière application moment sera aussi notée $\Phi_{\Orb_{\Lambda}}$ s'il n'y a pas d'ambiguïté sur le groupe qui agit.

\subsection{$G$ compact}
\label{subsection:projection_orbite_compacte}

Lorsque $G=K$ est pris compact connexe, le sous-groupe $H$ est alors lui aussi compact connexe. La projection d'une orbite $\Phi_{\Orb_{\Lambda}}:\Orb_{\Lambda}\rightarrow\got{h}^*$, pour un $\Lambda\in\got{k}^*$ quelconque, rentre donc ici dans le cadre d'application du Théorème de Convexité Hamiltonienne de Kirwan. Ainsi, l'image de la projection d'orbite $\Phi_{\Orb_{\Lambda}}$ intersecte la chambre de Weyl $\got{t}_+^*$ de $H$ en un polytope convexe $\Delta_H(\Orb_{\Lambda}) := \Phi_H(\Orb_{\Lambda})\cap\got{t}_+^*$.

Historiquement, ce résultat a été prouvé par Kostant, une décennie avant la preuve du cadre général par Kirwan, dans le cas où le sous-groupe compact $H=T$ est un tore maximal de $K$ \cite{kostant}. Le résultat démontré par Kostant, dont l'énoncé est donné ci-dessous, a l'avantage de donner explicitement et de manière simple une description du polyèdre moment $\Delta_T(\Orb_{\Lambda})$.

\begin{theo}[Kostant]
Soit $K$ un groupe compact connexe et $T$ un tore maximal de $K$. La projection de l'orbite coadjointe $K\cdot\Lambda$ d'un élément $\Lambda\in\got{t}^*$ est l'enveloppe convexe de l'orbite $W\cdot\Lambda$ de $\Lambda$ sous l'action du groupe de Weyl $W$.
\end{theo}

Il est également possible de connaître la description exacte du polytope moment d'une projection d'orbite, même lorsque le sous-groupe n'est pas abélien. Berenstein-Sjamaar proposent une formule pour déterminer les équations de ce polytope convexe dans le cas général \cite{berenstein_sjamaar}. Cependant, ces formules font apparaître des équations en cohomologie où interviennent des classes de Schubert de variétés des drapeaux, rendant les calculs difficiles et fastidieux en règle générale.

Il est également important de remarquer qu'avec $K$ compact, les orbites coadjointes de $\got{k}^*$ sont des variétés projectives complexes. Plus précisément, on a une identification $K$-équivariante de $\Orb_{\Lambda}=K/K_{\Lambda}$ avec l'espace homogène complexe $\KC/P_{\Lambda}$, où $P_{\Lambda}$ est un sous-groupe parabolique de $K_{\C}$, qui induit une structure de variété kählérienne $K$-invariante sur la variété symplectique $(\Orb_{\Lambda},\Omega_{\Orb_{\Lambda}})$.

Supposons que $\Lambda$ est une forme entière, c'est-à-dire $\Lambda\in\wedge^*$, et notons $\chi$ le caractère de $K_{\Lambda}$ associé, comme indiqué dans l'Exemple \ref{exple:fibré_kostantsouriau_pour_orbitecoadjointe}. Le fibré de Kostant-Souriau $\mathcal{L}_{\Lambda}:=K\times_{K_{\Lambda}}\C_{\chi}$ sur $\Orb_{\Lambda}$ peut également s'écrire comme le fibré en droites holomorphe $K_{\C}\times_{P_{\Lambda}}\C_{\chi}$. Dans ce dernier fibré, on a confondu le caractère $\chi$ de $K_{\Lambda}$ avec l'unique caractère de $P_{\Lambda}$ qui coïncide avec $\chi$ sur $G_{\Lambda}$. Les polytopes moments hamiltonien et algébrique associés sont alors liés par le résultat du Théorème \ref{theo:égalité_polymomenthamiltonien_et_polymomentalgébrique}. En effet, on aura ici
\[
\Delta_H(\Orb_{\Lambda}) = \overline{\Delta_{H}^{\mathrm{alg}}(\Orb_{\Lambda},\mathcal{L}_{\Lambda})} = \overline{\{\mu\in\wedge_{\Q,+}^*; \exists N>0, V_{N\mu}^H \subset H^0(\Orb_{\Lambda}, \mathcal{L}_{\Lambda}^{\otimes N})|_H\}}.
\]
Or, d'après le théorème de Borel-Weil, cela revient à écrire
\begin{equation}
\label{eq:polymomenthamiltonien=polymomentalgébrique_casprojorbitecompacte}
\Delta_H(\Orb_{\Lambda}) = \overline{\{\mu\in\wedge_{\Q,+}^*; \exists N>0, V_{N\mu}^H \subset V_{N\Lambda}^{K}|_H\}}.
\end{equation}
Cette approche du polytope moment hamiltonien de la projection d'orbite par les représentations irréductibles a été initiée par Heckman \cite{heckman}. Voir aussi \cite{guillemin82, berenstein_sjamaar}. Ceci est une conséquence d'une propriété de type \og{}la quantification commute avec la réduction\fg{}.

\subsection{Le problème de Horn}
\label{subsection:problème_de_Horn}

Le problème de Horn a vu le jour au dix-neuvième siècle sous la forme suivante : que peut-on dire des valeurs propres de la somme de deux matrices hermitiennes, par rapport aux valeurs propres de ces deux matrices? Cette question, qui, au premier abord, ne fait intervenir que des objets d'algèbre linéaire, a de nombreuses interprétations géométriques et algébriques, par exemple en combinatoire, en géométrie algébrique ou en géométrie hamiltonienne. L'article de Fulton \cite{fulton00}, donne un aperçu du problème de Horn et des interprétations connues à l'heure actuelle.

La formulation moderne du problème de Horn peut se décrire en termes de projection d'orbites coadjointes. Soit $s>2$ un entier. On fixe cette fois-ci un groupe de Lie $H=K$ compact connexe, $T$ un tore maximal de $K$ et on considère $G=K^s = K\times\cdots\times K$ le produit direct de $s$ copies du groupe $K$. Le groupe $K$ se voit comme sous-groupe de $G$ par l'injection diagonale. Une orbite coadjointe pour $G$ est un produit direct $\Orb_{\Lambda_1}\times\ldots\times\Orb_{\Lambda_s}$ des orbites coadjointes pour $K$ de $s$ éléments $\Lambda_1,\ldots,\Lambda_s\in\got{k}^*$. Dans ce contexte, l'application moment est tout simplement la somme d'éléments des orbites coadjointes, c'est-à-dire,
\[
\Phi_{K}(A_1,\ldots,A_s) = A_1 + \ldots + A_s
\]
pour tout $(A_1,\ldots,A_s)\in\Orb_{\Lambda_1}\times\ldots\times\Orb_{\Lambda_s}$.

Comme conséquence du Théorème \ref{theo:convexitéhamiltonienne}, l'intersection $\Delta_K(\Orb_{\Lambda_1}\times\ldots\times\Orb_{\Lambda_s})$ de la projection d'orbite $\Phi_{K}(\Orb_{\Lambda_1}\times\ldots\times\Orb_{\Lambda_s})$ avec une chambre de Weyl de $\got{t}^*$ est un polytope convexe. Le problème de Horn généralisé consiste à trouver des conditions sur $\mu\in\got{t}_+^*$ de sorte que $\mu$ soit somme des éléments des orbites de $\Lambda_1,\ldots,\Lambda_s$. Remarquons que $\mu$ est une somme d'éléments de ces $s$ orbites coadjointes si et seulement si $0\in \Orb_{\mu^*}+\Orb_{\Lambda_1}+\ldots+\Orb_{\Lambda_s}$, c'est-à-dire, $0$ est élément du polytope moment $\Delta_K(\Orb_{\mu^*}\times\Orb_{\Lambda_1}\times\ldots\times\Orb_{\Lambda_s})$ de la somme de $s+1$ orbites.

Le problème de Horn généralisé n'est qu'un cas particulier de projections d'orbites compactes expliqué dans le paragraphe \ref{subsection:projection_orbite_compacte}. Une réécriture de \eqref{eq:polymomenthamiltonien=polymomentalgébrique_casprojorbitecompacte}, lorsque $s=2$, donne le résultat suivant.

\begin{prop}
\label{prop:équivalences_polytopemomentproduitorbites}
Soient $\mu,\nu,\lambda\in\wedge^*_+$. Les assertions suivantes sont équivalentes:
\begin{enumerate}
\item $\lambda\in\Delta_K(\Orb_{\mu}\times\Orb_{\nu})$;
\item $0\in\Delta_K(\Orb_{\lambda^*}\times\Orb_{\mu}\times\Orb_{\nu})$;
\item Il existe un entier $N>0$ tel que $\left(V_{N\lambda}^{K*}\otimes V_{N\mu}^K\otimes V_{N\nu}^K\right)^K \neq \{0\}$;
\item Il existe un entier $N>0$ tel que $V_{N\lambda}^K\subset V_{N\mu}^K\otimes V_{N\nu}^K$.
\end{enumerate}
\end{prop}

Lorsque $K$ est le groupe unitaire $U(n)$ et $s=2$, on retrouve le problème de Horn originel. 
Il est bien connu qu'une matrice hermitienne de taille $n\times n$ est diagonalisable et ses valeurs propres sont toutes réelles. Ceci nous permet de lister ses valeurs propres par ordre décroissant, en indiquant chacune des valeurs propres autant de fois que sa multiplicité. Ainsi, pour trois matrices hermitiennes $A$, $B$ et $C$, on notera $\alpha$ (resp. $\beta$, resp. $\gamma$) le spectre ordonné des valeurs propres de $A$ (resp. $B$, resp. $C$),
\[
\alpha = (\alpha_1\geqslant\ldots\geqslant\alpha_n),
\]
et similairement pour $\beta$ et $\gamma$. La question devient alors :

\medskip

\begin{equation}
\label{eq:question_Horn}\tag{Horn}
\begin{minipage}{10cm}
Quels sont les $n$-uplets ordonnés $\alpha$, $\beta$ et $\gamma$ qui sont les spectres de matrices hermitiennes $A$, $B$ et $C$ de taille $n\times n$, telles que $C = A+B$?
\end{minipage}
\end{equation}

\bigskip

De nombreux mathématiciens ont étudié ce problème, dont Horn, qui donna une réponse dans \cite{horn62} pour les cas $n\leqslant 5$ et conjectura le résultat pour $n>5$. Knutson et Tao prouvèrent finalement la conjecture en 2001 dans l'article \cite{knutsontao2001}, grâce aux percées obtenues dans \cite{klyachko98,helmke_rosenthal,knutson_tao,knutson_tao_woodward}. Ils ont prouvé que l'ensemble des $(\alpha,\beta,\gamma)\in(\R^n)^3$ répondant à la question \eqref{eq:question_Horn} forme un cône convexe polyédral de $(\R^n)^3$ et ont donné ses équations. Ces équations sont indexées par les ensembles $T^n_r$ de triplets $(I,J,L)$ de parties de $\{1,\ldots,n\}$ de même cardinal, pour $1\leqslant r<n$, définis récursivement.

Tout d'abord, définissons, pour $r\in\{1,\ldots,n-1\}$, l'ensemble
\[
U^n_r = \left\{ (I,J,L); \ |I| = |J| = |L| = r, \mbox{ et } \sum_{i\in I}i + \sum_{j\in J} j = \sum_{\ell\in L}\ell + r(r+1)/2\right\}.
\]
Ensuite, pour $r=1$, posons $T^n_1 = U^n_1$. Puis, en général, définissons
\[
T^n_r = \left\{(I,J,L)\in U^n_r \left| \begin{array}{l}
\mbox{pour tout } p<r \mbox{ et tout } (F,G,H)\in T^r_p, \\
\displaystyle\sum_{f\in F} i_f + \sum_{g\in G} j_g \leqslant \sum_{h\in H}\ell_h + p(p+1)/2
\end{array}\right.\right\}.
\]

\begin{exple}
\label{exple:calcul_de_T_1^2}
Le cas le plus simple est pour $n=2$. On ne peut prendre que $r=1$, et
\[
T_1^2 = U_1^2 = \bigl\{(\{1\},\{1\},\{1\}),\ (\{2\},\{1\},\{2\}),\ (\{1\},\{2\},\{2\})\bigr\}.
\]
\end{exple}

\begin{exple}
\label{exple:calcul_de_T_1^3_et_T_2^3}
Nous aurons besoin plus tard du cas $n=3$. Le calcul de $T_1^3=U_1^3$ est aisé,
\[
T_1^3 = \left\{\begin{array}{l}
(\{1\},\{1\},\{1\}),\ (\{2\},\{1\},\{2\}),\ (\{1\},\{2\},\{2\}),\\
(\{3\},\{1\},\{3\}),\ (\{1\},\{3\},\{3\}),\ (\{2\},\{2\},\{3\})
\end{array}\right\}.
\]
On peut ensuite vérifier par le calcul que, dans le cas présent, on aura aussi $T_2^3=U_2^3$, c'est-à-dire,
\[
T_2^3 = \left\{\begin{array}{l}
(\{1,2\},\{1,2\},\{1,2\}),\ (\{1,3\},\{1,2\},\{1,3\}),\ (\{1,2\},\{1,3\},\{1,3\}),\\
(\{1,3\},\{1,3\},\{2,3\}),\ (\{2,3\},\{1,2\},\{2,3\}),\ (\{1,2\},\{2,3\},\{2,3\})
\end{array}\right\}.
\]
\end{exple}

Le théorème qui suit est l'énoncé de la Conjecture de Horn.

\begin{theo}[Horn \cite{horn62}, Knutson-Tao \cite{knutsontao2001}]
\label{theo:horn_klyachko}
Un triplet $(\alpha,\beta,\gamma)$ de $n$-uplets ordonnés apparaît comme spectre de trois matrices hermitiennes $A$, $B$ et $C$ de taille $n\times n$ avec $A+B=C$ si et seulement si
\[
\sum_{i=1}^n\alpha_i+\sum_{j=1}^n\beta_j = \sum_{\ell=1}^n\gamma_{\ell},
\]
et, pour tout $r<n$ et tout triplet $(I,J,L)$ de $T_r^n$,
\[
\sum_{i\in I}\alpha_i+\sum_{j\in J}\beta_j \geqslant \sum_{\ell\in L}\gamma_{\ell}.
\]
\end{theo}

Nous avons dit au début de ce paragraphe que le problème de Horn avait plusieurs interprétations. En particulier, il peut s'interpréter en termes de représentations irréductibles de $U(n)$ (ou, de manière équivalente, du complexifié $GL_n(\C)$ de $U(n)$). Le plus haut poids joue alors le rôle du spectre. Un poids dominant de $U(n)$ est représenté par un $n$-uplet d'entiers ordonnés de manière décroissante, $\mu = (\mu_1\geqslant\ldots\geqslant\mu_n)\in\Z^n$. L'ensemble des poids de $U(n)$ s'identifie au réseau $\wedge^*$ des éléments entiers du tore maximal des matrices diagonales de $U(n)$. Quant à l'ensemble des poids dominants, il s'identifie à l'ensemble $\wedge_+^*$ des éléments de $\wedge^*$ ordonnés de manière décroissante. On notera $V_{\mu}^{U(n)}$ la représentation irréductibles de $U(n)$, de plus haut poids de $\mu$.


\begin{theo}[Klyachko \cite{klyachko98}, Knutson-Tao \cite{knutson_tao}]
\label{theo:klyachko_knutson-tao}
Soient trois poids dominants $\lambda$, $\mu$ et $\nu$ de $U(n)$. Les assertions suivantes sont équivalentes:
\begin{enumerate}

\item $\lambda$, $\mu$ et $\nu$ sont les spectres de matrices hermitiennes $A$, $B$ et $C=A+B$;
\item Il existe un entier $N>0$ tel que la représentation irréductible $V_{N\nu}^{U(n)}$ apparaît dans la décomposition du produit tensoriel $V_{N\lambda}^{U(n)}\otimes V_{N\mu}^{U(n)}$;
\item La représentation irréductible $V_{\nu}^{U(n)}$ apparaît dans la décomposition du produit tensoriel $V_{\lambda}^{U(n)}\otimes V_{\mu}^{U(n)}$.
\end{enumerate}
\end{theo}

Les Théorèmes \ref{theo:horn_klyachko} et \ref{theo:klyachko_knutson-tao} permettent par conséquent de déterminer quelles sont les représentations irréductibles de $U(n)$ qui apparaissent dans un produit tensoriel de deux représentations irréductibles fixées de ce même groupe.

\begin{exple}
Soient $\lambda=(\lambda_1\geqslant\lambda_2)$, $\mu=(\mu_1\geqslant\mu_2)$ et $\nu=(\nu_1\geqslant\nu_2)$ trois poids de $U(2)$. De l'Exemple \ref{exple:calcul_de_T_1^2}, on obtient $V_{\nu}^{U(2)}\subset V_{\lambda}^{U(2)}\otimes V_{\mu}^{U(2)}$ si et seulement si le triplet $(\lambda,\mu,\nu)$ vérifie les trois inégalités suivantes,
\begin{align*}
\lambda_1+\mu_1&\geqslant\nu_1\\
\lambda_2+\mu_1&\geqslant\nu_2\\
\lambda_1+\mu_2&\geqslant\nu_2
\end{align*}
et l'égalité $\lambda_1+\lambda_2+\mu_1+\mu_2=\nu_1+\nu_2$.
\end{exple}

\begin{exple}
\label{exple:équationsgénérales_Horn_pourU(3)}
Prenons maintenant $\lambda=(\lambda_1\geqslant\lambda_2\geqslant\lambda_3)$, $\mu=(\mu_1\geqslant\mu_2\geqslant\mu_3)$ et $\nu=(\nu_1\geqslant\nu_2\geqslant\nu_3)$ trois poids de $U(3)$. Le Théorème \ref{theo:horn_klyachko} et l'Exemple \ref{exple:calcul_de_T_1^3_et_T_2^3} nous donnent $V_{\nu}^{U(3)}\subset V_{\lambda}^{U(3)}\otimes V_{\mu}^{U(3)}$ si et seulement si le triplet $(\lambda,\mu,\nu)$ de $\wedge_+^*$ vérifie les douze inégalités suivantes,
\[
\begin{array}{lr}
\left\{\begin{aligned}
\lambda_1+\mu_1&\geqslant\nu_1\\
\lambda_2+\mu_1&\geqslant\nu_2\\
\lambda_1+\mu_2&\geqslant\nu_2\\
\lambda_3+\mu_1&\geqslant\nu_3\\
\lambda_2+\mu_2&\geqslant\nu_3\\
\lambda_1+\mu_3&\geqslant\nu_3\\
\end{aligned}\right. & \qquad\text{et}\qquad
\left\{\begin{aligned}
\lambda_1+\lambda_2+\mu_1+\mu_2&\geqslant\nu_1+\nu_2\\
\lambda_1+\lambda_3+\mu_1+\mu_2&\geqslant\nu_1+\nu_3\\
\lambda_1+\lambda_2+\mu_1+\mu_3&\geqslant\nu_1+\nu_3\\
\lambda_1+\lambda_3+\mu_1+\mu_3&\geqslant\nu_2+\nu_3\\
\lambda_2+\lambda_3+\mu_1+\mu_2&\geqslant\nu_2+\nu_3\\
\lambda_1+\lambda_2+\mu_2+\mu_3&\geqslant\nu_2+\nu_3\\
\end{aligned}\right.
\end{array}
\]
et l'égalité $\lambda_1+\lambda_2+\lambda_3+\mu_1+\mu_2+\mu_3=\nu_1+\nu_2+\nu_3$.
\end{exple}

\bigskip

Le problème de Horn possède une propriété surprenante: il est possible de déterminer les ensembles $T_r^n$ en résolvant le problème de Horn pour tous les entiers $p\leqslant r$. Cette propriété est décrite par le Théorème $2$ de \cite{fulton00}, dont l'énoncé est donné ci-dessous.

Pour une partie $I= \{i_1<i_2<\ldots<i_r\}$ de $\{1,\ldots,n\}$ de cardinal $r$, on note la partition $\lambda(I) = (i_r-r\geqslant i_{r-1}-(r-1)\geqslant\ldots\geqslant i_2-2\geqslant i_1-1)$ associée à $I$.

\begin{theo}
\label{theo:Fultonthm2}
Un triplet $(I,J,L)$ est dans $T^n_r$ si et seulement si le triplet correspondant $(\lambda(I),\lambda(J),\lambda(L))$ apparaît comme spectre d'un triplet de matrices hermitiennes de taille $r\times r$ telles que la troisième est somme des deux premières.
\end{theo}

L'utilisation de ce théorème permet de donner des exemples de triplets non triviaux $(I,J,L)$ qui appartiennet à $T_r^n$, cf Annexe \ref{chap:exemples_de_triplets_pour_Horn}.

\subsection{$G$ semi-simple et $H=K$ sous-groupe compact maximal}
\label{subsection:Gsemi-simple}

Soit $G$ un groupe de Lie réel semi-simple connexe non compact à centre fini. On note $\got{g}=\got{k}\oplus\got{p}$ la décomposition de Cartan de son algèbre de Lie, associée à une involution de Cartan $\theta$ de $\got{g}$, et soit $K$ le sous-groupe connexe de $G$ d'algèbre de Lie $\got{k}$. Alors $K$ est un sous-groupe compact maximal de $G$.

Soient $\Lambda\in\got{g}^*$ et $\Orb_{\Lambda}$ son orbite coadjointe. Rappelons que l'application moment $\Phi_{G}$ est propre si et seulement si $\Orb_{\Lambda}$ est fermée dans $\got{g}^*$. L'action induite de $K$ sur $\Orb_{\Lambda}$ est hamiltonienne, d'application moment la projection d'orbite $\Phi_{K}:\Orb_{\Lambda}\rightarrow\got{k}^*$.

La forme de Killing $B_{\got{g}}$ sur $\got{g}$ est définie négative (resp. définie positive) sur $\got{k}$ (resp. sur $\got{p}$) et $G$-invariante. Elle définit donc des structures euclidiennes $K$-invariantes sur $\got{k}$ et $\got{p}$ de sorte que
\[
B_{\got{g}}(X,X) = -\|X_{\got{k}}\|^2+\|X_{\got{p}}\|^2, \qquad \forall\ X\in\got{g}.
\]
Cette forme de Killing permet également de réaliser les identifications $\got{g}\cong\got{g}^*$, ainsi que $\got{k}\cong\got{k}^*$ et $\got{p}\cong\got{p}^*$, et on peut voir $\got{k}^*$ comme un sous-espace de $\got{g}^*$.

D'un autre côté, l'involution de Cartan induit un produit scalaire $K$-invariant 
\[
B_{\theta}(X,X)=\|X_{\got{k}}\|^2+\|X_{\got{p}}\|^2
\]
sur $\got{g}$. Une vérification directe nous donne $\|\Phi_{K}\|^2 = \frac{1}{2}\|\Phi_{G}\|^2-\frac{1}{2}B_{\got{g}}(\Lambda,\Lambda)$. On en déduit que $\Phi_{K}$ est propre si et seulement si $\Phi_{G}$ est propre si et seulement si l'orbite $\Orb_{\Lambda}$ est fermée dans $\got{g}^*$, la deuxième équivalence provenant de la Remarque \ref{rema:inclusionorbitepropre_ssi_orbitefermée}.

Il est clair qu'une orbite coadjointe $\Orb_{\Lambda}$ est fermée si et seulement si l'orbite adjointe correspondante $\Ad(G) X_{\Lambda}$ est fermée dans $\got{g}$, où $X_{\Lambda}$ est l'unique élément de $\got{g}$ vérifiant $\langle\Lambda,X\rangle = B_{\theta}(X_{\Lambda},X)$ pour tout $X\in\got{g}$. Or, d'après le Théorème de Borel-Harish-Chandra \cite[Proposition 1.3.5.5]{warner72}, comme $\got{g}$ est semi-simple, l'orbite adjointe $\Ad(G)X_{\Lambda}$ est fermée si et seulement si l'endomorphisme $\ad(X_{\Lambda})$ de $\got{g}$ est semi-simple, c'est-à-dire diagonalisable sur $\C$. On en déduit que tout élément d'une sous-algèbre de Cartan de $\got{g}$ est dans une orbite fermée, mais également que les orbites elliptiques (c'est-à-dire les orbites intersectant $\got{k}^*$) sont toutes fermées.

\begin{rema}
La propriété que tout élément d'une sous-algèbre de Cartan de $\got{g}$ est dans une orbite fermée, a aussi été prouvée par Neeb \cite[Théorème I.13]{neeb} plus généralement pour toute algèbre de Lie réelle.
\end{rema}

Ainsi, la projection d'orbite associée à une orbite coadjointe elliptique $\Orb_{\Lambda}$ est propre. La version non compacte de Lerman et al. \cite{lerman98} du Théorème de Convexité hamiltonienne implique que la projection de l'orbite $\Orb_{\Lambda}$ intersecte une chambre de Weyl de $K$ en un ensemble convexe localement polyédral $\Delta_K(\Orb_{\Lambda})$.

Il est cependant compliqué de déterminer cet ensemble. Dans la suite de cette thèse, nous étudierons ce polyèdre moment pour un type particulier d'orbites coadjointes. Pour ce faire, nous chercherons à trouver une description de $\Delta_K(\Orb_{\Lambda})$ similaire au cas compact, c'est-à-dire, comme adhérence d'un ensemble de points rationnels du type
\[
\{\mu\in\wedge_{\Q,+}^*; \exists N>0, V_{N\mu}^K\subset V_{N\Lambda}^G|_K\}.
\]


\chapter{Espaces symétriques hermitiens}
\label{chap:espaces_symétriques_hermitiens}

Ce chapitre présente les espaces symétriques hermitiens et les propriétés que nous utiliserons par la suite. On décrit certaines orbites coadjointes associées: les orbites coadjointes holomorphes, dont la structure symplectique et la projection d'orbite seront étudiées le long des chapitres suivants. En fin de chapitre, nous fixons les notations pour les groupes simples et leurs algèbres de Lie qui apparaissent dans la classification des espaces symétriques hermitiens.

\section{Description des espaces symétriques hermitiens $G/K$}
\label{section:description_espace_symetrique_hermitien}

Soit $G$ un groupe de Lie réel semi-simple connexe non compact à centre fini et $\got{g}$ son algèbre de Lie. Puisque $\got{g}$ est semi-simple réelle, il existe une décomposition de Cartan associée à une involution de Cartan, au niveau de l'algèbre de Lie,
\[
\got{g} = \got{k}\oplus\got{p},
\]
où $\got{k}$ (resp. $\got{p}$) est appelé la \emph{partie compacte} (resp. la \emph{partie non compacte}) de la décomposition de Cartan de $\got{g}$. Le sous-espace vectoriel $\got{k}$ se révèle être également une sous-algèbre de Lie de $\got{g}$. Le sous-groupe connexe $K$ de $G$ d'algèbre de Lie $\got{k}$ est un sous-groupe compact maximal de $G$, lorsque $G$ vérifie les hypothèses de semi-simplicité, de connexité et de finitude de son centre. Nous avons un difféomorphisme, la décomposition de Cartan au niveau du groupe,
\begin{equation}
\label{eq:premier_difféo_décomp_cartan}
\begin{array}{ccl}
K\times\got{p} & \longrightarrow & G \\
(k,X) & \longmapsto & \exp(X)k,
\end{array}
\end{equation}
justifiant la terminologie de partie compacte et partie non compacte de la décomposition de Cartan.

\begin{rema}
Le difféomorphisme généralement donné dans la littérature est l'application $(k,X)\in K\times\got{p} \longmapsto k\exp(X)\in G$. Cependant, le difféomorphisme \eqref{eq:premier_difféo_décomp_cartan} a l'avantage d'être $K$-équivariant pour l'action de multiplication à gauche sur $G$ et pour une action diagonale de $K$ sur $K\times\got{p}$ qui sera introduite plus tard.
\end{rema}

Pour plus de détails sur la décomposition de Cartan et les définitions de $\got{k}$ et $\got{p}$, nous invitons le lecteur à regarder la Section \ref{subsection:notations_décomp_cartan} ou \cite{knapp}.


\begin{defi}
Nous dirons que $G/K$ est \emph{hermitien} si $G/K$ admet une structure de variété complexe pour laquelle $G$ agisse sur $G/K$ par transformations holomorphes.
\end{defi}

\begin{prop}
\label{prop:equivalence_G/Khermitien}
Soient $G$ et $K$ vérifiant les hypothèses décrites ci-dessus. Alors les assertions suivantes sont équivalentes :
\begin{enumerate}
\item $G/K$ est hermitien;
\item Il existe un élément $z_0\in\got{z}(\got{k})$ tel que $\ad(z_0)|_{\got{p}}^2 = -\id|_{\got{p}}$.
\end{enumerate}
\end{prop}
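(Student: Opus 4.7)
La preuve se d�compose naturellement en deux implications, la direction $(2)\Rightarrow(1)$ �tant essentiellement directe, tandis que $(1)\Rightarrow(2)$ constituera l'obstacle principal et demandera un passage par la complexifi�e de $\got{g}$.

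Pour $(2)\Rightarrow(1)$ : je partirais d'un $z_0\in\got{z}(\got{k})$ tel que $J_0 := \ad(z_0)|_{\got{p}}$ v�rifie $J_0^2 = -\id|_{\got{p}}$. Via la d�composition de Cartan \eqref{eq:premier_diff�o_d�comp_cartan}, on identifie l'espace tangent $T_{eK}(G/K)$ � $\got{p}$, et le fait que $z_0$ soit central dans $\got{k}$ entra�ne que $J_0$ commute avec l'action $\Ad(K)$ sur $\got{p}$. On peut donc translater $J_0$ par l'action de $G$ pour obtenir une structure presque complexe $G$-invariante $J$ sur $G/K$, bien d�finie gr�ce � l'$\Ad(K)$-invariance. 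Il reste � v�rifier l'int�grabilit�. Le tenseur de Nijenhuis de $J$ �tant $G$-invariant, il suffit de le calculer en $eK$, o� la v�rification repose sur l'identit� $[\got{p},\got{p}]\subset\got{k}$ inh�rente � la structure d'espace sym�trique et sur le fait que $\ad(z_0)$ annule $\got{k}$. Un calcul direct de $N_J(X,Y) = [JX,JY] - J[JX,Y] - J[X,JY] - [X,Y]$ pour $X,Y\in\got{p}$ montre alors que $N_J$ s'annule, et le th�or�me de Newlander-Nirenberg conclut.

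Pour $(1)\Rightarrow(2)$ : la structure complexe $G$-invariante de $G/K$ se restreint en une structure complexe $K$-invariante $J$ sur $\got{p}\cong T_{eK}(G/K)$. L'objectif est de r�aliser $J$ sous la forme $\ad(z_0)|_{\got{p}}$ avec $z_0\in\got{z}(\got{k})$. Je prolongerais $J$ � $\got{p}_{\C}=\got{p}\otimes\C$ en une involution dont les sous-espaces propres pour les valeurs propres $\pm i$ fournissent une d�composition $\got{p}_{\C} = \got{p}^+\oplus\got{p}^-$. L'int�grabilit� de $J$, qui correspond � l'annulation du tenseur de Nijenhuis, se traduit ici par le fait que $\got{p}^+$ et $\got{p}^-$ sont des sous-alg�bres ab�liennes de $\got{g}_{\C}$ ; c'est l'�tape la plus d�licate de l'argument. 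Je d�finis alors l'endomorphisme lin�aire $D$ de $\got{g}_{\C}$ par $D|_{\got{k}_{\C}} = 0$, $D|_{\got{p}^+} = i\cdot\id$ et $D|_{\got{p}^-} = -i\cdot\id$ et je v�rifie, en utilisant $[\got{k}_{\C},\got{p}^{\pm}]\subset\got{p}^{\pm}$ (cons�quence de la $K$-invariance de $J$), $[\got{p}^+,\got{p}^+]=[\got{p}^-,\got{p}^-]=0$ et $[\got{p}^+,\got{p}^-]\subset\got{k}_{\C}$, que $D$ est une d�rivation de $\got{g}_{\C}$.

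Puisque $\got{g}$ est semi-simple, il en est de m�me de $\got{g}_{\C}$ et toute d�rivation est int�rieure : $D = \ad(z_0)$ pour un unique $z_0\in\got{g}_{\C}$. La conjugaison complexe $\sigma$ sur $\got{g}_{\C}$ associ�e � $\got{g}$ �change $\got{p}^+$ et $\got{p}^-$ (car $J$ est r�el sur $\got{p}$), d'o� $\sigma\circ D\circ\sigma = D$, puis $\sigma(z_0)=z_0$ par unicit�, donc $z_0\in\got{g}$. En d�composant $z_0 = z_k + z_p$ selon $\got{g} = \got{k}\oplus\got{p}$ et en s�parant les contributions $\ad(z_k)(\got{p})\subset\got{p}$ et $\ad(z_p)(\got{p})\subset\got{k}$, on voit que la condition $D|_{\got{k}_{\C}}=0$ impose $z_p=0$, donc $z_0\in\got{k}$. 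Enfin, $D|_{\got{k}_{\C}}=0$ signifie exactement $z_0\in\got{z}(\got{k})$, et $\ad(z_0)|_{\got{p}}=J$ fournit $\ad(z_0)|_{\got{p}}^2=-\id|_{\got{p}}$. L'obstacle principal reste l'�tape d'int�grabilit� (passage de l'annulation du tenseur de Nijenhuis au caract�re ab�lien de $\got{p}^{\pm}$), qui est classique mais demande une manipulation soigneuse des crochets sur $\got{g}_{\C}$.
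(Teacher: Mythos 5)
La direction $(2)\Rightarrow(1)$ de votre proposition est correcte et standard. En revanche, la direction $(1)\Rightarrow(2)$ comporte une lacune r\'eelle, exactement \`a l'endroit que vous qualifiez vous-m\^eme d'\'etape la plus d\'elicate. Pour une structure presque complexe $G$-invariante sur l'espace sym\'etrique $G/K$, le crit\`ere d'int\'egrabilit\'e (annulation du tenseur de Nijenhuis, calcul\'e au point base via la d\'ecomposition r\'eductive) ne fait intervenir que les composantes sur $\got{p}$ des crochets $[X,Y]$, $[JX,JY]$, $[JX,Y]$, $[X,JY]$, pour $X,Y\in\got{p}$; comme $[\got{p},\got{p}]\subset\got{k}$, ces composantes sont toutes nulles, et l'int\'egrabilit\'e est donc automatique pour toute structure $J$ invariante --- c'est d'ailleurs pr\'ecis\'ement ce que vous exploitez dans la direction $(2)\Rightarrow(1)$. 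Elle ne peut donc fournir aucune information suppl\'ementaire, et en particulier pas l'annulation $[\got{p}^+,\got{p}^+]=[\got{p}^-,\got{p}^-]=0$. Or cette annulation \'equivaut \`a l'identit\'e $[JX,Y]+[X,JY]=0$ pour tous $X,Y\in\got{p}$, c'est-\`a-dire exactement au fait que votre endomorphisme $D$ est une d\'erivation, donc \`a l'existence de $z_0$ : c'est le coeur de l'\'enonc\'e \`a d\'emontrer, et votre argument est circulaire en ce point. La vraie difficult\'e consiste \`a \'etablir cette identit\'e par la th\'eorie des structures, par exemple en montrant que $J$ est antisym\'etrique pour la forme de Killing restreinte \`a $\got{p}$ (ce qui, par invariance de $B$ et non-d\'eg\'en\'erescence de $B|_{\got{k}}$, donne l'identit\'e voulue), ou par l'analyse en syst\`emes de racines de Knapp (Th\'eor\`emes 7.117 et 7.119), qui est pr\'ecis\'ement la r\'ef\'erence donn\'ee ici en guise de preuve.

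Le reste du sch\'ema $(1)\Rightarrow(2)$ est correct : toute d\'erivation de l'alg\`ebre semi-simple $\got{g}_{\C}$ est int\'erieure, et l'argument avec la conjugaison $\sigma$ donne bien $z_0\in\got{g}$. Notez toutefois que, pour conclure $z_p=0$, la seule condition $D|_{\got{k}_{\C}}=0$ ne suffit pas telle quelle : elle donne seulement $[z_p,\got{k}]=0$. Il faut aussi utiliser $D(\got{p})\subset\got{p}$, qui fournit $[z_p,\got{p}]=0$ ; on obtient alors $z_p\in\got{z}(\got{g})=0$ puisque $\got{g}$ est semi-simple. Enfin, le texte ne donne pas de preuve propre de cette proposition mais renvoie \`a Knapp ; c'est pr\'ecis\'ement l'\'etape manquante de votre proposition qui y est trait\'ee.
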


\begin{proof}
Voir les Théorèmes 7.177 et 7.119 de \cite{knapp}.
%
%
\end{proof}

L'étude des espaces symétriques hermitiens $G/K$ où $G$ est semi-simple connexe non-compact à centre fini peut être ramenée à celle des espaces symétriques hermitiens ayant pour groupe d'isométrie $G$ simple connexe non-compact à centre fini. Or, la classification de ces groupes simples est bien connue : un tel groupe simple est isomorphe à l'un des groupes suivants :
\begin{itemize}
\item $Sp(2n,\R)$, pour $n\geqslant 1$;
\item $SO^*(2n)$, pour $n\geqslant 3$;
\item $SU(p,q)$, pour $p\geqslant q\geqslant 1$;
\item $SO_0(p,2)$, pour $p\geqslant 1$;
\item $E{\mathrm{III}}$ (de type $E_6$);
\item $E{\mathrm{VII}}$ (de type $E_7$).
\end{itemize}
On pourra retrouver cette classification dans \cite{knapp,johnson80} par exemple.

 
\bigskip

Nous ferons désormais l'hypothèse suivante : $G/K$ est un espace symétrique \textbf{hermitien}. Il existe donc un élément $z_0$ de $\got{z}(\got{k})$ tel que $\ad(z_0)|_{\got{p}}^2=-\id_{\got{p}}$, c'est-à-dire que $\ad(z_0)|_{\got{p}}$ définit une structure complexe sur $\got{p}$.

Notons $\got{p}_{\C}$ le complexifié de $\got{p}$. L'action de $K$ sur $\got{p}$ passe en une action sur son complexifié $\got{p}_{\C}$, qui fait de $\got{p}_{\C}$ un sous-espace $K$-stable de $\got{g}_{\C}$. Nous avons la décomposition en sous-espaces propres $\got{p}_{\C} = \got{p}^{+,z_0}\oplus\got{p}^{-,z_0}$, où $\got{p}^{\pm,z_0}$ est le $K$-module $\ker(\ad(z_o)|_{\got{p}_{\C}}\mp i)$. Ces deux sous-espaces de $\got{p}_{\C}$ sont stables par l'action de $K$, car $z_0$ est fixé par l'action adjointe de $K$.

\begin{rema}
Il n'y a jamais unicité d'un tel élément $z_0$ du centre de $\got{k}$. En effet, pour un tel $z_0$, son vecteur opposé $-z_0$ définira de même une structure complexe $K$-invariante sur $\got{p}$. En revanche, deux choix différents de $z_0$ peuvent faire intervenir deux décompositions $\got{p}_{\C} = \got{p}^{+,z_0}\oplus\got{p}^{-,z_0}$ différentes. Par exemple, on a toujours $\got{p}^{+,-z_0} = \got{p}^{-,z_0}$.

On peut noter que lorsque $\got{g}$ est simple, le centre de $\got{k}$ est nécessairement de dimension $1$. Il n'existe alors que deux vecteurs $\pm z_0\in\got{z}(\got{k})$ qui induisent une telle structure complexe $K$-invariante sur $\got{p}$.
\end{rema}

Soit $T$ un tore maximal du groupe compact $K$. Notons $\got{t}$ son algèbre de Lie. Remarquons que le centre $Z(K)$ de $K$ est contenu dans $T$.

\begin{prop}
\label{prop:gott_sousalgèbredecartandegotg}
La sous-agèbre de Cartan $\got{t}_{\C}$ de $\got{k}_{\C}$ est aussi une sous-algèbre de Cartan de $\got{g}_{\C}$. Les algèbres de Lie $\got{g}$ et $\got{k}$ sont donc de même rang.
\end{prop}

\begin{proof}
D'après la Proposition \ref{prop:equivalence_G/Khermitien}, puisque $G/K$ est hermitien, si on note $\got{c}:=\got{z}(\got{k})$, on aura
\[
\got{z}_{\got{g}}(\got{c}) = \got{k}.
\]
Or, comme $Z(K)$ est contenu dans $T$, son algèbre de Lie $\got{c}$ est une sous-algèbre de $\got{t}$. On en déduit que $\got{z}_{\got{g}}(\got{t}) \subseteq \got{z}_{\got{g}}(\got{c}) = \got{k}$. En passant aux complexifiés, on en conclut que $\got{t}_{\C}$ est une sous-algèbre de Cartan de $\got{g}_{\C}$.
\end{proof}

En particulier, si le groupe $G$ vérifie les hypothèses énoncées au début de ce paragraphe, ainsi que l'hypothèse $G/K$ hermitien, alors toutes les conditions sont réunies pour pouvoir appliquer la Formule de Duflo-Heckman-Vergne qui sera donnée au Théorème \ref{theo:fomule_DHV}.

\section{La chambre holomorphe}
\label{section:la_chambre_holomorphe}

On prendra la convention suivante: un élément $\alpha\in\got{t}^*$ est une racine de $\got{g}$ s'il existe un $X\in\got{g}_{\C}$ non nul tel que $[H,X]=i\alpha(H)X$ pour tout $H\in\got{t}$. Ceci permet de rester cohérent avec la convention prise pour les poids, cf Notations page \pageref{chap:notations}.

Nous gardons les mêmes hypothèses sur $G$ introduites dans le paragraphe \ref{section:description_espace_symetrique_hermitien}, avec $G/K$ hermitien. Nous pouvons donc fixer un élément de $z_0\in\got{z}(\got{k})$ vérifiant l'assertion ($2$) de la Proposition \ref{prop:equivalence_G/Khermitien}.

Notons $\got{R}_c$\index{$\got{R}_c$} l'ensemble des racines de $\got{k}$ pour sa sous-algèbre de Cartan $\got{t}$. Fixons un système de racines positives $\got{R}_c^+$ de $\got{k}$ relativement à $\got{t}$. D'après la Proposition \ref{prop:gott_sousalgèbredecartandegotg}, les poids de $\got{t}_{\C}$ sur $\got{p}_{\C}$ sont des racines de $\got{g}_{\C}$ relativement à $\got{t}_{\C}$ vue comme sous-algèbre de Cartan de $\got{g}_{\C}$. Nous noterons alors $\got{R}_n$ \index{$\got{R}_n$}(resp. $\got{R}_n^{+,z_0}$, resp. $\got{R}_n^{-,z_0}$) l'ensemble des poids de $\got{t}_{\C}$ sur $\got{p}_{\C}$ (resp. $\got{p}^{+,z_0}$, resp. $\got{p}^{-,z_0}$), ses éléments seront appelés les racines non-compactes (resp. non-compactes positives, resp. non-compactes négatives) de $\got{g}$. Par la décomposition $\got{g}=\got{k}\oplus\got{p}$, le sous-ensemble $\got{R}_{\mathrm{hol}}:=\got{R}_c\cup\got{R}_n$ de $\got{t}^*$, est égal à l'ensemble des racines de $\got{g}$.

En outre, l'ensemble $\got{R}_{\mathrm{hol}}^{+,z_0}:=\got{R}_c^+\cup\got{R}_n^{+,z_0}$ forme un système de racines positives de $\got{g}$. En effet, on peut facilement voir que pour toute racine compacte $\alpha$, on a $\alpha(z_0) = 0$, car $z_0$ est central dans $\got{k}$. Quant aux racines non compactes, elles vérifient $\beta(z_0) = \pm 1$ pour tout $\beta\in\got{R}_n^{\pm,z_0}$, puisque $\got{R}_n^{\pm,z_0} = \{\beta\in\got{t}^*; \got{g}_{\beta}\subset\got{p}^{\pm,z_0}\}$, ce qui induit
\[
[z_0,X] = i\beta(z_0)X = \pm iX \qquad \forall X\in\got{g}_{\beta}\subset\got{p}^{\pm,z_0},\index{$\Chol$}
\]
car par définition, $\ad(z_0)_{\got{p}^{\pm,z_0}} = \pm i\,\id_{\got{p}^{+,z_0}}$. On peut donc trouver un ordre lexicographique tel que $\got{R}_c^+\cup\got{R}_n^{+,z_0}$ soit le système de racines positives associé.

Notons $\got{t}^*_+$ la chambre de Weyl de $\got{t}^*$ associée au système de racines compactes positives $\got{R}^+_c$. Nous définissons la \emph{chambre holomorphe} de $\got{t}^*$ par
\[
\Chol^{z_0} := \{\xi\in\got{t}^*_+ | (\beta,\xi)>0, \forall\beta\in\got{R}_n^{+,z_0}\} \subset\got{t}^*_+,
\]
où $(\cdot,\cdot)$ désigne le produit scalaire sur $\got{t}^*$ induit par la forme de Killing sur $\got{g}$. Ainsi, $\overline{\Chol^{z_0}}$ est la chambre de Weyl définie par $\got{R}^{+,z_0}_{\mathrm{hol}}$.

\begin{rema}
De nouveau, un choix différent d'élément $z_0$ apportera une décomposition $\got{p}_{\C} = \got{p}^{+,z_0}\oplus\got{p}^{-,z_0}$ différente et, donc, un autre système de racines non compactes positives $\got{R}_n^{+,z_0}$ et une autre chambre holomorphe $\Chol^{z_0}$. En particulier, pour $z_0$ fixé, on peut voir $\Chol^{-z_0}$ comme la \og{}chambre anti-holomorphe\fg{} associée à $\Chol^{z_0}$.

\`A partir de maintenant, une fois l'élément $z_0$ fixé, nous n'aurons plus besoin de souligner les différences apportées par des choix distincts de $z_0$. Par conséquent, nous désignerons les objets définis ci-dessus par $\got{p}^{\pm}$\index{$\got{p}^{\pm}$}, $\got{R}_n^{\pm}$\index{$\got{R}_n^{\pm}$}, $\got{R}_{\mathrm{hol}}^{\pm}$\index{$\got{R}_{\mathrm{hol}}^{\pm}$} et $\Chol$\index{$\Chol$}.
\end{rema}

Soit $\Lambda\in\got{g}^*$. Notons $\Orb_{\Lambda}:=G\cdot\Lambda$ l'orbite coadjointe de $\Lambda$ dans $\got{g}^*$. Si $\Orb_{\Lambda}$ intersecte $\got{t}^*$, nous dirons que l'orbite $\Orb_{\Lambda}$ est elliptique\index{Orbite elliptique}. Si, de plus, $\Orb_{\Lambda}$ intersecte $\Chol$, on dira que l'orbite $\Orb_{\Lambda}$ est holomorphe\index{Orbite holomorphe}. La terminologie d'orbite holomorphe est justifiée par le fait que, pour tout $\Lambda\in\Chol$, l'orbite $\Orb_{\Lambda}$ peut être munie d'une structure de variété complexe, compatible avec la structure symplectique canonique d'orbite coadjointe. C'est-à-dire, $\Orb_{\Lambda}$ est une variété kählérienne pour la structure symplectique de Kostant-Kirillov-Souriau. En effet, sur l'orbite coadjointe $\Orb_{\Lambda}$, nous avons les structures suivantes :
\begin{enumerate}
\item La forme symplectique de Kirillov-Kostant-Souriau $\Omega_{\Orb_{\Lambda}}$, définie dans le paragraphe \ref{subsection:orbites_coadjointes}.
\item L'inclusion $\Orb_{\Lambda}\hookrightarrow\got{g}^*$, qui est une application moment pour l'action hamiltonienne de $G$ sur $(\Orb_{\Lambda},\Omega_{\Orb_{\Lambda}})$. Ce point est expliqué également en \ref{subsection:orbites_coadjointes}.
\item Une structure complexe $G$-invariante $J_{\Orb_{\Lambda}}$ caractérisée par le fait que son fibré tangent holomorphe $T^{1,0}\Orb_{\Lambda}\rightarrow\Orb_{\Lambda}$ est égal, au-dessus de $\Lambda\in\Orb_{\Lambda}$, au $T$-module
\[
\sum_{\stackrel{\alpha\in\got{R}_c^+}{\langle\alpha,\Lambda\rangle\neq 0}}\got{g}_{\alpha} + \underbrace{\sum_{\beta\in\got{R}_n^-}\got{g}_{\beta}}_{\got{p}^-}.
\]
\end{enumerate}

Nous avons vu dans le paragraphe \ref{section:proj_orbites_defi_et_exemples} que l'action de $K$ sur $\Orb_{\Lambda}$ induite de l'action de $G$ est, elle aussi, hamiltonienne. L'application moment canonique associée est la projection d'orbite
\[
 \Phi_{\Orb_{\Lambda}}:\Orb_{\Lambda}\rightarrow\got{k}^*,
\]
composée de l'injection de $\Orb_{\Lambda}$ dans $\got{g}^*$ et de la projection linéaire canonique $\got{g}^*\rightarrow\got{k}^*$. Lorsque l'orbite est elliptique, l'application $\Phi_{\Orb_{\Lambda}}$ est propre, d'après le paragraphe \ref{subsection:Gsemi-simple}. On peut alors appliquer les résultats de Lerman et al. \cite{lerman98}, nous assurant que $\Delta_K(\Orb_{\Lambda}) := \Phi_{\Orb_{\Lambda}}(\Orb_{\Lambda})\cap\got{t}_+^*$\index{$\Delta_K(\Orb_{\Lambda})$} est un ensemble convexe localement polyédral dans $\got{t}^*$.

\begin{enonce*}{Objectif}
Prouver que $\Delta_K(\Orb_{\Lambda})$ est convexe polyédral, c'est-à-dire, est défini comme intersection finie de demi-espaces de $\got{t}^*$. Trouver un ensemble d'équations affines décrivant complètement le polyèdre convexe $\Delta_K(\Orb_{\Lambda})$.
\end{enonce*}

\section{Théorème de Schmid}
\label{section:theo_schmid}

Lorsque $G$ vérifie toutes les hypothèses du paragraphe \ref{section:description_espace_symetrique_hermitien} et qu'il est de plus \textbf{simple} (l'espace symétrique hermitien $G/K$ est alors dit \emph{irréductible}), nous savons également que le polyèdre moment $\Delta_K(\Orb_{\Lambda})$ est en étroite relation avec l'action de $K$ sur $\got{p}$. En effet, $\got{p}$ admet une structure complexe $K$-invariante canonique, qui fait de $\got{p}$ une représentation irréductible complexe de $K$, que l'on identifie à $\got{p}^-$. Pour les cas classiques, nous avons la liste suivante, que l'on peut trouver dans \cite{knapp} ou \cite{johnson80} par exemple.
\begin{enumerate}
\item Quand $G=Sp(2n,\R)$, alors $K=U(n)$ et $\got{p}^+\cong \mathrm{S}^2(\C^n)$, avec action standard de $U(n)$.
\item Quand $G=SO^*(2n)$, alors $K=U(n)$ et $\got{p}^+\cong \wedge^2\C^n$, avec action standard de $U(n)$.
\item Quand $G=SU(p,q)$, alors $K=S(U(p)\times U(q))$ et $\got{p}^+\cong M_{p,q}(\C)$, avec $U(p)$ (resp. $U(q)$) agissant par multiplication à gauche (resp. à droite) sur l'espace des matrices complexes $M_{p,q}(\C)$.
\item Quand $G=SO_0(p,2)$, alors $K=SO(p)\times SO(2)$ et $\got{p}^+\cong \C^p$, avec actions standard de $SO(p)$ et $SO(2)=S^1$ sur $\C^p$.
\end{enumerate}
Les identifications du $K$-module $\got{p}^+$ données ci-dessus seront explicitées pour chacun des quatre cas dans la section \ref{section:systèmes_de_racines}.

Or, nous avons ici une propriété bien plus forte : l'algèbre $\C[\got{p}^-]$ des polynômes complexes sur $\got{p}^-$ est sans multiplicité en tant que $K$-module. 
De plus, on connaît exactement les représentations irréductibles qui apparaissent dans la décomposition de $\C[\got{p}^-]$ en somme directe de représentations irréductibles de $K$.

On dira que deux racines $\alpha$ et $\beta$ sont \emph{fortement orthogonales} si ni $\alpha+\beta$, ni $\alpha-\beta$ ne sont des racines. Soit alors $\{\gamma_1,\ldots,\gamma_r\}$ l'ensemble maximal construit par récurrence de la manière suivante :
\begin{itemize}
 \item $\gamma_1$ est la plus grande racine non compacte positive,
 \item pour tout $i=1,\ldots,r-1$, $\gamma_{i+1}$ est la plus grande racine non compacte positive fortement orthogonale à $\gamma_1,\ldots,\gamma_i$.
\end{itemize}
Le Théorème suivant est dû à Schmid, cf \cite{schmid, johnson80}.

\begin{theo}[Schmid]
\label{theo:schmid}
Toute représentation de $K$ apparaît avec une multiplicité $1$ ou $0$ dans $\C[\got{p}^-]$. Plus précisément, on a
\begin{equation}
\label{eq:decomp_repirr_C[p]}
\C[\got{p}^-]=\sum_{p_1\geqslant\ldots\geqslant p_r\geqslant 0}V_{p_1\gamma_1+\ldots+p_r\gamma_r}^K,
\end{equation}
\end{theo}

La décomposition \eqref{eq:decomp_repirr_C[p]} de $\C[\got{p}^-]$ en somme directe de représentations irréductibles de $K$ est un des outils principaux que nous utiliserons dans le Chapitre \ref{chap:premières_tentatives} pour trouver les équations de certains polyèdres moments $\Delta_K(\Orb_{\Lambda})$.

\section[Systèmes de racines]{Systèmes de racines des groupes simples classiques de la classification des espaces symétriques hermitiens}
\label{section:systèmes_de_racines}

Les calculs de projections d'orbites qui seront effectués dans les derniers chapitres de cette thèse nécessiteront de connaître la description des algèbres de Lie et des systèmes de racines associés aux groupes de Lie classiques simples parmi les espaces symétriques hermitiens. Cette section est l'occasion de fixer les notations qui seront utilisées par la suite.

\subsection{Le groupe $Sp(2n,\R)$, $n\geqslant 2$}
\label{subsection:Notations_Sp(R2n)}

Dans ce paragraphe, on pose les notations et les paramétrages de l'algèbre de Lie de $Sp(2n,\R)$  nécessaires dans l'étude du polyèdre de Kirwan $\Delta_K(Sp(2n,\R)\cdot\Lambda)$. On pose
\[
J_n=\left(\begin{array}{cc}
0 & I_n \\
-I_n & 0 
\end{array}\right)\in GL_{2n}(\R),
\]
et on considère le sous-groupe fermé $Sp(2n,\R)$ de $GL_{2n}(\R)$ défini de la manière suivante,
\[
G = Sp(2n,\R) = \{M\in GL_{2n}(\R); \ ^tMJ_nM = J_n\}.
\]
On peut vérifier que $Sp(2n,\R)$ est en fait contenu dans $SL_{2n}(\R)$. Son algèbre de Lie est 
\[
\begin{array}{ccl}
\got{sp}(\R^{2n}) & = & \{M\in \mathcal{M}_{2n}(\R); \ ^tMJ_n + J_nM = 0\} \\
& = & \left\{\left(\begin{array}{cc}
A & B \\
C & -^t A
\end{array}\right); \ A,B,C\in\mathcal{M}_n(\R), \ ^tB=B, \ ^tC=C \right\}.
\end{array}
\]
Les éléments de $\got{sp}(\R^{2n})$ sont effectivement de trace nulle. Le groupe $Sp(2n,\R)$ est de dimension $n(2n+1)$. On considère l'involution de Cartan de $\got{sp}(\R^{2n})$ définie par : $M\mapsto -^t M$. Cela fixe une décomposition de Cartan $\got{sp}(\R^{2n}) = \got{k} \oplus \got{p}$, avec
\[
\got{k} = \left\{\left(\begin{array}{cc}
A & B \\
-B & A
\end{array}\right); \ A,B\in\mathcal{M}_n(\R), \ ^tA=-A, \ ^tB=B \right\}
\]
et
\[
\got{p} = \left\{\left(\begin{array}{cc}
A & B \\
B & -A
\end{array}\right); \ A,B\in\mathcal{M}_n(\R), \ ^tA=A, \ ^tB=B \right\}.
\]
Les dimensions de ces deux espaces vectoriels sont $\dim\got{k} = n^2$ et $\dim\got{p} = n(n+1)$. La matrice $J_n$ vérifiant l'équation $J_n^2 = -I_{2n}$, elle permet de définir une structure d'espace vectoriel complexe de dimension $n$ sur $\R^{2n}$, donnant une identification canonique de $U(n)$ à un sous-groupe $K$ de $Sp(2n,\R)$.

On peut préciser cette identification. Toute matrice $M$ de $\M_n(\C)$ s'écrit de manière unique sous la forme $M = A + iB$, avec $A$ et $B$ deux matrices de $\M_n(\R)$. Ceci nous permet de définir une application
\[
\begin{array}{cccc}
\Phi : & GL_n(\C) & \longrightarrow & GL_{2n}(\R) \\
& M = A + iB & \longmapsto & \left(\begin{array}{cc}
A & B \\
-B & A
\end{array}\right).
\end{array}
\]
On peut facilement vérifier que $\Phi$ est bien un morphisme de groupes à valeurs dans $GL_{2n}(\R)$. 
Le groupe $U(n)$ s'injecte dans $GL_{2n}(\R)$ et son image est contenue dans $Sp(2n,\R)$. De plus, $U(n)$ s'identifie au sous-groupe compact maximal $K$ de $Sp(2n,\R)$ par l'intermédiaire du morphisme de groupes $\Phi$.

On a une sous-algèbre de Cartan de $\got{k}$ canonique, c'est l'algèbre de Lie de l'image par $\Phi$ du tore maximal
\[
T:=\left\{\left(\begin{array}{ccc}e^{i\theta_1} & & 0 \\ & \ddots & \\ 0 & & e^{i\theta_n}\end{array}\right);\ \theta_1,\ldots,\theta_n\in\R\right\}
\]
de $U(n)$. Cette sous-algèbre de Cartan de $\got{k}$ est l'ensemble de matrices suivant:
\[
\got{t} = \left\{\left(\begin{array}{cccccc}
0 & \ldots & 0 & \theta_1 & & 0 \\
\vdots & \ddots & \vdots & & \ddots &  \\
0 & \ldots & 0  & 0 & & \theta_n \\
-\theta_1 & & 0 & 0 & \ldots & 0 \\
& \ddots & & \vdots & \ddots & \vdots \\
0 & & -\theta_n & 0 & \ldots & 0
\end{array}\right); \ \theta_1,\ldots,\theta_n\in\R\right\} \subset \got{k}.
\]
Nous choisissons maintenant une base de l'algèbre de Lie $\got{t}$. Nous notons $e_i$, pour $i=1,\ldots,n$, la matrice de $\got{t}$ avec $\theta_i = 1$ et $\theta_j = 0$ pour $j\neq i$, pour l'écriture donnée ci-dessus. On notera $(e_1^*,\ldots,e_n^*)$ la base duale dans $\got{t}^*$ de la base $(e_1,\ldots,e_n)$.

Le centre de $U(n)$ est de dimension $1$. Il s'agit du groupe $\left\{e^{i\theta}I_n; \ \theta\in\R\right\}\cong S^1$. L'algèbre de Lie qui lui est associée, dans $\got{sp}(\R^{2n})$, est la sous-algèbre de dimension $1$, $\got{z}(K) = \{\theta J_n; \ \theta\in\R\}$.

Posons $z_0 = \frac{J}{2}$, un élément de $\got{z}(K)$. Cet élément est particulier, car il est l'un des deux éléments de $\got{z}(K)$ qui définissent une structure complexe $K$-invariante sur $\got{p}$. En effet, pour tout $M\in\got{p}$, nous avons $\ad(z_0)^2 M = -M$. Le complexifié $\got{p}_{\C} = \got{p}\otimes\C$ de $\got{p}$ s'identifie à
\[
\left\{\left(\begin{array}{cc}
A & B \\
B & -A
\end{array}\right); \ A,B\in\mathcal{M}_n(\C), \ ^tA=A, \ ^tB=B \right\},
\]
et se décompose comme somme directe des deux sous-espaces propres de $\ad(z_0)_{\C}$ pour les valeurs propres $\pm i$. Ces deux sous-espaces propres sont
\[
\got{p}^{\pm} = \left\{\left(\begin{array}{cc}
A & \pm iA \\
\pm iA & -A
\end{array}\right); \ A\in\mathcal{M}_n(\C), \ ^tA=A\right\}.
\]
Par un calcul direct, on arrive à décrire l'action du sous-groupe compact $K$, par l'intermédiaire de son identification à $U(n)$, par
\[
\Phi(U)\cdot\left(\begin{array}{cc}A & iA\\ iA& -A\end{array}\right) = \Phi(U)\left(\begin{array}{cc}A & iA\\ iA& -A\end{array}\right)\Phi(U)^{-1} = \left(\begin{array}{cc}UA^tU & iUA^tU\\ iUA^tU& -UA^tU\end{array}\right)
\]
pour toutes matrices $U\in U(n)$ et $A\in\mathcal{M}_n(\C)$, $A$ symétrique. On retrouve ici que l'action de $K$ sur $\got{p}^+$ correspond à l'action par conjugaison de $U(n)$ sur l'espace des matrices symétriques complexes, ce qui est la même chose que l'action standard de $U(n)$ sur $\mathrm{S}^2(\C^n)$.

\bigskip

Les racines sont ici au nombre de $2n^2$. On a $n(n-1)$ racines compactes et $n(n+1)$ racines non compactes. Les racines compactes sont de la forme $\alpha_{i,j} = e_i^* - e_j^*$, avec $i\neq j$. Les racines non compactes sont les éléments $\pm\beta_{i,j} = \pm(e_i^*+e_j^*)$, $1\leqslant i\leqslant j\leqslant n$, de $\got{t}^*$. Un système de racines positives est
\[
\got{R}^+ = \underbrace{\{\alpha_{i,j} ; \ 1\leqslant i < j \leqslant n\}}_{\got{R}_c^+} \cup \underbrace{\{\beta_{i,j} ; \ 1\leqslant i \leqslant j \leqslant n\}}_{\got{R}_n^+}.
\]
Parmi les racines non compactes positives, il y a des racines remarquables, ce sont les racines $\gamma_i := \beta_{i,i} = 2e_i^*$, $i=1,\ldots,n$.

Pour ce système de racines positives, on obtient l'ordre suivant parmi les racines non compactes positives:
\[
\beta_{1,1} > \beta_{1,2} > \ldots > \beta_{1,n} > \beta_{2,2} > \beta_{2,3} > \ldots  > \beta_{n-1,n-1} > \beta_{n-1,n} > \beta_{n,n}.
\]
Ceci peut se trouver en choisissant un bon ordre correspondant au choix du système de racines positives. Un ordre convenable est donné par la famille ordonnée des vecteurs $H_i = 2 e_i + \sum_{j\neq i} e_j$ de $\got{t}$, pour $i$ variant de $1$ à $n$.

La famille de racines non compactes positives $(\gamma_1,\ldots,\gamma_n)=(2e_1^*,\ldots,2e_n^*)$ est fortement orthogonale et elle est maximale pour cette propriété. 
On remarque que les éléments du monoïde $\sum_{k=1}^n \N(\gamma_1+\ldots+\gamma_k)$ sont des poids de la forme $\delta=(2 p_1,\ldots, 2p_n)$, où $p_1\geqslant \ldots\geqslant p_n\geqslant 0$.

\begin{defi}
Un poids $\delta=(\delta_1,\ldots,\delta_n)$ de $U(n)$ est dit \emph{pair} si tous les $\delta_i$, pour $i=1,\ldots,n$, sont des entiers pairs.

Un poids dominant $\delta=(\delta_1\geqslant\ldots\geqslant\delta_n)$ de $U(n)$ est dit \emph{positif} si $\delta_n\geqslant 0$.
\end{defi}

D'après le Théorème de Schmid, énoncé dans le Théorème \ref{theo:schmid}, on a la décomposition suivante du $U(n)$-module $\C[\got{p}^-]$ pour le groupe $Sp(2n,\R)$:
\begin{equation}
\label{theo:Sp(R2n)_decompositiondeC[p-]}
\text{Pour $G = Sp(2n,\R)$, on a $\C[\got{p}^-] = \sum_{\beta\in\wedge^*_+ \text{ pair et positif}} V_{\beta}^{U(n)}$.}
\end{equation}

\subsection{Le groupe $SU(p,q)$, $p\geqslant q\geqslant 2$}
\label{subsection:Notations_SU(p,q)}

Donnons ici la définition de $SU(p,q)$, de son algèbre de Lie et les notations que nous utiliserons dans la suite. Fixons deux entiers $p\geqslant q\geqslant 1$. On pose
\[
I_{p,q}=\left(\begin{array}{cc}
I_p & 0\\
0 & -I_q
\end{array}\right)\in GL_{p+q}(\C)
\]
et on considère le sous-groupe fermé $SU(p,q)$ de $SL_{p+q}(\C)$ défini de la manière suivante,
\[
G = SU(p,q) = \{M\in SL_{p+q}(\C); \ ^t\overline{M}I_{p,q}M = I_{p,q}\}.
\]
Son algèbre de Lie est 
\[
\begin{array}{ccl}
\got{su}(p,q) & = & \{M\in \got{sl}_{p+q}(\C); \ ^t\overline{M}I_{p,q} + I_{p,q}M = 0\} \\
& = & \left\{\left(\begin{array}{cc}
A & B \\
^t\overline{B} & D
\end{array}\right); \ A\in\got{u}(p), D\in\got{u}(q), B\in\M_{p,q}(\C), \tr(D)=-\tr(A) \right\},
\end{array}
\]
où $\got{u}(n)$ est l'ensemble des matrices anti-hermitiennes de taille $n\times n$. Le groupe $SU(p,q)$ est de dimension $p^2+q^2+2pq-1$. On considère toujours l'involution de Cartan de $\got{su}(p,q)$ définie par : $M\mapsto -^t M$. Cela fixe une décomposition de Cartan $\got{su}(p,q) = \got{k} \oplus \got{p}$, avec
\[
\got{k} = \left\{\left(\begin{array}{cc}
A & 0 \\
0 & D
\end{array}\right); \ A\in\got{u}(p),D\in\got{u}(q) \text{ telles que } \tr(D)=-\tr(A)\right\}
\]
et
\[
\got{p} = \left\{\left(\begin{array}{cc}
0 & B \\
^t\overline{B} & 0
\end{array}\right); \ B\in\M_{p,q}(\C)\right\}.
\]
Les dimensions de ces deux espaces vectoriels sont $\dim\got{k} = p^2+q^2-1$ et $\dim\got{p} = 2pq$. Notons $K$ le sous-groupe compact maximal de $SU(p,q)$ d'algèbre de Lie $\got{k}$. Ce groupe est tout simplement l'ensemble de matrices suivant :
\[
K = \left\{\left(\begin{array}{cc}
A & 0 \\
0 & D
\end{array}\right); \ A\in U(p), D\in U(q) \text{ telles que } \det(D)=\det(A)^{-1}\right\},
\]
et on a un isomorphisme évident entre $K$ et le groupe de Lie $S(U(p)\times U(q))$.

On a une sous-algèbre de Cartan de $\got{k}$ canonique, c'est l'algèbre de Lie du tore maximal de $K$ des matrices diagonales de $SU(p,q)$
\[
T:=\left\{\left(\begin{array}{ccc}e^{i\theta_1} & & 0 \\ & \ddots & \\ 0 & & e^{i\theta_{p+q}}\end{array}\right);\ \theta_1,\ldots,\theta_{p+q}\in\R,\ \sum_{i=1}^{p+q}\theta_i=0\right\}.
\]
Cette sous-algèbre de Cartan de $\got{k}$ est l'ensemble de matrices suivant:
\[
\got{t} = \left\{\left(\begin{array}{ccc}
i\theta_1 & & \\
& \ddots & \\
& & i\theta_{p+q}
\end{array}\right); \ \theta_1,\ldots,\theta_{p+q}\in\R,\ \sum_{j=1}^{p+q} \theta_j = 0\right\},
\]
elle est de dimension $p+q-1$. On notera, de manière similaire à $U(n)$, la forme linéaire sur $\got{t}$
\[
e_i^*\left(\begin{array}{ccc}
ih_1 & & \\
& \ddots & \\
& & ih_{p+q}
\end{array}\right) = h_i,
\]
pour tout $i=1,\ldots,p+q$. Remarquons qu'ici, la famille $(e_1^*,\ldots,e_{p+q}^*)$ n'est pas libre, mais engendre $\got{t}^*$.


\bigskip

Posons $z_0 = \frac{i}{p+q}\tilde{I}_{n,1}$, un élément de $\got{z}(K)$, où
\[
\tilde{I}_{p,q}=\left(\begin{array}{cc}
qI_p & 0\\
0 & -pI_q
\end{array}\right).
\]
Cet élément est un des deux éléments de $\got{z}(K)$ qui définissent une structure complexe $K$-invariante sur $\got{p}$.

Le complexifié de $\got{su}(p,q)$ peut être identifié à $\got{sl}_{p+q}(\C)$. Avec cette identification, on a
\[
\got{p}_{\C} = \left\{
\left(\begin{array}{cc}
0 & B \\
^t C & 0
\end{array}\right); \ B,C\in\M_{p,q}(\C)
\right\} \subset \got{sl}_{p+q}(\C).
\]
Un calcul direct nous donne la décomposition en sous-espaces propres de $\ad(z_0)|_{\got{p}_{\C}}$ suivante :
\[
\got{p}^+ = \left\{
\left(\begin{array}{cc}
0 & B \\
0 & 0
\end{array}\right); \ B\in\M_{p,q}(\C)
\right\} \quad\text{et}\quad \got{p}^- = \left\{
\left(\begin{array}{cc}
0 & 0 \\
^t C & 0
\end{array}\right); \ C\in\M_{p,q}(\C)
\right\}.
\]
Il est clair que $\got{p}^+$ s'identifie à l'espace vectoriel $\mathcal{M}_{p,q}(\C)$. De plus, l'action de $S(U(p)\times U(q))$ sur $\got{p}^+$ passe à $\mathcal{M}_{p,q}(\C)$ en l'action
\[
(U,V)\cdot M := UMV^{-1}
\]
pour tout $M\in\mathcal{M}_{p,q}(\C)$ et tout $(U,V)\in S(U(p)\times U(q))$.

\bigskip

Les racines compactes sont les formes linéaires $\alpha_{i,j} = e_i^*-e_j^*$ sur $\got{t}$, pour $1\leqslant i,j\leqslant p$ et $p+1\leqslant i,j\leqslant p+q$, $i\neq j$. On prend pour système de racines compactes positives l'ensemble $\{\alpha_{i,j}\,;\ 1\leqslant i<j\leqslant p\} \cup \{\alpha_{i,j}\,;\ p+1\leqslant i<j\leqslant p+q\}$.

Les racines non compactes positives, racines provenant de $\got{p}^+$, sont les formes linéaires $\beta_{i,j} = e_i^*-e_j^*$ pour $1\leqslant i\leqslant p$ et $p+1\leqslant j\leqslant p+q$. Remarquons que l'on a $\beta_{i,j} = \beta_{i,l}+\alpha_{l,j} = \beta_{k,j}+\alpha_{i,k} = \beta_{k,l}+\alpha_{i,k}+\alpha_{l,j}$, pour des entiers $i,j,k,l$ \emph{ad hoc}. Par conséquent, la racine non compacte positive maximale (resp. minimale) est $\beta_{1,p+q}$ (resp. $\beta_{p,p+1}$). De plus, deux racines non compactes positives $\beta_{i,j}$ et $\beta_{k,l}$ sont fortement orthogonales si et seulement si $i\neq k$ et $j\neq l$. Ainsi, la famille $(\beta_{1,p+q},\beta_{2,p+q-1},\ldots,\beta_{q,p+1})$ est la famille fortement orthogonale maximale obtenue dans l'algorithme accompagnant le Théorème de Schmid.

De manière identique au cas du groupe $Sp(2n,\R)$, le Théorème de Schmid nous donne la décomposition du $S(U(p)\times U(q))$-module $\C[\got{p}^-]$:
\[
\C[\got{p}^-] = \sum_{m_1\geqslant\ldots\geqslant m_q\geqslant 0} V^{S(U(p)\times U(q))}_{(m_1,\ldots,m_q,0,\ldots,0;-m_q,\ldots,-m_1)}.
\]
On peut facilement voir que la représentation irréductible $V^{S(U(p)\times U(q))}_{(m_1,\ldots,m_q,0,\ldots,0;-m_q,\ldots,-m_1)}$ est isomorphe au produit tensoriel $V^{U(p)}_{(m_1,\ldots,m_q,0,\ldots,0)}\otimes(V^{U(q)}_{(m_1,\ldots,m_q)})^*$, pour tout poids dominant positif $(\mu_1\geqslant\ldots\geqslant\mu_q)$ de $U(q)$, lorque $q\geqslant 2$.

\begin{equation}
\label{theo:SU(p,q)_décompositiondeC[p-]}
\begin{minipage}{12cm}
\text{Pour $G = SU(p,q)$ avec $q\geqslant 2$, on a}
\[
\C[\got{p}^-] = \sum_{m_1\geqslant\ldots\geqslant m_q\geqslant 0} V^{U(p)}_{(m_1,\ldots,m_q,0,\ldots,0)}\otimes(V^{U(q)}_{(m_1,\ldots,m_q)})^*.
\]
\end{minipage}
\end{equation}

\paragraph*{Cas de $SU(n,1)$}

Tout ce qui a été écrit ci-dessus est bien évidemment applicable à $SU(n,1)$. Cependant, le cas de $SU(n,1)$ est bien plus simple. Tout d'abord, le sous-groupe compact maximal est ici identifiable à $U(n)$. Quant à la représentation $\got{p}^+$ de $U(n)$, elle correspond à l'action de $U(n)$ sur $\C^n$ tordu par la représentation déterminant de $U(n)$, c'est-à-dire,
\[
M\cdot v := \det(M)Mv
\]
pour tout $v\in\C^n$ et tout $M\in U(n)$. Il y a alors $n$ racines non compactes positives, notées $\beta_i := \beta_{i,n+1} = e_i^*-e_{n+1}^* = e_i^*+\sum_{k=1}^ne_k^*$. La racine non compacte positive maximale est $\beta_1$. Elle forme par ailleurs un système maximal de racines non compactes positives fortement orhogonales. On en déduit la décomposition suivante du $U(n)$-module $\C[\got{p}^-]$, donnée par le Théorème de Schmid, mais qui peut être vue directement aussi :

\begin{equation}
\label{theo:SU(n,1)_décompositiondeC[p-]}
\text{Pour $G = SU(n,1)$, on a $\C[\got{p}^-] = \sum_{k\geqslant 0} (\det)^k\otimes \mathrm{S}^k(\C^n)$.}
\end{equation}

\subsection{Le groupe $SO^*(2n)$, $n\geqslant 3$}
\label{subsection:Notations_SO*(2n)}

Le groupe $G = SO^*(2n)$, pour $n\geqslant 3$, est le sous-groupe de $SU(n,n)$ défini de la manière suivante,
\[
SO^*(2n) = \{M\in SU(n,n);\ ^tMI_{n,n}J_{n,n} M = I_{n,n}J_{n,n}\},
\]
avec
\[
I_{n,n} = \left(\begin{array}{cc}
I_n & 0 \\
0 & -I_n
\end{array}\right), \quad
J_{n,n} = \left(\begin{array}{cc}
0 & I_n \\
-I_n & 0
\end{array}\right) \quad \text{et}\quad
I_{n,n}J_{n,n} = \left(\begin{array}{cc}
0 & I_n \\
I_n & 0
\end{array}\right).
\]
Son algèbre de Lie est $\got{so}^*(2n) = \{M\in \got{su}(n,n);\ ^tMI_{n,n}J_{n,n} +I_{n,n}J_{n,n}M = 0\}$, de décomposition de Cartan $\got{so}^*(2n) = \got{k} \oplus \got{p}$, avec
\[
\got{k} = \left\{\left(\begin{array}{cc}
A & 0\\
0 & -^tA
\end{array}\right); \ A\in\got{u}(n)\right\} \cong \got{u}(n),
\]
et
\[
\got{p} = \left\{\left(\begin{array}{cc}
0 & B \\
^t\overline{B} & 0
\end{array}\right); \ B\in\got{o}(n,\C)\right\}.
\]Le sous-groupe compact maximal $K$ associé à $\got{k}$ est donc isomorphe à $U(n)$, par le morphisme de groupes
\[
U\in U(n) \longmapsto \left(\begin{array}{cc}
U & 0 \\
0 & \overline{U}
\end{array}\right)\in SO^*(2n),
\]
ce qui permet d'identifier le tore maximal des matrices diagonales de $U(n)$ au tore maximal
\[
T = \left\{\left(\begin{array}{cccccc}
e^{i\theta_1} & & & & & \\
& \ddots & & & & \\
& & e^{i\theta_n} & & & \\
& & & e^{-i\theta_1} & & \\
& & & & \ddots & \\
& & & & & e^{-i\theta_n}
\end{array}\right); \ \theta_1,\ldots,\theta_n\in\R\right\}.
\]
L'algèbre de Lie de $T$ est 
\[
\got{t} = \left\{\left(\begin{array}{cccccc}
i\theta_1 & & & & & \\
& \ddots & & & & \\
& & i\theta_n & & & \\
& & & -i\theta_1 & & \\
& & & & \ddots & \\
& & & & & -i\theta_n
\end{array}\right); \ \theta_1,\ldots,\theta_n\in\R\right\},
\]
et on note $e_j$ la matrice diagonale de $\got{t}$ avec $\theta_j=1$ et $\theta_k=0$ si $k\neq j$, pour $j=1,\ldots,n$. On pose $(e_1^*,\ldots,e_n^*)$ la base duale associée. Il est clair que dans cette base, les racines compactes de $\got{so}^*(2n)$ sont les formes linéaires $\alpha_{i,j}= e_i^*-e_j^*$, $1\leqslant i<j\leqslant n$.

On peut identifier le complexifié de $\got{p}$ au sous-espace vectoriel complexe
\[
\got{p}_{\C} = \left\{\left(\begin{array}{cc} 0 & B \\ C & 0\end{array}\right); B,C\in\got{o}(n,\C)\right\}.
\]
Le centre de $\got{k}$ est égal à $\{\theta I_{n,n}; \theta\in\R\}$, il est bien de dimension réelle $1$. On peut vérifier par le calcul que l'élément $z_0=\frac{I_{n,n}}{2}$ de $\got{z}(\got{k})$ donne par l'intermédiaire de $\ad(z_0)$ une structure complexe $K$-invariante sur $\got{p}$. En effet, on obtient $\ad(z_0)|_{\got{p}}^2 = -\id|_{\got{p}}$. On a
\[
\got{p}^+ = \left\{\left(\begin{array}{cc} 0 & B \\ 0 & 0\end{array}\right); B\in\got{o}(n,\C)\right\}
\quad\text{et}\quad\got{p}^- = \left\{\left(\begin{array}{cc} 0 & 0 \\ C & 0\end{array}\right); C\in\got{o}(n,\C)\right\}.
\]
Le sous-espace $\got{p}^+$ s'identifie clairement à l'ensemble des matrices anti-symétriques complexes $\got{o}(n,\C)$. L'action de $U(n)$ sur $\got{p}^+$ passe à $\got{o}(n,\C)$ par cette identification en l'action définie par
\[
U\cdot B := UB^tU \in \got{o}(n,\C)
\]
pour tout $B\in\got{o}(n,\C)$ et tout $U\in U(n)$. Il s'agit bien sûr de l'action de $U(n)$ sur $\got{o}(n,\C)$ induite par l'action standard de $GL_n(\C)$ sur ce même espace de matrices anti-symétriques. Elle peut se voir également comme l'action standard de $U(n)$ sur $\wedge^2\C^n$, induite de $GL_n(\C)$. On a donc bien $\got{p}^+\cong\wedge^2\C^n$ en tant que $U(n)$-modules.

\bigskip

Les racines non compactes positives de $\got{so}^*(2n)$ sont les formes linéaires $\beta_{i,j} = e_i^*+e_j^*$, pour $1\leqslant i< j \leqslant n$; ce sont les racines apparaissant dans $\got{p}^+$. Les racines non compactes négatives sont les $-\beta_{i,j}$, $1\leqslant i<j\leqslant n$.

Prenons pour système de racines compactes positives l'ensemble $\got{R}_c^+ = \{\alpha_{i,j}; 1\leqslant i<j\leqslant n\}$. Si maintenant on prend pour convention de noter indifféremment $\beta_{i,j}=\beta_{j,i}$ quel que soit l'ordre des entiers $i$ et $j$ de $\{1,\ldots,n\}$, alors $\beta_{i,j} + \alpha_{k,i} = \beta_{k,j}$, pour tout $k\in\{1,\ldots,n\}\setminus\{i,j\}$. Par conséquent, la plus grande racine non compacte positive est $\bmax = \beta_{1,2}$ et la plus petite est $\bmin = \beta_{n-1,n}$.

De plus, si $\beta_{i,j}$ est une racine non compacte positive et $k\notin\{i,j\}$, alors $\beta_{i,j}-\beta_{i,k} = \alpha_{j,k}$ est une racine compacte. Ce sera d'ailleurs la seule possibilité pour obtenir une racine compacte à partir de la somme de deux racines non compactes. On pose donc $\gamma_j = \beta_{2j-1,2j}$ pour tout $j\in\{1,\ldots,\mathrm{E}(n/2)\}$, où $\mathrm{E}(x)$ désigne la partie entière du réel $x$. La famille de racines non compactes positives $(\gamma_1,\ldots,\gamma_{\mathrm{E}(n/2)})$ est donc fortement orthogonale et maximale pour cette propriété. On obtient ainsi la décomposition du $U(n)$-module $\C[\got{p}^-]$ suivante.

\begin{equation}
\label{theo:SO*(4p)_décompositiondeC[p-]}
\text{Pour $G = SO^*(4p)$, on a $\C[\got{p}^-] = \sum_{m_1\geqslant\ldots\geqslant m_{p}\geqslant0} V^{U(2p)}_{(m_1,m_1,m_2,m_2,\ldots,m_p,m_p)}$}.
\end{equation}

\begin{equation}
\label{theo:SO*(4p+2)_décompositiondeC[p-]}
\text{Pour $G = SO^*(4p+2)$, on a $\C[\got{p}^-] = \sum_{m_1\geqslant\ldots\geqslant m_{p}\geqslant0} V^{U(2p+1)}_{(m_1,m_1,m_2,m_2,\ldots,m_p,m_p,0)}$}.
\end{equation}


\subsection{Le groupe $SO(2p+1,2)$, $p\geqslant 1$}
\label{subsection:Notations_SO(2p+1,2)}

Le groupe de Lie $G = SO(2p+1,2)$ est le sous-groupe de $SL_{2p+3}(\R)$ défini de la manière suivante,
\[
SO(2p+1,2) = \{M\in SL_{2p+3}(\R);\ ^tMI_{2p+1,2}M = I_{2p+1,2}\},
\]
avec
\[
I_{2p+1,2} = \left(\begin{array}{cc}
I_{2p+1} & 0 \\
0 & -I_2
\end{array}\right).
\]
Le groupe $SO(2p+1,2)$ a deux composantes connexes. Son algèbre de Lie est $\got{so}(2p+1,2) = \{M\in \got{sl}_{2p+3}(\R);\ ^tMI_{2p+1,2} + I_{2p+1,2}M = 0\}$, de décomposition de Cartan $\got{so}(2p+1,2) = \got{k} \oplus \got{p}$, avec
\[
\got{k} = \left\{\left(\begin{array}{cc}
A & 0\\
0 & D
\end{array}\right); \ A\in\got{so}(2p+1,\R), D\in\got{so}(2,\R)\right\} \cong \got{so}(2p+1,\R)\oplus\got{so}(2,\R),
\]
et
\[
\got{p} = \left\{\left(\begin{array}{cc}
0 & B \\
^tB & 0
\end{array}\right); \ B\in\mathcal{M}_{2p+1,2}(\R)\right\}.
\]
Le sous-groupe compact maximal $K$ associé à $\got{k}$ est donc isomorphe à $SO(2p+1,\R)\times SO(2,\R)$, par l'intermédiaire du morphisme de groupes
\[
(M,M')\in SO(2p+1,\R)\times SO(2,\R) \longmapsto \left(\begin{array}{cc}
M & 0 \\
0 & M'
\end{array}\right)\in SO(2p+1,2).
\]
On fixe la sous-algèbre de Cartan de $K$ suivante,
\[
\got{t} = \left\{\left(\begin{array}{ccccc}
\left(\begin{array}{cc}0 & h_1\\ -h_1&0\end{array}\right)& & & & \\
& \ddots & & & \\
& & \left(\begin{array}{cc}0 & h_p\\ -h_p&0\end{array}\right)& & \\
& & & 0& \\
& & & & \left(\begin{array}{cc}0 & h_{p+1}\\ -h_{p+1}&0\end{array}\right)
\end{array}\right); \ h_i\in\R
\right\}.
\]
et on note $(e_1,\ldots,e_{p+1})$ la base canonique de $\got{t}$ associée à cette décomposition. On notera également $(e_1^*,\ldots,e_{p+1}^*)$ la base duale associée. Il est clair que le centre de $K$ est isomorphe à $\{1\}\times SO(2,\R)$ et son algèbre de Lie est $\R e_{p+1}$. Les racines compactes sont les formes linéaires $\pm e_i^*\pm e_j^*$, pour $i\neq j$ dans $\{1,\ldots,p\}$, et $\pm e_k$, pour tout $k=1,\ldots,p$. On choisit le système de racines compactes positives $\got{R}_c^+=\{e_i^*\pm e_j^*; 1\leqslant i<j\leqslant p\}\cup\{e_k^*; k=1,\ldots,p\}$.

On peut identifier le complexifié de $\got{p}$ à l'algèbre de Lie complexe
\[
\got{p}_{\C} = \left\{\left(\begin{array}{cc} 0 & B \\ ^tB & 0\end{array}\right); B\in\mathcal{M}_{2p+1,2}(\C)\right\}.
\]
On peut vérifier par le calcul que l'élément $z_0=e_{p+1}$ de $\got{z}(K)$ donne par l'intermédiaire de $\ad(z_0)$ une structure complexe $K$-invariante sur $\got{p}$. En effet, si on note
\[
D:=\left(\begin{array}{cc}
0 & 1 \\
-1 & 0
\end{array}\right),
\]
alors, comme matrice de $\mathcal{M}_{2p+3}(\R)$, on a
\[
e_{p+1} = \left(\begin{array}{cc}
0 & 0\\
0 & D
\end{array}\right),
\]
et cela nous donne
\begin{align*}
\ad(e_{p+1})\left(\begin{array}{cc}
0 & B\\
^tB & 0
\end{array}\right) & = \left(\begin{array}{cc}
0 & 0\\
0 & D
\end{array}\right)\left(\begin{array}{cc}
0 & B\\
^tB & 0
\end{array}\right) - \left(\begin{array}{cc}
0 & B\\
^tB & 0
\end{array}\right)\left(\begin{array}{cc}
0 & 0\\
0 & D
\end{array}\right) \\
& = -\left(\begin{array}{cc}
0 & BD\\
^t(BD) & 0
\end{array}\right)
\end{align*}
En remarquant que $D^2 = -I_2$, on obtient donc $\ad(z_0)|_{\got{p}}^2 = -\id|_{\got{p}}$. On a
\[
\got{p}^+ = \left\{\left(\begin{array}{ccc} 0 & C & -iC \\
^tC & 0 & 0\\
-i^tC & 0 & 0
\end{array}\right); C\in\mathcal{M}_{2p+1,1}(\C)\right\}
\]
et
\[
\got{p}^- = \left\{\left(\begin{array}{ccc} 0 & C & iC \\
^tC & 0 & 0\\
i^tC & 0 & 0
\end{array}\right); C\in\mathcal{M}_{2p+1,1}(\C)\right\}.
\]
En particulier, $\got{p}^+$ s'identifie au $SO(2p+1,\R)\times SO(2,\R)$-module $\C^{2p+1}$, où l'action de $SO(2p+1,\R)\times SO(2,\R)$ sur $\C^{2p+1}$ est définie par
\[
(M,e^{i\theta})\cdot v := e^{i\theta}Mv,
\]
pour tout $v\in\C^{2p+1}$ et tout $(M,e^{i\theta})\in SO(2p+1,\R)\times SO(2,\R)$.

\bigskip

Les racines non compactes de $\got{so}(2p+1,2)$ sont les $4p+2$ formes linéaires $\pm e_i^*\pm e_{p+1}^*$, pour $i=1,\ldots p$, et $\pm e_{p+1}^*$. Les racines non compactes positives sont les racines $\got{R}_n^+=\{\pm e_i^*+e_{p+1}^*; i=1,\ldots,p\}\cup\{e_{p+1}^*\}$. Du choix du système de racines compactes positives, on obtient l'ordre suivant des racines non compactes positives:
\[
-e_1^*+e_{p+1}^* < \cdots < -e_p^*+e_{p+1}^* < e_{p+1}^* < e_p^*+e_{p+1}^* < \cdots < e_1^*+e_{p+1}^*.
\]
De plus, $e_{p+1}^*$ n'est fortement orthogonale à aucune autre racine non compacte, alors que la racine $\pm e_i^*+e_{p+1}^*$ n'est fortement orthogonale qu'à la racine non compacte positive $\mp e_i^*+e_{p+1}^*$. Par conséquent, la famille fortement orthogonale obtenue par l'algorithme précédant le Théorème \ref{theo:schmid} est la famile $(e_1^*+e_{p+1}^*,-e_1^*+e_{p+1}^*)$.

\begin{equation}
\label{eq:SO*(2n)_décompositiondeC[p-]}
\text{Pour $G = SO(2p+1,2)$, on a $\C[\got{p}^-] = \sum_{m_1\geqslant m_2\geqslant0} V^{SO(2p+1)\times SO(2)}_{(m_1-m_2,0,\ldots,0,m_1+m_2)}$}.
\end{equation}

\subsection{Le groupe $SO(2p,2)$, $p\geqslant 2$}
\label{subsection:Notations_SO(2p,2)}

Les notations pour le groupe $G=SO(2p,2)$ sont quasiment identiques à celles du groupe $SO(2p+1,2)$ données dans le paragraphe \ref{subsection:Notations_SO(2p+1,2)}. En particulier, on a la décomposition de Cartan $\got{so}(2p,2) = \got{k}\oplus\got{p}$, où
\[
\got{k} = \left\{\left(\begin{array}{cc}
A & 0\\
0 & D
\end{array}\right); \ A\in\got{so}(2p,\R), D\in\got{so}(2,\R)\right\}
\]
et
\[
\got{p} = \left\{\left(\begin{array}{cc}
0 & B \\
^tB & 0
\end{array}\right); \ B\in\mathcal{M}_{2p,2}(\R)\right\}.
\]
Le sous-groupe compact maximal de $SO(2p,2)$ d'algèbre de Lie $\got{k}$ est donc isomorphe $SO(2p,\R)\times SO(2,\R)$. La sous-algèbre de Cartan que l'on fixe ici diffère légèrement de celle de $SO(2p+1,2)$, c'est l'algèbre suivante,
\[
\got{t} = \left\{\left(\begin{array}{cccc}
\left(\begin{array}{cc}0 & h_1\\ -h_1&0\end{array}\right)& & & \\
& \ddots & & \\
& & \left(\begin{array}{cc}0 & h_p\\ -h_p&0\end{array}\right)& \\
& & & \left(\begin{array}{cc}0 & h_{p+1}\\ -h_{p+1}&0\end{array}\right)
\end{array}\right); \ h_i\in\R\right\}.
\]
On note $(e_1,\ldots,e_{p+1})$ la base canonique de $\got{t}$ et $(e_1^*,\ldots,e_{p+1}^*)$ la base duale. Les racines compactes sont les formes linéaires $\pm e_i^*\pm e_j^*$, pour $i\neq j$ dans $\{1,\ldots,p\}$. On choisit le système standard de racines compactes positives $\got{R}_c^+=\{e_i^*\pm e_j^*; 1\leqslant i<j\leqslant p\}$.

Les racines non compactes sont les formes linéaires $\pm e_i^*\pm e_{p+1}^*$, pour $i=1,\ldots,p$. Les racines non compactes positives sont les racines de $\got{R}_n^+=\{\pm e_i^*+e_{p+1}^*; i=1,\ldots,p\}$. On notera ces racines
\[
\beta^{\pm}_i := \pm e_i^* + e_{p+1}^*, \quad \forall i\in\{1,\ldots,p\}.
\]
\`A nouveau, le choix effectué sur le système de racines compactes positives nous donne l'ordre des racines non compactes positives  suivant:
\[
-e_1^*+e_{p+1}^* < \cdots < -e_p^*+e_{p+1}^* < e_p^*+e_{p+1}^* < \cdots < e_1^*+e_{p+1}^*.
\]
La famille fortement orthogonale obtenue par l'algorithme précédant le Théorème \ref{theo:schmid} est encore la famile $(e_1^*+e_{p+1}^*,-e_1^*+e_{p+1}^*)$.

\begin{equation}
\label{eq:SO_0(2p,2)_décompositiondeC[p-]}
\text{Pour $G = SO(2p,2)$, on a $\C[\got{p}^-] = \sum_{m_1\geqslant m_2\geqslant0} V^{SO(2p)\times SO(2)}_{(m_1-m_2,0,\ldots,0,m_1+m_2)}$}.
\end{equation}


\chapter[Premières tentatives de calculs de projections d'orbites]{Premières tentatives de calcul de projection d'orbites coadjointes holomorphes}
\label{chap:premières_tentatives}

Avant d'entrer véritablement dans l'étude théorique des orbites coadjointes holomorphes et de leurs projections, nous proposons dans ce chapitre d'expliciter le polyèdre moment d'une telle projection d'orbite pour certains groupes de la classification des espaces symétriques hermitiens irréductibles.

Pour ce faire, nous suivons deux méthodes différentes. La première utilise la Formule de Duflo-Heckman-Vergne, en calculant le polyèdre moment comme support d'une certaine mesure. La seconde s'appuie sur les résultats obtenus autour du problème de Horn, vus dans le paragraphe \ref{subsection:problème_de_Horn}, permettant de déterminer les points rationnels du polyèdre en question. Ceci nous permet de donner les équations des projections d'orbites coadjointes holomorphes pour les groupes $Sp(2n,\R)$, $SU(n,1)$ et $SO^*(6)$.

\section{Formule de Duflo-Heckman-Vergne}

Dans cette section, nous décrivons tout d'abord brièvement la Formule de Duflo-Heckman-Vergne. Pour plus de détails, le lecteur peut se référer à l'article \cite{duflo84}. Nous en déduisons ensuite les projections d'orbites coadjointes holomorphes associées aux groupes $SU(2,1)$ et $Sp(4,\R)$.

\subsection{\'Enoncé de la formule}

Soit $G$ un groupe de Lie réel semi-simple connexe non compact à centre fini et soit $\got{g}$ son algèbre de Lie. Fixons $\got{g}=\got{k}\oplus\got{p}$ une décomposition de Cartan de $\got{g}$ et supposons que $\got{g}$ et $\got{k}$ sont de même rang. Soit $K$ le sous-groupe de Lie connexe de $G$ d'algèbre de Lie $\got{k}$. Le sous-groupe $K$ est compact maximal dans $G$. On fixe $T$ un tore maximal de $K$ et $\got{t}$ son algèbre de Lie. L'algèbre de Lie $\got{t}$ est également une sous-algèbre de Cartan de $G$.

Soit $\Lambda$ un élément $G$-régulier de $\got{t}^*$, c'est-à-dire que son stabilisateur $G_{\Lambda}$ est égal à $T$. D'après le paragraphe \ref{subsection:Gsemi-simple}, $\Orb_{\Lambda}$ est fermée dans $\got{g}^*$ et la projection d'orbite $\Phi_{K}:\Orb_{\Lambda}\rightarrow\got{k}^*$ est propre.

La structure symplectique $\Omega_{\Orb_{\Lambda}}$ induit une orientation et une mesure
\[
\beta_{\Orb_{\Lambda}} := \frac{1}{(2\pi)^n n!}\Omega_{\Orb_{\Lambda}}^n
\]
sur $\Orb_{\Lambda}$. La mesure $\beta_{\Orb_{\Lambda}}$ est communément appelée \emph{mesure de Liouville}\index{Mesure de Liouville} de la variété symplectique $(\Orb_{\Lambda},\Omega_{\Orb_{\Lambda}})$.

On note $W$ le groupe de Weyl de $T$ dans $K$. Pour tout élément $n\in N_K(T)$, l'application $\Ad(n)$ sur $\got{g}$ définit par restriction un automorphisme de $\got{t}$, qui ne dépend que de la classe de $n$ dans $W$. On note $\varepsilon(w)$ le déterminant de l'élément de $GL(\got{t})$ donné par un représentant quelconque de la classe $w\in W$.

Soit $\got{R}$ l'ensemble des racines de $\got{t}$ dans $\got{g}_{\C}$. On désigne par $\got{R}_c$ (resp. $\got{R}_n$) les racines de $\got{t}$ dans $\got{k}_{\C}$ (resp. dans $\got{p}_{\C}$). Pour tout $\mu\in\got{t}^*$, on définit
\[
\got{R}^+(\mu) := \{\alpha\in\got{R}; (\alpha,\mu)>0\},
\]
où $(\cdot,\cdot)$ est le produit scalaire sur $\got{t}^*$ induit par la forme de Killing $B_{\got{g}}$. On notera alors $\got{R}_n^+(\mu):=\got{R}^+(\mu)\cap\got{R}_n$.

Pour tout $\mu\in\got{t}^*$, on définit également $\delta_{\mu}$ la mesure de Dirac au point $\mu$ sur $\got{k}^*$ et $\mathcal{H}_{\mu}$ la mesure de Heavyside\index{Mesure de Heavyside}, c'est-à-dire la mesure de Radon sur $\got{k}^*$ définie par
\[
\langle\mathcal{H}_{\mu},\varphi\rangle := \int_{0}^{\infty}\varphi(t\mu)dt \qquad \forall\varphi\in C^{\infty}(\got{k}^*).
\]

Si $\got{R}_n^+(\mu) = \{\alpha_1,\ldots,\alpha_p\}$, on notera
\[
Y^+_{\mu} = \mathcal{H}_{\alpha_1} * \mathcal{H}_{\alpha_2} * \dots * \mathcal{H}_{\alpha_p}
\]
le produit de convolution des mesures de Heavyside associées aux racines non compactes positives pour $\mu$. Enfin, on posera, pour tout $\nu\in\got{t}^*$,
\[
\pi^+(\nu) = \prod_{\alpha\in\got{R}_n^+(\Lambda)}(\alpha,\nu).
\]
La fonction $\pi^+$ est polynômiale et $W$-anti-invariante sur $\got{t}^*$.

Notons $\got{t}_{reg}^*$ l'ensemble des éléments réguliers de $\got{t}^*$ pour l'action de $K$. La variété $M := (\Phi_K)^{-1}(\got{t}_{reg}^*)$ est une sous-variété localement fermée $T$-stable de $\Orb_{\Lambda}$, non vide car $M$ contient nécessairement $\Lambda$, et
\[
\Phi_M := \Phi_K|_{M}:M\rightarrow\got{t}^*
\]
est une application moment pour l'action de $T$ sur la variété symplectique $(M,\Omega_{\Orb_{\Lambda}}|_{M})$. Ce résultat est bien connu et est utilisé dans la démonstration du Théorème de convexité hamiltonienne non abélien, cf \cite[Théorème 6.4]{guillemin82} pour le cas compact, \cite[Théorème 3.1]{lerman98} pour un cadre plus général. 
L'ensemble $K\cdot M\cong K\times_T M$ est un ouvert dense de $\Orb_{\Lambda}$. Par conséquent, $\Orb_{\Lambda}\setminus(K\cdot M)$ est de mesure nulle pour $\beta_{\Orb_{\Lambda}}$. Pour déterminer la mesure image $(\Phi_K)_*(\beta_{\Orb_{\Lambda}})$, il suffit donc de connaître la mesure $(\Phi_M)_*(\beta_M)$.

\begin{theo}[Formule de Duflo-Heckman-Vergne]
\label{theo:fomule_DHV}
\index{Formule de Duflo-Heckman-Vergne}
L'image de la mesure de Liouville par l'application moment $\Phi_{M}$ est donnée par la formule
\[
(\Phi_{M})_*(\beta_{M}) = \frac{\pi^+}{|\pi^+|}\sum_{w\in W}\varepsilon(w)(\delta_{w\Lambda}* Y^+_{w\Lambda}).
\]
\end{theo}

\begin{rema}
Duflo, Heckman et Vergne ont prouvé cette formule dans le cadre plus général d'une algèbre de Lie réductive réelle $\got{g}$ et un groupe de Lie $G$ connexe à centre compact d'algèbre de Lie $\got{g}$.
\end{rema}

Le Théorème 3.1 de \cite{lerman98} affirme également que $\Delta_K(\Orb_{\Lambda})$ est l'adhérence de $\Delta_T(M)\cap\got{t}_+^*$. 
On en déduit que le calcul du support de la mesure image $(\Phi_M)_*(\beta_M)$ permet de déterminer le polyèdre moment $\Delta_K(\Orb_{\Lambda}) := \Phi_K(\Orb_{\Lambda})\cap\got{t}_+^*$.

\subsection{Le cas de $SU(2,1)$}
\label{subsection:exemple_utilisation_formule_DHV}

Nous terminons ce chapitre en proposant une application de la formule de Duflo-Heckman-Vergne au calcul du polyèdre moment de la projection d'une orbite coadjointe elliptique régulière de $SU(2,1)$.

Les notations pour le groupe $SU(2,1)$ et son algèbre de Lie ont été fixées en \ref{subsection:Notations_SU(p,q)}. Nous choisissons pour chambre de Weyl de $\got{t}^*$, la chambre $\got{t}_+^* := \{\xi\in\got{t}^*; (\alpha,\xi)\geqslant 0\}$. Le groupe de Weyl $W$ de $T$ dans $K$ possède deux éléments, l'élément non trivial étant la symétrie orthogonale pour $B_{\theta}$ relativement à la droite vectorielle $\ker\alpha$. Si on note $s$ cette symétrie, on a alors
\[
s\alpha = -\alpha, \qquad s\beta_1 = \beta_2, \qquad s\beta_2 = \beta_1.
\]

Prenons maintenant un élément $\Lambda\in\got{t}_+^*$ qui est $SU(2,1)$-régulier. Trois cas distincts apparaissent ici.

\subsubsection*{L'élément $\Lambda$ est dans $\{\xi\in\got{t}^*_+; (\beta_1,\xi)>0, (\beta_2,\xi)>0\}$}

Les racines non compactes positives associées sont donc celles de $\got{R}_n^+(\Lambda) = \{\beta_1,\beta_2\}$ et on a également $\got{R}_n^+(s\Lambda) = \{\beta_1,\beta_2\}$. Par le calcul, la Formule de Duflo-Heckman-Vergne donne la mesure
\[
(\Phi_{M})_*(\beta_{M}) = \frac{\pi^+}{|\pi^+|}c(\mathbf{1}_{\Lambda+\R_+\beta_1+\R_+\beta_2}-\mathbf{1}_{s\Lambda+\R_+\beta_1+\R_+\beta_2})d\mu.
\]
On en déduit que le support de la mesure $(\Phi_{M})_*(\beta_{M})$ est égal à l'ensemble
\[
\Bigl((\Lambda+\R_+\beta_1+\R_+\beta_2)\cup(s\Lambda+\R_+\beta_1+\R_+\beta_2)\Bigr) \setminus \Bigl((\Lambda+\R_+\beta_1+\R_+\beta_2)\cap(s\Lambda+\R_+\beta_1+\R_+\beta_2)\Bigr),
\]
dont l'intersection avec la chambre de Weyl $\got{t}_+^*$ est bien un polyèdre convexe.

\begin{figure}[ht]
\begin{center}
\setlength{\unitlength}{1cm}
\resizebox{8cm}{8cm}{
\begin{pspicture}(10,10)
\newgray{lightlightgray}{0.90}

\pspolygon[linecolor=lightlightgray,fillstyle=solid,linewidth=0pt,fillcolor=lightlightgray](5,0)(5,10)(10.5,10)(10,5)(9.5,3)(9,2)(8,0.8)(7,0.2)
\pspolygon[linecolor=lightgray,fillstyle=solid,linewidth=0pt,fillcolor=lightgray](7,4.5)(5,7.98)(6.16,10)(10.16,10)
\pspolygon[linecolor=gray,fillstyle=solid,linewidth=0pt,fillcolor=gray](3,4.5)(5,7.98)(6.16,10)(-0.16,10)

\psdot(3,4.5)
\psdot(7,4.5)

\psline[linewidth=1pt](5,0)(5,10)
\psline[linewidth=1pt](2.52,0.5)(10,4.95)
\psline[linewidth=1pt](7.48,0.5)(0,4.95)
\psline[linewidth=0.5pt](10.16,10)(7,4.5)(5,7.98)(6.16,10)

\psline[linewidth=1pt]{->}(5,2)(6,2)
\psline[linewidth=1pt]{->}(5,2)(5.5,2.87)
\psline[linewidth=1pt]{->}(5,2)(4.5,2.87)

\psline[linewidth=1pt]{->}(7,4.5)(7.5,5.37)
\psline[linewidth=1pt]{->}(7,4.5)(6.5,5.37)

\rput(7,4.2){$\Lambda$}
\rput(3,4.2){$s\Lambda$}
\rput(6,1.8){$\alpha$}
\rput(4.2,3){$\beta_{1}$}
\rput(5.8,3){$\beta_{2}$}
\rput(5.5,0.5){$\got{t}^*_+$}
\rput(7,8){$\Delta_K(G\cdot\Lambda)$}
\rput(6.4,5){$\beta_{1}$}
\rput(7.6,5){$\beta_{2}$}

\end{pspicture}
}
\end{center}
\caption{Polyèdre de la projection d'une orbite $SU(2,1)$-régulière avec $(\beta_1,\Lambda)>0$ et $(\beta_2,\Lambda)>0$}
\end{figure}

\subsubsection*{L'élément $\Lambda$ est dans $\{\xi\in\got{t}^*_+; (\beta_1,\xi)<0, (\beta_2,\xi)>0\}$}

Les calculs sont similaires aux paragraphes précédents. Les éléments qui diffèrent sont les ensembles des racines non compactes positives associées à $\Lambda$ et $s\Lambda$, qui sont ici $\got{R}_n^+(\Lambda) = \{-\beta_1,\beta_2\}$ et $\got{R}_n^+(s\Lambda) = \{\beta_1,-\beta_2\}$, ainsi que la mesure image, qui vaut
\[
(\Phi_{M})_*(\beta_{M}) = \frac{\pi^+}{|\pi^+|}c(\mathbf{1}_{\Lambda+\R_+(-\beta_1)+\R_+\beta_2}-\mathbf{1}_{s\Lambda+\R_+\beta_1+\R_+(-\beta_2}))d\mu,
\]
pour une certaine constant $c>0$. Remarquons qu'ici, le cône $s\Lambda+\R_+\beta_1+\R_+(-\beta_2)$ est contenu dans le demi-plan $\{(\alpha,\cdot)< 0\}$, alors que le cône $\Lambda+\R_+(-\beta_1)+\R_+\beta_2$ est contenu dans le demi-plan opposé $\{(\alpha,\cdot)> 0\}$. Par conséquent, ces deux cônes ne s'intersectent pas et la mesure $(\Phi_{M})_*(\beta_{M})$ a pour support sur $\got{t}^*$ l'ensemble
\[
\bigl(\Lambda+\R_+(-\beta_1)+\R_+\beta_2\bigr)\cup\bigl(s\Lambda+\R_+\beta_1+\R_+(-\beta_2)\bigr).
\]

\begin{figure}[ht]
\begin{center}
\setlength{\unitlength}{1cm}
\resizebox{8cm}{8cm}{
\begin{pspicture}(10,10)
\newgray{lightlightgray}{0.90}

\pspolygon[linecolor=lightlightgray,fillstyle=solid,linewidth=0pt,fillcolor=lightlightgray](5,0)(5,10)(10,10)(10,0)
\pspolygon[linecolor=lightgray,fillstyle=solid,linewidth=0pt,fillcolor=lightgray](7.5,4.5)(10,8.85)(10,0.15)
\pspolygon[linecolor=gray,fillstyle=solid,linewidth=0pt,fillcolor=gray](2.5,4.5)(0,8.85)(0,0.15)

\psdot(2.5,4.5)
\psdot(7.5,4.5)

\psline[linewidth=1pt](5,0)(5,10)
\psline[linewidth=1pt](0,2.13)(10,7.87)
\psline[linewidth=1pt](0,7.87)(10,2.13)
\psline[linewidth=0.5pt](10,8.85)(7.5,4.5)(10,0.15)

\psline[linewidth=1pt]{->}(5,5)(6,5)
\psline[linewidth=1pt]{->}(5,5)(5.5,5.87)
\psline[linewidth=1pt]{->}(5,5)(4.5,5.87)

\rput(2.9,4.5){$s\Lambda$}
\rput(7.2,4.5){$\Lambda$}
\rput(6,4.8){$\alpha$}
\rput(4.2,6){$\beta_{1}$}
\rput(5.8,6){$\beta_{2}$}
\rput(5.5,0.5){$\got{t}^*_+$}
\rput(9,5){$\Delta_K(G\cdot\Lambda)$}

\end{pspicture}
}
\end{center}
\caption{Polyèdre de la projection d'une orbite $SU(2,1)$-régulière avec $(\beta_1,\Lambda)<0$ et $(\beta_2,\Lambda)>0$}
\end{figure}

\subsubsection*{L'élément $\Lambda$ est dans $\{\xi\in\got{t}^*_+; (\beta_1,\xi)>0, (\beta_2,\xi)<0\}$}

Il s'agit ici du cas \og{}opposé\fg{} au premier cas. La mesure obtenue par la Formule de Duflo-Heckman-Vergne est
\[
(\Phi_{M})_*(\beta_{M}) = \frac{\pi^+}{|\pi^+|}c(\mathbf{1}_{\Lambda+\R_+(-\beta_1)+\R_+(-\beta_2)}-\mathbf{1}_{s\Lambda+\R_+(-\beta_1)+\R_+(-\beta_2)})d\mu.
\]

\begin{rema}
Ces calculs, très simples pour le groupe $SU(2,1)$, se compliquent dès que le nombre de racines non compactes positives est supérieur au rang de $\got{g}$. C'est le cas de $Sp(2n,\R)$. Même en ne considérant que les groupes de type $SU(n,1)$, les difficultés apparaissent rapidement lorsque $n$ grandit.
\end{rema}

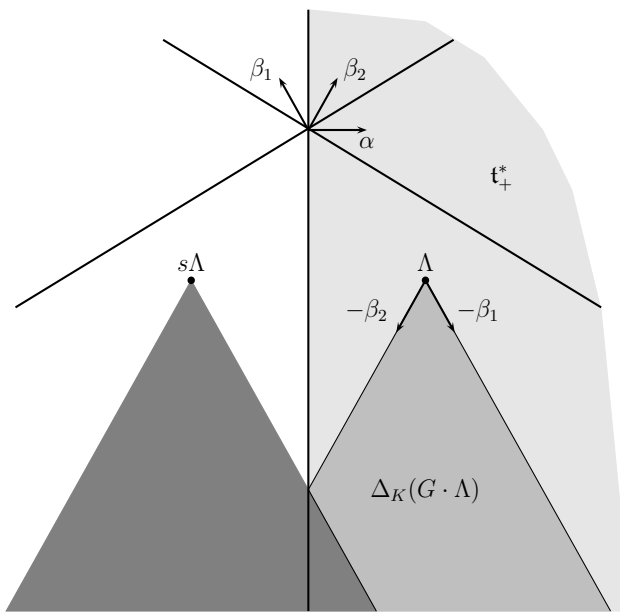
\begin{figure}[ht]
\begin{center}
\setlength{\unitlength}{1cm}
\resizebox{8cm}{8cm}{
\begin{pspicture}(10,10)
\newgray{lightlightgray}{0.90}

\pspolygon[linecolor=lightlightgray,fillstyle=solid,linewidth=0pt,fillcolor=lightlightgray](5,10)(5,0)(10.5,0)(10,5)(9.5,7)(9,8)(8,9.2)(7,9.8)
\pspolygon[linecolor=lightgray,fillstyle=solid,linewidth=0pt,fillcolor=lightgray](7,5.5)(5,2.02)(6.16,0)(10.16,0)
\pspolygon[linecolor=gray,fillstyle=solid,linewidth=0pt,fillcolor=gray](3,5.5)(5,2.02)(6.16,0)(-0.16,0)

\psdot(3,5.5)
\psdot(7,5.5)

\psline[linewidth=1pt](5,0)(5,10)
\psline[linewidth=1pt](2.52,9.5)(10,5.05)
\psline[linewidth=1pt](7.48,9.5)(0,5.05)
\psline[linewidth=0.5pt](10.16,0)(7,5.5)(5,2.02)(6.16,0)

\psline[linewidth=1pt]{->}(5,8)(6,8)
\psline[linewidth=1pt]{->}(5,8)(5.5,8.87)
\psline[linewidth=1pt]{->}(5,8)(4.5,8.87)

\psline[linewidth=1pt]{->}(7,5.5)(6.5,4.63)
\psline[linewidth=1pt]{->}(7,5.5)(7.5,4.63)

\rput(7,5.8){$\Lambda$}
\rput(3,5.8){$s\Lambda$}
\rput(6,7.8){$\alpha$}
\rput(4.2,9){$\beta_{1}$}
\rput(5.8,9){$\beta_{2}$}
\rput(6,5){$-\beta_{2}$}
\rput(7.9,5){$-\beta_{1}$}
\rput(8.3,7.2){$\got{t}^*_+$}
\rput(7,2){$\Delta_K(G\cdot\Lambda)$}

\end{pspicture}
}
\end{center}
\caption{Polyèdre de la projection d'une orbite $SU(2,1)$-régulière avec $(\beta_1,\Lambda)<0$ et $(\beta_2,\Lambda)<0$}
\end{figure}

Pour $SU(2,1)$, lorsque $\Lambda$ est dans la chambre holomorphe, on voit que les cônes affines engendrés par les racines non compactes positives et de sommets respectifs $\Lambda$ et $s\Lambda$ ont une intersection non vide dans $\got{t}_+^*$. Les deux mesures portées par ses cônes s'y annihilent, ce qui donne une sorte de réflexion du polyèdre moment lorsque celui-ci touche le mur de la chambre de Weyl.

Pour d'autres groupes, les mesures obtenues sur les différents cônes peuvent ne pas se neutraliser sur les parties communes de leurs supports. Ceci est le cas par exemple pour le groupe $Sp(4,\R)$, schématisé par la Figure 4.

\subsection{Le cas de $Sp(4,\R)$}

Disons quelques mots pour le cas du groupe $Sp(4,\R)$. Les notations de ce groupe ont été introduites en \ref{subsection:Notations_Sp(R2n)}. La chambre de Weyl choisie est la chambre $\got{t}_+^* := \{\xi\in\got{t}^*; (\alpha,\xi)\geqslant 0\}$. Le groupe de Weyl a deux éléments, dont l'élément non trivial est la symétrie $s$ relativement au mur $\ker\alpha$ de la chambre de Weyl. La symétrie $s$ agit sur l'ensemble des racines non compactes positives de $\got{sp}(\R^4)$ par
\[
s\beta_{1,1} = \beta_{2,2}, \quad s\beta_{2,2} = \beta_{1,1} \quad \text{et}\quad s\beta_{1,2} = \beta_{1,2}.
\]

Soit $\Lambda\in\Chol$ qui est $Sp(4,\R)$-régulier (nous ne parlerons pas ici des éléments des autres sous-chambres de $\got{t}_+^*$). Dans ce cas, on peut voir par le calcul que la mesure $(\Phi_{M})_*(\beta_{M})$ a pour support dans $\got{t}_+^*$ le polyèdre convexe
\[
\got{t}_+^* \cap (\Lambda + \R_+\beta_{1,1} + \R_+\beta_{2,2}).
\]

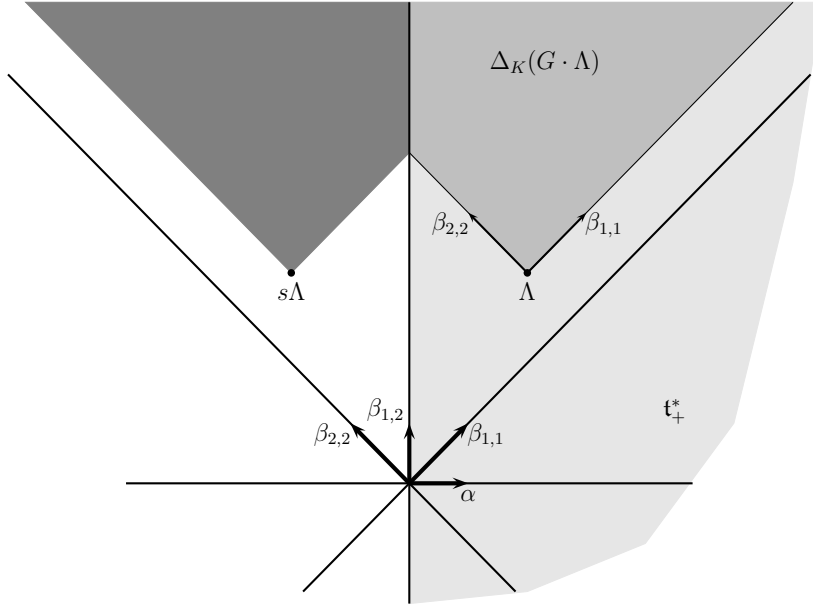
\begin{figure}[ht]
\label{figure:polyèdremoment_Sp(R4)}
\begin{center}
\setlength{\unitlength}{1cm}
\resizebox{11.2cm}{8cm}{
\begin{pspicture}(14,10)
\newgray{lightlightgray}{0.90}

\pspolygon[linecolor=lightlightgray,fillstyle=solid,linewidth=0pt,fillcolor=lightlightgray](7,0)(7,10)(14,10)(13.5,7)(12.5,3)(11,1)(9,0.2)
\pspolygon[linecolor=lightgray,fillstyle=solid,linewidth=0pt,fillcolor=lightgray](9,5.5)(7,7.5)(7,10)(13.5,10)
\pspolygon[linecolor=gray,fillstyle=solid,linewidth=0pt,fillcolor=gray](5,5.5)(7,7.5)(7,10)(0.5,10)

\psdot(5,5.5)
\psdot(9,5.5)

\psline[linewidth=1pt](7,0)(7,10)
\psline[linewidth=1pt](2.2,2)(11.8,2)
\psline[linewidth=1pt](5.2,0.2)(13.8,8.8)
\psline[linewidth=1pt](0.2,8.8)(8.8,0.2)
\psline[linewidth=0.5pt](7,7.5)(9,5.5)(13.5,10)

\psline[linewidth=2pt]{->}(7,2)(8,2)
\psline[linewidth=2pt]{->}(7,2)(8,3)
\psline[linewidth=2pt]{->}(7,2)(6,3)
\psline[linewidth=2pt]{->}(7,2)(7,3)
\psline[linewidth=1pt]{->}(9,5.5)(10,6.5)
\psline[linewidth=1pt]{->}(9,5.5)(8,6.5)

\rput(9,5.2){$\Lambda$}
\rput(5,5.2){$s\Lambda$}
\rput(8,1.8){$\alpha$}
\rput(8.3,2.8){$\beta_{1,1}$}
\rput(5.7,2.8){$\beta_{2,2}$}
\rput(6.6,3.2){$\beta_{1,2}$}
\rput(11.5,3.2){$\got{t}^*_+$}
\rput(9.3,9){$\Delta_K(G\cdot\Lambda)$}
\rput(10.3,6.3){$\beta_{1,1}$}
\rput(7.7,6.3){$\beta_{2,2}$}

\end{pspicture}
}
\end{center}
\caption{Polyèdre de la projection d'une orbite $Sp(4,\R)$-régulière holomorphe}
\end{figure}

\section{Caractérisation de $\Delta_K(\Orb_{\Lambda})$ par ses points rationnels}

Dans la cas d'une orbite compacte $G\cdot\Lambda$ avec $\Lambda\in\wedge_+^*$, on a vu que la projection de l'orbite $G\cdot\Lambda$ sur $\got{k}^*$ était complètement déterminée par l'étude des représentations irréductibles de $K$ qui apparaissent dans la décomposition des représentations irréductibles $V_{N\Lambda}^G$, où $N$ parcourt l'ensemble des entiers strictement positifs. Plus exactement, les éléments rationnels $\mu\in\wedge_{\Q,+}^*$ pour lesquels il existe $N>0$ tel que $V_{N\mu}^K \subset V_{N\Lambda}^{G}|_K$ sont exactement les points rationnels de la projection d'orbite $\Delta_K(G\cdot\Lambda)$. Mais surtout, $\Delta_K(G\cdot\Lambda)$ est égal à l'adhérence de ses points rationnels, ce qui permet de décrire $\Delta_K(G\cdot\Lambda)$ à partir des équations du polyèdre rationnel $\Delta_{\KC}^{\mathrm{alg}}(K\cdot\Lambda,\mathcal{L}_{\Lambda}) = \{\mu\in\wedge_{\Q,+}^*; \exists N>0, V_{N\mu}^K \subset H^0(G\cdot\Lambda, \mathcal{L}_{\Lambda}^{\otimes N})|_K\}$.

\bigskip

Si maintenant $G$ n'est pas compact, on ne peut plus appliquer les résultats sur les polytopes moments algébriques. Cependant, on conserve la propriété suivante: lorsque $\Lambda$ est dans $\got{t}^*$, le polyèdre de Kirwan $\Delta_K(\Orb_{\Lambda})$ est rationnel. Ceci provient de \cite{lerman98}. Il est donc égal à l'adhérence (ou, de manière équivalente, l'enveloppe convexe) de ses points rationnels. En particulier, on peut donc se ramener à l'étude de ses points rationnels, comme dans le cas compact.

Or, lorsque $G/K$ est un espace symétrique hermitien, on a un moyen de déterminer l'ensemble de ces points rationnels. Et ceux-ci se décrivent de manière proche du cas compact, en termes de représentations de $K$. Ce résultat utilise une version non compacte de la \og{}quantification commute avec la réduction\fg{}, en introduisant la quantification formelle.

\begin{theo}
\label{theo:DeltaK(OrbLambda)_par_représentationsirréductibles}
Soient $G/K$ un espace symétrique hermitien et $\Lambda\in\wedge^*_{\Q,+}\cap\Chol$. Alors, pour tout $\mu\in\wedge^*_{\Q,+}$, on a $\mu\in\Delta_K(\Orb_{\Lambda})$ si et seulement s'il existe un entier $N\geqslant 1$ tel que $(N\mu,N\Lambda)\in(\wedge^*)^2$ et $\left(V^*_{N\mu}\otimes V_{N\Lambda}\otimes \C[\got{p}^-]\right)^K \neq 0$.
\end{theo}


La preuve du Théorème \ref{theo:DeltaK(OrbLambda)_par_représentationsirréductibles} nécessite deux résultats qui seront prouvés respectivement aux quatrième et cinquième chapitres.

\bigskip

Le Théorème \ref{theo:DeltaK(OrbLambda)_par_représentationsirréductibles} donne une condition nécessaire et suffisante pour qu'un élément rationnel de $\got{t}^*$ soit dans le polyèdre moment $\Delta_K(\Orb_{\Lambda})$. Cette condition, en termes de produits tensoriels de représentations du groupe compact connexe $K$ fait intervenir la représentation $\C[\got{p}^-]$ de $K$. Or, lorsque $G/K$ est symétrique hermitien irréductible, le Théorème de Schmid donne la description exacte de la décomposition de $\C[\got{p}^-]$ en somme de représentations irréductibles de $K$.

Notons $(\gamma_1,\ldots,\gamma_r)$ la famille de racines non compactes positives obtenue par l'algorithme précédant l'énoncé du Théorème \ref{theo:schmid}. Définissons le $\Q$-cône convexe
\[
\mathscr{C}(\gamma_1,\ldots,\gamma_r) := \sum_{m_1\geqslant\ldots\geqslant m_r\geqslant0,\ m_i\in\Q}m_1\gamma_1+\cdots+m_r\gamma_r
\]
obtenu à partir de ces $r$ vecteurs de $\got{t}^*$.

\begin{prop}
\label{prop:DeltaK(OrbLambda)_par_représentationsirréductibles}
Soient $G/K$ un espace hermitien irréductible, $\Lambda\in\wedge^*_{\Q,+}\cap\Chol$. Alors, pour tout $\mu\in\wedge^*_{\Q,+}$, on a $\mu\in\Delta_K(\Orb_{\Lambda})$ si et seulement s'il existe un entier $N\geqslant 1$ et $\gamma\in\mathscr{C}(\gamma_1,\ldots,\gamma_r)$ tels que $(N\mu,N\Lambda,N\gamma)\in(\wedge^*)^3$ et $V_{N\gamma}\subset V_{N\mu}\otimes V_{N\Lambda^*}$. Autrement dit,
\begin{equation}
\label{eq:équivalence_DeltaK(OrbLambda)_et_intersection_deuxpolyèdres}
\mu\in\Delta_K(\Orb_{\Lambda}) \quad \Longleftrightarrow \quad \mathscr{C}(\gamma_1,\ldots,\gamma_r)\cap\Delta_K(K\cdot\mu\times K\cdot\Lambda^*) \neq \emptyset.
\end{equation}
\end{prop}

\begin{proof}
C'est une conséquence directe du Théorème \ref{theo:DeltaK(OrbLambda)_par_représentationsirréductibles}, du Théorème de Schmid et du fait que la condition $V_{N\mu}\subset V_{N\Lambda}\otimes V_{N\gamma}$ est équivalente à $V_{N\gamma}\subset V_{N\mu}\otimes V_{N\Lambda^*}$. La seconde équivalence est obtenue grâce à la Proposition \ref{prop:équivalences_polytopemomentproduitorbites}.
\end{proof}


\begin{rema}
De l'équivalence \eqref{eq:équivalence_DeltaK(OrbLambda)_et_intersection_deuxpolyèdres}, il est naturel d'essayer de résoudre le problème de la projection de l'orbite $\Orb_{\Lambda}$ en déterminant l'intersection de deux polyèdres convexes. Ces deux polyèdres convexes sont $\wedge_{\Q,+}^*\times\wedge_{\Q,+}^*\times\mathscr{C}(\gamma_1,\ldots,\gamma_r)$, qui est un cône convexe polyédral, et
\begin{equation}
\label{eq:polyèdre_convexe_dont_on_cherche_linteresction_avec_un_autre}
\{(\mu,\nu,\gamma)\in(\wedge_{\Q,+}^*)^3; \ \gamma\in\Delta_K(K\cdot\mu\times K\cdot\nu^*)\}.
\end{equation}
Le polyèdre \eqref{eq:polyèdre_convexe_dont_on_cherche_linteresction_avec_un_autre} peut également se décrire, grâce à la Proposition \ref{prop:équivalences_polytopemomentproduitorbites}, par
\[
\{(\mu,\nu,\gamma)\in(\wedge_{\Q,+}^*)^3; \exists N>0 \text{ t.q. } (V_{N\mu}^K\otimes V_{N\nu^*}^K\otimes V_{N\gamma^*}^K)^K\neq 0\}.
\]
Il s'agit bien d'un polyèdre, puisqu'il est, à peu de choses près, ce que l'on appelle le cône semi-ample de la variété projective $K/T\times K/T\times K/T$, cf Chapitre \ref{chap:projectiondorbitecoadjointe+GIT}.

Cette idée va être utilisée dans la section \ref{section:exemples_projections_orbites} pour calculer quelques exemples de projections d'orbites.

Déterminer l'intersection de deux polyèdres convexes est en général compliqué, mais ici cela reste envisageable lorsque le cône $\mathscr{C}(\gamma_1,\ldots,\gamma_r)$ a une géométrie simple (par exemple pour $Sp(2n,\R)$ et $SU(n,1)$) ou lorsque le rang de l'algèbre de Lie est petit (calcul pour $SO^*(6)$).
\end{rema}

\section{Exemples de calculs de projections d'orbites}
\label{section:exemples_projections_orbites}

La Proposition \ref{prop:DeltaK(OrbLambda)_par_représentationsirréductibles} donne une description de la projection d'une orbite holomorphe en termes de représentations irréductibles du sous-groupe compact maximal. En alliant nos connaissances des représentations irréductibles de $K$ apparaissant dans le $K$-module $\C[\got{p}^-]$, grâce au Théorème de Schmid, et les résultats du problème de Horn dus à Klyachko et Knutson-Tao, 
nous allons être en mesure de calculer certains polyèdres moments associés aux projections d'orbites holomorphes.

Nous ne présenterons ici que les calculs pour les groupes classiques $Sp(2n,\R)$ et $SU(n,1)$ pour $n\geqslant 2$, et $SO^*(6)$, qui font intervenir le problème de Horn classique, c'est-à-dire, lorsque le sous-groupe compact maximal $K$ est $U(n)$.

\subsection{Projections d'orbites coadjointes holomorphes de $Sp(2n,\R)$}
\label{subsection:polyèdremoment_Sp(R2n)}

Le Théorème de Schmid appliqué au groupe $Sp(2n,\R)$ nous permet de donner la décomposition de $\C[\got{p}^-]$ en somme directe de représentations irréductibles, cf \eqref{theo:Sp(R2n)_decompositiondeC[p-]}.
%

\begin{prop}
\label{prop:Sp(R2n)_repr_et_Polyèdre}
Soient $\Lambda\in\wedge_{\Q,+}^*\cap\Chol$ et $\mu\in\wedge_{\Q,+}^*$. Alors $\mu\in\Delta_K(\Orb_{\Lambda})$ si et seulement s'il existe un entier $k\geqslant 1$ et un poids dominant positif $\delta$ tels que $(k\mu,k\Lambda)\in(\wedge^*)^2$ et $V_{\delta} \subset V_{k\mu} \otimes V_{k\Lambda^*}$.
\end{prop}

\begin{proof}
Ceci découle de la Proposition \ref{prop:DeltaK(OrbLambda)_par_représentationsirréductibles}, appliquée à \eqref{theo:Sp(R2n)_decompositiondeC[p-]} qui donne l'énoncé du Théorème de Schmid dans le cas du groupe $Sp(2n,\R)$. Remarquons que l'on ne peut appliquer la Proposition \ref{prop:DeltaK(OrbLambda)_par_représentationsirréductibles} que pour les $\delta=(\delta_1\geqslant\ldots\geqslant\delta_n)$ où les composantes $\delta_i$ sont paires, d'après \eqref{theo:Sp(R2n)_decompositiondeC[p-]}. Mais si $\delta$ est dominant positif, alors $2\delta$ a ses composantes paires et vérifie $V_{2\delta} \subset V_{2k\mu} \otimes V_{2k\Lambda^*}$ pour un certain entier $k>0$. Autrement dit, la condition de parité des composantes de $\delta$ n'est pas nécessaire ici.
\end{proof}

%

Rappelons que, pour tout $\lambda=(\lambda_1\geqslant\ldots\geqslant\lambda_n)\in\got{t}_+^*$, la notation $\lambda^*$ désigne l'élément dual $\lambda^* = (-\lambda_n\geqslant\ldots\geqslant-\lambda_1)$ de $\lambda$ dans $\got{t}_+^*$.

\begin{theo}
\label{theo:Sp(R2n)_polyèdremomentSpn}
Soient $\Lambda\in\wedge_{\Q,+}^*\cap\Chol$ et $\mu\in\wedge^*_{\Q,+}$. Les assertions suivantes sont équivalentes:
\begin{enumerate}
\item[(\emph{i})] $\mu\in\Delta_K(\Orb_{\Lambda})$;
\item[(\emph{ii})] $\forall(I,J,L)\in T_r^n$, on a $\sum_{i\in I}\mu_i + \sum_{j\in J}\Lambda_j^* \geqslant 0$;
\item[(\emph{iii})] pour tout $i=1,\ldots,n$, on a $\mu_i\geqslant \Lambda_i$.
\end{enumerate}
En particulier, les polyèdres convexes $\Delta_K(\Orb_{\Lambda})$ et $(\Lambda+\sum_{\beta\in\got{R}_n^+}\R_+\beta)\cap\got{t}_+^*$ sont égaux.
\end{theo}

\begin{proof}
L'implication (\emph{i})$\Rightarrow$(\emph{ii}) est une conséquence des Théorèmes \ref{theo:horn_klyachko} et \ref{theo:klyachko_knutson-tao} (tirés du problème de Horn) et de la Proposition \ref{prop:Sp(R2n)_repr_et_Polyèdre}. La réciproque est donnée par la solution triviale $\delta = 0$. En effet, si $\mu$ vérifie (\emph{ii}), alors les Théorèmes \ref{theo:horn_klyachko} et \ref{theo:klyachko_knutson-tao} impliquent que le poids trivial $\delta=0$ satisfait $V_0 \subset V_{\mu}\otimes V_{\Lambda^*}$. Or le poids trivial est dominant positif, donc la Proposition \ref{prop:Sp(R2n)_repr_et_Polyèdre} donne que $\mu$ appartient à $\Delta_K(\Orb_{\Lambda})$. Ainsi, (\emph{i})$\Leftrightarrow$(\emph{ii}).

Prouvons (\emph{ii})$\Rightarrow$(\emph{iii}). Quitte à remplacer $\mu$ et $\Lambda$ par respectivement $k\mu$ et $k\Lambda$, pour un certain entier $k\geqslant 1$, on peut supposer $\mu,\Lambda\in\wedge^*$. En effet, on a toujours $(k\Lambda)^* = k \Lambda^*$, car $k\geqslant 0$. Il suffit alors de voir que, pour tout $i$ appartenant à $\{1,\ldots,n\}$, le triplet $(\{i\},\{n+1-i\},\{n\})$ est dans $U^n_1 = T^n_1$. C'est le cas puisque $i + (n+1-i) = n + 1$. Par conséquent, les coordonnées de $\mu$ et $\Lambda$ doivent vérifier les conditions suivantes :
\[
\mu_i + \Lambda^*_{n+1-i} \geqslant 0, \mbox{ pour tout } i=1,\ldots,n.
\]
Revenant à la définition de $\Lambda^*$ en terme de coordonnées, c'est-à-dire l'écriture $\Lambda^* = (-\Lambda_n,\ldots,-\Lambda_1)$, on obtient $\mu_i \geqslant \Lambda_i$ pour tout $i=1,\ldots,n$.

Réciproquement, si $\mu$ vérifie (\emph{iii}), alors le poids dominant $\delta = (\mu_{i_1}-\Lambda_{i_1}\geqslant\ldots,\mu_{i_n}-\Lambda_{i_n})$ est positif. De plus, les poids $\mu$, $\Lambda$ et $\delta$ sont les spectres respectifs des matrices hermitiennes suivantes:
\[
\begin{array}{c}
\begin{array}{ccc}
A = \left(\begin{array}{ccccc}
k\mu_1 & & 0 \\
& \ddots & \\
0 & & k\mu_n
\end{array}\right), & &

B = \left(\begin{array}{ccccc}
-k\Lambda_1 & & 0 \\
& \ddots & \\
0& & -k\Lambda_n
\end{array}\right),
\end{array} \\
\\
C = \left(\begin{array}{ccccc}
k(\mu_1-\Lambda_1) & & 0 \\
& \ddots & \\
0 & & k(\mu_n-\Lambda_n)
\end{array}\right).
\end{array}
\]
Or ces trois matrices vérifient $A+B=C$. Le Théorème \ref{theo:horn_klyachko} implique par conséquent que $\mu$ vérifie l'assertion (\emph{ii}).

Les assertions (\emph{i}), (\emph{ii}) et (\emph{iii}) sont donc équivalentes. De plus, étant donné que les racines non compactes positives de $\got{sp}(\R^{2n})$ sont les formes linéaires $\beta_{i,j}=e_i^*+e_j^*$, pour $1\leqslant i\leqslant j\leqslant n$, il est clair que le cône $\sum_{\beta\in\got{R}_n^+}\R_+\beta$ est tout simplement le cône $\sum_{i=1}^n\R_+e_i^*$. On en déduit alors les égalités $\Delta_K(\Orb_{\Lambda}) = (\Lambda+\sum_{i=1}^n\R_+e_i^*)\cap\got{t}_+^* = (\Lambda+\sum_{\beta\in\got{R}_n^+}\R_+\beta)\cap\got{t}_+^*$.
\end{proof}

\begin{figure}[ht]
\begin{center}
\setlength{\unitlength}{1cm}
\begin{pspicture}(10,10)
\definecolor{Couleur1}{rgb}{0,0.5,0.7}
\definecolor{Couleur2}{rgb}{0.6,0.4,0.2}

\pspolygon[linecolor=lightgray,fillstyle=solid,linewidth=0pt,fillcolor=lightgray](2,0)(2,10)(3.5,10)(4.5,9.9)(5,9.8)(6,9.6)(7,9.2)(8,8.6)(8.8,7.8)(9.3,7)(9.7,6.3)(10,5)(9.7,3.7)(9.3,3)(8.8,2.2)(8,1.4)(7,0.8)(6,0.4)(5,0.2)(4.5,0.1)(3.5,0)
\pspolygon[linecolor=gray,fillstyle=solid,linewidth=0pt,fillcolor=gray](2,1)(2,9.8)(3.5,9.8)(4.5,9.7)(5.5,9.4)(6.5,9)(7.6,8.4)(8.5,7.5)
\pspolygon[linecolor=lightgray,fillstyle=solid,linewidth=0pt,fillcolor=lightgray](3.5,5)(2,6.5)(2,9.7)(3.5,9.7)(4.5,9.6)(5.5,9.3)(6.3,8.95)(7,8.5)

\psdot(3.5,5)

\put(2, 0){\line(0, 1){10}}
\put(1, 0){\line(1, 1){7.8}}
\put(3.5,5){\line(-1, 1){1.5}}
\put(3.5,5){\line(1, 1){3.5}}

\put(2, 1){\vector(0, 1){1}}
\put(2, 1){\vector(1, 0){1}}
\put(2, 1){\vector(1, 1){1}}
\put(2, 1){\vector(-1, 1){1}}

\put(3.4,4.5){$\Lambda$}
\put(2.9,0.7){$\alpha$}
\put(2.8,1.5){$\beta_{1,1}$}
\put(2.1,1.9){$\beta_{1,2}$}
\put(0.8,1.5){$\beta_{2,2}$}
\put(8,4){$\got{t}^*_+$}
\put(6.2,6.8){$\Chol$}
\put(3,7.5){$\Delta_K(\Orb_{\Lambda})$}

\end{pspicture}
\end{center}
\caption{Polyèdre de la projection d'une orbite coadjointe holomorphe de $Sp(\R^2)$}
\end{figure}
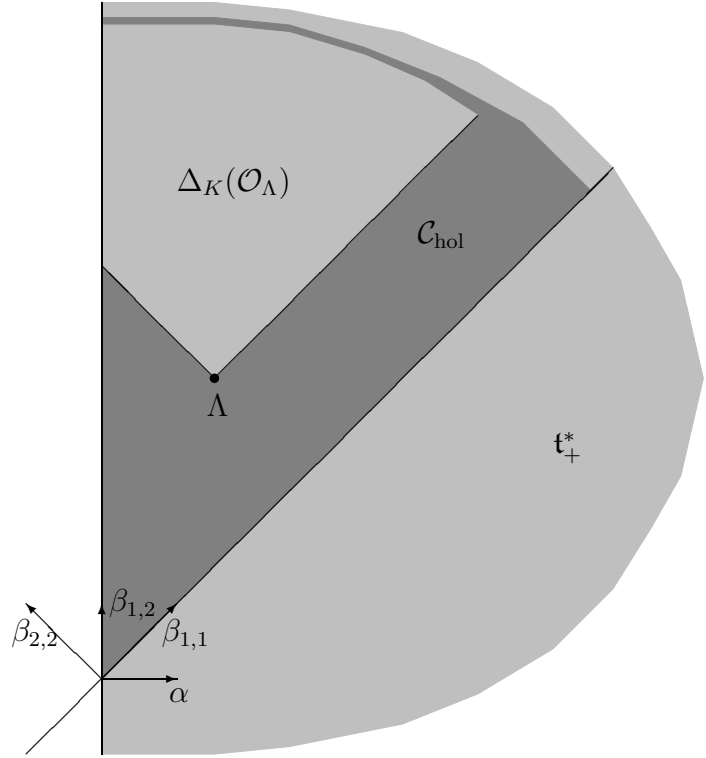

\subsection{Projections d'orbites coadjointes holomorphes de $SU(n,1)$}
\label{subsection:polyèdremoment_SU(n,1)}

Rappelons que les notations sur les différents éléments de $\got{su}(n,1)$ ont été introduites dans le paragraphe \ref{subsection:Notations_SU(p,q)}.

Grâce au Théorème de Schmid, on connait la décomposition de $\C[\got{p}^-]$ en somme directe de représentations irréductibles de $U(n)$ : ce sont les représentations irréductibles $V_{\delta}$, avec $\delta$ un poids dominant de la forme $\delta = m \beta_1$, où $m\in\N$, cf \eqref{theo:SU(n,1)_décompositiondeC[p-]}. En appliquant le Théorème \ref{theo:horn_klyachko}, on obtient aisément l'énoncé suivant.

\begin{prop}
\label{prop:SU(n,1)_CNS_polyèdre}
Soient $\Lambda\in\wedge_{\Q,+}^*\cap\Chol$ et $\mu\in\wedge^*_{\Q,+}$. On a $\mu\in\Delta_K(\Orb_{\Lambda})$ si et seulement s'il existe $R\in\Q_+$ tel que pour tout $r\in\{1,\ldots,n-1\}$, pour tout $(I,J,K)\in T^n_r$,
\begin{equation}
\label{eq:SU(n,1)_CNS_Polyèdre}
\sum_{i\in I} \mu_i + \sum_{j\in J} \Lambda_j^* \geqslant R\sum_{\ell\in L}(\beta_1)_{\ell},
\end{equation}
et
\[
\sum_{i = 1}^n \mu_i + \sum_{j=1}^n \Lambda_j^* = (n+1)R.
\]
\end{prop}

Nous allons maintenant présenter les équations qui déterminent le polyèdre de Kirwan $\Delta_K(\Orb_{\Lambda})$, pour $\Lambda\in\wedge_{\Q,+}^*\cap\Chol$. Ces équations font intervenir uniquement les directions des faces du cône engendré par les racines non compactes positives de $\got{su}(n,1)$, c'est-à-dire, par les $n$ vecteurs $\beta_1,\ldots,\beta_n$. Ces $n$ vecteurs formant une base de $\got{t}^*$, chaque $(n-1)$-uplet que l'on pourra obtenir à partir de cette famille de vecteurs, va nous donner un hyperplan contenant la face engendrée par ces $n-1$ racines. On peut par exemple chercher, pour chaque $k\in\{1,\ldots,n\}$, un vecteur $\chi_k\in\got{t}$ tel que $\langle\chi_k,\beta_i\rangle = 0 $ si $i\neq k$, et $\langle\chi_k,\beta_k\rangle> 0$. On aura alors :
\[
Cone(\beta_1,\ldots,\beta_n) = \{\xi\in\got{t}^*; \ \langle\xi,\chi_k\rangle\geqslant 0, \forall\ 1\leqslant k\leqslant n\}. 
\]
On prendra pour cela
\[
\chi_k = (n+1)e_k - \sum_{i=1}^n e_i.
\]
Par le calcul, on voit que l'on a bien $\langle\beta_i,\chi_k\rangle = 0$ si $i\neq k$, et $\langle\beta_k,\chi_k\rangle = n+1>0$.

Le but de cette section est de prouver le théorème suivant.

\begin{theo}
\label{theo:SU(n,1)_PolyèdreMoment}
Soit $\Lambda$ dans $\wedge_{\Q,+}^*\cap\Chol$. Alors, pour $G = SU(n,1)$, on  a
\[
\Delta_K(\Orb_{\Lambda}) = \{\xi\in\got{t}^*; \langle\xi,\chi_1\rangle\geqslant\langle\Lambda,\chi_1\rangle\geqslant\langle\xi,\chi_2\rangle\geqslant\langle\Lambda,\chi_2\rangle\geqslant\ldots\geqslant\langle\xi,\chi_n\rangle\geqslant\langle\Lambda,\chi_n\rangle\}.
\]
\end{theo}

Pour simplifier les notations dans le preuve de ce théorème, notons $\Pn = \{\xi\in\got{t}^*; \langle\xi,\chi_1\rangle\geqslant\langle\Lambda,\chi_1\rangle\geqslant\langle\xi,\chi_2\rangle\geqslant\langle\Lambda,\chi_2\rangle\geqslant\ldots\geqslant\langle\xi,\chi_n\rangle\geqslant\langle\Lambda,\chi_n\rangle\}$. Dans la suite de ce paragraphe, nous désignerons les équations apparaissant ci-dessus de la manière suivante:
\[
\begin{array}{cl}
(A_k): & \langle\xi,\chi_k\rangle\geqslant\langle\Lambda,\chi_k\rangle, \ \mbox{ pour } k=1,\ldots,n; \\
&\\
(B_k): & \langle\Lambda,\chi_k\rangle\geqslant\langle\xi,\chi_{k+1}\rangle, \ \mbox{ pour } k=1,\ldots,n-1.
\end{array}
\]

\begin{rema}
Pour $k=1,\ldots,n-1$, en sommant les équations $(A_k)$ et $(B_k)$, on obtient:
\[
(A_k) + (B_k) : \ \xi_k - \xi_{k+1} \geqslant 0.
\]
Par conséquent, un élément de $\Pn$ sera dans $\got{t}^*_+$. Cela est cohérent avec le résultat que l'on veut obtenir, c'est-à-dire $\Pn=\Delta_K(\Orb_{\Lambda})$ est un polyèdre convexe contenu dans $\got{t}^*_+$.
\end{rema}

Le polyèdre $\Pn$ est rationnel, donc pour montrer que $\Delta_K(\Orb_{\Lambda})$ est égal à $\Pn$, nous allons prouver l'égalité $\Delta_K(\Orb_{\Lambda})\cap\wedge^*_{\Q} = \Pn\cap\wedge^*_{\Q}$.

\paragraph*{Première étape : montrons que $\Delta_K(\Orb_{\Lambda})\cap\wedge^*_{\Q} \subset \Pn\cap\wedge^*_{\Q}$}

Les équations de type $(A_k)$ sont les équations qui définissent le cône engendré par les racines non compactes positives.

\begin{prop}
\label{prop:SU(n,1)_Equations_type1}
Soit $\mu$ un élément de $\Delta_K(\Orb_{\Lambda})\cap \wedge^*_{\Q,+}$. Alors, pour tout entier $k\in\{1,\ldots,n\}$, on a $\langle\mu - \Lambda,\chi_k\rangle \geqslant 0$.
\end{prop}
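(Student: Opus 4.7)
The plan is to apply Proposition \ref{prop:SU(n,1)_CNS_poly�dre} with a single well-chosen triple from $T_1^n$, and then combine the resulting inequality with the trace identity furnished by the same proposition. First, I would let $R\in\Q_+$ be the scalar produced by Proposition \ref{prop:SU(n,1)_CNS_poly�dre} applied to $\mu\in\Delta_K(\Orb_{\Lambda})\cap\wedge^*_{\Q,+}$. The trace identity reads $\sum_{i=1}^n\mu_i + \sum_{j=1}^n\Lambda^*_j = (n+1)R$, and since $\Lambda^*_j = -\Lambda_{n+1-j}$ implies $\sum_j\Lambda^*_j = -\sum_j\Lambda_j$, this rewrites as
\[
\sum_{i=1}^n(\mu_i - \Lambda_i) = (n+1)R.
\]
Expanding $\langle\mu-\Lambda,\chi_k\rangle = (n+1)(\mu_k-\Lambda_k) - \sum_i(\mu_i-\Lambda_i) = (n+1)(\mu_k-\Lambda_k) - (n+1)R$, I see that the proposition reduces to the single pointwise bound $\mu_k - \Lambda_k \geq R$ for every $k\in\{1,\ldots,n\}$.

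Next, I would produce this bound from the Horn-type inequality \eqref{eq:SU(n,1)_CNS_Poly�dre} applied to the triple $(I,J,L) = (\{k\},\{n+1-k\},\{n\})$. The membership $(\{k\},\{n+1-k\},\{n\})\in T_1^n = U_1^n$ is immediate from the defining equality $k + (n+1-k) = n + 1\cdot(1+1)/2$. For this triple, \eqref{eq:SU(n,1)_CNS_Poly�dre} becomes
\[
\mu_k + \Lambda^*_{n+1-k} \geq R\,(\beta_1)_n.
\]
Under the $U(n)$-identification recalled in \S\ref{subsection:Notations_SU(p,q)}, the highest weight of $\got{p}^+\cong\det\otimes\C^n$ is $\beta_1 = (2,1,1,\ldots,1)$, so $(\beta_1)_n = 1$; and $\Lambda^*_{n+1-k} = -\Lambda_k$. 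The inequality thus collapses to $\mu_k - \Lambda_k \geq R$, which is exactly what is needed.

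There is no substantive obstacle in this argument: the entire proof consists in selecting the correct triple of $T_1^n$ and performing the arithmetic verification. The only conceptual input is Proposition \ref{prop:SU(n,1)_CNS_poly�dre}, which already encodes the quantum-to-classical transfer via Schmid's theorem and Knutson--Tao. The remaining inclusion $\Pn\cap\wedge^*_{\Q}\subset\Delta_K(\Orb_{\Lambda})\cap\wedge^*_{\Q}$ would then be the genuinely non-trivial half of Theorem \ref{theo:SU(n,1)_Poly�dreMoment}, requiring the construction of explicit Hermitian matrices realizing the Horn inequalities for triples in $T_r^n$ with $r>1$; but that is the content of a later argument.
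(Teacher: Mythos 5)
Your proof is correct and follows essentially the same route as the paper: both invoke Proposition \ref{prop:SU(n,1)_CNS_poly�dre} for the trace identity and the Horn inequality attached to the triple $(\{k\},\{n+1-k\},\{n\})\in U_1^n=T_1^n$, using $(\beta_1)_n=1$ and $\Lambda^*_{n+1-k}=-\Lambda_k$. The only difference is presentational, in that you isolate the pointwise bound $\mu_k-\Lambda_k\geqslant R$ before recombining with the trace condition, whereas the paper combines the two inequalities directly.
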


\begin{proof}
Soit $\mu$ un élément de $\Delta_K(\Orb_{\Lambda})\cap \wedge^*_{\Q,+}$. Pour $k\in\{1,\ldots,n\}$ fixé, on a $\langle\mu-\Lambda,\chi_k\rangle \geqslant 0$ si et seulement si
\[
(n+1)\mu_k - \sum_{i=1}^n \mu_i \geqslant (n+1)\Lambda_k -\sum_{i=1}^n\Lambda_i.
\]
D'après la Proposition \ref{prop:SU(n,1)_CNS_polyèdre}, comme $\mu$ appartient à $\Delta_K(\Orb_{\Lambda})$, $\mu$ vérifie la \og condition de trace\fg{} $\sum_{i=1}^n \mu_i = \sum_{j=1}^n \Lambda_j + R(n+1)$, ainsi que les $n$ conditions $\mu_k \geqslant \Lambda_k + R$, pour $k=1,\ldots,n$ et $R$ un certain rationnel positif. Ces dernières conditions proviennent du fait que le triplet $(\{k\},\{n-k+1\},\{n\})$ appartient à $U^n_1 = T^n_1$, avec $\Lambda^*_{n-k+1} = -\Lambda_k$, et que la $n$-ième coordonnée $(\beta_1)_n$ de $\beta_1$ vaut $1$. Pour $k$ fixé dans $\{1,\ldots,n\}$, on associe ces deux équations pour obtenir
\[
\begin{array}{ccl}
(n+1)\mu_k - \sum_{i=1}^n\mu_i & \geqslant & -\sum_{j=1}^n \Lambda_j - R(n+1) + (n+1)(\Lambda_k + R) \\
& \geqslant & (n+1)\Lambda_k - \sum_{j=1}^n \Lambda_j.
\end{array}
\]
Et l'équivalence précédente nous donne $\langle\mu - \Lambda,\chi_k\rangle \geqslant 0$.
\end{proof}

Les équations de type $(B_k)$ font à nouveau intervenir les équations des hyperplans vectoriels orthogonaux aux $\chi_k$, mais cette fois pour deux entiers $k$ successifs.

\begin{prop}
\label{prop:SU(n,1)_Equations_type2}
Soit $\mu$ un élément de $\Delta_K(\Orb_{\Lambda})\cap \wedge^*_{\Q,+}$. Alors, pour tout entier $k\in\{1,\ldots,n-1\}$, on a
\[
\langle\mu,\chi_{k+1}\rangle \leqslant \langle\Lambda,\chi_k\rangle.
\]
\end{prop}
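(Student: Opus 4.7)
The plan is to apply the Horn-type characterization of $\Delta_K(\Orb_{\Lambda})$ from Proposition \ref{prop:SU(n,1)_CNS_poly�dre} to a carefully chosen triplet of rank $n-1$. First, I would reformulate the target inequality in coordinates: since $\chi_k = (n+1)e_k - \sum_i e_i$, the statement $\langle\mu,\chi_{k+1}\rangle \leqslant \langle\Lambda,\chi_k\rangle$ unpacks as $(n+1)\mu_{k+1} - \sum_i \mu_i \leqslant (n+1)\Lambda_k - \sum_i \Lambda_i$. Using the trace equality $\sum_i \mu_i - \sum_j \Lambda_j = (n+1)R$ supplied by Proposition \ref{prop:SU(n,1)_CNS_poly�dre} (rewritten via $\Lambda^*_j = -\Lambda_{n-j+1}$), this collapses to the much cleaner goal $\mu_{k+1} \leqslant \Lambda_k + R$.

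The second step is to pinpoint a single triplet in $T^n_{n-1}$ whose Horn inequality is tuned to yield exactly this bound. I would take the complements of singletons
\[
I = \{1,\ldots,n\}\setminus\{k+1\}, \quad J = \{1,\ldots,n\}\setminus\{n-k+1\}, \quad L = \{1,\ldots,n\}\setminus\{2\}.
\]
Direct arithmetic confirms $(I,J,L) \in U^n_{n-1}$, since $(k+1) + (n-k+1) = n + 2$. Granting the triplet lies in $T^n_{n-1}$, the associated Horn inequality reads
\[
\sum_{i\neq k+1}\mu_i + \sum_{j\neq n-k+1}\Lambda^*_j \geqslant R\sum_{\ell\neq 2}(\beta_1)_\ell.
\]
Substituting the trace identity and using $\Lambda^*_{n-k+1} = -\Lambda_k$, the left side simplifies to $(n+1)R - \mu_{k+1} + \Lambda_k$; on the right side, $(\beta_1)_1 = 2$ and $(\beta_1)_\ell = 1$ for $\ell \geqslant 2$, giving $Rn$. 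Rearranging yields $\mu_{k+1} \leqslant \Lambda_k + R$, as needed.

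The main obstacle is verifying that $(I,J,L)$ genuinely belongs to the recursive set $T^n_{n-1}$ and not merely to $U^n_{n-1}$. I would handle this via Fulton's Theorem \ref{theo:Fultonthm2}: the partitions computed from the triplet are $\lambda(I) = (1^{n-1-k},0^k)$, $\lambda(J) = (1^{k-1},0^{n-k})$ and $\lambda(L) = (1^{n-2},0)$, which are the spectra of orthogonal projections of ranks $n-1-k$, $k-1$ and $n-2$ respectively. Since $(n-1-k) + (k-1) = n-2$, one can realize $A$ and $B$ as orthogonal projections onto two orthogonal subspaces whose direct sum is a fixed $(n-2)$-dimensional subspace of $\mathbb{C}^{n-1}$, so that $A + B = C$ has spectrum $\lambda(L)$. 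This combinatorial verification is precisely the type announced for Annexe \ref{chap:exemples_de_triplets_pour_Horn}, to which the argument would naturally defer.
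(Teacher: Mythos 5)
Votre preuve est correcte et suit essentiellement la même démarche que celle du texte : même triplet $(\{1,\ldots,n\}\setminus\{k+1\},\{1,\ldots,n\}\setminus\{n-k+1\},\{1,\ldots,n\}\setminus\{2\})$, même utilisation de la Proposition \ref{prop:SU(n,1)_CNS_poly�dre} avec l'égalité de trace et les valeurs $\Lambda^*_{n-k+1}=-\Lambda_k$, $\sum_{\ell\neq 2}(\beta_1)_{\ell}=n$ pour aboutir à $\mu_{k+1}\leqslant\Lambda_k+R$. Votre vérification de l'appartenance à $T^n_{n-1}$ par des projections orthogonales n'est qu'une spécialisation directe du calcul matriciel de la Proposition \ref{prop:SU(n,1)_IJL_1dansL} (Annexe \ref{chap:exemples_de_triplets_pour_Horn}), que le texte invoque via le Lemme \ref{lemm:SU(n,1)_IJL_k_n-k+1_n}.
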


\begin{lemm}
\label{lemm:SU(n,1)_IJL_k_n-k+1_n}
Soit $k\in\{2,\ldots,n\}$. Notons les trois ensembles $I =\{1,\ldots,n\}\backslash\{k\}$, $J=\{1,\ldots,n\}\backslash\{n-k+2\}$, et $L = \{1,\ldots,n\}\backslash\{2\}$. On a $(I,J,L) \in T^n_{n-1}$.
\end{lemm}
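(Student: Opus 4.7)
L'approche consiste \`a utiliser le Th\'eor\`eme \ref{theo:Fultonthm2}, qui r\'eduit la v\'erification $(I,J,L)\in T^n_{n-1}$ \`a l'existence de trois matrices hermitiennes $A$, $B$, $C$ de taille $(n-1)\times(n-1)$, avec $A+B=C$ et de spectres respectifs $\lambda(I)$, $\lambda(J)$ et $\lambda(L)$. Je commencerais donc par calculer explicitement ces trois partitions.

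En \'ecrivant les \'el\'ements de $I=\{1,\ldots,n\}\setminus\{k\}$ dans l'ordre croissant $i_1<\ldots<i_{n-1}$, on a $i_p=p$ pour $p\leqslant k-1$ et $i_p=p+1$ pour $p\geqslant k$, donc $i_p-p\in\{0,1\}$, valant $1$ exactement pour $p\geqslant k$. La d\'efinition de $\lambda(I)$ donne alors une partition constitu\'ee de $n-k$ uns suivis de $k-1$ z\'eros. Un calcul analogue pour $J=\{1,\ldots,n\}\setminus\{n-k+2\}$ donne $\lambda(J)$ constitu\'ee de $k-2$ uns suivis de $n-k+1$ z\'eros. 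Enfin, pour $L=\{1,3,4,\ldots,n\}$, on a $\ell_1=1$ et $\ell_h=h+1$ pour $h\geqslant 2$, d'o\`u $\lambda(L)$ constitu\'ee de $n-2$ uns suivis d'un z\'ero.

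L'observation cl\'e est que ces trois partitions sont \`a valeurs dans $\{0,1\}$, de sorte que les r\'ealiser comme spectres revient \`a construire deux projections orthogonales $P_A$ et $P_B$ de rangs respectifs $n-k$ et $k-2$, telles que $P_A+P_B$ soit encore une projection orthogonale, n\'ecessairement de rang $n-2$. Puisque les rangs v\'erifient $(n-k)+(k-2)=n-2\leqslant n-1$, il suffit de choisir $P_A$ et $P_B$ d'images orthogonales dans $\C^{n-1}$. Explicitement, dans une base orthonorm\'ee $(e_1,\ldots,e_{n-1})$ de $\C^{n-1}$, on prend $P_A$ la projection orthogonale sur $\mathrm{Vect}(e_1,\ldots,e_{n-k})$ et $P_B$ la projection orthogonale sur $\mathrm{Vect}(e_{n-k+1},\ldots,e_{n-2})$. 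Alors $P_A+P_B$ est la projection orthogonale sur $\mathrm{Vect}(e_1,\ldots,e_{n-2})$, dont le spectre est pr\'ecis\'ement $\lambda(L)$. Les cas extr\^emes $k=2$ (o\`u $P_B=0$) et $k=n$ (o\`u $P_A=0$) sont couverts par la m\^eme construction.

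La principale difficult\'e attendue se trouve en amont, dans le calcul pr\'ecis des trois partitions et dans la reconnaissance de leur forme tr\`es particuli\`ere --- chacune \'etant une suite de $0$ et de $1$. Une fois cette forme identifi\'ee, l'application du Th\'eor\`eme \ref{theo:Fultonthm2} est imm\'ediate, puisqu'elle se ram\`ene \`a la d\'ecomposition \'el\'ementaire d'une projection orthogonale en somme de deux projections orthogonales de rangs compl\'ementaires.
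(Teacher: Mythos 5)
Votre preuve est correcte : les trois partitions valent bien $\lambda(I)=(1^{n-k},0^{k-1})$, $\lambda(J)=(1^{k-2},0^{n-k+1})$ et $\lambda(L)=(1^{n-2},0)$, et vos deux projecteurs orthogonaux d'images orthogonales dans $\C^{n-1}$ fournissent des matrices hermitiennes $A$, $B$, $C=A+B$ de taille $(n-1)\times(n-1)$ ayant ces spectres, ce qui permet de conclure par le Th\'eor\`eme \ref{theo:Fultonthm2}. La diff\'erence avec le texte est surtout d'organisation : le papier ne refait pas ce calcul, mais observe que $(I,J,L)$ est exactement le cas $r=n-1$ de la Proposition \ref{prop:SU(n,1)_IJL_1dansL} --- en effet, puisque $k\geqslant 2$ on a $1\in I$, et un calcul imm\'ediat donne $I^*=\{1,\ldots,n\}\setminus\{k-1\}$, donc $\overline{I^*}=\{1,\ldots,n\}\setminus\{n-k+2\}=J$, et $\tilde{L}^n_{n-1}=\{1,\ldots,n\}\setminus\{2\}=L$ --- proposition dont la preuve, donn\'ee en Annexe \ref{chap:exemples_de_triplets_pour_Horn}, repose sur le m\^eme Th\'eor\`eme \ref{theo:Fultonthm2} avec des matrices diagonales explicites. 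Votre argument direct utilise donc le m\^eme outil, appliqu\'e au cas particulier : il a l'avantage d'\^etre autonome et d'exploiter joliment le fait que les spectres sont \`a valeurs dans $\{0,1\}$ (d\'ecomposition d'un projecteur orthogonal en somme de deux projecteurs d'images orthogonales), tandis que la voie du papier obtient le lemme sans calcul suppl\'ementaire \`a partir d'une proposition g\'en\'erale dont la suite de la d\'emonstration a de toute fa\c{c}on besoin pour tout $r$.
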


\begin{proof}[Preuve du lemme]
C'est un cas particulier (pour $r=n-1$) de la Proposition \ref{prop:SU(n,1)_IJL_1dansL}.
\end{proof}

\begin{proof}[Preuve de la Proposition \ref{prop:SU(n,1)_Equations_type2}]
Comme dans la preuve de la Proposition \ref{prop:SU(n,1)_Equations_type1}, on a équivalence entre $\langle\mu,\chi_{k+1}\rangle \leqslant \langle\Lambda,\chi_k\rangle$ et l'équation $(\sum_{i=1}^n \mu_i) - (n+1)\mu_{k+1} \geqslant (\sum_{i=1}^n\Lambda_i) - (n+1)\Lambda_k$. Supposons que $\mu$ est un élément de $\Delta_K(\Orb_{\Lambda})\cap \wedge^*_{\Q,+}$. D'après la Proposition \ref{prop:SU(n,1)_CNS_polyèdre} et le Lemme \ref{lemm:SU(n,1)_IJL_k_n-k+1_n}, il existe $R\in\Q_+$ tel que $\sum_{i=1}^n\mu_i + \sum_{j=1}^n\Lambda^*_j = R(n+1)$ et, pour tout $k\in\{1,\ldots,n-1\}$,  $\sum_{i\neq k+1}\mu_i + \sum_{j\neq n-k+1}\Lambda^*_j \geqslant R\sum_{\ell\neq 2}(\beta_1)_{\ell}$. Pour $1\leqslant k\leqslant n-1$ fixé, on a
\[
-n\mu_{k+1} = n\sum_{i\neq k+1}\mu_i - n(n+1)R - n\sum_{j=1}^n \Lambda_j.
\]
Ceci nous donne :
\[
\begin{array}{ccl}
(\sum_{i=1}^n\mu_i) - (n+1)\mu_{k+1} & = & (\sum_{i\neq k+1}\mu_i) - n\mu_{k+1} \\
& = & (n+1)(\sum_{i\neq k+1}\mu_i) - n(n+1)R - n\sum_{j=1}^n \Lambda_j \\
& \geqslant & (n+1)(nR + \sum_{j\neq k}\Lambda_j) - n(n+1)R - n\sum_{j=1}^n \Lambda_j \\
& \geqslant & \sum_{j=1}^n \Lambda_j - (n+1)\Lambda_k.
\end{array}
\]
Dans les inégalités ci-dessus, on a utilisé le fait que $\sum_{\ell\neq 2}(\beta_1)_{\ell} = n$ et que $\Lambda^*_{n-k+1} = \Lambda_k$. On en déduit que $\langle\mu,\chi_{k+1}\rangle \leqslant \langle\Lambda,\chi_k\rangle$.
\end{proof}

Les Propositions \ref{prop:SU(n,1)_Equations_type1} et \ref{prop:SU(n,1)_Equations_type2} prouvent bien que l'on a l'inclusion $\Delta_K(\Orb_{\Lambda})\cap\wedge^*_{\Q} \subset \Pn\cap\wedge^*_{\Q}$.

\paragraph*{Seconde partie : montrons que $\Pn\cap\wedge^*_{\Q}\subset\Delta_K(\Orb_{\Lambda})\cap\wedge^*_{\Q}$}

Fixons, pour la suite de cette section, un élément $\mu$ de $\Pn\cap\wedge^*_{\Q}$. Un tel élément vérifiera donc les équations $(A_k)$ et $(B_k)$. En particulier, si on somme toutes les équations $(A_k)$, on obtient:
\[
\sum_{k=1}^n(A_k) : \ \sum_{k=1}^n \mu_k \geqslant \sum_{k=1}^n\Lambda_k.
\]
Puisque $\mu$ et $\Lambda$ appartiennent à $\wedge^*_{\Q}$, il existe un rationnel $R$ positif tel que
\[
\begin{array}{cc}
(T) : & \sum_{i=1}^n\mu_i = \sum_{j=1}^n\Lambda_j + (n+1)R,
\end{array}
\]
et ceci est équivalent à $\sum_{i=1}^n\mu_i + \sum_{j=1}^n\Lambda_j^* = (n+1)R$.

Le but est d'appliquer la Proposition \ref{prop:SU(n,1)_CNS_polyèdre} avec ce nombre rationnel $R$ obtenu. Nous devons donc montrer que pour tout $r=1,\ldots,n-1$ et tout triplet $(I,J,L)\in T^n_r$, on a $\sum_{i\in I}\mu_i + \sum_{j\in J}\Lambda^*_j \geqslant R\sum_{\ell\in L}(\beta_1)_{\ell}$.

\bigskip

Deux types de triplets de $T_r^n$ ont une importance particulière dans ce contexte.

Pour une partie $I \subset\{1,\ldots,n\}$ avec $|I| = r\leqslant n-1$, nous noterons $\overline{I} = \{n-i+1; \ i\in I\}$ et $L^n_r = \{n-r+1< n-r+2 < \ldots < n-1 < n\}$. Les trois sous-ensembles $I$, $\overline{I}$ et $L^n_r$ de $\{1,\ldots,n\}$ sont de même cardinal $r$.

Si, de plus, $1\in I$, nous noterons $I^* = \left\{i-1; \ i\in I\backslash\{1\}\right\} \cup \{n\}$ et $\tilde{L}^n_r = \{1<n-r+2<n-r+3<\ldots<n-1<n\}$. On indexe les éléments de $I$ par ordre croissant, $I=\{1<i_2<\ldots<i_r\}$, ce qui nous donne pour les autres ensembles, $I^* = \{i_2-1<\ldots<i_r-1<n\}$, et $\overline{I^*} = \{1<n-i_r+2<\ldots<n-i_2+2\}$.

\begin{lemm}
\label{lemm:SU(n,1)_IJL_types_spéciaux}
Soit $I \subset\{1,\ldots,n\}$ avec $|I| = r\leqslant n-1$. Alors, on a
\begin{equation}
\label{eq:SU(n,1)_IJL_type1}
\sum_{i\in I} \mu_i + \sum_{i\in \overline{I}}\Lambda_i^* \geqslant R\sum_{\ell\in L^n_r}(\beta_1)_{\ell} \ .
\end{equation}
Si, de plus, $1\in I$, alors
\begin{equation}
\label{eq:SU(n,1)_IJL_type2}
\sum_{i\in I} \mu_i + \sum_{i\in \overline{I^*}}\Lambda^*_i \geqslant R\sum_{\ell\in \tilde{L}^n_r}(\beta_1)_{\ell}\ .
\end{equation}
\end{lemm}


\begin{proof}[Démonstration du Lemme \ref{lemm:SU(n,1)_IJL_types_spéciaux}]
Soit $I \subset\{1,\ldots,n\}$ avec $|I| = r\leqslant n-1$. Puisque $\mu$ est dans $\Pn$, il vérifie $(T)$, et $(A_k)$ pour tout $k\in I$. Donc, il vérifie aussi l'équation $r(T) + \sum_{i\in I} (A_i)$, c'est-à-dire
\[
\sum_{j=1}^n r\mu_j + \sum_{i\in I}\left((n+1)\mu_i - \sum_{j=1}^n\mu_j\right)\geqslant \sum_{j=1}^n r\Lambda_j + \sum_{i\in I}\left((n+1)\Lambda_i - \sum_{j=1}^n\Lambda_j\right)+ r(n+1)R,
\]
ce qui est équivalent à $\sum_{i\in I} \mu_i \geqslant \sum_{i\in I}\Lambda_i + rR$, qui est également équivalente à \eqref{eq:SU(n,1)_IJL_type1} en changeant de côté les termes en $\Lambda$. 

Supposons maintenant que $1\in I$. Nous allons voir que $\mu$ vérifie l'équation \eqref{eq:SU(n,1)_IJL_type2}. Puisque $1\in I$, on a $1\notin\{1,\ldots,n\}\backslash I$, et l'équation $(r+1)(T) + \sum_{k\in\{1,\ldots,n\}\backslash I}(B_{k-1})$ donne:
\begin{multline*}
\displaystyle\sum_{j=1}^n (r+1)\mu_j + \sum_{k\in\{1,\ldots,n\}\backslash I}\left(\sum_{j=1}^n\mu_j - (n+1)\mu_k\right) \\
\geqslant \sum_{j=1}^n (r+1)\Lambda_j +  \displaystyle\sum_{k\in\{1,\ldots,n\}\backslash I}\left(\sum_{j=1}^n\Lambda_j - (n+1)\Lambda_{k-1}\right)+ (r+1)(n+1)R.
\end{multline*}
Cette inégalité est équivalente à la suivante:
\[
(n+1)\sum_{i\in I}\mu_i \geqslant (n+1)\left(\sum_{\stackrel{i\in\{1,\ldots,n-1\}}{i+1\in I}}\Lambda_i + \Lambda_n + (r+1)R\right),
\]
et en divisant par $(n+1)$, on obtient
\[
\sum_{i\in I}\mu_i \geqslant \sum_{i\in I^*}\Lambda_i + (r+1)R,
\]
ou, ce qui est équivalent,
\[
\sum_{i\in I} \mu_i + \sum_{i\in \overline{I^*}}\Lambda_i^* \geqslant R\sum_{\ell\in \tilde{L}^n_r}(\beta_1)_{\ell}\ ,
\]
car $1\in \tilde{L}^n_r$ et donc $\sum_{\ell\in \tilde{L}^n_r}(\beta_1)_{\ell} = r+1$. Donc l'équation \eqref{eq:SU(n,1)_IJL_type2} est bien vérifiée par $\mu$.
\end{proof}

Les Propositions \ref{prop:SU(n,1)_I_Ibar_Lnr} et \ref{prop:SU(n,1)_IJL_1dansL} montrent que les triplets $(I,\overline{I},L^n_r)$ et $(I,\overline{I^*},\tilde{L}^n_r)$ appartiennent à $T_r^n$. Par conséquent, les équations \eqref{eq:SU(n,1)_IJL_type1} et \eqref{eq:SU(n,1)_IJL_type2} nous donnent une partie des équations que nous recherchons. Nous allons tout de suite voir que l'on peut s'y ramener à chaque fois.

Partons donc de $(I,J,L)$ quelconque dans $T^n_r$, pour un $r\in\{1,\ldots,n-1\}$. Nous voulons obtenir \eqref{eq:SU(n,1)_CNS_Polyèdre}. Deux cas se présentent à nous.

Tout d'abord, lorsque $1\notin L$, on a l'inégalité $j_k \leqslant n-i_{r-k+1}+1$ pour tout $1\leqslant k \leqslant r$. En effet, $(I,J,L)\in T^n_r$ et $(\{r-k+1\},\{k\},\{r\})\in T^r_1 = U^r_1$. Donc $i_{r-k+1} + j_k \leqslant l_r+1 \leqslant n+1$. D'où l'inégalité $j_k \leqslant n-i_{r-k+1}+1 =: \overline{i}_k$. Par conséquent, on a $\Lambda^*_{j_k} \geqslant \Lambda^*_{\overline{i}_k}$. Nous obtenons l'inégalité attendue :
\[
\begin{array}{rcl}
\displaystyle\sum_{i\in I}\mu_i + \sum_{j\in J}\Lambda^*_j = \sum_{i\in I}\mu_i + \sum_{k=1}^r\Lambda^*_{j_k} & \geqslant & \displaystyle\sum_{i\in I}\mu_i + \sum_{k=1}^r\Lambda^*_{\overline{i}_k} = \sum_{i\in I}\mu_i + \sum_{\overline{i}\in \overline{I}}\Lambda^*_{\overline{i}} \\
& \geqslant & R\displaystyle\sum_{\ell\in L^n_r}(\beta_1)_{\ell} = rR = R\displaystyle\sum_{\ell\in L}(\beta_1)_{\ell}.
\end{array}
\]
Nous avons appliqué l'inégalité \eqref{eq:SU(n,1)_IJL_type1} démontrée ci-dessus.

Enfin, si $1\in L$, prenons $2\leqslant k\leqslant r$. Nous voulons montrer que $j_k \leqslant (\overline{i^*})_k = n-i_{r-k+2}+2$. Pour ce faire, on doit d'abord remarquer que $(\{1,r-k+2\},\{1,k\},\{1,r\})\in T^r_2$. Donc $i_1+i_{r-k+2} +j_1+j_k\leqslant \ell_1+\ell_r + 3$. Or  $\ell_1 = 1$ et $(\{1\},\{1\},\{1\})\in T^r_1$, ce qui implique que $i_1+j_1\leqslant\ell_1+1 = 2$ et, comme $i_1,j_1\in\{1,\ldots,n\}$, on a nécessairement $i_1=j_1=\ell_1 = 1$. Puisque $(I,J,L)\in T_r^n$, on obtient bien $j_k\leqslant n-i_{r-k+2}+2$. Et pour $k=1$, on a $j_1 = 1 \leqslant (\overline{i^*})_1 = 1$. D'où,
\[
\begin{array}{rcl}
\displaystyle\sum_{i\in I}\mu_i + \sum_{j\in J}\Lambda^*_j = \sum_{i\in I}\mu_i + \sum_{k=1}^r\Lambda^*_{j_k} & \geqslant & \displaystyle\sum_{i\in I}\mu_i + \sum_{k=1}^r\Lambda^*_{(\overline{i^*})_k} = \sum_{i\in I}\mu_i + \sum_{j\in \overline{I^*}}\Lambda^*_{j} \\
& \geqslant & R\displaystyle\sum_{\ell\in \tilde{L}^n_r}(\beta_1)_{\ell} = (r+1)R = R\displaystyle\sum_{\ell\in L}(\beta_1)_{\ell},
\end{array}
\]
l'équation que nous voulions. Nous avons utilisé ici le fait que $1\in I$, car $1\in L$, et avons appliqué l'inégalité \eqref{eq:SU(n,1)_IJL_type2} obtenue précédemment pour $(I,\overline{I^*},\tilde{L}^n_r)$.

Pour récapituler, nous avons montré que si $\mu$ appartient à $\Pn\cap\wedge^*_{\Q}$, alors il existe un rationnel positif $R$ tel que $\mu$ vérifie $\sum_{i=1}^n \mu_i + \sum_{j=1}^n\Lambda^*_j = (n+1)R$ et, pour tout $r\in\{1,\ldots,n-1\}$ et tout $(I,J,L)\in T^n_r$,
\[
\sum_{i\in I} \mu_i + \sum_{j\in J} \Lambda^*_j \geqslant R\sum_{\ell\in L}(\beta_1)_{\ell}.
\]
La Proposition \ref{prop:SU(n,1)_CNS_polyèdre} permet maintenant de conclure que $\Pn\cap\wedge^*_{\Q}\subset\Delta_K(\Orb_{\Lambda})\cap\wedge^*_{\Q}$. Cela prouve le Théorème \ref{theo:SU(n,1)_PolyèdreMoment}.

\begin{figure}[ht]
\begin{center}
\setlength{\unitlength}{1cm}
\begin{pspicture}(10,10)
\definecolor{Couleur1}{rgb}{0,0.5,0.7}
\definecolor{Couleur2}{rgb}{0.6,0.4,0.2}

\pspolygon[linecolor=lightgray,fillstyle=solid,linewidth=0pt,fillcolor=lightgray](2,0)(2,10)(3.5,10)(4.5,9.9)(5,9.8)(6,9.6)(7,9.2)(8,8.6)(8.8,7.8)(9.3,7)(9.7,6.3)(10,5)(9.7,3.7)(9.3,3)(8.8,2.2)(8,1.4)(7,0.8)(6,0.4)(5,0.2)(4.5,0.1)(3.5,0)
\pspolygon[linecolor=gray,fillstyle=solid,linewidth=0pt,fillcolor=gray](2,1)(2,9.8)(3.5,9.8)(4.5,9.7)(5.5,9.4)(6.5,9)(7.6,8.4)(8.5,7.5)(9.1,6.4)(9.4,5.272)
\pspolygon[linecolor=lightgray,fillstyle=solid,linewidth=0pt,fillcolor=lightgray](4,3.5)(2,6.964)(3.6,9.735)(4.4,9.65)(5,9.5)(5.8,9.25)(6.3,9.05)(6.7,8.85)(7,8.696)


\psdot(4,3.5)

\psline[linewidth=1pt](2,0)(2,10)
\psline[linewidth=1pt](0.268,0)(9.9,5.561)
\psline[linewidth=0.5pt](7,8.69)(4,3.5)(2,6.96)(3.6,9.73)

\psline[linewidth=1pt]{->}(2,1)(3,1)
\psline[linewidth=1pt]{->}(2,1)(2.5,1.87)
\psline[linewidth=1pt]{->}(2,1)(1.5,1.87)

\rput(4,3.2){$\Lambda$}
\rput(2.9,0.7){$\alpha$}
\rput(2.8,2){$\beta_{1}$}
\rput(1.2,2){$\beta_{2}$}
\rput(7,2){$\got{t}^*_+$}
\rput(8,5.5){$\Chol$}
\rput(4,7){$\Delta_K(\Orb_{\Lambda})$}

\end{pspicture}
\end{center}
\caption{Polyèdre de la projection d'une orbite coadjointe holomorphe de $SU(2,1)$}
\end{figure}
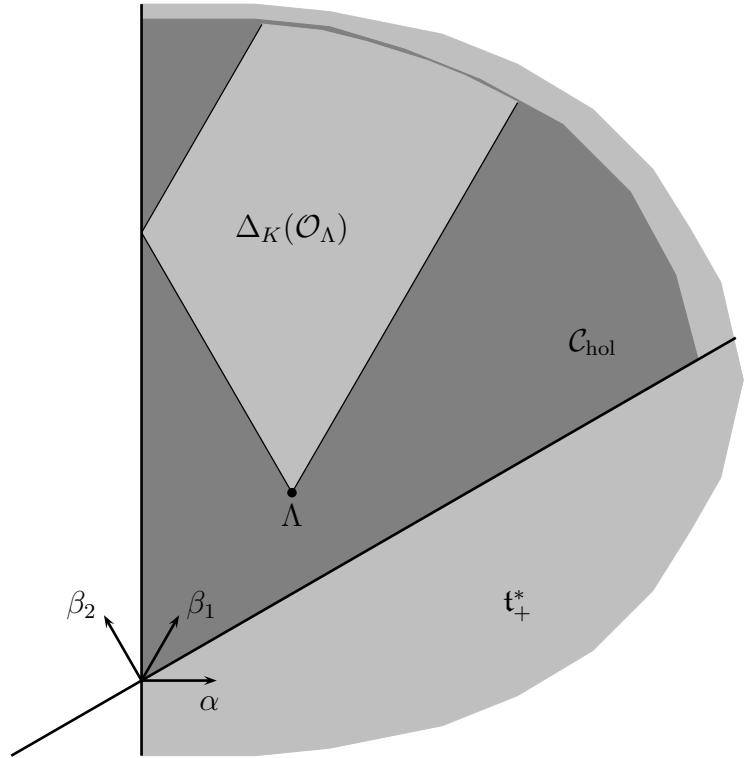

\subsection{Projections d'orbites coadjointes holomorphes de $SO^*(6)$}
\label{subsection:polyèdremoment_SO*(6)}

De manière analogue aux groupes $Sp(2n,\R)$ et $SU(n,1)$, on applique le Théorème de Schmid (Théorème \ref{theo:schmid}), spécialisé à $SO^*(6)$ dans l'assertion \eqref{theo:SO*(4p+2)_décompositiondeC[p-]}, pour obtenir une condition nécessaire et suffisante pour qu'un élément de $\wedge_{\Q,+}^*$ soit dans le polyèdre $\Delta_K(\Orb_{\Lambda})$.

%

\begin{prop}
\label{prop:SO*(2n)_CNS_polyèdre}
Soit $\Lambda\in\wedge_{\Q,+}^*\cap\Chol$ et $\mu\in\wedge_{\Q,+}^*$. Alors $\mu\in\Delta_K(\Orb_{\Lambda})$ si et seulement s'il existe $R\in\Q_+$ tel que
\begin{align}
\mu_1+\mu_2+\mu_3& =\Lambda_1+\Lambda_2+\Lambda_3+2R, \label{eq:SO*(2n)_équationsparHorn_égalité}\tag{E}\\
\mu_2+\mu_3&\geqslant\Lambda_2+\Lambda_3+R, \label{eq:SO*(2n)_équationsparHorn_eq1}\tag{A1}\\
\mu_1+\mu_3&\geqslant\Lambda_1+\Lambda_3+R, \label{eq:SO*(2n)_équationsparHorn_eq2}\tag{A2}\\
\mu_1+\mu_2&\geqslant\Lambda_1+\Lambda_2+R, \label{eq:SO*(2n)_équationsparHorn_eq3}\tag{A3}\\
\mu_1+\mu_2&\geqslant\Lambda_2+\Lambda_3+2R, \label{eq:SO*(2n)_équationsparHorn_eq4}\tag{A4}\\
\mu_1&\geqslant\Lambda_2+R, \label{eq:SO*(2n)_équationsparHorn_eq5}\tag{A5}\\
\mu_2&\geqslant\Lambda_3+R, \label{eq:SO*(2n)_équationsparHorn_eq6}\tag{A6}\\
\mu_3&\geqslant\Lambda_3, \label{eq:SO*(2n)_équationsparHorn_eq7}\tag{A7}\\
\mu_2&\geqslant\Lambda_2, \label{eq:SO*(2n)_équationsparHorn_eq8}\tag{A8}\\
\mu_1&\geqslant\Lambda_1. \label{eq:SO*(2n)_équationsparHorn_eq9}\tag{A9}
\end{align}
\end{prop}

\begin{proof}
Ce résultat découle directement du Théorème \ref{theo:horn_klyachko} et du Théorème de Schmid et de l'Exemple \ref{exple:équationsgénérales_Horn_pourU(3)}, donnant les équations pour le cas de $U(3)$.
\end{proof}

Il faut noter que certaines équations de l'Exemple \ref{exple:équationsgénérales_Horn_pourU(3)} n'apparaissent plus dans l'énoncé de la Proposition \ref{prop:SO*(2n)_CNS_polyèdre}, car elles découlent d'autres équations données ci-dessus. En effet, il ne faut pas oublier que les coordonnées de $\mu$ et $\Lambda$ sont rangées par ordre décroissant, ce qui ajoute des conditions au polyèdre final. Par ailleurs, on connaît la forme du poids $\nu$ qui doit apparaître dans le membre de droite des équations de l'Exemple \ref{exple:équationsgénérales_Horn_pourU(3)}, et ici il vérifie $\nu_1=\nu_2=1$ et $\nu_3=0$ (il s'agit de $\beta_{1,2}$). Pour donner un exemple, l'équation $\mu_1+\Lambda^*_1\geqslant R$ découle de l'équation $\mu_1+\Lambda^*_2\geqslant R$, puisque $\Lambda^*_1\geqslant\Lambda^*_2$.

Le Théorème suivant conclut l'étude du polyèdre $\Delta_K(\Orb_{\Lambda})$ pour le groupe $SO^*(6)$.

\begin{theo}
\label{theo:SO*(2n)_équationsdupolyèdremoment}
Lorsque $G=SO^*(6)$ et $\Lambda\in\wedge_{\Q,+}^*\cap\Chol$, le polyèdre de Kirwan $\Delta_K(\Orb_{\Lambda})$ est défini par les équations affines suivantes
\begin{align}
-\xi_1+\xi_2+\xi_3&\geqslant-\Lambda_1+\Lambda_2+\Lambda_3, \label{eq:SO*(2n)_équationsdupolyèdre_eq1}\tag{B1}\\
\xi_1-\xi_2+\xi_3&\geqslant\Lambda_1-\Lambda_2+\Lambda_3, \label{eq:SO*(2n)_équationsdupolyèdre_eq2}\tag{B2}\\
\xi_1+\xi_2-\xi_3&\geqslant\Lambda_1+\Lambda_2-\Lambda_3, \label{eq:SO*(2n)_équationsdupolyèdre_eq3}\tag{B3}\\
\xi_1-\xi_2-\xi_3&\geqslant-\Lambda_1+\Lambda_2-\Lambda_3, \label{eq:SO*(2n)_équationsdupolyèdre_eq4}\tag{B4}\\
-\xi_1+\xi_2-\xi_3&\geqslant-\Lambda_1-\Lambda_2+\Lambda_3, \label{eq:SO*(2n)_équationsdupolyèdre_eq5}\tag{B5}\\
\xi_1\geqslant\xi_2\geqslant\xi_3. \notag
\end{align}
\end{theo}

\begin{proof}
Puisque les polyèdres intervenant ici sont rationnels, il suffit de montrer que leurs ensembles de points rationnels coïncident. Notons $\mathcal{P}_{\Lambda}$ le polyèdre de $\wedge^*_{\Q,+}$ défini par les équations \eqref{eq:SO*(2n)_équationsdupolyèdre_eq1} à \eqref{eq:SO*(2n)_équationsdupolyèdre_eq5}.

Commençons donc par supposer que $\mu$ appartient à $\mathcal{P}_{\Lambda}$. En sommant les deux inégalités \eqref{eq:SO*(2n)_équationsdupolyèdre_eq1} et \eqref{eq:SO*(2n)_équationsdupolyèdre_eq2}, on obtient $2\mu_3\geqslant 2\Lambda_3$ et donc $\mu_3\geqslant\Lambda_3$, ce qui est exactement l'inégalité \eqref{eq:SO*(2n)_équationsparHorn_eq7}. De même, sommer les inégalités \eqref{eq:SO*(2n)_équationsdupolyèdre_eq1} et \eqref{eq:SO*(2n)_équationsdupolyèdre_eq3} (resp. \eqref{eq:SO*(2n)_équationsdupolyèdre_eq2} et \eqref{eq:SO*(2n)_équationsdupolyèdre_eq3}) donne l'inégalité \eqref{eq:SO*(2n)_équationsparHorn_eq8} (resp. \eqref{eq:SO*(2n)_équationsparHorn_eq9}). On en déduit que $\mu$ vérifie
\[
\mu_1+\mu_2+\mu_3 \geqslant \Lambda_1+\Lambda_2+\Lambda_3,
\]
donc il existe un rationnel positif $R$ tel que $\mu_1+\mu_2+\mu_3=\Lambda_1+\Lambda_2+\Lambda_3 + 2R$. Il s'agit de l'égalité \eqref{eq:SO*(2n)_équationsparHorn_égalité}. On va sommer cette égalité avec les équations (B).

En sommant les équations \eqref{eq:SO*(2n)_équationsparHorn_égalité} et \eqref{eq:SO*(2n)_équationsdupolyèdre_eq1}, on obtient
\[
2\mu_2+2\mu_3\geqslant 2\Lambda_2+2\Lambda_3+2R,
\]
ou encore, en divisant par $2$, l'équation $\mu_2+\mu_3\geqslant \Lambda_2+\Lambda_3+R$, qui est égale à \eqref{eq:SO*(2n)_équationsparHorn_eq1}. De même, en sommant \eqref{eq:SO*(2n)_équationsparHorn_égalité} et \eqref{eq:SO*(2n)_équationsdupolyèdre_eq2} (resp. \eqref{eq:SO*(2n)_équationsparHorn_égalité} et \eqref{eq:SO*(2n)_équationsdupolyèdre_eq3}), on obtient l'inégalité \eqref{eq:SO*(2n)_équationsparHorn_eq2} (resp. \eqref{eq:SO*(2n)_équationsparHorn_eq3}).

Enfin, on peut voir que l'on obtient l'équation \eqref{eq:SO*(2n)_équationsparHorn_eq4} (resp. \eqref{eq:SO*(2n)_équationsparHorn_eq5}, resp. \eqref{eq:SO*(2n)_équationsparHorn_eq6}) en faisant $2$\eqref{eq:SO*(2n)_équationsparHorn_égalité}$+$\eqref{eq:SO*(2n)_équationsdupolyèdre_eq4}$+$\eqref{eq:SO*(2n)_équationsdupolyèdre_eq5} (resp. $\frac{1}{2}$\eqref{eq:SO*(2n)_équationsparHorn_égalité}$+\frac{1}{2}$\eqref{eq:SO*(2n)_équationsdupolyèdre_eq4}, resp $\frac{1}{2}$\eqref{eq:SO*(2n)_équationsparHorn_égalité}$+\frac{1}{2}$\eqref{eq:SO*(2n)_équationsdupolyèdre_eq5}). Donc on a bien $\mu\in\Delta_K(\Orb_{\Lambda})\cap\wedge_{\Q,+}^*$.

\bigskip

Maintenant, supposons que $\mu$ appartient à $\Delta_K(\Orb_{\Lambda})\cap\wedge_{\Q}^*$. Puisque $\Delta_K(\Orb_{\Lambda})$ est, par définition, contenu dans la chambre de Weyl $\got{t}_+^*$, on a $\mu_1\geqslant\mu_2\geqslant\mu_3$. D'après la Proposition \ref{prop:SO*(2n)_CNS_polyèdre}, il existe un rationnel positif $R$ tel que $\mu$ vérifie l'égalité \eqref{eq:SO*(2n)_équationsparHorn_égalité} et les neuf inégalités \eqref{eq:SO*(2n)_équationsparHorn_eq1} à \eqref{eq:SO*(2n)_équationsparHorn_eq9}. En procédant de manière opposée à la première partie de la preuve, c'est-à-dire, en effectuant le calcul des équations $2$\eqref{eq:SO*(2n)_équationsparHorn_eq1}$-$\eqref{eq:SO*(2n)_équationsparHorn_égalité} (resp. $2$\eqref{eq:SO*(2n)_équationsparHorn_eq2}$-$\eqref{eq:SO*(2n)_équationsparHorn_égalité}, resp. $2$\eqref{eq:SO*(2n)_équationsparHorn_eq3}$-$\eqref{eq:SO*(2n)_équationsparHorn_égalité}, resp. $2$\eqref{eq:SO*(2n)_équationsparHorn_eq5}$-$\eqref{eq:SO*(2n)_équationsparHorn_égalité}, resp. $2$\eqref{eq:SO*(2n)_équationsparHorn_eq6}$-$\eqref{eq:SO*(2n)_équationsparHorn_égalité}), on obtient l'inégalité \eqref{eq:SO*(2n)_équationsdupolyèdre_eq1} (resp. \eqref{eq:SO*(2n)_équationsdupolyèdre_eq2}, resp \eqref{eq:SO*(2n)_équationsdupolyèdre_eq3}, resp. \eqref{eq:SO*(2n)_équationsdupolyèdre_eq4}, resp. \eqref{eq:SO*(2n)_équationsdupolyèdre_eq5}). Donc $\mu\in\mathcal{P}_{\Lambda}$. On en conclut l'égalité $\Delta_K(\Orb_{\Lambda})\cap\wedge_{\Q}^* = \mathcal{P}_{\Lambda}$. D'où le résultat en passant aux adhérences dans $\got{t}^*$.
\end{proof}


\chapter[Structure symplectique des orbites holomorphes]{Structure symplectique des orbites coadjointes holomorphes}
\label{chap:symplecto_mcduff}



L'étude du polyèdre moment $\Delta_K(\Orb_{\Lambda})$ de la projection d'une orbite coadjointe holomorphe nécessite une meilleure connaissance de la structure symplectique de $\Orb_{\Lambda}$. Dans ce chapitre, nous prouvons l'existence d'un symplectomorphisme $K$-équivariant entre l'orbite $(\Orb_{\Lambda},\Omega_{\Orb_{\Lambda}})$ et une variété symplectique \og{}plus simple\fg{}, produit direct de l'orbite coadjointe compacte $(K\cdot\Lambda,\Omega_{K\cdot\Lambda})$ et d'un espace vectoriel symplectique. L'avantage de cette nouvelle variété symplectique est que sa structure hamiltonienne donne un polyèdre moment qui peut être décrit en termes de représentations du groupe $K$, description que l'on pourrait considérer comme analogue non-compact de la version algébrique du polytope moment donnée au paragraphe \ref{section:polytopemoment_algébrique}. 

\section{Symplectomorphisme de McDuff}
\label{section:symplecto_mcduff}

Soit $G$ un groupe de Lie réel semi-simple connexe non compact à centre fini et $\got{g}$ son algèbre de Lie. On fixe $\got{g}=\got{k}\oplus\got{p}$ une décomposition de Cartan de son algèbre de Lie et on note $K$ le sous-groupe de Lie connexe de $G$ d'algèbre de Lie $\got{k}$. Nous supposerons que l'espace symétrique $G/K$ est hermitien. Il s'agit en particulier d'une variété kählérienne à courbure négative difféomorphe à l'espace vectoriel $\got{p}$. Dans \cite{mcduff}, McDuff a prouvé qu'une telle variété kählérienne est symplectomorphe à $\got{p}$ muni de la forme symplectique linéaire $\Omega_{\got{p}}$ définie par
\[
\Omega_{\got{p}}(A,B) = B_{\got{g}}(A,\ad(z_0)B) \quad \text{pour tous $A,B\in\got{p}$},
\]
où $B_{\got{g}}$ désigne la forme de Killing sur $\got{g}$ et $z_0$ est un élément du centre de $\got{k}$ tel que $\ad(z_0)$ définisse une structure complexe $K$-invariante sur $\got{p}$. Un tel élément existe d'après la Proposition \ref{prop:equivalence_G/Khermitien}.

On peut également interpréter l'espace symétrique hermitien $G/K$ comme l'orbite coadjointe dans $\got{g}^*$ d'un certain élément de $\got{k}^*$, central pour $K$. Plus généralement, si on prend $\Lambda\in\got{k}^*$, la décomposition de Cartan nous donne également un difféomorphisme $K$-équivariant entre l'orbite coadjointe elliptique $\Glambda$ et la variété symplectique $\Klambda\times\got{p}$. Ceci nous amène à la question suivante : existe-t-il une généralisation du symplectomorphisme de McDuff au cas d'une orbite elliptique non centrale?

\medskip

Fixons un tore maximal $T$ de $K$, notons $\got{t}$ son algèbre de Lie et $\got{t}_+^*$ une chambre de Weyl de $\got{t}^*$. Au choix de l'élément $z_0\in\got{z}(\got{k})$ correspond un système de racines non compactes positives $\got{R}_n^+$, ce sont les racines apparaissant dans $\got{p}^+=\ker(\ad(z_0)|_{\got{p}_{\C}}-i)$, et donc une chambre holomorphe $\Chol$. Pour plus de détails, cf paragraphe \ref{section:la_chambre_holomorphe}.
%

Dans le cas où on fait l'hypothèse supplémentaire que $G$ est simple, Paradan a prouvé, dans \cite{Paradan}, que pour tout élément $\Lambda$ de $\Chol$, on a l'égalité $\Delta_K(\Glambda) = \Delta_K(\Klambda\times\got{p})$ des polyèdres moments respectifs des variétés hamiltoniennes $(\Glambda,\Omega_{G\cdot\Lambda})$ et $(\Klambda\times\got{p},\Omega_{K\cdot\Lambda\times\got{p}})$, pour leurs applications moments standards. Ici, $\Omega_{K\cdot\Lambda\times\got{p}}:=\Omega_{K\cdot\Lambda}\oplus\Omega_{\got{p}}$ désigne la somme directe des deux formes symplectiques $\Omega_{K\cdot\Lambda}$ et $\Omega_{\got{p}}$ sur le produit de variétés $\Klambda\times\got{p}$. 
Il a alors conjecturé que ces variétés symplectiques sont $K$-symplectomorphes, l'égalité des polyèdres moments corroborant cette idée.

Cette conjecture est vraie, même sans l'hypothèse de simplicité du groupe $G$, et nous proposons dans ce chapitre une preuve de ce résultat. Il s'agit de prouver le théorème suivant.

\begin{theo}
\label{theo:symplecto_principal}
On considère les hypothèses énoncées au début du paragraphe. Soit $\Lambda\in\Chol$. Alors il existe un symplectomorphisme $K$-équivariant entre les variétés symplectiques $(\Glambda,\Omega_{G\cdot\Lambda})$ et $(\Klambda\times\got{p},\Omega_{K\cdot\Lambda\times\got{p}})$ qui envoie $(k\Lambda,0)\in\Klambda\times\got{p}$ sur $k\Lambda\in G\cdot\Lambda$, pour tout $k\in K$.
\end{theo}

Comme conséquence directe du Théorème \ref{theo:symplecto_principal}, nous obtenons une nouvelle preuve de l'égalité des images des applications moments respectives.

\begin{coro}
\label{coro:égalité_polyèdresmoments}
Les applications moments $\Phi_{\Glambda}$ et $\Phi_{\Klambda\times\got{p}}$ ont la même image dans $\got{k}^*$. En particulier, leurs polyèdres moments respectifs $\Delta_K(\Glambda)$ et $\Delta_K(\Klambda\times\got{p})$ sont égaux.
\end{coro}

Travailler sur la variété $K$-hamiltonienne $(\Klambda\times\got{p},\Omega_{K\cdot\Lambda\times\got{p}}, \Phi_{K\cdot\Lambda\times\got{p}})$ en lieu et place de la variété $K$-hamiltonienne $(\Glambda,\Omega_{G\cdot\Lambda},\Phi_{G\cdot\Lambda})$, nous permettra d'obtenir les équations du polyèdre moment $\Delta_K(G\cdot\Lambda)$ de la projection d'une orbite holomorphe dans le Chapitre \ref{chap:calcul_exemples_par_pairesbiencouvrantes}.

\section{Une version non compacte du Théorème de Moser}
\label{section:moser}

Une méthode efficace pour trouver des symplectomorphismes entre variétés symplectiques est d'utiliser un argument de type Moser \cite{moser}. Dans le cadre non compact, il y a cependant un problème supplémentaire qui apparaît : trouver un champ de vecteurs dépendant du temps qui s'intègre complètement afin d'obtenir une isotopie. Par exemple, dans \cite{mcduff}, McDuff utilise la complétude au sens riemannien de sa variété, couplé au Théorème de comparaison de Rauch, pour pouvoir appliquer l'argument de Moser. Une autre possibilité est de travailler sur des variétés hamiltoniennes avec applications moments propres, comme par exemple dans \cite{karshon_tolman}. Ici, nous allons utiliser une variante de cette deuxième méthode.

Soit $K$ un groupe de Lie compact connexe, $E$ un espace vectoriel réel de dimension finie sur lequel agit linéairement $K$ et $M$ une variété compacte connexe munie d'une action de $K$. Nous travaillerons maintenant sur le fibré vectoriel trivial $M\times E$, muni de l'action diagonale de $K$. Soit $(\Omega_t)_{t\in[0,1]}$ une famille de formes symplectiques $K$-invariantes de $M\times E$ dépendant de manière lisse en $t$, telle que, pour chaque $t\in[0,1]$, la variété symplectique $(M\times E,\Omega_t)$ soit hamiltonienne pour l'action de $K$, d'application moment $\phi_t:M\times E\rightarrow\got{k}^*$.

Dans l'énoncé ci-dessous, $\left(T_{(m,0)}(\{m\}\times E)\right)^{\Omega_t}$ dénote l'orthogonal symplectique de l'espace tangent $T_{(m,0)}(\{m\}\times E)$ au-dessus de $(m,0)$ de la sous-variété $\{m\}\times E$, dans l'espace tangent $T_{(m,0)}(M\times E)$, par rapport à la forme symplectique $\Omega_t$.

\begin{theo}
\label{theo:existence_isotopie_noncompact_casgeneral}
%
Si les quatre assertions suivantes sont vérifiées,
\begin{enumerate}
\item Il existe une famille lisse $(\mu_t)_{t\in[0,1]}$ de $1$-formes différentielles $K$-invariantes telles que $d\mu_t = \frac{d}{dt}\Omega_t$ pour tout $t\in[0,1]$,
\item L'ensemble $\{\phi_t(m,0); m\in M, t\in[0,1]\}$ est borné dans $\got{k}^*$,
\item Pour tout $m\in M$ et tout $t\in[0,1]$, $\left(T_{(m,0)}(\{m\}\times E)\right)^{\Omega_t} = T_{(m,0)}(M\times\{0\})$,
\item Il existe deux réels $d,\gamma>0$ tels que
\[
\|\phi_t(m,v)\| \geqslant d\|v\|^{\gamma}, \quad \forall (m,v)\in M\times E,\ \forall t\in[0,1],
\]
\end{enumerate}
alors, il existe une isotopie $K$-équivariante $\rho_t:M\times E\rightarrow M\times E$ telle que $\rho_t^*\Omega_t = \Omega_0$ pour tout $t\in[0,1]$.

Si, de plus, pour un $m_0\in M$, on a $\mu_t|_{(m_0,0)}(u,0)=0$ pour tout $(t,u)\in[0,1]\times T_{m_0}M$, alors $\rho_t(m_0,0) = (m_0,0)$ pout tout $t\in[0,1]$.
\end{theo}

\begin{rema}
La condition $(4)$ du Théorème \ref{theo:existence_isotopie_noncompact_casgeneral} peut être considérée comme une condition d'uniforme propreté des applications moments $\phi_t$, pour $t$ parcourant $[0,1]$. Il s'agit d'une condition très forte, qui est cependant réalisée pour les variétés hamiltoniennes que l'on rencontrera dans la suite de ce chapitre.
\end{rema}

\begin{rema}
\label{rema:theo_moser_conditionsupplémentaire_toujours_vérifiée_pour_E=V}
Lorsque $M\times E\cong E$, c'est-à-dire $M=\{m_0\}$ est réduite à un point, la dernière condition \og{}$\mu_t|_{0}(0)=0$ pour tout $t\in[0,1]$\fg{} est évidemment toujours vérifiée. En particulier, si les quatre autres hypothèses sont vérifiées, il existera toujours un symplectomorphisme $K$-équivariant qui stabilise le vecteur nul.

L'hypothèse ($3$) est, quant à elle, clairement vérifiée dès que $M\times E\cong E$ est vectoriel, car elle revient à écrire
\[
\left(T_0 E\right)^{\Omega_t} = \{0\},
\]
condition impliquée par la non-dégénérescence de la $2$-forme alternée $\Omega_t|_0$ sur $T_0 E$.
\end{rema}

\begin{lemm}
\label{lemm:exactitude_famillede1formes}
Soit $(\mu_t)_{t\in[0,1]}$ une famille lisse de $1$-formes différentielles sur $M\times E$ $K$-invariantes. Il existe une famille lisse $(f_t)_{t\in[0,1]}$ de fonctions $C^{\infty}$ et $K$-invariantes sur $M\times E$, telles que 
\begin{enumerate}
\item[(\emph{i})] $df_t|_{T(M\times\{0\})} \equiv 0$,
\item[(\emph{ii})] $\imath(v)(\mu_t-df_t) = 0$ sur $M\times\{0\}$ pour tout $v\in E$.
\end{enumerate}
\end{lemm}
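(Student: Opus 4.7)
L'approche que je suivrais repose sur l'exploitation directe de la structure vectorielle de la fibre $E$ et de la trivialisation produit du fibr� $M\times E\to M$. En identifiant $T_{(m,0)}(M\times E)$ � $T_mM\oplus E$, je poserais simplement
\[
f_t(m,v) := \mu_t|_{(m,0)}(0,v),
\]
c'est-�-dire l'�valuation de $\mu_t$ au point de base $(m,0)$ sur le vecteur vertical $(0,v)\in T_mM\oplus E$. Cette expression est lin�aire en $v$ et lisse en $(t,m,v)$, la d�pendance en $t$ �tant h�rit�e de celle de la famille $(\mu_t)_{t\in[0,1]}$.

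La v�rification des deux conditions se fait par calcul direct. Pour (\emph{i}), tout vecteur tangent � $M\times\{0\}$ en $(m,0)$ est de la forme $(u,0)$ avec $u\in T_mM$ : si $s\mapsto m(s)$ est une courbe lisse sur $M$ avec $m(0)=m$ et $m'(0)=u$, la fonction $s\mapsto f_t(m(s),0)=\mu_t|_{(m(s),0)}(0,0)$ est identiquement nulle, car $\mu_t|_p$ �valu�e sur le vecteur nul vaut z�ro pour tout point $p$. On en d�duit $df_t|_{(m,0)}(u,0)=0$. Pour (\emph{ii}), la lin�arit� de $f_t$ en $v$ donne
\[
df_t|_{(m,0)}(0,v)=\frac{d}{ds}\Big|_{s=0}\mu_t|_{(m,0)}(0,sv)=\mu_t|_{(m,0)}(0,v),
\]
d'o� $\bigl(\mu_t-df_t\bigr)|_{(m,0)}(0,v)=0$ pour tout $v\in E$, ce qui �quivaut bien � $\imath(v)(\mu_t-df_t)|_{M\times\{0\}}=0$.

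Pour la $K$-invariance de $f_t$, j'utiliserais que l'action diagonale de $K$ sur $M\times E$ se restreint � une action lin�aire sur la fibre $E$ et pr�serve la section nulle. La $K$-invariance de $\mu_t$ s'�crit $\mu_t|_{k\cdot p}(dk_p\cdot X)=\mu_t|_p(X)$; appliqu�e � $p=(m,0)$ et $X=(0,v)$, avec $dk_{(m,0)}(0,v)=(0,k\cdot v)$, elle donne imm�diatement $f_t(km,kv)=f_t(m,v)$.

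Aucun obstacle s�rieux ne se pr�sente dans cette preuve : le lemme exploite simplement la trivialit� globale du fibr� normal � la section nulle $M\times\{0\}$ dans $M\times E$, ainsi que la lin�arit� de l'�valuation d'une forme diff�rentielle sur un vecteur tangent. Aucun recollement local ni partition de l'unit� n'est n�cessaire, ce qui rendra ais�e l'application ult�rieure de cette construction dans l'argument de type Moser n�cessaire � la preuve du Th�or�me \ref{theo:existence_isotopie_noncompact_casgeneral}.
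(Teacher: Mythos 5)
Votre preuve est correcte et rel\`eve de la m\^eme strat\'egie que celle du texte : on exhibe explicitement $f_t$, puis on v\'erifie (\emph{i}), (\emph{ii}) et la $K$-invariance par calcul direct. La seule diff\'erence r\'eside dans la formule choisie : le texte prend $f_t(m,v)=2\int_0^1 \mu_t|_{(m,sv)}(0,sv)\,ds$, une primitive de type op\'erateur d'homotopie le long des rayons de la fibre (dans l'esprit de l'op\'erateur $h_F$ introduit plus loin dans ce chapitre), alors que vous prenez la fonction lin\'eaire fibre \`a fibre $f_t(m,v)=\mu_t|_{(m,0)}(0,v)$, qui co\"{\i}ncide avec la fonction du texte \`a l'ordre un le long de la section nulle. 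Comme seules comptent, pour (\emph{i}) et (\emph{ii}), l'annulation de $f_t$ sur $M\times\{0\}$ et sa d\'eriv\'ee dans les directions verticales le long de la section nulle, les deux choix conviennent ; le v\^otre est un peu plus \'el\'ementaire (lin\'earit\'e en $v$, aucune d\'erivation sous le signe int\'egral pour la r\'egularit\'e en $(t,m,v)$), et votre v\'erification de la $K$-invariance, qui n'utilise que la lin\'earit\'e de l'action de $K$ sur $E$ et la pr\'eservation de la section nulle, est exactement l'argument attendu.
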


\begin{proof}
Pour tout $t\in[0,1]$, tout $v\in E$ et tout $m\in M$, on pose
\[
f_t(m,v) = 2\int_0^1 \mu_t|_{(m,sv)}(0,sv)ds.
\]
Les fonctions $f_t$ sont toutes $C^{\infty}$ et dépendent de manière lisse par rapport à $t$, par le théorème de dérivabilité sous le signe intégral. La fonction $f_t$ est $K$-invariante par $K$-invariance de $\mu_t$. La dernière assertion se vérifie également directement sur la définition de $f_t$, en passant en coordonnées locales si nécessaire.
\end{proof}

\begin{proof}[Preuve du Théorème \ref{theo:existence_isotopie_noncompact_casgeneral}]
D'après le Lemme \ref{lemm:exactitude_famillede1formes}, quitte à remplacer $\mu_t$ par $\mu_t-df_t$ pour une certaine famille lisse $(f_t)_{t\in[0,1]}$ de fonctions $C^{\infty}$ et $K$-invariantes sur $M\times E$, on peut supposer que $\mu_t$ est $K$-invariante et vérifie
\[
\mu_t|_{(m,0)}(0,v) = 0, \quad \forall m\in M, \ \forall v\in E,
\]
pour tout $t\in[0,1]$.

Définissons maintenant le champ de vecteurs $\xi_t$ sur $M\times E$ dépendant du temps par
\[
\imath(\xi_t)\Omega_t = -\mu_t, \qquad \text{pour tout $t\in[0,1]$}.
\]
Ce champ de vecteurs est lisse par rapport à $t$ et est $K$-invariant, car $\Omega_t$ et $\mu_t$ le sont. Le but est d'intégrer ce champ de vecteurs $\xi_t$ en une isotopie de $M\times E$.

D'après l'hypothèse sur $\mu_t$, on remarque que, pour tout $m\in M$ et tout $v\in E$, on a
\[
\Omega_t|_{(m,0)}\bigl(\xi_t(m,0),(0,v)\bigr) = -\mu_t|_{(m,0)}(0,v) = 0.
\]
Donc le vecteur tangent $\xi_t(m,0)$ appartient à $\left(T_{(m,0)}(\{m\}\times E)\right)^{\Omega_t}$, orthogonal symplectique  de $T_{(m,0)}\{m\}\times E$ dans $T_{(m,0)}(M\times E)$ pour la forme symplectique $\Omega_t|_{(m,0)}$. Or, d'après l'hypothèse $(3)$, on a $\left(T_{(m,0)}(\{m\}\times E)\right)^{\Omega_t} = T_{(m,0)}(M\times\{0\})$. D'où $\xi_t(m,0)$ appartient à $T_{(m,0)}(M\times\{0\})$ pour tout $m\in M$, pour tout $t\in[0,1]$.

\medskip

Nous considérons l'équation différentielle dépendant du temps $t\in[0,1]$ associée au champ de vecteurs $\xi_t$
\begin{equation}
\label{eq:equadiff_integration_xi_t}
\left\{\begin{array}{l}
\rho_0(x) = x, \\
\displaystyle\frac{d}{dt}\rho_t(x) = \xi_t(\rho_t(x)),
\end{array}\right.
\end{equation}
pour toute condition initiale $x\in M\times E$. Une application du Théorème de Cauchy-Lipschitz montre que le domaine de définition $\mathcal{D}\subset[0,1]\times M\times E$ des courbes intégrales est ouvert dans $[0,1]\times M\times E$.

Soit $r>0$ un réel positif quelconque. Définissons le voisinage ouvert et connexe $U_r:=M\times B(0,r)$ de $M\times\{0\}$ dans $M\times E$, la boule étant prise pour une norme euclidienne $K$-invariante $\|\cdot\|$ sur $E$ (une telle norme existe toujours car $K$ est compact). Son adhérence $\overline{U_r}$ est le compact $M\times \overline{B}(0,r)$ de $M\times E$. On définit l'intervalle
\[
I_r := \{\varepsilon\in[0,1]; \forall x\in\overline{U_r}, \text{ la courbe intégrale $\rho_t(x)$ est définie sur tout $[0,\varepsilon]$}\}.
\]
Nous voulons montrer l'égalité $I_r = [0,1]$.

\bigskip

\noindent{\it \'Etape 1: montrons que l'intervalle $I_r$ est ouvert dans $[0,1]$.}

Soit $\varepsilon\in[0,1[$ tel que la courbe intégrale $\rho_t(x)$ soit définie sur $[0,\varepsilon]$ pour tout $x\in\overline{U}_r$ (ceci est possible au moins pour $\varepsilon=0$). Puisque le domaine $\mathcal{D}$ est ouvert, pour chaque $x\in\overline{U}_r$, il existe un voisinage $V_x$ de $x$ dans $M\times E$ et un réel $\varepsilon_x>\varepsilon$ tel que $\rho_t(y)$ soit défini pour tout $y\in V_x$ et tout $t\in[0,\varepsilon_x[$. Or $\overline{U_r}$ est compact, il est donc recouvert par un nombre fini de voisinages $V_{x_1},\ldots,V_{x_s}$, où $x_1,\ldots,x_s$ sont des éléments de $\overline{U_r}$. Ceci implique que l'intervalle $I_r$ contient le voisinage $[0,\min_{i=1}^s \varepsilon_{x_i}[$ de $\varepsilon$ dans $[0,1]$. Il est donc clair que $I_r$ est ouvert dans $[0,1]$.

\bigskip

\noindent{\it \'Etape 2: montrons que l'intervalle $I_r$ est fermé dans $[0,1]$.}

Pour prouver que $I_r$ est fermé, nous aurons besoin de montrer plusieurs faits.

\smallskip

\begin{center}
\begin{minipage}{13cm}
{\bf Fait 1:} {\it Pour tout $m\in M$, la courbe intégrale $\rho_t(m,0)$ est définie sur tout $[0,1]$.}
\end{minipage}
\end{center}

En effet, on a pu prendre $\mu_t$ de sorte que
\[
\xi_t(m,0)\in T_{(m,0)}(M\times\{0\}), \quad \forall m\in M, \ \forall t\in[0,1].
\]
Cela implique que toute courbe intégrale $t\in[0,\varepsilon[\mapsto\rho_t(m,0)$ est contenue dans $M\times\{0\}$ par unicité locale, pour tout $m\in M$. Or $M$ est compact, donc la courbe intégrale maximale partant de $(m,0)$ est définie sur $[0,1]$ tout entier, pour tout $m\in M$.

\bigskip

Fixons maintenant un élément $m_0\in M$. Cet élément nous permet de définir la famille de vecteurs de $\got{k}^*$
\[
c_t := \phi_t\circ\rho_t(m_0,0) - \phi_0(m_0,0) \qquad \forall t\in[0,1],
\]
et notons
\[
C := \sup_{t\in[0,1]}\|c_t\| < +\infty.
\]
La finitude de cette borne supérieure est donnée par l'hypothèse $(2)$, puisque, pour tout $t\in[0,1]$, le vecteur $\phi_t\circ\rho_t(m_0,0)$ est contenu dans l'ensemble borné $\{\phi_t(m,0); m\in M, t\in[0,1]\}$ de $\got{k}^*$.

Nous définissons également le réel
\[
D_r := \sup_{x\in\overline{U}_r}\|\phi_0(x)\|,
\]
pour tout réel $r>0$. Il est clair que si l'on a deux réels positifs $r<s$, alors $D_r\leqslant D_s$.

\smallskip

\begin{center}
\begin{minipage}{13cm}
{\bf Fait 2:} {\it Soient $r>0$ et $\varepsilon\in\,]0,1]$ tels que, pour tout $x\in U_r$, la courbe intégrale $\rho_t(x)$ soit définie sur $[0,\varepsilon[$. Alors, pour tout $t\in[0,\varepsilon[$, on a 
\[
\rho_t(U_r)\subset\overline{U}_{\left(\frac{D_{r}+C}{d}\right)^{1/\gamma}}.
\]}
\end{minipage}
\end{center}

Pour $r>0$ et $\varepsilon\in\,]0,1]$ pris comme dans l'énoncé du Fait 2, nous obtenons pour tout $t\in[0,\varepsilon[$ une application lisse $\rho_t:U_r\rightarrow M\times E$. Puisque $\Omega_t$ est fermée, on a $L_{\xi_t}\Omega_t = d(i(\xi_t)\Omega_t) = -d\mu_t = -\frac{d}{dt}\Omega_t$. Donc $\rho_t^*(L_{\xi_t}\Omega_t + \frac{d}{dt}\Omega_t) = \frac{d}{dt}(\rho_t^*\Omega_t) = 0$ sur $U_r$, pour tout $t\in[0,\varepsilon[$. Par conséquent, on a $\rho_t^*\Omega_t = \Omega_0|_{U_r}$ pour tout $t\in[0,\varepsilon[$.

L'ouvert $U_r$ de $M\times E$ est clairement un voisinage $K$-stable de $M\times\{0\}$ dans $M\times E$. La restriction $\phi_0|_{U_r}:U_r\rightarrow\got{k}^*$ définit donc une application moment pour la $K$-variété symplectique $(U_r,\Omega_0|_{U_r})$. Puisque $\rho_t^*\Omega_t = \Omega_0|_{U_r}$, l'application $\rho_t^*\phi_t=\phi_t\circ\rho_t:U_r\rightarrow\got{k}^*$ est une autre application moment pour $(U_r,\Omega_0|_{U_r})$. Comme $U_r$ est connexe, on en déduit que pour tout $t\in[0,\varepsilon[$, l'application $\phi_t\circ\rho_t-\phi_0$ est constante. De la définition des constantes $c_t$, on a $\phi_t\circ\rho_t(x)=\phi_0(x)+c_t$, pour tout $x\in U_r$ et tout $t\in[0,\varepsilon[$. En particulier, on voit que l'on a
\begin{equation}
\label{eq:majoration_phi_t_rho_t}
\|\phi_t(\rho_t(x))\| \leqslant \|\phi_0(x)\| + \|c_t\| \leqslant D_r + C
\end{equation}
pour tout $x\in U_r$ et tout $t\in[0,\varepsilon[$.

Notons $\pi_V:M\times E\rightarrow E$ la seconde projection canonique. En appliquant l'hypothèse $(4)$, on déduit de \eqref{eq:majoration_phi_t_rho_t} que, pour tout $x\in U_r$ et tout $t\in[0,\varepsilon[$,
\[
D_r+C \geqslant \|\phi_t(\rho_t(x))\| \geqslant d\|\pi_V(\rho_t(x))\|^{\gamma}.
\]
Il en découle directement l'inclusion
\[
\rho_t(U_r) \subset \overline{U}_{\left(\frac{D_{r}+C}{d}\right)^{1/\gamma}}
\]
pour tout $t\in[0,\varepsilon[$.

\smallskip

\begin{center}
\begin{minipage}{13cm}
{\bf Fait 3:} {\it Soient $r>0$ et $\varepsilon\in]0,1]$ tels que, pour tout $x\in\overline{U_r}$, la courbe intégrale $\rho_t(x)$ soit définie sur $[0,\varepsilon[$. Alors, pour tout $t\in[0,\varepsilon[$, on a 
\[
\rho_t(\overline{U}_r)\subset\overline{U}_{\left(\frac{D_{r+1}+C}{d}\right)^{1/\gamma}}.
\]}
\end{minipage}
\end{center}

Soit $\tau\in[0,\varepsilon[$. Par hypothèse, $\rho_t(x)$ est définie sur $[0,\tau]$ pour tout $x\in\overline{U}_r$. En réalisant un raisonnement similaire à l'\'Etape 1, on peut montrer qu'il existe un voisinage ouvert $V$ de $\overline{U}_r$ tel que la courbe intégrale $\rho_t(y)$ soit définie sur $[0,\tau]$ pour tout $y\in V$. Par un argument classique de topologie, il existe un réel $r'>r$ tel que $\overline{U}_{r'}\subset V$. En effet, fixons $s>r$ et supposons que $\overline{U}_s\cap V^{c}\neq \emptyset$ (sinon $\overline{U}_s\subset V$ et $r'=s$ convient). On sait que l'intersection $\overline{U}_s\cap V^{c}$ est un compact de $M\times E$, donc la borne inférieure de l'ensemble $\{\|v\|; (x,v)\in\overline{U}_s\cap V^{c}\}$ est atteinte en un point $(x_0,v_0)$ de $\overline{U}_s\cap V^{c}$. Mais alors on a forcément $\|v_0\|> r$, puisque l'hypothèse $\|v_0\|\leqslant r$ impliquerait $(x_0,v_0)\in\overline{U}_r\subset V$, ce qui contredirait le fait que $(x_0,v_0)$ appartient à $V^{c}$. Ainsi, en prenant un réel $r'$ vérifiant l'encadrement
\[
r < r' < \inf \{\|v\|; (x,v)\in\overline{U}_s\cap V^{c}\},
\]
on a $\overline{U}_{r'}\subset V$.

Par conséquent, il existe un réel $r'>r$ tel que la courbe intégrale $\rho_t(y)$ soit définie sur $[0,\tau]$ pour tout $y\in U_{r'}$. On peut supposer que $r<r'\leqslant r+1$. On a donc $D_r \leqslant D_{r'}\leqslant D_{r+1}$.

Appliquons le Fait 2 à $U_{r'}$ et $[0,\tau[$. Ceci nous donne donc, pour tout $t\in[0,\tau[$, les inclusions
\[
\rho_t(\overline{U}_r) \subset \rho_t(U_{r'}) \subset \overline{U}_{\left(\frac{D_{r'}+C}{d}\right)^{1/\gamma}} \subset \overline{U}_{\left(\frac{D_{r+1}+C}{d}\right)^{1/\gamma}}.
\]
Ceci étant vrai pour tout $\tau\in[0,\varepsilon[$, on en déduit que pour tout $t\in[0,\varepsilon[$, on a $\rho_t(\overline{U}_r) \subset \overline{U}_{\left(\frac{D_{r+1}+C}{d}\right)^{1/\gamma}}$.

\bigskip

Notons $\varepsilon_r := \sup I_r$. Par définition de $I_r$, cela signifie que, pour tout $x\in \overline{U}_r$, la courbe intégrale $\rho_t(x)$ est définie au moins sur $[0,\varepsilon_r[$. Or, d'après le Fait 3, $\rho_t(\overline{U}_r) \subset \overline{U}_{\left(\frac{D_{r+1}+C}{d}\right)^{1/\gamma}}$ est vérifié pour tout $t\in[0,\varepsilon[$. Par conséquent, on peut prolonger ponctuellement en $t=\varepsilon_r$ chacune des courbes intégrales ayant pour condition initiale un élément de $\overline{U}_r$. D'où $\varepsilon$ est lui-même un élément de $I_r$. Finalement, cette propriété implique que $I_r$ est un intervalle fermé de $[0,1]$.

De plus, il est clair que $I_r$ est non vide, puisqu'il contient au moins $0$. L'intervalle $I_r$ est alors ouvert, fermé et non vide dans le connexe $[0,1]$. On en conclut que $I_r=[0,1]$. Autrement dit, toutes les courbes intégrales ayant pour condition initiale un élément de $\overline{U}_r$ sont définies sur $[0,1]$ tout entier. Mais puisque tout élément de $M\times E$ est contenu dans un certain $\overline{U}_r$, le champ de vecteurs $\xi_t$ est donc complet.

Remarquons également que pour tout $t\in[0,1]$, l'application lisse $\rho_t:M\times E\rightarrow M\times E$ est en réalité un difféomorphisme. Ceci est un résultat classique sur les champs de vecteurs dépendant du temps, que l'on pourra trouver par exemple dans \cite[Théorème 52]{lafontaine}. Ainsi, $\rho_t$ définit une isotopie de $M\times E$ et vérifie $\rho_t^*\Omega_t=\Omega_0$ pour tout $t\in[0,1]$. Et comme $\xi_t$ est $K$-invariant, $\rho_t$ est $K$-équivariante.

Il reste à prouver la dernière assertion. On a vu au début de la preuve que l'on pouvait se ramener à une famille lisse $(\mu_t)_{t\in[0,1]}$ de $1$-formes $K$-invariantes qui vérifient $\mu_t|_{(m,0)}(0,v) = 0$ pour tout $(m,v)\in M\times E$. Or, l'hypothèse sur $m_0$ devient maintenant :
\[
\mu_t|_{(m_0,0)} \equiv 0 \quad \forall t\in[0,1].
\]
Par conséquent, $\xi_t(m_0,0) = 0$ pour tout $t\in[0,1]$, et comme $\rho_t(m_0)$ est l'unique courbe intégrale de l'équation \eqref{eq:equadiff_integration_xi_t} pour $x=(m_0,0)$ vérifiant $\rho_0(m_0,0) = (m_0,0)$, on en déduit que $\rho_t(m_0,0)=(m_0,0)$ pour tout $t\in[0,1]$.
\end{proof}

L'avantage du Théorème \ref{theo:existence_isotopie_noncompact_casgeneral} est que la famille lisse $(\Omega_t)_{t\in[0,1]}$, représentant le chemin lisse reliant les deux formes symplectiques considérées sur $M\times E$, n'est pas limitée au segment, comme c'est le cas dans l'argument de Moser. Il nous donne donc plus de liberté pour trouver un chemin symplectique, sachant que l'ensemble des formes symplectiques sur une variété n'est pas convexe.

Cependant, il n'y a pas que des avantages. En effet, il devient alors difficile de prouver que chaque $\Omega_t$ est fermée et possède une application moment $\phi_t$, alors que ceci est direct dans le cas du segment. C'est pourquoi on se ramène à étudier le segment reliant deux formes symplectiques dès que cela est possible.

\begin{coro}
\label{coro:existence_isotopie_noncompact_parsegment}
Soient $\Omega_0$ et $\Omega_1$ deux formes symplectiques $K$-invariantes sur $M\times E$, d'applications moments $\phi_0$ et $\phi_1$. Posons $\Omega_t=t\Omega_1+(1-t)\Omega_0$ et $\phi_t = t\phi_1+(1-t)\phi_0$ pour tout $t\in[0,1]$. Si les quatre assertions suivantes sont vérifiées,
\begin{enumerate}
\item pour tout $t\in[0,1]$, $\Omega_t$ est symplectique sur $M\times E$,
\item il existe une $1$-forme différentielle $K$-invariante $\mu$ telle que $d\mu = \Omega_1-\Omega_0$,
\item pour tout $m\in M$ et tout $t\in\{0,1\}$, $\left(T_{(m,0)}(\{m\}\times E)\right)^{\Omega_t} = T_{(m,0)}(M\times\{0\})$,
\item il existe deux réels $d,\gamma>0$ tels que
\[
\|\phi_t(m,v)\| \geqslant d\|v\|^{\gamma}, \quad \forall (m,v)\in M\times E,\ \forall t\in[0,1],
\]
\end{enumerate}
alors $(M\times E,\Omega_0)$ et $(M\times E,\Omega_1)$ sont $K$-symplectomorphes.
\end{coro}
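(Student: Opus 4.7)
La stratégie consiste à appliquer directement le Théorème \ref{theo:existence_isotopie_noncompact_casgeneral} à la famille lisse $(\Omega_t)_{t\in[0,1]}$ munie des applications moments $(\phi_t)_{t\in[0,1]}$. L'isotopie $K$-équivariante $\rho_t$ ainsi produite satisfera $\rho_t^*\Omega_t = \Omega_0$, et l'évaluation en $t=1$ donnera le $K$-symplectomorphisme cherché. La preuve se ramène donc à la vérification des quatre hypothèses du Théorème \ref{theo:existence_isotopie_noncompact_casgeneral}.

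Trois d'entre elles s'obtiennent rapidement. Pour l'hypothèse ($1$), je prends la famille constante $\mu_t := \mu$, qui est bien lisse, $K$-invariante et satisfait $d\mu_t = \Omega_1-\Omega_0 = \frac{d}{dt}\Omega_t$. Pour l'hypothèse ($2$), la compacité de $M$ entraîne que les ensembles $\phi_0(M\times\{0\})$ et $\phi_1(M\times\{0\})$ sont compacts dans $\got{k}^*$, de sorte que $\{\phi_t(m,0); (m,t)\in M\times[0,1]\}$, contenu dans l'enveloppe convexe de leur union, est borné. Enfin, l'hypothèse ($4$) du Corollaire reproduit textuellement l'hypothèse ($4$) du Théorème.

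Le point le plus délicat est la vérification de l'hypothèse ($3$) du Théorème, l'énoncé correspondant du Corollaire n'étant fourni qu'aux extrémités $t\in\{0,1\}$. Je procéderai par un argument de dimensions : en posant $V = T_{(m,0)}(\{m\}\times E)$ et $W = T_{(m,0)}(M\times\{0\})$, l'hypothèse du Corollaire donne $W\subset V^{\Omega_0}\cap V^{\Omega_1}$, d'où par linéarité $W\subset V^{\Omega_t}$ pour tout $t\in[0,1]$. Comme l'hypothèse ($1$) du Corollaire assure que $\Omega_t$ est non dégénérée, on a $\dim V^{\Omega_t} = \dim T_{(m,0)}(M\times E) - \dim V = \dim W$, ce qui force l'égalité $V^{\Omega_t} = W$ sur l'intégralité du segment.

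Les quatre hypothèses du Théorème \ref{theo:existence_isotopie_noncompact_casgeneral} étant alors satisfaites, on obtient une isotopie $K$-équivariante $(\rho_t)_{t\in[0,1]}$ telle que $\rho_t^*\Omega_t = \Omega_0$ pour tout $t\in[0,1]$, et l'application $\rho_1$ réalise le $K$-symplectomorphisme souhaité entre $(M\times E, \Omega_0)$ et $(M\times E, \Omega_1)$.
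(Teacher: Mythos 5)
Votre démonstration est correcte et suit essentiellement la même démarche que celle du texte : application du Théorème \ref{theo:existence_isotopie_noncompact_casgeneral} avec la famille constante $\mu_t=\mu$, extension de la condition ($3$) à tout $t\in[0,1]$ par linéarité de $\Omega_t$ et un argument de dimension utilisant la non-dégénérescence, puis conclusion via $\rho_1$. Votre argument de bornitude (enveloppe convexe des deux images compactes) remplace sans dommage celui du texte (image continue du compact $[0,1]\times M$).
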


\begin{proof}
Ce résultat découle directement du Théorème \ref{theo:existence_isotopie_noncompact_casgeneral}, où $\frac{d}{dt}\Omega_t=\Omega_1-\Omega_0$ et $\mu_t=\mu$ pour tout $t\in[0,1]$ est $K$-équivariante et dépend de manière lisse en $t$. Ici, la famille d'applications moments $(\phi_t)_{t\in[0,1]}$ dépend continûment de $t$. Donc l'ensemble $\{\phi_t(m,0);t\in[0,1],m\in M\}$ est compact comme image continue du compact $[0,1]\times M$.

De plus, la condition (3) est en réalité valable pour tout $t\in[0,1]$, grâce au fait que $\Omega_t$ s'écrive sous la forme $\Omega_t=t\Omega_1+(1-t)\Omega_0$ et qu'elle soit symplectique par la condition (1). En effet, on voit clairement de l'écriture de $\Omega_t$ que
\[
\left(T_{(m,0)}(\{m\}\times E)\right)^{\Omega_t} \supset T_{(m,0)}(M\times\{0\}).
\]
Comme $\Omega_t$ est symplectique, on a égalité des deux sous-espaces vectoriels par un argument de dimension.

On a donc une isotopie $\rho_t:M\times E\rightarrow M\times E$ telle que $\rho_t^*\Omega_t=\Omega_0$ pour tout $t\in[0,1]$. En particulier, $\rho_1^*\Omega_1=\Omega_0$, donc $\rho_1$ est un symplectomorphisme $K$-équivariant entre $(M\times E,\Omega_0)$ et $(M\times E,\Omega_1)$.
\end{proof}

Nous travaillons ici avec un type très particulier de variété, puisqu'il s'agit d'un produit direct d'une variété compacte connexe et d'un espace vectoriel de dimension finie. La condition ($2$) du Corollaire \ref{coro:existence_isotopie_noncompact_parsegment} correspond ainsi à une version $K$-invariante du Lemme de Poincaré, où les deux applications homotopes sur $M\times E$ considérées sont l'identité de $M\times E$ et l'application composée $i\circ\pi$, avec $i:M\hookrightarrow M\times E$ l'injection de $M$ comme section nulle de $M\times E$ et $\pi:M\times E\rightarrow M$ la projection canonique.

Dans l'énoncé ci-dessous, nous dirons qu'une homotopie $F:N_1\times[0,1]\rightarrow N_2$ entre deux variétés $N_1$ et $N_2$ munies d'actions du groupe $K$, est $K$-équivariante si
\[
F(k\cdot x, t) = k\cdot F(x,t) \quad \forall x\in N_1, \forall k\in K.
\]
Cela revient à dire que $F$ est $K$-équivariante lorsqu'on munit $N_1\times[0,1]$ de l'action diagonale de $K$, avec action triviale de $K$ sur $[0,1]$.

\begin{lemm}
\label{lemm:lemmePoincaré_Kinvariant}
Soient $N_1$ et $N_2$ deux variétés munies d'une action du groupe $K$. Soient $f,g:N_1\rightarrow N_2$ deux application lisses telles qu'il existe une homotopie $K$-équivariante $F:N_1\times[0,1]\rightarrow N_2$ vérifiant $F(x,0)=f(x)$ et $F(x,1)=g(x)$ pour tout $x\in N_1$. Pour toute $p$-forme différentielle $\omega$ sur $N_2$, on pose la $(p-1)$-forme différentielle $h_F(\omega)$ définie sur $N_1$ par
\begin{equation}
\label{eq:defi_opérateur_h^p}
h_F(\omega)|_x := \int_{t=0}^{t=1} (F^*\omega)|_{(x,t)}, \quad \forall x\in N_1.
\end{equation}
L'application $h_F:\Omega^{p}(N_2)\rightarrow\Omega^{p-1}(N_1)$ est un opérateur linéaire qui vérifie les propriétés suivantes :
\begin{enumerate}
\item[(\emph{i})] Pour tout $k\in K$, on a $k^*(h_F(\omega)) = h_F(k^*\omega)$. En particulier, si $\omega$ est $K$-invariante, alors $h_F(\omega)$ est $K$-invariante.
\item[(\emph{ii})] $d\circ h_F + h_F\circ d = g^*-f^*$.
\end{enumerate}
\end{lemm}

La preuve de ce lemme n'est qu'une adaptation de la preuve standard du Lemme de Poincaré au cas $K$-équivariant. On pourra trouver une preuve dans le cas non équivariant dans \cite{warner}. La démonstration de la $K$-équivariance se voit directement dans la définition de l'opérateur $h_F$.

Nous allons appliquer ce lemme, dans notre contexte, aux applications $f=i\circ\pi$ et $g=\id_{M\times E}$. Une homotopie entre ces deux applications de $M\times E$ dans $M\times E$ est
\[
\begin{array}{cccc}
F: & M\times E\times[0,1]& \longrightarrow & M\times E \\
& (m,v,t) & \longmapsto & (m,tv).
\end{array}
\]
On peut facilement vérifier sur la formule \eqref{eq:defi_opérateur_h^p} que, pour tout $m\in M$ et pour toute $2$-forme différentielle $\Omega$ sur $M\times E$, la $1$-forme $h_F(\Omega)$ est nulle sur la sous-variété $M\times\{0\}$.

\begin{coro}
\label{coro:existence_isotopie_noncompact_parsegment_avec_condition_lemmedepoincaré}
Soient $\Omega_0$ et $\Omega_1$ deux formes symplectiques $K$-invariantes sur $M\times E$, d'applications moments $\phi_0$ et $\phi_1$. Posons $\Omega_t=t\Omega_1+(1-t)\Omega_0$ et $\phi_t = t\phi_1+(1-t)\phi_0$ pour tout $t\in[0,1]$. Si les quatre assertions suivantes sont vérifiées,
\begin{enumerate}
\item pour tout $t\in[0,1]$, $\Omega_t$ est symplectique sur $M\times E$,
\item la $2$ forme différentielle $\Omega_1-\Omega_0$ est dans le noyau de $i^*$,
\item pour tout $m\in M$ et tout $t\in\{0,1\}$, $\left(T_{(m,0)}(\{m\}\times E)\right)^{\Omega_t} = T_{(m,0)}(M\times\{0\})$,
\item il existe deux réels $d,\gamma>0$ tels que
\[
\|\phi_t(m,v)\| \geqslant d\|v\|^{\gamma}, \quad \forall (m,v)\in M\times E,\ \forall t\in[0,1],
\]
\end{enumerate}
alors il existe un symplectomorphisme $K$-équivariant entre $(M\times E,\Omega_0)$ et $(M\times E,\Omega_1)$ qui laisse fixe chaque point de la sous-variété $i(M) = M\times\{0\}$ de $M\times E$.
\end{coro}

\begin{proof}
Il suffit d'appliquer le Corollaire \ref{coro:existence_isotopie_noncompact_parsegment}, où toutes les hypothèses sont déjà vérifiées, sauf la condition ($2$). Pour ce faire, nous allons appliquer le Lemme \ref{lemm:lemmePoincaré_Kinvariant} à $M\times E$, $f=i\circ\pi$ et $g=\id_{M\times E}$. L'homotopie $F$ entre $f$ et $g$ choisie dans le paragraphe précédant l'énoncé du Corollaire \ref{coro:existence_isotopie_noncompact_parsegment_avec_condition_lemmedepoincaré} est bien $K$-équivariante, car l'action de $K$ sur $E$ est linéaire. Or, $\Omega_1-\Omega_0$ est dans le noyau de $i^*$. Donc d'après le Lemme \ref{lemm:lemmePoincaré_Kinvariant} ({\it ii}), la $1$-forme différentielle $K$-invariante $\mu=h_F(\Omega_1-\Omega_0)$ est telle que $d\mu = \Omega_1-\Omega_0$, puisque $\Omega_1-\Omega_0$ est fermée. L'hypothèse ($2$) du Corollaire \ref{coro:existence_isotopie_noncompact_parsegment} est donc vérifiée.

De plus, on a vu dans le paragraphe précédant l'énoncé du Corollaire \ref{coro:existence_isotopie_noncompact_parsegment_avec_condition_lemmedepoincaré} que $\mu = h^2(\Omega_1-\Omega_0)$ est nulle sur la sous-variété $M\times\{0\}$. Appliquant ceci à la dernière assertion du Théorème \ref{theo:existence_isotopie_noncompact_casgeneral}, on en conclut qu'il existe un symplectomorphisme $K$-équivariant entre $(M\times E,\Omega_0)$ et $(M\times E,\Omega_1)$ qui stabilise chaque point $M\times\{0\}$ de $M\times E$.
\end{proof}

\section{Propriétés de la décomposition de Cartan de $G$}
\label{section:difféo_Gamma}

Comme indiqué dans la section \ref{section:moser}, nous allons prouver le Théorème \ref{theo:symplecto_principal} en utilisant des arguments de type Moser. Pour appliquer le Théorème \ref{theo:existence_isotopie_noncompact_casgeneral} et le Corollaire \ref{coro:existence_isotopie_noncompact_parsegment}, il faut d'abord nous ramener à deux formes symplectiques sur la même variété $M=\Klambda\times\got{p}$. Or, la décomposition de Cartan de $G$ induit un difféomorphisme $\Gamma$ de $\Klambda\times\got{p}$ sur $\Glambda$. Par conséquent, il nous suffira de considérer les deux formes symplectiques $\Omega_{K\cdot\Lambda\times\got{p}}$ et $\Gamma^*\Omega_{G\cdot\Lambda}$ sur $\Klambda\times\got{p}$.

Dans ce paragraphe, nous considérons un groupe de Lie $G$ réel semi-simple connexe non compact à centre fini.

\subsection{Notations}
\label{subsection:notations_décomp_cartan}

Notons $B_{\got{g}}$ la forme de Killing sur $\got{g}$. Comme $\got{g}$ est semi-simple, elle possède un involution de Cartan $\theta$. Rappelons qu'une involution de Cartan $\theta$ est un involution de l'algèbre de Lie $\got{g}$, c'est-à-dire une involution linéaire qui est aussi un automorphisme de l'algèbre de Lie, telle que la forme bilinéaire symétrique
\[
B_{\theta}(X,Y) = -B_{\got{g}}(X,\theta(Y)) \qquad \text{pour tous $X,Y\in\got{g}$}
\]
est définie positive sur $\got{g}$.

On note alors les deux sous-espaces propres
\[
\got{k}:= \ker(\theta-\id_{\got{g}}) \qquad \text{et} \qquad \got{p}:= \ker(\theta+\id_{\got{g}})
\]
de $\theta$ dans $\got{g}$. Le sous-espace vectoriel $\got{k}$ est en fait une sous-algèbre de Lie de $\got{g}$. En effet, on a les inclusions $[\got{k},\got{k}]\subset\got{k}$, $[\got{p},\got{p}]\subset\got{k}$ et $[\got{k},\got{p}]\subset\got{p}$. Notons $K$ le sous-groupe connexe de $G$ d'algèbre de Lie $\got{k}$.

Il est bien connu qu'il existe un automorphisme de groupe de Lie $\Theta$ de $G$, de différentielle $\theta$, tel que $\Theta^2 = \id_G$, que le sous-groupe de $G$ des éléments fixés par $\Theta$ est exactement $K$ et qu'il existe un difféomorphisme
\begin{equation}
\label{eq:difféo_décomposition_de_Cartan}
(k,Z)\in K\times\got{p}\mapsto\exp(Z)k\in G.
\end{equation}
Le lecteur pourra trouver une preuve de ce fait dans \cite[Théorème 6.31]{knapp}. Ce difféomorphisme est la décomposition de Cartan\index{Décomposition de Cartan} au niveau du groupe $G$. On peut facilement vérifier qu'il est $K$-équivariant pour $K$ qui agit sur $K$ et $G$ par multiplication à gauche, sur $\got{p}$ par l'action adjointe et sur $K\times\got{p}$ par action diagonale.

De plus, comme $G$ est à centre fini, $K$ est un sous-groupe compact maximal de $G$, toujours d'après \cite[Théorème 6.31]{knapp}.

Soit $\Lambda$ un élément de $\got{t}^*$ et $\Glambda$ son orbite coadjointe dans $\got{g}^*$. Le difféomorphisme \eqref{eq:difféo_décomposition_de_Cartan} induit un difféomorphisme
\[
\begin{array}{cccc}
\Gamma : & \Klambda\times\got{p} & \longrightarrow & \Glambda \\
& (k\Lambda,Z) & \longmapsto & \exp(Z)k\Lambda
\end{array}
\]\index{$\Gamma$}
qui est lui aussi $K$-équivariant pour les actions évidentes.

\subsection{La différentielle de $\Gamma$}


Pour connaître l'expression de la forme symplectique $\Gamma^*\Omega_{G\cdot\Lambda}$, il est nécessaire de calculer la différentielle de $\Gamma$.

Soit $(k\Lambda,Z)\in M$. Alors, pour tout $(X,A)\in\got{k}\oplus\got{p}$, on a
\begin{align}
d\Gamma(k\Lambda,Z)([k,\overline{X}],A) & = \frac{d}{dt}(\Gamma(k\exp(tX)\Lambda,Z))|_{t=0} + \frac{d}{dt}(\Gamma(k\Lambda,Z+tA))|_{t=0} \notag\\
& = \frac{d}{dt}(\exp(Z)k\exp(tX)\Lambda)|_{t=0} + \frac{d}{dt}(\exp(Z+tA)k\Lambda)|_{t=0} \notag\\
& = [\exp(Z)k,\overline{X}] + \frac{d}{dt}\left(\exp(Z+tA)k\Lambda\right)|_{t=0}. \label{eq:relation_différentielledeGamma_1}
\end{align}
Pour tout $Z\in\got{g}$, on définit l'opérateur linéaire $\Psi_Z:\got{g}\rightarrow\got{g}$\index{$\Psi_Z$} par
\[
\Psi_Z := \int_0^1 e^{-s\ad(Z)}ds.
\]
Remarquons que, pour tout $g\in G$, on a
\[
\Psi_{\Ad(g)Z}(\Ad(g)X) = \Ad(g)\Psi_Z(X),
\]
pour tous $Z,X\in\got{g}$. On peut également écrire
\begin{equation}
\label{eq:defi_equiv_de_PsiZ}
\Psi_Z = \int_0^1 e^{-s\ad(Z)}ds = \sum_{n=0}^{\infty} \frac{(-1)^n\ad(Z)^n}{(n+1)!}.
\end{equation}
Cet opérateur vérifie les propriétés bien connues suivantes.

\begin{prop}
\label{prop:differentielle_de_exp}
Pour tout $Z\in\got{g}$, on a
\begin{enumerate}
\item $e^{\ad(Z)}\circ\Psi_Z = \Psi_{-Z}$,
\item $d\exp(Z) = d l_{\exp(Z)}(e) \circ \Psi_Z = d r_{\exp(Z)}(e)\circ\Psi_{-Z}$,
\end{enumerate}
où $l_{\exp(Z)}$ (resp. $r_{\exp(Z)}$) désigne la multiplication à gauche (resp. à droite) par $\exp(Z)$ dans $G$.
\end{prop}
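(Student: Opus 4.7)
The plan is to prove the two assertions in sequence, with (1) reducing to a change of variable and (2) being the main computational step.

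For assertion (1), I would work directly with the integral expression $\Psi_Z = \int_0^1 e^{-s\ad(Z)}ds$. Since the endomorphism $e^{\ad(Z)}$ commutes with itself and the integral in $s$, I can write
\[
e^{\ad(Z)} \circ \Psi_Z = \int_0^1 e^{(1-s)\ad(Z)}\,ds,
\]
and the change of variable $u = 1-s$ transforms this into $\int_0^1 e^{u\,\ad(Z)}\,du = \int_0^1 e^{-u\,\ad(-Z)}\,du = \Psi_{-Z}$. This is essentially a one-line computation.

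For assertion (2), the first equality is the standard formula for the differential of the exponential map, which I would prove as follows. Fix $W \in \got{g}$ and consider the two-parameter family $\phi(s,t) = \exp\bigl(s(Z+tW)\bigr)$ for $s \in [0,1]$ and $t$ in a neighborhood of $0$. The goal is to compute $d\exp(Z)(W) = \partial_t \phi(1,0)$. Let
\[
V(s) := dl_{\exp(sZ)^{-1}}\bigl(\exp(sZ)\bigr)\bigl(\partial_t\phi(s,0)\bigr) \in \got{g},
\]
so that $dl_{\exp(Z)}(e)(V(1)) = d\exp(Z)(W)$. Using the Maurer-Cartan equation applied to $\phi$, with $\omega(\partial_s\phi)(s,0) = Z$ (constant, since $s \mapsto \exp(sZ)$ is a one-parameter subgroup) and $\omega(\partial_t\phi)(s,0) = V(s)$, the compatibility condition $\partial_s\omega(\partial_t\phi) - \partial_t\omega(\partial_s\phi) = -[\omega(\partial_s\phi),\omega(\partial_t\phi)]$ becomes the linear ODE $V'(s) = -\ad(Z)V(s) + W$ with initial condition $V(0)=0$. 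Solving by variation of constants yields $V(s) = \int_0^s e^{-(s-u)\ad(Z)}W\,du$, and evaluating at $s=1$ gives $V(1) = \int_0^1 e^{-r\ad(Z)}W\,dr = \Psi_Z(W)$. This establishes $d\exp(Z) = dl_{\exp(Z)}(e)\circ\Psi_Z$.

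For the second equality in (2), I would deduce it from the first using the relation between left and right translations. Writing $l_g = r_g \circ c_g$ with $c_g(h) = ghg^{-1}$, we have $dl_g(e) = dr_g(e)\circ \Ad(g)$. Taking $g = \exp(Z)$ and using $\Ad(\exp(Z)) = e^{\ad(Z)}$, together with assertion (1), we obtain
\[
dl_{\exp(Z)}(e)\circ \Psi_Z = dr_{\exp(Z)}(e)\circ e^{\ad(Z)}\circ\Psi_Z = dr_{\exp(Z)}(e)\circ\Psi_{-Z}.
\]
The only real obstacle is choosing and carefully applying the Maurer-Cartan formalism with the correct sign convention in the ODE step; once that is done, everything else is a direct manipulation.
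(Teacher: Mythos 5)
Your proof is correct. Note that the paper does not actually prove this proposition: it simply refers the reader to Duistermaat's book, so your argument fills in what the paper leaves as a citation. Your route is the standard one: (1) is an immediate change of variable in the integral $\int_0^1 e^{-s\ad(Z)}ds$; the first equality of (2) is the classical variation formula for $d\exp$, obtained by introducing the left logarithmic derivative $V(s)=\omega(\partial_t\phi)(s,0)$ of the family $\phi(s,t)=\exp\bigl(s(Z+tW)\bigr)$ and solving the linear ODE $V'(s)=-\ad(Z)V(s)+W$, $V(0)=0$, coming from the pulled-back Maurer--Cartan equation; and the second equality follows from $dl_g(e)=dr_g(e)\circ\Ad(g)$ with $\Ad(\exp Z)=e^{\ad(Z)}$ together with (1). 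Your sign conventions check out: with the left-invariant Maurer--Cartan form one indeed gets $\partial_s\omega(\partial_t\phi)-\partial_t\omega(\partial_s\phi)=-[\omega(\partial_s\phi),\omega(\partial_t\phi)]$, and the source term $+W$ in the ODE is $\partial_t\omega(\partial_s\phi)\big|_{t=0}$, which uses the (easy but worth stating explicitly) fact that $\omega(\partial_s\phi)(s,t)=Z+tW$ for all $t$, since $s\mapsto\phi(s,t)$ is the one-parameter subgroup generated by $Z+tW$; the solution by variation of constants and the substitution $r=1-u$ then give $V(1)=\Psi_Z(W)$ as claimed. The only improvement I would suggest is to spell out that computation of $\omega(\partial_s\phi)(s,t)$ rather than only quoting its value at $t=0$, since the $t$-derivative of this quantity is exactly what feeds the ODE.
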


On pourra trouver une preuve de cette proposition dans \cite{duistermaat}. Du point $(2)$ de la Proposition \ref{prop:differentielle_de_exp}, on voit que
\begin{equation}
\label{eq:relation_Psi_Z_et_exponentielle}
\frac{d}{dt}\left(\exp(Z+tX)\right)|_{t=0} = \frac{d}{dt}\left(\exp(Z)\exp(t\Psi_{Z}(X))\right)|_{t=0},
\end{equation}
pour tout $Z,X\in\got{g}$.

\begin{prop}
\label{prop:différentielle_de_Gamma}
Pour tout $(k\Lambda,Z)\in \Klambda\times\got{p}$ et tout $(X,A)\in\got{k}\oplus\got{p}$, on a
\begin{equation}
\label{eq:relation_différentielledeGamma_2}
d\Gamma(k\Lambda,Z)([k,\overline{X}],A) = [\exp(Z)k,\overline{X + \Psi_{\Ad(k^{-1})Z}(\Ad(k^{-1})A)}]\in G\times_{K_{\Lambda}}\got{g}/\got{g}_{\Lambda}.
\end{equation}
\end{prop}

\begin{proof}
En combinant les équations \eqref{eq:relation_différentielledeGamma_1} et \eqref{eq:relation_Psi_Z_et_exponentielle}, on obtient
\begin{align*}
d\Gamma(k\Lambda,Z)([k,\overline{X}],A) & = [\exp(Z)k,\overline{X}] + \frac{d}{dt}\left(k\exp\bigl(\Ad(k^{-1})(Z+tA)\bigr)\Lambda\right)|_{t=0} \\
& = [\exp(Z)k,\overline{X}] + [\exp(Z)k,\overline{\Psi_{\Ad(k^{-1})Z}(\Ad(k^{-1})A)}].
\end{align*}
Ceci nous donne bien la relation \eqref{eq:relation_différentielledeGamma_2}.
\end{proof}

\subsection{Propriétés de l'application $\Psi_Z$}

Commençons par regarder l'application linéaire $\Psi_Z:\got{g}\rightarrow\got{g}$, pour $Z\in\got{p}$ fixé.

\begin{prop}
Soit $Z\in\got{p}$. L'application linéaire $\Psi_Z:\got{g}\rightarrow\got{g}$ est symétrique pour $B_{\theta}$ et ses valeurs propres sont toutes strictement positives. En particulier, $\Psi_Z$ est un automorphisme linéaire de $\got{g}$.
\end{prop}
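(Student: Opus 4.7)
Le plan est d'exploiter la propri\'et\'e classique que, pour $Z\in\got{p}$, l'op\'erateur $\ad(Z):\got{g}\rightarrow\got{g}$ est sym\'etrique par rapport au produit scalaire $B_{\theta}$. En effet, en combinant l'invariance de la forme de Killing par l'action adjointe, $B_{\got{g}}([Z,X],Y) = -B_{\got{g}}(X,[Z,Y])$, avec la relation $\theta(Z) = -Z$ (puisque $Z\in\got{p}$) et le fait que $\theta$ est un automorphisme d'alg\`ebre de Lie, un calcul direct donne
\[
B_{\theta}(\ad(Z)X,Y) = B_{\got{g}}(X,[Z,\theta Y]) = B_{\theta}(X,\ad(Z)Y)
\]
pour tous $X,Y\in\got{g}$. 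Ceci est la premi\`ere \'etape ; c'est un calcul standard mais il est la base de tout le reste.

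Ensuite, j'invoquerais le th\'eor\`eme spectral : l'op\'erateur $\ad(Z)$ \'etant sym\'etrique pour le produit scalaire $B_{\theta}$, il est diagonalisable dans une base $B_{\theta}$-orthonorm\'ee et ses valeurs propres sont r\'eelles. La seconde \'etape consisterait \`a utiliser la d\'efinition \eqref{eq:defi_equiv_de_PsiZ} (ou, de mani\`ere \'equivalente, la d\'efinition int\'egrale) de $\Psi_Z$ pour constater que si $v\in\got{g}$ est vecteur propre de $\ad(Z)$ pour une valeur propre r\'eelle $\lambda$, alors $v$ est vecteur propre de $\Psi_Z$ pour la valeur propre
\[
f(\lambda) := \int_0^1 e^{-s\lambda}\,ds = \begin{cases} \dfrac{1-e^{-\lambda}}{\lambda} & \text{si } \lambda\neq 0, \\ 1 & \text{si } \lambda = 0.\end{cases}
\]
Ceci montre simultan\'ement que $\Psi_Z$ est $B_{\theta}$-sym\'etrique (m\^eme base de diagonalisation) et fournit explicitement son spectre.

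La derni\`ere \'etape est la stricte positivit\'e des valeurs propres de $\Psi_Z$, qui se lit directement sur la d\'efinition int\'egrale : l'int\'egrande $s\mapsto e^{-s\lambda}$ \'etant strictement positif sur $[0,1]$, son int\'egrale sur $[0,1]$ l'est aussi. On peut \'egalement v\'erifier \`a la main que $f(\lambda) > 0$ pour tout $\lambda\in\R$ par une disjonction de cas sur le signe de $\lambda$. L'op\'erateur $\Psi_Z$, sym\'etrique \`a valeurs propres strictement positives, est alors en particulier inversible, ce qui conclut. Il n'y a pas vraiment d'obstacle dans cette preuve : tout l'\'enonc\'e d\'ecoule des propri\'et\'es \'el\'ementaires de la d\'ecomposition de Cartan et du calcul fonctionnel associ\'e \`a un op\'erateur sym\'etrique.
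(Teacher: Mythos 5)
Votre preuve est correcte et suit essentiellement le même chemin que celle du texte : symétrie de $\ad(Z)$ pour $B_{\theta}$ (le texte renvoie simplement à Knapp, Lemme 6.27, là où vous esquissez le calcul), puis lecture des valeurs propres de $\Psi_Z$ comme $\frac{1-e^{-\nu}}{\nu}$ (avec la convention $1$ en $\nu=0$) via la définition intégrale ou la série, d'où symétrie, positivité stricte et inversibilité. Rien à redire.
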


\begin{proof}
Puisque $Z$ appartient à $\got{p}$, il est bien connu que $\ad(Z):\got{g}\rightarrow\got{g}$ est symétrique pour $B_{\theta}$, cf \cite[Lemme 6.27]{knapp}. De l'écriture en série convergente \ref{eq:defi_equiv_de_PsiZ}, il est clair que $\Psi_Z$ est aussi symétrique pour $B_{\theta}$. Etant donné que cette application est définie à partir de l'intégrale
\[
\Psi_Z =  \int_0^1 e^{-s\ad(Z)}ds,
\]
ses valeurs propres seront les réels $\frac{1-\exp(-\nu_i)}{\nu_i}$, pour $i=1,\ldots,r$, où $\nu_1,\ldots,\nu_r$ sont toutes les valeurs propres de $\ad(Z)$. On a pris pour convention $\frac{1-\exp(0)}{0} := 1$. On en déduit que les valeurs propres de $\Psi_Z$ sont toutes strictement positives et que $\Psi_Z$ est inversible.
\end{proof}

Il sera intéressant par la suite de décomposer l'application linéaire $\Psi_Z:\got{k}\oplus\got{p}\rightarrow\got{k}\oplus\got{p}$, en suivant les propriétés du crochet
\[
[\got{k},\got{k}]\subset\got{k}, \quad [\got{p},\got{p}]\subset\got{k} \quad \text{et} \quad [\got{k},\got{p}]\subset\got{p}.
\]

\begin{defi}
Pour tout $Z\in\got{p}$, on a une décomposition de $\Psi_Z$ en une partie paire $\Psi_Z^+$ et une partie impaire $\Psi_Z^-$, qui sont définies par
\[
\Psi_Z^+ = \sum_{n\geqslant 0}\frac{\ad(Z)^{2n}}{(2n+1)!}, \qquad \text{et} \qquad \Psi_Z^- = -\sum_{n\geqslant 0}\frac{\ad(Z)^{2n+1}}{(2n+2)!}.
\]\index{$\Psi_Z^+$, $\Psi_Z^-$}
\end{defi}

\begin{rema}
Nous justifions la terminologie \og{}paire\fg{} et \og{}impaire\fg{} du fait que, étant donné que $Z$ appartient à $\got{p}$, l'application linéaire $\Psi_Z^+$ envoie $\got{k}$ (resp. $\got{p}$) sur $\got{k}$ (resp. $\got{p}$), alors que $\Psi_Z^-$ envoie $\got{k}$ dans $\got{p}$, et réciproquement.
\end{rema}

Alors que l'application $\Psi_Z^+$ définit un isomorphisme de $\got{g}$ dans lui-même, l'étude de $\Psi_Z^-$ est plus compliquée, puisqu'il ne laisse pas stables les sous-espaces $\got{k}$ et $\got{p}$, mais les intervertit. De plus, on voit facilement que $\Psi_Z^-(Z) = 0$, car $\ad(Z)Z = [Z,Z] = 0$. Donc $\Psi_Z^-$ n'est pas inversible.

Dans la section \ref{section:cas_général}, nous ferons apparaître l'application linéaire
\[
\chi_Z := \Psi_Z^-\circ(\Psi_Z^+)^{-1} : \got{g}\rightarrow\got{g},
\]\index{$\chi_Z$}
pour $Z\in\got{p}$. Cette application a la propriété cruciale suivante.

\begin{prop}
\label{prop:chi_Z_symetrique+vp}
Pour tout $Z\in\got{p}$, l'application linéaire $\chi_Z:\got{g}\rightarrow\got{g}$ est symétrique pour le produit scalaire $B_{\theta}$ sur $\got{g}$ et ses valeurs propres sont toutes dans l'intervalle $]-1,1[$.
\end{prop}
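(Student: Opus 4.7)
Le plan est d'identifier explicitement $\Psi_Z^+$ et $\Psi_Z^-$ comme des fonctions enti�res �valu�es en $\ad(Z)$, puis d'en d�duire une formule ferm�e pour $\chi_Z$ qui rendra les deux propri�t�s transparentes. Posons $A = \ad(Z)$. Les d�veloppements donn�s dans la d�finition de $\Psi_Z^\pm$ se reconnaissent comme les s�ries enti�res
\[
\Psi_Z^+ = \frac{\sinh(A)}{A} = \sum_{n\geqslant 0}\frac{A^{2n}}{(2n+1)!}, \qquad \Psi_Z^- = -\frac{\cosh(A)-1}{A} = -\sum_{n\geqslant 0}\frac{A^{2n+1}}{(2n+2)!},
\]
ces s�ries convergeant sans probl�me puisque $A$ est un endomorphisme de l'espace vectoriel de dimension finie $\got{g}$ (avec la convention habituelle $\sinh(0)/0 = 1$ et $(\cosh(0)-1)/0 = 0$).

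Ensuite, en utilisant les identit�s standard $\cosh(x)-1 = 2\sinh^2(x/2)$ et $\sinh(x) = 2\sinh(x/2)\cosh(x/2)$, on en d�duit la formule
\[
\chi_Z = \Psi_Z^-\circ(\Psi_Z^+)^{-1} = -\frac{\cosh(A)-1}{\sinh(A)} = -\tanh\!\left(\frac{\ad(Z)}{2}\right).
\]
Cette manipulation est l�gitime: la proposition pr�c�dant l'�nonc� assure que $\Psi_Z$ est inversible, et par parit� le m�me argument appliqu� aux valeurs propres $\sinh(\nu_i)/\nu_i > 0$ de $\Psi_Z^+$ (o� les $\nu_i$ sont les valeurs propres de $\ad(Z)$) montre que $\Psi_Z^+$ est lui-m�me un automorphisme de $\got{g}$.

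Il reste � exploiter la formule pour conclure. Comme $Z\in\got{p}$, l'endomorphisme $\ad(Z):\got{g}\rightarrow\got{g}$ est sym�trique pour le produit scalaire $B_\theta$ (cf \cite[Lemme 6.27]{knapp}); en particulier, $\ad(Z)$ est $B_\theta$-diagonalisable � valeurs propres r�elles $\nu_1,\ldots,\nu_r$. Toute s�rie enti�re convergente en $\ad(Z)$ est donc elle aussi $B_\theta$-sym�trique, et sa d�composition spectrale s'obtient en appliquant la fonction correspondante aux $\nu_i$. Appliqu� � $f(x) = -\tanh(x/2)$, cela montre d'une part que $\chi_Z$ est $B_\theta$-sym�trique, et d'autre part que son spectre est $\{-\tanh(\nu_i/2) \,;\ i=1,\ldots,r\}\subset\,]-1,1[$, puisque $\tanh$ est � valeurs dans $]-1,1[$.

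L'argument est essentiellement un calcul, sans vraie difficult�: la seule �tape demandant un peu de vigilance est la reconnaissance des s�ries $\Psi_Z^\pm$ comme des fonctions hyperboliques et la simplification conduisant � $-\tanh(\ad(Z)/2)$, mais une fois cette identification obtenue, la sym�trie et la localisation du spectre d�coulent imm�diatement du fait que $\ad(Z)$ est un op�rateur $B_\theta$-sym�trique � spectre r�el.
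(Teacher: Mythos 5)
Votre preuve est correcte et suit essentiellement la m�me d�marche que celle du texte : sym�trie de $\ad(Z)$ pour $B_{\theta}$, diagonalisation simultan�e des fonctions de $\ad(Z)$, puis calcul des valeurs propres de $\chi_Z$, que le texte �crit valeur propre par valeur propre sous la forme $\frac{e^{\nu_i}-1}{e^{\nu_i}+1}$ l� o� vous reconnaissez directement $\mp\tanh(\nu_i/2)$ gr�ce � la forme ferm�e $\chi_Z=-\tanh\bigl(\ad(Z)/2\bigr)$. La diff�rence de signe avec le texte (qui obtient $+\tanh$, le signe moins de la d�finition de $\Psi_Z^-$ ayant disparu dans son calcul) est sans cons�quence, l'intervalle $]-1,1[$ �tant sym�trique.
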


\begin{proof}
Rappelons que $\ad(Z):\got{g}\rightarrow\got{g}$ est symétrique pour $B_{\theta}$, car $Z$ appartient à $\got{p}$. Puisque $\Psi_Z^+ = \sum_{k\geqslant 0} \frac{\ad(Z)^{2k}}{(2k+1)!}$, cet endomorphisme de $\got{g}$ est aussi symétrique. Il est également défini positif et, si les valeurs propres de $\ad(Z)$ sont les réels $\nu_1,\ldots,\nu_r$, alors les valeurs propres de $\Psi_Z^+$ sont
\[
\sum_{k\geqslant 0}\frac{\nu_i^{2k}}{(2k+1)!} = \frac{e^{\nu_i}-e^{-\nu_i}}{2\nu_i} > 0 \quad \text{pour $i=1,\ldots,r$}.
\]
Pour le cas de $\Psi_Z^-$, nous avons $\Psi_Z^- = \sum_{k\geqslant 0} \frac{\ad(Z)^{2k+1}}{(2k+2)!}$, qui est également symétrique pour $B_{\theta}$ et ses valeurs propres sont
\[
\sum_{k\geqslant 0}\frac{\nu_i^{2k+1}}{(2k+2)!} = \frac{e^{\nu_i}+e^{-\nu_i}-2}{2\nu_i} \quad \text{pour $i=1,\ldots,r$}.
\]
Il est clair que $\ad(Z)$ commute avec $\Psi_Z^+$, donc il commute avec $(\Psi_Z^+)^{-1}$ également. De la linéarité de $(\Psi_Z^+)^{-1}$ et de l'écriture de $\Psi_Z^-$ comme somme de puissances de $\ad(Z)$, on en déduit que $\Psi_Z^-$ et $(\Psi_Z^+)^{-1}$ commutent, donc sont simultanément diagonalisables. Par conséquent, $\chi_Z = \Psi_Z^-\circ(\Psi_Z^+)^{-1}$ est symétrique pour $B_{\theta}$ et ses valeurs propres sont les réels
\[
\frac{e^{\nu_i}+e^{-\nu_i}-2}{2\nu_i}\frac{2\nu_i}{e^{\nu_i}-e^{-\nu_i}} = \frac{e^{2\nu_i}-2e^{\nu_i}+1}{e^{2\nu_i}-1} = \frac{e^{\nu_i}-1}{e^{\nu_i}+1}  \quad \text{pour $i=1,\ldots,r$}.
\]
Et ces nombres sont évidemment dans $]-1,1[$, d'où le résultat.
\end{proof}

\begin{rema}
D'après la Proposition \ref{prop:chi_Z_symetrique+vp}, les valeurs propres de $\chi_Z$ sont toutes bornées indépendamment de $Z\in\got{p}$. Ce sera un des arguments-clés de la preuve du Théorème \ref{theo:symplecto_principal}.
\end{rema}

\subsection{Les formes symplectiques $\Omega_{G\cdot\Lambda}$ et $\Gamma^*\Omega_{G\cdot\Lambda}$}

D'après la Définition \ref{defi:formesymplectique_kirillovkostantsouriau}, et en utilisant les notations du paragraphe \ref{subsection:orbites_coadjointes}, la forme symplectique $\Omega_{G\cdot\Lambda}$ est définie par la formule
\[
(\Omega_{G\cdot\Lambda})|_{g\Lambda}\left([g,(\overline{X},A)],[g,(\overline{Y},B)]\right) = \langle\Lambda,[X,Y]\rangle + \langle\Lambda,[A,B]\rangle,
\]
pour tout $g\in G$ et tous $(X,A),(Y,B)\in\got{k}\oplus\got{p}$. En effet, dans l'algèbre de Lie $\got{g}$, la décomposition de Cartan $\got{k}\oplus\got{p}$ induit les inclusions $[\got{k},\got{k}]\subset\got{k}$, $[\got{p},\got{p}]\subset\got{k}$ et $[\got{k},\got{p}]\subset\got{p}$. Et comme $\Lambda$ appartient à $\got{k}^*$, on a nécessairement $\langle\Lambda,[X,B]\rangle = \langle\Lambda,[A,Y]\rangle = 0$ pour tous $(X,A),(Y,B)\in\got{k}\oplus\got{p}$.

Passons maintenant à la forme symplectique $\Gamma^*\Omega_{G\cdot\Lambda}$, qui est, elle, définie sur $\Klambda\times\got{p}$. Le calcul de la différentielle de $\Gamma$ ayant été effectué dans la Proposition \ref{prop:différentielle_de_Gamma}, nous obtenons la formule
\begin{multline*}
(\Gamma^*\Omega_{G\cdot\Lambda})|_{(k\Lambda,Z)}\left(([k,\overline{X}],A),([k,\overline{Y}],B)\right) \\
= \langle\Lambda,[X+\Ad(k^{-1})\Psi_Z(A), Y+\Ad(k^{-1})\Psi_Z(B)]\rangle
\end{multline*}
pour tout $(k\Lambda,Z)\in M$, tous $(X,A),(Y,B)\in\got{k}\oplus\got{p}$. Pour séparer partie dans $\got{k}$ et partie dans $\got{p}$, on peut également l'écrire sous la forme
\begin{multline}
\label{eq:formule_GammaOmegaGLambda}
(\Gamma^*\Omega_{G\cdot\Lambda})|_{(k\Lambda,Z)}\left(([k,\overline{X}],A),([k,\overline{Y}],B)\right) \\
= \left\langle\Lambda,[X+\Ad(k^{-1})\Psi_Z^-(A), Y+\Ad(k^{-1})\Psi_Z^-(B)]\right\rangle + \left\langle k\Lambda,[\Psi_Z^+(A),\Psi_Z^+(B)]\right\rangle 
\end{multline}
pour tout $(k\Lambda,Z)\in M$ et tous $(X,A),(Y,B)\in\got{k}\oplus\got{p}$.

\section{Cas de l'espace symétrique hermitien $G/K$}
\label{section:cas_G/K}

On sait qu'il existe un élément $z_0\in\got{z}(K)$ tel que $\ad(z_0)|_{\got{p}}$ définisse une structure complexe $K$-invariante sur $\got{p}$. Le produit scalaire $B_{\theta}$ sur $\got{g}$ permet d'identifier $z_0$ avec un élément $\Lambda_0\in\got{t}^*$ de la chambre holomorphe $\Chol$. En effet, rappelons que pour toute racine non compacte positive $\beta$, on a $\beta(z_0)=1$, d'après le paragraphe \ref{section:la_chambre_holomorphe}.

L'espace homogène $G/K$ s'identifie alors à l'orbite coadjointe $\Glambda_0$. Le difféomorphisme $\Gamma$ s'écrit ici
\[
\begin{array}{cccl}
\Gamma_0 : & \got{p} & \longrightarrow & \Glambda_0 \\
& Z & \longmapsto & \exp(Z)\Lambda_0.
\end{array}
\]

\begin{rema}
On donne ici une notation spécifique pour $\Gamma_0$, car, dans la section suivante, les difféomorphismes $\Gamma$ et $\Gamma_0$ interviendront tous les deux.
\end{rema}

La forme symplectique $\Gamma_0^*\Omega_{G\cdot\Lambda_0}$ devient alors
\[
(\Gamma^*\Omega_{G\cdot\Lambda_0})|_Z(A,B) = \langle \Lambda_0, [\Psi_Z^+(A),\Psi_Z^+(B)]\rangle,
\]
puisque, $z_0$ étant dans le centre de $\got{k}$, le stabilisateur $K_{\Lambda_0}$ est égal à $K$ tout entier. En particulier, le terme $\langle \Lambda_0, [\Psi_Z^-(A),\Psi_Z^-(B)]\rangle$ est forcément nul.

\begin{theo}[McDuff]
\label{theo:symplectomorphisme_casG/K}
La variété symplectique $(\Glambda_0,\Omega_{G\cdot\Lambda_0})$ est symplectomorphe à l'espace vectoriel symplectique $(\got{p},\Omega_{\got{p}})$, par un difféomorphisme $K$-équivariant qui envoie $0\in\got{p}$ sur $\Lambda_0\in \Glambda_0$.
\end{theo}

Nous proposons ici une démonstration très différente de celle donnée par McDuff dans \cite{mcduff}. La preuve ci-dessous n'utilise aucune propriété géométrique de la variété $\Glambda_0$, contrairement à \cite{mcduff} qui montre cette propriété plus généralement pour les variétés kählériennes à courbure négative possédant un pôle.

Un des premiers points à s'assurer est que des applications moments propres existent pour $\Gamma_0^*\Omega_{G\cdot\Lambda_0}$ et $\Omega_{\got{p}}$.

On sait qu'une application moment associée à $\Gamma_0^*\Omega_{G\cdot\Lambda_0}$ est l'application
\[
\begin{array}{cccl}
\Phi_{\Gamma_0^*\Omega_{G\cdot\Lambda_0}}:&\got{p}&\longrightarrow &\got{k}^* \\
& Z & \longmapsto & (X\in\got{k}\mapsto\langle \exp(Z)\Lambda_0,X\rangle\in\R).
\end{array}
\]
Il s'agit tout simplement de l'application composée $\Phi_{G\cdot\Lambda_0}\circ\Gamma_0$.

\begin{lemm}
\label{lemm:applicationmoment_Phi_Gamma0OmegaGlambda0_propre}
Pour tout $Z\in\got{p}$, on a
\[
\langle\Phi_{\Gamma_0^*\Omega_{G\cdot\Lambda_0}}(Z)-\Lambda_0,z_0\rangle \geqslant \frac{1}{2}\|Z\|^2.
\]
En particulier, l'application moment $\Phi_{\Gamma_0^*\Omega_{G\cdot\Lambda_0}} :\got{p}\rightarrow\got{k}^*$ est propre.
\end{lemm}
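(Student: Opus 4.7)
The plan is to compute the pairing $\langle \Phi_{\Gamma_0^*\Omega_{G\cdot\Lambda_0}}(Z), z_0\rangle$ explicitly and deduce the quadratic lower bound from an elementary series manipulation. By definition, $\Phi_{\Gamma_0^*\Omega_{G\cdot\Lambda_0}}(Z)$ is the restriction to $\got{k}$ of $\exp(Z)\cdot\Lambda_0\in\got{g}^*$, and since $z_0\in\got{k}$, the pairing equals $\langle\exp(Z)\Lambda_0,z_0\rangle = \langle\Lambda_0,\Ad(\exp(-Z))z_0\rangle$. Under the $B_\theta$-identification $\Lambda_0\leftrightarrow z_0$ fixed at the beginning of the section, this becomes $B_\theta(z_0,e^{-\ad(Z)}z_0)$.

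The key step is to split $e^{-\ad(Z)} = \cosh(\ad(Z)) - \sinh(\ad(Z))$. For $Z\in\got{p}$, the operator $\ad(Z)$ interchanges $\got{k}$ and $\got{p}$, so $\cosh(\ad(Z))z_0$ lies in $\got{k}$ while $\sinh(\ad(Z))z_0$ lies in $\got{p}$. Since $B_\theta(\got{k},\got{p})=0$, only the even part contributes:
\[
\langle\Phi_{\Gamma_0^*\Omega_{G\cdot\Lambda_0}}(Z),z_0\rangle \;=\; \sum_{n\geqslant 0}\frac{B_\theta(z_0,\ad(Z)^{2n}z_0)}{(2n)!}.
\]
Using the $B_\theta$-symmetry of $\ad(Z)$ for $Z\in\got{p}$ (a fact already recalled in the proof of Proposition~\ref{prop:chi_Z_symetrique+vp}), each summand equals $\|\ad(Z)^n z_0\|^2/(2n)!$ and is non-negative. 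Retaining only the terms $n=0$ and $n=1$ gives
\[
\langle\Phi_{\Gamma_0^*\Omega_{G\cdot\Lambda_0}}(Z),z_0\rangle \;\geqslant\; \|z_0\|^2 + \tfrac{1}{2}\|\ad(Z)z_0\|^2.
\]

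To finish, I would use two elementary identifications: $\|z_0\|^2 = B_\theta(z_0,z_0) = \langle\Lambda_0,z_0\rangle$, and $\ad(Z)z_0 = -\ad(z_0)Z$, combined with the fact that $\ad(z_0)|_{\got{p}}$ is a $B_\theta$-isometry of $\got{p}$. Indeed, it is $B_\theta$-skew-symmetric (since $z_0\in\got{k}$ and $B_\theta$ is $K$-invariant) and squares to $-\id_{\got{p}}$ (by the Hermitian hypothesis), which together force $\|\ad(z_0)Z\|^2 = \|Z\|^2$. This yields the announced inequality. Properness is then immediate: for any compact $C\subset\got{k}^*$, the continuous functional $\mu\mapsto\langle\mu-\Lambda_0,z_0\rangle$ is bounded on $C$, so the inequality forces $\|Z\|$ to be bounded on $\Phi_{\Gamma_0^*\Omega_{G\cdot\Lambda_0}}^{-1}(C)$; this preimage is also closed by continuity of $\Phi_{\Gamma_0^*\Omega_{G\cdot\Lambda_0}}$, hence compact in $\got{p}$. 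I do not anticipate any real obstacle, the only delicate bookkeeping being that two natural bilinear forms ($B_{\got{g}}$ and $B_\theta$) coexist on $\got{g}$ and the argument relies specifically on the $B_\theta$-version of the identification $\got{g}\cong\got{g}^*$.
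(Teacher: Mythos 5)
Your proposal is correct and follows essentially the same route as the paper's proof: expand $e^{-\ad(Z)}$ acting on $z_0$, discard the odd part using the $\got{k}$/$\got{p}$ parity of $\ad(Z)$, use the $B_{\theta}$-symmetry of $\ad(Z)$ to see each even term as $\|\ad(Z)^k z_0\|^2/(2k)!\geqslant 0$, keep the $k=1$ term, and conclude with $\|[z_0,Z]\|=\|Z\|$. The only cosmetic difference is the final properness step (you bound $\|Z\|$ on preimages of compacts directly, the paper passes through the operator norm of $\Phi_{\Gamma_0^*\Omega_{G\cdot\Lambda_0}}(Z)-\Lambda_0$), which is the same argument in substance.
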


\begin{proof}
Remarquons que, pour tout $Z\in\got{p}$ et tout $X\in\got{k}$, on a
\begin{align*}
\langle \exp(Z)\Lambda_0,X\rangle & = \langle \Lambda_0,e^{-\ad(Z)}X\rangle \\
& = \Bigl\langle\Lambda_0,\sum_{k=0}^{\infty}\frac{\ad(-Z)^{2k}}{(2k)!}X\Bigr\rangle \\
& = B_{\theta}\left(z_0,\sum_{k=0}^{\infty}\frac{\ad(Z)^{2k}}{(2k)!}X\right).
\end{align*}
Or, $\ad(Z)$ est symétrique pour la forme bilinéaire symétrique $B_{\theta}$. On obtient donc
\begin{align*}
\langle\exp(Z)\Lambda_0-\Lambda_0,z_0\rangle & = \sum_{k=1}^{\infty}\frac{1}{(2k)!}B_{\theta}(\ad(Z)^k z_0,\ad(Z)^k z_0) \\
& \geqslant \frac{1}{2}B_{\theta}([z_0,Z],[z_0,Z]) = \frac{1}{2}\|Z\|^2.
\end{align*}
Par conséquent, l'application $\Phi_{\Gamma_0^*\Omega_{G\cdot\Lambda_0}}-\Lambda_0:\got{p}\rightarrow\got{k}^*$ est propre. En effet, en tant qu'élément du dual de $\got{k}$, la norme de $\Phi_{\Gamma_0^*\Omega_{G\cdot\Lambda_0}}(Z)-\Lambda_0$ est la norme d'opérateur linéaire, donc
\begin{align*}
\|\Phi_{\Gamma_0^*\Omega_{G\cdot\Lambda_0}}(Z)-\Lambda_0\| & = \sup_{\|X\|=1}\langle\Phi_{\Gamma_0^*\Omega_{G\cdot\Lambda_0}}(Z)-\Lambda_0,X\rangle \\
& \geqslant \frac{\langle\Phi_{\Gamma_0^*\Omega_{G\cdot\Lambda_0}}(Z)-\Lambda_0,z_0\rangle}{\|z_0\|} \\
& \geqslant \frac{1}{2\|z_0\|}\|Z\|^2,
\end{align*}
quel que soit $Z\in\got{p}$. Cette application est donc bien propre. On en déduit que l'application $\Phi_{\Gamma_0^*\Omega_{G\cdot\Lambda_0}}$ est propre elle aussi.
\end{proof}

La seconde forme symplectique, $\Omega_{\got{p}}$, est, quant à elle, une forme symplectique linéaire sur $\got{p}$. On peut donc facilement vérifier qu'une application moment pour $\Omega_{\got{p}}$ est
\[
\begin{array}{cccl}
\Phi_{\Omega_{\got{p}}}:&\got{p}&\longrightarrow&\got{k}^* \\
& Z & \longmapsto & (X\mapsto\langle\Lambda_0,[[X,Z],Z]\rangle).
\end{array}
\]

\begin{lemm}
\label{lemm:applicationmoment_Phi_Omega_gotp_propre}
On a $\langle\Phi_{\Omega_{\got{p}}}(Z),z_0\rangle = \|Z\|^2$, pour tout $Z\in\got{p}$. En particulier, l'application moment $\Phi_{\Omega_{\got{p}}} :\got{p}\rightarrow\got{k}^*$ est propre.
\end{lemm}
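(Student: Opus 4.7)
Le plan consiste � d�rouler la d�finition de $\Phi_{\Omega_{\got{p}}}$ appliqu�e � $z_0$, puis � ramener le calcul � la relation caract�ristique $\ad(z_0)^2|_{\got{p}} = -\id|_{\got{p}}$ gr�ce � l'invariance de la forme de Killing. Par d�finition, $\langle\Phi_{\Omega_{\got{p}}}(Z),z_0\rangle = \langle\Lambda_0,[[z_0,Z],Z]\rangle$, et l'identification de $\Lambda_0 \in \got{k}^*$ avec $z_0 \in \got{z}(\got{k})$ par l'interm�diaire de $B_{\theta}$ (rappel�e juste avant le Lemme \ref{lemm:applicationmoment_Phi_Gamma0OmegaGlambda0_propre}) permet de r��crire ceci comme $B_{\theta}(z_0,[[z_0,Z],Z])$.

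Ensuite, on observe que $[z_0,Z] \in \got{p}$ (car $z_0\in\got{k}$ et $[\got{k},\got{p}]\subset\got{p}$), donc $[[z_0,Z],Z] \in \got{k}$. Puisque $\theta$ fixe $\got{k}$ point par point, on a $B_{\theta}(z_0,[[z_0,Z],Z]) = -B_{\got{g}}(z_0,[[z_0,Z],Z])$. L'invariance $B_{\got{g}}(X,[Y,W]) = B_{\got{g}}([X,Y],W)$ donne alors
\[
B_{\got{g}}(z_0,[[z_0,Z],Z]) = B_{\got{g}}(\ad(z_0)^2 Z,Z) = -B_{\got{g}}(Z,Z) = -\|Z\|^2,
\]
o� on a utilis� $\ad(z_0)^2|_{\got{p}} = -\id|_{\got{p}}$ ainsi que l'�galit� $B_{\theta} = B_{\got{g}}$ sur $\got{p}$. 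En remontant les signes, on trouve $\langle\Phi_{\Omega_{\got{p}}}(Z),z_0\rangle = \|Z\|^2$.

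Pour la propret�, on reproduit l'argument de la fin de la preuve du Lemme \ref{lemm:applicationmoment_Phi_Gamma0OmegaGlambda0_propre}: l'in�galit�
\[
\|\Phi_{\Omega_{\got{p}}}(Z)\| \geqslant \frac{\langle\Phi_{\Omega_{\got{p}}}(Z),z_0\rangle}{\|z_0\|} = \frac{\|Z\|^2}{\|z_0\|}
\]
assure que les images r�ciproques de parties born�es de $\got{k}^*$ par $\Phi_{\Omega_{\got{p}}}$ sont born�es dans $\got{p}$, donc relativement compactes puisque $\got{p}$ est de dimension finie. Il n'y a pas r�ellement de point d�licat dans cette preuve: tout repose sur la manipulation soigneuse des signes dans la relation entre $B_{\got{g}}$ et $B_{\theta}$ selon qu'on �value sur $\got{k}$ ou sur $\got{p}$, et sur l'utilisation directe de la propri�t� $\ad(z_0)^2|_{\got{p}} = -\id|_{\got{p}}$ caract�risant le fait que $G/K$ soit hermitien.
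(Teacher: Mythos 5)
Votre preuve est correcte et suit essentiellement le m�me chemin que celle du texte : on applique la d�finition de $\Phi_{\Omega_{\got{p}}}$ au vecteur $z_0$, on identifie $\Lambda_0$ � $z_0$ via $B_{\theta}$, on se ram�ne � $B_{\theta}(-\ad(z_0)^2Z,Z)$ par invariance, puis on conclut avec $\ad(z_0)^2|_{\got{p}}=-\id_{\got{p}}$, la propret� d�coulant du m�me argument de minoration que pour le Lemme \ref{lemm:applicationmoment_Phi_Gamma0OmegaGlambda0_propre}. Le seul �cart est cosm�tique : vous explicitez le passage par $B_{\got{g}}$ et l'invariance de la forme de Killing l� o� le texte �crit directement l'identit� en termes de $B_{\theta}$, et la v�rification des signes y est correcte.
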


\begin{proof}
Si la première assertion est vraie, alors, par un raisonnement similaire à celui de la preuve du Lemme \ref{lemm:applicationmoment_Phi_Gamma0OmegaGlambda0_propre}, on en déduit que $\Phi_{\Omega_{\got{p}}}$ est propre.

Il suffit donc de prouver que $\langle\Phi_{\Omega_{\got{p}}}(Z),z_0\rangle = \|Z\|^2$, pour tout $Z\in\got{p}$. On utilise la définition de $\Phi_{\Omega_{\got{p}}}$, qui nous donne les égalités suivantes,
\[
\langle\Phi_{\Omega_{\got{p}}}(Z),z_0\rangle = B_{\theta}(z_0,[[z_0,Z],Z]) = B_{\theta}(-\ad(z_0)^2Z,Z).
\]
Or, $\ad(z_0)|_{\got{p}}^2 = -\id_{\got{p}}$. D'où l'égalité $\langle\Phi_{\Omega_{\got{p}}}(Z),z_0\rangle = \|Z\|^2$.
\end{proof}

On aura également besoin du lemme suivant, qui est un analogue du Lemme de Poincaré aux familles lisses de formes différentielles.

\begin{lemm}
\label{lemm:lemmePoincare_versionfamillelisse}
Soit $(\omega_t)_{t\in[0,1]}$ une famille lisse de $2$-formes fermées sur $\got{p}$. Alors, il existe une famille lisse $(\mu_t)_{t\in[0,1]}$ de $1$-formes différentielles, telle que $\omega_t = d\mu_t$ pour tout $t\in[0,1]$. Et si, pour $t\in[0,1]$, la $2$-forme $\omega_t$ est $K$-invariante, alors $\mu_t$ peut être choisie $K$-invariante.
\end{lemm}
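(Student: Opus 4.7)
The plan is to imitate the classical proof of the Poincaré lemma on the star-shaped domain $\got{p}$, but applied fiberwise in the parameter $t$, and then to check that everything depends smoothly on $t$ and respects the $K$-action.

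First, I would introduce the contracting homotopy $F : \got{p}\times[0,1]\rightarrow\got{p}$ defined by $F(Z,s) = sZ$, which satisfies $F(\cdot,0) \equiv 0$ and $F(\cdot,1) = \id_{\got{p}}$. As in Lemma \ref{lemm:lemmePoincar�_Kinvariant}, this gives a chain homotopy operator on differential forms; for a smooth family $(\omega_t)_{t\in[0,1]}$ of $2$-forms one sets
\[
\mu_t|_Z(v) := \int_0^1 \omega_t|_{sZ}(Z, sv)\, ds, \qquad Z\in\got{p},\ v\in T_Z\got{p}\cong\got{p}.
\]
This is obtained by applying $h_F$ from Lemma \ref{lemm:lemmePoincar�_Kinvariant} to each $\omega_t$.

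Next, I would verify the three required properties in order. Smoothness of $(t,Z)\mapsto\mu_t|_Z$ follows from the smoothness of $(t,Z,s)\mapsto \omega_t|_{sZ}$ together with the classical theorem of differentiation under the integral sign over the compact interval $[0,1]$. The identity $d\mu_t = \omega_t$ for each fixed $t$ is precisely the conclusion of the Poincaré lemma applied to the closed form $\omega_t$ via the homotopy $F$ (assertion (\emph{ii}) of Lemma \ref{lemm:lemmePoincar�_Kinvariant}, noting that $0^*\omega_t = 0$ and $\id^*\omega_t = \omega_t$). Finally, for $K$-invariance, the crucial observation is that $F(Z,s) = sZ$ is $K$-equivariant because the action of $K$ on $\got{p}$ is linear; consequently $F^*$ commutes with pullback by $K$, and the operator $h_F$ sends $K$-invariant forms to $K$-invariant forms, which is exactly assertion (\emph{i}) of Lemma \ref{lemm:lemmePoincar�_Kinvariant}. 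Thus if $\omega_t$ is $K$-invariant, so is $\mu_t$.

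There is essentially no obstacle here: the result is purely a smooth-parameter refinement of the Poincaré lemma, and the homotopy operator is explicit and linear in the input form, so smoothness in $t$ and preservation of $K$-invariance both come for free. As a fallback, if one preferred not to exploit the $K$-equivariance of $F$ directly, one could always replace $\mu_t$ by its average $\int_K k^*\mu_t\, dk$ over Haar measure on $K$, which is still smooth in $t$, still primitives $\omega_t$ when $\omega_t$ is $K$-invariant (since $d$ commutes with the averaging), and is manifestly $K$-invariant.
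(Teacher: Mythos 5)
Your argument is correct and is essentially the paper's own: the paper likewise takes the explicit radial-contraction homotopy operator from the classical Poincaré lemma, observes that the parameter $t$ enters harmlessly so the resulting family $(\mu_t)$ is smooth, and deduces the $K$-invariance of $\mu_t$ from the linearity of the $K$-action on $\got{p}$ via the explicit formula. Your fallback by averaging over $K$ is a harmless variant but is not needed.
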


\begin{proof}
Il suffit de recopier la preuve du Lemme de Poincaré pour les $2$-formes différentielles sur $\R^n$ (voir par exemple \cite[4.18]{warner}), en effectuant de très légères modifications de notations. Le paramètre $t$ ne change aucun calcul et on obtient bien une famille lisse à l'arrivée.

La $K$-invariance de $\mu_t$ provient de la linéarité de l'action de $K$ sur $\got{p}$. Elle se vérifie alors directement sur la définition du $\mu_t$ choisi dans la preuve du Lemme de Poincaré.
\end{proof}

\begin{proof}[Preuve du Théorème \ref{theo:symplectomorphisme_casG/K}]
Notons, pour tout $t\in[0,1]$, la $2$-forme différentielle $\Omega_t$ sur $\got{p}$ définie par
\[
\Omega_t|_Z := (\Gamma_0^*\Omega_{G\cdot\Lambda_0})|_{tZ} \qquad \text{pour tout $Z\in\got{p}$}.
\]
En particulier, $\Omega_1 = \Gamma_0^*\Omega_{G\cdot\Lambda_0}$. On remarque également que, pour $t=0$, on obtient la forme symplectique linéaire
\[
\Omega_0|_Z(A,B) = \langle\Lambda_0,[A,B]\rangle = B_{\theta}(z_0,[A,B]) = \Omega_{\got{p}}|_Z(A,B),
\]
pour tous $Z,A,B\in\got{p}$, puisque $\Psi_0^+ = \id_{\got{p}}$. Lorsque $t\neq 0$, on peut voir que
\[
\Omega_t = \frac{1}{t^2}\eta_t^*(\Gamma_0^*\Omega_{G\cdot\Lambda_0}) = \frac{1}{t^2}\eta_t^*\Omega_1,
\]
où $\eta_t:\got{p}\rightarrow\got{p}$ est l'homothétie $Z\mapsto tZ$, pour tout $t\in[0,1]$. L'action de $K$ sur $\got{p}$ étant linéaire, $\eta_t$ commute avec l'action. On en déduit que $\Omega_t$ est $K$-invariante. De plus, comme $\Omega_1$ est fermée, on a $d\Omega_t = \frac{1}{t^2}d(\eta_t^*\Omega_1)=\frac{1}{t^2}\eta_t^*d\Omega_1 = 0$. Et $\Omega_1$ étant symplectique, il est clair que la forme bilinéaire alternée $(\Omega_t)|_Z=(\Omega_1)|_{tZ}$ est non dégénérée. On en conclut que $\Omega_t$ est une forme symplectique pour tout $t\in[0,1]$.

La famille $(\Omega_t)_{t\in[0,1]}$ est une famille lisse de formes symplectiques. Ainsi, la famille $(\frac{d}{dt}\Omega_t)_{t\in[0,1]}$ est elle aussi lisse, avec les $2$-formes différentielles $\frac{d}{dt}\Omega_t$ qui sont fermées puisque la différentielle extérieure $d$ et l'opérateur différentiel $\frac{d}{dt}$ commutent. Il ne reste plus qu'à appliquer le Lemme \ref{lemm:lemmePoincare_versionfamillelisse} pour obtenir l'existence d'une famille lisse $(\mu_t)_{t\in[0,1]}$ de $1$-formes $K$-invariantes sur $\got{p}$ telle que, pour tout $t\in[0,1]$, on ait $\frac{d}{dt}\Omega_t = d\mu_t$. Ceci prouve la condition $(1)$ du Théorème \ref{theo:existence_isotopie_noncompact_casgeneral}.

Pour tout $t\in]0,1]$, on pose
\[
\Phi_t := \frac{1}{t^2}\eta_t^*\Phi_{\Gamma_0^*\Omega_{G\cdot\Lambda_0}} - \frac{1}{t^2}\Lambda_0 :\got{p}\rightarrow\got{k}^*.
\]
Puisque $\eta_t$ et $\Phi_{\Gamma_0^*\Omega_{G\cdot\Lambda_0}}$ sont $K$-équivariantes, il en est de même de $\Phi_t$. On peut alors facilement vérifier que $\Phi_t$ est une application moment pour la forme symplectique $\Omega_t$ sur $\got{p}$, car $\Lambda_0$ est laissé fixe par l'action de $K$. Pour $t=0$, on pose $\Phi_0 := \Phi_{\Omega_{\got{p}}}$, qui est une application moment pour $\Omega_0=\Omega_{\got{p}}$.

Remarquons que $\Phi_{\Gamma_0^*\Omega_{G\cdot\Lambda_0}}(0) = \exp(0)\Lambda_0 = \Lambda_0$ et que $\eta_t(0) = 0$. Par conséquent, on a $\Phi_t(0) = 0$ pour tout $t\in]0,1]$. De plus, on a également $\Phi_0(0) = \Phi_{\Omega_{\got{p}}}(0) = 0$. Donc l'ensemble $\{\Phi_t(0); t\in[0,1]\}$ est réduit à un point. La condition $(2)$ du Théorème \ref{theo:existence_isotopie_noncompact_casgeneral} est donc vérifiée, puisque ici, $M = K\cdot\Lambda_0 = \{\Lambda_0\}$ est un singleton, c'est-à-dire que $K\cdot\Lambda_0\times\got{p} \simeq \got{p}$ est vectoriel. Par ailleurs, ce fait prouve trivialement la condition $(3)$, car ici, on a $(T_0 \got{p})^{\Omega_t|_0} = \{0\} = T_0\{0\}$, cf Remarque \ref{rema:theo_moser_conditionsupplémentaire_toujours_vérifiée_pour_E=V}.

Il ne reste plus qu'à prouver la condition $(4)$ du Théorème \ref{theo:existence_isotopie_noncompact_casgeneral}. Par un premier calcul, nous obtenons
\[
\langle\Phi_t(Z),z_0\rangle = \frac{1}{t^2}\langle\Phi_{\Gamma_0^*\Omega_{G\cdot\Lambda_0}}(tZ)-\Lambda_0,z_0\rangle \geqslant \frac{1}{2t^2}\|tZ\|^2 = \frac{1}{2}\|Z\|^2,
\]
pour tout $Z\in\got{p}$ et tout $t\in]0,1]$, grâce au Lemme \ref{lemm:applicationmoment_Phi_Gamma0OmegaGlambda0_propre}. D'après le Lemme \ref{lemm:applicationmoment_Phi_Omega_gotp_propre}, nous avons également $\langle\Phi_0(Z),z_0\rangle\geqslant \frac{1}{2}\|Z\|^2$ pour tout $Z\in\got{p}$. Par un raisonnement analogue à la preuve du Lemme \ref{lemm:applicationmoment_Phi_Gamma0OmegaGlambda0_propre}, cela prouve que $\|\Phi_t(Z)\|\geqslant\frac{1}{2\|z_0\|}\|Z\|^2$ pour tout $Z\in\got{p}$ et tout $t\in[0,1]$. La condition $(4)$ est donc prouvée et on conclut en appliquant le Théorème \ref{theo:existence_isotopie_noncompact_casgeneral} et la Remarque \ref{rema:theo_moser_conditionsupplémentaire_toujours_vérifiée_pour_E=V}, donnant l'existence d'un symplectomorphisme entre  $(\got{p},\Omega_0)$ et $(\got{p},\Omega_1)$ qui laisse fixe le point $0$.
\end{proof}

\section{Cas général}
\label{section:cas_général}

Dans cette dernière section, nous allons prouver l'énoncé du Théorème \ref{theo:symplecto_principal}. On se place à nouveau dans le cas où $\Lambda$ est un élément arbitraire de la chambre holomorphe $\Chol$. Interviendront maintenant les deux difféomorphismes $K$-équivariants $\Gamma:\Klambda\times\got{p}\rightarrow \Glambda$ et $\Gamma_0:\got{p}\rightarrow \Glambda_0$. En effet, nous travaillerons ici exclusivement sur la variété $K\cdot\Lambda\times\got{p}$. Nous considérerons sur $K\cdot\Lambda\times\got{p}$ les formes symplectiques suivantes,
\begin{enumerate}
\item[(\emph{i})] $\Omega_{K\cdot\Lambda\times\got{p}} = \Omega_{K\cdot\Lambda}\oplus\Omega_{\got{p}}$;
\item[(\emph{ii})] $\Omega^1:= \Omega_{K\cdot\Lambda}\oplus\Gamma_0^*\Omega_{G\cdot\Lambda_0}$;
\item[(\emph{iii})] $\Omega^{\delta}:= \Omega_{K\cdot\Lambda}\oplus (\delta\Gamma_0^*\Omega_{G\cdot\Lambda_0})$, pour $\delta>0$;
\item[(\emph{iv})] $\Gamma^*\Omega_{G\cdot\Lambda}$.
\end{enumerate}
Rappelons que la \og{}somme directe\fg{} de deux formes symplectiques est la forme symplectique canonique définie sur le produit direct des deux variétés symplectiques sous-jacentes.

Nous allons montrer dans cette section que les formes symplectiques $\Omega_{K\cdot\Lambda\times\got{p}}$ et $\Gamma^*\Omega_{G\cdot\Lambda}$ sont symplectomorphes. Pour ce faire, nous utiliserons à plusieurs reprises l'argument de Moser donné en section \ref{section:moser} pour prouver les symplectomorphismes indiqués dans le diagramme suivant,
\begin{diagram}
\Omega_{K\cdot\Lambda\times\got{p}} & \rTo^{\text{Théorème \ref{theo:symplectomorphisme_casG/K} }} & \Omega^1 & \rTo^{\text{paragraphe \ref{subsection:symplecto_entre_Omega^1_et_Omega^delta} }} & \Omega^{\delta} & \rTo^{\text{paragraphe \ref{subsection:dernier_symplecto} }} \Gamma^*\Omega_{G\cdot\Lambda}
\end{diagram}
Le premier symplectomorphisme découle directement du Théorème \ref{theo:symplectomorphisme_casG/K}. Les deux autres symplectomorphismes seront montrés dans la suite de cette section. De plus, on peut rajouter la condition que chacun des symplectomorphismes obtenus pour les trois flèches du diagramme ci-dessus laisse fixe les points de $K\cdot\Lambda\times\{0\}$. La composition de tels symplectomorphismes donne un symplectomorphisme de $(K\cdot\Lambda\times\got{p},\Omega_{K\cdot\Lambda\times\got{p}})$ sur $(K\cdot\Lambda\times\got{p},\Gamma^*\Omega_{G\cdot\Lambda})$ qui satisfait l'énoncé du Théorème \ref{theo:symplecto_principal}.

\subsection{Symplectomorphisme entre $\Omega^1$ et $\Omega^{\delta}$ sur $K\cdot\Lambda\times\got{p}$}
\label{subsection:symplecto_entre_Omega^1_et_Omega^delta}

Pour tout $\delta>0$, on définit la $2$-forme différentielle
\begin{equation}
\label{eq:formule_Omegadelta}
\Omega^{\delta} = \Omega_{K\cdot\Lambda}\oplus(\delta\Gamma_0^*\Omega_{G\cdot\Lambda_0}).
\end{equation}

\begin{lemm}
\label{lemm:Omegadelta_symplectique}
Pour tout $\delta>0$, la $2$-forme $\Omega^{\delta}$ est symplectique $K$-invariante sur $\Klambda\times\got{p}$ et l'application
\[
\Phi^{\delta} = \pi_{K\cdot\Lambda}^*\Phi_{K\cdot\Lambda} + \delta\pi_{\got{p}}^*(\Gamma_0^*\Phi_{G\cdot\Lambda_0})
\]
est une application moment pour $\Omega^{\delta}$, où $\pi_{K\cdot\Lambda}:\Klambda\times\got{p}\rightarrow\Klambda$ et $\pi_{\got{p}}:\Klambda\times\got{p}\rightarrow\got{p}$ sont les deux projections canoniques. De plus, on a
\begin{equation}
\label{eq:minoration_norme_de_Phidelta}
\|\Phi^{\delta}(k\Lambda,Z)\| \geqslant \frac{\delta}{2\|z_0\|}\|Z\|^2 \quad \text{pour tout $(k\Lambda,Z)\in\Klambda\times\got{p}$}.
\end{equation}
En particulier, l'application moment $\Phi^{\delta}$ est propre.
\end{lemm}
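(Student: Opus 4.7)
La preuve se d\'ecompose naturellement en trois \'etapes. D'abord, $\Omega^{\delta}$ est symplectique et $K$-invariante comme somme directe, via les projections canoniques $\pi_{K\cdot\Lambda}$ et $\pi_{\got{p}}$, de deux $2$-formes symplectiques $K$-invariantes sur le produit $\Klambda\times\got{p}$: la forme de Kirillov-Kostant-Souriau $\Omega_{K\cdot\Lambda}$ sur l'orbite coadjointe compacte, et $\delta\Gamma_0^*\Omega_{G\cdot\Lambda_0}$. Cette seconde forme est symplectique car $\Gamma_0$ est un diff\'eomorphisme (cf. section \ref{section:cas_G/K}) et $\delta>0$; elle est $K$-invariante par $K$-\'equivariance de $\Gamma_0$ et par $G$-invariance de la forme de Kirillov-Kostant-Souriau $\Omega_{G\cdot\Lambda_0}$.

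Pour v\'erifier que $\Phi^{\delta}$ est une application moment pour $\Omega^{\delta}$, j'invoquerais le fait g\'en\'eral bien connu selon lequel, sur un produit de deux $K$-vari\'et\'es hamiltoniennes muni de l'action diagonale, la somme via les projections canoniques des images r\'eciproques des applications moments des facteurs d\'efinit une application moment pour la forme symplectique produit. Il suffit de l'appliquer aux facteurs hamiltoniens $(\Klambda, \Omega_{K\cdot\Lambda}, \Phi_{K\cdot\Lambda})$ et $(\got{p}, \delta\Gamma_0^*\Omega_{G\cdot\Lambda_0}, \delta\Gamma_0^*\Phi_{G\cdot\Lambda_0})$, la seconde application moment ayant \'et\'e \'etudi\'ee au paragraphe pr\'ec\'edent.

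La partie la plus technique est la minoration \eqref{eq:minoration_norme_de_Phidelta}. J'\'evaluerais $\Phi^{\delta}(k\Lambda,Z)$ contre l'\'el\'ement central $z_0 \in \got{z}(\got{k})$: comme $z_0$ est fix\'e par l'action adjointe de $K$, on a $\langle k\Lambda, z_0\rangle = \langle \Lambda, z_0\rangle$, et donc
\[
\langle \Phi^{\delta}(k\Lambda, Z), z_0\rangle = \langle \Lambda, z_0\rangle + \delta\langle \exp(Z)\Lambda_0, z_0\rangle.
\]
Le Lemme \ref{lemm:applicationmoment_Phi_Gamma0OmegaGlambda0_propre} fournit $\langle \exp(Z)\Lambda_0 - \Lambda_0, z_0\rangle \geqslant \frac{1}{2}\|Z\|^2$. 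En combinant avec les non-n\'egativit\'es $\langle\Lambda_0, z_0\rangle = \|z_0\|^2 > 0$ et $\langle \Lambda, z_0\rangle \geqslant 0$ (cette derni\`ere r\'esultant de $\Lambda\in\Chol$), on obtient $\langle \Phi^{\delta}(k\Lambda, Z), z_0\rangle \geqslant \frac{\delta}{2}\|Z\|^2$. La minoration annonc\'ee s'en d\'eduit par l'in\'egalit\'e $\langle \Phi^{\delta}(k\Lambda, Z), z_0\rangle \leqslant \|\Phi^{\delta}(k\Lambda, Z)\|\cdot\|z_0\|$, et la propret\'e de $\Phi^{\delta}$ r\'esulte imm\'ediatement de la compacit\'e de $\Klambda$ et de cette minoration quadratique en $\|Z\|$. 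Le seul point d\'elicat sera de justifier proprement la positivit\'e $\langle \Lambda, z_0\rangle \geqslant 0$ pour $\Lambda\in\Chol$, qui devrait se d\'eduire de la d\'ecomposition orthogonale $\got{t}^* = \got{z}(\got{k})^* \oplus ([\got{k},\got{k}]\cap\got{t})^*$ et du fait que la composante centrale de $\Lambda$ est un multiple non n\'egatif de $\Lambda_0$ par d\'efinition m\^eme de la chambre holomorphe.
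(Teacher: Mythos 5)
Votre preuve est correcte et suit pour l'essentiel la m\^eme d\'emarche que le texte : somme directe des deux structures hamiltoniennes (le texte \'etablit d'abord le fait g\'en\'eral pour $(a_1\omega_1)\oplus(a_2\omega_2)$ puis sp\'ecialise), puis \'evaluation de $\Phi^{\delta}$ contre $z_0$ via le Lemme \ref{lemm:applicationmoment_Phi_Gamma0OmegaGlambda0_propre} et conclusion par le m\^eme argument de norme d'op\'erateur, la compacit\'e de $\Klambda$ n'\'etant m\^eme pas n\'ecessaire vu la minoration quadratique. Le seul point \`a resserrer est la positivit\'e de $\langle\Lambda,z_0\rangle$, qui n'est pas \og{}par d\'efinition\fg{} de $\Chol$ mais s'obtient en une ligne, par exemple en \'ecrivant $\langle\Lambda,z_0\rangle = B_{\theta}(H_{\Lambda},z_0) = 2\sum_{\beta\in\got{R}_n^+}\beta(H_{\Lambda})\beta(z_0) = 2\sum_{\beta\in\got{R}_n^+}(\beta,\Lambda) > 0$ puisque $\beta(z_0)=1$ et $\Lambda\in\Chol$ (les racines compactes ne contribuent pas), argument valable m\^eme si $\got{z}(\got{k})$ n'est pas de dimension $1$.
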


\begin{proof}
De manière plus générale, on peut regarder deux variétés symplectiques $(N_1,\omega_1)$ et $(N_2,\omega_2)$, et considérer la $2$-forme différentielle
\[
\omega^{a_1,a_2} = (a_1\omega_1)\oplus(a_2\omega_2) := \pi_1^*(a_1\omega_1) + \pi_2^*(a_2\omega_2),
\]
sur la variété produit $N_1\times N_2$, où $a_1,a_2$ sont deux réels non nuls et $\pi_i:N_1\times N_2\rightarrow N_i$ est la projection canonique, pour $i=1,2$. La $2$-forme $\omega^{a_1,a_2}$ est fermée, puisqu'on a
\[
d\omega^{a_1,a_2} = a_1d(\pi_1^*\omega_1) + a_2d(\pi_2^*\omega_2) = a_1\pi_1^*(d\omega_1) + a_2\pi_2^*(d\omega_2) = 0,
\]
car $\omega_1$ et $\omega_2$ sont fermées. Et comme $\omega_1$ et $\omega_2$ sont non dégénérées, il est facile de voir que la définition de $\omega^{a_1,a_2}$ et le fait que $a_1$ et $a_2$ sont non nuls impliquent que $\omega^{a_1,a_2}$ est aussi symplectique.

Si, de plus, $N_1$ et $N_2$ sont munies d'une action du groupe de Lie $K$, telle que $\omega_1$ et $\omega_2$ soient $K$-invariantes, alors $\omega^{a_1,a_2}$ est aussi $K$-invariante, car les projections $\pi_i$ sont équivariantes. Si $\phi_1$ (resp. $\phi_2$) est une application moment pour $\omega_1$ (resp. $\omega_2$), alors un calcul direct montre que $\phi^{a_1,a_2}:=a_1\pi_1^*\phi_1+a_2\pi_2^*\phi_2$ est une application moment pour $\omega^{a_1,a_2}$.

On en déduit directement le résultat pour $\Omega^{\delta}$. Il reste tout de même à montrer que $\Phi^{\delta}$ est propre. Soit $(k\Lambda,Z)\in\Klambda\times\got{p}$. En appliquant le Lemme \ref{lemm:applicationmoment_Phi_Gamma0OmegaGlambda0_propre}, on obtient l'inégalité
\[
\langle\Phi^{\delta}(k\Lambda,Z),z_0\rangle = \langle k\Lambda,z_0\rangle + \delta\langle\exp(Z)\Lambda_0,z_0\rangle \geqslant \langle \Lambda,z_0\rangle + \frac{\delta}{2}\|Z\|^2 \geqslant \frac{\delta}{2}\|Z\|^2.
\]
La positivité de $\langle \Lambda,z_0\rangle$ provient du fait que $\Lambda$ et $\Lambda_0$ sont deux éléments de $\Chol$. Par un argument analogue à la preuve du Lemme \ref{lemm:applicationmoment_Phi_Gamma0OmegaGlambda0_propre}, ceci prouve l'équation \eqref{eq:minoration_norme_de_Phidelta} et l'application moment $\Phi^{\delta}$ est propre.
\end{proof}

\begin{prop}
\label{prop:symplecto_Omegadelta_et_Omega1}
Pour tout réel $\delta>0$, les deux formes symplectiques $\Omega^{\delta}$ et $\Omega^1 = \Omega_{K\cdot\Lambda}\oplus\Gamma_0^*\Omega_{G\cdot\Lambda_0}$ sont symplectomorphes sur $\Klambda\times\got{p}$, pour un symplectomorphisme $K$-équivariant qui envoie $(k\Lambda,0)$ sur lui-même pour tout $k\in K$.
\end{prop}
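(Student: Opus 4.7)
The plan is to apply Corollaire~\ref{coro:existence_isotopie_noncompact_parsegment_avec_condition_lemmedepoincar�} to the linear path
\[
\Omega_t := t\Omega^{\delta} + (1-t)\Omega^1 = \Omega_{K\cdot\Lambda} \oplus \bigl((t\delta + (1-t))\Gamma_0^*\Omega_{G\cdot\Lambda_0}\bigr), \qquad t\in[0,1],
\]
between $\Omega^1$ and $\Omega^{\delta}$ on $K\cdot\Lambda\times\got{p}$. Note that $t\delta+(1-t)>0$ for every $t\in[0,1]$ as soon as $\delta>0$, so $\Omega_t$ is of the form $\Omega^{s}$ with $s=t\delta+(1-t)>0$, and Lemme~\ref{lemm:Omegadelta_symplectique} already guarantees that each $\Omega_t$ is $K$-invariant and symplectic, with proper moment map $\phi_t := \Phi^{t\delta+(1-t)} = \pi_{K\cdot\Lambda}^*\Phi_{K\cdot\Lambda}+(t\delta+(1-t))\pi_{\got{p}}^*(\Gamma_0^*\Phi_{G\cdot\Lambda_0})$. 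This takes care of hypothesis~(1) of the corollary.

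For hypothesis~(2), I would observe that the difference
\[
\Omega^{\delta}-\Omega^1 = (\delta-1)\,\pi_{\got{p}}^*\bigl(\Gamma_0^*\Omega_{G\cdot\Lambda_0}\bigr)
\]
satisfies $i^*(\Omega^{\delta}-\Omega^1)=0$, where $i:K\cdot\Lambda\hookrightarrow K\cdot\Lambda\times\got{p}$ is the inclusion as zero section, since $\pi_{\got{p}}\circ i$ is constant equal to $0$. For hypothesis~(3), at a point $(k\Lambda,0)$ the form $\Omega_t|_{(k\Lambda,0)}$ is the direct sum $\Omega_{K\cdot\Lambda}|_{k\Lambda}\oplus\bigl((t\delta+(1-t))(\Gamma_0^*\Omega_{G\cdot\Lambda_0})|_0\bigr)$, and the non-degeneracy of the second summand on $\got{p}$ forces the symplectic orthogonal of $\{0\}\times\got{p}$ to be exactly $T_{k\Lambda}(K\cdot\Lambda)\times\{0\}$, as required.

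The key step will be hypothesis~(4), the uniform moment estimate. Pairing $\phi_t(k\Lambda,Z)$ against the central element $z_0$ and applying Lemme~\ref{lemm:applicationmoment_Phi_Gamma0OmegaGlambda0_propre} together with the positivity $\langle k\Lambda,z_0\rangle=\langle\Lambda,z_0\rangle\geqslant 0$ (which holds because $\Lambda\in\Chol$ and $z_0$ corresponds to $\Lambda_0\in\Chol$), one finds
\[
\langle\phi_t(k\Lambda,Z),z_0\rangle \geqslant (t\delta+(1-t))\,\frac{1}{2}\|Z\|^2 \geqslant \frac{\min(1,\delta)}{2}\|Z\|^2
\]
for every $t\in[0,1]$ and every $(k\Lambda,Z)\in K\cdot\Lambda\times\got{p}$, whence $\|\phi_t(k\Lambda,Z)\|\geqslant d\|Z\|^2$ with $d=\min(1,\delta)/(2\|z_0\|)$ uniform in $t$. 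This is exactly hypothesis~(4) with $\gamma=2$.

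All four hypotheses of Corollaire~\ref{coro:existence_isotopie_noncompact_parsegment_avec_condition_lemmedepoincar�} being satisfied, the corollary produces a $K$-equivariant symplectomorphism between $(K\cdot\Lambda\times\got{p},\Omega^1)$ and $(K\cdot\Lambda\times\got{p},\Omega^{\delta})$ which leaves every point of the zero section $K\cdot\Lambda\times\{0\}$ fixed, which is precisely the statement to be proved. The main subtlety here is that a naive uniform bound on the moment map could break down as $t\to 0$ or $t\to 1$ when $\delta$ is extreme; replacing $t\delta+(1-t)$ by its minimum $\min(1,\delta)$ in the estimate on $\langle\phi_t(k\Lambda,Z),z_0\rangle$ is exactly what makes the uniform properness condition work.
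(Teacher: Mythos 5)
Votre démonstration est correcte et suit essentiellement la même voie que celle du texte : on relie $\Omega^1$ et $\Omega^{\delta}$ par le segment de formes $\Omega^{t\delta+(1-t)}$, on utilise le Lemme \ref{lemm:Omegadelta_symplectique} pour la symplecticité et la minoration de l'application moment (d'où la constante uniforme $\min\{1,\delta\}/(2\|z_0\|)$), et on conclut par le Corollaire \ref{coro:existence_isotopie_noncompact_parsegment_avec_condition_lemmedepoincar�}, la condition $(2)$ étant vérifiée car la différence des deux formes est un tiré-en-arrière par $\pi_{\got{p}}$ donc dans le noyau de $i^*$. La seule différence est le sens de parcours du segment (rôles de $\Omega^1$ et $\Omega^{\delta}$ échangés), ce qui est sans incidence.
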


\begin{proof}
Fixons $\delta$ strictement positif. 
Posons $\omega^{\delta}_t = t\Omega^1 + (1-t)\Omega^{\delta}$, pour tout $t\in[0,1]$. Il est clair que
\[
\omega_t = \Omega_{K\cdot\Lambda}\oplus\bigl((t+(1-t)\delta)\Gamma_0^*\Omega_{G\cdot\Lambda_0}\bigr) = \Omega^{t+(1-t)\delta} \quad \text{pour tout $t\in[0,1]$}.
\]
Puisque $\delta$ est strictement positif, on a $t+(1-t)\delta>0$, et par conséquent, $\omega_t$ est une forme symplectique pour tout $t\in[0,1]$, d'après le Lemme \ref{lemm:Omegadelta_symplectique}. De plus, l'écriture de $\omega_t$ sous forme d'une somme directe de formes symplectiques
\[
\omega_t = \Omega_{K\cdot\Lambda}\oplus\bigl((t+(1-t)\delta)\Gamma_0^*\Omega_{G\cdot\Lambda_0}\bigr)
\]
montre que $\omega_t$ vérifie la condition $(3)$ du Corollaire \ref{coro:existence_isotopie_noncompact_parsegment_avec_condition_lemmedepoincaré}.

Pour tout $t\in[0,1]$, la $2$-forme différentielle $\frac{d}{dt}\omega_t$ est égale à
\[
\frac{d}{dt}\omega_t = \Omega^1-\Omega^{\delta} = (1-\delta)\pi_{\got{p}}^*(\Gamma_0^*\Omega_{G\cdot\Lambda_0}).
\]
Elle est fermée, $K$-invariante et est clairement dans le noyau de $i^*$, où $i:\Klambda\hookrightarrow\Klambda\times\got{p}$ est l'inclusion canonique. Il s'agit de la condition ($2$) du Corollaire \ref{coro:existence_isotopie_noncompact_parsegment_avec_condition_lemmedepoincaré}.

D'après le Lemme \ref{lemm:Omegadelta_symplectique}, $\omega_t$ a pour application moment l'application $\Phi^{t+(1-t)\delta} = \pi_{K\cdot\Lambda}^*\Phi_{K\cdot\Lambda} + (t+(1-t)\delta)\pi_{\got{p}}^*(\Gamma_0^*\Phi_{G\cdot\Lambda_0})$ et on a
\[
\|\Phi^{t+(1-t)\delta}(k\Lambda,Z)\| \geqslant \frac{(t+(1-t)\delta)}{2\|z_0\|}\|Z\|^2 \geqslant \frac{\min\{1,\delta\}}{2\|z_0\|}\|Z\|^2.
\]
On peut donc appliquer le Corollaire \ref{coro:existence_isotopie_noncompact_parsegment_avec_condition_lemmedepoincaré}.
D'où le résultat annoncé.
\end{proof}

\subsection{Symplectomorphisme entre $\Omega^{\delta}$ et $\Gamma^*\Omega_{G\cdot\Lambda}$ sur $\Klambda\times\got{p}$}
\label{subsection:dernier_symplecto}

Ce dernier paragraphe est consacré à la preuve du théorème suivant, qui est un des arguments-clés de la preuve du Théorème \ref{theo:symplecto_principal}.

\begin{theo}
\label{theo:symplecto_Gamma*OmegaGlamnbda_et_Omegadelta}
Pour tout $\delta> b_{\Lambda}:=\sup_{\|u\|=1,\|v\|=1}\langle\Lambda,[u,v]\rangle$, il existe un symplectomorphisme $K$-équivariant entre les deux variétés symplectiques $(\Klambda\times\got{p},\Gamma^*\Omega_{G\cdot\Lambda})$ et $(\Klambda\times\got{p},\Omega^{\delta})$ qui laisse fixe chaque $(k\Lambda,0)$, pour $k$ parcourant $K$.
\end{theo}

L'idée est d'utiliser une fois encore le Corollaire \ref{coro:existence_isotopie_noncompact_parsegment}. Le point le plus délicat est de prouver que toutes les $2$-formes du segment reliant les deux formes symplectiques $\Omega^{\delta}$ et $\Gamma^*\Omega_{G\cdot\Lambda}$ sont elles aussi symplectiques. L'intervention du $\delta$ est ici primordiale.

\begin{theo}
\label{theo:segment_bien_symplectique}
Si $\delta>b_{\Lambda}:=\sup_{\|u\|=1,\|v\|=1}\langle\Lambda,[u,v]\rangle$, alors, pour tout $t\in[0,1]$, la $2$-forme $\Omega_t^{\delta} := t\Omega^{\delta}+(1-t)\Gamma^*\Omega_{G\cdot\Lambda}$ est symplectique.
\end{theo}

Ce résultat sera prouvé dans le paragraphe \ref{subsection:preuve_du_theo_segment_bien_symplectique}.

Nous aurons besoin du lemme suivant, qui sera utile aux démonstrations des Théorèmes \ref{theo:symplecto_Gamma*OmegaGlamnbda_et_Omegadelta} et \ref{theo:segment_bien_symplectique}.

Commençons par donner quelques notations. Pour toute racine $\alpha\in\got{R}$, le sous-espace de racine $\got{g}_{\alpha}\subset\got{g}_{\C}$ est défini ici par
\[
\{X\in\got{g}_{\C}; [H,X] = i\alpha(H) X, \forall H\in\got{t}\}.
\]
Pour tout élément $\lambda\in\got{t}^*$, on notera $H_{\lambda}$ l'unique élément de $\got{t}$ qui vérifie
\[
B_{\theta}(H_{\lambda},X) = \lambda(X) \quad \forall X\in\got{g}.
\]
Pour toute racine non compacte positive $\beta$, on peut fixer deux vecteurs non nuls $E_{\beta}\in\got{g}_{\beta}$ et $E_{-\beta}\in\got{g}_{-\beta}$ tels que $E_{-\beta} = \overline{E_{\beta}}$. Ainsi, $E_{\beta}+E_{-\beta}$ et $i(E_{\beta}-E_{-\beta})$ sont deux vecteurs de $\got{p}$. De plus, la famille
$\bigl(E_{\beta}+E_{-\beta}, i(E_{\beta}-E_{-\beta})\bigr)_{\beta\in\got{R}_n^+}$ définit une base réelle de $\got{p}$, et il est bien connu que cette base de $\got{p}$ est orthogonale pour $B_{\theta}$, cf \cite{knapp, bordemann}.


\begin{lemm}
\label{lemm:positivité_puissances_ad(Z)_pourHlambda}
Soient $\Lambda,\Lambda'\in\Chol$
. Alors, pour tout $Z\in\got{p}$, on a
\begin{equation}
\label{eq:minoration_Btheta(HadZ2H')}
B_{\theta}(H_{\Lambda},\ad(Z)^2 H_{\Lambda'}) \geqslant \left(\min_{\alpha\in\got{R}_n^+}\alpha(H_{\Lambda})\alpha(H_{\Lambda'})\right)\|Z\|^2.
\end{equation}
En particulier, si on pose $m_{\Lambda}=\min_{\alpha\in\got{R}_n^+}\alpha(H_{\Lambda})$, alors
\begin{equation}
\label{eq:minoration_Btheta(z0adZ2H)}
B_{\theta}(z_0,\ad(Z)^2 H_{\Lambda}) \geqslant m_{\Lambda}\|Z\|^2,
\end{equation}
et
\begin{equation}
\label{eq:minoration_Btheta(HadZ2H)}
B_{\theta}(H_{\Lambda},\ad(Z)^2 H_{\Lambda}) \geqslant m_{\Lambda}^2\|Z\|^2,
\end{equation}
pour tout $Z\in\got{p}$.
\end{lemm}

\begin{proof}
Soit $Z\in\got{p}$. Il s'écrit
\[
Z = \sum_{\beta\in\got{R}_n^+}\left(x_{\beta}^+(E_{\beta}+E_{-\beta})+x_{\beta}^-i(E_{\beta}-E_{-\beta})\right),
\]
avec $x_{\beta}^{\pm}\in\R$ pour tout $\beta\in\got{R}_n^+$.

Soit $H\in\got{t}$. Puisque $E_{\pm\beta}$ appartient à $\got{g}_{\pm\beta}$, on a $[H,E_{\pm\beta}] = \pm i\beta(H)E_{\pm\beta}$. On en déduit les deux égalités
\[
[H,E_{\beta} + E_{-\beta}] = \beta(H)\bigl(i(E_{\beta}-E_{-\beta})\bigr),
\]
et
\[
[H,i(E_{\beta}-E_{-\beta})] = -\beta(H)(E_{\beta} + E_{-\beta}).
\]
Par conséquent
\[
[H,Z] = \sum_{\beta\in\got{R}_n^+}\beta(H)\left(-x_{\beta}^-(E_{\beta}+E_{-\beta})+x_{\beta}^+i(E_{\beta}-E_{-\beta})\right),
\]
pour tout $H\in\got{t}$.

Prenons maintenant $\Lambda,\Lambda'\in\Chol$ et notons $H_{\Lambda}, H_{\Lambda'}\in\got{t}$ les éléments duaux respectifs obtenus grâce au produit scalaire $B_{\theta}$. Ces deux éléments de $\got{t}$ vérifient $\beta(H_{\Lambda})>0$ et $\beta(H_{\Lambda'})>0$ pour tout $\beta\in\got{R}_n^+$.
\begin{align*}
B_{\theta}(H_{\Lambda},\ad(Z)^2 H_{\Lambda'}) & = B_{\theta}([H_{\Lambda},Z],[H_{\Lambda'},Z]) \\
& = \sum_{\beta\in\got{R}_n^+}\beta(H_{\Lambda})\beta(H_{\Lambda'})\left((x_{\beta}^-)^2B_{\theta}(E_{\beta}+E_{-\beta},E_{\beta}+E_{-\beta})\right. \\
& \qquad \qquad \qquad \left.+ (x_{\beta}^+)^2B_{\theta}\bigl(i(E_{\beta}-E_{-\beta}),i(E_{\beta}-E_{-\beta})\bigr)\right).
\end{align*}
Puisque $\beta(H_{\Lambda})\beta(H_{\Lambda'})$ est positif pour tout $\beta\in\got{R}_n^+$, on obtient la minoration
\begin{align*}
B_{\theta}(H_{\Lambda},\ad(Z)^2 H_{\Lambda'}) & \geqslant \left(\min_{\beta\in\got{R}_n^+}\beta(H_{\Lambda})\beta(H_{\Lambda'})\right)\sum_{\beta\in\got{R}_n^+}\left((x_{\beta}^-)^2B_{\theta}(E_{\beta}+E_{-\beta},E_{\beta}+E_{-\beta})\right. \\
& \qquad \qquad \qquad \left.+ (x_{\beta}^+)^2B_{\theta}\bigl(i(E_{\beta}-E_{-\beta}),i(E_{\beta}-E_{-\beta})\bigr)\right) \\
& \geqslant \left(\min_{\beta\in\got{R}_n^+}\beta(H_{\Lambda})\beta(H_{\Lambda'})\right)B_{\theta}(Z,Z),
\end{align*}
puisque $B_{\theta}(E_{\beta}+E_{-\beta},E_{\beta}+E_{-\beta}) =  B_{\theta}\bigl(i(E_{\beta}-E_{-\beta}),i(E_{\beta}-E_{-\beta})\bigr)$ pour tout $\beta\in\got{R}_n^+$. Ceci prouve l'inégalité \eqref{eq:minoration_Btheta(HadZ2H')}.

Pour prouver l'inégalité \eqref{eq:minoration_Btheta(z0adZ2H)}, il suffit de prendre $\Lambda' = \Lambda_0$, pour avoir $H_{\Lambda'} = H_{\Lambda_0} = z_0$. Or, $\beta(z_0) = 1$ pour toute racine $\beta\in\got{R}_n^+$, donc $\min_{\beta\in\got{R}_n^+}\beta(H_{\Lambda})\beta(z_0) = m_{\Lambda}$. En remplaçant ce minimum dans \eqref{eq:minoration_Btheta(HadZ2H')}, on obtient bien l'inégalité \eqref{eq:minoration_Btheta(z0adZ2H)}.

Quant à l'inégalité \eqref{eq:minoration_Btheta(HadZ2H)}, il suffit de remarquer que $\min_{\beta\in\got{R}_n^+}\left(\beta(H_{\Lambda})^2\right) = m_{\Lambda}^2$, puisque tous les $\beta(H_{\Lambda})$ sont positifs et la fonction carrée est croissante sur l'intervalle $[0,+\infty[$.
\end{proof}

\begin{proof}[Preuve du Théorème \ref{theo:symplecto_Gamma*OmegaGlamnbda_et_Omegadelta}]
Pour pouvoir appliquer le Corollaire \ref{coro:existence_isotopie_noncompact_parsegment_avec_condition_lemmedepoincaré}, nous devons vérifier ses conditions $(1)$ à $(4)$. Tout d'abord, d'après le Théorème \ref{theo:segment_bien_symplectique}, l'hypothèse $\delta> b_{\Lambda}$ implique que la famille $(\Omega_t^{\delta})_{t\in[0,1]}$ est formée de formes symplectiques sur $\Klambda\times\got{p}$. Donc l'assertion $(1)$ est vérifiée.

Ensuite, on peut facilement voir, en regardant les formules \eqref{eq:formule_GammaOmegaGLambda} et \eqref{eq:formule_Omegadelta}, que la $2$-forme différentielle $\Gamma^*\Omega_{G\cdot\Lambda} - \Omega^{\delta}$ est dans le noyau de $i^*$, où $i:\Klambda\rightarrow \Klambda\times\got{p}$ est l'inclusion canonique. D'où l'hypothèse ($2$) est vérifiée.

Considérons maintenant $k\in K$ et regardons l'expression de $\Omega_t^{\delta}|_{(k\Lambda,0)}$. \'Etant donné que $\Psi_0(A) = \Psi_0^+(A) = A$ pour tout $A\in\got{p}$, on a
\[
\Omega_t^{\delta}|_{(k\Lambda,0)}\bigl(([k,\overline{X}],A),([k,\overline{Y}],B)\bigr) = \langle\Lambda,[X,Y]\rangle + \langle t\delta\Lambda_0+(1-t)\Lambda,[A,B]\rangle,
\]
pour tous $X,Y\in\got{k}$ et tous $A,B\in\got{p}$. On en déduit facilement l'assertion $(3)$ pour $\Omega_t^{\delta}$.

Le dernier point à montrer est que les applications moments $\phi_t:\Klambda\times\got{p}\rightarrow\got{k}^*$, pour tout $t\in[0,1]$, sont uniformément propres.

Une application moment pour la variété hamiltonienne $(\Klambda\times\got{p},\Gamma^*\Omega_{G\cdot\Lambda})$ est l'application définie pour tout $(k\Lambda,Z)\in \Klambda\times\got{p}$ par
\[
\Phi_{\Gamma^*\Omega_{G\cdot\Lambda}}(k\Lambda,Z) := (e^Zk\Lambda)|_{\got{k}} = k\Lambda\circ\left(\sum_{n\geqslant 0}\frac{\ad(Z)^{2n}}{(2n)!}\right).
\]
D'après le Lemme \ref{lemm:Omegadelta_symplectique}, l'application moment $\Phi^{\delta}$ est propre et sa formule générale est donnée pour tout $(k\Lambda,Z)\in\Klambda\times\got{p}$ par
\[
\Phi^{\delta}(k\Lambda,Z) := k\Lambda + \delta \Lambda_0\circ\left(\sum_{n\geqslant 0}\frac{\ad(Z)^{2n}}{(2n)!}\right).
\]
On obtient alors une application moment pour $\Omega_t^{\delta}$ en prenant l'application $\phi_t^{\delta} = t\Phi_{\Gamma^*\Omega_{G\cdot\Lambda}} + (1-t)\Phi^{\delta}$.

Montrons que la famille d'applications moments $(\phi_t^{\delta})_{t\in[0,1]}$ est uniformément propre. Pour ce faire, posons
\[
\Lambda_t = t\Lambda + (1-t)\delta \Lambda_0
\]
pour tout $t\in[0,1]$. L'élément associé dans $\got{t}$ est le vecteur $H_{\Lambda_t} = tH_{\Lambda} + (1-t)\delta z_0$. La forme linéaire $\Lambda_t$ est elle aussi dans $\Chol$ puisque cette chambre est un cône convexe.

Nous appliquons le vecteur $\Ad(k)H_{\Lambda_t}$ à la forme linéaire $\phi_t^{\delta}(k\Lambda,Z)\in\got{k}^*$ et nous obtenons
\begin{align*}
\langle\phi_t^{\delta}(k\Lambda,Z),\Ad(k)H_{\Lambda_t}\rangle & = \left\langle t\Phi_{\Gamma^*\Omega_{G\cdot\Lambda}}(k\Lambda,Z)+(1-t)\Phi^{\delta}(k\Lambda,Z), \Ad(k)H_{\Lambda_t}\right\rangle\ , \\
& = t B_{\theta}\bigl(\Ad(k)H_{\Lambda},\sum_{n\geqslant 0}\frac{\ad(Z)^{2n}}{(2n)!}\Ad(k)H_{\Lambda_t}\bigr) \\
& \quad + (1-t)\delta B_{\theta}\bigl(z_0,\sum_{n\geqslant 0}\frac{\ad(Z)^{2n}}{(2n)!}\Ad(k)H_{\Lambda_t}\bigr) \\
& \quad + (1-t)\langle k\Lambda, \Ad(k)H_{\Lambda_t}\rangle\ , \\
& = B_{\theta}\bigl(\Ad(k)H_{\Lambda_t},\sum_{n\geqslant 0}\frac{\ad(Z)^{2n}}{(2n)!}\Ad(k)H_{\Lambda_t}\bigr) + (1-t)\langle\Lambda, H_{\Lambda_t}\rangle\ .
\end{align*}
Puisque $\Lambda$ et $\Lambda_t$ sont deux éléments de $\Chol$, le crochet de dualité $\langle\Lambda,H_{\Lambda_t}\rangle$ est positif, car $\langle\Lambda,H_{\Lambda_t}\rangle = 2\sum_{\alpha\in\got{R}^+}\alpha(H_{\Lambda})\alpha(H_{\Lambda_t}) \geqslant 0$. Donc, grâce au fait que $\ad(Z)$ est symétrique pour $B_{\theta}$, on obtient les inégalités suivantes,
\begin{align*}
\langle\phi_t^{\delta}(k\Lambda,Z),\Ad(k)H_{\Lambda_t}\rangle & \geqslant \sum_{n\geqslant 0}\frac{1}{(2n)!}B_{\theta}\bigl(\ad(Z)^{n}\Ad(k)H_{\Lambda_t},\ad(Z)^{n}\Ad(k)H_{\Lambda_t}\bigr) \\
& \geqslant \sum_{n\geqslant 0}\frac{1}{(2n)!}B_{\theta}\bigl(\ad(\Ad(k^{-1})Z)^{n}H_{\Lambda_t},\ad(\Ad(k^{-1})Z)^{n}H_{\Lambda_t}\bigr) \\
& \geqslant \frac{1}{2}B_{\theta}\bigl(\ad(\Ad(k^{-1})Z)H_{\Lambda_t},\ad(\Ad(k^{-1})Z)H_{\Lambda_t}\bigr).
\end{align*}
On applique l'inégalité \eqref{eq:minoration_Btheta(HadZ2H)} du Lemme \ref{lemm:positivité_puissances_ad(Z)_pourHlambda}, ce qui nous donne
\[
\langle\phi_t^{\delta}(k\Lambda,Z),\Ad(k)H_{\Lambda_t}\rangle \geqslant \frac{m_{\Lambda_t}^2}{2}\|\Ad(k^{-1})Z\|^2 = \frac{m_{\Lambda_t}^2}{2}\|Z\|^2,
\]
puisque la norme $\|\cdot\|$ est $K$-invariante. Tout ceci nous permet maintenant de majorer la norme de $\phi_t^{\delta}(k\Lambda,Z)$,
\[
\|\phi_t^{\delta}(k\Lambda,Z)\| \geqslant \frac{\langle\phi_t^{\delta}(k\Lambda,Z),\Ad(k)H_{\Lambda_t}\rangle}{\|\Ad(k)H_{\Lambda_t}\|} \geqslant \frac{m_{\Lambda_t}^2}{2\|H_{\Lambda_t}\|}\|Z\|^2 \geqslant \left(\inf_{t\in[0,1]}\frac{m_{\Lambda_t}^2}{2\|H_{\Lambda_t}\|}\right)\|Z\|^2.
\]
La constante $\inf_{t\in[0,1]}\frac{m_{\Lambda_t}^2}{2\|H_{\Lambda_t}\|}$ est strictement positive. En effet, par continuité sur le compact $[0,1]$, il est clair que $\|H_{\Lambda_t}\|$ est majoré et ne s'annule jamais. De plus, pour toute racine non compacte positive $\beta$, la valeur de $\beta(H_{\Lambda_t})^2$ est minorée par une constante strictement positive. Sinon,  cela impliquerait qu'elle s'annulerait pour un certain $t$, toujour par propriété de continuité sur le compact $[0,1]$, ce qui est impossible car $\Lambda_t\in\Chol$ pour tout $t\in[0,1]$. Par conséquent, $m_{\Lambda_t}^2\geqslant \min\left\{\inf_{t\in[0,1]}\beta(H_{\Lambda_t})^2; \beta\in\got{R}_n^+\right\}$ est aussi minoré par un réel strictement positif.

Ceci prouve la condition $(4)$ du Corollaire \ref{coro:existence_isotopie_noncompact_parsegment_avec_condition_lemmedepoincaré}. On peut alors appliquer le Corollaire \ref{coro:existence_isotopie_noncompact_parsegment_avec_condition_lemmedepoincaré}, prouvant l'existence d'un symplectomorphisme $K$-équivariant entre les deux variétés symplectiques $(\Klambda\times\got{p},\Gamma^*\Omega_{G\cdot\Lambda})$ et $(\Klambda\times\got{p},\Omega^{\delta})$ qui fixe point par point la sous-variété $K\cdot\Lambda\times\{0\}$.
\end{proof}

\subsection{Preuve du Théorème \ref{theo:segment_bien_symplectique}}
\label{subsection:preuve_du_theo_segment_bien_symplectique}


On définit les $2$-formes différentielles sur $\Klambda\times\got{p}$ suivantes,
\[
\omega_0^{\delta}|_{(k\Lambda,Z)}\bigl(([k,X],A),([k,Y],B)\bigr) = \langle\Lambda,[X,Y]\rangle + \delta B_{\theta}(z_0,[A,B]),
\]
pour tout $\delta >0$, et
\begin{align*}
\omega_1|_{(k\Lambda,Z)}\bigl(([k,X],A),([k,Y],B)\bigr) & = \langle\Lambda,[X+\Ad(k^{-1})\chi_Z(A),Y+\Ad(k^{-1})\chi_Z(B)]\rangle \\
& \quad + \langle\Lambda,[\Ad(k^{-1})A,\Ad(k^{-1})B]\rangle,
\end{align*}
pour tout $(k\Lambda,Z)\in \Klambda\times\got{p}$ et tous vecteurs $X\oplus A, Y\oplus B \in \got{k}/\got{k}_{\Lambda}\oplus\got{p}$.

\begin{lemm}
\label{lemm:condnécessaire_si_formeslinéaires_opposées}
Soit $\delta>0$. Supposons qu'il existe $(k\Lambda,Z)\in\got{p}$, $X\oplus A\in\got{k}/\got{k}_{\Lambda}\oplus\got{p}$ non nul et $c>0$ tels que
\begin{equation}
\label{eq:formeslinéaires_opposées}
\omega_1|_{(k\Lambda,Z)}\bigl(([k,X],A),([k,Y],B)\bigr) = -c\,\omega_0^{\delta}|_{(k\Lambda,Z)}\bigl(([k,X],A),([k,Y],B)\bigr),
\end{equation}
pour tout $Y\oplus B\in\got{k}/\got{k}_{\Lambda}\oplus\got{p}$. Alors $\delta \leqslant b_{\Lambda}$.
\end{lemm}

\begin{proof}
Puisque $\omega_0^{\delta}$ et $\omega_1$ sont $K$-équivariantes, quitte à prendre de nouvelles notations sur $Z$, $X$ et $A$, on peut supposer que $k=1$. Si l'hypothèse de l'énoncé est vérifiée pour $(\Lambda,Z)$, $X\oplus A$ et $c>0$, alors on a
\[
\langle\Lambda,[X+\chi_Z(A),Y+\chi_Z(B)]\rangle+\langle\Lambda,[A,B]\rangle = -c(\langle\Lambda,[X,Y]\rangle+\delta B_{\theta}(z_0,[A,B]))
\]
pour tout $Y\oplus B\in\got{k}/\got{k}_{\Lambda}\oplus\got{p}$. En prenant en particulier $B=0$, cela nous donne
\[
\langle\Lambda,[X+\chi_Z(A),Y]\rangle = -c\langle\Lambda,[X,Y]\rangle, \quad \forall Y\in\got{k}/\got{k}_{\Lambda}.
\]
On en déduit que $X+\chi_Z(A) = -c X$, et donc $\chi_Z(A) = -(c+1)X$, modulo $\got{k}_{\Lambda}$. On voit que l'on a nécessairement $A\neq0$, car sinon $X$ serait également nul, mais on a supposé que le vecteur $X\oplus A$ de $\got{k}/\got{k}_{\Lambda}\oplus\got{p}$ était non nul.

On remarque également qu'il ne nous reste plus que l'égalité
\[
\langle\Lambda,[X+\chi_Z(A),\chi_Z(B)]\rangle+\langle\Lambda,[A,B]\rangle = -c\delta B_{\theta}(z_0,[A,B]),
\]
pour tout $B\in\got{p}$, ce qui est équivalent à
\begin{equation}
\label{eq:égalité_condnécessaire_1}
\delta B_{\theta}(z_0,[A,B]) + \frac{1}{c}\langle\Lambda,[A,B]\rangle = \langle\Lambda,[X,\chi_Z(B)]\rangle,
\end{equation}
pour tout $B\in\got{p}$, puisque $X+\chi_Z(A) = -c X$ modulo $\got{k}_{\Lambda}$.

En remplaçant $B$ par le vecteur $-[z_0,A]\in\got{p}$ dans \eqref{eq:égalité_condnécessaire_1}, on obtient
\[
0 < \delta B_{\theta}(z_0,[A,-[z_0,A]]) + \frac{1}{c}\langle\Lambda,[A,-[z_0,A]]\rangle = -\frac{1}{c+1}\langle\Lambda,[\chi_Z(A),\chi_Z(-[z_0,A])]\rangle
\]
Or, $\|A\|^2 = B_{\theta}(A,A) = B_{\theta}(z_0,[A,-[z_0,A]]))$. De plus, $\frac{1}{c}\langle\Lambda,[A,-[z_0,A]]\rangle\geqslant 0$ puisque
\[
\frac{1}{c}\langle\Lambda,[A,-[z_0,A]]\rangle = \frac{1}{c}B_{\theta}(H_{\Lambda},[A,-[z_0,A]]) = \frac{1}{c}B_{\theta}(z_0,\ad(A)^2 H_{\Lambda}) \geqslant 0,
\]
la minoration par $0$ étant donnée par l'inégalité \eqref{eq:minoration_Btheta(z0adZ2H)} du Lemme \ref{lemm:positivité_puissances_ad(Z)_pourHlambda} appliqué à $A\in\got{p}$. En découlent les inégalités suivantes,
\begin{align*}
\delta\|A\|^2 & \leqslant \delta B_{\theta}(z_0,[A,-[z_0,A]]) + \frac{1}{c}\langle\Lambda,[A,-[z_0,A]]\rangle \\
& \leqslant \frac{1}{c+1}\langle\Lambda,[\chi_Z(A),\chi_Z([z_0,A])]\rangle \\
& \leqslant \frac{1}{c+1} b_{\Lambda}\|\chi_Z(A)\|.\|\chi_Z([z_0,A])\|.
\end{align*}
Rappelons que $c$ est positif, donc $0<\frac{1}{c+1}\leqslant 1$. De plus, d'après la Proposition \ref{prop:chi_Z_symetrique+vp}, l'endomorphisme $\chi_Z$, symétrique pour $B_{\theta}$, a toutes ses valeurs propres dans $]-1,1[$. Donc
\[
\|\chi_Z(W)\| \leqslant \|W\| \quad \text{pour tout $W\in\got{g}$}.
\]
On obtient par conséquent $\|\chi_Z([z_0,A])\| \leqslant \|[z_0,A]\| = \|A\|$. On en déduit que $\delta\|A\|^2 \leqslant b_{\Lambda}\|A\|^2$. Or, on a vu que $A$ est non nul. On en conclut l'inégalité $\delta\leqslant b_{\Lambda}$.
\end{proof}

\begin{proof}[Preuve du Théorème \ref{theo:segment_bien_symplectique}]
Le seul point non trivial est de montrer que $\Omega_t$ est non dégénérée en tout point de $\Klambda\times\got{p}$. Par $K$-invariance de $\Omega_t$, il suffit de le montrer aux points de $\{\Lambda\}\times\got{p}$.

Fixons $\delta > b_{\Lambda}$. Par contraposée de l'énoncé du Lemme \ref{lemm:condnécessaire_si_formeslinéaires_opposées}, pour tout $Z\in\got{p}$, tout $X\oplus A\in\got{k}/\got{k}_{\Lambda}\oplus\got{p}$ et toute constante $c>0$, on ne peut pas avoir l'égalité de formes linéaires suivante :
\[
\imath\bigl(([1,X],A)\bigr)\omega_1|_{(\Lambda,Z)} = -c \left(\imath\bigl(([1,X],A)\bigr)\omega_0^{\delta}|_{(\Lambda,Z)}\right).
\]
Cela signifie que pour $Z\in\got{p}$ et $X\oplus A\in\got{k}/\got{k}_{\Lambda}\oplus\got{p}$ fixés, avec $X\oplus A$ non nul, il existe forcément un vecteur $Y\oplus B\in\got{k}/\got{k}_{\Lambda}\oplus\got{p}$ tel que
\[
\langle\Lambda,[X,Y]\rangle+\delta B_{\theta}(z_0,[A,B]) >0
\]
et
\[
\langle\Lambda,[X+\chi_Z(A),Y+\chi_Z(B)]\rangle+\langle\Lambda,[A,B]\rangle >0.
\]
En effet, si deux formes linéaires non nulles $f_1$ et $f_2$ sur un $\R$-espace vectoriel $F$ de dimension finie sont telles que
\[
\forall x\in F, \quad f_1(x) >0 \quad \Longrightarrow \quad f_2(x)\leqslant 0,
\]
alors ces formes linéaires ont même noyau, et par conséquent il existe une constante $c\in \R^*$ tel que $f_2 = cf_1$. Mais nécessairement $c<0$ car sinon $f_1(x)>0$ implique $f_2(x)=cf_1(x)>0$.

Or, de la définition de $\chi_Z$ et du fait que l'application linéaire $\Psi_Z^+:\got{p}\rightarrow\got{p}$ est un isomorphisme pour tout $Z\in\got{p}$, cela implique les deux inégalités
\[
\Omega^{\delta}|_{(\Lambda,Z)}\left(\left([1,\overline{X}],(\Psi_Z^+)^{-1}(A)\right),\left([1,\overline{Y}],(\Psi_Z^+)^{-1}(B)\right)\right) >0
\]
et
\[
(\Gamma^*\Omega_{G\cdot\Lambda})|_{(\Lambda,Z)}\left(\left([1,\overline{X}],(\Psi_Z^+)^{-1}(A)\right),\left([1,\overline{Y}],(\Psi_Z^+)^{-1}(B)\right)\right) > 0.
\]
Il est alors clair que $\Omega_t^{\delta}|_{(\Lambda,Z)}\left(\left([1,\overline{X}],(\Psi_Z^+)^{-1}(A)\right),\left([1,\overline{Y}],(\Psi_Z^+)^{-1}(B)\right)\right) > 0$, d'où la non-dégénérescence de $\Omega_t^{\delta}|_{(\Lambda,Z)}$ pour tout $t\in[0,1]$.
\end{proof}


\chapter[Incursion en Théorie Géométrique des Invariants]{Incursion en Théorie Géométrique des Invariants. \'Etude des équations du polyèdre moment de la variété hamiltonienne $K\cdot\Lambda\times E$}
\label{chap:projectiondorbitecoadjointe+GIT}


Après la digression réalisée dans le chapitre précédent pour prouver l'existence d'un symplectomorphisme $K$-équivariant entre une orbite coadjointe holomorphe et la variété symplectique $(K\cdot\Lambda\times\got{p},\Omega_{K\cdot\Lambda\times\got{p}})$, nous rentrons maintenant de plain-pied dans l'étude des équations de la projection d'orbite $\Delta_K(\Orb_{\Lambda})$. Ce chapitre présente la première étape de cette étude, proposant un calcul des équations pour une famille plus large de variétés. Il s'agit des variétés de la forme $K\cdot\Lambda\times E$, où $E$ est un espace vectoriel hermitien sur lequel $K$ agit en préservant la forme hermitienne. Cette étude nécessitera l'utilisation d'un résultat qui sera démontré dans le chapitre suivant.

\section{Une structure hamiltonienne sur $K\cdot\Lambda\times E$}
\label{section:polydremoment_KLambdaE}

Soit $(E,h)$ un espace vectoriel hermitien et $U(E)$ le groupe unitaire associé, d'algèbre de Lie $\got{u}(E)$. On note $\Omega_E$ la partie imaginaire de $-h$. C'est une forme symplectique sur $E$, stable par l'action de $U(E)$. L'espace vectoriel (réel) symplectique $(E,\Omega_E)$ est $U(E)$-hamiltonien, d'application moment $\Phi_{U(E)}:E\rightarrow \got{u}(E)^*$ définie par $\langle\Phi_{U(E)}(v),X\rangle = \frac{1}{2}\Omega_E(v,Xv)$.

Soit $\zeta:K\rightarrow U(E)$ un morphisme de groupes de Lie et $\Phi_E : E\rightarrow \got{k}^*$ l'application moment obtenue par composition de $\Phi_{U(E)}$ et de la transposée $^t(d\zeta):\got{u}(E)^*\rightarrow\got{k}^*$.

\begin{prop}[\cite{paradan_fgq}, Lemme 5.2]
Les conditions suivantes sont équivalentes :
\begin{enumerate}
\item L'application $\Phi_E$ est propre,
\item $\Phi_E^{-1}(0) = \{0\}$.
\end{enumerate}
\end{prop}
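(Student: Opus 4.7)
The plan is to exploit the quadratic homogeneity of $\Phi_E$. Since $\Omega_E$ is bilinear and $v \mapsto Xv$ is linear, the defining formula $\langle\Phi_E(v),X\rangle = \tfrac{1}{2}\Omega_E(v,\zeta_*(X)v)$ immediately gives
\[
\Phi_E(tv) = t^2 \Phi_E(v), \qquad \forall\, t\in\R,\ v\in E.
\]
In particular $\Phi_E^{-1}(0)$ is a closed real cone in $E$.

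For the implication $(1)\Rightarrow(2)$: if $\Phi_E$ is proper, then $\Phi_E^{-1}(0)$ is a compact closed cone containing the origin, and the only such cone is $\{0\}$. Indeed, if $v\in\Phi_E^{-1}(0)$ is nonzero, then $\R v\subset\Phi_E^{-1}(0)$ is an unbounded subset of a compact set, contradiction.

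For the converse $(2)\Rightarrow(1)$: I would argue by contradiction. Suppose $\Phi_E$ is not proper; then there exists a sequence $(v_n)$ in $E$ with $\|v_n\|\to+\infty$ such that $(\Phi_E(v_n))$ stays bounded in $\got{k}^*$. Set $u_n := v_n/\|v_n\|$, so that $\|u_n\|=1$. By compactness of the unit sphere (here $E$ is finite dimensional), a subsequence converges to some $u\in E$ with $\|u\|=1$. Using the homogeneity relation,
\[
\Phi_E(u_n) = \frac{1}{\|v_n\|^2}\Phi_E(v_n) \longrightarrow 0,
\]
and continuity of $\Phi_E$ yields $\Phi_E(u)=0$, i.e. $u\in\Phi_E^{-1}(0)=\{0\}$, which contradicts $\|u\|=1$. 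Hence $\Phi_E$ must be proper.

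The argument is quite short and I do not foresee any serious obstacle; the only mildly subtle point is keeping track of the fact that $\Phi_E$ is really quadratic (not merely positively homogeneous), so both implications are clean. No deep machinery from symplectic geometry is needed beyond the explicit formula for $\Phi_E$ recalled just above the statement.
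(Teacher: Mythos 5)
Votre démonstration est correcte : l'homogénéité quadratique $\Phi_E(tv)=t^2\Phi_E(v)$ découle bien de la formule $\langle\Phi_E(v),X\rangle=\tfrac12\Omega_E(v,Xv)$, et les deux implications (cône fermé compact réduit à l'origine, puis argument de renormalisation sur la sphère unité en dimension finie) sont complètes et sans lacune. La thèse ne donne pas de preuve de cet énoncé — elle renvoie au Lemme 5.2 de \cite{paradan_fgq} — et votre argument est précisément la démonstration standard de ce lemme, de sorte qu'il n'y a rien à redire.
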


Dans \cite{paradan_fgq}, $K$ est un sous-groupe de $U(E)$, mais cela peut être aisément étendu au cas où on a juste un morphisme de groupes de Lie $K\rightarrow U(E)$.

\begin{exple}
\label{exple:Phi_gotp_propre}
Lorsque $E = \got{p}$ provient d'un espace symétrique hermitien $G/K$, on peut lui associer la forme symplectique linéaire $K$-invariante
\[
\Omega_{\got{p}}(X,Y) = B_{\theta}(X, \ad(z_0)Y) \quad\mbox{pour tous $X,Y\in\got{p}$},
\]
avec laquelle on peut définir un produit scalaire hermitien $K$-invariant $h_{\got{p}}$ sur $\got{p}$ correspondant. On aura alors $\Phi_{\got{p}}(v) = -[v,[z_0,v]]$, pour $v\in\got{p}$,  si on identifie $\got{k}^*$ à $\got{k}$ par la relation $\langle\xi,X\rangle = -B_{\theta}(\tilde{\xi},X)$, pour tout $X\in\got{k}$. Ceci donne $\langle\Phi_{\got{p}}(v),z_0\rangle = \|[z_0,v]\|^2 >0$, si $v\neq 0$, pour la norme associée à la forme bilinéaire symétrique définie positive $B_{\theta}$.
\end{exple}

Soit $\Lambda\in\got{t}_+^*$. Nous pouvons maintenant considérer la variété $K\cdot\Lambda \times E$, munie de l'action diagonale de $K$, pour l'action standard de $K$ sur l'orbite coadjointe $K\cdot\Lambda$, et l'action sur $E$ induite par le morphisme de groupes $\zeta:K\rightarrow U(E)$.

On définit sur $K\cdot\Lambda\times E$ une structure symplectique comme produit des deux variétés symplectiques $(K\cdot\Lambda,\Omega_{K\cdot\Lambda})$ et $(E,\Omega_E)$. La forme symplectique obtenue sur $K\cdot\Lambda\times E$ est définie par
\[
\Omega_{K\cdot\Lambda\times E} = \pi_{K\cdot\Lambda}^*\Omega_{K\cdot\Lambda} + \pi_E^*\Omega_E,
\]
où $\pi_{K\cdot\Lambda}: K\cdot\Lambda\times E \rightarrow K\cdot\Lambda$ et $\pi_E:K\cdot\Lambda\times E\rightarrow E$ désignent les deux projections canoniques. L'application moment canonique qui lui est associée est l'application
\[
\begin{array}{cccl}
\Phi_{K\cdot\Lambda\times E} : & K\cdot\Lambda\times E & \longrightarrow & \got{k}^* \\
 & (k\Lambda, v) & \longmapsto & k\Lambda + \Phi_E(v).
\end{array}
\]
On définit alors le polyèdre moment
\[
\Delta_K(K\cdot\Lambda\times E) := \Phi_{K\cdot\Lambda\times E}(K\cdot\Lambda\times E)\cap\got{t}_+^*\index{$\Delta_K(K\cdot\Lambda\times E)$}
\]
associé à la variété hamiltonienne $(K\cdot\Lambda\times E, \Omega_{K\cdot\Lambda\times E},\Phi_{K\cdot\Lambda\times E})$.

Il faut noter que la variété $K\cdot\Lambda\times E$ n'est pas compacte. Le Théorème de convexité (Théorème \ref{theo:convexitéhamiltonienne}) sera tout de même applicable sur cette variété hamiltonienne. 
En effet, d'après \cite[Théorème 1.5]{paradan_fgq}, si $(E,\Omega_E)$ est un espace vectoriel symplectique muni d'une action $K$-hamiltonienne d'application moment $\Phi_E:E\rightarrow \got{k}^*$ propre, alors l'application moment $\Phi_{K\cdot\Lambda\times E}:K\cdot\Lambda\times E\rightarrow \got{k}^*$ sera également propre. L'ensemble $\Delta_K(K\cdot\Lambda\times E)$ sera alors un ensemble convexe rationnel localement polyèdral, d'après \cite{lerman98}.

On demandera donc, dans la suite, que l'application moment $\Phi_E$ soit propre.

Notons $\wedge^*_+ := \wedge^*\cap\got{t}^*_+$ (resp $\wedge^*_{\Q,+} := (\wedge^*\otimes_{\Z}\Q)\cap\got{t}^*_+$) l'ensemble des poids dominants (resp. poids rationnels dominants) de $K$. Pour tout $\nu\in\wedge^*_+$ poids dominant de $K$, $V_{\nu}$ désignera la représentation irréductible de $K$ de plus haut poids $\nu$. Enfin, notons $\C[E]$ l'algèbre des polynômes sur $E$.

\begin{theo}
\label{theo:polyèdremoment_et_représentationsirréductibles}
Soient $\Lambda\in\wedge^*_{\Q,+}$ et $\mu\in\wedge^*_{\Q,+}$. Alors $\mu\in\Delta_K(K\cdot\Lambda\times E)$ si et seulement s'il existe un entier $n\geqslant 1$ tel que $(n\Lambda,n\mu)\in(\wedge^*)^2$ et
\[
\left(V^*_{n\mu}\otimes V_{n\Lambda}\otimes \C[E]\right)^K \neq 0.
\]
\end{theo}
%
%
%

\begin{rema}
Nous allons voir que ce résultat est un corollaire de \cite[Théorème 4.9]{sjamaar}. Il peut également se montrer en utilisant la quantification géométrique formelle, de façon similaire à \cite[Lemme 2.5]{Paradan}.
\end{rema}

La preuve qui suit a été communiquée à l'auteur par R. Sjamaar.

\begin{proof}[Preuve du Théorème \ref{theo:polyèdremoment_et_représentationsirréductibles}]
Il est connu que l'orbite coadjointe $K\cdot\Lambda$ est un quotient symplectique du fibré cotangent $T^*K$ par rapport à l'action de multiplication à droite de $K$. Le lecteur pourra trouver les détails de la construction du quotient symplectique de $T^*K$ dans \cite[Section 29]{guillemin_sympltechniques} et dans \cite[Section 7.3]{sjamaar}. Par conséquent, $K\cdot\Lambda\times E$ est un quotient symplectique de $T^*K\times E$.

Précisons ce quotient symplectique. Tout d'abord, rappelons que le fibré cotangent $T^*K$ s'identifie à $K\times\got{k}^*$ par translations à gauche. Le groupe $K\times K$ agit sur $K\times\got{k}^*\times E$ par
\[
(k,k')\cdot(l,\mu,v) := (klk'^{-1},k'\mu,k'v), \quad \forall(k,k')\in K\times K, \forall (l,\mu,v)\in K\times\got{k}^*\times E.
\]
Cette action est hamiltonienne pour l'action de $K\times K$, et une application moment associée est
\[
\Phi_{K\times K}(l,\mu,v) = (l\mu,-\mu+\Phi_E(v))\in\got{k}^*\times\got{k}^*,
\]
pour tout $(l,\mu,v)\in K\times\got{k}^*\times E$. Le sous-groupe distingué $K\times\{1\}$ de $K\times K$ agit par restriction sur la variété $K\times \got{k}^*\times E$ (par multiplication à gauche); cette action est hamiltonienne pour l'application moment
\[
\Phi_{K\times\{1\}}(l,\mu,v) = l\mu\in\got{k}^*, \quad \forall (l,\mu,v)\in K\times\got{k}^*\times E.
\]
On remarque que, pour tout $\mu\in\got{t}^*$, la variété $K\cdot\mu\times E$ s'identifie de manière $K$-équivariante au quotient symplectique
\[
\Phi_{K\times\{1\}}^{-1}(K\cdot\mu)/(K\times\{1\}),
\]
où $K$ agit sur ce quotient par l'intermédiaire du sous-groupe $\{1\}\times K$ de $K\times K$. De plus, on voit clairement que, pour tout $(k,v)\in K\times E$, on a 
\[
\Phi_{K\cdot\Lambda\times E}(k\Lambda,v) = k\Lambda+\Phi_E(v) = -k(-\Lambda)+\Phi_E(v) = \Phi_{\{1\}\times K}(l,k(-\Lambda),v),
\]
pour tout $l$ parcourant $K$. En particulier, remplaçant $l$ par $w_0k^{-1}$, on obtient aussi
\[
\Phi_{K\times K}(w_0k^{-1},k(-\Lambda),v) = (-w_0\Lambda,\Phi_{K\cdot\Lambda\times E}(k\Lambda,v)).
\]
On en déduit que le polyèdre moment $\Delta_K(K\cdot\Lambda\times E)$ s'obtient en intersectant le polyèdre moment $\Delta_{K\times K}(T^*K\times E)$ par l'espace affine $\{-w_0\Lambda\}\times \got{t}^*$, c'est-à-dire
\begin{equation}
\label{eq:lien_DeltaK(KLambdatimesE)_et_DeltaKtimesK(T*KtimesE)}
\Delta_K(K\cdot\Lambda\times E) = \mathrm{pr}_2\left(\Delta_{K\times K}(T^*K\times E)\cap (W\cdot\{-w_0\Lambda\}\times \got{t}^*)\right), 
\end{equation}
où $\mathrm{pr}_2:\got{t}^*\times\got{t}^*\rightarrow\got{t}^*$ est la projection sur la deuxième variable.

D'un autre côté, le fibré cotangent $T^*K$ est isomorphe en tant que $K\times K$-variété hamiltonienne à $\KC$, groupe complexifié du groupe de Lie compact connexe $K$. Or $\KC$ est un groupe algébrique affine, donc $\KC\times E$ est également affine. En appliquant \cite[Théorème 4.9]{sjamaar}, on obtient que $\Delta_{K\times K}(T^*K\times E)$ est le cône engendré par le monoïde
\[
\left\{(\mu,\nu)\in(\wedge^*_+)^2; \ V_{(\mu,\nu)}^{K\times K} \subset \C[\KC\times E]\right\}.
\]
Or, $\C[\KC\times E] = \C[\KC]\otimes\C[E]$, et, d'après le théorème de Frobenius, $\C[\KC] = \bigoplus_{\delta\in\wedge^*_+}V_{\delta}\otimes V_{\delta}^*$. D'où un couple $(\mu,\nu)\in(\wedge^*_+)^2$ vérifie $V_{(\mu,\nu)}^{K\times K} \subset \C[\KC\times E]$ si et seulement si $V_{\mu}$ apparaît dans la décomposition en représentations irréductibles de $V_{\nu}^*\otimes\C[E]$.

On conlut alors grâce à \eqref{eq:lien_DeltaK(KLambdatimesE)_et_DeltaKtimesK(T*KtimesE)} que $\mu\in\wedge_{\Q,+}^*$ appartient à $\Delta_K(K\cdot\Lambda\times E)$ si et seulement s'il existe un entier $n\geqslant 1$ tel que $(n\mu,n\Lambda)$ appartienne à $(\wedge^*)^2$ et
\[
\left(V_{n\mu}^*\otimes V_{-w_0(n\Lambda)}^*\otimes\C[E]\right)^K \neq 0,
\]
c'est-à-dire $\left(V_{n\mu}^*\otimes V_{n\Lambda}\otimes\C[E]\right)^K \neq 0$.
\end{proof}

Nous définissons maintenant l'objet que nous allons étudier dans ce chapitre. Il s'agit de la version algébrique du polyèdre moment de $K\cdot\Lambda\times E$.

\begin{defi}
\label{defi:DGIT}
Pour tout $\Lambda\in\wedge^*_{\Q,+}$, nous définissons l'ensemble 
\[
\Delta_K^{\mathrm{GIT}}(K\cdot\Lambda\times E) = \left\{\mu\in\wedge^*_{\Q,+} \left|
\begin{array}{l}
\exists n\in\N^* \mbox{ tel que } (n\mu,n\Lambda)\in(\wedge^*)^2, \\
\mbox{et } \left(V^*_{n\mu}\otimes V_{n\Lambda}\otimes \C[E]\right)^K \neq 0
\end{array}\right.\right\}.
\]\index{$\DGIT$}
\end{defi}

Le corollaire suivant permet de lier la version hamiltonienne et la version algébrique du polyèdre moment de la variété $K\cdot\Lambda\times E$.

\begin{coro}
\label{coro:DeltaClassique_DeltaGIT}
Supposons que $\Phi_E: E\rightarrow \got{k}^*$ est propre. Pour tout $\Lambda\in\wedge^*_{\Q,+}$, nous avons $\Delta_K(K\cdot\Lambda\times E)\cap\wedge^*_{\Q} = \DGIT$ et
$\Delta_K(K\cdot\Lambda\times E)$ est l'adhérence de $\DGIT$ dans $\got{t}^*$.
\end{coro}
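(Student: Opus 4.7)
The plan is to deduce the corollary directly from Theorem \ref{theo:poly�dremoment_et_repr�sentationsirr�ductibles}, which already provides the key bridge between the Hamiltonian picture and the representation-theoretic one, together with general facts about rational convex sets.

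First I would prove the set equality $\Delta_K(K\cdot\Lambda\times E)\cap\wedge^*_{\Q} = \DGIT$. This is essentially a restatement: by Definition \ref{defi:DGIT}, an element $\mu\in\wedge^*_{\Q,+}$ belongs to $\DGIT$ precisely when there exists $n\geqslant 1$ with $(n\mu,n\Lambda)\in(\wedge^*)^2$ and $\bigl(V^*_{n\mu}\otimes V_{n\Lambda}\otimes\C[E]\bigr)^K\neq 0$. By Theorem \ref{theo:poly�dremoment_et_repr�sentationsirr�ductibles}, this condition is equivalent to $\mu\in\Delta_K(K\cdot\Lambda\times E)$. Since by definition $\DGIT\subset\wedge^*_{\Q,+}$, the two sets agree on the rational dominant locus, and it remains only to remark that $\Delta_K(K\cdot\Lambda\times E)\subset\got{t}^*_+$ so that intersecting with $\wedge^*_{\Q}$ automatically lands in $\wedge^*_{\Q,+}$.

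Second, I would establish that $\Delta_K(K\cdot\Lambda\times E)$ is the closure of $\DGIT$ in $\got{t}^*$. The hypothesis that $\Phi_E$ is proper, combined with the result from \cite{paradan_fgq} recalled at the beginning of Section \ref{section:polydremoment_KLambdaE}, ensures that the moment map $\Phi_{K\cdot\Lambda\times E}$ itself is proper. Applying the non-compact convexity theorem of Lerman, Meinrenken, Tolman and Woodward \cite{lerman98} (stated here as Theorem \ref{theo:convexit�hamiltonienne}) then guarantees that $\Delta_K(K\cdot\Lambda\times E)$ is a convex, locally polyhedral, and rational subset of $\got{t}^*$.

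The only remaining step, which I expect to be routine rather than the main obstacle, is the density statement: any convex rational locally polyhedral set in a finite-dimensional real vector space equals the closure of its set of rational points. This is a standard argument: locally the set is cut out by finitely many affine inequalities with rational coefficients, and rational points are dense in any such local model. Combining this density with the first step yields
\[
\Delta_K(K\cdot\Lambda\times E) \;=\; \overline{\Delta_K(K\cdot\Lambda\times E)\cap\wedge^*_{\Q}} \;=\; \overline{\DGIT},
\]
which concludes the proof. The real conceptual work has all been done upstream, in Theorem \ref{theo:poly�dremoment_et_repr�sentationsirr�ductibles} and in the properness result for $\Phi_{K\cdot\Lambda\times E}$; the corollary itself is essentially a packaging of these inputs.
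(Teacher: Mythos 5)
Your proposal is correct and follows essentially the same route as the paper: the rational-points identity is read off from Théorème \ref{theo:poly�dremoment_et_repr�sentationsirr�ductibles} together with the definition of $\DGIT$ (noting $\Delta_K(K\cdot\Lambda\times E)\subset\got{t}^*_+$), and the closure statement comes from the properness of $\Phi_{K\cdot\Lambda\times E}$ and the rationality of the locally polyhedral set given by \cite{lerman98}, so that it equals the closure of its rational points. Nothing essential is missing.
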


\begin{proof}
Ce résultat découle directement du Théorème \ref{theo:polyèdremoment_et_représentationsirréductibles} et de la définition de $\DGIT$.

Pour la dernière assertion, puisque l'application moment $\Phi_E: E\rightarrow \got{k}^*$ est propre, comme il a été dit dans un paragraphe précédant l'énoncé du Théorème \ref{theo:polyèdremoment_et_représentationsirréductibles}, l'application moment $\Phi_{K\cdot\Lambda\times E}:K\cdot\Lambda\times E\rightarrow \got{k}^*$ est également propre et l'ensemble convexe localement polyédral $\Delta_K(K\cdot\Lambda\times E)$ est rationnel. Par conséquent, il est donc égal à l'adhérence de l'ensemble de ses points rationnels. D'où le résultat.
\end{proof}

L'ensemble $\DGIT$ manque de symétrie entre les variables $\mu$ et $\Lambda$. Nous définissons donc l'ensemble
\begin{equation}
\label{eq:gammaQ_version_K}
\Pi_{\Q}(E) = \left\{ (\mu,\nu)\in(\wedge^*_{\Q,+})^2 \left|
\begin{array}{l}
\exists n\geqslant 1 \text{ tel que } (n\mu,n\nu)\in(\wedge^*)^2 \\
\mbox{et } \left(V_{n\mu}^*\otimes V_{n\nu}^*\otimes \C[E]\right)^K \neq 0
\end{array}\right.\right\}.
\end{equation}\index{$\Pi_{\Q}(E)$}

\begin{lemm}
\label{lemm:lien_DeltaKLambda_GammaQ}
Pour tout $\Lambda\in\wedge^*_{\Q,+}$, on a
\[
\DGIT = \left\{\xi\in\wedge^*_{\Q}; (\xi,-w_0\Lambda)\in\Pi_{\Q}(E)\right\},
\]
où $w_0$ est le plus long élément du groupe de Weyl $W=N_K(T)/T$. Si, de plus, on suppose que $\Phi_E: E\rightarrow \got{k}^*$ est propre, alors
\[
\Delta_K(K\cdot\Lambda\times E) = \left\{\xi\in\got{t}^*; (\xi,-w_0\Lambda)\in\overline{\Pi_{\Q}(E)}\right\},
\]
où $\overline{\Pi_{\Q}(E)}$ désigne l'adhérence de $\Pi_{\Q}(E)$ dans $\got{t}^*$.
\end{lemm}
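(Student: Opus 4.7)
The plan is to prove the first equality by direct rewriting using the duality $V_{n\Lambda}^* \cong V_{-w_0(n\Lambda)} = V_{n(-w_0\Lambda)}$, and then obtain the second equality by taking closures and applying Corollary \ref{coro:DeltaClassique_DeltaGIT}.

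First, I would record the basic fact that the involution $-w_0 : \got{t}^* \to \got{t}^*$ preserves the dominant Weyl chamber $\got{t}^*_+$, the weight lattice $\wedge^*$, and consequently $\wedge^*_{\Q,+}$. In particular, setting $\nu = -w_0\Lambda$, the element $\nu$ lies in $\wedge^*_{\Q,+}$ as soon as $\Lambda$ does, and $(n\mu,n\Lambda)\in(\wedge^*)^2$ if and only if $(n\mu,n\nu)\in(\wedge^*)^2$. Moreover, for every $n\geqslant 1$ with $n\Lambda$ integral, we have the isomorphism of $K$-modules
\[
V_{n\Lambda}^K \;\cong\; V_{-w_0(n\Lambda)}^{K*} \;=\; V_{n\nu}^{K*},
\]
by the very definition of the dual highest weight recalled in the Notations.

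Next, I would compare the two defining conditions. An element $\mu\in\wedge^*_{\Q,+}$ belongs to $\DGIT$ iff there exists $n\geqslant 1$ with $(n\mu,n\Lambda)\in(\wedge^*)^2$ and $(V^*_{n\mu}\otimes V_{n\Lambda}\otimes\C[E])^K \neq 0$. Substituting $V_{n\Lambda} \cong V_{n\nu}^*$, this is equivalent to
\[
(n\mu,n\nu)\in(\wedge^*)^2 \quad\text{and}\quad (V^*_{n\mu}\otimes V^*_{n\nu}\otimes\C[E])^K \neq 0,
\]
which is precisely the condition $(\mu,\nu) = (\mu,-w_0\Lambda)\in\Pi_{\Q}(E)$ from \eqref{eq:gammaQ_version_K}. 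This establishes the first equality of the lemma.

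For the second statement, assume $\Phi_E$ is proper. By Corollary \ref{coro:DeltaClassique_DeltaGIT}, $\Delta_K(K\cdot\Lambda\times E)$ is the closure of $\DGIT$ in $\got{t}^*$. The map $\Psi : \xi\in\got{t}^*\mapsto (\xi,-w_0\Lambda)\in\got{t}^*\times\got{t}^*$ is a continuous injection whose image is the closed affine subspace $\got{t}^*\times\{-w_0\Lambda\}$, so $\Psi$ is a homeomorphism onto its image. Thus the closure of $\DGIT = \Psi^{-1}(\Pi_{\Q}(E))$ in $\got{t}^*$ is exactly $\Psi^{-1}(\overline{\Pi_{\Q}(E)})$, yielding
\[
\Delta_K(K\cdot\Lambda\times E) = \{\xi\in\got{t}^*;\ (\xi,-w_0\Lambda)\in\overline{\Pi_{\Q}(E)}\}.
\]

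There is essentially no obstacle here: the lemma is a reformulation exercise exploiting the symmetry between a representation and its dual through the longest Weyl element, together with the topological fact that closures behave well under the simple continuous map $\Psi$. The only point deserving care is verifying that $-w_0\Lambda$ is again a dominant rational weight, which is automatic from the general property that $-w_0$ stabilizes the dominant chamber.
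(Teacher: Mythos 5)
Your proof of the first equality is correct and is exactly the paper's argument: since $-w_0$ preserves $\wedge^*$ and the dominant chamber, the substitution $V_{n\Lambda}\cong V^*_{n(-w_0\Lambda)}$ turns the defining condition of $\DGIT$ (Definition \ref{defi:DGIT}) into the condition $(\mu,-w_0\Lambda)\in\Pi_{\Q}(E)$ of \eqref{eq:gammaQ_version_K}, and nothing more is needed there.

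The second equality, however, is not established by your topological argument. For a continuous map $\Psi$ that is a homeomorphism onto a closed subspace, one only gets $\overline{\Psi^{-1}(A)}\subseteq\Psi^{-1}(\overline{A})$ in general; the reverse inclusion can fail, even when $A$ is a convex cone. For instance, in $\R^2$ take $A=\{(x,y);\ y>0\}\cup\{(0,0)\}$ and $\Psi(x)=(x,0)$: then $\Psi^{-1}(A)=\{0\}$ is already closed, while $\Psi^{-1}(\overline{A})=\R$. Concretely, the missing step is the inclusion $\{\xi\in\got{t}^*;\ (\xi,-w_0\Lambda)\in\overline{\Pi_{\Q}(E)}\}\subseteq\overline{\DGIT}$, i.e. the fact that a point of $\overline{\Pi_{\Q}(E)}$ lying on the affine slice $\{\nu=-w_0\Lambda\}$ is a limit of points of $\Pi_{\Q}(E)$ lying on that same slice; this does not follow from continuity of $\Psi$ and requires an actual input on $\Pi_{\Q}(E)$. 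It does hold once one knows that $\Pi_{\Q}(E)$ is cut out in $\wedge^*_{\Q}\times\wedge^*_{\Q}$ by finitely many rational linear inequalities (the polyhedrality proved in the paper only in the following GIT section): then the slice at the rational point $-w_0\Lambda$ is a rational polyhedron whose rational points are precisely $\DGIT$, hence dense in it, and the equality follows together with Corollary \ref{coro:DeltaClassique_DeltaGIT}. Alternatively, the missing inclusion can be extracted from the Hamiltonian description \eqref{eq:lien_DeltaK(KLambdatimesE)_et_DeltaKtimesK(T*KtimesE)} of $\Delta_K(K\cdot\Lambda\times E)$ as a slice of the moment polyhedron of $T^*K\times E$. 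As written, your argument only proves $\Delta_K(K\cdot\Lambda\times E)\subseteq\{\xi;\ (\xi,-w_0\Lambda)\in\overline{\Pi_{\Q}(E)}\}$.
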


\begin{proof}
Ce résultat vient tout simplement du fait que $V_{k\Lambda}\cong V^*_{k(-w_0\Lambda)}$ et des définitions de $\DGIT$ et $\Pi_{\Q}(E)$. La deuxième assertion découle du Corollaire \ref{coro:DeltaClassique_DeltaGIT}.
\end{proof}

\begin{rema}
L'ensemble $\Pi_{\Q}(E)$ a des propriétés géométriques très intéressantes que nous allons étudier dans la section suivante. Il est en particulier un cône convexe polyédral. En appliquant le Lemme \ref{lemm:lien_DeltaKLambda_GammaQ}, on voit qu'il suffit de trouver les équations du polyèdre $\Pi_{\Q}(E)$ pour obtenir celles de $\DGIT$.
\end{rema}

Avant de passer à la section suivante, nous allons démontrer le Théorème \ref{theo:DeltaK(OrbLambda)_par_représentationsirréductibles} (dont l'énoncé est donné page \pageref{theo:DeltaK(OrbLambda)_par_représentationsirréductibles}). C'est un corollaire direct du Théorème \ref{theo:polyèdremoment_et_représentationsirréductibles} et du résultat principal du Chapitre 4.

\begin{proof}[Preuve du Théorème \ref{theo:DeltaK(OrbLambda)_par_représentationsirréductibles}]
Le Théorème \ref{theo:symplecto_principal} nous donne l'égalité $\Delta_K(\Orb_{\Lambda}) = \Delta_K(K\cdot\Lambda\times\got{p})$ dès que $\Lambda\in\Chol$. Enfin, d'après le Théorème \ref{theo:polyèdremoment_et_représentationsirréductibles} appliqué à $E=\got{p}\simeq\got{p}^-$ et à l'application moment propre $\Phi_{\got{p}}$, les points rationnels de $\Delta_K(K\cdot\Lambda\times\got{p})$ sont exactement les $\mu\in\wedge_{\Q,+}^*$ qui vérifient $(N\mu,N\Lambda)\in(\wedge^*)^2$ et $\left(V^*_{N\mu}\otimes V_{N\Lambda}\otimes \C[\got{p}^-]\right)^K \neq 0$ pour un certain entier strictement positif $N$.
\end{proof}

\section{Quelques éléments de GIT}
\label{section:GIT}

Le GIT, ou \emph{Théorie Géométrique des Invariants}, a pour but d'étudier les quotients $X/\!/G$, où $X$ est une variété projective irréductible munie d'une action d'un groupe algébrique $G$, le plus souvent réductif. Le quotient standard $X/G$ n'étant pas en général une variété algébrique (les orbites ne sont pas toutes fermées), on est amené à définir un \og{}bon\fg{} quotient, le quotient GIT, que l'on note $X/\!/G$. Nous n'entrerons pas ici dans les détails de la Théorie Géométrique des Invariants. Nous n'aborderons que les éléments utiles à la compréhension de la suite de ce chapitre. Pour plus d'informations sur le GIT, le lecteur pourra se référer à \cite{dolgachev03, ressayre08, heinzner_migliorini}. Nous suivrons les notations et conventions de \cite{ressayre08}.

\subsection{Notations}
\label{subsection:GIT_notations}

Dans la suite de cette section, $K$ désignera un groupe de Lie réel compact connexe et $\KC$ son complexifié. Le groupe $\KC$ est un groupe algébrique complexe réductif connexe et $K$ est un sous-groupe compact maximal de $\KC$.

Soit $X$ une variété algébrique projective sur $\C$, munie d'une action algébrique de $\KC$. Nous noterons $\Pic(X)$ l'ensemble des classes d'isomorphismes des fibrés en droites sur la variété $X$. Cet ensemble peut être muni d'une structure canonique de groupe abélien, induite par le produit tensoriel des fibrés en droites sur $X$.

\begin{defi}
Une $\KC$-linéarisation d'un fibré en droites $\mathcal{L}$ sur $X$ est un relèvement de l'action de $\KC$ sur $X$ en une action sur $\mathcal{L}$ qui est linéaire sur les fibres. Un fibré en droites $\KC$-linéarisé est la donnée d'un fibré en droites $\mathcal{L}$ sur $X$ et d'une $\KC$-linéarisation de $\mathcal{L}$. On notera $\PicG(X)$ le groupe des classes d'isomorphismes des fibrés en droites $\KC$-linéarisés de $X$. La structure de groupe de $\PicG(X)$ est à nouveau induite par le produit tensoriel.
\end{defi}


Nous noterons $\PicQ(X):=\Pic(X)\otimes_{\Z}\Q$ (resp. $\PicGQ(X):=\PicG(X)\otimes_{\Z}\Q$) le $\Q$-espace vectoriel engendré par les éléments de $\Pic(X)$ (resp. $\PicG(X)$) et, pour tout $\mathcal{L}\in\PicG(X)$, $\mathrm{H}^0(X,\mathcal{L})$ le $\KC$-module des sections régulières de $\mathcal{L}$.

Soit $T$ un tore maximal de $K$ et $\got{t}$ son algèbre de Lie. Nous fixons $\TC$ le tore maximal de $\KC$ tel que $T=K\cap\TC$ et $B$ un sous-groupe de Borel de $\KC$ contenant $\TC$. On notera $W = W(\KC;\TC) = N_{\KC}(\TC)/\TC$ le groupe de Weyl de $\KC$ relatif à son tore maximal $\TC$.

Dans la suite, nous identifierons le groupe des caractères de $\TC$ avec le réseau des poids $\wedge^*\subset\got{t}^*$. Pour un poids dominant $\mu\in\wedge^*$, nous noterons $V_{\mu}$ la représentation irréductible de $\KC$ (ou, de manière équivalente, de $K$), de plus haut poids $\mu$.

\begin{nota}
\label{nota:variétéX_M}
Pour $M$ une représentation complexe de $\KC$ (ou, de manière équivalente, de $K$), on définit la variété projective lisse
\[
X_M:=\KC/B\times \KC/B\times\mathbb{P}(M),
\]\index{$X_M$}
munie de l'action diagonale de $\KC$.
\end{nota}

\begin{nota}
Pour tout $\TC$-module $M$, on notera $\WT(M)\subset\wedge^*$\index{$\WT(M)$} l'ensemble des poids de $\TC$ sur $M$.
\end{nota}

\begin{nota}
Si $F$ est un sous-ensemble d'un $\Z$-module (resp. $\Q$-espace vectoriel, resp. $\R$-espace vectoriel) $M$, on notera $\mathcal{C}_{\Z}(F)$\index{$\mathcal{C}_{\Z}(F)$, $\mathcal{C}_{\Q}(F)$, $\mathcal{C}_{\R}(F)$} (resp. $\mathcal{C}_{\Q}(F)$, resp. $\mathcal{C}_{\R}(F)$) le monoïde (resp. le $\Q$-cône convexe, resp. le $\R$-cône convexe) engendré par $F$ dans $M$.
\end{nota}

\subsection{Fibrés en droites amples et semi-amples}

Parmi tous les fibrés en droites sur la variété projective $X$, certains sont de première importance. Toute application algébrique de la variété projective $X$ dans un espace projectif $\mathbb{P}^n$ est définie par le choix d'un fibré en droites $\mathcal{L}$ et d'un ensemble $s_0,\ldots,s_n$ de sections globales de $\mathcal{L}$. L'application est définie par
\begin{equation}
\label{eq:fibréendroites_morphisme_sanspointbase}
x\in X\longmapsto [s_0(x):\ldots:s_n(x)]\in\mathbb{P}^n.
\end{equation}
Celle-ci sera bien définie si, pour tout $x\in X$, une des sections $s_i$ ne s'annule pas en $x$. Nous dirons que $\mathcal{L}$ est \emph{engendré par ses sections globales} s'il existe un nombre fini de sections globales de $\mathcal{L}$ telles que, pour tout $x\in X$, au moins une de ces sections ne s'annule pas en $x$. Un fibré engendré par ses sections globales donnera bien le morphisme algébrique \eqref{eq:fibréendroites_morphisme_sanspointbase} pour les sections globales $s_0,\ldots,s_n$ correspondantes. Inversement, un morphisme $f : X\rightarrow \mathbb{P}^n$ définira un fibré en droite $\mathcal{L}_f = f^*(\mathcal{O}(1))$, tiré-en-arrière du fibré canonique sur $\mathbb{P}^n$, qui sera engendré par ses sections globales sur $X$.

Un fibré en droites $\mathcal{L}$ engendré par ses sections globales, sera \emph{très ample} si l'application algébrique \eqref{eq:fibréendroites_morphisme_sanspointbase} associée est une immersion fermée et $\mathcal{L}$ est isomorphe au tiré-en-arrière de $\mathcal{O}(1)$ sur $X$.

\begin{defi}
Un fibré en droites $\mathcal{L}$ est \emph{ample} (resp. \emph{semi-ample}) s'il existe un entier strictement positif $n$ tel que $\mathcal{L}^{\otimes n}$ est très ample (resp. engendré par ses sections globales).
\end{defi}

Nous noterons $\PicG(X)^+$ (resp. $\PicG(X)^{++}$) l'ensemble des fibrés en droites $\KC$-linéarisés et semi-amples (resp. amples) sur $X$. Nous noterons aussi $\PicG(X)_{\Q}^+:=\mathcal{C}_{\Q}(\PicG(X)^+)$ et $\PicG(X)_{\Q}^{++}:=\mathcal{C}_{\Q}(\PicG(X)^{++})$ les cônes convexes engendrés respectivement par les éléments semi-amples et amples de $\PicG(X)_{\Q}$. On peut remarquer que nous avons les inclusions $\PicG(X)^{++}\subset\PicG(X)^+$ et $\PicG(X)_{\Q}^{++}\subset\PicG(X)_{\Q}^+$.

\subsection{Semi-stabilité}

Pour un fibré en droites $\KC$-linéarisé $\mathcal{L}$ sur la variété projective $X$, un des objets fondamentaux du GIT associés à $\mathcal{L}$ est l'ensemble de ses points semi-stables sur $X$,
\[
\XssL = \left\{x\in X;\ \exists k\geqslant 1, \exists s\in \mathrm{H}^0(X,\mathcal{L}^{\otimes k})^{\KC}, \mbox{ tel que } s(x)\neq 0\right\}.
\]
Ceci n'est pas la définition standard de $\Xss(\mathcal{L})$, mais ça l'est si $\mathcal{L}$ est ample. Dans ce cas, l'ouvert sur lequel $\sigma$ ne s'annule pas est affine. Cette propriété est utile pour définir un bon quotient de $\XssL$.

Quand $\mathcal{L}$ est ample, nous avons un quotient catégorique $\pi : \XssL \rightarrow \XssL\quotient\KC$, tel que $\XssL\quotient\KC$ soit une variété projective et $\pi$ soit affine.

On remarque aisément que pour tout fibré en droites $\KC$-linéarisé $\mathcal{L}$ et pour tout entier strictement positif $n$, on a $\Xss(\mathcal{L}) = \Xss(\mathcal{L}^{\otimes n})$. Nous pouvons donc définir $\Xss(\mathcal{L})$ pour tout élément $\mathcal{L}\in\PicG(X)_{\Q}^+$.

\subsection{Cônes ample et semi-ample}

Nous allons maintenant introduire la notion de cône ample $\Ca$ et de cône semi-ample $\Csa$ associés à une variété projective.

Soit $X$ une variété projective irréductible, munie d'une action algébrique du groupe $\KC$. Nous avons défini, pour un fibré en droites $\mathcal{L}$ sur $X$, l'ensemble de ses points semi-stables et avons indiqué que, dans le cas des fibrés amples, on pouvait définir un quotient catégorique de $\XssL$. La question qui vient maintenant est de savoir pour quels $\mathcal{L}\in\PicG(X)$ l'ensemble $\XssL$ est non vide. Nous définissons donc le \emph{cône semi-ample}\index{Cône semi-ample}
\[
\Csa = \{\mathcal{L}\in\PicG(X)_{\Q}^+; \ \Xss(\mathcal{L}) \neq\emptyset\}.
\]\index{$\Csa$, $\Ca$}
et le \emph{cône ample}\index{Cône ample}
\[
\Ca = \Csa_{\Q} \cap \PicG(X)_{\Q}^{++}.
\]
Les théorèmes suivant vont justifier la terminologie de ces ensembles.

\begin{theo}[\cite{dolgachev98, ressayre00}]
L'ensemble $\Ca$ est un cône convexe polyédral dans $\PicGQ(X)$, fermé dans $\PicGQ(X)^{++}$.
\end{theo}

Nous nous intéresserons plus tard à un type très particulier de variétés projectives, les variétés de type $X_M$, définies en Notation \ref{nota:variétéX_M}, pour $M$ une représentation complexe du groupe $\KC$. Ces variétés vérifient les hypothèses du théorème suivant. Soit $\hKC$ un groupe réductif connexe tel que $\KC\subset\hKC$ et soit $Q$ (resp. $\hat{Q}$) un sous-groupe parabolique de $\KC$ (resp. $\hKC$).

\begin{theo}[\cite{ressayre08}, Proposition 10]
\label{theo:CsaX_conepolyconvfermé_et_adhérencedeCaX}
Si $X=\KC/Q\times\hKC/\hat{Q}$, alors le cône semi-ample $\Csa$ est un cône convexe polyédral fermé dans $\PicGQ(X)$. De plus, si $\Ca$ n'est pas vide, son adhérence dans $\PicGQ(X)$ est $\Csa$.
\end{theo}

\subsection{Groupe de Picard $\KC$-linéarisé de la variété $X_M$}

Dans ce paragraphe, nous allons décrire le groupe de Picard $\PicG(X_M)$ de la variété $X_M=\KC/B\times\KC/B\times\mathbb{P}(M)$, où $M$ est une représentation complexe de $\KC$.

Commençons tout d'abord par la variété $\KC/B$. A tout élément $\mu\in\wedge^*$ est associé de manière univoque un caractère de $\TC$. Cette association provient de l'identification, que nous avons fixée au début de cette section, entre le réseau des poids $\wedge^*$ et le groupe des caractères de $\TC$. Un tel caractère se prolonge de manière unique en un caractère de $B$. Nous noterons alors $\C_{\mu}$ la représentation de $B$ sur $\C$ associée à ce caractère de $B$. Ceci permet de définir l'application $\mu\in\wedge^*\mapsto\KC\times_B\C_{-\mu}\in\Pic(\KC/B)$.

Nous pouvons remarquer que, pour tout $\mu\in\wedge^*$, le fibré en droites $\KC\times_B\C_{-\mu}$ est muni d'une action canonique de $\KC$, définie par
\[
g\cdot[k,z] = [gk,z] \text{ pour tout $g\in\KC$ et tout $[k,z]\in\KC\times_B\C_{-\mu}$}.
\]
Cette action relève l'action de $\KC$ depuis la base $\KC/B$ sur le fibré tout entier et l'action est évidemment linéaire sur les fibres. Il s'agit donc d'une $\KC$-linéarisation du fibré en droites $\KC\times_B\C_{-\mu}$, que nous noterons $\mathcal{L}_{\mu}$. Nous obtenons l'application $\mu\in\wedge^*\mapsto\mathcal{L}_{\mu}\in\PicG(\KC/B)$, qui, composée avec le foncteur oubli, redonne l'application $\wedge^*\rightarrow\Pic(\KC/B)$ définie ci-dessus. Il est bien connu que cette application est un isomorphisme entre $\wedge^*$ et $\PicG(\KC/B)$, cf \cite[Section 3.1]{knopkraftvurst}. De plus, le Théorème de Borel-Weil implique que les fibrés $\KC$-linéarisés sur $\KC/B$ engendrés par leurs sections globales sont les fibrés $\mathcal{L}_{\mu}$ provenant d'un poids $\mu$ dominant. Par conséquent, il est clair que l'on a
\[
\PicG(\KC/B)_{\Q}^+ \simeq \wedge^*_{\Q,+}.
\]
Il est aussi bien connu que les fibrés très amples sur la variété des drapeaux $\KC/B$ correspondent aux éléments dominants réguliers de $\wedge^*$. L'ensemble des éléments dominants réguliers sera noté $\wedge^*_{\scriptscriptstyle ++}$. Ainsi,
\[
\PicG(\KC/B)_{\Q}^{++} \simeq \wedge^*_{\Q,\scriptscriptstyle++}.
\]

\bigskip

Intéressons-nous maintenant au dernier facteur apparaissant dans $X_M$, c'est-à-dire l'espace projectif $\mathbb{P}(M)$, avec action induite de $\KC$.

Pour tout $k\in\Z$, nous définissons $\C_k$ comme étant l'espace vectoriel complexe $\C$ muni de l'action de $\C^*\cong\mathbb{G}_m$ définie par
\[
\forall w\in\C,\ \forall z\in\C^*,\ z\cdot w := z^k w.
\]
Pour tout $k\in\Z$, nous posons le fibré en droites $\mathcal{L}_k:=M\setminus\{0\}\times_{\C^*}\C_k$ sur $\mathbb{P}(M)$. Il est bien connu que l'application $k\in\Z\mapsto\mathcal{L}_k\in\Pic(\mathbb{P}(M))$ est un isomorphisme de groupes abéliens. Remarquons que $\mathcal{L}_{1}$ correspond au fibré $\mathcal{O}(1)$ de $\mathbb{P}(M)$ et, par conséquent, puisque l'on a $\mathcal{L}_k = \mathcal{L}_{1}^{\otimes k}$, le fibré en droites $\mathcal{L}_k$ est isomorphe au fibré $\mathcal{O}(k)$. On en déduit que les fibrés en droites semi-amples (resp. amples) de $\Pic(\mathbb{P}(M))$ sont les $\mathcal{L}_k$, pour $k$ parcourant $\N$ (resp. $k$ parcourant $\N\setminus\{0\}$).

Maintenant, notons $\mathcal{X}(\KC)$ le groupe des caractères de $\KC$. Par définition, il s'agit du groupe des homomorphismes rationnels de groupes algébriques de $\KC$ vers $\mathbb{G}_m$. Pour tout $(\chi,k)\in\car(\KC)\times\Z$, l'action de $\KC$ sur $\mathcal{L}_k$ définie par
\[
g\cdot[x,z] := [g\cdot x,\chi(g^{-1})z] \text{ pour tout $g\in\KC$ et tout $[x,z]\in\mathcal{L}_k$},
\]
est une $\KC$-linéarisation du fibré en droites $\mathcal{L}_k$. Nous noterons $\mathcal{L}_{\chi,k}$ le fibré en droites $\KC$-linéarisé de $\mathbb{P}(M)$ obtenu.

\begin{theo}
Le groupe de Picard $\KC$-linéarisé $\PicG(\mathbb{P}(M))$ est isomorphe à $\Z\times\car(\KC)$.
\end{theo}

\begin{proof}
Nous venons de voir que l'on peut définir, pour tout couple $(\chi,k)\in\car(\KC)\times\Z$, une $\KC$-linéarisation sur $\mathcal{L}_k$, le fibré $\KC$-linéarisé associé étant $\mathcal{L}_{\chi,k}$. On en déduit que le morphisme oubli $\alpha:\PicG(\mathbb{P}(M))\rightarrow\Pic(\mathbb{P}(M))$ est surjectif. D'après \cite[Lemme 2.2]{knopkraftvurst} (ou encore \cite[Théorème 7.1]{dolgachev03}), $\ker\alpha$ est isomorphe au groupe abélien $\car(\KC)$. Par conséquent, tout fibré en droite $\KC$-linéarisé de $\mathbb{P}(M)$ est isomorphe à $\mathcal{L}_{\chi,k}$ pour un certain couple $(\chi,k)\in\car(\KC)\times\Z$.
\end{proof}

Nous identifions $\car(\KC)$ avec le sous-$\Z$-module $(\wedge^*)^{\KC}:=(\got{k}_{\C}^*)^{\KC}\cap\wedge^*$ de $\wedge^*$. On peut remarquer que $(\got{k}_{\C}^*)^{\KC}$ est égal au dual $\got{z}(\got{k}_{\C})^* := [\got{k}_{\C},\got{k}_{\C}]^{\bot}$ du centre de $\got{k}_{\C}$. Il est clair que, pour tout $(\chi,k)\in\car(\KC)\times\Z$, le fibré $\KC$-linéarisé $\mathcal{L}_{\chi,k}$ est isomorphe au produit tensoriel de fibrés en droites $\KC$-linéarisés $\mathcal{L}_k\otimes(\mathbb{P}(M)\times\C_{-\chi})$, où $\KC$ agit uniquement sur le premier facteur de $\mathbb{P}(M)\times\C_{-\chi}$.

%
%

La proposition suivante est un analogue de \cite[Lemme 13]{ressayre08}.

\begin{prop}
\label{prop:surjection_grpPicardKC}
L'application $\Q$-linéaire
\begin{equation}
\label{eq:application_Qlin_surjective_sur_gpPicardKC}
\begin{array}{ccc}
\wedge^*_{\Q}\times\wedge^*_{\Q}\times(\wedge^*_{\Q})^{\KC}\times\Q & \longrightarrow & \PicG(X_M)_{\Q} \\
(\mu,\nu,\chi,r) & \longmapsto & \mathcal{L}_{\mu,\nu,\chi,r} := \mathcal{L}_{\mu}\boxtimes\mathcal{L}_{\nu}\boxtimes\mathcal{L}_{\chi,r}
\end{array}
\end{equation}
est surjective. Son noyau est le sous-espace $\{(-\mu-\chi,\mu,\chi,0); \mu,\chi\in(\wedge^*_{\Q})^{\KC}\}$. L'ensemble des fibrés en droites $\KC$-linéarisés semi-amples (resp. amples) sur $X_M$ est l'image par l'application ci-dessus du cône convexe $\wedge^*_{\Q,+} \times \wedge^*_{\Q,+} \times (\wedge^*_{\Q})^{\KC} \times \Q_{\geqslant 0}$ (resp. $\wedge^*_{\Q,\scriptscriptstyle++} \times \wedge^*_{\Q,\scriptscriptstyle++} \times (\wedge^*)^{\KC} \times \Q_{>0}$).
\end{prop}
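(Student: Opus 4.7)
The plan is to compute $\PicG(X_M)_{\Q}$ factor by factor and then combine via external tensor product, keeping track of the standard ambiguity coming from $\KC$-characters.

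First I would recall, from the paragraphs preceding the proposition, the two building blocks: the isomorphism $\wedge^*_{\Q} \to \PicG(\KC/B)_{\Q}$, $\mu \mapsto \mathcal{L}_{\mu}$, which is an iso of $\Q$-vector spaces sending $\wedge^*_{\Q,+}$ onto $\PicG(\KC/B)_{\Q}^+$ and $\wedge^*_{\Q,\scriptscriptstyle++}$ onto $\PicG(\KC/B)_{\Q}^{++}$; and the isomorphism $\Q \times (\wedge^*_{\Q})^{\KC} \to \PicG(\mathbb{P}(M))_{\Q}$, $(r,\chi) \mapsto \mathcal{L}_{\chi,r}$, where the underlying non-equivariant bundle is $\mathcal{O}(r)$, so that semi-ampleness (resp.\ ampleness) corresponds to $r \geqslant 0$ (resp.\ $r > 0$), independently of $\chi$.

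Next I would invoke the standard Künneth-type statement for $\KC$-linearized Picard groups of complete connected $\KC$-varieties $Y_1,\ldots,Y_n$: the external tensor product
\[
\bigoplus_{i=1}^n \PicG(Y_i) \longrightarrow \PicG(Y_1\times\cdots\times Y_n)
\]
is surjective, with kernel generated by tuples $(\chi_1\cdot\mathcal{O}_{Y_1},\ldots,\chi_n\cdot\mathcal{O}_{Y_n})$ where $\chi_i\in\car(\KC)$ and $\sum\chi_i=0$ (the ambiguity coming from the fact that $\mathcal{O}(Y_i)^* = \C^*$, so a $\KC$-linearization of the trivial bundle is uniquely determined by a character of $\KC$, and one may freely shift characters among the factors). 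Applied to $Y_1=Y_2=\KC/B$ and $Y_3=\mathbb{P}(M)$, this, together with the identifications from the previous step, immediately produces the surjection in the statement. To identify the kernel, I note that on $\KC/B$ the trivial line bundle equipped with a $\KC$-linearization by $\chi$ is exactly $\mathcal{L}_{\chi}$, which requires $\chi \in (\wedge^*)^{\KC}$ for $\mathcal{L}_{\chi}$ to be trivial as a line bundle; the relation $\chi_1+\chi_2+\chi_3=0$ on characters then gives exactly the sub-$\Q$-vector space $\{(-\mu-\chi,\mu,\chi,0);\ \mu,\chi\in(\wedge^*_{\Q})^{\KC}\}$ after tensoring with $\Q$.

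Finally I would handle the semi-ample and ample cones: for a product of projective varieties, a box-tensor product $\mathcal{L}_1\boxtimes\mathcal{L}_2\boxtimes\mathcal{L}_3$ is ample (resp.\ semi-ample) if and only if each $\mathcal{L}_i$ is (via Segre embeddings for ampleness, and the fact that a tensor product of sections separates points factor by factor for base-point-freeness). Combined with the identifications of $\PicG(\KC/B)_{\Q}^+, \PicG(\KC/B)_{\Q}^{++}$ and of $\PicG(\mathbb{P}(M))_{\Q}^+, \PicG(\mathbb{P}(M))_{\Q}^{++}$ recalled above, this yields the description of $\PicG(X_M)_{\Q}^+$ and $\PicG(X_M)_{\Q}^{++}$ as images of the claimed cones.

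The main obstacle, and the only nontrivial input, is the Künneth-type exact sequence for $\KC$-equivariant Picard groups of products. Everything else is a matter of packaging the known descriptions of $\PicG(\KC/B)$ and $\PicG(\mathbb{P}(M))$ and of the behavior of ampleness under external tensor products. In the spirit of \cite[Lemme 13]{ressayre08}, I would actually prove the surjectivity and compute the kernel by hand for this specific $X_M$: given an arbitrary $\KC$-linearized line bundle $\mathcal{L}$ on $X_M$, restrict it to each of the three factors (using points in the other factors to get sections of the projections), use the computations of $\PicG$ on each factor to get a candidate $\mathcal{L}_{\mu,\nu,\chi,r}$, and then show that $\mathcal{L}\otimes\mathcal{L}_{\mu,\nu,\chi,r}^{-1}$ is a trivial $\KC$-linearized bundle on $X_M$, which, after controlling the character ambiguity, gives the surjectivity and the precise form of the kernel.
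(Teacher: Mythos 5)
Your argument reaches the correct conclusion, but by a genuinely different route from the paper's. The paper never decomposes $\Pic(X_M)$ non-equivariantly: it invokes Mumford's linearization theorem (\cite[Proposition 1.5]{mumford94}, \cite[Corollaire 7.2]{dolgachev03}) to linearize a power of the underlying bundle under the \emph{larger} group $\KC\times\KC\times GL_{\C}(M)$, uses that $X_M$ is homogeneous under this group -- so that its equivariant Picard group is the external product of those of the three factors -- and then restricts the linearization to $\KC$, absorbing the discrepancy (a linearization of the trivial bundle, i.e.\ a character of $\KC$) exactly as in your final comparison step. You instead work with the ordinary Picard group of the product, restricting to slices and recovering the linearization a posteriori; this is legitimate here and even more elementary (no passage to powers, no Mumford), but two points deserve care. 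First, your ``standard K\"unneth-type statement'' for arbitrary complete connected $\KC$-varieties is false in that generality: already non-equivariantly the external product map $\Pic(Y_1)\oplus\Pic(Y_2)\to\Pic(Y_1\times Y_2)$ fails to be surjective, e.g.\ for a product of abelian varieties (equip them with the trivial $\KC$-action to get an equivariant counterexample). So the ``by hand'' argument you sketch at the end, in the spirit of \cite[Lemme 13]{ressayre08}, is not an optional variant but the necessary one, and its key input is the identification $\Pic(X_M)\cong\Pic(\KC/B)^{\oplus 2}\oplus\Pic(\mathbb{P}(M))$, valid because the factors are rational with $\Pic^0$ trivial (the restriction of a bundle to a slice is then independent of the slice and a seesaw argument applies) -- this is precisely what the paper's homogeneity argument under the bigger group buys for free. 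Second, the slices $\KC/B\times\{\mathrm{pt}\}\times\{\mathrm{pt}\}$ carry no $\KC$-action (there are no $\KC$-fixed points in the other factors), so restriction only determines the underlying bundle of the candidate $\mathcal{L}_{\mu,\nu,\chi,r}$; the character ambiguity must be handled afterwards via the lemma that two linearizations of the same bundle on the complete connected $X_M$ differ by a character of $\KC$, which your plan does correctly and which also yields the stated kernel. Your factor-by-factor treatment of the semi-ample and ample cones (box products of (semi-)amples are (semi-)ample, conversely by restriction to slices) is sound and in fact more explicit than the paper, whose proof leaves that part implicit.
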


\begin{proof}
Soient trois éléments $\mu$, $\nu$ et $\chi$ de $(\wedge^*)^{\KC}$. Ils correspondent à trois caractères du groupe $\KC$. Le fibré en droites $\KC$-linéarisé $\mathcal{L}_{\mu,\nu,\chi,0}$ provient alors d'une $\KC$-linéarisation du fibré trivial. Plus exactement, on peut voir que $\mathcal{L}_{\mu,\nu,\chi,0} = X_M\times \C_{-(\mu+\nu+\chi)}$ comme élément de $\PicG(X_M)$. On en déduit que toute $\KC$-linéarisation du fibré trivial sur $X_M$ provient d'un $\mathcal{L}_{\mu,\nu,\chi,0}$.

Soit $\mathcal{L}\in\PicG(X_M)$. Notons $\mathcal{L}'$ le fibré en droites obtenu en oubliant l'action de $\KC$ sur $\mathcal{L}$. D'après \cite[Proposition 1.5]{mumford94} ou bien \cite[Corollaire 7.2]{dolgachev03}, il existe un entier $n\geqslant 1$ et une $\KC\times\KC\times GL_{\C}(M)$-linéarisation $\mathcal{M}$ de $\mathcal{L}'^{\otimes n}$. 
Or on a l'isomorphisme
\[
\begin{array}{ccc}
\PicG(\KC/B)\times\PicG(\KC/B)\times\Pic^{GL_{\C}(M)}(\mathbb{P}(M)) &\stackrel{\sim}{\longrightarrow} & \Pic^{\KC\times\KC\times GL_{\C}(M)}(X_M) \\
(\mathcal{L}_{\mu},\mathcal{L}_{\nu},\mathcal{L}_{\hat{\nu}}) & \longmapsto & \mathcal{L}_{\mu}\boxtimes\mathcal{L}_{\nu}\boxtimes\mathcal{L}_{\hat{\nu}}
\end{array}
\]
où $\mu$ et $\nu$ désignent deux caractères de $\KC$, et $\hat{\nu}$ un caractère de $GL_{\C}(M)$. Par réduction de l'action de $GL_{\C}(M)$ sur $\mathcal{L}_{\hat{\nu}}$ à celle de $\KC$, on obtient un certain couple $(\chi,k)\in(\wedge^*)^{\KC} \times \Z$ tel que le fibré $\mathcal{L}_{\chi,k}$ définisse une $\KC$-linéarisation de $\mathcal{L}_{\hat{\nu}}$. Par conséquent, $\mathcal{L}_{\mu,\nu,\chi,k}^*\otimes \mathcal{L}^{\otimes n}$ est le fibré trivial sur $X_M$. D'après le premier paragraphe de la preuve, on en déduit que $\mathcal{L}_{\mu,\nu,\chi,k}^*\otimes \mathcal{L}^{\otimes n}$ est dans l'image de l'application \eqref{eq:application_Qlin_surjective_sur_gpPicardKC} donnée dans l'énoncé. On en conlut que $\mathcal{L}$ est égal à un certain $\mathcal{L}_{\mu',\nu',\chi',r}$, avec $(\mu',\nu',\chi',r)\in\wedge^*_{\Q}\times\wedge^*_{\Q}\times(\wedge^*_{\Q})^{\KC}\times\Q$. D'où la surjectivité de l'application \eqref{eq:application_Qlin_surjective_sur_gpPicardKC}.

Soit maintenant $(\mu,\nu,\chi,k)\in\wedge^*\times\wedge^*\times(\wedge^*)^{\KC}\times\Z$ tel que $\mathcal{L}_{\mu,\nu,\chi,k}$ est le fibré $\KC$-linéarisé trivial. Si on oublie l'action de $\KC$, cela signifie que les fibrés $\mathcal{L}_{\mu}$, $\mathcal{L}_{\nu}$ et $\mathcal{L}_{\chi,k}$ sont les fibrés triviaux. Par conséquent, $\mu$ et $\nu$ sont forcément des caractères de $\KC$, c'est-à-dire $\mu$, $\nu\in(\wedge^*)^{\KC}$, et $k=0$. Dans ce cas, on obtient l'égalité $\mathcal{L}_{\mu,\nu,\chi,k} = X_M\times \C_{-(\mu+\nu+\chi)}$ comme fibré $\KC$-linéarisé. Ce dernier est trivial si et seulement si $\mu+\nu+\chi = 0$. On en déduit alors directement que le noyau de l'application linéaire \eqref{eq:application_Qlin_surjective_sur_gpPicardKC} est $\{(-\mu-\chi,\mu,\chi,0); \mu,\chi\in(\wedge^*_{\Q})^{\KC}\}$.
\end{proof}

\begin{rema}
En regardant le noyau de l'application linéaire \eqref{eq:application_Qlin_surjective_sur_gpPicardKC} donnée dans l'énoncé de la Proposition \ref{prop:surjection_grpPicardKC}, on se rend compte que la donnée de $\chi$ est superflue, puisqu'elle est déjà incluse dans les deux premières variables $\mu$ et $\nu$. Il suffira dans la suite de considérer l'application linéaire
\begin{equation}
\label{eq:application_Qlin_surjective_sur_gpPicardKC_v2}
\begin{array}{cccc}
\pi^{\KC}:&\wedge^*_{\Q}\times\wedge^*_{\Q}\times\Q & \longrightarrow & \PicG(X_M)_{\Q} \\
&(\mu,\nu,r) & \longmapsto & \mathcal{L}_{\mu,\nu,r} := \mathcal{L}_{\mu}\boxtimes\mathcal{L}_{\nu}\boxtimes\mathcal{L}_{r}.
\end{array}
\end{equation}
L'application $\pi^{\KC}$ est elle aussi surjective, et son noyau est $\ker(\pi^{\KC}) = \{(\mu,-\mu,0); \mu\in(\wedge^*_{\Q})^{\KC}\}$.
\end{rema}

\subsection{Cône semi-ample de la variété $X_{E\oplus\C}$}

Nous allons tout particulièrement nous intéresser au cas d'une représentation $M = E\oplus \C$, où $\zeta:\KC\rightarrow GL_{\C}(E)$ est une représentation complexe de $\KC$ et où $\KC$ agit trivialement sur le facteur $\C$. Dans l'énoncé ci-dessous, $\wedge^*_{\Q,\scriptscriptstyle ++}$ désignera l'ensemble des éléments dominants et $\KC$-réguliers de $\wedge^*_{\Q}$, et $\C_{\leqslant d}[E]$ le $\KC$-module des polynômes de degré inférieur ou égal à $d$ sur $E$. Rappelons que $\Pi_{\Q}(E)$ a été défini en \eqref{eq:gammaQ_version_K} par
\[
\Pi_{\Q}(E) = \left\{ (\mu,\nu)\in(\wedge^*_{\Q,+})^2 \left|
\begin{array}{l}
\exists n\geqslant 1\text{ tel que } (n\mu,n\nu)\in(\wedge^*)^2 \\
\mbox{et }\left(V_{n\mu}^*\otimes V_{n\nu}^*\otimes \C[E]\right)^{\KC} \neq 0
\end{array}\right.\right\}.
\]

\begin{prop}
\label{prop:Csa_deféquivalente_avec_irrép}
Soit $(\mu,\nu,r)\in\wedge^*_{\Q,+}\times\wedge^*_{\Q,+}\times\Q_{\geqslant 0}$, et $\mathrm{pr}:\wedge^*_{\Q}\times\wedge^*_{\Q}\times\Q\rightarrow\wedge^*_{\Q}\times\wedge^*_{\Q}$ la projection linéaire canonique.
\begin{enumerate}
\item On a $\mathcal{L}_{\mu,\nu,r}\in C_{\Q}(\XEC)^+$ si et seulement s'il existe $n\geqslant 1$ tel que l'on a $(n\mu,n\nu,n\chi)\in(\wedge^*)^3$, $nr\in\Z$ et
\[
\left(V_{n\mu}^*\otimes V_{n\nu}^*\otimes \C_{\leqslant nr}[E]\right)^{\KC}\neq 0.
\]
\item L'ensemble $C_{\Q}(\XEC)^+$ est l'adhérence de $C_{\Q}(\XEC)^{++}$ dans $\PicGQ(\XEC)$.
\item On a $\Pi_{\Q}(E) = \mathrm{pr}\left((\pi^{\KC})^{-1}(C_{\Q}(\XEC)^+)\right)$.
\end{enumerate}
\end{prop}

\begin{proof}
Par définition, pour $(\mu,\nu,r)\in\wedge^*_{\Q,+}\times\wedge^*_{\Q,+}\times\Q_{\geqslant 0}$, le fibré en droites $\mathcal{L}_{\mu,\nu,r}$ est dans le cône semi-ample $C_{\Q}(\XEC)^+$ si et seulement s'il existe un entier $n\geqslant 1$ tel que $(n\mu,n\nu,nr)\in\wedge^*\times\wedge^*\times\Z$ et $\Xss(\mathcal{L}_{n\mu,n\nu,nr}) \neq \emptyset$. Cette dernière assertion est équivalente à ce que l'espace vectoriel $\mathrm{H}^0(X,\mathcal{L}_{n\mu,n\nu,nr}^{\otimes m})^{\KC}$ contienne un élément non nul, pour un certain $m\geqslant 1$. Ceci provient de la définition de l'ensemble $\Xss(\mathcal{L}_{n\mu,n\nu,nr})$. Quitte à multiplier $n$ par $m$, on peut supposer que l'espace vectoriel $\mathrm{H}^0(X,\mathcal{L}_{n\mu,n\nu,nr})^{\KC}$ est non nul. Le $\KC$-module $\mathrm{H}^0\left(\mathbb{P}(E\oplus\C),\mathcal{L}_{nr}\right)$ est isomorphe au $\KC$-module $\C_{\leqslant nr}[E]$ des polynômes sur $E$ de degré inférieur ou égal à $nr$ (qui est bien un entier positif, car $\mathcal{L}_{nr}$ est semi-ample dans $\Pic(\mathbb{P}(E\oplus\C))$).

D'après le théorème de Borel-Weil, nous savons que pour tout $\nu\in\wedge^*_+$, $\mathrm{H}^0(X,\mathcal{L}_{n\nu})$ est isomorphe, en tant que $\KC$-module, à $V_{n\nu}^*$. Nous obtenons donc
\[
\mathrm{H}^0(X,\mathcal{L}_{n\mu,n\nu,nr}) \cong V_{n\mu}^*\otimes V_{n\nu}^*\otimes \C_{\leqslant nr}[E].
\]
Ceci prouve la première assertion.

Par définition, nous avons $C_{\Q}(\XEC)^{++} = C_{\Q}(\XEC)^+ \cap \pi^{\KC}(\wedge^*_{\Q,\scriptscriptstyle ++}\times\wedge^*_{\Q,\scriptscriptstyle ++}\times\Q_{>0})$. On remarque que, pour tout $\mu\in\wedge^*_{\Q,\scriptscriptstyle ++}$, tout $r\in\Q_{>0}$ et tout $n\in\N^*$ tels que $(n\mu,nr)\in\wedge^*\times\Z$, on a $(V_{n\mu}^*\otimes V_{n(-w_0\mu)}^*\otimes\C_{\leqslant nr}[E])^{\KC}\neq 0$, car il contient le sous-espace $(V_{n\mu}^*\otimes V_{n(-w_0\mu)}^*\otimes\C)^{\KC}\neq 0$, où $w_0$ est le plus élément du groupe de Weyl de $K$ relativement à $T$. Par conséquent, $\mathcal{L}_{\mu,-w_0\mu,r}$ est dans $C_{\Q}(\XEC)^{++}$. Et la seconde assertion découle du Théorème \ref{theo:CsaX_conepolyconvfermé_et_adhérencedeCaX}.

Enfin, d'après la définition de $\Pi_{\Q}(E)$ et grâce à l'assertion (1), il est clair que l'image de $(\pi^{\KC})^{-1}(C_{\Q}(\XEC)^+)$ par la projection $\mathrm{pr}$ est exactement l'ensemble $\Pi_{\Q}(E)$.
\end{proof}

\begin{rema}
\label{rema:GammaQ(E)_est_coneconvpolyfermé}
D'après le Théorème \ref{theo:CsaX_conepolyconvfermé_et_adhérencedeCaX}, nous savons que $C_{\Q}(\XEC)^+$ est un cône convexe polyédral fermé de $\PicG(\XEC)$. Puisque $\pi^{\KC}$ est une projection linéaire entre espaces de dimensions finies, $(\pi^{\KC})^{-1}(C_{\Q}(\XEC)^+)$ est également un cône convexe polyédral fermé de $\wedge^*_{\Q}\times\wedge^*_{\Q}\times\Q$. On en déduit, par la Proposition \ref{prop:Csa_deféquivalente_avec_irrép} ($3$), que $\Pi_{\Q}(E)$ est un cône convexe polyédral fermé de $\wedge^*_{\Q}\times\wedge^*_{\Q}$, comme projection linéaire du cône convexe polyédral fermé $(\pi^{\KC})^{-1}(C_{\Q}(\XEC)^+)$.
\end{rema}

\begin{rema}
\label{rema:cns_C_Q(XEC)intérieurnonvide}
Dans la preuve de l'assertion ($2$) de la Proposition \ref{prop:Csa_deféquivalente_avec_irrép}, nous avons en fait prouvé que $(\pi^{\KC})^{-1}(C_{\Q}(\XEC)^+)$ est non vide, donc $\Pi_{\Q}(E)$ est lui aussi non vide. Cette dernière propriété se voit facilement puisque $(0,0)$ appartient à $\Pi_{\Q}(E)$. En outre, puisque $\pi^{\KC}$ est une application linéaire entre deux espaces vectoriels de dimensions finies, il est clair que $C_{\Q}(\XEC)^+$ est d'intérieur non vide dans $\PicGQ(\XEC)$ si et seulement si le cône convexe $(\pi^{\KC})^{-1}(C_{\Q}(\XEC)^+)$ est d'intérieur non vide dans $\wedge^*_{\Q}\times\wedge^*_{\Q}\times\Q$.
\end{rema}

Nous allons décrire plusieurs propriétés géométriques de l'ensemble $\Pi_{\Q}(E)$. La plus importante pour notre étude donne une condition équivalente à ce que $\Pi_{\Q}(E)$ soit d'intérieur non vide. Cette propriété sera en effet très utile pour déterminer un ensemble fini d'équations définissant le polyèdre convexe $C_{\Q}(\XEC)^+$ (cf section \ref{section:redondance_équations}).

%

Soit $U$ le radical unipotent de $B$ et $U^-$ le radical unipotent du sous-groupe de Borel $B^-$ opposé à $B$ et contenant $\TC$. On considère l'action de $\KC\times\KC$ sur $\KC\times E$ définie, pour tout $(k_1,k_2)\in \KC\times \KC$ et tout $(k,v)\in G\times E$, par
\[
(k_1,k_2)\cdot (k,v) := (k_1kk_2^{-1},\zeta(k_1)v),
\]
ainsi que l'action induite sur $\C[\KC\times E]$. Comme $\TC\times\TC$ normalise le sous-groupe $U\times U^-$ de $\KC\times\KC$, la dernière action induit une action de $\TC\times\TC$ sur $\C[\KC\times E]^{U\times U^-}$.

Nous noterons $\Pi_{\Z}(E)$ l'ensemble des points entiers de $\Pi_{\Q}(E)$.

\begin{theo}
\label{theo:GammaQ_intérieurnonvide}
Soient $E$ une représentation complexe de $\KC$, associée au morphisme de groupes $\zeta:\KC\rightarrow GL_{\C}(E)$, et $(\mu,\nu)\in(\wedge_+^*)^2$.
\begin{enumerate}
\item $\Pi_{\Q}(E) = \Q_{>0}\cdot\Pi_{\Z}(E)$.
\item L'intérieur de $\Pi_{\Q}(E)$ est non vide dans $\wedge_{\Q}^*\times\wedge_{\Q}^*$ si et seulement si $\ker(\zeta)$ est fini.
\end{enumerate}
\end{theo}

La démonstration du point (2) nécessitera l'utilisation du lemme suivant.

\begin{lemm}
\label{lemm:codim(PiQ(X))=dim(ker_action_TC)}
Soit $X$ une variété algébrique affine sur $\C$ sur laquelle agit algébriquement un tore $\TC$. Notons $\Pi_{\Z}(X)$ le monoïde des poids de $\TC$ dans $\C[X]$. Alors la codimension de $\Pi_{\Q}(X):=\Q_{>0}\cdot\Pi_{\Z}(X)$ dans $\wedge_{\Q}^*$ est égale à la dimension du noyau de l'action de $\TC$ sur la variété $X$.
\end{lemm}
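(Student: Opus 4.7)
Le plan est d'exploiter la d�composition de $\C[X]$ en sous-espaces de poids pour l'action de $\TC$, puis d'identifier le $\Q$-sous-espace engendr� par $\Pi_{\Q}(X)$ au dual de $\TC/H$, o� $H\subset\TC$ d�signe le noyau de l'action.

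Dans un premier temps, j'utiliserais la d�composition
\[
\C[X]=\bigoplus_{\chi\in\wedge^*}\C[X]_{\chi},\qquad\C[X]_{\chi}:=\{f\in\C[X];\ t\cdot f=\chi(t)f,\ \forall t\in\TC\},
\]
disponible car $\TC$ est un groupe r�ductif agissant rationnellement sur une alg�bre affine. Par d�finition, $\Pi_{\Z}(X)=\{\chi\in\wedge^*;\ \C[X]_{\chi}\neq 0\}$. Je noterais $L\subset\wedge^*$ le sous-groupe engendr� par $\Pi_{\Z}(X)$, de sorte que le $\Q$-sous-espace engendr� par $\Pi_{\Q}(X)$ dans $\wedge^*_{\Q}$ soit exactement $L\otimes_{\Z}\Q$.

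Je poserais ensuite $H:=\{t\in\TC;\ t\cdot x=x,\ \forall x\in X\}$ et $H^{\perp}:=\{\chi\in\wedge^*;\ \chi|_{H}=1\}\cong X^*(\TC/H)$. Pour tout $\chi\in\Pi_{\Z}(X)$, le groupe $H$ agit trivialement sur le sous-espace non nul $\C[X]_{\chi}$, ce qui impose $\chi|_{H}=1$; d'o� $L\subset H^{\perp}$. Comme $\TC$ est connexe, $\TC/H$ est un tore complexe, donc le rang de $H^{\perp}$ vaut $\dim_{\C}\TC-\dim_{\C}H$, et $H^{\perp}\otimes_{\Z}\Q$ est de codimension $\dim_{\C}H$ dans $\wedge^*_{\Q}$. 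L'�nonc� se ram�ne donc � l'�galit� $L\otimes_{\Z}\Q=H^{\perp}\otimes_{\Z}\Q$.

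Pour obtenir l'inclusion manquante, j'introduirais le sous-groupe ferm�
\[
H':=\{t\in\TC;\ \chi(t)=1,\ \forall\chi\in L\},
\]
qui contient a priori $H$. Pour tout $t\in H'$ et tout $f\in\C[X]_{\chi}$ avec $\chi\in\Pi_{\Z}(X)\subset L$, on a $t\cdot f=\chi(t)f=f$; en sommant sur les poids, $H'$ agit trivialement sur $\C[X]$, donc sur $X=\mathrm{Spec}(\C[X])$. Par d�finition m�me de $H$, on en tire $H'\subset H$, et finalement $H=H'$.

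Le point le plus d�licat sera de conclure par dualit� : les deux sous-groupes $L$ et $H^{\perp}$ du r�seau $\wedge^*=X^*(\TC)$ ont le m�me annulateur $H=H'$ dans le tore $\TC$, et la correspondance entre sous-groupes ferm�s de $\TC$ et quotients du groupe des caract�res $X^*(\TC)$ entra�ne alors que la \emph{saturation} de $L$ co�ncide avec $H^{\perp}$, soit $L\otimes_{\Z}\Q=H^{\perp}\otimes_{\Z}\Q$. Je prendrai garde au fait que l'on n'a pas n�cessairement $L=H^{\perp}$ (il peut subsister de la torsion lorsque $H$ n'est pas connexe), mais l'�galit� apr�s tensorisation par $\Q$ suffit pour l'�nonc� sur les codimensions, ce qui ach�vera la preuve.
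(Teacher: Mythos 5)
Votre preuve est correcte et, pour l'essentiel, constitue la formulation duale de celle du texte. Le texte travaille avec les sous-groupes \`a un param\`etre : $\lambda$ a son image dans le noyau de l'action si et seulement si $\langle\lambda,\mu\rangle=0$ pour tout $\mu\in\Pi_{\Z}(X)$ (le sens non trivial reposant, comme chez vous, sur le fait que $\C[X]$ s\'epare les points de la vari\'et\'e affine $X$), puis il identifie la dimension du noyau \`a celle de l'orthogonal de $\Pi_{\Q}(X)$. Vous faites le m\^eme travail du c\^ot\'e des caract\`eres : d\'ecomposition de $\C[X]$ en espaces de poids, sous-groupe $H'=\bigcap_{\chi\in L}\ker\chi$, et dualit\'e des groupes diagonalisables pour comparer les rangs de $L$ et de $H^{\perp}$ ; les deux ingr\'edients cl\'es (trivialit\'e de $H$ sur chaque espace de poids non nul, s\'eparation des points par les fonctions r\'eguli\`eres) sont identiques, votre version explicitant davantage la structure de groupes l\`a o\`u celle du texte \'evite toute discussion de connexit\'e ou de torsion. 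Un seul point est \`a rectifier, sans cons\'equence pour l'argument : de $H=H'$ et de la correspondance entre sous-groupes de $X^*(\TC)$ et sous-groupes ferm\'es de $\TC$ (en caract\'eristique nulle, le double annulateur d'un sous-groupe du r\'eseau des caract\`eres est ce sous-groupe lui-m\^eme), on d\'eduit en fait $L=H^{\perp}$ exactement ; or $H^{\perp}$ n'est pas satur\'e lorsque $H$ n'est pas connexe (par exemple $\TC=\C^*$ agissant sur $\C$ par $t\cdot x=t^2x$, o\`u $L=H^{\perp}=2\Z$), de sorte que l'\'enonc\'e interm\'ediaire \og{}la saturation de $L$ co\"{\i}ncide avec $H^{\perp}$\fg{} n'est pas litt\'eralement exact. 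Ce que vous utilisez r\'eellement est l'\'egalit\'e des rangs : $L$ et $H^{\perp}$ ayant le m\^eme annulateur $H$ dans $\TC$, tous deux sont de rang $\dim\TC-\dim H$, d'o\`u $L\otimes_{\Z}\Q=H^{\perp}\otimes_{\Z}\Q$, ce qui suffit pour conclure sur la codimension.
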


\begin{proof}[Preuve du Lemme \ref{lemm:codim(PiQ(X))=dim(ker_action_TC)}]
Nous identifions toujours le réseau des poids $\wedge^*$ avec le groupe des caractères de $\TC$.

Soit $\lambda$ un sous-groupe à un paramètre de $\TC$ dont l'action est triviale sur $X$ (c'est-à-dire son image est dans le noyau de l'action de $\TC$ sur $X$). Alors clairement $\lambda$ agit trivialement sur l'algèbre $\C[X]$ des fonctions régulières sur $X$. Par conséquent, pour tout poids $\mu$ de $\Pi_{\Z}(X)$, on a $\langle\lambda,\mu\rangle = 0$.

Réciproquement, soit $\lambda$ un sous-groupe à un paramètre de $\TC$ tel que $\langle\lambda,\mu\rangle=0$ pour tout poids $\mu$ de $\Pi_{\Z}(X)$. Cela signifie donc que $\lambda$ laisse fixe toute fonction régulière de $\C[X]$. Or, comme $X$ est affine, $\C[X]$ sépare les points. On en déduit que l'action de $\lambda$ sur $X$ fixe tous les points de $X$. D'où l'image de $\lambda$ est contenue dans le noyau de l'action de $\TC$.

On en conclut que la dimension du noyau de l'action de $\TC$ sur $X$ est égale à la dimension de l'orthogonal de $\Pi_{\Q}(X)$, qui est elle-même égale à la codimension de $\Pi_{\Q}(X)$ dans $\wedge_{\Q}^*$.
\end{proof}

Nous pouvons maintenant commencer la preuve du théorème.

\begin{proof}[Preuve du Théorème \ref{theo:GammaQ_intérieurnonvide}]
La première assertion découle directement du fait que $\Pi_{\Q}(E)$ est un cône dans $(\wedge^*_{\Q})^2$, d'après la Remarque \ref{rema:GammaQ(E)_est_coneconvpolyfermé}.

Le reste de la preuve est inspirée de la preuve de \cite[Proposition 3]{lrcone}. Par le théorème de Frobenius, le $\KC\times\KC$-module $\C[\KC]$ se décompose en une somme directe de sous-espaces stables
\[
\C[\KC] = \bigoplus_{\nu\in\wedge_+^*}V_{\nu}\otimes V^*_{\nu},
\]
où, pour tout $\nu\in\wedge_+^*$, l'action de $\KC\times\KC$ sur $V_{\nu}\otimes V^*_{\nu}$ est définie par 
\[
(k_1,k_2)\cdot v\otimes\varphi = (k_1\cdot v)\otimes(k_2\cdot\varphi),
\]
pout tout $(k_1,k_2)\in\KC\times\KC$ et tout $(v,\varphi)\in V_{\nu}\times V^*_{\nu}$, et étendue par linéarité à tout $V_{\nu}\otimes V^*_{\nu}$.

Par conséquent, nous obtenons
\begin{align*}
\C[\KC\times E]^{U\times U^-} & = (\C[\KC]\otimes \C[E])^{U\times U^-} = \Big(\bigoplus_{\nu\in\wedge^*_+} V_{\nu}\otimes V_{\nu}^*\otimes \C[E]\Big)^{U\times U^-} \\
& = \bigoplus_{\nu\in\wedge^*_+}(V_{\nu}\otimes\C[E])^U\otimes V_{\nu}^{*U^-},
\end{align*}
où $\TC$ agit sur $V_{\nu}^{*U^-}$ par le caratère $-\nu$. En ce qui concerne l'action de $\TC\times\TC$, chaque sous-espace $(V_{\nu}\otimes\C[E])^U\otimes V_{\nu}^{*U^-}$ est stable, avec la première composante de $\TC\times\TC$ agissant sur $(V_{\nu}\otimes\C[E])^U$, la seconde sur $V_{\nu}^{*U^-}$.

On en déduit que l'ensemble des poids de $\TC\times \TC$ sur $\C[\KC\times E]^{U\times U^-}$ est égal à l'ensemble des couples $(\mu,-\nu)\in(\wedge^*)^2$ où $(\mu,\nu)\in(\wedge^*_+)^2$ sont tels que $\mu$ est un poids de $\TC$ sur $(V_{\nu}\otimes\C[E])^U$, autrement dit, $V_{\mu}\subset V_{\nu}\otimes\C[E]$. Et cette assertion est équivalente à $(V_{\mu}^*\otimes V_{\nu}\otimes\C[E])^{\KC}\neq 0$, c'est-à-dire, $(\mu,-w_0\nu)\in\Pi_{\Z}(E)$.

L'application $l:(\mu,\nu)\in(\wedge^*_{\Q})^2\mapsto(\mu,w_0\nu)\in(\wedge^*_{\Q})^2$ est clairement un isomorphisme $\Q$-linéaire, qui induit une application $\Z$-linéaire bijective de $(\wedge^*)^2$ dans lui-même. Ainsi, l'ensemble des poids de $\TC\times \TC$ dans $\C[\KC\times E]^{U\times U^-}$ est égal à l'ensemble $l(\Pi_{\Z}(E))$ des points entiers de $l(\Pi_{\Q}(E))$.

Maintenant, puisque $\Pi_{\Q}(E) = \Q_{>0}\cdot\Pi_{\Z}(E)$ et $l$ est un isomorphisme, la codimension de $\Pi_{\Q}(E)$ dans $(\wedge^*_{\Q})^2$ est égal à la dimension du noyau de l'action de  $\TC\times\TC$ sur la variété $(\KC\times E)\quotient(U\times U^-)$, d'après le Lemme \ref{lemm:codim(PiQ(X))=dim(ker_action_TC)}. Par conséquent, $\Pi_{\Q}(E)$ est d'intérieur non vide dans $(\wedge^*)^2$ si et seulement si le noyau de l'action de $\TC\times\TC$ sur $(\KC\times E)\quotient(U\times U^-)$ est fini.

Les actions de $U$ et $U^-$, induites par l'action de $\KC\times\KC$ sur $\KC\times E$, commutent. On a donc
\[
\C[\KC\times E]^{U\times U^-} = \left((\C[\KC]\otimes \C[E])^{U^-}\right)^U = \left(\C[\KC]^{U^-}\otimes \C[E]\right)^U,
\]
puisque $U^-$ agit trivialement sur $E$ et donc aussi sur $\C[E]$. Cependant, $BU^-$ est ouvert dans $\KC$, donc $\C[\KC]^{U^-}$ s'injecte canoniquement dans $\C[B]$ en tant que $B$-modules, pour l'action à gauche de $B$. En général, ces deux algèbres ne sont pas isomorphes, sauf si $\KC$ est un tore. Cependant, leurs corps de fractions sont isomorphes, puisque $B$ s'identifie à un ouvert affine de $\KC/U$. On en déduit que $\C(\KC\times E)^{U\times U^-}$ est isomorphe à $\C(B\times E)^U$ comme $\TC\times\TC$-module.

De plus, $B$ est le produit semi-direct des groupes $U$ et $\TC$, où $\TC$ normalise $U$. Donc il existe un isomorphisme $\TC\times\TC$-équivariant canonique entre $\C(B\times E)^U$ et $\C(\TC\times E)$, où $\TC\times\TC$ agit sur $\TC\times E$ par une action analogue à celle de $\KC\times \KC$ sur $\KC\times E$. D'où, $(\KC\times E)\quotient(U\times U^{-})$ est birationnellement $\TC\times\TC$-équivariant à la variété $\TC\times E$.

Maintenant, on voit clairement que le noyau de l'action de $\TC\times\TC$ sur $\TC\times E$ est fini si et seulement si le noyau de l'action de $\TC$ sur $E$ est fini, c'est-à-dire, $\ker(\zeta|_{\TC})$ est fini. En effet, comme $\TC$ est abélien, on peut aisément prouver que ces deux noyaux sont canoniquement isomorphes.

Il reste donc à prouver que le groupe $\ker\zeta$ est fini si et seulement si $\ker(\zeta|_{\TC})$ est fini. La première implication est évidente.

Supposons donc que $\ker(\zeta|_{\TC})$ est fini, autrement dit, $\ker(\zeta)\cap \TC$ est fini. Soit $S$ un tore maximal de $\ker\zeta$ et $\tilde{S}$ un tore maximal de $\KC$ contenant $S$. Puisque $\KC$ est réductif, $\tilde{S}$ est conjugué à $\TC$, donc $S$ est conjugué à un sous-tore de $\TC$. Or $\ker\zeta$ est un sous-groupe distingué de $\KC$, par conséquent $S$ est conjugué à un sous-groupe de $\ker\zeta\cap \TC$, qui est fini. Donc $S$ est trivial et $\ker\zeta$ est fini. Ceci conclut la preuve de l'assertion ($3$).
%
\end{proof}

Comme conséquence du théorème précédent, nous obtenons une condition nécessaire et suffisante pour que $C_{\Q}(\XEC)^+$ soit d'intérieur non vide. C'est l'énoncé du corollaire suivant. Ce corollaire est lui-même conséquence de la propriété suivante de $C_{\Q}(\XEC)^+$.

\begin{lemm}
\label{lemm:propriétédeCQ(XEC)_danslacomposante_r}
Pour tout triplet $(\mu,\nu,r)\in (\pi^{\KC})^{-1}(C_{\Q}(\XEC)^+)$ et tout $r'\in\Q$, si $r'\geqslant r$ alors $(\mu,\nu,r')$ est aussi dans $(\pi^{\KC})^{-1}(C_{\Q}(\XEC)^+)$.
\end{lemm}

\begin{proof}
%
Soit $(\mu,\nu,r)$ appartiennant à $(\pi^{\KC})^{-1}(C_{\Q}(\XEC)^+)$, alors il existe un entier $n\geqslant 1$ tel que $(n\mu,n\nu)\in(\wedge^*)^2$, $nr\in\Z$ et $(V_{n\mu}^*\otimes V_{n\nu}^*\otimes\C_{\leqslant nr}[E])^{\KC}\neq 0$. Il est clair que, pour tout $m\geqslant 1$, le $\KC$-module $V_{n\mu}^*\otimes V_{n\nu}^*\otimes\C_{\leqslant nr}[E]$ est contenu dans $V_{n\mu}^*\otimes V_{n\nu}^*\otimes\C_{\leqslant nm(r+1)}[E]$. Par conséquent, $(V_{n\mu}^*\otimes V_{n\nu}^*\otimes\C_{\leqslant nm(r+1)}[E])^{\KC}\neq 0$. Donc, pour tout $d\geqslant 1$, $(\mu,\nu,m(r+1))$ est aussi dans $(\pi^{\KC})^{-1}(C_{\Q}(\XEC)^+)$. De la convexité de $(\pi^{\KC})^{-1}(C_{\Q}(\XEC)^+)$ dans $\wedge^*_{\Q}\times\wedge^*_{\Q}\times\Q$, comme $r+1>0$, on en déduit que $(\mu,\nu,r')$ appartient à $(\pi^{\KC})^{-1}(C_{\Q}(\XEC)^+)$ pour tout rationel $r'\geqslant r$.
\end{proof}

\begin{coro}
\label{coro:côneample_intérieurnonvide}
Les cônes convexes polyédraux $C_{\Q}(\XEC)^+$ et $C_{\Q}(\XEC)^{++}$ sont d'intérieur non vide si et seulement si $\ker\zeta$ est fini.
\end{coro}

\begin{proof}
Tout d'abord, il faut remarquer que $C_{\Q}(\XEC)^{++}$ est d'intérieur non vide si et seulement si $C_{\Q}(\XEC)^+$ est d'intérieur non vide, puisque $C_{\Q}(\XEC)^+$ est l'adhérence de $C_{\Q}(\XEC)^{++}$ dans  $\PicGQ(\XEC)$, d'après la Proposition \ref{prop:Csa_deféquivalente_avec_irrép} ($2$).

De plus, d'après le Théorème \ref{theo:GammaQ_intérieurnonvide} et la Remarque \ref{rema:GammaQ(E)_est_coneconvpolyfermé}, il est suffisant de prouver que le cône convexe polyédral $(\pi^{\KC})^{-1}(C_{\Q}(\XEC)^+)$ est d'intérieur non vide dans $\wedge^*_{\Q}\times\wedge^*_{\Q}\times\Q$ si et seulement si $\Pi_{\Q}(E)$ est lui-même d'intéreur non vide dans $\wedge^*_{\Q}\times\wedge^*_{\Q}$.

Tout d'abord, on sait que $\Pi_{\Q}(E)$ est la projection linéaire de l'ensemble convexe $(\pi^{\KC})^{-1}(C_{\Q}(\XEC)^+)$, d'après l'assertion (1) de la Proposition \ref{prop:Csa_deféquivalente_avec_irrép}. Par conséquent, si $(\pi^{\KC})^{-1}(C_{\Q}(\XEC)^+)$ est d'intérieur non vide dans $\wedge^*_{\Q}\times\wedge^*_{\Q}\times\Q$, alors l'intérieur de $\Pi_{\Q}(E)$ est aussi non vide.

Réciproquement, supposons que l'intérieur du cône convexe $(\pi^{\KC})^{-1}(C_{\Q}(\XEC)^+)$ est vide. 
Comme il s'agit d'un ensemble convexe, il est donc contenu dans un certain hyperplan $\mathcal{H}$, défini par l'équation
\[
f_1(\xi_1)+f_2(\xi_2)+ax = 0, \qquad (\xi_1,\xi_2,x)\in\wedge^*_{\Q}\times\wedge^*_{\Q}\times\Q,
\]
avec $f_1,f_2$ deux formes linéaires rationnelle sur $\wedge^*_{\Q}$ et $a\in\Q$.

Mais, d'après le Lemme \ref{lemm:propriétédeCQ(XEC)_danslacomposante_r}, si $(\mu,\nu,r)$ appartient à $(\pi^{\KC})^{-1}(C_{\Q}(\XEC)^+)$, alors pour tout rationnel $r'\geqslant r$, on doit aussi avoir $(\mu,\nu,r')\in (\pi^{\KC})^{-1}(C_{\Q}(\XEC)^+)$. Cela donne donc
\[
f_1(\mu)+f_2(\nu)+ar = 0 = f_1(\mu)+f_2(\nu)+ar',
\]
pour tout rationnel $r'\geqslant r$. On en déduit que $a=0$, puisque $(\pi^{\KC})^{-1}(C_{\Q}(\XEC)^+)$ est non vide d'après la Remarque \ref{rema:cns_C_Q(XEC)intérieurnonvide}. Alors, au moins une des deux formes linéaires $f_1$ ou $f_2$ est non nulle. De plus, tout $(\mu,\nu)\in\Pi_{\Q}(E)$ vérifie l'équation
\[
f_1(\mu)+f_2(\nu) = 0,
\]
puisqu'il doit exister $r\in\Q_{\geqslant 0}$ tel que $(\mu,\nu,r)$ est dans $(\pi^{\KC})^{-1}(C_{\Q}(\XEC)^+)$, d'après la Proposition \ref{prop:Csa_deféquivalente_avec_irrép} ($3$). On en conclut que $\Pi_{\Q}(E)$ est inclus dans un hyperplan de $\wedge^*_{\Q}\times\wedge^*_{\Q}$ et, donc, il est d'intérieur vide.
\end{proof}

\subsection{Critère numérique de Hilbert-Mumford}
\label{subsection:critère_HilbertMumford}

Fixons un fibré $\mathcal{L}$ dans $\PicG(X)$. Pour un point $x\in X$ et un sous-groupe à un paramètre $\lambda$ de $\KC$, la limite $\lim_{t\rightarrow 0} \lambda(t)\cdot x$ peut être définie comme suit: puisque $X$ est projective, l'application rationnelle $f:t\in\C^*\mapsto \lambda(t)\cdot x\in X$ peut être prolongé en une application algébrique $\tilde{f}:\mathbb{P}^1\rightarrow X$. On posera alors $\lim_{t\rightarrow 0}\lambda(t)\cdot x = \tilde{f}(0)$.

Si on note $z = \lim_{t\rightarrow 0}\lambda(t)\cdot x$, ce point $z$ est alors fixé par l'image de $\lambda$ dans $\KC$, c'est-à-dire, l'action induite de $\C^*$ sur $X$ par l'intermédiaire de $\lambda$ laisse fixe $z$. Comme $\mathcal{L}$ est $G$-linéarisé, le groupe $\C^*$ va alors agir sur la fibre $\mathcal{L}_z$. Cette action définit un élément $\mu^{\mathcal{L}}(x,\lambda)$ de $\Z$, par $\lambda(t)\cdot v = t^{-\mu^{\mathcal{L}}(x,\lambda)}v$, pour tout $t\in\C^*$ et tout $v\in\mathcal{L}_z$.

On peut facilement vérifier que les nombres $\mu^{\mathcal{L}}(x,\lambda)$ vérifient les propriétés suivantes:
\begin{enumerate}
\item $\mu^{\mathcal{L}}(g.x,g\cdot\lambda\cdot g^{-1}) = \mu^{\mathcal{L}}(x,\lambda)$, pour tout $g\in \KC$,
\item l'application $\mathcal{L}\mapsto\mu^{\mathcal{L}}(x,\lambda)$ est un morphisme de groupes de $\PicG(X)$ dans $\Z$.
\item pour tout $n\in\N$, on a $\mu^{\mathcal{L}}(x,n\lambda) = n\mu^{\mathcal{L}}(x,\lambda)$,
\end{enumerate}
où $n\lambda$ est le sous-groupe à un paramètre de $\KC$ défini par $(n\lambda)(t):=\lambda(t^n)$, pour tout $t\in\C^*$.

\begin{defi}
 Un sous-groupe à un paramètre $\lambda$ de $\KC$ est dit \emph{indivisible}\index{Sous-groupe à un paramètre indivisible} si, pour tout sous-groupe à un paramètre $\lambda'$ de $\KC$ et tout entier $n>1$, $n\lambda'$ n'est pas égal à $\lambda$.
\end{defi}

Ces nombres $\mu^{\mathcal{L}}(x,\lambda)$ donnent une caractérisation de l'ensemble des points semi-stables de $X$ pour $\mathcal{L}$. En effet, d'après \cite{mumford94}, dans le cas ample, et \cite{ressayre08} (Lemme $2$) dans le cas semi-ample,
\[
x\in\Xss(\mathcal{L}) \ \Longleftrightarrow \ \mu^{\mathcal{L}}(x,\lambda) \leqslant 0, \ \text{pour tout $\lambda$ sous-groupe à un paramètre de $\KC$}.
\]\index{Critère de Hilbert-Mumford}
On remarque que l'on aurait pu seulement considérer les sous-groupes à un paramètre indivisibles de $\KC$ dans l'équivalence écrite ci-dessus, d'après l'assertion ($3$).

\`A un sous-groupe à un paramètre $\lambda$ de $\KC$, on peut associer le sous-groupe parabolique
\[
P(\lambda) = \left\{g\in \KC; \ \lim_{t\rightarrow 0} \lambda(t)\cdot g\cdot \lambda(t)^{-1} \mbox{ existe dans } \KC\right\}.
\]\index{$P(\lambda)$}
Notons $\KC^{\lambda}$ le centralisateur de l'image de $\lambda$ dans $\KC$. Ce groupe $\KC^{\lambda}$ est un sous-groupe de Levi de $P(\lambda)$. De plus, si $g\in P(\lambda)$, alors l'élément $\overline{g} = \lim_{t\rightarrow 0}\lambda(t)\cdot g \cdot\lambda(t)^{-1}\in \KC$ est un élément de $\KC^{\lambda}$. Enfin, pour $g\in P(\lambda)$, on a $\mu^{\mathcal{L}}(x,\lambda) = \mu^{\mathcal{L}}(x,g\cdot\lambda\cdot g^{-1})$.

\bigskip

Nous aurons, dans la suite, besoin de calculer la valeur de $\mu^{\mathcal{L}}(x,\lambda)$ pour certains points $x$ de la variété produit $X_M = \KC/B\times \KC/B \times \mathbb{P}(M)$, où $M$ est une représentation complexe de $\KC$. Les trois énoncés suivants répondent à cette question. La première proposition est un résultat bien connu.

\begin{prop}
\label{prop:produit_deuxfibrés_critèrenumérique}
Si $(X_1,\mathcal{L}_1)$ et $(X_2,\mathcal{L}_2)$ sont deux variétés munies de fibrés en droites $\KC$-linéarisés, alors
\[
\mu^{\mathcal{L}_1\boxtimes\mathcal{L}_2}((x_1,x_2),\lambda) = \mu^{\mathcal{L}_1}(x_1,\lambda) + \mu^{\mathcal{L}_2}(x_2,\lambda),
\]
pour tout $(x_1,x_2)\in X_1\times X_2$.
\end{prop}

\begin{rema}
\label{rema:defi_hilbertmumford_pour_fibrés_rationnels}
En particulier, pour tout entier $n\geqslant 1$, on a $\mu^{\mathcal{L}^{\otimes n}}(x,\lambda) = n\mu^{\mathcal{L}}(x,\lambda)$ pour tout fibré en droites $\mathcal{L}\in\PicG(X)$, tout sous-groupe à un paramètre $\lambda$ de $\KC$ et tout point $x\in X$. Ceci nous permet de définir la valeur de $\mu^{\bullet}(x,\lambda)$ pour les fibrés rationnels, en posant, pour tout $\mathcal{L}\in\PicG(X)_{\Q}$,
\[
\mu^{\mathcal{L}}(x,\lambda) := \frac{\mu^{\mathcal{L}^{\otimes n}}(x,\lambda)}{n},
\]
où $n$ est un entier strictement positif tel que $\mathcal{L}^{\otimes n}\in\PicG(X)$.
\end{rema}

Pour les deux prochains lemmes, nous supposerons que $\lambda$ est maintenant un sous-groupe à un paramètre de $\TC$.

\begin{lemm}
\label{prop:critèrenumérique_KCsurB}
Soit $w$ un élément du groupe de Weyl $W$ et $\nu$ un caractère de $B$. Alors $\mu^{\mathcal{L}_{\nu}}(w^{-1}B/B, \lambda) = \langle w\lambda,\nu\rangle$.
\end{lemm}

\begin{proof}
L'élément $w^{-1}B/B$ de $\KC/B$ est un point fixe de l'action de $\TC$ sur $\KC/B$. Par conséquent, la valeur de $\mu^{\mathcal{L}_{\nu}}(w^{-1}B/B, \lambda)$ provient de l'action de $\lambda$ sur la fibre au-dessus de $w^{-1}B/B$.

Soit $z\in\C_{-\nu}$ et $t\in\C^*$. Alors, par définition de l'action de $\KC$ sur $\mathcal{L}_{\nu} = G\times_B \C_{-\nu}$, nous avons $\lambda(t)\cdot[w^{-1},z] = [\lambda(t)w^{-1},z] = [w^{-1}(w\lambda(t)w^{-1}),z]$. Or, $w$ est un élément du groupe de Weyl $W = W(\KC;\TC)$ et $\lambda(t)$ est un élément de $\TC$, donc $w\lambda(t)w^{-1}$ est dans $\TC\subset B$. On en déduit
\[
\lambda(t)\cdot[w^{-1},z] = [w^{-1},(w\lambda(t)w^{-1})\cdot z] = [w^{-1},t^{\langle w\lambda,-\nu\rangle}z] = [w^{-1},t^{-\langle w\lambda,\nu\rangle}z],
\]
ce qui signifie que $\mu^{\mathcal{L}_{\nu}}(w^{-1}B/B, \lambda) = \langle w\lambda,\nu\rangle$.
\end{proof}

\begin{lemm}
\label{prop:critèrenumérique_ProjectifdeE}
Soit $M$ une représentation complexe non nulle de $\KC$. Soient $v\in M$ non nul et $n\in\Z$ tels que, pour tout $t\in\C^*$, on ait $\lambda(t)\cdot v = t^n v$. Alors, pour tout $k>0$, on a $\mu^{\mathcal{L}_k}([v],\lambda) = nk$, où $[v]$ est la classe de $v$ dans $\mathbb{P}(M)$.
\end{lemm}

\begin{proof}
Puisque $\lambda(t)\cdot v = t^n v$ pour tout $t\in\C^*$, le point $[v]$ de $\mathbb{P}(M)$ est fixé par $\lambda(\C^*)$. Il suffit donc de déterminer l'action de $\lambda(\C^*)$ sur la fibre au-dessus de $[v]$ dans $\mathcal{L}_k$. Soient $z\in\C$ et $t\in\C^*$. Alors
\[
\lambda(t)\cdot[v,z] = [\lambda(t)\cdot v, z] = [t^n v,z].
\]
Or, par définition de l'action de $\C^*$ sur $M\setminus\{0\}\times \C_k$ utilisée pour définir $\mathcal{L}_k = M\setminus\{0\}\times_{\C^*}\C_k$, nous avons $[a v, a^k z] = [v,z]$ pour tout $a\in\C^*$. Par conséquent, nous avons aussi $[a v, z] = [v, a^{-k}z]$ pour tout $a\in\C^*$. Et en particulier pour $a = t^n \neq 0$, cela donne
\[
\lambda(t)\cdot[v,z] = [t^n v,z] = [v, (t^n)^{-k}z] = [v, t^{-nk}z].
\]
On en conclut que $\mu^{\mathcal{L}_k}([v],\lambda) = nk$.
\end{proof}

\subsection{Paires bien couvrantes}
\label{subsubsection:gpalg_git_pairesbiencouvrantes}

Cette notion a été introduite par Ressayre dans \cite{ressayre08}. Elle permet de donner une condition nécessaire et suffisante pour qu'un élément $\mathcal{L}\in\PicG(X)^{++}$ (resp. $\mathcal{L}\in\PicG(X)^+$) soit dans le cône ample $\Ca$ (resp. cône semi-ample $\Csa$), en termes d'inégalités linéaires.

Nous supposerons dans ce paragraphe que $X$ est une variété projective lisse. Cette propriété sera vérifiée pour toutes les variétés que nous considérerons dans la suite, puisque nous ne rencontrerons que des produits de variétés des drapeaux.

Dans la suite, si $\lambda$ est un sous-groupe à un paramètre de $\KC$, on notera $X^{\lambda}$ l'ensemble des points de $X$ fixés par le sous-groupe $\lambda(\C^*)$ de $\KC$.

\begin{defi}
Soit $\lambda$ un sous-groupe à un paramètre de $\KC$ et $C$ une composante irréductible de $X^{\lambda}$. Soit $C^+ := \{x\in X\ ; \ \lim_{t\rightarrow 0} \lambda(t)x \in C\}$. Considérons l'application $\KC$-équivariante suivante :
\[
\begin{array}{cccc}
\eta : & \KC\times_{P(\lambda)} C^+ & \longrightarrow & X \\
& [g,x] & \longmapsto & g\cdot x.
\end{array}
\]
La paire $(C,\lambda)$\index{$(C,\lambda)$} est dite \emph{couvrante}\index{Paire couvrante} (resp. \emph{dominante}\index{Paire dominante}) si $\eta$ est une application birationnelle (resp. dominante). La paire $(C,\lambda)$ sera \emph{bien couvrante}\index{Paire bien couvrante} si elle est couvrante et s'il existe un ouvert $P(\lambda)$-stable $\Omega$ de $C^+$ intersectant $C$ tel que $\eta$ induise un isomorphisme de $\KC\times_{P(\lambda)}\Omega$ sur un ouvert de $X$.
\end{defi}

Pour un fibré $\mathcal{L}\in\PicG(X)$ et un sous-groupe à un paramètre $\lambda$ de $\KC$ fixés, l'application $x\mapsto\mu^{\mathcal{L}}(x,\lambda)$ est à valeur dans $\Z$. Pour une composante irréductible $C$ de $X^{\lambda}$, la valeur de $\mu^{\mathcal{L}}(x,\lambda)$ provient de l'action rationnelle de $\C^*$ sur la fibre au-dessus de $x\in C$. La valeur de $\mu^{\mathcal{L}}(x,\lambda)$ ne dépend pas de l'élément $x$ choisi dans $C$, ce qui permet de définir le nombre $\mu^{\mathcal{L}}(C,\lambda)$\index{$\mu^{\mathcal{L}}(C,\lambda)$}. Remarquons, de plus, que $\mu^{\mathcal{L}}(x,\lambda)$ ne dépendra pas non plus de $x\in C^+$ et vaudra aussi $\mu^{\mathcal{L}}(C,\lambda)$.

\begin{lemm}[\cite{ressayre08}, Lemme 3]
\label{lemm:ressayre_lemme3}
Soit $(C,\lambda)$ une paire dominante et $\mathcal{L}\in\Csa$. Alors $\mu^{\mathcal{L}}(C,\lambda)\leqslant 0$.
\end{lemm}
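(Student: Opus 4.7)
Le plan est d'exploiter la non-vacuit� de $\Xss(\mathcal{L})$ coupl�e � la densit� de l'image de $\eta$, puis d'appliquer le crit�re num�rique de Hilbert-Mumford rappel� au paragraphe \ref{subsection:crit�re_HilbertMumford}. Concr�tement, puisque $\mathcal{L}\in\Csa$, l'ensemble $\Xss(\mathcal{L})$ est par d�finition un ouvert non vide de $X$. La paire $(C,\lambda)$ �tant dominante, l'image de l'application $\KC$-�quivariante $\eta:\KC\times_{P(\lambda)}C^+\rightarrow X$ contient un ouvert dense de $X$, et donc l'ouvert $\eta^{-1}(\Xss(\mathcal{L}))$ de $\KC\times_{P(\lambda)}C^+$ est non vide. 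On choisit alors un point $[g,x]$ dans ce dernier ouvert, obtenant ainsi un �l�ment $g\cdot x\in\Xss(\mathcal{L})$ avec $x\in C^+$.

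Ensuite, j'applique le crit�re de Hilbert-Mumford au point semi-stable $g\cdot x$ : pour tout sous-groupe � un param�tre $\mu'$ de $\KC$, on a $\mu^{\mathcal{L}}(g\cdot x,\mu')\leqslant 0$. En prenant $\mu'=g\lambda g^{-1}$ et en utilisant la propri�t� de $\KC$-invariance
\[
\mu^{\mathcal{L}}(g\cdot x,g\lambda g^{-1})=\mu^{\mathcal{L}}(x,\lambda),
\]
on obtient directement $\mu^{\mathcal{L}}(x,\lambda)\leqslant 0$. Enfin, comme $x$ appartient � $C^+$ et que la valeur de $\mu^{\mathcal{L}}(\cdot,\lambda)$ ne d�pend pas du point choisi dans $C^+$ (c'est pr�cis�ment la d�finition du nombre $\mu^{\mathcal{L}}(C,\lambda)$), on conclut que $\mu^{\mathcal{L}}(C,\lambda)=\mu^{\mathcal{L}}(x,\lambda)\leqslant 0$.

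L'argument ne pr�sente pas de v�ritable difficult� : le seul point � surveiller est que le crit�re de Hilbert-Mumford et la notion de $\mu^{\mathcal{L}}(\cdot,\lambda)$ restent valides dans le cadre semi-ample rationnel, ce qui a d�j� �t� justifi� au paragraphe \ref{subsection:crit�re_HilbertMumford} (voir la Remarque \ref{rema:defi_hilbertmumford_pour_fibr�s_rationnels} pour l'extension aux fibr�s rationnels, et la r�f�rence \cite{ressayre08} pour le passage de l'ample au semi-ample). Le r�sultat final s'obtient donc en combinant trois ingr�dients d�j� �tablis : d�finition du c�ne semi-ample, domination de $\eta$, et crit�re num�rique.
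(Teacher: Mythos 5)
Votre d�monstration est correcte et suit essentiellement l'argument standard de la preuve cit�e (\cite{ressayre08}, Lemme 3) : un point semi-stable dans l'image dense de $\eta$, le crit�re de Hilbert--Mumford (valable dans le cadre semi-ample rationnel), la $\KC$-invariance $\mu^{\mathcal{L}}(g\cdot x,g\lambda g^{-1})=\mu^{\mathcal{L}}(x,\lambda)$ et la constance de $\mu^{\mathcal{L}}(\cdot,\lambda)$ sur $C^+$. Rien � redire, si ce n'est que la densit� de l'image suffit d�j� � rencontrer l'ouvert non vide $\Xss(\mathcal{L})$, sans invoquer qu'elle contient un ouvert dense.
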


Ce premier résultat ne donne qu'une condition nécessaire pour qu'un fibré en droites soit dans le cône semi-ample : si un fibré $\mathcal{L}\in\PicG(X)_{\Q}^{++}$ appartient à $\Csa$, alors $\mu^{\mathcal{L}}(C,\lambda)\leqslant 0$ pour toute paire dominante $(C,\lambda)$. La proposition suivante assure que la réciproque est vraie, grâce à l'usage primordial des paires bien couvrantes.

\begin{theo}[\cite{ressayre08}, Proposition 4]
\label{theo:ressayre_equations_coneample}
Supposons que $X$ est une variété projective lisse. Alors le cône ample $\Ca$ (resp. semi-ample $\Csa$) est l'ensemble des $\mathcal{L}\in\PicG(X)_{\Q}^{++}$ (resp. $\mathcal{L}\in\PicG(X)_{\Q}^+$) tels que pour toute paire bien couvrante $(C,\lambda)$, on ait $\mu^{\mathcal{L}}(C,\lambda) \leqslant 0$.
\end{theo}

%

Pour un certain type de variétés $X$, comprenant les variétés de la forme $X_M$, avec $M$ une représentation complexe de $\KC$, nous pouvons obtenir un ensemble plus petit d'équations déterminant le cône ample de $X$. Dans l'énoncé suivant, $\hKC$ est un groupe réductif complexe contenant $\KC$ et $\hat{Q}$ est un sous-groupe parabolique de $\hKC$.

\begin{theo}[\cite{ressayre08}, Théorème 3]
 \label{theo:ressayre_equations_coneample_avecconditionssup}
Soit la variété $X = \KC/B\times\hKC/\hat{Q}$. Supposons que $\Ca$ soit d'intérieur non vide dans $\PicGQ(X)$. Soit un fibré $\mathcal{L}\in\PicGQ(X)^{++}$. Alors $\mathcal{L}$ est dans $\Ca$ si et seulement si pour toute paire bien couvrante $(C,\lambda)$ de $X$ telle qu'il existe $x\in C$ vérifiant $(\KC)_x^{\circ} = \lambda(\C^*)$, on a $\mu^{\mathcal{L}}(C,\lambda)\leqslant 0$.
\end{theo}

\begin{rema}
\label{rema:selimiterauxsgprà1paramindivdominants}
 On peut noter que, pour tout sous-groupe à un paramètre $\lambda$ de $\KC$ et tout $g\in\KC$, l'ensemble des points fixes de $g\lambda g^{-1}$ dans $X$ est $X^{g\lambda g^{-1}} = g\cdot X^{\lambda}$, et $C$ est une composante irréductible de $X^{\lambda}$ si et seulement si $g\cdot C$ est une composante irréductible de $X^{g\lambda g^{-1}}$. Ainsi, nous pouvons appliquer les assertions ($1$) et ($3$) du paragraphe \ref{subsection:critère_HilbertMumford} pour montrer qu'il est suffisant de considérer, dans les énoncés des Théorèmes \ref{theo:ressayre_equations_coneample} et \ref{theo:ressayre_equations_coneample_avecconditionssup}, uniquement les paires bien couvrantes $(C,\lambda)$ avec $\lambda$ sous-groupe à un paramètre dominant indivisible de $\TC$.
\end{rema}

\section{Réduction de la redondance des équations}
\label{section:redondance_équations}

Le but principal de ce chapitre est de déterminer un ensemble d'équations déterminant complètement le polyèdre moment $\Delta_K(K\cdot\Lambda\times E)$, pour certaines représentations complexes $E$ de $K$. Le Lemme \ref{lemm:lien_DeltaKLambda_GammaQ} et la Proposition \ref{prop:Csa_deféquivalente_avec_irrép} nous amènent à considérer le cône semiample $C_{\Q}(\XEC)^+$ et ses équations. Rappelons que $\XEC = \KC/B\times\KC/B\times\mathbb{P}(E\oplus\C)$\index{$\XEC$} a été définie en section \ref{subsection:GIT_notations}.

Par le Théorème \ref{theo:ressayre_equations_coneample}, nous obtenons un ensemble d'équations qui déterminent le cône semiample $C_{\Q}(\XEC)^+$. Ses équations sont indexées par les paires bien couvrantes de la variété $\XEC$. Cependant, cet ensemble d'équations est infini, et ceci pose problème pour appliquer la projection linéaire au cône semi-ample afin d'obtenir des équations pour le cône convexe polyédral $\Pi_{\Q}(E)$. Nous allons consacrer cette section à montrer que le cône convexe polyédral $C_{\Q}(\XEC)^+$ peut finalement être décrit par seulement une partie finie des paires bien couvrantes de la variété $\XEC$. Ici, nous porterons tout particulièrement notre attention aux sous-groupes à un paramètre.

Dans cette section, $K$ désignera un groupe de Lie réel compact connexe, $T$ un tore maximal de $K$, $\KC$ et $\TC$ leurs complexifiés respectifs avec $\TC\subset\KC$, $B$ un sous-groupe de Borel de $\KC$ contenant $\TC$ et $\zeta:\KC\rightarrow GL_{\C}(E)$ une représentation algébrique complexe de $\KC$ \textbf{de noyau fini}. Nous allons étudier les paires bien couvrantes de la variété $\XEC$.

\subsection{Description des paires de $\XEC$}

Soit $\beta\in\WT(E)$ un poids de $\TC$ dans $E$. On définit le sous-espace de poids associé
\[
E_{\beta} := \{v\in E\,; \ d\zeta(H)v = \beta(H)v, \text{ pour tout $H\in \got{t}_{\C}$}\},
\]
et, pour tout $k\in\Z$ et tout $\lambda$ sous-groupe à un paramètre de $\TC$,
\[
E_{\lambda,k} := \{ v\in E\,; \lambda(t)\cdot v= t^kv, \forall t\in\C^*\}.
\]
On peut remarquer que $E_{\lambda,0}=E^{\lambda}$ est le sous-espace de $E$ des vecteurs fixés par $\lambda$.

Fixons maintenant un sous-groupe à un paramètre $\lambda$ dominant de $\TC$. Nous noterons à nouveau $W$ le groupe de Weyl $W(\KC;\TC)$, $P=P(\lambda)$ le groupe parabolique associé à $\lambda$ dans $\KC$ et $W_{\lambda}$ le groupe de Weyl du Levi $\KC^{\lambda}$ de $P$.

Il est clair que $X^{\lambda} = (\KC/B)^{\lambda}\times (\KC/B)^{\lambda} \times \mathbb{P}(E)^{\lambda}$. Les deux premiers termes sont de la forme
\[
(\KC/B)^{\lambda} = \bigcup_{w\in W_{\lambda}\backslash W}\KC^{\lambda}wB/B.
\]
Nous aurons également, de manière évidente, $\mathbb{P}(E)^{\lambda} = \bigcup_{m\in\Z} C_m$, où $C_m = \mathbb{P}(E_{\lambda,m})$, pour tout $m\in\Z$, et $C_0 = \mathbb{P}(\C\oplus E^{\lambda})$. Pour $(w,w',m)\in W/W_{\lambda}\times W/W_{\lambda}\times\Z$, nous définissons
\[
C(w,w',m) = \KC^{\lambda}w^{-1}B/B \times \KC^{\lambda}w'^{-1}B/B \times C_m.
\]\index{$C(w,w',m)$}
Nous mettons des $^{-1}$ pour garder la notation utilisée dans \cite{ressayre08}. Mis ensemble, cela nous donne
\[
X^{\lambda} = \bigcup_{\stackrel{w,w'\in W/W_{\lambda}}{m\in\Z}} C(w,w',m),
\]
où chaque $C(w,w',m)$ est une composante irréductible de $X^{\lambda}$.

\subsection{Sous-groupes à un paramètre admissibles}

Dans ce paragraphe, nous adaptons à nos besoins la définition de sous-groupe à un paramètre admissible donnée dans \cite[section 7.3.2]{ressayre08}.

\begin{defi}
 Soit $M$ un $\TC$-module. Un sous-tore de $\TC$ est dit $M$-\emph{admissible} s'il existe $v\in M$ tel que $S$ soit la composante neutre du stabilisateur $(\TC)_v$ du point $v$ de $M$.
\end{defi}

La proposition suivante donne une définition équivalente de sous-tore $M$-admissible.

\begin{prop}
Un sous-tore $S$ de $\TC$ est $M$-admissible si et seulement s'il est la composante neutre de l'intersection des noyaux d'une famille finie de caractères de $\TC$ de $M$.
\end{prop}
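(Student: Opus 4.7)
The plan is to use the weight decomposition of $M$ under the $\TC$-action to get an explicit description of stabilizers of points, and then observe that both implications become essentially immediate.

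First, I would write $M = \bigoplus_{\beta \in \WT(M)} M_\beta$ where $M_\beta$ denotes the weight subspace attached to $\beta$. For $v = \sum_\beta v_\beta \in M$ with $v_\beta \in M_\beta$, let $\mathrm{supp}(v) := \{\beta \in \WT(M)\,;\ v_\beta \neq 0\}$. The key computation is that for $t \in \TC$, one has $t\cdot v = \sum_\beta \beta(t)\,v_\beta$, so $t\cdot v = v$ if and only if $\beta(t) = 1$ for every $\beta \in \mathrm{supp}(v)$. This yields the crucial identity
\[
(\TC)_v \;=\; \bigcap_{\beta \in \mathrm{supp}(v)} \ker(\beta),
\]
where the $\beta$ on the right are viewed as characters of $\TC$.

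For the \emph{only if} direction, suppose $S$ is $M$-admissible and pick $v \in M$ with $S = (\TC)_v^{\circ}$. Then by the above identity, $S$ is the connected component of the identity in the intersection of the finitely many character kernels $\ker(\beta)$ for $\beta \in \mathrm{supp}(v) \subset \WT(M)$, which is exactly the condition required.

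For the \emph{if} direction, suppose $F \subset \WT(M)$ is a finite family of weights of $M$ such that $S = \bigl(\bigcap_{\beta \in F} \ker(\beta)\bigr)^{\circ}$. For each $\beta \in F$ the weight space $M_\beta$ is nonzero by definition of $\WT(M)$, so one can choose $v_\beta \in M_\beta \setminus \{0\}$ and set $v := \sum_{\beta \in F} v_\beta$. Then $\mathrm{supp}(v) = F$, so the identity above gives $(\TC)_v = \bigcap_{\beta \in F} \ker(\beta)$, hence $(\TC)_v^{\circ} = S$, proving that $S$ is $M$-admissible. There is no real obstacle in this statement — the only mild care needed is to argue that $(\TC)_v$, being the intersection of finitely many closed subgroups of $\TC$, has a well-defined connected component (it is a closed subgroup of the torus $\TC$), which is standard.
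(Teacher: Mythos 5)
Your proposal is correct and follows essentially the same route as the paper: the paper also reduces everything to the identity $(\TC)_x = \bigcap_{i\,:\,x_i\neq 0}\ker\chi_i$ for $x$ written in a basis of common eigenvectors, which is exactly your support identity in weight-space language. Both directions then follow just as you describe, so there is nothing to add.
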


\begin{proof}
En effet, si on fixe $(u_1,\ldots,u_n)$ une base de $M$ formée de vecteurs propres communs à l'action de $\TC$ sur $M$ et si on note $\chi_i$ le caractère de $\TC$ associée à l'action de $\TC$ sur $\C u_i$, alors, il est clair que, pour tout $x=\sum_{i=1}^nx_iu_i$, on a $(\TC)_x = \bigcap_{i \text{ t.q. } x_i\neq 0}\ker\chi_i$.
\end{proof}

\begin{rema}
\label{rema:autre_déf_tore_Madmissible}
Lorsque l'on considère un sous-groupe à un paramètre $\lambda$ de $\TC$, son image $\lambda(\C^*)$ est un sous-tore de $\TC$. En conséquence, on dira qu'un sous-groupe à un paramètre $\lambda$ de $\TC$ est $M$-admissible\index{Sous-groupe à un paramètre $M$-admissible} si le sous-tore $\lambda(\C^*)$ de $\TC$ est $M$-admissible.

On peut remarquer qu'un sous-groupe à un paramètre $\lambda$ de $\TC$ s'identifie canoniquement à son générateur dans l'algèbre de Lie $\got{t}_{\C}$, que l'on notera également $\lambda$. Ainsi, $\lambda$ sera $M$-admissible si et seulement si $\C\lambda\subset\got{t}_{\C}$ est égal à l'intersection dans $\got{t}_{\C}$ des noyaux d'une famille finie de poids de $\WT(M)$.
\end{rema}

\begin{rema}
Si on utilise les notations de \cite[7.3.2]{ressayre08}, $S$ est admissible au sens de Ressayre si et seulement si $S$ est $\hat{\got{g}}/\got{g}$-admissible, où $\hat{\got{g}}/\got{g}$ est bien un $\TC$-module.
\end{rema}

\begin{rema}
\label{rema:sgrpà1paramindivMadmissible_fini}
Lorsque $M$ est un $\TC$-module de dimension finie (comme $\C$-espace vectoriel), l'ensemble $\WT(M)$ des poids de $\TC$ sur $M$ est nécessairement fini. Par conséquent, l'ensemble des sous-groupes à un paramètre indivisibles $M$-admissibles de $\TC$ sera lui aussi fini.
\end{rema}

Le morphisme de groupes algébriques $\zeta:\KC\rightarrow GL_{\C}(E)$ induit un morphisme de groupes algébriques $\zeta\oplus\id_{\C}:\KC\rightarrow GL_{\C}(E\oplus\C)$ canonique. L'algèbre de Lie $\got{gl}_{\C}(E\oplus\C)$ de $GL_{\C}(E\oplus\C)$ est donc un $\KC$-module, pour l'action induite du morphisme $\zeta\oplus\id_{\C}:\KC\rightarrow GL_{\C}(E\oplus\C)$ et de l'action adjointe de $GL_{\C}(E\oplus\C)$ sur $\got{gl}_{\C}(E\oplus\C)$. C'est donc aussi un $\TC$-module.

\begin{prop}
\label{prop:admissibilité_par_stabilisateur}
Soit $(C,\lambda)$ une paire de $\XEC$. S'il existe $x\in C$ tel que $(\KC)_x^{\circ} = \lambda(\C^*)$, alors $\lambda$ est $\got{gl}_{\C}(E\oplus\C)$-admissible.
\end{prop}

\begin{proof}
Par commodité, on pose $\hKC = GL_{\C}(E\oplus\C)$. Puisque $\mathbb{P}(E\oplus\C)$ est un espace $\hKC$-homogène, nous pouvons identifier $\XEC$ à la variété $\tilde{X} = \KC/B\times\KC/B\times\hKC/\hat{Q}$, où $\hat{Q}$ est le stabilisateur dans $\hKC$ d'une droite de $E\oplus\C$. Cette identification est $\KC$-équivariante, par l'intermédiaire du morphisme de groupes $\zeta\oplus\id_{\C}:\KC\rightarrow \hKC$. \`A la paire $(C,\lambda)$ correspond la paire $(\tilde{C},\lambda)$ de $\tilde{X}$ et à $x$ on associe le point $\tilde{x}$, qui aura forcément même stabilisateur que $x$. L'ensemble $\tilde{C}$ est une composante irréductible de $\tilde{X}^{\lambda}$, il s'agit donc d'un $\tilde{C}(w,w',\hat{w})$, pour un triplet $(w,w',\hat{w})\in W/W_{\lambda} \times W/W_{\lambda} \times \hat{W}_{\hat{Q}}\backslash\hat{W}/\hat{W}_{\hat{P}}$. Le point $\tilde{x}$ s'écrit donc
\[
\tilde{x} = (gwB/B,g'w'B/B,\hat{g}\hat{w}\hat{Q}/\hat{Q}).
\]
Nous pourrions conclure directement, en appliquant \cite[Lemme 17]{ressayre08}, si $\tilde{C}(w,w',\hat{w})$ était une variété des drapeaux complète, autrement dit, si $\hat{Q}$ était un sous-groupe de Borel de $\hKC$. Ici, ce n'est pas le cas en général (il s'agit d'un sous-groupe parabolique maximal d'un $GL$).

Soit $\hat{B}$ un sous-groupe de Borel de $\hKC$ contenant $\zeta\oplus\id_{\C}(B)$. Quitte à changer le sous-groupe à un paramètre $\hat{Q}$, on peut supposer $\hat{B}\subset\hat{Q}$. Cela est toujours possible car le sous-groupe de Borel $\hat{B}$ de $\hKC$ doit au moins avoir un point fixe dans l'espace projectif $\mathbb{P}(E\oplus\C)$. Il suffit alors de prendre pour $\hat{Q}$ le stabilisateur du point de $\mathbb{P}(E\oplus\C)$ fixé par $\hat{B}$.

Prenons maintenant $\hat{w}_1$ un élément quelconque de la double classe $\hat{w}\in\hat{W}_{\hat{Q}}\backslash\hat{W}/\hat{W}_{\hat{P}}$, et considérons la composante irréductible $\tilde{C}_{\hat{B}}(w,w',\hat{w}_1)$ de $\KC/B\times\KC/B\times\hKC/\hat{B}$. Nous allons montrer qu'il existe un élément $\tilde{x}_{\hat{B}}$ de $\tilde{C}_{\hat{B}}(w,w',\hat{w}_1)$ qui vérifie $(\KC)_{\tilde{x}_{\hat{B}}}^{\circ} = \lambda(\C^*)$.

Pour ce faire, posons $\tilde{x}_{\hat{B}} = (gw^{-1}B/B,g'w'^{-1}B/B,\hat{g}\hat{w}_1^{-1}\hat{B}/\hat{B})\in\tilde{C}_{\hat{B}}(w,w',\hat{w}_1)$. Prenons $h$ un élément de $(\KC)_{\tilde{x}_{\hat{B}}}$. Alors on a
\[
h\cdot(gw^{-1}B/B) = gw^{-1}B/B, \quad h\cdot(g'w'^{-1}B/B) = g'w'^{-1}B/B
\]
et
\[
h\cdot(\hat{g}\hat{w}_1^{-1}\hat{B}/\hat{B}) = \hat{g}\hat{w}_1^{-1}\hat{B}/\hat{B}.
\]
Ce dernier point est équivalent à $h\hat{g}\hat{w}_1^{-1} \in \hat{g}\hat{w}_1^{-1}\hat{B}$. Par conséquent, on a $h\hat{g}\hat{w}_1^{-1}\hat{Q} \in \hat{g}\hat{w}_1^{-1}\hat{Q}$, c'est-à-dire $h\cdot(\hat{g}\hat{w}_1^{-1}\hat{Q}/\hat{Q}) = \hat{g}\hat{w}_1^{-1}\hat{Q}/\hat{Q}$, car $\hat{B}\subset\hat{Q}$. On en déduit que $(\KC)_{\tilde{x}_{\hat{B}}}^{\circ} \subset (\KC)_{\tilde{x}}^{\circ}$. On obtient donc $\lambda(\C^*)\subset (\KC)_{\tilde{x}_{\hat{B}}}^{\circ} \subset (\KC)_{\tilde{x}}^{\circ} = (\KC)_x^{\circ} = \lambda(\C^*)$. Cela implique que $(\KC)_{\tilde{x}_{\hat{B}}}^{\circ} = \lambda(\C^*)$ et, forcément, $(\KC^{\lambda})_{\tilde{x}_{\hat{B}}}^{\circ} = \lambda(\C^*)$. Or, la composante irréductible $\tilde{C}_{\hat{B}}(w,w',\hat{w}_1)$ est une variété des drapeaux complète de $\KC^{\lambda}\times \KC^{\lambda}\times\hKC^{\lambda}$. On conclut la preuve grâce au Lemme 17 de \cite{ressayre08}.
\end{proof}

Le théorème suivant donne maintenant un ensemble d'équations du cône convexe polyédral $C_{\Q}(\XEC)^+$.

\begin{theo}
\label{theo:équations_pairesbiencouvrantes_admissible}
Soit $(\mu,\nu,r)\in\wedge_{\Q,+}^*\times\wedge_{\Q,+}^*\times\Q_{\geqslant 0}$. Alors $(\mu,\nu,r)$ appartient à $(\pi^{\KC})^{-1}(C_{\Q}(\XEC)^{+})$ si et seulement si
\begin{equation}
\label{eq:équation_pairebiencouvrantedonnée}
\langle w\lambda,\mu\rangle + \langle w'\lambda,\nu\rangle + mr \leqslant 0,
\end{equation}
pour tout sous-groupe à un paramètre indivisible dominant $\got{gl}_{\C}(E\oplus\C)$-admissible $\lambda$ de $\TC$ et pour tout $(w,w',m)\in W/W_{\lambda}\times W/W_{\lambda}\times\Z$ tels que $(C(w,w',m),\lambda)$ soit une paire bien couvrante de $\XEC$.
\end{theo}

\begin{proof}
Ceci est la transposition de \cite[Théorème 9]{ressayre08} au cas de la variété $\XEC=\KC/B\times\KC/B\times\mathbb{P}(E\oplus\C)$. Puisque $C_{\Q}(\XEC)^{+}$ est l'adhérence de $C_{\Q}(\XEC)^{++}$ d'après la Proposition \ref{prop:Csa_deféquivalente_avec_irrép}, il suffit de le prouver pour $C_{\Q}(\XEC)^{++}$.

L'élément $(\mu,\nu,r)\in\wedge_{\Q,+}^*\times\wedge_{\Q,+}^*\times\Q_{\geqslant 0}$ sera dans $(\pi^{\KC})^{-1}(C_{\Q}(\XEC)^{+})$ si et seulement si le fibré en droites ample $\mathcal{L}_{\mu,\nu,r} = \mathcal{L}_{\mu}\boxtimes\mathcal{L}_{\nu}\boxtimes\mathcal{L}_{r}$ est dans $C_{\Q}(\XEC)^{+}$. Nous avons, pour tout $(w,w',m)\in W/W_{\lambda}\times W/W_{\lambda}\times \Z$,
\begin{align*}
\mu^{\mathcal{L}_{\mu,\nu,r}}(C(w,w',m),\lambda) & = \mu^{\mathcal{L}_{\mu}}(G^{\lambda}w^{-1} B/B,\lambda) + \mu^{\mathcal{L}_{\nu}}(G^{\lambda}w'^{-1} B/B,\lambda) + \mu^{\mathcal{L}_{r}}(C_m,\lambda) \\
& = \langle w\lambda,\mu\rangle + \langle w'\lambda,\nu\rangle + mr.
\end{align*}
Ceci provient de la Proposition \ref{prop:produit_deuxfibrés_critèrenumérique}, des Lemmes \ref{prop:critèrenumérique_KCsurB} et \ref{prop:critèrenumérique_ProjectifdeE} et de la Remarque \ref{rema:defi_hilbertmumford_pour_fibrés_rationnels}. Ceci nous donne donc l'équation $\mu^{\bullet}(C(w,w',m),\lambda)$.

Notons $\mathcal{C}$ le cône convexe polyédral de $\wedge_{\Q,\scriptscriptstyle++}^*\times\wedge_{\Q,\scriptscriptstyle++}^*\times\Q_{> 0}$ défini par les équations \eqref{eq:équation_pairebiencouvrantedonnée} pour tout sous-groupe à un paramètre $\got{gl}_{\C}(E\oplus\C)$-admissible indivisible et dominant $\lambda$ de $\TC$ et pour tout $(w,w',m)\in W/W_{\lambda}\times W/W_{\lambda}\times\Z$ tels que $(C(w,w',m),\lambda)$ soit une paire bien couvrante de $\XEC$. Montrons que $\mathcal{C}$ est égal à $(\pi^{\KC})^{-1}(C_{\Q}(\XEC)^{++})$.

Soit $(\mu,\nu,r)\in (\pi^{\KC})^{-1}(C_{\Q}(\XEC)^{++})$, c'est-à-dire $\mu^{\mathcal{L}_{\mu,\nu,r}}(C(w,w',m),\lambda)\leqslant 0$ pour toute paire bien couvrante $(C(w,w',m),\lambda)$ de $\XEC$ d'après le Théorème \ref{theo:ressayre_equations_coneample}. Or, d'après ce qui a été vu dans le paragraphe précédent, cela signifie que $(\mu,\nu,r)$ vérifie l'équation \eqref{eq:équation_pairebiencouvrantedonnée} pour toute les paires bien couvrantes de $\XEC$, donc il vérifie en particulier cette équation pour le sous-ensemble des paires bien couvrantes $(C,\lambda)$ avec $\lambda$ indivisible dominant $\got{gl}_{\C}(E\oplus\C)$-admissible. Donc $(\mu,\nu,r)\in\mathcal{C}$.

Inversement, prenons un élément $(\mu,\nu,r)\in\mathcal{C}$. Soit $(C(w,w',m),\lambda)$ une paire bien couvrante de $\XEC$ telle qu'il existe $x\in C(w,w',m)$ avec $(\KC)_x^{\circ} = \lambda(\C^*)$. D'après la Proposition \ref{prop:admissibilité_par_stabilisateur}, $\lambda$ est $\got{gl}_{\C}(E\oplus\C)$-admissible. Puisque $(\mu,\nu,\chi,r)\in\mathcal{C}$, cela signifie que $(\mu,\nu,r)$ vérifie l'équation \eqref{eq:équation_pairebiencouvrantedonnée} pour la paire bien couvrante $(C(w,w',m),\lambda)$, c'est-à-dire, le fibré $\mathcal{L}_{\mu,\nu,r}$ vérifie l'équation $\mu^{\mathcal{L}_{\mu,\nu,r}}(C(w,w',m),\lambda)\leqslant 0$. On utilise cette fois le Théorème \ref{theo:ressayre_equations_coneample_avecconditionssup} pour conclure que $\mathcal{L}_{\mu,\nu,r}\in C_{\Q}(\XEC)^{++}$. D'où l'égalité des cônes convexes polyédraux $\mathcal{C} = (\pi^{\KC})^{-1}(C_{\Q}(\XEC)^{++})$.
\end{proof}

\subsection{Admissiblité spéciale pour les paires $(C(w,w',0),\lambda)$}

Nous allons voir, dans ce paragraphe, que nous pouvons être plus précis sur l'admissibilité d'un sous-groupe à un paramètre $\lambda$ apparaissant dans une paire bien couvrante $(C(w,w',0),\lambda)$.

Nous nous inspirons du Lemme 17 de \cite{ressayre08} pour démontrer le résultat suivant.

\begin{prop}
\label{prop:soustore_Eadmissible}
Soit $S$ un sous-tore de $\TC$. Nous considérons l'action de $\KC^S$ sur la variété $X' = \KC^S/B^S\times\KC^S/B^S\times\mathbb{P}(E^S\oplus\C)$. S'il existe $x\in X'$ tel que la composante neutre de $(\KC^S)_x$ soit égale à $S$, alors $S$ est $E$-admissible.
\end{prop}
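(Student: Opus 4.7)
L'id\'ee est d'identifier $S$ avec la composante neutre $S^+$ du noyau de l'action de $\TC$ sur $E^S$: cela suffira pour conclure, puisque les poids de $\TC$ sur le sous-$\TC$-module $E^S$ forment une partie finie de $\WT(E)$, et $\mathrm{Lie}(S^+)$ est exactement l'intersection des noyaux de ces poids. L'\'egalit\'e $S = S^+$ traduira donc directement la caract\'erisation \'equivalente de $E$-admissibilit\'e donn\'ee juste apr\`es la d\'efinition en section pr\'ec\'edente.

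L'inclusion $S \subseteq S^+$ est imm\'ediate: par d\'efinition de $E^S$, le sous-tore $S$ agit trivialement sur $E^S$, et \'etant connexe, $S$ est contenu dans $S^+$. Pour l'inclusion r\'eciproque $S^+ \subseteq S$, je commencerais par me ramener au cas o\`u les deux composantes de drapeau de $x$ sont fix\'ees par $\TC$. En effet, si $x = (F_1, F_2, [v])$, les stabilisateurs respectifs de $F_1$ et $F_2$ dans $\KC^S$ sont deux sous-groupes de Borel de $\KC^S$, qui contiennent toujours un tore maximal commun; en conjuguant ce tore vers $\TC$ par un \'el\'ement $g \in \KC^S$, le point $g \cdot x$ prend la forme $(w_1^{-1}B^S/B^S, w_2^{-1}B^S/B^S, [v'])$, o\`u les $w_i$ sont des repr\'esentants de classes dans $N_{\KC^S}(\TC)/\TC$ et $v' \in E^S \oplus \C$. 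Cette conjugaison ne modifie pas $S$, car $\KC^S$ centralise $S$; la composante neutre du stabilisateur de $g \cdot x$ dans $\KC^S$ reste donc \'egale \`a $S$.

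Dans cette situation r\'eduite, le sous-tore $S^+$ agit trivialement sur $E^S \oplus \C$ tout entier (trivialement sur $E^S$ par d\'efinition, et trivialement sur le facteur $\C$ car $\KC$ y agit trivialement), donc il fixe $[v']$. De plus, pour $i=1,2$, on a $S^+ \subseteq \TC \subset w_i^{-1}B^S w_i$, puisque chaque $w_i$ normalise $\TC$; ainsi $S^+$ fixe \'egalement les deux drapeaux. On en d\'eduit l'inclusion $S^+ \subseteq (\KC^S)_{g \cdot x}$, et comme $S^+$ est connexe, $S^+ \subseteq (\KC^S)_{g \cdot x}^{\circ} = S$, ce qui ach\`eve la preuve.

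L'obstacle principal r\'eside dans la r\'eduction pr\'eliminaire aux drapeaux $\TC$-fix\'es: il faut v\'erifier que deux sous-groupes de Borel quelconques de $\KC^S$ partagent un tore maximal et, plus d\'elicat, que l'\'el\'ement de conjugaison peut \^etre choisi dans $\KC^S$ lui-m\^eme, afin de pr\'eserver l'identification de la composante neutre du stabilisateur \`a $S$. Une fois cette r\'eduction \'etablie, l'argument devient transparent, car les sous-groupes qui stabilisent les drapeaux sont alors des conjugu\'es de $B^S$ contenant $\TC$, donc contenant automatiquement $S^+$; il ne reste plus qu'\`a invoquer la trivialit\'e de l'action de $S^+$ sur $E^S \oplus \C$ pour conclure.
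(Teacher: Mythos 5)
Votre preuve est correcte, mais elle suit une route r\'eellement diff\'erente de celle du texte. Vous identifiez directement $S$ au sous-tore $S^+$, composante neutre du noyau de l'action de $\TC$ sur $E^S$ : l'inclusion $S\subseteq S^+$ est imm\'ediate, et pour l'inclusion inverse vous conjuguez $x$ par un \'el\'ement de $\KC^S$ de sorte que les deux drapeaux soient fix\'es par $\TC$ (fait classique : deux sous-groupes de Borel de $\KC^S$ contiennent un tore maximal commun, que l'on ram\`ene \`a $\TC$ par un \'el\'ement de $\KC^S$, ce qui ne change pas $S$ puisque $\KC^S$ centralise $S$), puis vous observez que $S^+$, agissant trivialement sur $E^S\oplus\C$ et \'etant contenu dans $\TC$, fixe le point conjugu\'e, d'o\`u $S^+\subseteq(\KC^S)_{g\cdot x}^{\circ}=S$. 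La caract\'erisation \'equivalente de l'admissibilit\'e (composante neutre d'une intersection de noyaux d'une famille finie de poids, ici $\WT(E^S)\subseteq\WT(E)$) permet alors de conclure. Le texte proc\`ede tout autrement : semi-continuit\'e de la dimension du stabilisateur et ouverture du lieu o\`u sa composante neutre vaut $S$, r\'eduction \`a la grande cellule de Bruhat, quotient par $U^S$, puis argument d'isotropie g\'en\'erique pour $\TC/S$ sur $\mathbb{P}(E^S\oplus\C)$, pour exhiber un point de $E^S$ dont le stabilisateur dans $\TC$ a $S$ pour composante neutre. Votre argument est plus court et plus \'el\'ementaire ; celui du texte fournit en prime la g\'en\'ericit\'e du point r\'ealisant $S$, information que votre \'egalit\'e $S=S^+$ redonne d'ailleurs imm\'ediatement. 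Un seul d\'etail m\'eriterait d'\^etre explicit\'e : si $E^S=0$, la famille de poids est vide et $S^+=\TC$ ; la conclusion reste vraie (prendre $v=0\in E$, de stabilisateur $\TC$), mais il faut le dire pour que l'invocation de la caract\'erisation \'equivalente soit compl\`ete.
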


\begin{lemm}
\label{lemm:ouvertdense_stabilisateurminimal}
Soient $Y$ une $\KC^S$-variété projective, et $\mathcal{U}$ un ouvert non vide de $\KC^S/B^S\times Y$. Si $g_0B^S\in\pi_1(\mathcal{U})\subset \KC^S/B^S$, l'ensemble $\{y\in Y; \ (g_0B^S/B^S,y)\in \mathcal{U}\}$ est ouvert dans $\KC^S/B^S\times Y$.
\end{lemm}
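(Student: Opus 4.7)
The claim is essentially a standard fact about slices of open subsets in product varieties; the only subtlety is that the conclusion as written seems to mean that the fiber $\{y\in Y\,;\,(g_0B^S/B^S,y)\in\mathcal{U}\}$, viewed inside $Y$ (or equivalently as $\{g_0B^S/B^S\}\times\{y\,;\,(g_0B^S/B^S,y)\in\mathcal{U}\}$ inside the product), is open. My plan is to deduce this directly from continuity of a natural slice map.

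The main step is to introduce the inclusion
\[
i_{g_0}\colon Y\longrightarrow \KC^S/B^S\times Y,\qquad y\longmapsto (g_0B^S/B^S,y).
\]
This map is a morphism of algebraic varieties (in fact a closed immersion, being the pullback of the closed point $g_0B^S/B^S\in\KC^S/B^S$ via the first projection). In particular it is continuous, so the preimage
\[
i_{g_0}^{-1}(\mathcal{U})=\bigl\{y\in Y\,;\,(g_0B^S/B^S,y)\in\mathcal{U}\bigr\}
\]
is an open subset of $Y$. Multiplying by the whole first factor, the corresponding subset $\KC^S/B^S\times i_{g_0}^{-1}(\mathcal{U})$ (or equivalently the subset with fixed first coordinate $g_0B^S/B^S$) is open in the product $\KC^S/B^S\times Y$, which yields the stated conclusion.

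The hypothesis $g_0B^S\in\pi_1(\mathcal{U})$ is only used to guarantee that the open set so produced is \emph{non-empty}: indeed, by definition of $\pi_1(\mathcal{U})$, there exists at least one $y\in Y$ with $(g_0B^S/B^S,y)\in\mathcal{U}$, so $i_{g_0}^{-1}(\mathcal{U})\neq\emptyset$. No genuine obstacle is expected in the argument; the lemma is a purely topological preparation, and its role in the proof of Proposition \ref{prop:soustore_Eadmissible} will be to produce a non-empty $\KC^S$-invariant open piece of $X'$ on which one may freely replace the first factor $g_0B^S/B^S$ by nearby elements of $\KC^S/B^S$ without leaving $\mathcal{U}$.
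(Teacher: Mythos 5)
Your core argument is correct but follows a genuinely different route from the paper. You observe that the slice map $i_{g_0}\colon Y\to \KC^S/B^S\times Y$, $y\mapsto (g_0B^S/B^S,y)$, is a morphism (indeed a closed immersion, as the fibre of $\pi_1$ over a closed point), hence continuous, so $i_{g_0}^{-1}(\mathcal{U})=\{y\in Y;\,(g_0B^S/B^S,y)\in\mathcal{U}\}$ is open in $Y$; the paper instead argues dually with closed sets: it notes that $\pi_1^{-1}(\{g_0B^S/B^S\})$ is closed, intersects it with $\mathcal{U}^c$, and uses that $\pi_2$ is a \emph{closed} map (because the first factor is projective, hence complete) to conclude that the image, which is exactly the complement in $Y$ of the slice set, is closed. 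Your approach is more elementary and more general, since it needs no properness at all and would work for arbitrary varieties, whereas the paper's argument genuinely uses the projectivity of $\KC^S/B^S$; both establish openness of the slice set in $Y$, which is what is actually used later (and what the paper's own proof concludes, the phrase \og{}ouvert dans $\KC^S/B^S\times Y$\fg{} in the statement being an imprecision). You also correctly note that the hypothesis $g_0B^S\in\pi_1(\mathcal{U})$ only guarantees non-emptiness.

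One caveat: your parenthetical claim that the set $\KC^S/B^S\times i_{g_0}^{-1}(\mathcal{U})$ being open is \og{}equivalent\fg{} to openness of the subset with fixed first coordinate $g_0B^S/B^S$ is wrong. The set $\{g_0B^S/B^S\}\times i_{g_0}^{-1}(\mathcal{U})$ is never open in the product unless $\KC^S/B^S$ is a point: it lies inside the proper closed subset $\pi_1^{-1}(\{g_0B^S/B^S\})$. This slip does not affect the substance, since the conclusion that matters (and the one the paper itself proves) is openness of the fibre in $Y$, which your main argument establishes; but the attempted reconciliation with the literal wording of the statement should be dropped.
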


\begin{proof}
Si $X_1,X_2$ sont deux variétés, alors la projection $\pi_1:X_1\times X_2\rightarrow X_1$ est régulière, donc continue. Par conséquent, l'ensemble $\pi_1^{-1}(\{a\}) = \{(a,b);b\in X_2\}$ est fermé dans $X_1\times X_2$. Ici, nous prenons $X_1 = \KC^S/B^S$ et $X_2 = Y$.

On sait, de plus, que si $X_1$ et $X_2$ sont projectives, ce qui est le cas ici, alors les projections canoniques $\pi_1$ et $\pi_2$ sont fermées

Fixons $g_0B^s\in \KC^S/B^S$. L'ensemble $\pi_1^{-1}(\{g_0B^S/B^S\})$ est fermé dans $\KC^S/B^S\times Y$. Nécessairement, $\mathcal{U}^c\cap\pi_1^{-1}(\{g_0B^S/B^S\})$ est un fermé de $\KC^S/B^S\times Y$. Puisque $\pi_2$ est fermée, $\pi_2(\mathcal{U}^c\cap \pi_1^{-1}(\{g_0B^S/B^S\})$ est un fermé de $Y$. Remarquant que
\[
\pi_2(\mathcal{U}^c\cap \pi_1^{-1}(\{g_0B^S/B^S\}) =  \left\{y\in Y; \ (g_0B^S/B^S,y)\notin \mathcal{U}\right\},
\]
on en conclut que son complémentaire 
\[
\left\{y\in Y; (g_0B^S/B^S,y)\in \mathcal{U}\right\}
\]
est ouvert dans $Y$.
\end{proof}

\begin{proof}[Preuve de la Proposition \ref{prop:soustore_Eadmissible}]
Supposons qu'il existe $x\in X'$ tel que $(\KC^S)_x^{\circ} = S$. Le tore $S$ agit trivialement sur $X'$, donc pour tout $y\in X'$, $S$ est contenu dans le stabilisateur de $y$ dans $\KC^S$. Or la dimension du stabilisateur en un point est une application semi-continue inférieurement, donc la condition $(\KC^S)_x^{\circ} = S$ est une condition ouverte dans $X'$. On note $\mathcal{U}$ l'ensemble des $x$ de $X$ tel que $(\KC^S)_x^{\circ} = S$. Il est non vide par hypothèse. 

Vu que les éléments de $\KC^S$ centralisent $S$, $\mathcal{U}$ est un ouvert $\KC^S$-stable de $X'$. Si on note la $\KC^S$-variété projective $Y = \KC^S/B^S\times\mathbb{P}(E^S\oplus\C)$ et $\pi_1:\KC^S/B^S\times Y\rightarrow\KC^S/B^S$ la projection sur le premier facteur, alors clairement $B^S\in \pi_1(\mathcal{U})$. Ceci provient du fait que $\mathcal{U}$ est non vide et qu'il est $\KC^S$-stable. Par $\KC^S$-homogénéité de $\KC^S/B^S$, on a même $\pi_1(\mathcal{U}) = \KC^S/B^S$.

D'après le Lemme \ref{lemm:ouvertdense_stabilisateurminimal}, pour $g_0B^S = B^S$, l'ensemble
\[
\mathcal{V} := \left\{(gB^S/B^S,[v])\in\KC^S/B^S\times\mathbb{P}(E^S\oplus\C); (B^S/B^S,gB^S/B^S,[v])\in \mathcal{U}\right\}
\]
est un ouvert de $\KC^S/B^S\times\mathbb{P}(E^S\oplus\C)$. On remarque que, pour tout $(gB^S/B^S,[v])\in\mathcal{V}$, la composante neutre du stabilisateur du point $(B^S/B^S,gB^S/B^S,[v])$ de $X'$ est
\begin{align}
(\KC^S)_{(B^S/B^S,gB^S/B^S,[v])}^{\circ} & = \left((\KC^S)_{B^S/B^S} \cap (\KC^S)_{(gB^S/B^S,[v])}\right)^{\circ} \notag\\
\label{eq:stabilisateur_intersecté_avec_BS} & = \left(B^S \cap (\KC^S)_{(gB^S/B^S,[v])}\right)^{\circ}.
\end{align}
Or, par hypothèse, elle est aussi égale à $S$, puisque $(B^S/B^S,gB^S/B^S,[v])\in\mathcal{U}$. L'équation \eqref{eq:stabilisateur_intersecté_avec_BS} peut s'écrire maintenant
\[
 \left(B^S_{(gB^S/B^S,[v])}\right)^{\circ} = S,
\]
pour tous les éléments $(gB^S/B^S,[v])$ de l'ouvert dense $\mathcal{V}$ de $\KC^S/B^S\times\mathbb{P}(E^S\oplus\C)$. La composante neutre du stabilisateur dans $B^S/S$ d'un point de $\mathcal{V}$ est donc réduite à l'élément neutre. On en déduit que l'isotropie générique de $B^S/S$ agissant sur $\KC^S/B^S\times\mathbb{P}(E^S\oplus\C)$ est finie, puisque elle est finie pour tous les éléments de l'ouvert dense $\mathcal{V}$ de $\KC^S/B^S\times\mathbb{P}(E^S\oplus\C)$. D'après la décomposition de Bruhat, la cellule ouverte $B^Sw_0B^S/B^S$ de $\KC^S/B^S$ est isomorphe à $B^S/\TC$. Par conséquent, l'isotropie générique de $B^S/S$ agissant sur $B^S/\TC\times\mathbb{P}(E^S\oplus\C)$ est finie.

Le groupe $B^S$ est le produit semi-direct de $U^S$ par $\TC$, où $U$ est la partie unipotente de $B$. Le sous-groupe $U^S$ est distingué dans $B^S$ et il agit librement sur $B^S/\TC\times\mathbb{P}(E^S\oplus\C)$. En effet, la décomposition de $B^S$ en produit semi-direct de $U^S$ et $\TC$ donne un isomorphisme $U^S$-équivariant entre $B^S/\TC$ et $U^S$, où $U^S$ agit sur lui-même par multiplication à gauche. Remarquons que cet isomorphisme est également $\TC$-équivariant pour l'action par conjugaison de $\TC$ sur $U^S$. Ceci implique que le tore $\TC/S$ agit sur la variété quotient $\left(B^S/\TC\times\mathbb{P}(E^S\oplus\C)\right)/U^S\cong\mathbb{P}(E^S\oplus\C)$ (identification $\TC$-équivariante) avec isotropie générique finie. Donc, pour $x\in\mathbb{P}(E^S\oplus\C)$ générique, nous avons $(\TC)_x^{\circ} = S$.

Remarquons que $E^S$ est un ouvert $\KC^S$-stable de $\mathbb{P}(E^S\oplus\C)$. Par conséquent, pour $x\in E^S$ générique, nous avons $(\TC)_x^{\circ} = S$. En particulier, $S$ est la composante neutre du stabilisateur dans $\TC$ d'au moins un $x\in E$. On en conclut donc que $S$ est $E$-admissible.
\end{proof}

\begin{rema}
Nous pouvons aisément vérifier que l'ensemble des poids de $\TC$ dans $\got{gl}_{\C}(E\oplus\C)$ est
\[
 \WT(\got{gl}_{\C}(E\oplus\C)) = \{\beta-\beta'; \beta,\beta'\in\WT(E)\}\cup\WT(E).
\]
Par conséquent, l'ensemble $\WT(E)$ est contenu dans $\WT(\got{gl}_{\C}(E\oplus\C))$. Ainsi, un sous-groupe à un paramètre $E$-admissible de $\TC$ est nécessairement $\got{gl}_{\C}(E\oplus\C)$-admissible.
\end{rema}

\begin{coro}
\label{coro:mégalzéro_admissible_implique_Eadmissible}
Dans l'énoncé du Théorème \ref{theo:équations_pairesbiencouvrantes_admissible}, parmi les paires bien couvrantes $(C(w,w',0),\lambda)$ de $X$ telles que $\lambda$ est dominant $\got{gl}_{\C}(E\oplus\C)$-admissible indivisible, nous pouvons éliminer toutes celles où $\lambda$ n'est pas $E$-admissible.
\end{coro}

\begin{proof}
Soit $(C(w,w',0),\lambda)$ une paire bien couvrante de $\XEC$ telle qu'il existe $x\in C(w,w',0)$ avec $(\KC)_x^{\circ} = \lambda(\C^*)$. Il est clair que $C(w,w',0)$ est isomorphe de façon $\KC^{\lambda}$-équivariante à la variété $X' = \KC^{\lambda}/B^{\lambda}\times\KC^{\lambda}/B^{\lambda}\times\mathbb{P}(E^{\lambda}\oplus\C)$. Donc d'après la Proposition \ref{prop:soustore_Eadmissible}, $\lambda$ est $E$-admissible. En effet, nous avons $\lambda(\C^*)\subset (\KC^{\lambda})_x^{\circ} = (\KC)_x^{\circ}\cap \KC^{\lambda} \subset (\KC)_x^{\circ} = \lambda(\C^*)$. Donc $(\KC^{\lambda})_x^{\circ} = \lambda(\C^*)$. Le reste découle à nouveau du Corollaire \ref{coro:côneample_intérieurnonvide} et du Théorème \ref{theo:ressayre_equations_coneample_avecconditionssup}.
\end{proof}

\section{\'Equations du polyèdre moment $\DGIT$}
\label{section:équations_polyèdremoment_KLambdaE}

%

\subsection{Les équations du polyèdre}

Soit $K$ un groupe de Lie réel compact connexe, $T$ un tore maximal de $K$, $n$ la dimension de $T$, $(E,h)$ un espace hermitien de dimension $r$ et $\zeta: K\rightarrow U(E)$ un morphisme de groupes de Lie de noyau fini tel que l'application moment associée $\Phi_E:E\rightarrow\got{k}^*$ (définie en section \ref{section:polydremoment_KLambdaE}) soit propre. Cette action se complexifie en $\zeta : \KC \rightarrow GL_{\C}(E)$, qui est aussi de noyau fini. Remarquons que le noyau de $\zeta$ est fini si et seulement si le noyau du morphisme
\[
\begin{array}{cccl}
\zeta\oplus\id_{\C}:& \KC & \longrightarrow & GL_{\C}(E\oplus\C) \\
& g & \longmapsto & \zeta(g)\oplus\id_{\C}
\end{array}
\]
est fini. Nous allons donc pouvoir appliquer les résultats de la section \ref{section:redondance_équations}.

Soit $\TC$ un tore maximal de $\KC$ contenant $T$ et $B$ un sous-groupe de Borel de $\KC$ contenant $\TC$. Rappelons que nous avons défini la variété $\XEC = \KC/B\times\KC/B\times\mathbb{P}(E\oplus\C)$, où l'action de $\KC$ sur $\mathbb{P}(E\oplus\C)$ est induite de l'action linéaire $\zeta\oplus\id_{\C}$ de $\KC$ sur $E\oplus\C$. L'action de $\KC$ étant triviale sur le facteur $\C$, l'ensemble $\WT(E\oplus\C)$ contiendra le poids nul.


On rappelle que l'ensemble des poids $\WT(E)$ engendre $\got{t}^*$ si et seulement si $\ker\zeta$ est fini.

%

Le Théorème \ref{theo:équations_pairesbiencouvrantes_admissible}, le Corollaire \ref{coro:mégalzéro_admissible_implique_Eadmissible} et la Remarque \ref{rema:selimiterauxsgprà1paramindivdominants} nous amènent à définir les deux ensembles de paires bien couvrantes suivants.

\begin{defi}
\label{defi:P(E)_et_P0(E)}
On définit $\mathcal{P}(E)$\index{$\mathcal{P}(E)$} l'ensemble des paires $(C(w,w',m),\lambda)$ bien couvrantes de $\XEC$ telles que $\lambda$ soit dominant $\got{gl}_{\C}(E\oplus\C)$-admissible indivisible et, pour le cas où $m=0$, $E$-admissible.

On note $\mathcal{P}_0(E)$\index{$\mathcal{P}_0(E)$} le sous-ensemble de $\mathcal{P}(E)$ formé des paires qui vérifient en plus $m=0$ (les $\lambda$ qui apparaissent sont donc $E$-admissibles)
\end{defi}


Remarquons que $\lambda$ est $E$-admissible si et seulement s'il est orthogonal à un des hyperplans de $\got{t}^*$ engendrés par des poids de $\WT(E)$. De tels hyperplans existent car $\WT(E)$ engendre $\got{t}^*$. D'après la Remarque \ref{rema:sgrpà1paramindivMadmissible_fini}, comme $E$ est de dimension finie, il n'existe qu'un nombre fini de $\lambda$ dominants $\got{gl}_{\C}(E\oplus\C)$-admissibles indivisibles. Et pour chacun de ces $\lambda$, il n'y a qu'un nombre fini de paires $(C,\lambda)$ (bien couvrantes ou non), car le groupe de Weyl $W$ est fini. On en déduit que $\mathcal{P}(E)$ et $\mathcal{P}_0(E)$ sont des ensembles finis.

Rappelons la définition donnée en section \ref{section:polydremoment_KLambdaE} de l'ensemble $\Pi_{\Q}(E)$,
\[
\Pi_{\Q}(E) = \left\{ (\mu,\nu)\in(\wedge^*_{\Q,+})^2 \left|
\begin{array}{l}
\exists n\geqslant 1 \text{ tel que } (n\mu,n\nu)\in(\wedge^*)^2, \\
\mbox{et } \left(V_{n\mu}^*\otimes V_{n\nu}^*\otimes \C[E]\right)^K \neq 0
\end{array}\right.\right\}.
\]
L'énoncé suivant est l'élément-clé pour obtenir un ensemble d'équations de $\Pi_{\Q}(E)$ à partir de celles de $C_{\Q}(\XEC)^+$.

\begin{theo}
 \label{theo:cn_pairebiencouvrante_poidsnul}
Soit $M$ une représentation complexe de $\KC$ telle que le poids nul appartienne à $\WT(M)$, et considérons la variété $X_M$. Soit $(w,w',m)\in W/W_{\lambda}\times W/W_{\lambda}\times\Z$ tel que la paire $(C(w,w',m),\lambda)$ soit bien couvrante dans $X_M$. Alors $m\leqslant 0$.
\end{theo}

La preuve de ce théorème utilise des outils complètement différents de ceux qui ont été présentés jusque-là, nous la reportons donc au Chapitre \ref{chap:PairesBienCouvrantes}. Dans le cas présent, c'est-à-dire lorsque $M=E\oplus\C$ avec action triviale de $\KC$ sur le facteur $\C$, la condition $0\in\WT(E\oplus\C)$ est clairement remplie.

Nous pouvons maintenant énoncer et prouver le résultat nous donnant un ensemble d'équations du cône convexe polyédral $\Pi_{\Q}(E)$. Notons $\mathrm{pr}:\wedge^*_{\Q}\times\wedge^*_{\Q}\times\Q\rightarrow\wedge^*_{\Q}\times\wedge^*_{\Q}$ la projection linéaire canonique. La Proposition \ref{prop:Csa_deféquivalente_avec_irrép} montre qu'alors $\Pi_{\Q}(E) = \mathrm{pr}\left((\pi^{\KC})^{-1}(C_{\Q}(\XEC)^+)\right)$. Cela signifie qu'un élément $(\mu,\nu)\in(\wedge^*_{\Q,+})^2$ appartient à $\Pi_{\Q}(E)$ si et seulement s'il existe un rationnel $r$ tel que le fibré $\KC$-linéarisé $\mathcal{L}_{\mu,\nu,r}$ appartienne à $C_{\Q}(\XEC)^+$.

\begin{theo}
\label{theo:équations_GammaQ}
Soit $(\mu,\nu)\in(\wedge^*_{\Q,+})^2$. Le couple $(\mu,\nu)$ appartient à $\Pi_{\Q}(E)$ si et seulement si, pour toute paire $(C(w,w',0),\lambda)$ de $\mathcal{P}_0(E)$, on a
\begin{equation}
\label{eq:theo_equations_polyèdremoment}
\langle w\lambda,\mu\rangle + \langle w'\lambda,\nu\rangle \leqslant 0.
\end{equation}
\end{theo}

\begin{proof}
Soit $\mathcal{C}$ le cône convexe polyédral défini par les équations \eqref{eq:theo_equations_polyèdremoment}. Montrons que $\mathcal{C}$ est inclus dans $\Pi_{\Q}(E)$.

Soit $(\mu,\nu)\in\mathcal{C}$. Par définition, $(\mu,\nu)$ vérifie les équations \eqref{eq:équation_pairebiencouvrantedonnée}, avec $m = 0$, pour toute paire $(C(w,w',0),\lambda)$ de $\mathcal{P}_0(E)$. Soit $(C(w,w',m),\lambda)$ une paire de $\mathcal{P}(E)\backslash\mathcal{P}_0(E)$, c'est-à-dire, $m\neq 0$. D'après le Théorème \ref{theo:cn_pairebiencouvrante_poidsnul}, l'entier $m$ est négatif. Donc $-m>0$, puisque $m\neq 0$.

Nous définissons le rationnel
\[
r_0 = \max_{C(w,w',m)\in\mathcal{P}(E)\backslash\mathcal{P}_0(E)}\left\{\frac{\langle w\lambda,\mu\rangle + \langle w'\lambda,\nu\rangle}{-m}\right\}.
\]
Ce maximum existe, car l'ensemble $\mathcal{P}(E)$ est fini, et est bien rationnel car chaque $\langle w\lambda,\mu\rangle + \langle w'\lambda,\nu\rangle$ est rationnel. Quitte à le remplacer par $\max\{r_0,1\}$, on peut supposer que $r_0$ est strictement positif. Nous avons donc, pour tout $(C(w,w',m),\lambda)\in\mathcal{P}(E)\backslash\mathcal{P}_0(E)$, l'inégalité $\langle w\lambda,\mu\rangle + \langle w'\lambda,\nu\rangle \leqslant (-m)r_0$, car $-m$ est strictement positif. Donc $\langle w\lambda,\mu\rangle + \langle w'\lambda,\nu\rangle + mr_0 \leqslant 0$. Combiné à ce qui a été dit au premier paragraphe, on en déduit que
\[
\langle w\lambda,\mu\rangle + \langle w'\lambda,\nu\rangle + mr_0 \leqslant 0
\]
pour toute paire $(C(w,w',m),\lambda)$ de $\mathcal{P}(E)$, et le Théorème \ref{theo:équations_pairesbiencouvrantes_admissible} montre que $\mathcal{L}_{\mu,\nu,r_0}$ appartient à $C_{\Q}(\XEC)^+$, ce qui signifie que $(\mu,\nu)$ est un élément de $\Pi_{\Q}(E)$. D'où $\mathcal{C}\subset\Pi_{\Q}(E)$.

Réciproquement, soit $(\mu,\nu)\in\Pi_{\Q}(E)$. Il existe donc un rationnel positif $r$ tel que $\mathcal{L}_{\mu,\nu,r}\in C_{\Q}(\XEC)^+$. C'est-à-dire, $(\mu,\nu,r)$ vérifie l'inégalité $\langle w\lambda,\mu\rangle + \langle w'\lambda,\nu\rangle + mr \leqslant 0$ pour toute paire $(C(w,w',m),\lambda)$ de $\mathcal{P}(E)$. En particulier, le couple $(\mu,\nu)$ vérifie l'équation $\langle w\lambda,\mu\rangle + \langle w'\lambda,\nu\rangle \leqslant 0$ pour toute paire $(C(w,w',0),\lambda)$ de $\mathcal{P}_0(E)$. Ce sont les équations \eqref{eq:theo_equations_polyèdremoment}. On en déduit $\Pi_{\Q}(E) \subset \mathcal{C}$. Cela termine la preuve du théorème.
\end{proof}

Comme corollaire direct, nous obtenons un ensemble fini d'équations décrivant le polyèdre convexe $\DGIT$.

\begin{coro}
\label{coro:équationsgénérales_DGIT}
Soit $\Lambda\in\wedge^*_{\Q,+}$. Le polyèdre moment $\DGIT$
est égal au polyèdre convexe
\[
\left\{\xi\in\wedge_{\Q,+}^*;\ \langle w\lambda,\xi\rangle \leqslant \langle w_0w'\lambda,\Lambda\rangle \mbox{ pour toute paire } (C(w,w',0),\lambda)\in\mathcal{P}_0(E)\right\}.
\]
\end{coro}

\begin{proof}
C'est une conséquence immédiate du Théorème \ref{theo:équations_GammaQ} et du Lemme \ref{lemm:lien_DeltaKLambda_GammaQ}.
\end{proof}


L'énoncé qui suit est un autre résultat géométrique qui découle directement du corollaire précédent.

\begin{coro}
Nous avons
\[
\DGIT[E] = \left\{\xi\in\wedge_{\Q,+}^*;\ \langle w\lambda,\xi\rangle \leqslant 0 \mbox{ pour toute paire } (C(w,w',0),\lambda)\in\mathcal{P}_0(E)\right\}.
\]
Plus généralement, soit $\Lambda\in\wedge^*_{\Q,+}$ central. Le polyèdre moment $\DGIT$
est égal au polyèdre convexe
\[
\left\{\xi\in\wedge_{\Q,+}^*;\ \langle w\lambda,\xi-\Lambda\rangle \leqslant 0 \mbox{ pour toute paire } (C(w,w',0),\lambda)\in\mathcal{P}_0(E)\right\}.
\]
En particulier, on a $\DGIT = \Lambda + \DGIT[E]$.
\end{coro}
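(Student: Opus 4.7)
The plan is to read off both statements of the corollary directly from Corollary \ref{coro:�quationsg�n�rales_DGIT}, with only minor manipulations exploiting the centrality of $\Lambda$. Since the hard analytic input (the description of $\DGIT$ by inequalities indexed by well-covering pairs) is already available, what remains is mostly bookkeeping: specialising $\Lambda=0$ for the first identity, and using $W$-invariance of a central $\Lambda$ for the second.

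First I would treat the case $\Lambda=0$. The orbit $K\cdot 0$ reduces to a single point, so $K\cdot 0\times E$ is canonically $K$-equivariantly identified with $E$, whence $\Delta_K^{\mathrm{GIT}}(K\cdot 0\times E)=\DGIT[E]$. Applying Corollary \ref{coro:�quationsg�n�rales_DGIT} at $\Lambda=0$, every right-hand term $\langle w_0w'\lambda,0\rangle$ collapses to $0$, and one obtains the announced equality. No further argument is needed here.

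Next I would handle the general central case. Centrality of $\Lambda\in\wedge^*_{\Q,+}$ means $\Lambda$ lies in the $W$-fixed subspace of $\got{t}^*$, so $u\Lambda=\Lambda$ for every $u\in W$. Combined with $W$-invariance of the natural pairing between $\got{t}$ and $\got{t}^*$, this gives
\[
\langle w_0w'\lambda,\Lambda\rangle=\langle\lambda,(w_0w')^{-1}\Lambda\rangle=\langle\lambda,\Lambda\rangle=\langle w\lambda,\Lambda\rangle
\]
for every $w,w'\in W$ and every one-parameter subgroup $\lambda$ of $\TC$. The inequality produced by Corollary \ref{coro:�quationsg�n�rales_DGIT},
\[
\langle w\lambda,\xi\rangle\leqslant\langle w_0w'\lambda,\Lambda\rangle,
\]
therefore rewrites as $\langle w\lambda,\xi-\Lambda\rangle\leqslant 0$, yielding the second announced description.

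Finally, to deduce the translation identity $\DGIT=\Lambda+\DGIT[E]$, I would note that a $W$-invariant element of $\got{t}^*$ is orthogonal to every root, hence pairs to zero with every simple coroot; consequently $\xi\in\wedge^*_{\Q,+}$ if and only if $\xi-\Lambda\in\wedge^*_{\Q,+}$. Putting this observation together with the two descriptions just obtained, one immediately gets the equivalence $\xi\in\DGIT \Leftrightarrow \xi-\Lambda\in\DGIT[E]$. No genuine obstacle appears; the only point requiring a moment's care is the compatibility of dominance with translation by $\Lambda$, which is precisely what centrality ensures.
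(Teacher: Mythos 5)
Votre preuve est correcte et suit exactement la d�duction directe � partir du Corollaire \ref{coro:�quationsg�n�rales_DGIT} que le texte a en vue (le cas $\Lambda=0$, la $W$-invariance d'un �l�ment central donnant $\langle w_0w'\lambda,\Lambda\rangle=\langle w\lambda,\Lambda\rangle$, puis la compatibilit� de la dominance avec la translation par $\Lambda$). Rien � redire : c'est pr�cis�ment l'argument implicite du papier, explicit� proprement.
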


\subsection{Paires dominantes dans les équations de $\DGIT$}
\label{subsection:pairesdominantes_dans_DGIT}

D'un point de vue théorique, l'utilisation des paires bien couvrantes permet d'obtenir un ensemble d'équations pour le cône semi-ample $C_{\Q}^+(\XEC)$ avec peu de redondance d'information. Par ailleurs, lorsque la variété $X$ est un produit $\KC/B\times\hKC/\hat{B}$ de variétés des drapeaux complètes, avec $\KC\subseteq\hKC$, \cite[Théorème 10]{ressayre08} prouve que cet ensemble d'équations est minimal pour décrire le cône semi-ample $\Csa$.

Malheureusement, en passant du cône $C_{\Q}^+(\XEC)$ au cône convexe polyédral $\Pi_{\Q}(E)$ par projection linéaire, il est très probable que de la redondance supplémentaire apparaisse dans les équations de $\Pi_{\Q}(E)$ obtenues par les paires bien couvrantes de $\mathcal{P}_0(E)$.

Néanmoins, en pratique, le calcul des paires bien couvrantes de la variété $\XEC$ peut se révéler fastidieux, car il fait intervenir une condition cohomologique forte, cf Théorème \ref{theo:cns_pairebiencouvrante}. On peut toutefois réduire la difficulté dans la recherche d'équations de $\Pi_{\Q}(E)$ si, d'un autre côté, on diminue nos attentes dans l'optimalité de l'ensemble des équations que l'on souhaite obtenir.

\begin{lemm}
\label{lemm:équations_de_GammaQ_et_pairesdominantes}
Soit $(C(w,w',0),\lambda)$ une paire dominante de $\XEC$. Alors, pour tout $(\mu,\nu)\in\Pi_{\Q}(E)$, on a
\[
\langle w\lambda,\mu\rangle + \langle w'\lambda,\nu\rangle \leqslant 0.
\]
\end{lemm}

\begin{proof}
Soit $(\mu,\nu)\in\Pi_{\Q}(E)$. Il existe donc un rationnel positif $r$ tel que $(\mu,\nu,0,r)$ appartienne à $C_{\Q}(\XEC)^+$, c'est-à-dire, le fibré en droites $\mathcal{L} := \mathcal{L}_{\mu,\nu,0,r}$ appartient à $C_{\Q}(\XEC)^+$. Or, d'après le Lemme \ref{lemm:ressayre_lemme3}, comme la paire $(C(w,w',0),\lambda)$ est dominante, on doit avoir $\mu^{\mathcal{L}}(C(w,w',0),\lambda) \leqslant 0$. Cette dernière équation peut s'écrire également
\[
\langle w\lambda,\mu\rangle + \langle w'\lambda,\nu\rangle + \langle \lambda,0\rangle + 0r \leqslant 0,
\]
comme indiqué dans la preuve du Théorème \ref{theo:équations_pairesbiencouvrantes_admissible}. Ceci prouve l'assertion.
\end{proof}

Notons de manière analogue au paragraphe précédent, $\overline{\mathcal{P}}_0(E)$ (resp. $\overline{\mathcal{P}}_0^{adm}(E)$) l'ensemble des paires dominantes $(C(w,w',0),\lambda)$ de $\XEC$ telles que $\lambda$ soit dominant indivisible (resp. dominant indivisible $E$-admissible). On a évidemment $\mathcal{P}_0(E)\subset\overline{\mathcal{P}}_0^{adm}(E)\subset\overline{\mathcal{P}}_0(E)$.

\begin{theo}
Soit $\mathcal{P}_1$ un ensemble de paires $(C(w,w',0),\lambda)$ de $\XEC$ tel que $\mathcal{P}_0(E)\subseteq\mathcal{P}_1\subseteq\overline{\mathcal{P}}_0(E)$. Alors le cône convexe polyédral $\Pi_{\Q}(E)$ est l'ensemble des $(\mu,\nu)\in(\wedge_{\Q,+})^2$ tels que
\begin{equation}
\label{eq:inégalité_pour_P1}
\langle w\lambda,\mu\rangle + \langle w'\lambda,\nu\rangle \leqslant 0
\end{equation}
pour toute paire $(C(w,w',0),\lambda)$ de $\mathcal{P}_1$.
\end{theo}

\begin{proof}
Puisque $\mathcal{P}_0(E)\subseteq\mathcal{P}_1$, il est clair que si le couple $(\mu,\nu)\in(\wedge_{\Q,+})^2$ vérifie les équations \eqref{eq:inégalité_pour_P1} pour les paires de $\mathcal{P}_1$, alors il vérifiera ces mêmes équations pour les paires de $\mathcal{P}_0(E)$. Le Théorème \ref{theo:équations_pairesbiencouvrantes_admissible} permet d'en déduire qu'un tel couple $(\mu,\nu)$ est dans $\Pi_{\Q}(E)$.

Réciproquement, si $(\mu,\nu)\in\Pi_{\Q}(E)$, alors le Lemme \ref{lemm:équations_de_GammaQ_et_pairesdominantes} montre que $(\mu,\nu)$ vérifie l'équation \eqref{eq:inégalité_pour_P1} pour toute paire dominante, donc en particulier pour toute paire de $\mathcal{P}_1\subseteq\overline{\mathcal{P}}_0(E)$.
\end{proof}

On en déduit un nouvel ensemble d'équations du polyèdre convexe rationnel $\DGIT$, résultat analogue au Corollaire \ref{coro:équationsgénérales_DGIT}.

\begin{coro}
Soient $\Lambda\in\wedge_{\Q,+}^*$ et $\mathcal{P}_1$ un ensemble de paires $(C(w,w',0),\lambda)$ de $\XEC$ tel que $\mathcal{P}_0(E)\subseteq\mathcal{P}_1\subseteq\overline{\mathcal{P}}_0(E)$. Alors on a
\[
\DGIT = \left\{\xi\in\wedge_{\Q,+}^*;\ \langle w\lambda,\xi\rangle \leqslant \langle w_0w'\lambda,\Lambda\rangle, \forall (C(w,w',0),\lambda)\in\mathcal{P}_1\right\}.
\]
\end{coro}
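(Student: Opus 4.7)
The plan is a direct application of the previous theorem combined with the link between $\DGIT$ and $\Pi_{\Q}(E)$ provided by Lemme~\ref{lemm:lien_DeltaKLambda_GammaQ}. First I would recall that Lemme~\ref{lemm:lien_DeltaKLambda_GammaQ} gives the identity
\[
\DGIT = \bigl\{\xi\in\wedge^*_{\Q}; (\xi,-w_0\Lambda)\in\Pi_{\Q}(E)\bigr\},
\]
and that, under the hypothesis $\mathcal{P}_0(E)\subseteq\mathcal{P}_1\subseteq\overline{\mathcal{P}}_0(E)$, the preceding theorem describes $\Pi_{\Q}(E)$ as the set of $(\mu,\nu)\in(\wedge^*_{\Q,+})^2$ satisfying $\langle w\lambda,\mu\rangle+\langle w'\lambda,\nu\rangle\leqslant 0$ for every $(C(w,w',0),\lambda)\in\mathcal{P}_1$.

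Next I would substitute $(\mu,\nu)=(\xi,-w_0\Lambda)$ in these inequalities. For each $(C(w,w',0),\lambda)\in\mathcal{P}_1$, this yields
\[
\langle w\lambda,\xi\rangle - \langle w'\lambda,w_0\Lambda\rangle \leqslant 0,
\]
which is $\langle w\lambda,\xi\rangle \leqslant \langle w'\lambda,w_0\Lambda\rangle$. The only remaining step is the rewriting $\langle w'\lambda,w_0\Lambda\rangle = \langle w_0w'\lambda,\Lambda\rangle$, which follows from the $W$-invariance of the natural pairing $\got{t}\times\got{t}^*\to\R$ together with the fact that the longest element satisfies $w_0^{-1}=w_0$. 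Note also that $\xi\in\wedge^*_{\Q,+}$ on the left-hand side corresponds exactly to the condition $(\xi,-w_0\Lambda)\in(\wedge^*_{\Q,+})^2$ on the right-hand side, since $-w_0\Lambda\in\wedge^*_{\Q,+}$ because $\Lambda\in\wedge^*_{\Q,+}$.

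Putting these steps together gives the stated equality. There is no real obstacle here: the corollary is a purely formal consequence of the previous theorem and of Lemme~\ref{lemm:lien_DeltaKLambda_GammaQ}, the only substantive point being the $W$-equivariance of the pairing used to convert $\langle w'\lambda,w_0\Lambda\rangle$ into $\langle w_0w'\lambda,\Lambda\rangle$.
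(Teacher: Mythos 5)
Votre démonstration est correcte et suit exactement la même voie que le texte : le corollaire se déduit immédiatement du théorème précédent (description de $\Pi_{\Q}(E)$ par les paires de $\mathcal{P}_1$) combiné au Lemme \ref{lemm:lien_DeltaKLambda_GammaQ}, via la substitution $(\mu,\nu)=(\xi,-w_0\Lambda)$ et la $W$-équivariance du couplage avec $w_0^{-1}=w_0$. Rien à ajouter.
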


En particulier, ce corollaire est valable pour $\mathcal{P}_1 = \overline{\mathcal{P}}_0^{adm}(E)$, c'est-à-dire, le polyèdre convexe polyédral $\DGIT$ est aussi égal à l'ensemble des vecteurs $\xi\in\wedge_{\Q,+}^*$ qui vérifient l'équation $\langle w\lambda,\xi\rangle \leqslant \langle w_0w'\lambda,\Lambda\rangle$ pour toute paire dominante $(C(w,w',0),\lambda)$ avec $\lambda$ dominant indivisible $E$-admissible.

\subsection{Propriétés géométriques de $\DGIT$}
\label{subsection:propriétésgéométriquesdeDGIT}

Nous terminons ce chapitre en donnant plusieurs propriétés géométriques du polyèdre $\DGIT$. Cela concerne les faces de codimension $1$ qui rencontrent le point $\Lambda$.

Parmi les paires de $\mathcal{P}_0(E)$, les paires de type $(C(w,w_0w,0),\lambda)$ sont très particulières. En effet, elles donnent les faces de codimension $1$ du cône convexe polyédral $\CQ(-\WT(E))$. Avant de prouver ce résultat, nous avons besoin d'énoncer le théorème suivant, qui utilise des résultats du Chapitre \ref{chap:PairesBienCouvrantes}.

\begin{theo}
 \label{theo:paires_w'=w0w_sonttoutesbiencouvrantes}
Soit $\lambda$ un sous-groupe à un paramètre dominant de $\TC$ et soit $(w,m)\in W/W_{\lambda}\times\Z$ tel que $C(w,w_0w,m)\neq\emptyset$. Alors $(C(w,w_0w,m),\lambda)$ est une paire bien couvrante de $\XEC$ et, pour tout $\beta\in\WT(E)$, on a $\langle\lambda,\beta\rangle\geqslant m$.
\end{theo}

\begin{proof}
 Ce résultat découle directement du Théorème \ref{theo:cns_pairebiencouvrante} et du Corollaire \ref{coro:info_longueurs_éléments_pour_pairebiencouvrante}.
\end{proof}

Lorsque $\Lambda\in\wedge_{\Q,+}^*$ est quelconque, c'est-à-dire que $\Lambda$ n'a aucune hypothèse de régularité (son stabilisateur dans $K$ peut contenir strictement $T$), il est difficile de décrire exactement les faces de $\DGIT$ au voisinage de $\Lambda$. Cependant, nous pouvons tout de même donner une information partielle.

\begin{prop}
\label{prop:faces_ConeWtT(E)_et_pairesbiencouvrantes}
Toute paire bien couvrante $(C(w,w_0w,0),\lambda)$ de $\mathcal{P}_0(E)$ définit une face de codimension $1$ de $\CQ(-\WT(E))$. Inversement, toute face de codimension $1$ du cône convexe polyédral $\CQ(-\WT(E))$ provient d'une telle paire bien couvrante. En particulier, pour tout $\Lambda\in\wedge^*_{\Q,+}$, on a
\[
\DGIT \subset \Lambda + \CQ(-\WT(E)).
\]
\end{prop}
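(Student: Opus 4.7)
The plan is to prove the three assertions in order, with the main input being Theorem~\ref{theo:paires_w'=w0w_sonttoutesbiencouvrantes}, which guarantees well-coveringness of $(C(w,w_0w,0),\lambda)$ as soon as $C(w,w_0w,0)$ is nonempty, and provides the key inequality $\langle\lambda,\beta\rangle\geqslant 0$ for all $\beta\in\WT(E)$. A secondary observation used throughout is that $\WT(E)$ is $W$-invariant, since $E$ is a representation of the whole compact group $K$.

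For the first assertion, start from a pair $(C(w,w_0w,0),\lambda)\in\mathcal{P}_0(E)$, so that $\lambda$ is dominant, indivisible, and $E$-admissible. Theorem~\ref{theo:paires_w'=w0w_sonttoutesbiencouvrantes} gives $\langle\lambda,\beta\rangle\geqslant0$ for every $\beta\in\WT(E)$, and the $W$-invariance of $\WT(E)$ then yields $\langle w\lambda,-\gamma\rangle\leqslant 0$ for every $\gamma\in\WT(E)$; hence $w\lambda$ is an outer normal to a supporting hyperplane of $\CQ(-\WT(E))$ at the origin. To see that the resulting face has codimension exactly one, I would unfold the $E$-admissibility of $\lambda$ via Remark~\ref{rema:autre_d�f_tore_Madmissible}: $\C\lambda$ is the intersection (in $\got{t}_{\C}$) of kernels of finitely many weights of $E$, which is dual to saying that the rational hyperplane $(w\lambda)^{\perp}=w\cdot\lambda^{\perp}$ is spanned by vectors in $\WT(E)$. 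The face $F_{w,\lambda}:=\CQ(-\WT(E))\cap(w\lambda)^{\perp}$ is thus spanned by $\{-\beta:\beta\in\WT(E),\ \langle w\lambda,\beta\rangle=0\}$, hence has dimension $\dim\got{t}^{*}-1$; and the ambient cone is itself of full dimension because $\ker\zeta$ is finite forces $\WT(E)$ to span $\got{t}^{*}$.

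For the converse, let $F$ be a codimension $1$ face of $\CQ(-\WT(E))$ and let $\lambda_{0}$ be the (unique up to sign) indivisible one-parameter subgroup of $\TC$ whose hyperplane $\lambda_{0}^{\perp}$ contains $F$, normalized so that $\langle\lambda_{0},\beta\rangle\geqslant 0$ for all $\beta\in\WT(E)$. Since $F$ has dimension $n-1$ and is generated by rays $\R_{+}(-\beta)$ with $\langle\lambda_{0},\beta\rangle=0$, the hyperplane $\lambda_{0}^{\perp}$ is spanned rationally by weights of $E$; dualizing, $\C\lambda_{0}$ is cut out by kernels of finitely many weights of $E$, so $\lambda_{0}$ is $E$-admissible. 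Choose $w\in W$ such that $\lambda:=w^{-1}\lambda_{0}$ lies in the dominant Weyl chamber. By $W$-invariance of $\WT(E)$, $\lambda$ is still indivisible and $E$-admissible, and since $C(w,w_{0}w,0)$ is always nonempty (all three of its factors are nonempty; in particular $\C\oplus E^{\lambda}$ contains the line $\C$), Theorem~\ref{theo:paires_w'=w0w_sonttoutesbiencouvrantes} shows that $(C(w,w_{0}w,0),\lambda)$ is a well-covering pair in $\mathcal{P}_{0}(E)$, whose associated supporting hyperplane is precisely $(w\lambda)^{\perp}=\lambda_{0}^{\perp}$, recovering the face $F$.

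For the final inclusion, specialize Corollary~\ref{coro:�quationsg�n�rales_DGIT} to pairs of the form $(C(w,w_{0}w,0),\lambda)$: the relation $\langle w\lambda,\xi\rangle\leqslant\langle w_{0}(w_{0}w)\lambda,\Lambda\rangle=\langle w\lambda,\Lambda\rangle$ becomes $\langle w\lambda,\xi-\Lambda\rangle\leqslant 0$. By the converse direction just proved, as $(C(w,w_{0}w,0),\lambda)$ ranges over the relevant pairs in $\mathcal{P}_{0}(E)$, the vectors $w\lambda$ exhaust the set of outer normals to the codimension $1$ faces of the full-dimensional polyhedral cone $\CQ(-\WT(E))$; since a full-dimensional rational polyhedral cone equals the intersection of its codimension $1$ supporting half-spaces, these inequalities force $\xi-\Lambda\in\CQ(-\WT(E))$. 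The one point requiring some care, and the main conceptual obstacle, is the careful matching of sign conventions between the Hilbert--Mumford index and the outer normal direction, together with the dimension count ensuring that $E$-admissibility really produces a codimension $1$ face rather than a face of higher codimension; both are handled by the dual interpretation of admissibility recalled in Remark~\ref{rema:autre_d�f_tore_Madmissible} and by the finiteness of $\ker\zeta$.
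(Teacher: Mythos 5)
Your proof is correct and follows essentially the same route as the paper's: both directions of the facet correspondence rest on Theorem \ref{theo:paires_w'=w0w_sonttoutesbiencouvrantes} (well-coveredness of the pairs $(C(w,w_0w,0),\lambda)$ together with $\langle\lambda,\beta\rangle\geqslant 0$ for all $\beta\in\WT(E)$), on the $W$-invariance of $\WT(E)$ and the dual reading of $E$-admissibility, and the inclusion $\DGIT \subset \Lambda + \CQ(-\WT(E))$ is obtained exactly as in the paper by specializing the defining inequalities of $\DGIT$ to the pairs with $w'=w_0w$, for which $\langle w_0w'\lambda,\Lambda\rangle=\langle w\lambda,\Lambda\rangle$. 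The only differences are cosmetic: you make explicit the nonemptiness of $C(w,w_0w,0)$ and the full-dimensionality of $\CQ(-\WT(E))$ (via finiteness of $\ker\zeta$) needed to recover the cone as the intersection of its facet half-spaces, two points the paper leaves implicit.
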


\begin{proof}
Soit $(C(w,w_0w,0),\lambda)$ une paire de $\mathcal{P}_0(E)$. 
Le sous-groupe à un paramètre $\lambda$ est $E$-admissible, donc il existe $n-1$ poids $\beta_{i_1},\ldots,\beta_{i_{n-1}}$ formant une famille libre de $\got{t}_{\C}^*$ tels que $\C\lambda = \cap_{j=1}^{n-1}\ker\beta_{i_j}$. De plus, d'après le Théorème \ref{theo:paires_w'=w0w_sonttoutesbiencouvrantes}, nous avons $\langle\lambda,\beta\rangle \geqslant 0$ pour tout poids $\beta\in\WT(E)$. Le groupe de Weyl laissant stable l'ensemble des poids de $\TC$ dans $E$, tout poids $\beta\in\WT(E)$ vérifiera aussi $\langle w\lambda,-\beta\rangle \leqslant 0$, mais nous aurons aussi $\langle w\lambda,-\beta_{i_k}\rangle = 0$ pour $k=1,\ldots,n-1$. Par conséquent, l'équation $\langle w\lambda,\cdot\rangle \leqslant 0$ définit une équation de codimension $1$ de $\CQ(-\WT(E))$.

Réciproquement, soit $\mathcal{F}$ une face de codimension 1 de $\CQ(-\WT(E))$. Ce cône convexe polyédral est rationnel, donc il existe un sous-groupe à un paramètre $\lambda_{\mathcal{F}}$ indivisible tel que $\langle \lambda_{\mathcal{F}},-\beta\rangle \leqslant 0$ pour tout $\beta\in\WT(E)$, et $\langle \lambda_{\mathcal{F}},-\beta'_{i_k}\rangle = 0$ pour $n-1$ poids $\beta'_{i_1},\ldots,\beta'_{i_{n-1}}$ linéairement indépendants. Il existe $w\in W$ tel que $\lambda = w^{-1}\lambda_{\mathcal{F}}$ soit dominant. De plus, comme les $\beta'_{i_k}$, $k\in\{1,\ldots,n-1\}$ sont linéairement indépendants, nous avons nécessairement $\C(w^{-1}\lambda_{\mathcal{F}}) = \cap_{j=1}^{n-1}\ker(w^{-1}\beta'_{i_j})$ et, donc, $w^{-1}\lambda_{\mathcal{F}}$ est un sous-groupe à un paramètre dominant $E$-admissible indivisible de $\TC$. Et d'après le Théorème \ref{theo:paires_w'=w0w_sonttoutesbiencouvrantes}, la paire $(C(w,w_0w,0),w^{-1}\lambda_{\mathcal{F}})$ est bien couvrante, c'est donc une paire de $\mathcal{P}_0(E)$. Et, bien sûr, $\langle w(w^{-1}\lambda_{\mathcal{F}}),\cdot\rangle = \langle \lambda_{\mathcal{F}},\cdot\rangle = 0$ est l'équation de $\mathcal{F}$.

Si maintenant $x$ appartient à $\DGIT$, cet élément de $\wedge_{\Q,+}^*$ vérifiera les équations affines
\[
\langle w\lambda,x\rangle \leqslant \langle w_0w'\lambda,\Lambda\rangle,
\]
pour toute paire $(C(w,w',0),\lambda)\in\mathcal{P}_0(E)$. En particulier, si $\mathcal{F}$ est une face de codimension $1$ de $\CQ(-\WT(E))$, si $\lambda_{\mathcal{F}}$ est le sous-groupe à un paramètre $E$-admissible indivisible associé et si $w_{\mathcal{F}}\in W/W_{\lambda}$ est tel que $w_{\mathcal{F}}^{-1}\lambda_{\mathcal{F}}$ soit dominant $E$-admissible indivisible, alors la paire bien couvrante $(C(w_{\mathcal{F}},w_0w_{\mathcal{F}},0),w_{\mathcal{F}}^{-1}\lambda_{\mathcal{F}})$ est dans $\mathcal{P}_0(E)$, et
\[
\langle w_{\mathcal{F}}(w_{\mathcal{F}}^{-1}\lambda_{\mathcal{F}}),x-\Lambda\rangle = \langle \lambda_{\mathcal{F}},x-\Lambda\rangle \leqslant 0,
\]
car ici $w' = w_0w$. Ceci est vrai pour toute face de codimension $1$ de $\CQ(-\WT(E))$, on a donc $x -\Lambda \in\CQ(-\WT(E))$. Autrement dit, $\DGIT\subset \Lambda + \CQ(-\WT(E))$.
\end{proof}

Le résultat précédent affirme que le polyèdre convexe $\DGIT$ est contenu dans le polyèdre convexe $\Lambda + \CQ(-\WT(E))$, pour tout $\Lambda\in\wedge^*_{\Q,+}$. Nous pouvons montrer également le fait suivant.

\begin{prop}
\label{prop:Lambda_dans_DGIT}
Pour tout $\Lambda\in\wedge_{\Q,+}^*$, l'élément $\Lambda$ est dans $\DGIT$.
\end{prop}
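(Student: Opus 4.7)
The plan is to verify directly from the definition of $\DGIT$ that $\Lambda$ itself satisfies the defining condition, by exhibiting a non-zero $K$-invariant in the relevant tensor product.

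First, since $\Lambda\in\wedge_{\Q,+}^*$, there exists an integer $n\geqslant 1$ such that $n\Lambda\in\wedge^*$; in particular $(n\Lambda,n\Lambda)\in(\wedge^*)^2$. It therefore suffices, in view of the definition of $\DGIT$, to prove that
\[
\left(V^*_{n\Lambda}\otimes V_{n\Lambda}\otimes \C[E]\right)^K \;\neq\; 0.
\]

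For this, I would simply decompose the problem into two pieces. On the one hand, the constant polynomial $1\in\C\subset \C[E]$ is manifestly $K$-invariant, since $K$ acts linearly on $E$ and therefore trivially on the constants. On the other hand, by Schur's lemma, the space $(V^*_{n\Lambda}\otimes V_{n\Lambda})^K \cong \mathrm{End}_K(V_{n\Lambda})$ contains (in fact is generated by) the identity endomorphism, hence is non-zero. The tensor product of a non-zero $K$-invariant element of $V^*_{n\Lambda}\otimes V_{n\Lambda}$ with the invariant constant $1$ then yields a non-zero element of $\left(V^*_{n\Lambda}\otimes V_{n\Lambda}\otimes \C[E]\right)^K$.

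There is no main obstacle: the statement is essentially immediate from the definition. One may equivalently phrase the argument via the Lemma \ref{lemm:lien_DeltaKLambda_GammaQ}, by checking that $(\Lambda,-w_0\Lambda)\in\Pi_{\Q}(E)$, which reduces to the same computation since $V_{n(-w_0\Lambda)}^*\cong V_{n\Lambda}$ and thus $(V_{n\Lambda}^*\otimes V_{n(-w_0\Lambda)}^*\otimes\C[E])^K$ contains the $K$-invariant coming from the pairing $V_{n\Lambda}^*\otimes V_{n\Lambda}\to\C$ tensored with the constant $1$. Either way, we conclude $\Lambda\in\DGIT$, which is exactly the geometric statement that $\Lambda$ lies in the image of the moment map $\Phi_{K\cdot\Lambda\times E}$ (obtained by taking $v=0$ in $K\cdot\Lambda\times E$).
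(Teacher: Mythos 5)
Your argument is correct and is essentially the paper's own proof: the paper likewise picks $n$ with $n\Lambda\in\wedge^*$, notes that $(V_{n\Lambda}^*\otimes V_{n\Lambda})^{K}$ is non-zero, and embeds it into $(V_{n\Lambda}^*\otimes V_{n\Lambda}\otimes\C[E])^{K}$ via the constant polynomial. Your extra remarks (Schur's lemma, the reformulation through $\Pi_{\Q}(E)$) only make explicit what the paper leaves implicit.
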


\begin{proof}
Soit $n\in\N^*$ tel que $n\Lambda\in\wedge^*$. On sait que $(V_{n\Lambda}^*\otimes V_{n\Lambda})^{\KC}$ n'est pas réduit à l'élément neutre. Or, clairement, on a $(V_{n\Lambda}^*\otimes V_{n\Lambda})^{\KC}\subseteq (V_{n\Lambda}^*\otimes V_{n\Lambda}\otimes\C[E])^{\KC}$. D'où $\Lambda$ appartient à $\DGIT$.
\end{proof}

\chapter[Critère cohomologique des paires bien couvrantes]{Critère cohomologique des paires bien couvrantes de la variété $\KC/B\times\KC/B\times\mathbb{P}(M)$}
\label{chap:PairesBienCouvrantes}


Dans le chapitre \ref{chap:projectiondorbitecoadjointe+GIT}, nous avons établi une liste d'équations affines qui définissent le polyèdre convexe rationnel $\DGIT$ lorsque $\Lambda$ est un poids rationnel de la chambre holomorphe $\Chol$. Ces équations proviennent d'une partie des équations linéaires déterminant le cône ample $C_{\Q}(\XEC)^+$, par projection linéaire.

De manière générale, nous avons vu, dans le paragraphe \ref{subsubsection:gpalg_git_pairesbiencouvrantes}, que nous pouvons avoir une description des cônes ample et semi-ample d'une variété projective lisse $X$ par des inéquations linéaires indexées par les paires bien couvrantes de $X$.

Le plus souvent, il se révèle difficile de déterminer l'ensemble de toutes les paires bien couvrantes d'une variété projective fixée. Cependant, dans notre contexte, nous pouvons nous restreindre à l'étude d'une famille de variétés projectives possédant une forme très intéressante. La Proposition $11$ de \cite{ressayre08} donne une condition nécessaire et suffisante pour qu'une paire $(C,\lambda)$ soit bien couvrante, dans le cas d'une $\KC$-variété $Y$ de la forme $Y=\KC/Q\times\hKC/\hat{Q}$, où $\KC$ est un sous-groupe réductif connexe d'un groupe réductif complexe connexe $\hKC$ et $Q$ (resp. $\hat{Q}$) un sous-groupe parabolique de $\KC$ (resp. $\hKC$).

Notre variété $X_M := \KC/B\times\KC/B\times\mathbb{P}(M)$ est un exemple de telle $\KC$-variété, lorsque $M$ est un $\KC$-module de dimension finie.

Dans ce chapitre, nous améliorons le critère cohomologique \cite[Proposition 11]{ressayre08} permettant de calculer l'ensemble des paires bien couvrantes de $X_M$. D'un point de vue théorique, ce critère a une conséquence qui nous a permis d'obtenir, dans la section \ref{section:équations_polyèdremoment_KLambdaE}, les équations de $\DGIT$ à partir des équations de $C_{\Q}(\XEC)^+$, en prenant $M=E\oplus\C$, avec action triviale de $\KC$ sur le facteur $\C$. D'un autre côté, ce critère nous permettra dans le Chapitre \ref{chap:calcul_exemples_par_pairesbiencouvrantes} de calculer les polyèdres moments $\Delta_K(\Orb_{\Lambda})$ pour certains groupes de Lie réels simples $G$.

\section{Le critère principal}
\label{section:critèreprincipal}

Dans cette section, nous utiliserons les mêmes notations que dans le paragraphe \ref{subsection:GIT_notations}, pour les groupes $\TC\subset B\subset\KC$. Soit $\zeta:\KC\rightarrow GL_{\C}(M)$ une représentation complexe de $\KC$ et $r:=\dim_{\C}M$. On note $\got{t}_{\C}$, $\got{b}$ et $\got{k}_{\C}$ les algèbres de Lie respectives. Nous fixons pour le reste de ce chapitre un sous-groupe à un paramètre $\lambda$ dominant de $\TC$. On notera $P(\lambda)$ le sous-groupe parabolique de $\KC$ associé à $\lambda$. Il est standard, c'est-à-dire, il contient $B$, puisque $\lambda$ est dominant.


Soit $P$ un quelconque sous-groupe parabolique de $\KC$ contenant $B$. On note $W_P$ le groupe de Weyl du sous-groupe de Levi de $P$ contenant $T$. Pour tout $w\in W/W_P$, la sous-variété algébrique $X_w^P:=\overline{BwP/P}$ de $\KC/P$ est appelée la variété de Schubert associée à $w$. Bien qu'une variété de Schubert ne soit pas forcément lisse, on peut tout de même lui définir une classe fondamentale, qui coïncide avec la définition standard de classe fondamentale lorsque la variété de Schubert est lisse. Cette définition est expliquée dans l'Annexe \ref{chap:classefondamentale}. Les classes fondamentales des variétés de Schubert forment une base du $\Z$-module libre $\mathrm{H}_*(\KC/P,\Z)$, ce qui nous permet de définir sa base duale $\{\sigma_w^P; w\in W/W_P\}$ dans $\coh{\KC/P}$. On notera également $\pt := \sigma_{w_0}^P$ la classe du point, c'est-à-dire, le générateur de $\mathrm{H}^{2\dim_{\C}(\KC/P)}(\KC/P,\Z)$. Ici, $w_0$ désigne le plus long élément de $W$.

Désormais, on considère le sous-groupe parabolique $P=P(\lambda)$ et $W_{\lambda}$ le groupe de Weyl de son sous-groupe de Levi $\KC^{\lambda}$. Remarquons que $W_{\lambda}$ s'identifie au stabilisateur de $\lambda$ dans $W$ (ce qui justifie la notation $W_{\lambda}$). Posons $W^{\lambda}$ l'ensemble des représentants de longueur maximale des classes de $W/W_{\lambda}$. On définit l'application $\jmath: gB\in\KC/B\mapsto gP(\lambda)\in\KC/P(\lambda)$ et l'application induite en cohomologie
\[
\jmath^*:\coh{\KC/P(\lambda)}\longrightarrow\coh{\KC/B}.
\]
Puisque $\jmath$ est surjective, l'homomorphisme d'anneaux $\jmath^*$ est injectif. Ici, nous considérons $\coh{\KC/P(\lambda)}$ et $\coh{\KC/B}$ comme anneaux pour la somme induite par leur structure de $\Z$-module et le produit définit par le produit cup.

Il est bien connu que, pour tout $w$ dans $W^{\lambda}$, on a l'égalité $\jmath^*(\sigma_{w_0w}^{P(\lambda)}) = \sigma_{w_0w}^B$ dans $\coh{\KC/B}$, car $w_0w$ est le plus petit élément de $w_0wW_{\lambda}$. Ceci redonne l'injectivité de $\jmath^*$. En particulier, nous avons $\jmath^*(\pt)=\sigma_{w_0w_{\lambda}}^B$, où $w_{\lambda}$ est le plus long élément de $W_{\lambda}$, cf \cite{brion_flagvar} pour plus de détails.

Nous rappelons que pour tout $k\in\Z$, le sous-espace $M_{\lambda,k}$ de $M$ est défini par
\[
M_{\lambda,k}:=\{v\in M; \zeta(\lambda(t))v = t^kv,\forall t\in\C^*\}.
\]
Pour tout $m\in\Z$, on définit les sous-espaces
\[
M_{< m}:=\bigoplus_{k< m}M_{\lambda,k} \qquad \text{et} \qquad M_{\geqslant m}:=\bigoplus_{k\geqslant m}M_{\lambda,k}.
\]
Pour tout $m\in\Z$, les trois sous-espaces $M_{\lambda,m}$, $M_{< m}$ et $M_{\geqslant m}$ sont stables par $\KC^{\lambda}$. On remarque que l'ensemble $\WT(M_{< m})$ est égal à l'ensemble des poids $\beta$ de $M$ tels que $\langle\lambda,\beta\rangle < m$. De même, $\beta'\in\WT(M_{\geqslant m})$ si et seulement si $\beta'\in\WT(M)$ et $\langle\lambda,\beta'\rangle\geqslant m$. En particulier, on a l'union disjointe
\[
 \WT(M) = \WT(M_{< m}) \amalg \WT(M_{m\geqslant m}),
\]
quel que soit $m\in\Z$. Pour tout $\beta\in\WT(M)$, on notera $n_{\beta}$ la multiplicité du poids $\beta$ sur $M$, c'est-à-dire $n_{\beta}:=\dim_{\C}M_{\beta}$, où $M_{\beta}$ est l'espace de poids $\beta$ dans $M$.

Soit $\Theta:\wedge^*\rightarrow\mathrm{H}^2(\KC/B,\Z)$ le morphisme qui envoie un poids $\mu\in\wedge^*$ de $\TC$ sur la première classe de Chern $\Theta(\mu) = c_1(\mathcal{L}_{\mu})$ du fibré en droites $\mathcal{L}_{\mu}$ de poids $\mu$.

On notera $\rho$ la demi-somme des racines positives de $\got{k}_{\C}$.

\begin{theo}
\label{theo:cns_pairebiencouvrante}
\label{THEO:CNS_PAIREBIENCOUVRANTE}
Soit $(w,w',m)\in W^{\lambda}\times W^{\lambda}\times\Z$ tel que $C(w,w',m)$ soit non vide. La paire $(C(w,w',m),\lambda)$ de $X_{M}$ est bien couvrante si et seulement si soit $w' = w_0ww_{\lambda}$ et $M_{< m} = 0$, soit les deux assertions suivantes sont simultanément vérifiées:
\begin{enumerate}
\item[(\emph{i})] $\sigma_{w_0w}^B\,.\,\sigma_{w_0w'}^B\,.\,\prod_{\beta\in\WT(M_{< m})}\Theta(-\beta)^{n_{\beta}}= \jmath^*(\pt)$,
\item[(\emph{ii})] $\langle w\lambda + w'\lambda,\rho\rangle + \sum_{k<m}(m-k)\dim_{\C}(M_{\lambda,k}) = 0$.
\end{enumerate}
\end{theo}

Le Théorème \ref{theo:cns_pairebiencouvrante} sera démontré dans la section \ref{section:démo_theo_pairebiencouvrante}. Dans l'énoncé du théorème, le cas $w'=w_0ww_{\lambda}$ et $M_{< m}=0$ est un cas particulier de (\emph{i}) et (\emph{ii}), si on utilise la convention que le produit d'éléments parcourant un ensemble vide est égal à $\sigma_{\id}^B = 1$, cf Lemme \ref{lemm:équation_lambda_casextrême}. Par ailleurs, les paires bien couvrantes $(C(w,w_0w,0),\lambda)$ sont très particulières. En effet, elles donnent les équations des faces du cône convexe polyédral $\CQ(-\WT(E))$ contenant $\Lambda$, quand on prend $M=E\oplus\C$, d'après les résultats du paragraphe \ref{subsection:propriétésgéométriquesdeDGIT}.

Lorsque le tore maximal $\TC$ fixe un élément non trivial dans $M$ ou, de manière équivalente, si le poids nul est un poids de la représentation $\rho$, nous sommes capables de donner une information supplémentaire sur l'entier $m$ qui peut apparaître dans les paires bien couvrantes. C'est le résultat énoncé par le Théorème \ref{theo:cn_pairebiencouvrante_poidsnul}. Ce résultat est essentiel pour pouvoir passer d'un ensemble fini d'équations du cône convexe polyédral $C_{\Q}(\XEC)^+$ à un ensemble fini d'équations du cône convexe polyédral $\Gamma_{\Q}(E)$, pour enfin obtenir des équations de $\DGIT$.


\begin{proof}[Preuve du Théorème \ref{theo:cn_pairebiencouvrante_poidsnul}]
 Soit $(w,w',m)\in W^{\lambda}\times W^{\lambda} \times\Z$ tel que $C(w,w',m)$ soit non vide dans $(X_M)^{\lambda}$.  Supposons que $m$ soit strictement positif. Alors le poids nul est dans $\WT(M_{< m})$ et $\prod_{\beta\in\WT(M_{< m})}\Theta(-\beta)^{n_{\beta}} = 0$, car $\Theta$ est un morphisme de $\Z$-modules. D'après le Théorème \ref{theo:cns_pairebiencouvrante}, on en déduit que la paire $(C(w,w',m),\lambda)$ n'est pas bien couvrante, car $M_{< m} \neq 0$ et l'assertion ($i$) de l'énoncé du Théorème \ref{theo:cns_pairebiencouvrante} n'est pas vérifiée.
\end{proof}

Nous terminons cette section avec un corollaire, qui donne une condition nécessaire simple pour qu'une paire $(C(w,w',m),\lambda)$ soit bien couvrante. Cette condition nécessaire concerne les longueurs de $w$ et $w'$.

Soit $(C(w,w',m),\lambda)$ une paire bien couvrante. D'après le Théorème \ref{theo:cns_pairebiencouvrante}, nous avons nécessairement
\[
 \sigma_{w_0w}^B\,.\,\sigma_{w_0w'}^B\,.\,\prod_{\beta\in\WT(M_{< m})}\Theta(-\beta)^{n_{\beta}}= \jmath^*(\pt),
\]
donc nous avons aussi une égalité sur les degrés en cohomologie. En effet, l'égalité précédente implique que
\[
 l(w_0w)+l(w_0w')+\dim_{\C}(M_{< m}) = \dim_{\C}(\KC/P(\lambda)).
\]
Or, pour tout $u\in W$, on a $l(w_0u) = l(w_0)-l(u)$, car $w_0$ est le plus grand élément de $W$. Par conséquent, on obtient
\[
 l(w)+l(w') = \dim_{\C}(M_{< m}) + 2l(w_0)-\dim_{\C}(\KC/P(\lambda)),
\]
qui s'écrit également de la manière suivante,
\[
 l(w)+l(w') = l(w_0)+l(w_{\lambda})+\dim_{\C}(M_{< m}).
\]

\begin{coro}
\label{coro:info_longueurs_éléments_pour_pairebiencouvrante}
 Soit $(w,w',m)\in W^{\lambda}\times W^{\lambda}\times\Z$ tel que $(C(w,w',m),\lambda)$ soit une paire bien couvrante. Alors,
\begin{equation}
 \label{eq:égalité_longueurs_casw'=w_0w}
l(w)+l(w') = l(w_0)+l(w_{\lambda})+\dim_{\C}(M_{< m}).
\end{equation}
En particulier, on a $M_{< m} = 0$ si et seulement si $w'=w_0ww_ {\lambda}$.
\end{coro}

\begin{proof}
 Il reste seulement à prouver la dernière assertion. Par la Remarque \ref{rema:N=r_ssi_M<m=0}, qui sera donnée en section \ref{section:notations_et_paramétrage}, et par le Théorème \ref{theo:cns_pairebiencouvrante}, si $M_{< m} = 0$, alors $w'=w_0ww_{\lambda}$ et un calcul direct nous donne
\begin{equation}
 \label{eq:égalité_longueurs_casw'=w_0ww_lambda}
 l(w)+l(w') = l(w_0)+l(w_{\lambda}).
\end{equation}
Nous avons utilisé ici le fait que $l(w'w_{\lambda}) = l(w')-l(w_{\lambda})$, car $w'$ est le plus long élément de $w'W_{\lambda}$ et $w_{\lambda}$ est le plus long élément de $W_{\lambda}$.

Réciproquement, si $w'=w_0ww_{\lambda}$, alors les équations \eqref{eq:égalité_longueurs_casw'=w_0w} et \eqref{eq:égalité_longueurs_casw'=w_0ww_lambda} sont satisfaites, donc $\dim_{\C}(M_{< m}) = 0$, d'où finalement $M_{< m} = 0$.
\end{proof}

\section{Notations et paramétrage}
\label{section:notations_et_paramétrage}

Avant de pouvoir démontrer le Théorème \ref{theo:cns_pairebiencouvrante}, nous allons choisir une bonne identification de $\mathbb{P}(M)$ avec une variété des drapeaux $\hKC/\hat{Q}$ \emph{ad hoc}, définie par un homomorphisme de groupes $f:\KC\rightarrow\hKC$ et un tore maximal $\hTC$ de $\hKC$ tels que $f(\TC)\subset\hTC$ et $\hTC\subset\hat{Q}$. Le choix d'une telle identification nous amène donc à réaliser un paramétrage convenable de notre espace vectoriel complexe $M$.

Rappelons que nous avons fixé une fois pour toute un sous-groupe à un paramètre dominant $\lambda$ de $\TC$ au début de la section \ref{section:critèreprincipal}. 

La premier point à remarquer est que l'on a les inclusions $\zeta(B)\subset\zeta(P(\lambda))\subset \hat{P}(\lambda)$, où $\hat{P}(\lambda)$ désigne le sous-groupe parabolique associé au sous-groupe à un paramètre $\zeta\circ\lambda$ de $GL_{\C}(M)$. Le sous-groupe $\zeta(B)$ de $\hat{P}(\lambda)$ est résoluble et connexe, donc il existe un sous-groupe de Borel $\hat{B}$ de $\hat{P}(\lambda)$ qui contient $\zeta(B)$. Le groupe $\hat{B}$ est alors un sous-groupe de Borel de $GL_{\C}(M)$. Fixons également un tore maximal de $\hTC$ de $\hat{B}$ contenant $\zeta(\TC)$.

Nous remarquons que, de la définition même des sous-espaces $M_{\lambda,k}$ pour $k$ parcourant $\Z$, le fait que $\zeta(\lambda(\C^*))$ soit contenu dans $\hTC$ implique que les sous-espaces $M_{\lambda,k}$ sont tous $\hTC$-stables. De plus, si on numérote de manière décroissante $\{k_1>k_2>\cdots>k_s\} = \{k\in\Z; M_{\lambda,k}\neq 0 \}$ les poids de $\lambda$ sur $M$, et si on note les sous-espaces $V_i = \oplus_{j=1}^i M_{\lambda,k_j}$, alors on voit également que le drapeau $0\subsetneq V_1 \subsetneq \cdots \subsetneq V_s = M$ est stable par $\hat{P}(\lambda)$, donc stable par $\hat{B}$. Or $\hat{B}$ est un groupe résoluble connexe, donc l'action de $\hat{B}$ sur chaque espace vectoriel $V_{i+1}/V_i$ est trigonalisable, ce qui nous donne au final un drapeau complet $0\subsetneq V'_1 \subsetneq \dots \subsetneq V'_r = M$ stable par $\hat{B}$ qui est imbriqué dans le drapeau $0\subsetneq V_1 \subsetneq \cdots \subsetneq V_s = M$, c'est-à-dire,
\[
0\subsetneq V'_1\subsetneq \cdots \subsetneq V'_{\dim V_1+\ldots+\dim V_i} = \bigoplus_{j=1}^i V_j \quad \text{pour tout $i=1,\ldots,s$}.
\]

Le tore maximal $\hTC$ étant formé d'éléments semi-simples de $GL_{\C}(M)$, on peut aisément vérifier qu'il existe une base $\mathcal{B}_{\lambda}=(u_1,\ldots,u_r)$ de $M$ formée de vecteurs propres communs de l'action de $\hTC$ sur $M$ tels que, pour tout $i=1,\ldots,r$, on ait
\[
\mathrm{Vect}(u_1,\ldots,u_i) = V'_i.
\]
Et évidemment, chaque $u_i$ est en particulier un vecteur propre de $\zeta(\TC)$. Pour tout $i\in\{1,\ldots,r\}$, on notera donc $\beta_i\in\WT(M)$ le poids de $\TC$ sur $M$ tel que $u_i\in M_{\beta_i}$. 
%
%
Nous aurons alors, pour tout $i\in\{1,\ldots,r\}$,
\[
\zeta(\lambda(t))u_i = t^{\langle\lambda,\beta_i\rangle}u_i \quad \text{pour tout $t\in\C^*$}.
\]
Il est clair que, du choix du drapeau $0\subsetneq V'_1 \subsetneq \dots \subsetneq V'_r = M$ donnant la base $\mathcal{B}_{\lambda}$, on obtient
\[
 \langle\lambda,\beta_i\rangle \geqslant \langle\lambda,\beta_{i+1}\rangle \quad \text{pour tout $i\in\{1,\ldots,n-1\}$}.
\]

Maintenant que nous avons fait ce choix de base de $M$, nous pouvons identifier l'espace projectif $\mathbb{P}(M)$ avec la variété des drapeaux $\hKC/\hat{Q}$, où $\hKC = GL_r(\C)$ et $\hat{Q}$ est le stabilisateur dans $\hKC$ de la droite vectorielle $\C u_1$, c'est-à-dire, le sous-groupe parabolique maximal
\begin{equation}
\label{eq:sousgrpparaboliquemax_hatQ}
\hat{Q} = \left(\begin{array}{c|ccc}
* & * & \ldots & * \\\hline
0 & * & \ldots & * \\
\vdots & \vdots & \ddots & \vdots \\
0 & * & \ldots & *
\end{array}\right)
\end{equation}
de $\hKC$. Cette identification est canoniquement définie par l'application
\[
 \hat{g}\hat{Q}\in\hKC/\hat{Q}\longmapsto[\hat{g}\cdot u_1]\in\mathbb{P}(M).
\]
Cette application est $\KC$-équivariante si on munit $\hKC/\hat{Q}$ de l'action induite par le morphisme de groupes
\[
 \begin{array}{cccl}
f_{\lambda} : & \KC & \longrightarrow & \hKC = GL_r(\C) \\
& g & \longmapsto & \mathcal{M}at_{\mathcal{B}_{\lambda}}(\zeta(g)).
 \end{array}
\]

Nous identifions de façon évidente $GL_{\C}(M)$ à $\hKC= GL_r(\C)$. Nous conservons les mêmes notations pour le sous-groupe de Borel $\hat{B}$ et le tore maximal $\hTC$ de $\hKC$ correspondants. Des paragraphes précédents, on voit que $\hat{B}$ est alors le sous-groupe des matrices triangulaires supérieures de $GL_r(\C)$ et $\hTC$ celui des matrices diagonales. Par la définition de $\hat{Q}$, nous avons $\hTC\subset\hat{B}\subset\hat{Q}$.


A partir de maintenant, nous identifions la $\KC$-variété $X_M$ au produit de variétés des drapeaux $\KC/B\times\KC/B\times\hKC/\hat{Q}$, munie de l'action diagonale de $\KC$. En suivant cette identification, nous avons une autre description simple et complète des composantes irréductibles de la variété projective $X_M^{\lambda}$. En effet, on a
\[
 X_M^{\lambda} = \bigcup_{\begin{array}{c}w,w'\in W/W_{\lambda}\\ \hat{w}\in\hat{W}_{\hat{Q}}\backslash\hat{W}/\hat{W}_{\lambda}\end{array}} \KC^{\lambda}w^{-1}B/B\times\KC^{\lambda}w'^{-1}B/B\times\hKC^{\lambda}\hat{w}^{-1}\hat{Q}/\hat{Q},
\]
où $\hat{W}=S_r$ (resp. $\hat{W}_{\lambda}$, resp. $\hat{W}_{\hat{Q}}=S_{r-1}$) est le groupe de Weyl de $\hKC$ (resp. du sous-groupe de Levi $\hKC^{\lambda}$ de $\hat{P}(\lambda)$, resp. du sous-groupe de Levi de $\hat{Q}$).

Pour $i=1,\ldots,r-1$, notons $s_i$ l'endomorphisme de permutation simple associée à la base canonique de $\C^r$. C'est-à-dire $s_i(u_i) = u_{i+1}$, $s_i(u_{i+1})=u_i$ et $s_i(u_k)=u_k$ si $k\notin\{i,i+1\}$. Posons $\hat{w}_k=s_1\circ\dots\circ s_{k-1}$, pour $k=2,\ldots,r$, et $\hat{w}_1 = \id$, éléments de $\hat{W}$. \label{spécial:définition_hatw_k}

Nous noterons également $\hat{w}_{\lambda}$ (resp. $\hat{w}_{\hat{Q}}$) le plus long élément du groupe de Weyl $\hat{W}_{\lambda}$ (resp. $\hat{W}_{\hat{Q}}$).

\begin{lemm}
\label{lemm:C_m_et_wQwN}
 Soit $N=\dim_{\C}(M_{\geqslant m})$. Alors $\mathbb{P}(M_{\lambda,m}) = \hKC^{\lambda}\hat{w}_N^{-1}\hat{Q}/\hat{Q}$ et $\hat{w}_{\hat{Q}}\hat{w}_N$ est le plus long élément de la classe $\hat{W}_{\hat{Q}}\hat{w}_N\hat{W}_{\lambda}$ de $\hat{W}_{\hat{Q}}\backslash\hat{W}/\hat{W}_{\lambda}$.
\end{lemm}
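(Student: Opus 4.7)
My plan is to treat the two assertions separately.

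For the equality $\mathbb{P}(M_{\lambda,m}) = \hKC^{\lambda}\hat{w}_N^{-1}\hat{Q}/\hat{Q}$, I would exploit the $\KC$-equivariant identification $\hKC/\hat{Q}\xrightarrow{\sim}\mathbb{P}(M)$, $\hat{g}\hat{Q}\mapsto[\hat{g}\cdot u_1]$ fixed just before the lemma. A short induction using $\hat{w}_N = s_1\cdots s_{N-1}$ shows $\hat{w}_N^{-1}\cdot u_1 = u_N$, so $\hat{w}_N^{-1}\hat{Q}/\hat{Q}$ corresponds under this identification to $[u_N]$. The key observation is that, in the setting of interest where $C_m = \mathbb{P}(M_{\lambda,m})$ is nonempty, the non-increasing ordering $\langle\lambda,\beta_1\rangle\geqslant\cdots\geqslant\langle\lambda,\beta_r\rangle$ combined with $N = \dim_{\C} M_{\geqslant m}$ forces $\beta_N = m$, hence $u_N\in M_{\lambda,m}$. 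It then remains to note that $\hKC^{\lambda}$ preserves the weight-space decomposition of $M$ (since $\mathcal{B}_{\lambda}$ diagonalizes $\zeta\circ\lambda$) and contains the block $GL(M_{\lambda,m})$, which acts transitively on $\mathbb{P}(M_{\lambda,m})$, giving $\hKC^{\lambda}\cdot[u_N] = \mathbb{P}(M_{\lambda,m})$.

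For the second claim, the plan is to compute $\hat{w}_{\hat{Q}}\hat{w}_N\in\hat{W} = S_r$ explicitly in one-line notation, and then invoke the standard characterization of the longest element of a double coset $\hat{W}_{\hat{Q}}\cdot w\cdot\hat{W}_{\lambda}$: namely, this longest element is the unique representative having every simple reflection of $\hat{W}_{\hat{Q}}$ as a left descent and every simple reflection of $\hat{W}_{\lambda}$ as a right descent. A short calculation identifies $\hat{w}_N$ with the cycle $(1\,2\,\cdots\,N)$ (sending $k\mapsto k+1$ for $k<N$, $N\mapsto 1$, and fixing $k>N$) while $\hat{w}_{\hat{Q}}$ is the involution $i\mapsto r+2-i$ on $\{2,\ldots,r\}$ fixing $1$; composing them produces the one-line notation $(r,r-1,\ldots,r+2-N,1,r+1-N,\ldots,2)$. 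I would then read the right descents directly off this expression to see that they are exactly the $s_i$ with $i\neq N$, and I would argue that $s_N$ is \emph{not} in $\hat{W}_{\lambda}$ because $N$ is the last index of its $\lambda$-weight block in $\mathcal{B}_{\lambda}$ while $N+1$ (if present) opens a new block of strictly smaller $\lambda$-weight. A symmetric computation applied to the inverse permutation would yield left descents exactly $\{s_i : i\neq 1\}$, matching the generating set of $\hat{W}_{\hat{Q}} = S_{\{2,\ldots,r\}}$.

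The main technicality to watch for is the boundary case $N = r$ (equivalently $M_{<m}=0$), in which the central value $1$ disappears from the one-line notation, $\hat{w}_{\hat{Q}}\hat{w}_N$ collapses to the longest element of $\hat{W}$, and several of the index ranges above degenerate; this case should be dispatched separately to avoid off-by-one errors, though the conclusion is then immediate. Beyond that, the argument is a routine descent-set computation in the symmetric group and requires no deeper input.
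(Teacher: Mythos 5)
Your proof is correct, and for the second assertion it takes a genuinely different route from the paper's. For the identity $\mathbb{P}(M_{\lambda,m}) = \hKC^{\lambda}\hat{w}_N^{-1}\hat{Q}/\hat{Q}$ you argue essentially as the paper does: the base-point computation $\hat{w}_N^{-1}\cdot u_1 = u_N$ together with $\langle\lambda,\beta_N\rangle = m$, which (as you rightly flag) uses the nonemptiness of $C_m$, i.e. $M_{\lambda,m}\neq 0$, exactly as the paper does implicitly; the only difference is that where the paper invokes the a priori description of the irreducible components of $\mathbb{P}(M)^{\lambda}$ as the orbits $\hKC^{\lambda}\hat{w}^{-1}\hat{Q}/\hat{Q}$ and only has to identify the correct double coset, you close the argument directly by transitivity of the block $GL(M_{\lambda,m})\subset\hKC^{\lambda}$ on $\mathbb{P}(M_{\lambda,m})$ — both work. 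For the maximality of $\hat{w}_{\hat{Q}}\hat{w}_N$, the paper decomposes the double coset $\hat{W}_{\hat{Q}}\hat{w}_N\hat{W}_{\lambda}$ into the left cosets $\hat{W}_{\hat{Q}}\hat{w}_k$ with $\langle\lambda,\beta_k\rangle = m$, observes that $N$ is the largest such $k$, and compares lengths using the appendix formula $l(v\hat{w}_k)=l(v)+(k-1)$ for $v\in\hat{W}_{\hat{Q}}$; you instead compute the one-line notation of $\hat{w}_{\hat{Q}}\hat{w}_N$ (your formula, and the degenerate case $N=r$ where it collapses to the longest element of $\hat{W}$, are correct) and invoke the characterization of the longest element of a parabolic double coset as the unique representative whose left descent set contains all simple reflections of $\hat{W}_{\hat{Q}}$ and whose right descent set contains those of $\hat{W}_{\lambda}$. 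That characterization is true — it is the $w_0$-twisted form of the standard uniqueness of the minimal double-coset representative — but it is an external input the paper neither states nor needs, so you should cite or prove it; also note that your verification that $s_N\notin\hat{W}_{\lambda}$ (because $\beta_N$ and $\beta_{N+1}$ lie in different $\lambda$-weight blocks) is precisely where the definition of $N$ enters, playing the role of the paper's remark that $N$ is maximal among the admissible indices $k$. The paper's route has the side benefit of exhibiting exactly which left cosets constitute the double coset, which your descent computation does not provide; yours avoids having to identify those cosets at all.
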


\begin{proof}
Nous savons déjà que $\mathbb{P}(M_{\lambda,m})$ est une composante irréductible de $\mathbb{P}(M)^{\lambda}$. Donc $\mathbb{P}(M_{\lambda,m}) = \hKC^{\lambda}\hat{w}^{-1}\hat{Q}/\hat{Q}$, pour une certaine classe $\hat{w}\in\hat{W}_{\hat{Q}}\backslash\hat{W}/\hat{W}_{\lambda}$. De plus, par l'identification $\hat{g}\hat{Q}\in\hKC/\hat{Q}\mapsto[\hat{g}\cdot u_1]\in\mathbb{P}(M)$, on a
\[
 \mathbb{P}(M_{\lambda,m}) = \hKC^{\lambda}\hat{w}^{-1}\hat{Q}/\hat{Q} \quad \text{si et seulement si} \quad \hat{w}^{-1}\cdot u_1 \in M_{\lambda,m}.
\]
Cela ne dépend pas du représentant de la classe $\hat{W}_{\hat{Q}}\hat{w}\hat{W}_{\lambda}$, car la droite $\C u_1$ est stabilisée par $\hat{Q}$, donc par les éléments de $\hat{W}_{\hat{Q}}$. Or, par une récurrence simple, on voit que $\hat{w}_N^{-1}\cdot u_1 = s_{N-1}\circ\cdots\circ s_1(u_1) = u_N$, car $s_i(u_i) = u_{i+1}$ pour tout $i\in\{1,\ldots,N-1\}$. Donc $\hat{w}_N^{-1}\cdot u_1 = u_N$ appartient à $M_{\lambda,m}$, d'après la définition de $N$, de $\mathcal{B}_{\lambda}$ et de l'ordre choisi pour les éléments de cette base de $M$.

La classe $\hat{W}_{\hat{Q}}\hat{w}_N\hat{W}_{\lambda}$ est stable par multiplication à gauche par un élément de $\hat{W}_{\hat{Q}}$. C'est donc l'union disjointe de classes de $\hat{W}_{\hat{Q}}\backslash\hat{W}$. D'après le Lemme \ref{lemm:éléments_les_plus_courts_de_W_mod_WQ}, l'ensemble $\{\hat{w}_k;k=1,\ldots,r\}$ est un système de plus courts représentants des classes de $\hat{W}_{\hat{Q}}\backslash\hat{W}$. Donc $\hat{W}_{\hat{Q}}\hat{w}_N\hat{W}_{\lambda}$ se décompose en une union disjointe de classes modulo $\hat{W}_{\hat{Q}}$ à gauche,
\[
 \hat{W}_{\hat{Q}}\hat{w}_N\hat{W}_{\lambda} = \bigcup_{k\in\{1,\ldots,r\},\langle\lambda,\beta_k\rangle=m}\hat{W}_{\hat{Q}}\hat{w}_k.
\]
Toujours par sa définition, $N$ est le plus grand entier de $\left\{k\in\{1,\ldots,r\}; \langle\lambda,\beta_k\rangle = m\right\}$. Or pour deux entiers $k<k'$, on a $l(v\hat{w}_k) = l(v)+k<l(v)+k' = l(v\hat{w}_{k'})$, pour tout élément $v\in\hat{W}_{\hat{Q}}$, d'après le Lemme \ref{lemm:éléments_les_plus_courts_de_W_mod_WQ}. On en conclut que $\hat{w}_{\hat{Q}}\hat{w}_N$ est le plus long élément de $\hat{W}_{\hat{Q}}\backslash\hat{W}/\hat{W}_{\lambda}$.
\end{proof}

\begin{rema}
 \label{rema:N=r_ssi_M<m=0}
On remarque que $N=\dim_{\C}(M_{\geqslant m}) = r - \dim_{\C}(M_{< m})$. On en déduit que $N=r$ si et seulement si $M_{< m} =0$.
\end{rema}

On déduit du Lemme \ref{lemm:C_m_et_wQwN} que la composante irréductible $C(w,w',m)$ de $X_M^{\lambda}$ est identifiée à la composante irréductible $C(w,w',\hat{w}_{\hat{Q}}\hat{w}_N)$ de $(\KC/B\times\KC/B\times\hKC/\hat{Q})^{\lambda}$, où $N=\dim_{\C}(M_{\geqslant m})$.

Soit $i_{\lambda}:g\in\KC\mapsto(g,f_{\lambda}(g))\in\KC\times\hKC$ l'injection que nous allons étudier dans la section suivante pour démontrer le Théorème \ref{theo:cns_pairebiencouvrante}. Cette injection induit l'immersion fermée $i_{\lambda}:\KC/P(\lambda)\rightarrow\KC/P(\lambda)\times\hKC/\hat{P}(\lambda)$. Et cette dernière induit une application en cohomologie
\[
i_{\lambda}^*:\coh{\KC/P(\lambda)\times\hKC/\hat{P}(\lambda)}\longrightarrow\coh{\KC/P(\lambda)}.
\]
Nous définissons également l'application
\[
 f_{\lambda}^{P(\lambda)}: gP(\lambda)\in\KC/P(\lambda)\longmapsto f_{\lambda}(g)\hat{P}(\lambda)\in\hKC/\hat{P}(\lambda),
\]
et l'application induite en cohomologie
\[
 (f_{\lambda}^{P(\lambda)})^*:\coh{\hKC/\hat{P}(\lambda)}\longrightarrow\coh{\KC/P(\lambda)}.
\]
Nous définissons de manière similaire $f_{\lambda}^B$ et $(f_{\lambda}^B)^*$ en remplaçant $P(\lambda)$ et $\hat{P}(\lambda)$ par respectivement $B$ et $\hat{B}$.

\section{Critère cohomologique}
\label{section:critère_cohomologique}

Comme première étape de la démonstration du Théorème \ref{theo:cns_pairebiencouvrante}, nous allons donner un critère cohomologique pour qu'une paire soit dominante (resp. couvrante). En effet, toute paire bien couvrante est en particulier couvrante (et donc dominante).

Soit $\tKC$ un groupe réductif connexe et $\KC$ un sous-groupe réductif connexe. Notons $i:\KC\hookrightarrow\tKC$ l'injection associée. Soit $\TC$ (resp. $\tTC$) un tore maximal et $B$ (resp. $\tilde{B}$) un sous-groupe de Borel de $\KC$ (resp. $\tKC$) tels que $\TC\subset B\subset\tilde{B}\supset\tTC\supset \TC$. Soit $Q$ (resp. $\tilde{Q}$) un sous-groupe parabolique de $\KC$ (resp. $\tKC$) contenant $\TC$ (resp. $\tTC$). On ne demande pas que $Q$ soit contenu dans $\tilde{Q}$. Rappelons que $\rho$ (resp. $\hat{\rho}$, resp. $\tilde{\rho}$) dénote la demi-somme des racines positives de $\got{k}_{\C}$ (resp. $\hat{\got{k}}_{\C}$, resp. $\tilde{\got{k}}_{\C}$).

\begin{lemm}[\cite{ressayre08}, Lemme 14]
\label{lemm:ressayre_lemme14}
Soit $\lambda$ un sous-groupe à un paramètre dominant de $\TC$ et $(w,\tilde{w})\in W\times\tilde{W}$ tel que $w$ (resp. $\tilde{w}$) est l'élément le plus long de la classe $W_{Q}wW_{\lambda}$ (resp. $\tilde{W}_{\tilde{Q}}\tilde{w}\tilde{W}_{\lambda}$). Alors :
\begin{enumerate}
\item la paire $(C(w,\tilde{w}),\lambda)$ est dominante si et seulement si $\sigma_{w_0w}^{P(\lambda)}\,.\,i^*\bigl(\sigma_{\tilde{w}_0\tilde{w}}^{\tilde{P}(\lambda)}\bigr) \neq 0$,
\item la paire $(C(w,\tilde{w}),\lambda)$ est couvrante si et seulement si $\sigma_{w_0w}^{P(\lambda)}\,.\,i^*\bigl(\sigma_{\tilde{w}_0\tilde{w}}^{\tilde{P}(\lambda)}\bigr) = \pt$.
\end{enumerate}
\end{lemm}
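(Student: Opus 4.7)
The plan is to interpret dominance and covering of the canonical $\KC$-equivariant map $\eta: \KC \times_{P(\lambda)} C^+ \to X$ as a generic fiber-counting problem on $X = \KC/Q \times \tKC/\tilde{Q}$, and then to translate that count into a cup-product identity in $\coh{\KC/P(\lambda)}$.

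The first step would be to describe $C^+$ factor by factor via the Bialynicki-Birula decomposition. The hypothesis that $w$ (resp.\ $\tilde w$) is the longest representative of its double class $W_Q w W_\lambda$ (resp.\ $\tilde W_{\tilde Q} \tilde w \tilde W_\lambda$) ensures precisely that the projection of $C^+$ onto $\KC/Q$ (resp.\ $\tKC/\tilde Q$) is an open subset of the Schubert cell $Bw^{-1}Q/Q$ (resp.\ $\tilde B \tilde w^{-1}\tilde Q/\tilde Q$); the closures of these cells are the Schubert varieties whose fundamental classes are Poincar\'e dual to $\sigma_{w_0w}^{Q}$ and $\sigma_{\tilde w_0 \tilde w}^{\tilde Q}$ respectively. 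Dimension count then shows that $\dim(\KC \times_{P(\lambda)} C^+) = \dim X$ is exactly the condition that these two Schubert cohomology classes have complementary total degree in $\coh{X}$.

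Next I would analyze a generic fiber of $\eta$. A point $(y,\tilde y) \in X$ lies in the image iff there exists $g \in \KC$ such that $g^{-1}y$ and $i(g)^{-1}\tilde y$ simultaneously land in the $Q$- and $\tilde Q$-projections of $C^+$. Hence for generic $(y,\tilde y)$, the set of cosets $g P(\lambda)$ in $\eta^{-1}(y,\tilde y)$ is in bijection with the intersection, inside $\KC/P(\lambda)$, of a $y$-translate of the preimage of $\overline{Bw^{-1}Q/Q}$ and an $i$-pullback $\tilde y$-translate of the preimage of $\overline{\tilde B \tilde w^{-1}\tilde Q/\tilde Q}$. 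Kleiman's transversality theorem in the $\KC$-homogeneous variety $\KC/P(\lambda)$ implies that for generic $(y,\tilde y)$ these two subvarieties meet transversely in finitely many reduced points, of cardinality
\[
N \;=\; \int_{\KC/P(\lambda)} \sigma_{w_0 w}^{P(\lambda)} \cdot i^*\bigl(\sigma_{\tilde w_0 \tilde w}^{\tilde P(\lambda)}\bigr).
\]

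Both assertions of the lemma would then follow: $\eta$ is dominant iff $N \geqslant 1$, equivalently the cup product is nonzero in $\coh{\KC/P(\lambda)}$; and $\eta$ is covering (birational) iff the generic fiber is a single reduced point, that is $N=1$, equivalently the cup product equals $\pt$. The main obstacle I anticipate is the clean identification of the generic fibers of $\eta$ with the Schubert intersection just described, including verifying that no extra multiplicity appears from the $P(\lambda)$-bundle structure of $\eta$ or from the boundary strata of $C^+$; this is exactly where the longest-representative choice for $w$ and $\tilde w$ is indispensable, since it guarantees that each relevant Schubert cell projection from $C^+$ is an open immersion rather than a branched cover, so the coset count $N$ genuinely equals the degree of $\eta$.
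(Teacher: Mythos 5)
Your route is, in substance, the one behind the cited result: the thesis itself gives no proof of this lemma (it is quoted from \cite{ressayre08}), and Ressayre's argument is exactly the mechanism you describe — describe $C^+$ factorwise via Bialynicki-Birula, identify the fibre of $\eta$ over $(y,\tilde y)=(hQ,\tilde h\tilde Q)$ with $\{gP(\lambda)\;;\; g^{-1}y\in P(\lambda)w^{-1}Q/Q,\ i(g)^{-1}\tilde y\in \tilde P(\lambda)\tilde w^{-1}\tilde Q/\tilde Q\}$, i.e.\ with the intersection in $\KC/P(\lambda)$ of $h\cdot Q w P(\lambda)/P(\lambda)$ with the preimage under $\KC/P(\lambda)\rightarrow\tKC/\tilde P(\lambda)$ of $\tilde h\cdot\tilde Q\tilde w\tilde P(\lambda)/\tilde P(\lambda)$, and conclude by Kleiman transversality; the hypothesis that $w$ and $\tilde w$ are the longest representatives of their double cosets is what guarantees that the closures of these two subvarieties have classes $\sigma^{P(\lambda)}_{w_0w}$ and $i^*\bigl(\sigma^{\tilde P(\lambda)}_{\tilde w_0\tilde w}\bigr)$. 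Two small slips in your description do not affect this: each factor of $C^+$ is the $P(\lambda)$-orbit $P(\lambda)w^{-1}Q/Q$, which \emph{contains} the cell $Bw^{-1}Q/Q$ as a dense subset (not the reverse inclusion), and the relevant degree complementarity is in $\coh{\KC/P(\lambda)}$, not in $\coh{X}$.

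There is, however, one step that fails as written. In part (1) you declare the generic fibre to be finite, of cardinality the intersection number $N$, and assert ``dominant iff $N\geqslant 1$, equivalently the cup product is nonzero''. Dominance does not force $\dim(\KC\times_{P(\lambda)}C^+)=\dim X$: when this dimension is strictly larger, $\eta$ can be dominant with positive-dimensional generic fibres, the integral $N$ vanishes for degree reasons, and yet the cup product is nonzero — so the equivalence between $N\geqslant 1$ and nonvanishing of the class is false in general, and the finite-count formulation cannot prove the ``only if'' direction of (1). The correct statement, which your Kleiman input does deliver after rephrasing, is: for general $(h,\tilde h)$ the intersection above is either empty or generically transverse of the expected dimension, with cohomology class equal to $\sigma^{P(\lambda)}_{w_0w}\,.\,i^*\bigl(\sigma^{\tilde P(\lambda)}_{\tilde w_0\tilde w}\bigr)$; it is nonempty for general translates if and only if this class is nonzero (an empty intersection has zero class, while a nonempty generically transverse one is a nonzero effective cycle, hence has nonzero class in the projective variety $\KC/P(\lambda)$). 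The fibre count, the complementary-degree condition and $N=1$ only enter in part (2), where $\sigma^{P(\lambda)}_{w_0w}\,.\,i^*\bigl(\sigma^{\tilde P(\lambda)}_{\tilde w_0\tilde w}\bigr)=\pt$ forces the generic fibre to be a single reduced point and hence $\eta$ to be birational in characteristic zero, and conversely.
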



Fixons une fois pour toute un sous-groupe à un paramètre dominant $\lambda$ de $\TC$. Nous travaillons maintenant sur la variété $X_M$, c'est-à-dire qu'on applique le Lemme \ref{lemm:ressayre_lemme14} à $\tKC=\KC\times \hKC$ et à l'immersion fermée $i=i_{\lambda}:g\in\KC\hookrightarrow (g,i(g))\in\KC\times\hKC$ définie en fin de section \ref{section:notations_et_paramétrage}.

Soit $(w,w',m)$ un triplet de $W^{\lambda} \times W^{\lambda} \times \Z$ tel que $C(w,w',m)$ est non vide. On pose $\tilde{w} = (w',\hat{w}_{\hat{Q}}\hat{w}_N)\in\tilde{W}=W\times\hat{W}$. La paire $(w,\tilde{w})$ vérifie les hypothèses du Lemme \ref{lemm:ressayre_lemme14}. Et, par le Lemme \ref{lemm:C_m_et_wQwN}, la paire $(C(w,w',m),\lambda)$ de $X_M$ est dominante (resp. couvrante, resp. bien couvrante) si et seulement si la paire $(C(w,\tilde{w}),\lambda)$ de $\KC/B\times\KC/B\times\hKC/\hat{Q}$ est dominante (resp. couvrante, resp. bien couvrante).

\begin{lemm}
\label{lemm:alternatives_cohomologiques}
Soit $(w,w',m)\in W^{\lambda}\times W^{\lambda}\times\Z$. 
 Alors :
\begin{enumerate}
\item soit $M_{<0} = 0$, et alors $\jmath^*\left(\sigma_{w_0w}^{P(\lambda)}\,.\,i_{\lambda}^*\bigl(\sigma_{(w_0w',\hat{w}_0\hat{w}_{\hat{Q}}\hat{w}_N)}^{P(\lambda)\times\hat{P}(\lambda)}\bigr)\right) = \sigma_{w_0w}^{P(\lambda)}\,.\,\sigma_{w_0w'}^{P(\lambda)}$,
\item sinon $\jmath^*\left(\sigma_{w_0w}^{P(\lambda)}\,.\,i_{\lambda}^*\bigl(\sigma_{(w_0w',\hat{w}_0\hat{w}_{\hat{Q}}\hat{w}_N)}^{P(\lambda)\times\hat{P}(\lambda)}\bigr)\right) = \sigma_{w_0w}^{P(\lambda)}\,.\,\sigma_{w_0w'}^{P(\lambda)}\,.\,\prod_{\beta\in\WT(M_{< m})}\Theta(-\beta)^{n_{\beta}}$.
\end{enumerate}
\end{lemm}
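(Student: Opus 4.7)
The proof would proceed in three steps: a Künneth decomposition for $i_\lambda^*$, a naturality reduction to the full flag variety of $\hKC$, and a final Schubert-calculus computation — the last being the main obstacle.

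First, applying the Künneth formula to the product $\KC/P(\lambda)\times\hKC/\hat{P}(\lambda)$, the Schubert class on the product factors as an external product of Schubert classes on the factors. Since $i_\lambda$ is the graph of $f_\lambda^{P(\lambda)}$, so that $p_1\circ i_\lambda=\id$ and $p_2\circ i_\lambda=f_\lambda^{P(\lambda)}$, pulling back converts the external product to a cup product on $\KC/P(\lambda)$:
\[
i_\lambda^*\!\left(\sigma^{P(\lambda)\times\hat{P}(\lambda)}_{(w_0w',\,\hat{w}_0\hat{w}_{\hat{Q}}\hat{w}_N)}\right) \;=\; \sigma^{P(\lambda)}_{w_0w'} \cdot (f_\lambda^{P(\lambda)})^*\sigma^{\hat{P}(\lambda)}_{\hat{w}_0\hat{w}_{\hat{Q}}\hat{w}_N}.
\]

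Second, I would multiply by $\sigma^{P(\lambda)}_{w_0w}$ and apply $\jmath^*$, using the identity $\jmath^*(\sigma^{P(\lambda)}_{w_0u})=\sigma^B_{w_0u}$ for $u\in W^\lambda$ to handle the first two factors. Naturality of pullback yields the commutative square $\jmath^*\circ (f_\lambda^{P(\lambda)})^* = (f_\lambda^B)^*\circ\hat{\jmath}^*$, where $\hat{\jmath}:\hKC/\hat{B}\to\hKC/\hat{P}(\lambda)$ is the canonical projection. Lemma \ref{lemm:C_m_et_wQwN} ensures that $\hat{w}_{\hat{Q}}\hat{w}_N$ is the longest representative in its class mod $\hat{W}_\lambda$, so $\hat{w}_0\hat{w}_{\hat{Q}}\hat{w}_N$ is the shortest in its class and $\hat{\jmath}^*(\sigma^{\hat{P}(\lambda)}_{\hat{w}_0\hat{w}_{\hat{Q}}\hat{w}_N})=\sigma^{\hat{B}}_{\hat{w}_0\hat{w}_{\hat{Q}}\hat{w}_N}$. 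The lemma then reduces to proving the single identity
\[
(f_\lambda^B)^*\sigma^{\hat{B}}_{\hat{w}_0\hat{w}_{\hat{Q}}\hat{w}_N} \;=\; \prod_{\beta\in\WT(M_{<m})}\Theta(-\beta)^{n_\beta}
\]
in $\coh{\KC/B}$, with the empty product convention giving case (1) whenever $M_{<m}=0$.

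Third, and this is the main obstacle, I would establish the cohomological identity in the full flag variety of $GL_r(\C)$:
\[
\sigma^{\hat{B}}_{\hat{w}_0\hat{w}_{\hat{Q}}\hat{w}_N} \;=\; \prod_{i=N+1}^{r} c_1(L_i)
\]
in $\coh{\hKC/\hat{B}}$, where $L_i=\mathcal{F}_i/\mathcal{F}_{i-1}$ are the tautological line quotients. This is precisely the type of combinatorial identity treated in Appendix \ref{chap:combinatoire_gpdeWeyl_de_GL_r(C)}: it follows from the explicit description of $\hat{w}_0\hat{w}_{\hat{Q}}\hat{w}_N$ as a permutation in $S_r$ together with iterated applications of the Chevalley formula. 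Once this is in hand, one concludes by tracking weights through the basis $\mathcal{B}_\lambda=(u_1,\ldots,u_r)$: the fiber of $L_i$ at the base point is $\C u_i$ with $\hat{T}_\C$-weight $\beta_i$, which restricts along $\zeta$ to the $\TC$-weight $\beta_i$, giving $(f_\lambda^B)^*L_i=\mathcal{L}_{-\beta_i}$ and hence $(f_\lambda^B)^*c_1(L_i)=\Theta(-\beta_i)$. By construction of $\mathcal{B}_\lambda$, the indices $i=N+1,\ldots,r$ enumerate the weights of $M_{<m}$ with multiplicities $n_{\beta}$, so regrouping yields the claimed product.
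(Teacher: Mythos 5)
Your proposal is correct and follows essentially the same route as the paper: graph/Künneth factorization of $i_{\lambda}^*$, the ring-morphism and naturality reductions through $\jmath^*$ and $\hat{\jmath}^*$, and then the pullback of $\sigma^{\hat{B}}_{\hat{w}_0\hat{w}_{\hat{Q}}\hat{w}_N}=\sigma^{\hat{B}}_{\check{w}_N}$ under $(f_{\lambda}^B)^*$. Your final step, phrased via the tautological quotient line bundles $L_i$, is just a geometric repackaging of the paper's Appendix computation ($\sigma_{\check{w}_{k-1}}$ as a product of differences of fundamental-weight classes, pulled back through $f_{\lambda}^*$), since $c_1(L_i)=\hat{\Theta}(-\hat{e}_i^*)$; note also that the hypothesis in case (1) should be read as $M_{<m}=0$ (equivalently $N=r$), exactly as you did.
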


\begin{proof}
Tout d'abord, on remarque que
\[
i^*\left(\sigma_{\tilde{w}_0\tilde{w}}^{\tilde{P}(\lambda)}\right) = i_{\lambda}^*\left(\sigma_{w_0w'}^{P(\lambda)}\otimes\sigma_{\hat{w}_0\hat{w}_{\hat{Q}}\hat{w}_N}^{\hat{P}(\lambda)}\right) = \sigma_{w_0w'}^{P(\lambda)}\,.\,\left(f_{\lambda}^{P(\lambda)}\right)^*\left(\sigma_{\hat{w}_0\hat{w}_{\hat{Q}}\hat{w}_N}^{\hat{P}(\lambda)}\right),
\]
puisque $i_{\lambda}$ est la composition des applications $(\id\times f_{\lambda}^{P(\lambda)})\circ\Delta$, où $\Delta$ est l'application diagonale $\KC/P\rightarrow \KC/P\times \KC/P$. Alors $i_{\lambda}^* = \Delta^*\circ(\id\times (f_{\lambda}^{P(\lambda)})^*)$ et $\Delta^*$ est le produit cup.

Puisque $\jmath^*$ est un morphisme d'anneaux pour le produit cup et $w_0w$ (resp. $w_0w'$) est le plus court élément de $w_0wW_{\lambda}$ (resp. $w_0wW_{\lambda}$), on a
\[
\jmath^*\left(\sigma_{w_0w}^{P(\lambda)}\,.\,\sigma_{w_0w'}^{P(\lambda)}\,.\,(f_{\lambda}^{P(\lambda)})^*\bigl(\sigma_{\hat{w}_0\hat{w}_{\hat{Q}}\hat{w}_N}^{\hat{P}(\lambda)}\bigr)\right) = \sigma_{w_0w}^{B}\,.\,\sigma_{w_0w'}^{B}\,.\,\jmath^*\left((f_{\lambda}^{P(\lambda)})^*\bigl(\sigma_{\hat{w}_0\hat{w}_{\hat{Q}}\hat{w}_N}^{\hat{P}(\lambda)}\bigr)\right).
\]
De plus, nous avons le diagramme commutatif suivant
\[
\begin{CD}
   \KC/B @>\jmath>> \KC/P(\lambda) \\
   @Vf_{\lambda}^BVV @VVf_{\lambda}^{P(\lambda)}V\\
   \hKC/\hat{B} @>\hat{\jmath}>> \hKC/\hat{P}(\lambda)
\end{CD}
\]
qui donne
\[
\jmath^*\left((f_{\lambda}^{P(\lambda)})^*\bigl(\sigma_{\hat{w}_0\hat{w}_{\hat{Q}}\hat{w}_N}^{\hat{P}(\lambda)}\bigr)\right) = (f_{\lambda}^B)^*\left(\hat{\jmath}^*\bigl(\sigma_{\hat{w}_0\hat{w}_{\hat{Q}}\hat{w}_N}^{\hat{P}}\bigr)\right) = (f_{\lambda}^B)^*\bigl(\sigma_{\hat{w}_0\hat{w}_{\hat{Q}}\hat{w}_N}^{\hat{B}}\bigr),
\]
car $\hat{w}_0\hat{w}_{\hat{Q}}\hat{w}_N$ est le plus court élément de sa classe de $\hat{W}/\hat{W}_{\lambda}$. Nous en déduisons donc l'égalité
\[
\jmath^*\left(\sigma_{w_0w}^{P(\lambda)}\,.\,\sigma_{w_0w'}^{P(\lambda)}\,.\,(f_{\lambda}^{P(\lambda)})^*\bigl(\sigma_{\hat{w}_0\hat{w}_{\hat{Q}}\hat{w}_N}^{\hat{P}(\lambda)}\bigr)\right) = \sigma_{w_0w}^B\,.\,\sigma_{w_0w'}^B\,.\,(f_{\lambda}^B)^*\bigl(\sigma_{\hat{w}_0\hat{w}_{\hat{Q}}\hat{w}_N}^{\hat{B}}\bigr),
\]
pour tout $(w,w',m)\in W^{\lambda} \times W^{\lambda} \times \Z$.

Nous utilisons la formule de Chevalley pour calculer $(f_{\lambda}^B)^*\bigl(\sigma_{\hat{w}_0\hat{w}_{\hat{Q}}\hat{w}_N}^{\hat{B}}\bigr)$. Tous les résultats nécessaires sont réunis dans le paragraphe \ref{subsection:formule_Chevalley}.

Du Lemme \ref{lemm:relation_wtilde_wchapeau}, on a $\hat{w}_0\hat{w}_{\hat{Q}}\hat{w}_{N} = s_{r-1}\ldots s_{N+1}s_N$ lorsque $N\in\{1,\ldots,r-1\}$, et $\hat{w}_0\hat{w}_{\hat{Q}}\hat{w}_{r} = \id$. Donc l'équation \eqref{eq:image_par_flambdatilde_de_sigmawtilde_finale} donne
\[
(f_{\lambda}^B)^*(\sigma_{\hat{w}_0\hat{w}_{\hat{Q}}\hat{w}_N}^{\hat{B}}) = (f_{\lambda}^B)^*(\sigma_{s_{r-1}\ldots s_N}^{\hat{B}}) = \Theta\bigl((-\beta_{N+1}) \ldots (-\beta_{r})\bigr),
\]
quand $N<r$, et
\[
(f_{\lambda}^B)^*(\sigma_{\hat{w}_0\hat{w}_{\hat{Q}}\hat{w}_r}^{\hat{B}}) = (f_{\lambda}^B)^*(\sigma_{\id}^{\hat{B}}) = \sigma_{\id}^B.
\]
En outre, on remarque que $\Theta(\beta_{N+1}\ldots\beta_{r}) = \prod_{\beta\in\WT(M_{<m})}\Theta(\beta)^{n_{\beta}}$, car $\beta_{N+1},\ldots,\beta_r$ sont les poids de $M_{<m}$ comptés avec multiplicité.

On obtient maintenant l'alternative suivante:
\begin{itemize}
\item soit $N=r$, et alors $\jmath^*\left(\sigma_{w_0w}^{P(\lambda)}\,.\,i_{\lambda}^*\bigl(\sigma_{(w_0w',\hat{w}_0\hat{w}_{\hat{Q}}\hat{w}_N)}^{P(\lambda)\times\hat{P}(\lambda)}\bigr)\right) = \sigma_{w_0w}^{P(\lambda)}\,.\,\sigma_{w_0w'}^{P(\lambda)}$,
\item soit $1\leqslant N<r$, et alors
\[
\jmath^*\left(\sigma_{w_0w}^{P(\lambda)}\,.\,i_{\lambda}^*\bigl(\sigma_{(w_0w',\hat{w}_0\hat{w}_{\hat{Q}}\hat{w}_N)}^{P(\lambda)\times\hat{P}(\lambda)}\bigr)\right) = \sigma_{w_0w}^{P(\lambda)}\,.\,\sigma_{w_0w'}^{P(\lambda)}\,.\,\prod_{\beta\in\WT(M_{< m})}\Theta(-\beta)^{n_{\beta}}.
\]
\end{itemize}
Or $N=r-\dim_{\C}M_{<0}$ d'après la Remarque \ref{rema:N=r_ssi_M<m=0}. On en déduit alors le résultat annoncé.
\end{proof}

La Proposition suivante donne un critère cohomologique pour qu'une paire de $X_M$ soit dominante (resp. couvrante).

\begin{prop}
\label{prop:critère_cohomologique_paire_dominante/couvrante}
Soit $(w,w',m)\in W^{\lambda}\times W^{\lambda}\times\Z$ tel que la paire $(C(w,w',m),\lambda)$ soit non vide. Alors la paire $(C(w,w',m),\lambda)$ est dominante (resp. couvrante) si et seulement
\begin{enumerate}
\item soit $M_{<0} = 0$ et $\sigma_{w_0w}^{P(\lambda)}\,.\,\sigma_{w_0w'}^{P(\lambda)}\neq 0$ (resp. $M_{<0} = 0$ et $w' = w_0ww_{\lambda}$),
\item soit  $\sigma_{w_0w}^{P(\lambda)}\,.\,\sigma_{w_0w'}^{P(\lambda)}\,.\,\prod_{\beta\in\WT(M_{< m})}\Theta(-\beta)^{n_{\beta}} \neq 0$ (resp. $\dots=\jmath^*(\pt)$).
\end{enumerate}
\end{prop}

Ce résultat découle des Lemmes \ref{lemm:ressayre_lemme14} et \ref{lemm:alternatives_cohomologiques}, de l'injectivité du morphisme $\jmath^*:\coh{\KC/P(\lambda)}\rightarrow\coh{\KC/B}$ , ainsi que de la Proposition \ref{prop:produit_deuxclassesschubert_dansG/P_vaut_classept} donnée ci-dessous. La Proposition \ref{prop:produit_deuxclassesschubert_dansG/P_vaut_classept} est un fait bien connu, généralisant le résultat pour le cas des sous-groupes de Borel dont on pourra trouver la preuve dans \cite[Lemme 1 et Proposition 1]{demazure} ou \cite{chevalley} par exemple.

\begin{prop}
\label{prop:produit_deuxclassesschubert_dansG/P_vaut_classept}
Soit $(w,w')\in(W^{\lambda})^2$ tel que $l(w)+l(w')\geqslant l(w_0)+l(w_{\lambda})$. Alors
\[
\sigma_{w}^{P(\lambda)}\,.\,\sigma_{w'}^{P(\lambda)} = \left\{\begin{array}{ll}
\pt & \text{si $w'=w_0ww_{\lambda}$,} \\
0 & \text{sinon}.
\end{array}\right.
\]
\end{prop}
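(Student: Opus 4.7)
Le plan est de combiner une analyse des degr�s en cohomologie avec la dualit� de Poincar� classique pour les classes de Schubert sur $\KC/P(\lambda)$. Pour $w\in W^\lambda$, le plus court repr�sentant de la classe $wW_\lambda$ est $w_{min} := ww_\lambda$, et la vari�t� de Schubert $X_w^{P(\lambda)} = \overline{Bw_{min}P(\lambda)/P(\lambda)}$ est de dimension complexe $l(w_{min}) = l(w)-l(w_\lambda)$. Par cons�quent, la classe duale $\sigma_w^{P(\lambda)}$ appartient � $\mathrm{H}^{2(l(w)-l(w_\lambda))}(\KC/P(\lambda),\Z)$, et le produit $\sigma_w^{P(\lambda)}\,.\,\sigma_{w'}^{P(\lambda)}$ vit en degr� cohomologique $2(l(w)+l(w')-2l(w_\lambda))$.

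La dimension complexe de $\KC/P(\lambda)$ �tant $l(w_0)-l(w_\lambda)$, la cohomologie maximale est $\mathrm{H}^{2(l(w_0)-l(w_\lambda))}(\KC/P(\lambda),\Z) = \Z\cdot\pt$. Sous l'hypoth�se $l(w)+l(w')\geqslant l(w_0)+l(w_\lambda)$, deux situations se pr�sentent. Si l'in�galit� est stricte, le produit se trouve en degr� strictement sup�rieur au degr� maximal, donc nul. Un calcul direct montre par ailleurs qu'alors $w'\neq w_0 w w_\lambda$, car pour $w\in W^\lambda$ on a $w_0ww_\lambda = w_0w_{min}$ et $l(w_0 w w_\lambda) = l(w_0)-l(w)+l(w_\lambda)$, ce qui forcerait l'�galit� $l(w)+l(w') = l(w_0)+l(w_\lambda)$. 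Dans le cas d'�galit�, le produit appartient � la cohomologie de plus haut degr�, donc s'�crit $c\cdot\pt$ pour un certain entier $c$ qu'il reste � identifier.

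Pour d�terminer ce coefficient $c$, on invoque la dualit� de Poincar� pour les classes de Schubert sur la vari�t� projective lisse $\KC/P(\lambda)$, qui constitue le point central de la d�monstration. Plus pr�cis�ment, pour des plus courts repr�sentants $u,u'\in W$ de leurs classes modulo $W_\lambda$ v�rifiant $l(u)+l(u') = \dim_{\C}\KC/P(\lambda)$, la dualit� fournit
\[
\int_{\KC/P(\lambda)} \sigma_u^{P(\lambda)}\,.\,\sigma_{u'}^{P(\lambda)} = \delta_{u',w_0 u w_\lambda},
\]
o� l'on v�rifie au passage que $w_0 u w_\lambda$ est encore le plus court repr�sentant de sa classe modulo $W_\lambda$ lorsque $u$ l'est (en utilisant la caract�risation des plus courts repr�sentants par l'in�galit� $l(us)>l(u)$ pour tout $s$ simple dans $S_\lambda$). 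Le passage aux plus longs repr�sentants via $u = ww_\lambda$, $u' = w'w_\lambda$ et $w_\lambda^2=1$ fournit alors l'�quivalence $w' = w_0 w w_\lambda \Leftrightarrow u' = w_0 u w_\lambda$, donc $c = \delta_{w',w_0 w w_\lambda}$, ce qui ach�verait la preuve. L'obstacle principal est l'identification explicite de la Poincar�-duale de $\sigma_u^{P(\lambda)}$ comme $\sigma_{w_0uw_\lambda}^{P(\lambda)}$ : elle repose sur la transversalit� de Kleiman appliqu�e � la vari�t� de Schubert $X_u^{P(\lambda)}$ et � sa vari�t� de Schubert oppos�e $\overline{B^-uP(\lambda)/P(\lambda)}$, identifi�e comme un $w_0$-translat� de $X_{w_0uw_\lambda}^{P(\lambda)}$, ce qui est un r�sultat classique dont les preuves se trouvent dans \cite{demazure} et \cite{chevalley}.
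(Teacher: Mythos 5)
Votre d\'emonstration est correcte, mais elle emprunte un chemin sensiblement diff\'erent de celui du texte. Vous commencez par un comptage de degr\'es : l'hypoth\`ese $l(w)+l(w')\geqslant l(w_0)+l(w_{\lambda})$ place le produit en degr\'e au moins \'egal au degr\'e maximal, ce qui r\`egle imm\'ediatement le cas strict (produit nul, et $w'\neq w_0ww_{\lambda}$), puis vous identifiez le coefficient de $\pt$ dans le cas d'\'egalit\'e en invoquant la dualit\'e de Poincar\'e classique des bases de Schubert, $\int\sigma_u^{P(\lambda)}\,.\,\sigma_{u'}^{P(\lambda)}=\delta_{u',w_0uw_{\lambda}}$ pour des repr\'esentants minimaux de longueurs compl\'ementaires, dualit\'e dont vous renvoyez la preuve \`a \cite{demazure} et \cite{chevalley}. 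Le texte proc\`ede autrement : il exprime d'abord le produit cup comme classe d'une intersection effective, $\sigma_w^{P(\lambda)}\,.\,\sigma_{w'}^{P(\lambda)} = [X_{w_0ww_{\lambda}}^{P(\lambda)}\cap w_0X_{w_0w'w_{\lambda}}^{P(\lambda)}]$, gr\^ace au th\'eor\`eme de transversalit\'e de Kleiman et \`a la $B$- et $B^-$-invariance des deux vari\'et\'es de Schubert, puis analyse cette intersection par un argument de points fixes du tore (th\'eor\`eme de Borel) : elle est vide d\`es que $w'\neq w_0ww_{\lambda}$ (ce qui couvre aussi le cas strict, trait\'e chez vous par le degr\'e), et r\'eduite au seul point $wP(\lambda)$, transversalement, lorsque $w'=w_0ww_{\lambda}$. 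Les deux approches reposent au fond sur le m\^eme socle classique --- le texte utilise lui aussi l'identification $[X_w^{P(\lambda)}]=\sigma_{w_0ww_{\lambda}}^{P(\lambda)}$ --- mais la v\^otre est plus courte et plus modulaire, en d\'el\'eguant tout le contenu g\'eom\'etrique \`a l'\'enonc\'e de dualit\'e standard (ce que la remarque du texte renvoyant aux m\^emes r\'ef\'erences autorise), tandis que celle du texte red\'emontre essentiellement cette dualit\'e dans le cadre parabolique et fournit une information suppl\'ementaire : le point d'intersection explicite et la transversalit\'e de l'intersection.
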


Nous commençons par prouver deux lemmes. Pour $w\in W^{\lambda}$, nous noterons $X_w^{P(\lambda)} = \overline{BwP(\lambda)/P(\lambda)}$ la variété de Schubert généralisée associée à $w$.

\begin{lemm}
\label{lemm:intersection_classes_de_schubert_généralisées}
Soit $(w,w')\in (W^{\lambda})^2$.
\begin{enumerate}
\item Si $l(w)\leqslant l(w')$ et $w\neq w'$, alors $X_w^{P(\lambda)}\cap w_0X_{w_0w'w_{\lambda}}^{P(\lambda)} = \emptyset$.
\item L'intersection $X_w^{P(\lambda)}\cap w_0X_{w_0ww_{\lambda}}^{P(\lambda)}$ est transverse et réduite à $wP(\lambda)/P(\lambda)$.
\end{enumerate}
\end{lemm}

\begin{proof}
Comme $BwP(\lambda)/P(\lambda)$ est une orbite pour l'action de $B$ sur $\KC/B$, $X_w^{P(\lambda)}$ sera stable par $B$, donc union des $X_{w'}^{P(\lambda)}$, avec $\dim X_{w'}^{P(\lambda)} \leqslant \dim X_{w}^{P(\lambda)}$, et égalité si et seulement si $w=w'$ modulo $W_{\lambda}$. Le cas de l'égalité provient de la décomposition de Bruhat de $\KC/P(\lambda)$ en cellules disjointes $\cup_{w'\in W^{\lambda}}Bw'P(\lambda)/P(\lambda)$, cf \cite{BGG}. De plus, la dimension complexe d'une variété algébrique singulière $X_w^{P(\lambda)}$ est égale $l(w)-l(w_{\lambda})$, car $w$ est le plus long élément de sa classe $wW_{\lambda}$. Donc pour $w,w'\in W^{\lambda}$,
\begin{equation}
\label{eq:implications_inclusion_variétésdeschubert}
(w'P(\lambda)/P(\lambda)\in X_w^{P(\lambda)}) \Longleftrightarrow (X_{w'}^{P(\lambda)}\subset X_w^{P(\lambda)}) \Longrightarrow (l(w') < l(w) \mbox{ ou } w'=w).
\end{equation}

Fixons $w,w'\in W^{\lambda}$ tels que $l(w)\leqslant l(w')$, et supposons que $X_w^{P(\lambda)}\cap w_0 X_{w_0w'}^{P(\lambda)} \neq \emptyset$. Si $Y$ est une composante irréductible de $X_w^{P(\lambda)}\cap w_0 X_{w_0w'}^{P(\lambda)}$, c'est une variété projective non vide, stable par $\TC$, car chacune des deux variétés $X_w^{P(\lambda)}$ et $w_0 X_{w_0w'}^{P(\lambda)}$ est stable par $\TC$. Le groupe $\TC$ possède donc un point fixe dans $Y$, par le théorème de Borel (Théorème 21.2 de \cite{humphreys}). C'est en particulier un point fixe dans $\KC/P(\lambda)$, or le stabilisateur dans $\TC$ d'un point $gP(\lambda)/P(\lambda)$ est égal à $g\TC g^{-1}\cap\TC$. Donc $(\TC)_g = g\TC g^{-1}\cap\TC = \TC$ si et seulement si $g$ normalise $\TC$, c'est-à-dire $g=w''\in W$. Et quitte à multiplier à droite $g$ par un élément convenable de $W_{\lambda}\subset P(\lambda)$, on peut supposer que $g=w''\in W^{\lambda}$ (c'est-à-dire de longueur maximale dans sa classe modulo $W_{\lambda}$). Donc $w''P(\lambda)/P(\lambda)\in X_w^{P(\lambda)}$ et $w_0w''P(\lambda)/P(\lambda)\in X_{w_0w'}^{P(\lambda)}$. Cela implique donc, d'après \eqref{eq:implications_inclusion_variétésdeschubert}, que
\[
(l(w'') < l(w) \mbox{ ou } w'' = w)\ \mbox{ et } \ (l(w_0w''w_{\lambda}) < l(w_0w'w_{\lambda}) \mbox{ ou } w'' = w'),
\]
la dernière assertion venant du fait que le plus long élément de la classe $w_0w'W_{\lambda}$ est $w_0w'w_{\lambda}$. Ceci découle du fait que, pour tout $u\in W$, $l(w_0u) = l(w_0) - l(u)$. En particulier, $l(w_0w''w_{\lambda}) < l(w_0w'w_{\lambda})$ si et seulement si $l(w''w_{\lambda}) > l(w'w_{\lambda})$. Et comme $w'$ et $w''$ sont les éléments les plus longs de leurs classes, $l(w''w_{\lambda}) = l(w'') - l(w_{\lambda})$ et $l(w'w_{\lambda}) = l(w') - l(w_{\lambda})$. D'où $l(w_0w''w_{\lambda}) < l(w_0w'w_{\lambda})$ est équivalent à $l(w'') > l(w')$. De l'hypothèse $l(w)\leqslant l(w')$, on a nécessairement $w = w'' = w'$. On en déduit que si $w \neq w'$, alors $X_w^{P(\lambda)}\cap w_0 X_{w_0w'}^{P(\lambda)} = \emptyset$.

Maintenant, montrons le point (2). D'après le Théorème de transversalité de Kleiman \cite{kleiman}, \cite[Proposition 3]{belkale_kumar}, il existe un ouvert $\mathcal{U}$ de $\KC$ tel que $X_{w}^{P(\lambda)}$ intersecte $gw_0X_{w_0ww_{\lambda}}^{P(\lambda)}$ proprement et transversalement pour tout $g\in\mathcal{U}$. Mais $\KC$ est irréductible et $BB^-$ est un ouvert de $\KC$, donc $\mathcal{U}$ intersecte $BB^- = BU^- \cong B\times U^-$. Or $X_w^{P(\lambda)}$ est $B$-invariante et $w_0X_{w_0ww_{\lambda}}^{P(\lambda)}$ est $B^-$-invariante, donc pour $g=bu\in\mathcal{U}\cap BB^-$, on a
\[
X_{w}^{P(\lambda)}\cap (gw_0X_{w_0ww_{\lambda}}^{P(\lambda)}) = (bX_{w}^{P(\lambda)})\cap (bw_0X_{w_0ww_{\lambda}}^{P(\lambda)}) = b\cdot(X_{w}^{P(\lambda)}\cap w_0X_{w_0ww_{\lambda}}^{P(\lambda)}).
\]
Puisque l'intersection $X_{w}^{P(\lambda)}\cap (gw_0X_{w_0ww_{\lambda}}^{P(\lambda)})$ est transverse, c'est aussi le cas de l'intersection $X_{w}^{P(\lambda)}\cap w_0X_{w_0ww_{\lambda}}^{P(\lambda)}$. On en déduit que $X_{w}^{P(\lambda)}\cap w_0X_{w_0ww_{\lambda}}^{P(\lambda)}$ est de dimension $0$, c'est-à-dire est un ensemble fini de points de $\KC/P(\lambda)$. Or chacun de ces points est une composante irréductible de $X_{w}^{P(\lambda)}\cap w_0X_{w_0ww_{\lambda}}^{P(\lambda)}$, donc les points de l'intersection $X_{w}^{P(\lambda)}\cap w_0X_{w_0ww_{\lambda}}^{P(\lambda)}$ sont fixés par $\TC$. On a vu qu'un tel point est unique, c'est $wP(\lambda)/P(\lambda)$. D'où le résultat.
\end{proof}

Dans l'énoncé suivant, si $X$ est une sous-variété complexe singulière de $\KC/P(\lambda)$, $[X]$ désigne la classe de cohomologie obtenue par dualité de Poincaré à partir de la classe fondamentale de la sous-variété complexe $X$, cf Annexe \ref{chap:classefondamentale}. En particulier, on a $[X_w^{P(\lambda)}] = \sigma_{w_0ww_{\lambda}}^{P(\lambda)}$ pour tout $w\in W^{\lambda}$.

\begin{lemm}
\label{lemm:produit_2classesschubert_et_classedelintersection}
Pour tout $(w,w')\in(W^{\lambda})^2$, on a
\[
\sigma_w^{P(\lambda)}\,.\,\sigma_{w'}^{P(\lambda)} = [X_{w_0ww_{\lambda}}^{P(\lambda)}\cap w_0X_{w_0w'w_{\lambda}}^{P(\lambda)}].
\]
\end{lemm}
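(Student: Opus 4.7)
The plan is to use Kleiman's transversality theorem to reduce the cup product $\sigma_w^{P(\lambda)} \cdot \sigma_{w'}^{P(\lambda)}$ to the class of a transverse intersection, and then to show that the particular $w_0$-translate appearing on the right-hand side already realizes this transverse situation, exactly as in the proof of Lemma \ref{lemm:intersection_classes_de_schubert_g�n�ralis�es}(2).

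First, using $\sigma_w^{P(\lambda)} = [X_{w_0ww_\lambda}^{P(\lambda)}]$ and $\sigma_{w'}^{P(\lambda)} = [X_{w_0w'w_\lambda}^{P(\lambda)}]$, set $Y = X_{w_0ww_\lambda}^{P(\lambda)}$ and $Z = X_{w_0w'w_\lambda}^{P(\lambda)}$. The goal becomes $[Y] \cdot [Z] = [Y \cap w_0 Z]$. Since $K_\mathbb{C}$ is connected and the cohomology class of a subvariety is preserved under $K_\mathbb{C}$-translation, we have $[w_0 Z] = [Z]$, so the cup product is unaffected by replacing $Z$ with its opposite $w_0 Z$.

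Second, I would apply Kleiman's transversality theorem (as in \cite{kleiman, belkale_kumar}, already invoked in Lemma \ref{lemm:intersection_classes_de_schubert_g�n�ralis�es}) to obtain a dense open subset $\mathcal{U} \subset K_\mathbb{C}$ such that for every $g \in \mathcal{U}$, the intersection $Y \cap g(w_0 Z)$ is either empty or proper and transverse. In either case, classical intersection theory gives $[Y] \cdot [w_0 Z] = [Y \cap g(w_0 Z)]$.

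Third, and this is the crucial step, take $g \in \mathcal{U} \cap BB^-$, which is nonempty because $BB^-$ is dense in $K_\mathbb{C}$. Writing $g = bu$ with $b \in B$ and $u \in B^-$, and using that $Y$ is $B$-stable while $w_0 Z$ is $B^-$-stable (as an opposite Schubert variety), we obtain
\[
Y \cap g(w_0 Z) = Y \cap bu(w_0 Z) = Y \cap b(w_0 Z) = b\bigl(b^{-1}Y \cap w_0 Z\bigr) = b(Y \cap w_0 Z).
\]
Hence $Y \cap w_0 Z$ is the $b$-translate of the transverse intersection $Y \cap g(w_0 Z)$; since transversality and properness are invariant under translation, $Y \cap w_0 Z$ is itself a transverse (possibly empty) intersection of the expected dimension. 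It follows that $[Y \cap w_0 Z] = [b(Y \cap w_0 Z)] = [Y \cap g(w_0 Z)] = [Y] \cdot [Z]$, which is the claimed identity.

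The main subtlety to handle carefully will be the meaning of $[Y \cap w_0 Z]$ when one does not know \emph{a priori} that this intersection is transverse; this is exactly why the argument proceeds by first establishing transversality via transport from the generic situation. One also needs to verify that the assignment $V \mapsto [V]$ on possibly singular algebraic subvarieties (in the sense of Appendix \ref{chap:classefondamentale}) is compatible with the $K_\mathbb{C}$-translation argument used above, and to treat the degenerate case where the intersection is empty (in which case both sides vanish).
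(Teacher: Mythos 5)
Your proposal is correct and follows essentially the same route as the paper's own proof: both apply Kleiman's transversality theorem to get a dense open set $\mathcal{U}\subset\KC$ of translates giving a proper transverse intersection, then pick $g=bu\in\mathcal{U}\cap BB^-$ and use the $B$-stability of the Schubert variety together with the $B^-$-stability of its $w_0$-translate to identify the generic intersection with a $b$-translate of $X_{w_0ww_{\lambda}}^{P(\lambda)}\cap w_0X_{w_0w'w_{\lambda}}^{P(\lambda)}$, whence the equality of classes.
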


\begin{proof}
On applique à nouveau le Théorème de transversalité de Kleiman, de manière identique à la deuxième partie de la preuve du Lemme \ref{lemm:intersection_classes_de_schubert_généralisées}. Il existe un ouvert non vide $\mathcal{U}$ de $\KC$ tel que $X_{w}^{P(\lambda)}$ intersecte $gw_0X_{w_0w'w_{\lambda}}^{P(\lambda)}$ proprement et transversalement pour tout $g\in\mathcal{U}$. L'ouvert $\mathcal{U}$ de $\KC$ intersecte l'ouvert $BU^-$. De plus, $X_w^{P(\lambda)}$ est $B$-invariante et $w_0X_{w_0w'w_{\lambda}}^{P(\lambda)}$ est $B^-$-invariante, donc pour tout $g=bu\in\mathcal{U}\cap BB^-\subset\KC$, on a
\[
X_{w}^{P(\lambda)}\cap (gw_0X_{w_0w'w_{\lambda}}^{P(\lambda)}) = (bX_{w}^{P(\lambda)})\cap (bw_0X_{w_0w'w_{\lambda}}^{P(\lambda)}) = b\cdot(X_{w}^{P(\lambda)}\cap w_0X_{w_0w'w_{\lambda}}^{P(\lambda)}).
\]
Par conséquent, on a $[X_{w}^{P(\lambda)}]\,.\,[X_{w_0w'w_{\lambda}}^{P(\lambda)}] = [X_{w}^{P(\lambda)}]\,.\,[w_0X_{w_0w'w_{\lambda}}^{P(\lambda)}] = [X_{w}^{P(\lambda)}\cap w_0X_{w_0w'w_{\lambda}}^{P(\lambda)}]$. On en conclut que $\sigma_w^{P(\lambda)}\,.\,\sigma_{w'}^{P(\lambda)} = [X_{w_0ww_{\lambda}}^{P(\lambda)}]\,.\,[X_{w_0ww_{\lambda}}^{P(\lambda)}] = [X_{w_0ww_{\lambda}}^{P(\lambda)}\cap w_0 X_{w_0w'w_{\lambda}}^{P(\lambda)}]$.
\end{proof}

\begin{proof}[Preuve de la Proposition \ref{prop:produit_deuxclassesschubert_dansG/P_vaut_classept}]
Puisque $l(w)+l(w')\geqslant l(w_0)+l(w_{\lambda})$, on a clairement $l(w_0ww_{\lambda}) \leqslant l(w')$. Donc, d'après le Lemme \ref{lemm:intersection_classes_de_schubert_généralisées}, on a 
\[
X_{w_0ww_{\lambda}}^{P(\lambda)}\cap w_0 X_{w_0w'w_{\lambda}}^{P(\lambda)} = \left\{\begin{array}{l}
w_0ww_{\lambda}P(\lambda) \quad \text{si $w' = w_0ww_{\lambda}$,}\\
\emptyset \quad \text{sinon.}
\end{array}\right.
\]
Le résultat découle alors directement du Lemme \ref{lemm:produit_2classesschubert_et_classedelintersection}.
\end{proof}

\section{Démonstration du Théorème \ref{theo:cns_pairebiencouvrante}}
\label{section:démo_theo_pairebiencouvrante}

Nous allons maintenant prouver la condition nécessaire et suffisante pour qu'une paire de $X_M$ soit bien couvrante. Le théorème qui suit, de \cite{ressayre08}, donne un critère dans un contexte plus général que celui qui nous intéresse dans ce chapitre. Nous utiliserons les notations introduites dans la section \ref{section:critère_cohomologique}.

\renewcommand{\theenumi}{\textit{\alph{enumi}}}

\begin{theo}[\cite{ressayre08}, Proposition 11]
\label{theo:cns_paire_est_biencouvrante_casgénéral}
Soit $(w,\tilde{w})\in W\times\tilde{W}$ tel que $w$ (resp. $\tilde{w}$) est l'élément le plus long de la classe $W_{Q}wW_{\lambda}$ (resp. $\tilde{W}_{\tilde{Q}}\tilde{w}\tilde{W}_{\lambda}$). Alors la paire $(C(w,\tilde{w}),\lambda)$ est bien couvrante si et seulement si les deux assertions suivantes sont vérifiées :
\begin{enumerate}
\item $\sigma_{w_0w}^{P(\lambda)}\,.\,i^*\bigl(\sigma_{\tilde{w}_0\tilde{w}}^{\tilde{P}(\lambda)}\bigr) = \pt$,

\item $\langle \lambda,\rho+w^{-1}\rho\rangle + \langle i^*(\lambda),\tilde{\rho}+\tilde{w}^{-1}\tilde{\rho}\rangle = \langle\lambda,2\rho\rangle$.
\end{enumerate}
\end{theo}

Dans le cas de la variété $X_M$, nous sommes capables d'améliorer les formules données dans l'énoncé du théorème ci-dessus.

Fixons de nouveau un sous-groupe à un paramètre dominant $\lambda$ de $\TC$. On applique le Théorème \ref{theo:cns_paire_est_biencouvrante_casgénéral} à nouveau à $\tKC=\KC\times \hKC$ et à la projection $i=i_{\lambda}:\KC\hookrightarrow \KC\times\hKC$ définie en fin de section \ref{section:notations_et_paramétrage}.

Soit $(w,w',m)$ un triplet de $W^{\lambda} \times W^{\lambda} \times \Z$ tel que $C(w,w',m)$ est non vide. On pose $\tilde{w} = (w',\hat{w}_{\hat{Q}}\hat{w}_N)\in\tilde{W}=W\times\hat{W}$. La paire $(w,\tilde{w})$ vérifie les hypothèses du Théorème \ref{theo:cns_paire_est_biencouvrante_casgénéral}. Et, par le Lemme \ref{lemm:C_m_et_wQwN}, $(C(w,w',m),\lambda)$ est une paire bien couvrante de $X_M$ si et seulement si $(C(w,\tilde{w}),\lambda)$ est bien couvrante dans $\KC/B\times\KC/B\times\hKC/\hat{Q}$.

\bigskip

L'assertion (\emph{a}) du Théorème \ref{theo:cns_paire_est_biencouvrante_casgénéral} pour $(w,\tilde{w})$, est équivalente à l'alternative suivante:
\begin{itemize}
\item soit $N=r$ et $w' = w_0ww_{\lambda}$,
\item soit $1\leqslant N<r$ et $\sigma_{w_0w}^B\,.\,\sigma_{w_0w'}^B\,.\,\prod_{\beta\in\WT(M_{<m})}\Theta(-\beta)^{n_{\beta}} = \jmath^*(\pt)$.
\end{itemize}
d'après les Lemmes \ref{lemm:ressayre_lemme14} et \ref{lemm:alternatives_cohomologiques}, la Proposition \ref{prop:critère_cohomologique_paire_dominante/couvrante} et la Remarque \ref{rema:N=r_ssi_M<m=0}.

Il ne reste qu'à prouver que l'assertion ($b$) du Théorème \ref{theo:cns_paire_est_biencouvrante_casgénéral}, pour $(w,\tilde{w})$ avec $\tilde{w}=(w',\hat{w}_{\hat{Q}}\hat{w}_N)$, est équivalente à l'équation linéaire
\[
\langle w\lambda+w'\lambda,\rho\rangle+\sum_{k<m}(m-k)\dim_{\C}(M_{\lambda,k}) = 0.
\]
Nous allons essentiellement utiliser le lemme suivant.

\begin{lemm}
\label{lemm:gammahat}
On a $\hat{\rho}+(\hat{w}_{\hat{Q}}\hat{w}_N)^{-1}\hat{\rho} = \sum_{l=N+1}^{r}\halpha_{k,l}$.
\end{lemm}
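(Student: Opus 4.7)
L'approche repose sur deux ingr\'edients. D'une part, on d\'ecrit explicitement la permutation $w:=\hat{w}_{\hat{Q}}\hat{w}_N\in\hat{W}=S_r$. D'autre part, on utilise l'identit\'e classique
\[
\hat{\rho}+w^{-1}\hat{\rho}=\sum_{\alpha>0,\,w\alpha>0}\alpha,
\]
valable pour tout $w\in\hat{W}$, obtenue en retranchant de $2\hat{\rho}=\sum_{\alpha>0}\alpha$ l'identit\'e classique $\hat{\rho}-w^{-1}\hat{\rho}=\sum_{\alpha>0,\,w\alpha<0}\alpha$. Pour $\hKC=GL_r(\C)$, les racines positives sont les $\halpha_{ij}=e_i-e_j$ avec $1\leqslant i<j\leqslant r$, et $w(\halpha_{ij})$ est positive si et seulement si $w(i)<w(j)$.

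La premi\`ere \'etape consiste \`a \'ecrire $w$ comme permutation de $\{1,\ldots,r\}$. La d\'efinition $\hat{w}_N=s_1\cdots s_{N-1}$ montre que ce dernier est le cycle $1\mapsto 2\mapsto\cdots\mapsto N\mapsto 1$ qui fixe $\{N+1,\ldots,r\}$, tandis que $\hat{w}_{\hat{Q}}$ (plus long \'el\'ement de $S_{r-1}$ agissant sur $\{2,\ldots,r\}$) est l'involution fixant $1$ et envoyant $i\mapsto r-i+2$ pour $i\in\{2,\ldots,r\}$. Leur compos\'ee v\'erifie donc $w(i)=r-i+1$ pour $i\in\{1,\ldots,N-1\}$, $w(N)=1$, et $w(i)=r-i+2$ pour $i\in\{N+1,\ldots,r\}$.

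La seconde \'etape est une analyse par disjonction de cas pour identifier les couples $(i,j)$ avec $1\leqslant i<j\leqslant r$ v\'erifiant $w(i)<w(j)$. Sur chacun des intervalles $\{1,\ldots,N-1\}$ et $\{N+1,\ldots,r\}$, l'application $w$ est strictement d\'ecroissante : aucune paire enti\`erement contenue dans l'un de ces intervalles ne contribue. Les paires $(i,N)$ avec $i<N$ ne contribuent pas non plus, car $w(i)\geqslant r-N+2\geqslant 2>1=w(N)$. Pour $i<N$ et $j>N$, la condition $r-i+1<r-j+2$ \'equivaut \`a $j\leqslant i$, impossible sous l'hypoth\`ese $i<j$. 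Seules restent les paires $(N,l)$ avec $l\in\{N+1,\ldots,r\}$, qui contribuent toutes puisque $w(N)=1<r-l+2=w(l)$, d'o\`u $\hat{\rho}+w^{-1}\hat{\rho}=\sum_{l=N+1}^r\halpha_{N,l}$. Le seul obstacle est purement combinatoire (le suivi soigneux des images sous la composition) et ne pr\'esente aucune difficult\'e conceptuelle.
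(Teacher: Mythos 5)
Votre preuve est correcte et suit pour l'essentiel la m\^eme voie que celle du texte : toutes deux reposent sur l'identit\'e $\hat{\rho}+w^{-1}\hat{\rho}=\sum_{\alpha>0,\ w\alpha>0}\alpha$, le texte identifiant l'ensemble des racines positives conserv\'ees positives via la factorisation $\hat{w}_{\hat{Q}}\hat{w}_N=\hat{w}_0\check{w}_N$ du Lemme \ref{lemm:relation_wtilde_wchapeau}, tandis que vous explicitez directement la permutation $\hat{w}_{\hat{Q}}\hat{w}_N$, ce qui aboutit au m\^eme d\'ecompte des racines $\halpha_{N,l}$, $N+1\leqslant l\leqslant r$. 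Votre calcul confirme au passage que l'indice $k$ figurant dans l'\'enonc\'e est une coquille pour $N$.
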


\begin{proof}
D'après le Lemme \ref{lemm:relation_wtilde_wchapeau}, on sait que
\[
w_0\hat{w}_{\hat{Q}}\hat{w}_N = \check{w}_N := s_{r-1}\ldots s_{N+1}s_N.
\]
Donc $\hat{w}_{\hat{Q}}\hat{w}_N = w_0\check{w}_N$ et
\[
(\hat{w}_{\hat{Q}}\hat{w}_N)^{-1}\hat{\rho} + \hat{\rho} = \check{w}_N^{-1}\hat{w}_0\hat{\rho} + \hat{\rho}.
\]
Or il est clair que $(\check{w}_N^{-1}\hat{w}_0\hat{\rho} + \hat{\rho})$ est la somme des racines $\halpha\in\hat{\got{R}}^+$ telles que $\hat{w}_0\check{w}_N(\halpha)$ est positive. Comme $\hat{w}_0$ permute les deux ensembles de racines $\hat{\got{R}}^+$ et $\hat{\got{R}}^-$, $(\check{w}_N^{-1}\hat{w}_0\hat{\rho} + \hat{\rho})$ est la somme des racines positives $\halpha$ telles que $\check{w}_N(\halpha)$ est négative. Ces racines sont les racines $\halpha_{N,l}$, pour $N+1 \leqslant l \leqslant r$. Le lemme en résulte.
\end{proof}

Il est clair qu'en développant $\langle i_{\lambda}^*(\lambda),\tilde{\rho}+\tilde{w}^{-1}\tilde{\rho}\rangle$ selon la définition de $i_{\lambda}$, de $\tilde{\rho}$ et $\tKC$, on obtient
\[
\langle i_{\lambda}^*(\lambda),\tilde{\rho}+\tilde{w}^{-1}\tilde{\rho}\rangle = \langle\lambda,\rho+w'^{-1}\rho\rangle + \langle f_{\lambda}^*(\lambda),\hat{\rho}+(\hat{w}_{\hat{Q}}\hat{w}_N)^{-1}\hat{\rho}\rangle.
\]
Par le Lemme \ref{lemm:gammahat} et en utilisant le fait que, pour tout $k=N+1,\ldots,r$, $\langle\lambda,\hat{\alpha}_{N,k}\rangle = \langle\lambda,\beta_N\rangle-\langle\lambda,\beta_k\rangle$, on obtient
\[
\langle f_{\lambda}^*(\lambda),\hat{\rho}+(\hat{w}_{\hat{Q}}\hat{w}_N)^{-1}\hat{\rho}\rangle = \sum_{l=N+1}^r(\langle\lambda,\beta_N\rangle -\langle\lambda,\beta_l\rangle).
\]
De plus, on a évidemment $M_{<m} = \bigoplus_{l=N+1}^r M_{\beta_l}$. Cela implique que
\[
\langle f_{\lambda}^*(\lambda),\hat{\rho}+(\hat{w}_{\hat{Q}}\hat{w}_N)^{-1}\hat{\rho}\rangle = \sum_{k<m}(m-k)\dim_{\C}(M_{\lambda,k}) \ .
\]
Par conséquent, l'assertion ($b$) du Théorème \ref{theo:cns_paire_est_biencouvrante_casgénéral} peut être réécrite de la manière suivante,
\begin{align*}
\langle \lambda,\rho+w^{-1}\rho\rangle + \langle i^*(\lambda),\tilde{\rho}+\tilde{w}^{-1}\tilde{\rho}\rangle-\langle\lambda,2\rho\rangle & = \langle \lambda,w^{-1}\rho+w'^{-1}\rho\rangle \\
& + \sum_{k<m}(m-k)\dim_{\C}(M_{\lambda,k}).
\end{align*}
Donc nous avons prouvé que l'assertion ($b$) du Théorème \ref{theo:cns_paire_est_biencouvrante_casgénéral} pour $(w,\tilde{w})$, est équivalente à $\langle w\lambda+w'\lambda,\rho\rangle+\sum_{k<m}(m-k)\dim_{\C}(M_{\lambda,k}) = 0$.

\begin{lemm}
\label{lemm:équation_lambda_casextrême}
Pour tout $w\in W^{\lambda}$, on a
\[
\langle w\lambda+w_0ww_{\lambda}\lambda,\rho\rangle=0.
\]
\end{lemm}

\begin{proof}
Cela provient directement de $w_0\rho = -\rho$ et $w_{\lambda}\lambda = \lambda$, puisque $w_{\lambda}\in W_{\lambda}$.
\end{proof}

Dans le cas $N=r$, nous avons $M_{< m} = 0$, d'après la Remarque \ref{rema:N=r_ssi_M<m=0}. Et cela implique clairement que $\sum_{k<m}(m-k)\dim_{\C}(M_{\lambda,k}) = 0$. Par le Lemme \ref{lemm:équation_lambda_casextrême}, on voit que l'équation
\[
\langle w\lambda+w'\lambda,\rho\rangle+\sum_{k<m}(m-k)\dim_{\C}(M_{\lambda,k}) = 0
\]
est toujours satisfaite. C'est-à-dire, si $w'=w_0ww_{\lambda}$, alors l'assertion ($b$) est toujours vérifiée.

\bigskip

On en conclut que nous avons prouvé que la paire $(C(w,w',m),\lambda)$ est bien couvrante si et seulement si on a soit $w'=w_0ww_{\lambda}$ et $M_{<m}=0$ (c'est-à-dire $N=r$), soit les deux assertions suivantes sont vérifiées :
\begin{enumerate}
\item $\sigma_{w_0w}^B\,.\,\sigma_{w_0w'}^B\,.\,\prod_{\beta\in\WT(M_{<m})}\Theta(-\beta)^{n_{\beta}} = \sigma_{w_0w_{\lambda}}^B$,
\item $\langle w\lambda+w'\lambda,\rho\rangle+\sum_{k<m}(m-k)\dim_{\C}(M_{\lambda,k}) = 0$.
\end{enumerate}
C'est exactement l'énoncé du Théorème \ref{theo:cns_pairebiencouvrante}. Ceci termine la preuve.

\chapter[Projection d'orbites coadjointes holomorphes]{Projection d'orbites coadjointes holomorphes. Exemples}
\label{chap:calcul_exemples_par_pairesbiencouvrantes}


Nous terminons l'étude de la projection d'orbites coadjointes holomorphes en accomplissant le calcul explicite des équations de ces polyèdres dans plusieurs cas de la classification des espaces symétriques hermitiens irréductibles donnée dans le paragraphe \ref{section:description_espace_symetrique_hermitien}.

\section{\'Equations du polyèdre moment $\Delta_K(\Orb_{\Lambda})$}

Nous utilisons les résultats des Chapitres \ref{chap:symplecto_mcduff} et \ref{chap:projectiondorbitecoadjointe+GIT} pour déterminer de manière générale les équations de la projection $\Delta_K(\Orb_{\Lambda})$ d'une orbite coadjointe holomorphe .

\subsection{Deux descriptions du polyèdre moment de $\Orb_{\Lambda}$}
\label{subsection:projection_moment_de_GLambda}

Soit $G$ un groupe de Lie réel semi-simple connexe non compact à centre fini, $K$ un sous-groupe compact maximal. Nous continuons à supposer que l'espace symétrique $G/K$ est hermitien. Nous noterons $\got{g}$ et $\got{k}$ les algèbres de Lie respectives. L'algèbre de Lie compacte $\got{k}$ provient d'une décomposition de Cartan $\got{g} = \got{k}\oplus\got{p}$.

D'après le paragraphe \ref{section:description_espace_symetrique_hermitien}, nous pouvons fixer un élément $z_0\in\got{z}(K)$ tel que $\ad(z_o)|_{\got{p}}^2 = -\mathrm{id}_{\got{p}}$. Rappelons que $\got{p}^{\pm}$ est le sous-$K$-module $\ker(\ad(z_o)\mp i)$ de $\got{p}_{\C}$.


Soit $T$ un tore maximal de $K$ et $\got{t}$ son algèbre de Lie. Fixons $\got{t}^*_+$ une chambre de Weyl de $\got{t}^*$.

Prenons un élément $\Lambda$ de $\Chol$. Soit $\Phi_{\Orb_{\Lambda}}:\Orb_{\Lambda}\rightarrow\got{k}^*$ l'application moment donnée par la projection d'orbite définie au paragraphe \ref{section:proj_orbites_defi_et_exemples}.

Rappellons que le but principal de cette thèse est de déterminer les faces du polyèdre $\Delta_K(\Orb_{\Lambda}) := \Phi_{\Orb_{\Lambda}}(\Orb_{\Lambda})\cap\got{t}^*_+$, autrement dit, donner les équations affines qui caractérisent cet ensemble. Nous allons, pour cela, utiliser un résultat qui donne une autre description de $\Delta_K(\Orb_{\Lambda})$ comme polyèdre moment d'une variété plus simple à étudier.

Nous avons vu dans le Chapitre \ref{chap:symplecto_mcduff} que l'orbite coadjointe holomorphe $\Orb_{\Lambda}$ est $K$-symplectomorphe à la variété symplectique $(K\cdot\Lambda\times\got{p},\Omega_{K\cdot\Lambda\times\got{p}})$, munie de l'action diagonale de $K$. Ici $\Omega_{K\cdot\Lambda\times\got{p}}$ désigne la forme symplectique sur $K\cdot\Lambda\times\got{p}$ obtenue comme produit direct de la forme symplectique de Kirillov-Kostant-Souriau $\Omega_{K\cdot\Lambda}$ sur l'orbite coadjointe compacte $K\cdot\Lambda$ et de la forme symplectique linéaire $\Omega_{\got{p}}$ définie sur $\got{p}$ par
\begin{equation}
\label{eq:forme_symplectique_linéaire_sur_gotp}
\Omega_{\got{p}}(X,Y) = B_{\got{g}}(X, \ad(z_0)Y) \quad\mbox{pour tous $X,Y\in\got{p}$}.
\end{equation}
Le produit direct de ces deux formes symplectiques est donné par
\[
\Omega_{K\cdot\Lambda\times\got{p}} = \pi_{K\cdot\Lambda}^*\Omega_{K\cdot\Lambda} + \pi_{\got{p}}^*\Omega_{\got{p}},
\]
où $\pi_{K\cdot\Lambda}:K\cdot\Lambda\times\got{p}\rightarrow K\cdot\Lambda$ et $\pi_{\got{p}}:K\cdot\Lambda\times\got{p}\rightarrow\got{p}$ sont les deux projections canoniques. L'application moment sur $\got{p}$ associée est l'application $\Phi_{\got{p}}:\got{p}\rightarrow\got{k}^*$ définie pour tout $v\in\got{p}$ par
\[
\langle\Phi_{\got{p}}(v),X\rangle = \frac{1}{2}B_{\got{g}}(\ad(X)v,\ad(z_0)v) \text{ pour tout } X\in\got{k}.
\]
Une application moment pour l'action hamiltonienne de $K$ sur $K\cdot\Lambda\times\got{p}$ est
\[
\Phi_{K\cdot\Lambda\times\got{p}}:(\xi,v)\in K\cdot\Lambda\times \got{p} \longmapsto \xi+\Phi_{\got{p}}(v)\in\got{k}^*.
\]
On définit alors le polyèdre moment
\[
\Delta_K(K\cdot\Lambda\times\got{p}) := \Phi_{K\cdot\Lambda\times\got{p}}(K\cdot\Lambda\times\got{p})\cap\got{t}_+^*
\]
associé à la variété hamiltonienne $(K\cdot\Lambda\times\got{p}, \Omega_{K\cdot\Lambda\times\got{p}}, \Phi_{K\cdot\Lambda\times\got{p}})$.

Comme corollaire direct du Théorème \ref{theo:symplecto_principal}, nous avons le résultat suivant, qui est tout simplement l'énoncé du Corollaire \ref{coro:égalité_polyèdresmoments}.

\begin{theo}
\label{theo:polyèdre_GLambda_KLambdap}
Soit $G$ un groupe de Lie réel semi-simple connexe non compact à centre fini, tel que l'espace symétrique $G/K$ soit hermitien. Alors, pour tout $\Lambda\in\Chol$, on a
\[
\Delta_K(\Orb_{\Lambda}) = \Delta_K(K\cdot\Lambda\times\got{p}).
\]
\end{theo}


\begin{rema}
\label{rema:isomorphisme_entre_gotp_et_gotp-}
Pour éviter toute ambiguïté, dans la suite, nous utiliserons généralement $\got{p}^-$ à la place de $\got{p}$ pour accentuer le fait que la structure complexe $K$-invariante considérée sur $\got{p}$ est la structure complexe définie par $-\ad(z_0)$. On constate qu'en tant que sous-espaces vectoriels réels de $\got{p}_{\C}$, les espaces $\got{p}$ et $\got{p}^-$ sont isomorphes pour l'application linéaire $L:X\in\got{p}\mapsto X+i\ad(z_0)X\in\got{p}^-$, qui vérifie $L(-\ad(z_0)X) = iL(X)$ pour tout $X\in\got{p}$.
\end{rema}

On s'est donc ramené à l'étude des équations du polyèdre moment de la variété $K\cdot\Lambda\times\got{p}^-$, produit direct de l'orbite coadjointe compacte $K\cdot\Lambda$, où $\Lambda\in\Chol\subset\got{t}_+^*$, et de la représentation complexe de dimension finie $\got{p}^-$.

On peut définir une structure hermitienne $K$-invariante sur $\got{p}^-$ par
\[
h_{\got{p}^-}(X,Y) = B_{\got{g}}(X, Y) - iB_{\got{g}_0}(X, \ad(z_0)Y) \mbox{ pour tous } X,Y\in\got{p},
\]
pour l'isomorphisme $K$-équivariant $L:\got{p}\rightarrow\got{p}^-$ défini dans la Remarque \ref{rema:isomorphisme_entre_gotp_et_gotp-}. La forme symplectique $K$-invariante associée est
\[
\Omega_{\got{p}^-}(X,Y) = B_{\got{g}}(X, \ad(z_0)Y) \mbox{ pour tous } X,Y\in\got{p},
\]
c'est-à-dire la forme symplectique correspondant à \eqref{eq:forme_symplectique_linéaire_sur_gotp} par l'isomorphisme $L$. On note $\zeta: k\in K\rightarrow \Ad(k)|_{\got{p}^-}\in U(\got{p}^-)$ le morphisme de groupes ainsi obtenu.

Soit $\Lambda$ un élément de $\wedge_{\Q}^*\cap\Chol$. Le Théorème \ref{theo:polyèdre_GLambda_KLambdap} et le Corollaire \ref{coro:DeltaClassique_DeltaGIT} montrent que $\Delta_K(\Orb_{\Lambda}) = \Delta_K(K\cdot\Lambda\times\got{p}^-)$ est l'adhérence dans $\got{t}^*$ du polyèdre convexe $\DGIT[K\cdot\Lambda\times\got{p}^-]$ de $\wedge^*_{\Q}$. Nous appliquons donc les résultats de la section \ref{section:équations_polyèdremoment_KLambdaE}, pour $E = \got{p}^-$. Comme vu dans l'Exemple \ref{exple:Phi_gotp_propre}, l'application moment $\Phi_{\Orb_{\Lambda}}$ associée est propre.

L'hypothèse qui nous manque est la propriété de finitude du noyau du morphisme $\zeta:K\rightarrow U(\got{p}^-)$ nécessaire pour terminer l'étude du polyèdre $\Delta_K(K\cdot\Lambda\times\got{p}^-)$. Dans le cas des espaces symétriques hermitiens irréductibles, nous avons une réponse positive à la question de finitude du noyau du morphisme $\zeta$.

\begin{lemm}
\label{lemm:noyau_de_zeta_fini}
Si $G$ est simple, alors le noyau du morphisme de groupes $\zeta:K\rightarrow U(\got{p}^-)$ est égal au centre $Z(G)$ de $G$. En particulier, $\ker\zeta$ est fini.
\end{lemm}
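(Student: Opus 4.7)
The plan is short and essentially algebraic: identify $\ker\zeta$ as the kernel of the adjoint action of $K$ on the whole Lie algebra $\mathfrak{g}$, and then invoke connectedness to conclude $\ker\zeta = Z(G)$, which is finite by the standing hypothesis on $G$.

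First I would observe the easy inclusion $Z(G)\subset\ker\zeta$. It is standard that for a real semisimple connected Lie group with finite center, any maximal compact subgroup $K$ contains $Z(G)$. If $g\in Z(G)$, then $\mathrm{Ad}(g)=\mathrm{id}_{\mathfrak{g}}$, so a fortiori $\zeta(g)=\mathrm{Ad}(g)|_{\mathfrak{p}^-}=\mathrm{id}$. Note also that $\zeta$ is well-defined as a $\mathbb{C}$-linear representation because every $k\in K$ commutes with $z_0\in\mathfrak{z}(\mathfrak{k})$, so $\mathrm{Ad}(k)$ commutes with the complex structure $-\mathrm{ad}(z_0)|_{\mathfrak{p}}$ on $\mathfrak{p}^-$.

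For the reverse inclusion, the key step — and the only nontrivial one — is to show that if $k\in K$ satisfies $\mathrm{Ad}(k)|_{\mathfrak{p}}=\mathrm{id}$, then in fact $\mathrm{Ad}(k)|_{\mathfrak{k}}=\mathrm{id}$ as well. This is where the simplicity of $G$ enters, via the equality $[\mathfrak{p},\mathfrak{p}]=\mathfrak{k}$. To prove this equality, I would argue that $[\mathfrak{p},\mathfrak{p}]+\mathfrak{p}$ is an ideal of $\mathfrak{g}$: it is stable under $\mathrm{ad}(\mathfrak{k})$ thanks to the Cartan relations $[\mathfrak{k},\mathfrak{p}]\subset\mathfrak{p}$ and the Jacobi identity, and stable under $\mathrm{ad}(\mathfrak{p})$ by construction. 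Since $\mathfrak{g}$ is simple and $\mathfrak{p}\neq 0$ (because $G$ is noncompact), this ideal must equal $\mathfrak{g}$, which forces $[\mathfrak{p},\mathfrak{p}]=\mathfrak{k}$. Now if $\mathrm{Ad}(k)$ fixes every vector of $\mathfrak{p}$, it fixes every bracket $[X,Y]$ with $X,Y\in\mathfrak{p}$ by functoriality, hence every element of $\mathfrak{k}$, and therefore every element of $\mathfrak{g}=\mathfrak{k}\oplus\mathfrak{p}$.

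Finally, $\mathrm{Ad}(k)=\mathrm{id}_{\mathfrak{g}}$ combined with the connectedness of $G$ gives $k\in\ker(\mathrm{Ad}:G\to\mathrm{GL}(\mathfrak{g}))=Z(G)$. This proves $\ker\zeta\subset Z(G)$ and hence equality. The finiteness of $\ker\zeta$ is then immediate from the assumption that $Z(G)$ is finite. I do not foresee any real obstacle; the only point requiring care is the bracket identity $[\mathfrak{p},\mathfrak{p}]=\mathfrak{k}$, which genuinely uses the simplicity hypothesis and would fail if $G$ had a nontrivial compact simple ideal.
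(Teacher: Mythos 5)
Your proof is correct and follows essentially the same route as the paper: reduce $\ker\zeta$ to the kernel of $\Ad$ on $\got{g}$ via the identity $[\got{p},\got{p}]=\got{k}$ (valid since $\got{g}$ is simple), then conclude $k\in\ker(\Ad)=Z(G)$ by connectedness. The only difference is that you prove $[\got{p},\got{p}]=\got{k}$ yourself by the ideal argument, whereas the paper simply cites Knapp for it; your argument there is sound.
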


\begin{proof}
Soit $k\in\ker\zeta$. Cela signifie que, pour tout $Y\in\got{p}^-$, nous avons $\Ad(k)Y = Y$. Puisque les deux $K$-modules $\got{p}$ et $\got{p}^-$ sont isomorphes, nous aurons aussi $\Ad(k)Y' = Y'$ pour tout $Y'\in\got{p}$. On a donc, pour tous $Y,Z\in\got{p}$,
\[
\Ad(k)[Y,Z] = [\Ad(k)Y,\Ad(k)Z] = [Y,Z].
\]
Par conséquent, $\Ad(k)$ est l'identité sur $[\got{p},\got{p}] = \got{k}$, ceci étant vrai car $\got{g}$ est simple (d'après \cite{knapp}, problème $24$ page $430$). Par linéarité, puisque $\got{g} = \got{k} \oplus \got{p}$, $\Ad(k)$ laisse fixe chaque élément de $\got{g}$. D'où $k\in \ker(\Ad) = Z(G)$. On en déduit l'inclusion $\ker\zeta\subset Z(G)$. Et il est clair que $Z(G)\subset\ker\zeta$. D'où le résultat.
\end{proof}

\subsection{Les équations de la projection d'orbite coadjointe holomorphe}

Maintenant nous considérons le groupe produit direct
\begin{equation}
\label{eq:decomposition_de_G_en_facteurs_simples}
G\cong K_0\times G_1\times\cdots\times G_s,
\end{equation}
où $K_0$ est un groupe de Lie semi-simple compact connexe à centre fini, $s\geqslant 1$ et $G_,\ldots,G_s$ des groupes de Lie simples connexes non compacts à centres finis.

Chaque algèbre de Lie $\got{g}_i$, associée au groupe $G_i$, a une décomposition de Cartan $\got{g}_i=\got{k}_i\oplus\got{p}_i$, pour $i=1,\ldots,s$, induisant une décomposition de Cartan $\got{g} = \got{k}\oplus\got{p}$ pour $\got{g}$, où
\[
\got{k} := \bigoplus_{i=0}^s\got{k}_i, \qquad\text{et}\qquad \got{p} := \bigoplus_{i=1}^s\got{p}_i.
\]
Si $K,K_1,\ldots,K_s$ désignent les sous-groupes de Lie connexes respectifs de $G,G_1,\ldots,G_s$ d'algèbres de Lie $\got{k},\got{k}_1,\ldots,\got{k}_s$, alors l'isomorphisme entre $G$ et $K_0\times G_1\times\cdots\times G_s$ induit un isomorphisme entre $K$ et $K_0\times K_1\times \cdots\times K_s$.

Fixons, pour chaque $i=0,\ldots,s$, un tore maximal $T_i$ de $K_i$ et notons $T$ le tore de $K$ s'identifiant à $T_0\times T_1\times \cdots\times T_s$ par l'isomorphisme précédent. Il s'agit également d'un tore maximal de $K$. On note leurs algèbres de Lie respectives $\got{t}$ et $\got{t}_0,\got{t}_1,\ldots,\got{t}_s$, et choisissons des chambres de Weyl $\got{t}_+^*$ et $(\got{t}_0^*)_+,(\got{t}_1^*)_+,\ldots,(\got{t}_s^*)_+$, de sorte que $\got{t}_+^*\cong (\got{t}_0^*)_+\times(\got{t}_1^*)_+\times\cdots\times(\got{t}_s^*)_+$.

Soit $\Lambda\in\got{t}_+^*$, qui s'identifie à l'élément $(\Lambda_0,\Lambda_1,\ldots,\Lambda_s)$ de $(\got{t}_0^*)_+\times(\got{t}_1^*)_+\times\cdots\times(\got{t}_s^*)_+$. Alors l'orbite coadjointe $\Orb_{\Lambda}$ s'identifie clairement au produit des orbites coadjointes $\Orb_{\Lambda_0}\times\Orb_{\Lambda_1}\times\cdots\times\Orb_{\Lambda_s}$.

L'action induite de $K$ sur $\Orb_{\Lambda}$ correspond à l'action de $K_0\times K_1\times \cdots\times K_s$ sur $\Orb_{\Lambda_0}\times\Orb_{\Lambda_1}\times\cdots\times\Orb_{\Lambda_s}$. Ces deux actions sont hamiltoniennes et on peut aisément vérifier que l'on a l'égalité
\[
\Delta_K(\Orb_{\Lambda}) = \Delta_{K_0}(\Orb_{\Lambda_0})\times\Delta_{K_1}(\Orb_{\Lambda_1})\times\cdots\times \Delta_{K_s}(\Orb_{\Lambda_s}).
\]

Le terme $\Delta_{K_0}(\Orb_{\Lambda_0})$ est réduit au point $\Lambda_0$, puisqu'il s'agit de la projection d'une orbite coadjointe de $K_0$ par rapport à l'action hamiltonienne de ce même groupe $K_0$. Par conséquent, on a
\[
\Delta_K(\Orb_{\Lambda}) = \{\Lambda_0\}\times\Delta_{K_1}(\Orb_{\Lambda_1})\times\cdots\times \Delta_{K_s}(\Orb_{\Lambda_s}).
\]
On peut ainsi supposer que le groupe $G$ est simple non-compact, ou bien, ce qui est équivalent, que $G/K$ est symétrique hermitien irréductible.

Pour le reste de ce paragraphe, nous supposerons donc que $G$ vérifie l'hypothèse suivante :
\smallskip
\begin{equation}
\label{eq:hypothese_sur_G_sans_facteur_compact}
\begin{minipage}{12cm}
$G$ est un groupe de Lie réel simple connexe non-compact à centre fini, avec $G/K$ hermitien.
\end{minipage}
\end{equation}

Le Lemme \ref{lemm:noyau_de_zeta_fini} nous indique que nous pouvons appliquer les résultats des Chapitres \ref{chap:projectiondorbitecoadjointe+GIT} et \ref{chap:PairesBienCouvrantes} afin de déterminer les équations de la projection d'orbite $\Delta_K(\Orb_{\Lambda})$.

Rappelons que l'ensemble fini $\mathcal{P}_0(\got{p}^-)$ a été introduit en Définition \ref{defi:P(E)_et_P0(E)}. Il s'agit de l'ensemble des paires bien couvrantes $(C(w,w',0),\lambda)$ de $X_{\got{p}^-\oplus\C} = \KC/B\times\KC/B\times\mathbb{P}(\got{p}^-\oplus\C)$ telles que $\lambda$ soit dominant indivisible $\got{p}^-$-admissible.

\begin{theo}[\'Equations de $\Delta_K(\Orb_{\Lambda})$]
\label{theo:équations_DeltaK_G0Lambda}
On suppose que $G$ vérifie l'hypothèse \eqref{eq:hypothese_sur_G_sans_facteur_compact}. Soient $\Lambda\in\wedge^*_{\Q}\cap\Chol$ et $\xi\in\got{t}_+^*$. Alors $\xi$ appartient à $\Delta_K(\Orb_{\Lambda})$ si et seulement s'il vérifie les équations
\[
\langle w\lambda,\xi\rangle \leqslant \langle w_0w'\lambda,\Lambda\rangle
\]
pour toute paire $(C(w,w',0),\lambda)\in\mathcal{P}_0(\got{p}^-)$.
\end{theo}

\begin{proof}
La preuve se déduit directement du fait que $\Delta_K(\Orb_{\Lambda}) = \Delta_K(K\cdot\Lambda\times\got{p}^-)$ est l'adhérence de $\DGIT[K\cdot\Lambda\times\got{p}^-]$ dans $\got{t}^*$, du Corollaire \ref{coro:équationsgénérales_DGIT} et du Lemme \ref{lemm:noyau_de_zeta_fini}.
\end{proof}


Remarquons qu'ici, les poids de $\TC$ sur $\got{p}^-$ sont les racines non compactes négatives, c'est-à-dire $\WT(\got{p}^-) = \got{R}_n^-$. De plus, un sous-groupe à un paramètre dominant $\lambda$ est $\got{p}^-$-admissible s'il existe $n-1$ racines non compactes positives $\beta_1,\ldots,\beta_{n-1}$ telles que $\C\lambda = \cap_{i=1}^{n-1}\ker\beta_i$.

Du paragraphe précédent, nous pouvons tirer les propriétés géométriques attendues sur le polyèdre convexe $\Delta_K(\Orb_{\Lambda})$.

\begin{prop}
\label{prop:propriétés_géométriques_polyèdremomentG0}
On suppose que $G$ vérifie l'hypothèse \eqref{eq:hypothese_sur_G_sans_facteur_compact}. Soit $\Lambda\in\wedge^*_{\Q}\cap\Chol$. Alors $\Lambda\in\Delta_K(\Orb_{\Lambda})$ et $\Delta_K(\Orb_{\Lambda})\subset \Lambda + \CR(\got{R}_n^+)$.
\end{prop}

\begin{proof}
Ces assertions découlent directement des Propositions \ref{prop:faces_ConeWtT(E)_et_pairesbiencouvrantes} et \ref{prop:Lambda_dans_DGIT}.
\end{proof}

\begin{prop}
\label{prop:polyèdremoment_contenudans_Chol}
On suppose que $G$ vérifie l'hypothèse \eqref{eq:hypothese_sur_G_sans_facteur_compact}. Soit $\Lambda\in\wedge^*_{\Q}\cap\Chol$. Alors $\DGIT[K\cdot\Lambda\times\got{p}^-]\subset\Chol$. En particulier, pour tout $\Lambda\in\wedge^*_{\Q}\cap\Chol$, on a $\Delta_K(\Orb_{\Lambda})\subset\Chol$.
\end{prop}

\begin{proof}
D'après la Proposition \ref{prop:faces_ConeWtT(E)_et_pairesbiencouvrantes}, on a $\DGIT[K\cdot\Lambda\times\got{p}^-] \subset \Lambda + \CQ(\got{R}_n^+)$. 
La chambre $\Chol$ est égale à
\[
\Chol = \{\xi\in\got{t}_+^*; (\beta,\xi) > 0, \forall\beta\in\got{R}_n^+\},
\]
où $(\cdot,\cdot)$ est le produit scalaire sur $\got{t}^*$ induit par le produit scalaire $B_{\theta}$ sur $\got{g}$. Pour toutes racines $\beta,\beta'\in\got{R}_n^+$, la somme $\beta+\beta'$ ne peut pas être une racine d'après la preuve de \cite[Lemme 7.128]{knapp}, ni $0$, donc d'après \cite[Proposition 2.48 (e)]{knapp}, nécessairement $(\beta,\beta')\geqslant 0$. Donc tout élément $\xi$ de $\CR(\got{R}_n^+)$ vérifie $(\beta,\xi)\geqslant 0$ pour toute racine $\beta\in\got{R}_n^+$. Et si $\Lambda$ appartient à $\wedge^*_{\Q}\cap\Chol$, on a $(\beta,\xi)> 0$ pour tout $\xi\in(\Lambda + \CR(\got{R}_n^+))$ et pour tout $\beta\in\got{R}_n^+$. Il est maintenant clair que $\DGIT[K\cdot\Lambda\times\got{p}^-] \subset (\Lambda + \CQ(\got{R}_n^+))\cap\got{t}_+^* \subset \Chol$, dès que $\Lambda$ appartient à $\wedge^*_{\Q}\cap\Chol$. Et la dernière assertion provient du fait que $\Delta_K(\Orb_{\Lambda})$ est l'enveloppe convexe de $\DGIT[K\cdot\Lambda\times\got{p}^-]$ dans $\got{t}^*$ lorsque $\Lambda\in\wedge^*_{\Q}\cap\Chol$.
\end{proof}

\section[Sous-groupes à un paramètre dominants $\got{p}^-$-admissibles]{Sous-groupes à un paramètre dominants indivisibles et $\got{p}^-$-admissibles pour le cas des groupes classiques}

Dans cette section, nous allons donner la liste complète de tous les sous-groupes à un paramètre dominants indivisibles et $\got{p}^-$-admissibles apparaissant pour chaque espace symétrique hermitien $G/K$ avec $G$ simple connexe non compact à centre fini classique, c'est-à-dire $G = Sp(2n,\R)$, $SU(p,q)$, $SO^*(2n)$ et $SO(p,2)$.

\subsection{Le groupe $SU(n,1)$, $n\geqslant 2$}
\label{subsection:ssgpes1param_SU(n,1)}

Le cas du groupe $SU(n,1)$ est le plus simple. En effet, ici, le sous-groupe compact maximal est isomorphe à $U(n)$ et $\got{p}^-$ est isomorphe à $\C^n$. Le nombre de racines non compactes négatives (qui sont les poids de l'action de $U(n)$ sur $\got{p}^-$) est égal à la dimension du tore maximal des matrices diagonales de $U(n)$, et on peut facilement vérifier qu'elles forment une famille libre de formes linéaires sur $\got{t}$. Notant $\beta_1,\ldots,\beta_n$ les racines non compactes positives de $SU(n,1)$, il est clair que les sous-groupes à un paramètres $\got{p}^-$-admissibles sont les $\lambda$ tels que
\[
\C\lambda = \bigcap_{i\neq k}\ker\beta_i
\]
pour un certain $k\in\{1,\ldots,n\}$.

Rappelons que $\beta_k = 2e_k^* + \sum_{j\neq k}e_j^*$ et notons $\lambda_k = ne_k - \sum_{j\neq k}e_j$. On vérifie aisément que $\C\lambda_k = \bigcap_{i\neq k}\ker\beta_i$ et $\langle\lambda_k,\beta_k\rangle = n+1$. Donc les sous-groupes à un paramètre indivisibles et $\got{p}^-$-admissibles sont les sous-groupes à un paramètre $\pm\lambda_k$, pour $k=1,\ldots,n$. On obtient l'énoncé suivant. Ici, $\TC$ désigne le groupe des matrices diagonales de $GL_n(\C)$.

\begin{theo}
\label{theo:ssgpes1param_SU(n,1)}
L'ensemble des sous-groupes à un paramètre dominants indivisibles $\got{p}^-$-admissibles de $\TC$ associés à $G=SU(n,1)$ est
\[
\mathcal{A}dm^+_{SU(n,1)} = \{\lambda_1,-\lambda_n\},
\]
où $\lambda_1 = (n,-1,\ldots,-1)$ et $-\lambda_n = (1,\ldots,1,-n)$.
\end{theo}

%

\subsection{Le groupe $Sp(2n,\R)$, $n\geqslant 2$}
\label{subsection:ssgpe1param_Sp(R,n)}

Lorsque $G=Sp(2n,\R)$, le sous-groupe compact maximal est à nouveau $U(n)$ et le $U(n)$-module $\got{p}^-$ est isomorphe à la représentation standard $S^2((\C^n)^*)$ de $U(n)$ (et le $U(n)$-module $\got{p}^+$ est isomorphe à $S^2(\C^n)$). Les racines non compactes positives sont les formes linéaires
$\beta_{i,j} = e_i^*+e_j^*$, avec $1\leqslant i, j\leqslant n$. Remarquons que $\beta_{i,j}=\beta_{j,i}$ pour tout $i,j$. Il suffit donc de paramétrer les racines non compactes positives par $\beta_{i,j}$ avec $1\leqslant i\leqslant j\leqslant n$. Nous noterons spécifiquement $\beta_i = \beta_{i,i} = 2e_i^*$, pour $i=1,\ldots,n$.


Soit $\lambda=(\lambda_1,\ldots,\lambda_n)$ un sous-groupe à un paramètre dominant de $\TC$. On note $L:=\{\lambda_k; k=1,\ldots,n\}$ et $\Lsym:=(L\cap(-L))\cap\N$. Nous noterons également, pour $x\in\Z$,
\[
I(x) :=\{1\leqslant i\leqslant n; \lambda_i=x\}.
\]
Remarquons qu'un entier $l$ appartient à $L$ si et seulement si $I(l)\neq \emptyset$. De plus, un entier positif $l$ appartient à $\Lsym$ si et seulement si $I(l)\neq\emptyset$ et $I(-l)\neq\emptyset$. Puisque $\lambda$ est dominant, pour $l\in \Lsym\setminus\{0\}$ et tout élément $i\in I(l)$ et $j\in I(-l)$, on aura $i< j$. Notons enfin $\WT(\got{p}^+_{\lambda,0})$ 
l'ensemble des racines non compactes positives $\beta$ telles que $\langle\lambda,\beta\rangle =0$, autrement dit, $\C\lambda\subset\ker\beta$. Le sous-groupe à un paramètre $\lambda$ sera $\got{p}^-$-admissible si et seulement si $\C\lambda=\bigcap_{\beta\in\WT(\got{p}^+_{\lambda,0})
}\ker\beta$, d'après la Remarque \ref{rema:autre_déf_tore_Madmissible}.

Une racine $\beta_{i,j}$, $1\leqslant i\leqslant j\leqslant n$, appartient à $\WT(\got{p}^+_{\lambda,0})$ 
si et seulement si $\lambda_j=-\lambda_i$. Distinguant deux cas, $\beta_{i,j}\in\WT(\got{p}^+_{\lambda,0})$ 
si et seulement si $j\in I(-\lambda_i)$. Nous obtenons donc une partition de l'ensemble $\WT(\got{p}^+_{\lambda,0})$ 
en
\[
\WT(\got{p}^+_{\lambda,0})
 = \bigcup_{l\in \Lsym} \{\beta_{i,j}; i\in I(l),j\in I(-l)\}.
\]
Par définition des éléments $\beta_{i,j}$, nous voyons clairement que pour tout $l\in \Lsym$, le sous-espace engendré par l'ensemble de vecteurs $\{\beta_{i,j}; i\in I(l),j\in I(-l)\}$ est contenu dans $\mathrm{Vect}\{e_i^*; i\in I(l)\cup I(-l)\}$. De plus, si $l\neq l'$ sont deux éléments de $\Lsym$, alors $(I(l)\cup I(-l))\cap (I(l')\cup I(-l')) = \emptyset$. On en déduit l'égalité suivante,
\[
\mathrm{Vect}(\WT(\got{p}^+_{\lambda,0})
) = \bigoplus_{l\in \Lsym} \mathrm{Vect}\{\beta_{i,j}; i\in I(l),j\in I(-l)\}.
\]
Donc, en termes de dimension, on a
\[
\dim\mathrm{Vect}(\WT(\got{p}^+_{\lambda,0})
) = \sum_{l\in \Lsym} \dim\mathrm{Vect}\{\beta_{i,j}; i\in I(l),j\in I(-l)\}.
\]
Lorsque $l=0$, on a $\mathrm{Vect}\{\beta_{i,j}; i\in I(0),j\in I(0)\} = \mathrm{Vect}\{e_i^*; i\in I(0)\}$, ce qui implique $\dim\mathrm{Vect}\{\beta_{i,j}; i\in I(0),j\in I(0)\} = |I(0)|$. Il reste donc à calculer les dimensions $\dim\mathrm{Vect}\{\beta_{i,j}; i\in I(l),j\in I(-l)\}$ lorsque $l>0$. Dans cette situation, nous avons $I(l)\cap I(-l) = \emptyset$, mais $I(l)$ et $I(-l)$ sont non vides.

\begin{lemm}
\label{lemm:dimension_sousespace_engendré_par_famille_uij}
Soient $E$ et $F$ deux espaces vectoriels non nuls, $(e_1,\ldots,e_p)$ une base de $E$, $(f_1,\ldots,f_q)$ une base de $F$. On définit les vecteurs $u_{i,j}:=(e_i,f_j)\in E\times F$, pour tout $(i,j)\in\{1,\ldots,p\}\times\{1,\ldots,q\}$. Alors
\[
\dim\mathrm{Vect}\{u_{i,j}; 1\leqslant i\leqslant p, 1\leqslant j\leqslant q\} = p + q - 1 = \dim E + \dim F - 1.
\]
Si, de plus, $I_1,I_2\subset\{1,\ldots,p\}$ (resp. $J_1,J_2\subset\{1,\ldots,p\}$) vérifient $I_1\cap I_2=\emptyset$ (resp. $J_1\cap J_2 = \emptyset$), alors
\[
\mathrm{Vect}\{u_{i_1,j_1}\}_{(i_1,j_1)\in I_1\times J_1}\cap\mathrm{Vect}\{u_{i_2,j_2}\}_{(i_2,j_2)\in I_2\times J_2} = 0
\]
\end{lemm}

\begin{proof}
Quitte à changer les notations, on peut supposer que $\dim E = p\leqslant q=\dim F$. Le cas $p=q=1$ est trivial. On suppose donc $q\geqslant 2$. On définit la famille
\[
\mathcal{L} = (u_{1,1},\ldots,u_{p,1},u_{1,2},\ldots,u_{1,q}).
\]
Puisque pour tout $i=1,\ldots,p$, $j=1,\ldots,q$, on a $u_{i,j} = u_{i,1} + u_{1,j} - u_{1,1}$, la famille $\mathcal{L}$ engendre l'espace vectoriel $\mathrm{Vect}\{u_{i,j}; 1\leqslant i\leqslant p, 1\leqslant j\leqslant q\}$. Montrons qu'elle est aussi libre. Considérons donc la combinaison linéaire
\[
\sum_{i=1}^p x_iu_{i,1} + \sum_{j=2}^q y_ju_{1,j} = 0
\]
En appliquant successivement à cette égalité les formes linéaires sur $E\times F$, $(e_i^*,0)$ et $(0,f_j^*)$ pour $1< i\leqslant p$ et $1<j\leqslant q$, on obtient $x_2=\ldots=x_p = 0$ et $y_2=\ldots=y_q = 0$. Il ne reste donc que $\lambda_1 u_{1,1} = 0$. Mais comme $u_{1,1}$ n'est pas le vecteur nul, nous avons nécessairement $\lambda_1=0$. On en conclut que la famille $\mathcal{L}$ est libre, donc une base de $\mathrm{Vect}\{u_{i,j}; 1\leqslant i\leqslant p, 1\leqslant j\leqslant q\}$. Le résultat découle du fait que $\mathcal{L}$ est composée de $p+q-1$ éléments.

Pour montrer la dernière assertion, il suffit de voir que pour tout $(i_k,j_k)\in I_k\times J_k$, avec $k=1,2$, on a $u_{i_k,j_k} \in\mathrm{Vect}\{e_i\}_{i\in I_k}\times\mathrm{Vect}\{f_j\}_{j\in J_k}$. Par conséquent, on obtient
\begin{multline*}
\mathrm{Vect}\{u_{i_1,j_1}\}_{(i_1,j_1)\in I_1\times J_1}\cap\mathrm{Vect}\{u_{i_2,j_2}\}_{(i_2,j_2)\in I_2\times J_2} \\  \subset (\mathrm{Vect}\{e_i\}_{i\in I_1}\cap\mathrm{Vect}\{e_i\}_{i\in I_2})\times(\mathrm{Vect}\{f_j\}_{j\in J_2}\cap\mathrm{Vect}\{f_j\}_{j\in J_2}) = 0,
\end{multline*}
puisque $(e_1,\ldots,e_p)$ (resp. $(f_1,\ldots,f_q)$) est une base de $E$ (resp. $F$) et puisque les ensembles $I_1$ et $I_2$ (resp. $J_1$ et $J_2$) sont disjoints.
\end{proof}

\begin{rema}
\label{rema:somme_directe_sousespace_engendré_par_famille_uij_récurrence}
Par une récurrence évidente, on peut montrer que si on a des parties disjointes $I_1,\ldots,I_k$ (resp. $J_1,\ldots,J_k$) de $\{1,\ldots,p\}$ (resp. $\{1,\ldots,q\}$), alors les sous-espaces $\mathrm{Vect}\{u_{i_1,j_1}\}_{(i_1,j_1)\in I_1\times J_1},\ldots,\mathrm{Vect}\{u_{i_k,j_k}\}_{(i_k,j_k)\in I_k\times J_k}$ sont en somme directe dans $E\times F$.

Il faut aussi noter qu'en général, on n'a pas mieux que l'inclusion stricte
\[
\mathrm{Vect}\{u_{i_1,j_1}\}_{(i_1,j_1)\in I_1\times J_1}\oplus\mathrm{Vect}\{u_{i_2,j_2}\}_{(i_2,j_2)\in I_2\times J_2} \subsetneq \mathrm{Vect}\{u_{i,j}\}_{(i,j)\in (I_1\cup I_2)\times(J_1\cup J_2)}.
\]
Par exemple, en prenant $I_k=\{i_k\}$ et $J_k=\{j_k\}$, avec $k=1,2$ et $i_1\neq i_2$, $j_1\neq j_2$, on aura $\dim\mathrm{Vect}\{u_{i_1,j_1}\}_{(i_1,j_1)\in I_1\times J_1}\oplus\mathrm{Vect}\{u_{i_2,j_2}\}_{(i_2,j_2)\in I_2\times J_2} = 2$, alors que le sous-espace vectoriel $\mathrm{Vect}\{u_{i,j}\}_{(i,j)\in (I_1\cup I_2)\times(J_1\cup J_2)}$ sera de dimension $|I_1\cup I_2|+|J_1\cup J_2|-1 = 3$.
\end{rema}

\begin{lemm}
\label{lemm:dimension_sousespace_engendré_par_famille_betaij_parrapportàl}
Soit $l\in \Lsym$ tel que $l\neq 0$. Alors on a
\[
\dim\mathrm{Vect}\{\beta_{i,j}; i\in I(l),j\in I(-l)\} = |I(l)|+|I(-l)| - 1.
\]
\end{lemm}

\begin{proof}
On pose $E=\mathrm{Vect}\{e_i^*; i\in I(l)\}$ et $F=\mathrm{Vect}\{e_j^*; j\in I(-l)\}$. Puisque $I(l)\cap I(-l) = \emptyset$ , alors $E\cap F = 0$ et on se retrouve dans la configuration de l'énoncé du Lemme \ref{lemm:dimension_sousespace_engendré_par_famille_uij}. La base utilisée dans $E$ est la base $(e_i^*)_{i\in I(l)}$ et celle de $F$ est $(e_j^*)_{j\in I(-l)}$. Les éléments $\beta_{i,j}$, avec $i\in I(l)$ et $j\in I(-l)$ correspondent bien aux vecteurs $u_{i,j}$. Enfin, comme les dimensions de ces espaces vectoriels sont $\dim E = |I(l)|$ et $\dim F = |I(-l)|$, on obtient bien le résultat attendu.
\end{proof}

Notons maintenant $\Lasym
 = L\setminus(L\cap(-L))$. Des Lemmes \ref{lemm:dimension_sousespace_engendré_par_famille_uij} et \ref{lemm:dimension_sousespace_engendré_par_famille_betaij_parrapportàl} et de la Remarque \ref{rema:somme_directe_sousespace_engendré_par_famille_uij_récurrence}, on déduit les égalités suivantes,
\begin{align*}
\dim\mathrm{Vect}(\WT(\got{p}^+_{\lambda,0})
) & = \sum_{l\in \Lsym} \dim\mathrm{Vect}\{\beta_{i,j}; i\in I(l),j\in I(-l)\} \\
& = |I(0)| + \sum_{l\in \Lsym, l\neq 0} (|I(l)|+|I(-l)| - 1) \\
& = \left(|I(0)| + \sum_{l\in \Lsym, l\neq 0} |I(l)|+|I(-l)|\right) - |\Lsym\setminus\{0\}| \\
& = n - \sum_{l\in\Lasym}|I(l)| - |\Lsym\setminus\{0\}|.
\end{align*}
Or, $\Lasym\subset L$, donc pour tout $l\in\Lasym$, $I(l)$ est non vide. On en conclut que la dimension de l'espace vectoriel $\mathrm{Vect}(\WT(\got{p}^+_{\lambda,0})
)$ est égale à $n-1$ (c'est-à-dire $\lambda$ est $\got{p}^-$-admissible) si et seulement si l'une des deux alternatives suivantes est vérifiée :
\renewcommand{\theenumi}{\textit{\roman{enumi}}}
\begin{enumerate}
\item $|\Lsym\setminus\{0\}| = 1$ et $\Lasym=\emptyset$;
\item $\Lsym\setminus\{0\} = \emptyset$, $\Lasym=\{l\}$ et $|I(l)| = 1$.
\end{enumerate}
Dans le premier cas, la valeur $0$ peut apparaître ou non dans $L$. Dans le second, la valeur $0$ doit apparaître exactement $n-1$ fois (avec $n\geqslant 2$). On note
\begin{equation}
\label{eq:lambda_k,l}
\lambda_{k,l} = (\underbrace{1,\ldots,1}_{k\ \mbox{termes}},\underbrace{0,\ldots,0}_{n-(k+l)\ \mbox{termes}},\underbrace{-1,\ldots,-1}_{l\ \mbox{termes}}),
\end{equation}
pour tout $k=1,\ldots, n-1$, $l=1,\ldots,n-k$. On peut maintenant en déduire le théorème suivant.

\begin{theo}
\label{theo:ssgpe1param_Sp(R,n)}
L'ensemble des sous-groupes à un paramètre dominants indivisibles $\got{p}^-$-admissibles de $\TC$ associés à $G=Sp(2n,\R)$ est
\[
\mathcal{A}dm^+_{Sp(2n,\R,} = \{(1,0,\ldots,0),(0,\ldots,0,-1)\}\cup\{\lambda_{k,l}; k=1,\ldots, n-1, l=1,\ldots,n-k\}.
\]
Le cardinal de $\mathcal{A}dm^+_{Sp(2n,\R)}$ est égal à $\frac{n(n-1)}{2}+2$.
\end{theo}

\subsection{Le groupe $SO^*(2n)$, $n\geqslant 3$}
\label{subsection:ssgpe1param_SO*(2n)}

Le groupe $G=SO^*(2n)$ est le dernier de la liste à avoir $U(n)$ pour sous-groupe compact maximal. De plus, $\got{p}^-$ est isomorphe au $U(n)$-module standard $\wedge^2((\C^n)^*)$. Les racines non compactes positives sont les formes linéaires $\beta_{i,j} = e_i^*+e_j^*$, avec $1\leqslant i<j\leqslant n$. Nous avons à nouveau $\beta_{i,j} = \beta_{j,i}$.

Le calcul des sous-groupes à un paramètre dominants $\got{p}^-$-admissibles de $\TC$ est très similaire au cas du groupe $Sp(2n,\R)$. Nous gardons les mêmes notations que celles introduites dans le paragraphe \ref{subsection:ssgpe1param_Sp(R,n)}.

Soit $\lambda$ un sous-groupe à un paramètre dominant de $\TC$. La seule différence par rapport au paragraphe précédent apparaît dans la dimension de l'espace vectoriel
\[
\mathrm{Vect}\{\beta_{i,j}; i\in I(0),j\in I(0), i<j\}.
\]

\begin{lemm}
Si $|I(0)|\in\{1,2\}$, alors $\dim\mathrm{Vect}\{\beta_{i,j}; i\in I(0),j\in I(0), i<j\} = |I(0)|-1$. Si $|I(0)|\geqslant 3$, alors $\dim\mathrm{Vect}\{\beta_{i,j}; i\in I(0),j\in I(0), i<j\} = |I(0)|$.
\end{lemm}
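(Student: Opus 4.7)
The plan is to proceed by a direct case analysis on the cardinal $s := |I(0)|$. I first observe that the vectors $\beta_{i,j} = e_i^* + e_j^*$ for $i,j \in I(0)$, $i < j$, all lie in the $s$-dimensional subspace $F := \mathrm{Vect}\{e_i^* ; i \in I(0)\}$ of $\got{t}^*$, since the basis $(e_1^*, \ldots, e_n^*)$ is free. The question is therefore to determine when the span of the $\beta_{i,j}$'s exhausts $F$ and when it is a strict subspace.

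For the cases $s \in \{1,2\}$, I would argue directly: if $s=1$ the set $\{(i,j) ; i,j \in I(0), i<j\}$ is empty so the span is reduced to $\{0\}$ and has dimension $0 = s-1$; if $s=2$, writing $I(0) = \{i_1 < i_2\}$ there is only the vector $\beta_{i_1,i_2} = e_{i_1}^* + e_{i_2}^*$, which is nonzero, so the span has dimension $1 = s-1$. This handles the first assertion.

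For $s \geqslant 3$, I would show equality between the span and $F$ by writing any $e_{i}^*$ (for $i \in I(0)$) as a linear combination of the $\beta_{i',j'}$'s. The key trick is that with at least three indices $i_1 < i_2 < i_3$ in $I(0)$, one has the identity
\[
e_{i_1}^* = \tfrac{1}{2}\bigl( (e_{i_1}^* + e_{i_2}^*) + (e_{i_1}^* + e_{i_3}^*) - (e_{i_2}^* + e_{i_3}^*) \bigr) = \tfrac{1}{2}\bigl(\beta_{i_1,i_2} + \beta_{i_1,i_3} - \beta_{i_2,i_3}\bigr),
\]
and symmetrically for $e_{i_2}^*$ and $e_{i_3}^*$. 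For every remaining index $i_k \in I(0)$ with $k \geqslant 4$, one recovers $e_{i_k}^* = \beta_{i_1,i_k} - e_{i_1}^*$. Thus all basis vectors of $F$ are in the span, giving dimension exactly $s$.

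No step looks like a serious obstacle; the only subtle point is the contrast with the $Sp(2n,\R)$ case treated in Lemma~\ref{lemm:dimension_sousespace_engendr�_par_famille_betaij_parrapport�l}, where the presence of the diagonal roots $\beta_{i,i} = 2e_i^*$ allowed one to directly recover each $e_i^*$ even when $|I(0)| \leqslant 2$. Here, for $SO^*(2n)$, the diagonal roots are absent, which is precisely what forces the drop of dimension by one when $|I(0)| \in \{1,2\}$, and explains why the threshold $s \geqslant 3$ appears.
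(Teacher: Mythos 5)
Your proof is correct and follows essentially the same route as the paper: the cases $|I(0)|\in\{1,2\}$ are handled identically, and for $|I(0)|\geqslant 3$ your half-sum identity $e_{i_1}^*=\tfrac{1}{2}(\beta_{i_1,i_2}+\beta_{i_1,i_3}-\beta_{i_2,i_3})$ is just the dual formulation of the paper's argument, which instead exhibits the free family $(\beta_{i_2,i_3},\beta_{i_1,i_2},\beta_{i_1,i_3},\ldots,\beta_{i_1,i_p})$ of cardinality $|I(0)|$ inside the same span. Your closing remark correctly identifies why the threshold $|I(0)|\geqslant 3$ appears here but not in the $Sp(2n,\R)$ case, where the roots $2e_i^*$ are available.
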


\begin{proof}
Le cas $|I(0)| = 1$ est trivial puisque $\{\beta_{i,j}; i\in I(0),j\in I(0), i<j\} = \emptyset$ dans ce cas-là. Ensuite, si $|I(0)| = 2$, l'ensemble $\{\beta_{i,j}; i\in I(0),j\in I(0), i<j\}$ aura un seul élément, qui est non nul, donc engendrera un espace vectoriel de dimension $1$.

Supposons maintenant que $|I(0)|\geqslant 3$. Sans perdre de généralité, on peut supposer que $I(0) = \{1,\ldots,p\}$, avec $p\geqslant 3$. Nous avons
\[
\mathrm{Vect}\{\beta_{i,j}; i\in I(0),j\in I(0), i<j\} \subset \mathrm{Vect}(e_k^*)_{k=1,\ldots,p}
\]
et, par conséquent, la dimension de $\mathrm{Vect}\{\beta_{i,j}; i\in I(0),j\in I(0), i<j\}$ est inférieure ou égale à $|I(0)|=p$. Montrons que la famille $(\beta_{2,3},\beta_{1,2},\beta_{1,3},\ldots,\beta_{1,p})$ est libre. Pour cela, considérons des scalaires $\mu_1,\ldots,\mu_p$ tels que
\[
\mu_1\beta_{2,3}+\mu_2\beta_{1,2}+\mu_3\beta_{1,3}+\ldots+\mu_p\beta_{1,p} = 0.
\]
Appliquant le vecteur $e_k$ à cette combinaison linéaire de formes linéaires, pour $k\geqslant 4$, cela nous donne $\mu_k = 0$ pour tout $k\geqslant 4$. Il ne reste donc plus que la combinaison linéaire $\mu_1\beta_{2,3}+\mu_2\beta_{1,2}+\mu_3\beta_{1,3}=0$. Autrement dit, tout se passe dans le cas $|I(0)| = 3$. Or on peut facilement vérifier que la famille $(\beta_{2,3},\beta_{1,2},\beta_{1,3})$ est libre. On en déduit que tous les scalaires $\mu_k$ sont nuls et que la famille $(\beta_{2,3},\beta_{1,2},\beta_{1,3},\ldots,\beta_{1,p})$ est libre. Cette famille est constituée de $p$ vecteurs de $\mathrm{Vect}\{\beta_{i,j}; i\in I(0),j\in I(0), i<j\}$, ce qui permet de conclure la preuve du lemme.
\end{proof}

On déduit du lemme précédent que lorsque $|I(0)| = 0$ ou $|I(0)|\geqslant 3$, alors
\[
\dim\mathrm{Vect}(\WT(\got{p}^+_{\lambda,0})
) = n - \sum_{l\in\Lasym}|I(l)| - |\Lsym\setminus\{0\}|,
\]
comme dans le cas de $Sp(2n,\R)$. On en déduit que les sous-groupes à un paramètre $\lambda$ dominants indivisibles $\got{p}^-$-admissibles sont les sous-groupes à un paramètre $\lambda_{k,l}$ définis en $\eqref{eq:lambda_k,l}$, pour $1\leqslant k\leqslant n-4$ et $l=1,\ldots,n-k-3$ (pour $|I(0)|\geqslant 3$), ainsi que les sous-groupes à un paramètre $\lambda_{k,n-k}$ avec $k=1,\ldots,n-1$ (pour $|I(0)|=0$), et enfin $(1,0,\ldots,0)$ et $(0,\ldots,0,-1)$ lorsque $n\geqslant 4$.

Pour $|I(0)| =1,2$, nous aurons
\[
\dim\mathrm{Vect}(\WT(\got{p}^+_{\lambda,0})
) = n -1 - \sum_{l\in\Lasym}|I(l)| - |\Lsym\setminus\{0\}|,
\]
ce qui implique que l'on doit nécessairement avoir $\Lasym=\emptyset$ et $\Lsym=\{0\}$. Mais alors $I(0) = \{1,\ldots,n\}$, avec $n\geqslant 3$, et $|I(0)|\leqslant 2$, ce qui est impossible. Donc le cas $|I(0)|=1,2$ n'apparaît pas pour $\lambda$ dominant $\got{p}^-$-admissible.

\begin{theo}
\label{theo:ssgpe1param_SO*(2n)}
L'ensemble des sous-groupes à un paramètre dominants indivisibles $\got{p}^-$-admissibles de $U(n)$ associés à $G=SO^*(2n)$ est
\begin{align*}
\mathcal{A}dm^+_{SO^*(2n)} = &\ \{(1,0,\ldots,0),(0,\ldots,0,-1)\} \cup\{\lambda_{k,n-k}; k=1,\ldots, n-1\} \\
& \quad \cup\{\lambda_{k,l}; k=1,\ldots, n-4, l=1,\ldots,n-k-3\}
\end{align*}
lorsque $n\geqslant 4$, et
\[
\mathcal{A}dm^+_{SO^*(6)} = \{(1,-1,-1),(1,1,-1)\}.
\]
\end{theo}


\subsection{Le groupe $SU(p,q)$, $p\geqslant q\geqslant 2$}
\label{subsection:ssgpe1param_SU(p,q)}

Nous allons maintenant donner la liste des sous-groupes à un paramètre dominants indivisibles $\got{p}^-$-admissibles associés au groupe $G=SU(p,q)$, lorsque $p\geqslant q\geqslant 2$. Le cas $SU(p,1)$ a été fait dans le paragraphe \ref{subsection:ssgpes1param_SU(n,1)}.

Rappelons que les racines non compactes positives de $\got{su}(p,q)$ sont les formes linéaires $\beta_{i,p+j}=e_i^*-e_{p+j}^*$ de $\got{t}^*$, pour $1\leq i\leq p$ et $1\leq j\leq q$, cf paragraphe \ref{subsection:Notations_SU(p,q)}.

Commençons par démontrer le lemme suivant.

\begin{lemm}
\label{lemm:SU(p,q)_dimension_espaceengendrépar_sousensemble_racinesnoncompactespositives}
Soit $I\subset\{1,\ldots,p\}$ et $J\subset\{1,\ldots,q\}$ deux ensembles non vides. Alors
\[
\dim\mathrm{Vect}(\{\beta_{i,p+j}\}_{i\in I,j\in J}) = |I| + |J| -1.
\]
\end{lemm}

\begin{proof}
Ce lemme découle du Lemme \ref{lemm:dimension_sousespace_engendré_par_famille_uij}, où $E=\mathrm{Vect}(e_i^*)_{i=1,\ldots,p}$, $F=\mathrm{Vect}(e_j^*)_{j=p+1,\ldots,p+q}$, et on prend pour bases de $E$ et $F$ respectivement $(e_1^*,\ldots,e_p^*)$ et $(-e_{p+1}^*,\ldots,-e_{p+q}^*)$. 
\end{proof}

Soit $\lambda$ un sous-groupe à un paramètre dominant de $\TC$. Nous noterons
\[
L:=\{\lambda_k; k=1,\ldots,p\}\cap\{\lambda_{p+k'}; k'=1,\ldots,q\}\subset\Z
\]
l'ensemble des valeurs qui apparaissent à la fois parmi les $p$ premières composantes et les $q$ dernières composantes du $(p+q)$-uplet $\lambda$
. Nous définissons également les deux sous-ensembles de $\Z$ suivants, pour tout $n\in\Z$,
\[
I(n):=\{1\leqslant i\leqslant p; \lambda_i=n\} \quad \text{et} \quad J(n):=\{1\leqslant j\leqslant q; \lambda_{p+j}=n\}.
\]
Les parties $I(n)$ (resp. $J(n)$), pour $n$ parcourant $\Z$, sont deux à deux disjointes. De plus, pour tout $\ell\in\Z$, l'entier $\ell$ appartient à $L$ si et seulement si $I(\ell)$ et $J(\ell)$ sont tous les deux non vides.

Nous rappelons que $\WT(\got{p}^+_{\lambda,0})$ est égal à l'ensemble des racines non compactes positives $\beta$ vérifiant $\langle\lambda,\beta\rangle=0$. Nous pouvons travailler sur les racines positives en lieu et place des racines négatives, car on a $\WT(\got{p}^+) = -\WT(\got{p}^-)$ et $\WT(\got{p}^+_{\lambda,0}) = - \WT(\got{p}^-_{\lambda,0})$, ce qui nous permet d'écrire
\[
\bigcap_{\beta\in\WT(\got{p}^+_{\lambda,0})}\ker\beta = \bigcap_{\beta'\in\WT(\got{p}^-_{\lambda,0})}\ker\beta'.
\]
Soit $(i,j)\in\{1,\ldots,p\}\times\{1,\ldots,q\}$. Les assertions sont clairement équivalentes :
\begin{enumerate}
\item la racine non compacte positive $\beta_{i,p+j}$ appartient à $\WT(\got{p}^+_{\lambda,0})$;
\item $j\in J(\lambda_i)$;
\item $i\in I(\lambda_j)$;
\item il existe un entier $\ell\in L$ tel que $i\in I(\ell)$ et $j\in J(\ell)$.
\end{enumerate}
On a donc
\begin{equation}
\label{eq:SU(p,q)_partitiondespoids_annulant_lambda}
\WT(\got{p}^+_{\lambda,0}) = \bigcup_{\ell\in L}\bigl\{\beta_{i,j}; (i,j)\in I(\ell)\times J(\ell)\bigr\}.
\end{equation}


\begin{lemm}
\label{lemm:sg1padmissibles_SU(p,q)_sommedirecte_espacesengendréspardifférentsI(l)J(l)}
Les sous-espaces vectoriels $\mathrm{Vect}\left(\{\beta_{i,p+j}\}_{i\in I(\ell), j\in J(\ell)}\right)$, pour $\ell$ parcourant $L$, sont en somme directe dans $\got{t}^*$ et, plus exactement,
\[
\mathrm{Vect}\left(\WT(\got{p}^+_{\lambda,0})\right) = \bigoplus_{\ell\in L}\mathrm{Vect}\left(\{\beta_{i,p+j}\}_{i\in I(\ell), j\in J(\ell)}\right).
\]
\end{lemm}

\begin{proof}
La somme directe découle directement de la deuxième assertion du Lemme \ref{lemm:dimension_sousespace_engendré_par_famille_uij}, en utilisant la propriété que si $\ell,\ell'\in L$ sont distincts, alors $I(\ell)\cap I(\ell') = \emptyset = J(\ell)\cap J(\ell')$. Et d'après l'égalité \eqref{eq:SU(p,q)_partitiondespoids_annulant_lambda}, cette somme directe est forcément égale au sous-espace de $\got{t}^*$ engendré par les éléments de $\WT(\got{p}^+_{\lambda,0})$.
\end{proof}


Notons maintenant $I = \cup_{\ell\in L}I(\ell)$ et $J = \cup_{\ell\in L}J(\ell)$. On a $\cup_{\ell\in L}I(\ell)\times J(\ell)\subset I\times J$, ce qui donne, d'après le Lemme \ref{lemm:sg1padmissibles_SU(p,q)_sommedirecte_espacesengendréspardifférentsI(l)J(l)},
\[
\mathrm{Vect}\left(\WT(\got{p}^+_{\lambda,0})\right) \subset \mathrm{Vect}\left(\{\beta_{i,p+j}\}_{(i,j)\in I\times J}\right).
\]
On en déduit que la dimension du sous-espace vectoriel $\mathrm{Vect}\left(\WT(\got{p}^+_{\lambda,0})\right)$ de $\got{t}^*$ est majorée par la dimension de $\mathrm{Vect}\left(\{\beta_{i,p+j}\}_{(i,j)\in I\times J}\right)$, et, par conséquent,
\begin{equation}
\label{eq:SU(p,q)_inégalités_dimensiondeVect(WTp-)}
\dim\mathrm{Vect}\left(\WT(\got{p}^+_{\lambda,0})\right) \leqslant |I|+|J|-1 \leqslant p+q-1,
\end{equation}
par le Lemme \ref{lemm:SU(p,q)_dimension_espaceengendrépar_sousensemble_racinesnoncompactespositives}. De plus, une autre application du Lemme \ref{lemm:SU(p,q)_dimension_espaceengendrépar_sousensemble_racinesnoncompactespositives} donne la dimension
\[
\dim\mathrm{Vect}(\{\beta_{i,p+j}\}_{i\in I(\ell),j\in J(\ell)}) = |I(\ell)| + |J(\ell)| - 1,
\]
pour tout $\ell\in L$. Ceci est vrai puisque, par définition de $L$, $I(\cdot)$ et $J(\cdot)$, les ensembles $I(\ell)$ et $J(\ell)$ sont des parties respectivement de $\{1,\ldots,p\}$ et $\{1,\ldots,q\}$, toutes les deux non vides dès que $\ell$ appartient à $L$. D'après le Lemme \ref{lemm:sg1padmissibles_SU(p,q)_sommedirecte_espacesengendréspardifférentsI(l)J(l)}, nous pouvons calculer la dimension suivante,
\begin{align}
\dim\mathrm{Vect}\left(\WT(\got{p}^+_{\lambda,0})\right) & = \sum_{\ell\in L} \dim \mathrm{Vect}\left(\{\beta_{i,p+j}\}_{i\in I(\ell),j\in J(\ell)}\right) \notag\\
& = \sum_{\ell\in L} (|I(\ell)| + |J(\ell)| - 1) \notag\\
& = |I| + |J| - |L|.\label{eq:SU(p,q)_égalité_dimensiondeVect(WTp-)}
\end{align}

Supposons maintenant que $\lambda$ est $\got{p}^-$-admissible. Cela signifie que 
\[
\dim\mathrm{Vect}\left(\WT(\got{p}^+_{\lambda,0})\right) = \dim\got{t}^*-1 = p+q-2.
\]
Des équations \eqref{eq:SU(p,q)_inégalités_dimensiondeVect(WTp-)} et \eqref{eq:SU(p,q)_égalité_dimensiondeVect(WTp-)}, on obtient
\begin{equation}
\label{eq:SU(p,q)_égalité+inégalité_cardinauxdeIetJ}
p+q-2 = |I| + |J| - |L| \leqslant |I|+|J|-1 \leqslant p+q-1.
\end{equation}
Or, par les définitions de $I$ et $J$, on doit avoir $|I|\leqslant p$ et $|J|\leqslant q$. Ainsi, on doit forcément avoir $(|I|,|J|)\in\{(p,q-1),(p-1,q),(p,q)\}$.

\bigskip

Déterminons les sous-groupes à un paramètre dominants indivisibles $\got{p}^-$-admissibles de $\TC$ dans chacun de ces cas. Commençons par le cas $|I|=p-1$ et $|J|=q$. On a donc $I=\{1,\ldots,p\}\setminus\{i_0\}$, pour un certain $i_0\in\{1,\ldots,p\}$, et $J=\{1,\ldots,q\}$. En appliquant ces valeurs à l'équation \eqref{eq:SU(p,q)_égalité+inégalité_cardinauxdeIetJ}, on a $p+q-2 = (p-1)+q-|L|$, donc $|L|=1$, c'est-à-dire $L=\{\ell\}$. Donc,
\begin{itemize}
\item pour tout $j\in\{1,\ldots,q\}$, $\lambda_{p+j} = \ell$,
\item pour tout $i\in\{1,\ldots,p\}\setminus\{i_0\}$, $\lambda_i=\ell$,
\item et $\lambda_{i_0} = (p+q-1)\ell$.
\end{itemize}
Cette dernière égalité vient du fait que $\lambda$ est un élément diagonal de $\got{sl}_{p+q}(\C)$, donc la somme de ses composantes doit être nulle. Pour que $\lambda$ soit indivisible, il faut alors prendre $\ell\in\{\pm 1\}$. Pour que $\lambda$ soit en plus dominant, il faut $\ell=1$ et $i_0=p$, ou $\ell=-1$ et $i_0=1$. Donc on obtient les deux sous-groupes à un paramètre suivants :
\[
\lambda = (1,\ldots,1,1-p-q;1,\ldots,1) \quad \text{ou} \quad \lambda = (p+q-1,-1,\ldots,-1;-1,\ldots,-1).
\]

Pour $|I|=p$ et $|J|=q-1$, le raisonnement est similaire. On obtient
\[
\lambda = (1,\ldots,1;1,\ldots,1-p-q) \quad \text{ou} \quad \lambda = (-1,\ldots,-1;p+q-1,-1,\ldots,-1).
\]

Enfin, prenons maintenant $|I|=p$ et $|J|=q$, c'est-à-dire $I=\{1,\ldots,p\}$ et $J=\{1,\ldots,q\}$. L'équation \eqref{eq:SU(p,q)_égalité+inégalité_cardinauxdeIetJ} implique $|L|=2$. On note $L=\{a,b\}$ avec $a\geqslant b$. On a ainsi les deux partitions
\[
\{1,\ldots,p\} = I(a)\cup I(b) \quad \text{et} \quad \{1,\ldots,q\} = J(a)\cup J(b).
\]
Pour que $\lambda$ soit dominant, comme on a pris $a\geqslant b$, il faut avoir que tout indice dans $I(a)$ (resp. dans $J(a)$) soit plus petit que tout indice dans $I(b)$ (resp. $J(b)$). Il existe donc deux entiers $k\in\{1,\ldots,p-1\}$ et $l\in\{1,\ldots,q-1\}$ tels que
\begin{itemize}
\item $I(a) = \{1,\ldots,k\}$ et $I(b) = \{k+1,\ldots,p\}$;
\item $J(a) = \{1,\ldots,l\}$ et $I(b) = \{l+1,\ldots,q\}$.
\end{itemize}
Le sous-groupe à un paramètre dominant $\got{p}^-$-admissible correspondant est
\begin{equation}
\label{eq:defi_sg1p_lambdakl}
\lambda_{k,l} = (\underbrace{a,\ldots,a}_{k\ \mbox{termes}},\underbrace{b,\ldots,b}_{p-k\ \mbox{termes}};\underbrace{a,\ldots,a}_{l\ \mbox{termes}},\underbrace{b,\ldots,b}_{q-l\ \mbox{termes}}).
\end{equation}

\begin{theo}
\label{theo:sg1pdominantsadmissibles_de_SU(p,q)}
Les sous-groupes à un paramètre dominants indivisibles $\got{p}^-$-admissibles de $\TC$ pour $G = SU(p,q)$ sont les sous-groupes à un paramètre $\lambda_{k,l}$ définis en \eqref{eq:defi_sg1p_lambdakl}, pour tous $k=1,\ldots,p-1$ et $l=1,\ldots,q-1$, avec 
\[
a=\frac{p+q-k-l}{\mathrm{pgcd}(p+q-k-l,k+l)} \quad\text{et} \quad b=\frac{-(k+l)}{\mathrm{pgcd}(p+q-k-l,k+l)},
\]
et les quatre sous-groupes à un paramètre suivants,
\begin{align*}
\lambda_{p,q-1} & := (1,\ldots,1;1,\ldots,1,1-p-q), \\
\lambda_{p-1,q} & := (1,\ldots,1,1-p-q;1,\ldots,1), \\
\lambda_{0,1} & := (-1,\ldots,-1;p+q-1,-1,\ldots,-1), \\
\lambda_{1,0} & := (p+q-1,-1,\ldots,-1;-1,\ldots,-1).
\end{align*}
\end{theo}

\subsection{Le groupe $SO(2p,2)$, $p\geqslant 2$}
\label{subsection:ssgpe1param_SO(2p,2)}

Rappelons que les notations pour le groupe $SO(2p,2)$ ont été posées dans le paragraphe \ref{subsection:Notations_SO(2p,2)}.

Etudions maintenant les sous-groupes à un paramètre dominants $\got{p}^-$-admissibles de $\TC$, complexifié du tore maximal $T$ de $K$ dont l'algèbre de Lie est $\got{t}$. Un sous-groupe à un paramètre dominant de $\TC$ peut s'écrire $\lambda=(\lambda_1,\ldots,\lambda_{p+1})$ dans la base $(e_1,\ldots,e_{p+1})$ du complexifié $\got{t}_{\C}$ de $\got{t}$, avec $\lambda_1,\ldots,\lambda_{p+1}$ des entiers vérifiant
\[
\lambda_1\geqslant\lambda_2\geqslant\ldots\geqslant\lambda_p \quad \text{et} \quad \lambda_{p-1}+\lambda_p\geqslant 0.
\]
En particulier, si $\lambda$ est dominant, on doit avoir $\lambda_1\geqslant\ldots\geqslant\lambda_{p-1}\geqslant 0$. Remarquons tout d'abord que, pour tout $i\in\{1,\ldots,p\}$, $\langle\lambda,\beta_i^{\pm}\rangle = 0$ si et seulement si $\lambda_i=\mp\lambda_{p+1}$. Nous définissons donc les deux sous-ensembles
\[
I^{\pm} := \{1\leqslant i\leqslant p; \lambda_i=\mp \lambda_{p+1}\} = \{1\leqslant i\leqslant p; \langle\lambda,\beta_i^{\pm}\rangle=0\}.
\]
On a donc
\begin{equation}
\label{eq:SO(2p,2)_écriture_WT(p-0)}
\WT(\got{p}_{\lambda,0}) = \{\beta_i^+; i\in I^+\}\cup\{\beta_j^-; j\in I^-\}.
\end{equation}

\begin{lemm}
\label{lemm:SO(2p,2)_I+capI-neq0_équivalent_I+=I-}
Supposons que l'un des ensembles $I^+$ ou $I^-$ soit non vide. Alors, $I^+\cap I^-\neq\emptyset$ si et seulement si $I^+=I^-$.
\end{lemm}

\begin{proof}
En effet, supposons que $I^+\cap I^-\neq\emptyset$. Fixons $i_0\in I^+\cap I^-$. On a donc $\lambda_{i_0} = \lambda_{p+1} = -\lambda_{i_0}$. Donc $\lambda_{p+1} = 0$. Soit $i\in\{1,\ldots,p\}$. On a donc les équivalences
\[
i\in I^+ \Longleftrightarrow \lambda_i=-\lambda_{p+1}=0=\lambda_{p+1}=\lambda_i \Longleftrightarrow i\in I^-.
\]
D'où $I^+=I^-$.

Réciproquement, supposons que $I^+=I^-$. Par hypothèse, on a $I^+$ ou $I^-$ non vide. Ils sont donc tous les deux non vides, et $I^+\cap I^-=I^+=I^-\neq\emptyset$.
\end{proof}

\begin{rema}
\label{rema:SO(2p,2)_I+capI-neq0_équivalent_lambdap+1=0}
Conservant l'hypothèse de l'énoncé du Lemme \ref{lemm:SO(2p,2)_I+capI-neq0_équivalent_I+=I-}, on peut voir également que l'assertion $I^+\cap I^-\neq\emptyset$ est équivalente à $\lambda_{p+1}=0$. En effet, la preuve du Lemme \ref{lemm:SO(2p,2)_I+capI-neq0_équivalent_I+=I-} donne la première implication et si on suppose $\lambda_{p+1}=0$, alors pour un $i\in I^{\pm}\neq\emptyset$ (on est sûr qu'au moins l'un des deux est non vide par hypothèse), on doit avoir $\lambda_i=\mp\lambda_{p+1}=0$. On aura donc aussi $\lambda_i = \pm 0 = \pm\lambda_{p+1}$, donc $i$ appartient à $I^+\cap I^-$.
\end{rema}

\begin{lemm}
\label{lemm:SO(2p,2)_dimensionVect(WTp-0)}
Lorsque $I^+=I^-=\emptyset$, alors $\dim\mathrm{Vect}(\WT(\got{p}^-_{\lambda,0})) = 0$. Sinon, on a
\begin{itemize}
\item soit $I^+\cap I^-=\emptyset$, et alors $\dim\mathrm{Vect}(\WT(\got{p}^-_{\lambda,0})) = |I^+|+|I^-|$;
\item sinon $I^+=I^-$ et $\dim\mathrm{Vect}(\WT(\got{p}^-_{\lambda,0})) = |I^+|+1 = |I^-|+1$.
\end{itemize}
\end{lemm}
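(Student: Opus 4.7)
The plan is to argue by case analysis using Lemma~\ref{lemm:SO(2p,2)_I+capI-neq0_�quivalent_I+=I-}, which tells us that when at least one of $I^+$, $I^-$ is non-empty, either $I^+\cap I^-=\emptyset$ or $I^+=I^-$. In every case, I would work directly from the defining formula $\beta_i^{\pm}=\pm e_i^*+e_{p+1}^*$ and exploit the fact that $(e_1^*,\ldots,e_{p+1}^*)$ is a basis of $\got{t}^*$.

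The first case, $I^+=I^-=\emptyset$, is immediate from \eqref{eq:SO(2p,2)_�criture_WT(p-0)}: the spanning family is empty, so the dimension is $0$. For the case $I^+\cap I^-=\emptyset$ with $I^+\cup I^-\neq\emptyset$, I would show directly that the family
\[
\{\beta_i^+;\ i\in I^+\}\cup\{\beta_j^-;\ j\in I^-\}
\]
is linearly independent. Starting from a relation $\sum_{i\in I^+}a_i\beta_i^+ + \sum_{j\in I^-}b_j\beta_j^- = 0$ and expanding, one obtains
\[
\sum_{i\in I^+}a_ie_i^* - \sum_{j\in I^-}b_je_j^* + \Bigl(\sum_{i\in I^+}a_i+\sum_{j\in I^-}b_j\Bigr)e_{p+1}^* = 0.
\]
Since $I^+\cap I^-=\emptyset$ and neither contains $p+1$, the indices appearing on the $e_k^*$ are pairwise distinct and distinct from $p+1$, so all coefficients vanish, yielding the dimension $|I^+|+|I^-|$.

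For the remaining case $I^+=I^-\neq\emptyset$, by Remark~\ref{rema:SO(2p,2)_I+capI-neq0_�quivalent_lambdap+1=0} we have $\lambda_{p+1}=0$, and the key identities are $\beta_i^++\beta_i^-=2e_{p+1}^*$ and $\beta_i^+-e_{p+1}^*=e_i^*$. From these, one sees that $e_{p+1}^*$ and every $e_i^*$ with $i\in I^+$ belong to $\mathrm{Vect}(\WT(\got{p}^-_{\lambda,0}))$, and conversely that the spanning family is contained in $\mathrm{Vect}\{e_i^*;\ i\in I^+\}\oplus\C e_{p+1}^*$. This gives equality of the two subspaces, hence dimension $|I^+|+1$.

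I do not anticipate any real obstacle: the statement is a linear algebra computation, the partition of cases is dictated by Lemma~\ref{lemm:SO(2p,2)_I+capI-neq0_�quivalent_I+=I-}, and the identities $\beta_i^++\beta_i^-=2e_{p+1}^*$ together with the basis property of $(e_k^*)$ do all the work. The only mild care needed is to invoke Remark~\ref{rema:SO(2p,2)_I+capI-neq0_�quivalent_lambdap+1=0} to justify that in the third case $e_{p+1}^*$ is genuinely independent from the $e_i^*$ with $i\in I^+$, which follows because $p+1\notin\{1,\ldots,p\}\supset I^+$.
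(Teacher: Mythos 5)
Your proof is correct and follows essentially the same route as the paper: the same case split via Lemma \ref{lemm:SO(2p,2)_I+capI-neq0_�quivalent_I+=I-}, and in the case $I^+=I^-$ the same identities $(\beta_i^++\beta_i^-)/2=e_{p+1}^*$ and $e_i^*=\beta_i^+-e_{p+1}^*$. The only difference is that you spell out the linear independence in the disjoint case by an explicit coefficient computation where the paper simply asserts it, and your appeal to the remark on $\lambda_{p+1}=0$ is harmless but unnecessary, since the independence of $e_{p+1}^*$ from the $e_i^*$ is just the basis property.
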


\begin{proof}
La première égalité est directe, d'après \eqref{eq:SO(2p,2)_écriture_WT(p-0)}, car alors $\WT(\got{p}^-_{\lambda,0})$ est vide. Supposons donc maintenant que $I^+$ ou $I^-$ soit non vide. D'après le Lemme \ref{lemm:SO(2p,2)_I+capI-neq0_équivalent_I+=I-}, on a deux possibilités : soit $I^+\cap I^-=\emptyset$, soit $I^+=I^-$.

Si $I^+\cap I^-=\emptyset$, alors les éléments de $\WT(\got{p}_{\lambda,0})$ forment clairement une famille libre. Par conséquent, l'espace engendré par $\WT(\got{p}_{\lambda,0})$ a pour dimension le cardinal de $\WT(\got{p}_{\lambda,0})$, qui est égal à $|I^+|+|I^-|$, d'après \eqref{eq:SO(2p,2)_écriture_WT(p-0)} et le fait que $I^+\cap I^-$ soit vide.

Enfin, si $I^+=I^-$, puisque $\beta_i^{\pm} = \pm e_i^*+e_{p+1}^*$, on a
\[
\mathrm{Vect}(\WT(\got{p}_{\lambda,0})) \subset \mathrm{Vect}(\{e_i^*; i\in I^+=I^-\}\cup \{e_{p+1}^*\}).
\]
Or, pour tout $i\in I^+$, on a aussi $i\in I^-$, donc $\beta_i^+,\beta_i^-\in\WT(\got{p}_{\lambda,0})$ et $(\beta_i^+ +\beta_i^-)/2 = e_{p+1}^*\in\mathrm{Vect}(\WT(\got{p}_{\lambda,0}))$. Alors, $e_i^* = \beta_i^+-e_{p+1}^*$ est aussi dans $\mathrm{Vect}(\WT(\got{p}_{\lambda,0}))$, d'où l'égalité
\[
\mathrm{Vect}(\WT(\got{p}_{\lambda,0})) = \mathrm{Vect}(\{e_i^*; i\in I^+=I^-\}\cup \{e_{p+1}^*\}),
\]
ce qui donne le résultat attendu.
\end{proof}

Soit $\lambda$ un sous-groupe à un paramètre dominant indivisible $\got{p}^-$-admissible de $\TC$. Alors nous devons avoir $\dim\mathrm{Vect}(\WT(\got{p}_{\lambda,0})) = p \geqslant 2$. D'après le Lemme \ref{lemm:SO(2p,2)_dimensionVect(WTp-0)}, on a deux possibilités,
\begin{itemize}
\item soit $I^+=I^-$ et alors $|I^+|=|I^-|=p-1$, c'est-à-dire $I^+=I^-=\{1,\ldots,p\}\setminus\{i_0\}$.
\item soit $I^+\cap I^-=\emptyset$ et alors $|I^+|+|I^-| = p$, c'est-à-dire $I^+\cup I^- = \{1,\ldots,p\}$.
\end{itemize}
Pour que $\lambda$ soit dominant, il faut dans le premier cas que $i_0=1$, et dans le deuxième cas $I^{\pm} = \{1,\ldots,p-1\}$ et $I^{\mp}=\{p\}$, ce qui donne les cinq sous-groupes à un paramètre
\begin{equation}
\label{eq:SO(2p,2)_défi_des_sg1pdomindivp-admissibles}
\lambda_0:=(1,0,\ldots,0;0), \quad \lambda_1^{\pm}:=(1,\ldots,1;\pm 1) \quad \text{et} \quad \lambda_{-1}^{\pm}:=(1,\ldots,1,-1;\pm 1).
\end{equation}

\begin{theo}
L'ensemble des sous-groupes à un paramètre dominants indivisibles $\got{p}^-$-admissibles de $\TC$ associés à $G=SO(2p,2)$ est
\[
\mathcal{A}dm^+_{SO(2p,2)} = \{\lambda_0,\lambda_1^+,\lambda_1^-,\lambda_{-1}^+,\lambda_{-1}^-\}, 
\]
où $\lambda_0$, $\lambda_1^{\pm}$ et $\lambda_{-1}^{\pm}$ sont définis en \eqref{eq:SO(2p,2)_défi_des_sg1pdomindivp-admissibles}.
\end{theo}

\subsection{Le groupe $SO(2p+1,2)$, $p\geqslant 1$}
\label{subsection:ssgpe1param_SO(2p+1,2)}

Nous terminons l'étude des sous-groupes à un paramètre $\got{p}^-$-admissibles par ceux du groupe $SO(2p+1,2)$. Rappelons que les notations pour le groupe $SO(2p+1,2)$ et son algèbre de Lie ont été posées dans le paragraphe \ref{subsection:Notations_SO(2p+1,2)}. Les racines compactes positives sont
\[
\got{R}_c^+=\{e_i^*\pm e_j^*; 1\leqslant i<j\leqslant p\}\cup\{e_k^*; k=1,\ldots,p\}
\]
et les racines non compactes positives
\[
\got{R}_n^+=\{\pm e_i^*+e_{p+1}^*; i=1,\ldots,p\}\cup\{e_{p+1}^*\}.
\]
On notera à nouveau $\beta^{\pm}_i := \pm e_i^* + e_{p+1}^*$ pour tout $i\in\{1,\ldots,p\}$. On notera également $\beta_{p+1}=e_{p+1}^*$. On définit de manière identique au paragraphe \ref{subsection:ssgpe1param_SO(2p,2)} les sous-ensembles $I^+$ et $I^-$ de $\{1,\ldots,p\}$

L'étude des sous-groupes à un paramètre dominants indivisibles $\got{p}^-$-admissibles est sensiblement la même que celle du groupe $SO(2p,2)$. Il y a essentiellement deux différences. La première est qu'un sous-groupe à un paramètre $\lambda=(\lambda_1,\ldots,\lambda_{p+1})$ est dominant si et seulement si $\lambda_1\geqslant\ldots\geqslant\lambda_{p-1}\geqslant\lambda_p\geqslant 0$. La deuxième est le fait que l'on a rajouté la racine non compacte positive $\beta_{p+1}$ par rapport à $SO(2p,2)$. Cependant, cela n'apporte de changement que pour le cas $I^+=I^-=\emptyset$. En effet, ici, lorsque $I^+=I^-=\emptyset$, on a
\begin{itemize}
\item soit $\lambda_{p+1}\neq 0$, donc $\WT(\got{p}^-_{\lambda,0}) = \emptyset$ et $\dim\mathrm{Vect}(\WT(\got{p}^-_{\lambda,0})) = 0$;
\item sinon $\lambda_{p+1}= 0$, donc $\WT(\got{p}^-_{\lambda,0}) = \{\beta_{p+1}\}$ et $\dim\mathrm{Vect}(\WT(\got{p}^-_{\lambda,0})) = 1$.
\end{itemize}
Les autres propriétés sont identiques. On les regroupe dans le lemme suivant.

\begin{lemm}
\label{lemm:SO(2p+1,2)_propriétésimportantes}
Supposons que l'un des ensembles $I^+$ ou $I^-$ soit non vide. Alors, les trois assertions suivantes sont équivalentes :
\begin{enumerate}
\item $I^+\cap I^-\neq\emptyset$;
\item $I^+=I^-$;
\item $\lambda_{p+1}=0$.
\end{enumerate}
De plus, on a
\begin{itemize}
\item si $I^+\cap I^-=\emptyset$, alors $\dim\mathrm{Vect}(\WT(\got{p}^-_{\lambda,0})) = |I^+|+|I^-|$;
\item sinon $I^+=I^-$ et $\dim\mathrm{Vect}(\WT(\got{p}^-_{\lambda,0})) = |I^+|+1 = |I^-|+1$.
\end{itemize}
\end{lemm}

\begin{proof}
La preuve de ce résultat est identique à celles des Lemmes \ref{lemm:SO(2p,2)_I+capI-neq0_équivalent_I+=I-} et \ref{lemm:SO(2p,2)_dimensionVect(WTp-0)} et de la Remarque \ref{rema:SO(2p,2)_I+capI-neq0_équivalent_lambdap+1=0}.
\end{proof}

Soit $\lambda$ un sous-groupe à un paramètre dominant indivisible $\got{p}^-$-admissible de $\TC$. Alors nous devons avoir $\dim\mathrm{Vect}(\WT(\got{p}_{\lambda,0})) = p \geqslant 1$. Nous devons distinguer le cas $p=1$, car on peut alors avoir $I^+=I^-=\emptyset$. On doit alors avoir $\lambda_{p+1}=\lambda_2=0$ et $\lambda_1>0$ pour que $\lambda$ soit dominant. Donc $\lambda = (1;0)$. Sinon, $I^{\pm}\neq\emptyset$, et alors la seule possibilité est $I^+\cap I^-=\emptyset$ et $\lambda_{p+1}=\lambda_2\neq 0$, d'après le Lemme \ref{lemm:SO(2p+1,2)_propriétésimportantes}. Ceci nous donne $\lambda=(1;\pm 1)$.

Lorsque $p\geqslant 2$, un raisonnement analogue au paragraphe \ref{subsection:ssgpe1param_SO(2p,2)} nous donne que les seuls sous-groupes à un paramètre de $\TC$ dominants indivisibles et $\got{p}^-$-admissibles sont $\lambda_0$ et $\lambda_1^{\pm}$, qui ont été définis en \eqref{eq:SO(2p,2)_défi_des_sg1pdomindivp-admissibles}.

\begin{theo}
L'ensemble des sous-groupes à un paramètre dominants indivisibles $\got{p}^-$-admissibles de $\TC$ associés à $G=SO(2p+1,2)$ est
\[
\mathcal{A}dm^+_{SO(2p+1,2)} = \{\lambda_0,\lambda_1^+,\lambda_1^-\}.
\]
\end{theo}

\section[Exemples de calculs explicites de projection d'orbites]{Exemples de calculs explicites de projections d'orbites holomorphes}
\label{subsection:exemplespolyèdresmoment}

Cette section est consacrée au calcul des polyèdres moments $\Delta_K(\Orb_{\Lambda})$ de la projection d'une orbite coadjointe holomorphe $\Orb_{\Lambda} = G\cdot\Lambda$, pour un certain nombre d'exemples. 

Ce travail a pour principal objectif de vérifier que la partie théorique effectuée dans les Chapitres \ref{chap:projectiondorbitecoadjointe+GIT} et \ref{chap:PairesBienCouvrantes} est correcte, en faisant le lien avec les résultats obtenus lors des premières tentatives de calculs, cf Chapitre \ref{chap:premières_tentatives}.

Nous continuons à utiliser les notations générales posées au Chapitre \ref{chap:projectiondorbitecoadjointe+GIT}. Les exemples porteront sur les groupes classiques $Sp(2n,\R)$, $SU(n,1)$, $SO^*(6)$, $SO^*(8)$ et $SU(2,2)$, dont les définitions et les notations associées ont été faites dans le Chapitre \ref{chap:espaces_symétriques_hermitiens}.

%
%

\subsection{Projections d'orbites coadjointes holomorphes de $Sp(2n,\R)$, $n\geqslant 2$}
\label{subsection:exemplespolyèdresmoment_Sp(R2n)}

Toutes les notations pour les calculs concernant le groupe $G = Sp(2n,\R)$ sont rassemblées dans le paragraphe \ref{subsection:Notations_Sp(R2n)}. Les racines non compactes positives sont les $\beta_{i,j} = e_i^*+e_j^*$, pour $1\leqslant i\leqslant j\leqslant n$. La plus petite racine non compacte négative est $-\beta_{1,1}=\bmin$. Toute racine non compacte négative $\beta$ s'écrit
\[
\beta = \bmin + \sum_{\alpha\in\got{R}_c^+} n_{\alpha}\alpha,
\]
avec $n_{\alpha}\in\N$ pour tout $\alpha\in\got{R}_c^+$. Par conséquent, pour tout sous-groupe à un paramètre $\lambda$ dominant, on a $\langle\lambda,\bmin\rangle \leqslant \langle\lambda,\beta\rangle$, pour toute racine $\beta\in\got{R}_n^-$.

Nous allons calculer $\Theta(-\bmin) = \Theta(\beta_{1,1})$. La formule de Chevalley (Théorème \ref{theo:Chevalley}) nous donne
\[
\Theta(\beta_{1,1})  = \Theta(\beta_{1,1}).\sigma^B_{\id}  =  \sum_{\stackrel{\alpha\in\got{R}_c^+}{l(s_{\alpha}) = 1}}\beta_{1,1}(\alpha^{\vee})\sigma^B_{s_{\alpha}} = \sum_{i=1}^{n-1}\beta_{1,1}(\alpha_{i,i+1}^{\vee})\sigma^B_{s_{\alpha_{i,i+1}}}.
\]
Or, on a $\alpha_{i,i+1}^{\vee} = e_i-e_{i+1}$, donc $\beta_{1,1}(\alpha_{1,2}^{\vee}) = 2$ et $\beta_{1,1}(\alpha_{i,i+1}^{\vee}) = 0$ si $2\leqslant i \leqslant n-1$. On en déduit que $\Theta(-\bmin) = \Theta(\beta_{1,1}) = 2\sigma^B_{s_{\alpha_{1,2}}}$. Par conséquent, pour tout $m>\langle\lambda,\bmin\rangle$, il existe un entier positif $p$ tel que
\[
\sigma_{w_0w}^B\,.\,\sigma_{w_0w'}^B\,.\,\prod_{\beta\in\WT(\got{p}^-_{< m})}\Theta(-\beta)^{n_{\beta}}= 2p\sigma_{w_0w_{\lambda}}^B+\text{ termes de degré $< 2l(w_0w_{\lambda})$}.
\]
On en déduit que, pour un sous-groupe à un paramètre dominant $\lambda$ de $\TC$ fixé, si $\got{p}^-_{< 0}\neq 0$, alors il n'existe aucun couple $(w,w')\in(W^{\lambda})^2$ vérifiant l'équation \[
\sigma_{w_0w}^B\,.\,\sigma_{w_0w'}^B\,.\,\prod_{\beta\in\WT(\got{p}^-_{<0})}\Theta(-\beta)^{n_{\beta}}=\sigma_{w_0w_{\lambda}}.
\]
Une application du Théorème \ref{theo:cns_pairebiencouvrante} nous permet alors d'affirmer que les seules paires bien couvrantes de $X_{\got{p}^-\oplus\C}$ du type $(C(w,w',0),\lambda)$, avec $\lambda$ dominant de $\TC$, sont les paires $(C(w,w_0ww_{\lambda},0),\lambda)$ vérifiant $\got{p}^-_{<0}=0$, c'est-à-dire, $\langle\lambda,\bmin\rangle \geqslant 0$. De plus, si on demande que $\lambda$ soit aussi $\got{p}^-$-admissible, on doit avoir nécessairement $\langle\lambda,\bmin\rangle\leqslant 0$. Il doit donc vérifier $\langle\lambda,\bmin\rangle = -2\lambda_1 = 0$, donc $\lambda_1=0$.

L'ensemble des sous-groupes à un paramètre dominants indivisibles $\got{p}^-$-admissibles de $\TC$ a été calculé dans le paragraphe \ref{subsection:ssgpe1param_Sp(R,n)}. D'après le Théorème \ref{theo:ssgpe1param_Sp(R,n)}, les sous-groupes à un paramètre dominants indivisibles $\got{p}^-$-admissibles sont $(1,0,\ldots,0)$, $(0,\ldots,0,-1)$ et
\[
\lambda_{k,l} = (\underbrace{1,\ldots,1}_{k\ \mbox{termes}},\underbrace{0,\ldots,0}_{n-(k+l)\ \mbox{termes}},\underbrace{-1,\ldots,-1}_{l\ \mbox{termes}}),
\]
pour tout $k=1,\ldots, n-1$ et tout $l=1,\ldots,n-k$. Or, parmi ces sous-groupes à un paramètre, le seul qui vérifie $\lambda_1=0$ est $\lambda=(0,\ldots,0,-1)$.

De la Proposition \ref{prop:faces_ConeWtT(E)_et_pairesbiencouvrantes} et de sa preuve, on déduit que $\Delta_K(\Orb_{\Lambda}) = (\Lambda + \CR(\got{R}_n^+))\cap \got{t}^*_+$. Remarquons que l'on retrouve bien le résultat annoncé au Théorème \ref{theo:Sp(R2n)_polyèdremomentSpn}.

Le Théorème \ref{theo:équations_DeltaK_G0Lambda} nous permet de donner la description concrète suivante du polyèdre moment de la variété $K$-hamiltonienne $\Orb_{\Lambda}$ :
\[
\Delta_K(\Orb_{\Lambda}) = \{\xi=(\xi_1,\ldots,\xi_n)\in\got{t}^*_+;
\mbox{ pour tout } i=1,\ldots,n,\ \xi_i\geqslant\Lambda_i\}.
\]
La figure $1$ représente ce polyèdre moment dans le cas du groupe $Sp(4,\R)$. Nous résumons ceci dans le théorème suivant.

\begin{theo}
Pour $G = Sp(2n,\R)$, le polyèdre moment est égal au polyèdre convexe
\[
\Delta_K(\Orb_{\Lambda}) = (\Lambda + \CR(\got{R}_n^+))\cap\got{t}^*_+ = \{(\xi_1,\ldots,\xi_n)\in\got{t}^*_+; \xi_i\geqslant\Lambda_i\ \forall i=1,\ldots,n\}.
\]
\end{theo}

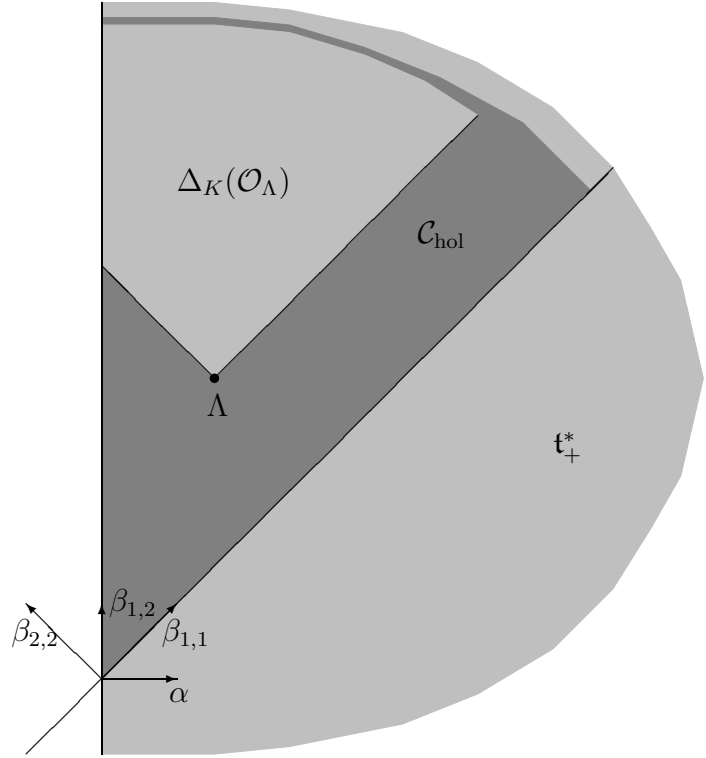
\begin{figure}[htb]
\label{figure:polyèdremoment_sp(r,n)}
\begin{center}
\setlength{\unitlength}{1cm}
\begin{pspicture}(10,10)
\definecolor{Couleur1}{rgb}{0,0.5,0.7}
\definecolor{Couleur2}{rgb}{0.6,0.4,0.2}

\pspolygon[linecolor=lightgray,fillstyle=solid,linewidth=0pt,fillcolor=lightgray](2,0)(2,10)(3.5,10)(4.5,9.9)(5,9.8)(6,9.6)(7,9.2)(8,8.6)(8.8,7.8)(9.3,7)(9.7,6.3)(10,5)(9.7,3.7)(9.3,3)(8.8,2.2)(8,1.4)(7,0.8)(6,0.4)(5,0.2)(4.5,0.1)(3.5,0)
\pspolygon[linecolor=gray,fillstyle=solid,linewidth=0pt,fillcolor=gray](2,1)(2,9.8)(3.5,9.8)(4.5,9.7)(5.5,9.4)(6.5,9)(7.6,8.4)(8.5,7.5)
\pspolygon[linecolor=lightgray,fillstyle=solid,linewidth=0pt,fillcolor=lightgray](3.5,5)(2,6.5)(2,9.7)(3.5,9.7)(4.5,9.6)(5.5,9.3)(6.3,8.95)(7,8.5)

\psdot(3.5,5)

\put(2, 0){\line(0, 1){10}}
\put(1, 0){\line(1, 1){7.8}}
\put(3.5,5){\line(-1, 1){1.5}}
\put(3.5,5){\line(1, 1){3.5}}

\put(2, 1){\vector(0, 1){1}}
\put(2, 1){\vector(1, 0){1}}
\put(2, 1){\vector(1, 1){1}}
\put(2, 1){\vector(-1, 1){1}}

\put(3.4,4.5){$\Lambda$}
\put(2.9,0.7){$\alpha$}
\put(2.8,1.5){$\beta_{1,1}$}
\put(2.1,1.9){$\beta_{1,2}$}
\put(0.8,1.5){$\beta_{2,2}$}
\put(8,4){$\got{t}^*_+$}
\put(6.2,6.8){$\Chol$}
\put(3,7.5){$\Delta_K(\Orb_{\Lambda})$}

\end{pspicture}
\end{center}
\caption{Polyèdre de la projection d'une orbite coadjointe holomorphe de $Sp(4,\R)$}
\end{figure}

\subsection{Projections d'orbites coadjointes holomorphes de $SU(n,1)$, $n\geqslant 2$}
\label{subsection:exemplespolyèdresmoment_SU(n,1)}

Rappelons que les notations concernant le groupe $G = SU(n,1)$ ont été données dans le paragraphe \ref{subsection:Notations_SU(p,q)}. Les racines non compactes sont les $\pm\beta_k = \pm(e_k^* + \sum_{j=1}^n e_j^*)$ pour $k=1,..,n$, avec $\got{R}_n^+ =\{\beta_k; k=1,\ldots,n\}$ et $\got{R}_n^- =\{-\beta_k; k=1,\ldots,n\}$. La plus petite racine négative est $\bmin = -\beta_1$ et, pour tout $k=2,\ldots,n$, on a
\[
-\beta_k = \bmin + \alpha_{1,2} + \ldots + \alpha_{k-1,k}.
\]
On pose, pour $k=1,\ldots,n$, le sous-groupe à un paramètre $\lambda_k = (n+1)e_k - \sum_{j=1}^ne_j$. D'après le Théorème \ref{theo:ssgpes1param_SU(n,1)}, les seuls sous-groupes à un paramètre dominants indivisibles $\got{p}^-$-admissibles de $\TC$ sont $\lambda_1 = (n,-1,\ldots,-1)$ et $-\lambda_n = (1,\ldots,1,-n)$.

Le second, $-\lambda_n$, donne $M_{<0} = 0$, car $\langle-\lambda_{n},\bmin\rangle = 0$, avec $\bmin$ le plus petit poids de l'action de $T$ sur $\got{p}^-$. Les équations provenant de $-\lambda_n$ donnent, par conséquent, les équations du cône convexe $\Lambda + \CR(\got{p}^+)$. Ces équations sont les suivantes :
\[
\langle-\lambda_k,\xi-\Lambda\rangle \leqslant 0,
\]
pour $k$ parcourant l'ensemble $\{1,\ldots,n\}$, ou, en notant $\xi = (\xi_1,\ldots,\xi_n)$, ainsi que $\Lambda = (\Lambda_1,\ldots,\Lambda_n)$, dans la base $(e_1^*,\ldots,e_n^*)$,
\[
\begin{array}{cc}
(A_k): & n\xi_k - \sum_{j\neq k}\xi_j \geqslant n\Lambda_k - \sum_{j\neq k}\Lambda_j,
\end{array}
\]
pour $k=1,\ldots,n$.

Nous allons voir que, contrairement à $Sp(2n,\R)$, ici apparaissent d'autres équations, celles pour $\lambda_1$. Maintenant, nous avons $\langle\lambda_1,\bmin\rangle = -(n+1)$ et $\langle\lambda_1,-\beta_k\rangle = 0$, pour $k=2,\ldots,n$. De plus, $\langle\lambda_1,\alpha_{1,2}\rangle=n+1 > 0$ et $\langle\lambda_1,\alpha_{k,k+1}\rangle = 0$ si $k\in\{2,\ldots,n-1\}$. Par conséquent, les sous-groupes paraboliques associés à $\lambda_1$ sont
\[
P(\lambda_1) = \left(\begin{array}{c|ccc}
* & * & \ldots & * \\ \hline
0 & * & \ldots & * \\
\vdots & \vdots & \ddots & \vdots \\
0 & * & \ldots & *
\end{array}\right)\subset GL_n(\C) \text{ et }  
\hat{P}(\lambda_1) = \left(\begin{array}{c|ccc}
* & * & \ldots & * \\ \hline
0 & * & \ldots & * \\
\vdots & \vdots & \ddots & \vdots \\
0 & * & \ldots & *
\end{array}\right)\subset GL_{n+1}(\C).
\]
Dans cette situation, nous n'avons que deux $m$ possibles pour les paires bien couvrantes $(C(w,w',m),\lambda_1)$. Ce sont les entiers $m=0$ et $m=-(n+1)<0$. Mais seul $m=0$ nous intéresse, d'après le Théorème \ref{theo:équations_DeltaK_G0Lambda}. 
Donc les paires bien couvrantes avec $\lambda_1$ et $m=0$ sont les paires $(C(w,w',0),\lambda_1)$, avec $(w,w')\in (W^{\lambda_1})^2$, telles que
\[
\sigma_{w_0w}^B\,.\,\sigma_{w_0w'}^B\,.\,\Theta(-\bmin) = \sigma_{w_0w_{\lambda_1}}^B
\]
et
\begin{equation}
\label{eq:calulSU(n,1)_équationlinéaireàvérifier}
\langle w\lambda_1 + w'\lambda_1,\rho\rangle - \langle\lambda_1,\bmin\rangle = 0.
\end{equation}
d'après le Théorème \ref{theo:cns_pairebiencouvrante}. De plus, le Corollaire \ref{coro:info_longueurs_éléments_pour_pairebiencouvrante} montre que, nécessairement, nous aurons l'équation suivante
\[
l(w) + l(w') = l(w_0) + l(w_{\lambda_1}) + 1.
\]
Nous nous trouvons ici dans le cas où $\KC = GL_n(\C)$ et $W^{\lambda_1}$ est connu. En effet, nous savons d'après le Lemme \ref{lemm:éléments_les_plus_courts_de_W/WQ}, que $W/W_{\lambda_1}$ a $n$ classes, et que les éléments $\hat{w}_k^{-1}$, pour $k=1,\ldots,n$, forment un système de représentants de $W/W_{\lambda_1}$ de plus petite longueur dans leurs classes respectives. Les éléments les plus longs de chaque classe seront donc les $\hat{w}_k^{-1}w_{\lambda_1}$ et, toujours grâce au Lemme \ref{lemm:éléments_les_plus_courts_de_W/WQ}, la longueur d'un tel élément est $l(\hat{w}_k^{-1}w_{\lambda_1}) = l(w_{\lambda_1}) + k-1$, pour $k=1,\ldots,n$. Donc, si la paire $(C(\hat{w}_k^{-1}w_{\lambda_1},\hat{w}_{k'}^{-1}w_{\lambda_1},0),\lambda_1)$ est bien couvrante, nous obtenons une équation vérifiée par $k$ et $k'$,
\[
k-1 + k'-1 = l(w_0) - l(w_{\lambda_1}) + 1 = n-1+1 = n,
\]
c'est-à-dire $k' = n-k+2$. Il reste donc à trouver, parmi les paires de la forme $(C(\hat{w}_k^{-1}w_{\lambda_1},\hat{w}_{n-k+2}^{-1}w_{\lambda_1},0),\lambda_1)$ avec $k\in\{2,\ldots,n\}$, celles qui sont bien couvrantes. Il faut donc commencer par trouver celles qui vérifient
\[
\sigma_{w_0\hat{w}_k^{-1}w_{\lambda_1}}^B\,.\,\sigma_{w_0\hat{w}_{n-k+2}^{-1}w_{\lambda_1}}^B\,.\,\Theta(-\bmin) = \sigma_{w_0w_{\lambda_1}}^B.
\]
Le Lemme \ref{lemm:éléments_les_plus_courts_de_W/WQ} montre que $w_0\hat{w}_k^{-1}w_{\lambda_1} = \hat{w}_{n-k+1}^{-1} = s_{n-k}\ldots s_1$, pour tout $k=1,\ldots,n$ (pour $k=n$, $w_0\hat{w}_n^{-1}w_{\lambda_1} = \id$), et $w_0w_{\lambda_1} = \hat{w}_{n}^{-1}=s_{n-1}\ldots s_1$. Ensuite, nous pouvons calculer $\Theta(-\bmin)$ par la formule de Chevalley,
\[
\Theta(-\bmin) = \sum_{\stackrel{\alpha\in\got{R}_c^+}{l(s_{\alpha}) = 1}}\beta_1(\alpha^{\vee})\sigma_{s_{\alpha}}^B = \sum_{i=1}^{n-1}\beta_1(\alpha_{i,i+1}^{\vee})\sigma_{s_i}^B = \sigma_{s_1}^B,
\]
car $\beta_1(\alpha_{1,2}^{\vee}) = \beta_1(e_1-e_2) = 1$ et $\beta_1(\alpha_{i,i+1}^{\vee}) = \beta_1(e_i-e_{i+1}) = 0$ si $i=2,\ldots,n-1$. Il ne reste plus qu'à calculer le produit cup $\sigma_{\hat{w}_{n-k+1}^{-1}}^B\,.\,\sigma_{\hat{w}_{k-1}^{-1}}^B\,.\,\sigma_{s_1}^B$ pour $k=1,\ldots,n$.

\begin{lemm}
\label{lemm:formule_puissancede_sigma_s_1}
Pour tout $k=1,\ldots,n-1$, on a $(\sigma_{s_1}^B)^k = \sigma_{s_k\ldots s_1}^B = \sigma_{\hat{w}_{k+1}^{-1}}^B$. De plus, $(\sigma_{s_1}^B)^n = 0$. Et, pour $a,b\in\{1,\ldots,n\}$, nous aurons $\sigma_{\hat{w}_{a}^{-1}}^B\,.\,\sigma_{\hat{w}_{b}^{-1}}^B = \sigma_{\hat{w}_{a+b-1}^{-1}}^B$ si $a+b \leqslant n+1$ et $\sigma_{\hat{w}_{a}^{-1}}^B\,.\,\sigma_{\hat{w}_{b}^{-1}}^B = 0$ sinon.
\end{lemm}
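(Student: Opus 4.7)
Mon approche pour prouver ce lemme consiste � se ramener � un calcul dans la cohomologie d'un espace projectif. Plus pr�cis�ment, consid�rons le sous-groupe parabolique maximal standard $P\subset GL_n(\C)$ stabilisant la droite engendr�e par le premier vecteur de base, de sorte que $GL_n(\C)/P \cong \mathbb{P}^{n-1}$. Le groupe de Weyl $W_P$ du Levi de $P$ est engendr� par $s_2,\ldots,s_{n-1}$. Une v�rification directe utilisant les longueurs des $\hat{w}_k^{-1}=s_{k-1}\cdots s_1$, via le Lemme \ref{lemm:�l�ments_les_plus_courts_de_W/WQ}, montre que $\hat{w}_k^{-1}$ est le plus court �l�ment de sa classe modulo $W_P$. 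Par la fonctorialit� des classes de Schubert, l'application $\jmath^P:GL_n(\C)/B\to GL_n(\C)/P$ induit alors $\jmath^{P*}(\sigma_{\hat{w}_k^{-1}}^P)=\sigma_{\hat{w}_k^{-1}}^B$ pour tout $k\in\{1,\ldots,n\}$.

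Je commencerais par rappeler que $\jmath^{P*}$ est un morphisme d'anneaux \emph{injectif}, car $\jmath^P$ est une fibration localement triviale surjective. Le lemme se ram�ne donc � �tablir les identit�s analogues dans $\mathrm{H}^*(\mathbb{P}^{n-1},\Z)$. Or, les classes $\sigma_{\hat{w}_k^{-1}}^P$ pour $k=1,\ldots,n$ sont pr�cis�ment les classes de Poincar� duales des sous-espaces lin�aires de codimensions $0,1,\ldots,n-1$ de $\mathbb{P}^{n-1}$, c'est-�-dire les puissances de la classe de l'hyperplan $H:=\sigma_{s_1}^P$. L'identit� de ce c�ne d'anneau $\mathrm{H}^*(\mathbb{P}^{n-1},\Z)\cong\Z[H]/(H^n)$ donne alors d'un coup toutes les assertions du lemme: $H^k=\sigma_{\hat{w}_{k+1}^{-1}}^P$ pour $k\leqslant n-1$, $H^n=0$, et $\sigma_{\hat{w}_a^{-1}}^P\cdot\sigma_{\hat{w}_b^{-1}}^P=H^{(a-1)+(b-1)}$, ce dernier �tant $\sigma_{\hat{w}_{a+b-1}^{-1}}^P$ si $a+b\leqslant n+1$ et nul sinon.

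La seule v�rification non triviale est donc l'identification $H^k=\sigma_{\hat{w}_{k+1}^{-1}}^P$ dans $\mathrm{H}^*(\mathbb{P}^{n-1})$. Elle peut se faire par r�currence sur $k$ en utilisant la formule de Chevalley (Th�or�me \ref{theo:Chevalley}) appliqu�e dans $GL_n/B$ avec $\jmath^{P*}$, ou bien plus directement g�om�triquement: le produit cup de $H$ avec la classe d'un sous-espace projectif de codimension $k-1$ est la classe de son intersection g�n�rique avec un hyperplan, qui est de codimension $k$. Alternativement, on peut utiliser la formule de Pieri sur la Grassmannienne $Gr(1,n)=\mathbb{P}^{n-1}$, qui n'est dans ce cas qu'un cas particulier classique.

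Le point d�licat de la preuve serait v�ritablement le passage entre les calculs dans $GL_n/B$ et ceux dans $\mathbb{P}^{n-1}$: il faut s'assurer rigoureusement que les repr�sentants $\hat{w}_k^{-1}$ choisis sont bien les plus courts dans leurs classes respectives modulo $W_P$, et que $\jmath^{P*}$ est compatible avec les bases de Schubert. Une fois cette identification �tablie, le reste de la preuve devient un fait standard de cohomologie d'espaces projectifs, et �vite le calcul direct (fastidieux) par application r�p�t�e de la formule de Chevalley dans $GL_n/B$.
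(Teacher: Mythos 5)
Votre d\'emonstration est correcte, mais elle emprunte une voie r\'eellement diff\'erente de celle du texte. La preuve du texte reste enti\`erement dans $\mathrm{H}^*(\KC/B,\Z)$ : elle \'etablit $\sigma_{s_1}^B\,.\,\sigma_{s_k\ldots s_1}^B=\sigma_{s_{k+1}\ldots s_1}^B$ par la formule de Chevalley (Th\'eor\`eme \ref{theo:Chevalley}), le seul terme survivant \'etant identifi\'e gr\^ace au lemme combinatoire de l'Annexe~\ref{chap:combinatoire_gpdeWeyl_de_GL_r(C)} qui d\'etermine quand $l(s_k\ldots s_1t_{1,j})=l(s_k\ldots s_1)+1$, puis elle conclut par r\'ecurrence sur $k$; c'est le m\^eme sch\'ema de calcul que pour les Lemmes \ref{lemm:produitcup_sk_wtildek} et \ref{lemm:produitcup_skmoins1_wtildek}. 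Vous, au contraire, ramenez tout \`a l'anneau $\mathrm{H}^*(\mathbb{P}^{n-1},\Z)\cong\Z[H]/(H^n)$ en tirant en arri\`ere le long de $\KC/B\rightarrow\KC/P\cong\mathbb{P}^{n-1}$ : votre parabolique $P$ est exactement le $\hat{Q}$ du texte, de sorte que le fait que les $\hat{w}_k^{-1}$ soient les repr\'esentants minimaux modulo $W_P$ est pr\'ecis\'ement le lemme de l'Annexe~\ref{chap:combinatoire_gpdeWeyl_de_GL_r(C)} que vous invoquez, et l'identification $\sigma_{\hat{w}_k^{-1}}^P=H^{k-1}$ est imm\'ediate, les vari\'et\'es de Schubert de $\mathbb{P}^{n-1}$ \'etant les sous-espaces lin\'eaires. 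Votre argument est plus conceptuel et \'evite \`a la fois l'induction par Chevalley et le calcul de longueurs; il repose en revanche sur la compatibilit\'e $(\jmath^{P})^*(\sigma_u^P)=\sigma_u^B$ pour $u$ minimal dans $uW_P$, fait standard que le texte n'\'enonce que pour $P(\lambda)$ mais qui vaut pour tout parabolique standard. Deux remarques mineures : l'injectivit\'e de $(\jmath^{P})^*$ n'est en fait pas n\'ecessaire --- vous transportez les identit\'es de $\KC/P$ vers $\KC/B$, ce qui n'utilise que la propri\'et\'e de morphisme d'anneaux et la compatibilit\'e pr\'ec\'edente --- et la justification que vous en donnez (surjectivit\'e d'une fibration) n'est pas en soi une preuve; la bonne raison est que les classes de Schubert se tirent en arri\`ere sur des \'el\'ements de la base de Schubert. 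Comme cette injectivit\'e est superflue dans votre argument, cela ne cr\'ee aucune lacune.
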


\begin{proof}
Nous nous inspirons des Lemmes \ref{lemm:produitcup_sk_wtildek} et \ref{lemm:produitcup_skmoins1_wtildek}. Nous allons montrer que l'on a $\sigma_{s_1}^B\,.\,\sigma_{s_k\ldots s_1}^B = \sigma_{s_{k+1}\ldots s_1}$ si $k\in\{1,\ldots, n-2\}$ et réaliser une récurrence simple sur $k$. Nous utilisons la formule de Chevalley (Théorème \ref{theo:Chevalley}),
\[
\sigma_{s_1}^B\,.\,\sigma_{s_k\ldots s_1}^B = \Theta(\pi_{\alpha_{1,2}})\,.\,\sigma_{s_k\ldots s_1} = \sum_{\stackrel{j=3,\ldots,n}{l(s_k\ldots s_1s_{\alpha_{1,j}}) = l(s_k\ldots s_1)+1}} \sigma_{s_k\ldots s_1 s_{\alpha_{1,j}}}^B.
\]
Du Lemme \ref{lemm:wtilde-1_plus1_jégalk+2}, nous avons $l(s_k\ldots s_1s_{\alpha_{1,j}}) = l(s_k\ldots s_1)+1$ si et seulement si $j = k+2$, ce qui est possible car $k\leqslant n-2$, donc $j\leqslant n$, et, dans ce cas-là, $s_k\ldots s_1s_{\alpha_{1,j}} = s_{k+1}\ldots s_1$. Nous obtenons bien $\sigma_{s_1}^B\,.\,\sigma_{s_k\ldots s_1}^B = \sigma_{s_{k+1}\ldots s_1}^B$. Pour $k=1$, nous avons évidemment $(\sigma_{s_1}^B)^1 = \sigma_{s_1}^B$, et, par récurrence sur $k$, nous avons $(\sigma_{s_1}^B)^k = \sigma_{s_k\ldots s_1}^B$, pour $k\leqslant n-1$.

Pour $k=n-1$, nous n'aurons jamais $j = k+2$ dans $\{3,\ldots,n\}$, donc $(\sigma_{s_1}^B)^n = \sigma_{s_1}^B\,.\,\sigma_{s_{n-1}\ldots s_1}^B = 0$, d'après le Lemme \ref{lemm:wtilde-1_plus1_jégalk+2}.

Enfin, pour $a,b\in\{2,\ldots,n\}$, nous pouvons calculer $\sigma_{\hat{w}_a^{-1}}^B\,.\,\sigma_{\hat{w}_b^{-1}}^B$, car $\hat{w}_a^{-1} = s_{a-1}\ldots s_1$ (resp. $\hat{w}_b^{-1} = s_{b-1}\ldots s_1$). On obtient
\[
\sigma_{\hat{w}_a^{-1}}^B\,.\,\sigma_{\hat{w}_b^{-1}}^B = \sigma_{s_{a-1}\ldots s_1}^B\,.\,\sigma_{s_{b-1}\ldots s_1}^B = (\sigma_{s_1}^B)^{a-1}\,.\,(\sigma_{s_1}^B)^{b-1} = (\sigma_{s_1}^B)^{a+b-2},
\]
et ce dernier est égal à $\sigma_{s_{a+b-2}\ldots s_1}^B = \sigma_{\hat{w}_{a+b-1}^{-1}}^B$ si $a+b-2\leqslant n-1$, c'est-à-dire $a+b\leqslant n+1$. Et pour $a+b>n+1$, nous avons $a+b-2 \geqslant n$, donc $\sigma_{\hat{w}_a^{-1}}^B\,.\,\sigma_{\hat{w}_b^{-1}}^B = 0$. Pour terminer, si $a$ ou $b$ vaut $1$, quitte à permuter les notations, on peut supposer que $a=1$, alors $\hat{w}_1^{-1} = \id$, donc $\sigma_{\hat{w}_1^{-1}}^B\,.\,\sigma_{\hat{w}_b^{-1}}^B = \sigma_{\hat{w}_b^{-1}}^B$, avec bien $b= b+a-1\leqslant n$.
\end{proof}

Revenons à nos produits cup $\sigma_{\hat{w}_{n-k+1}^{-1}}^B\,.\,\sigma_{\hat{w}_{k-1}^{-1}}^B\,.\,\sigma_{s_1}^B$ pour $k=2,\ldots,n$. Du lemme ci-dessus, nous obtenons
\[
\sigma_{\hat{w}_{n-k+1}^{-1}}^B\,.\,\sigma_{\hat{w}_{k-1}^{-1}}^B\,.\,\sigma_{s_1}^B = (\sigma_{s_1}^B)^{n-k} \,.\, (\sigma_{s_1}^B)^{k-2} \,.\, \sigma_{s_1}^B = (\sigma_{s_1}^B)^{n-1} = \sigma_{\hat{w}_n^{-1}}^B,
\]
ce qui redonne bien
\[
\sigma_{w_0\hat{w}_k^{-1}w_{\lambda_1}}^B\,.\,\sigma_{w_0\hat{w}_{n-k+2}^{-1}w_{\lambda_1}}^B\,.\,\Theta(-\bmin) = \sigma_{w_0w_P}^B,
\]
pour tout $k=2,\ldots,n$. Il reste encore à montrer que
\[
\langle \hat{w}_k^{-1}w_{\lambda_1}\lambda_1 + \hat{w}_{n-k+2}^{-1}w_{\lambda_1}\lambda_1,\rho\rangle - \langle\lambda_1,\bmin\rangle = 0.
\]
pour tout $k=2,\ldots,n$.

\begin{lemm}
\label{lemm:crochetdedualité_lambda1_gamma_SU(n,1)}
Pour tout $k=1,\ldots,n$, on a $\langle\lambda_1,\hat{w}_k\rho+\hat{w}_{n-k+2}\rho\rangle = \langle\lambda_1,\bmin\rangle$.
\end{lemm}

\begin{proof}
Nous utilisons ici le fait que $\hat{w}_{n-k+2} = w_{\lambda_1}\hat{w}_{k-1}w_0$. Ceci implique les égalités
\begin{align*}
\langle\lambda_1,\hat{w}_k\rho+\hat{w}_{n-k+2}\rho\rangle & = \langle\lambda_1,\hat{w}_k\rho+\hat{w}_{k-1}w_0\rho\rangle \\
& = \langle\hat{w}_{k-1}\lambda_1,s_k\rho-\rho\rangle \\
& = -\langle\hat{w}_{k-1}\lambda_1,\alpha_{k,k+1}\rangle \\
& = -\langle\lambda_1,\alpha_{1,k+1}\rangle.
\end{align*}
On en conclut l'égalité $\langle\lambda_1,\hat{w}_k\rho+\hat{w}_{n-k+2}\rho\rangle = -(n+1) = \langle\lambda_1,\bmin\rangle$.
\end{proof}

Puisque $w_{\lambda_1}$ appartient à $W_{\lambda_1}$, on a $\langle \hat{w}_k^{-1}w_{\lambda_1}\lambda_1 + \hat{w}_{n-k+2}^{-1}w_{\lambda_1}\lambda_1,\rho\rangle = \langle\lambda_1,\hat{w}_k\rho+\hat{w}_{n-k+2}\rho\rangle$, ce qui implique que l'équation \eqref{eq:calulSU(n,1)_équationlinéaireàvérifier} est vraie pour les couples $(w,w') = (\hat{w}_k^{-1}w_{\lambda_1},\hat{w}_{n-k+2}^{-1}w_{\lambda_1})$. Les paires $(C(\hat{w}_k^{-1},\hat{w}_{n-k+2}^{-1},0),\lambda_1)$ sont donc toutes bien couvrantes pour $k=2,\ldots,n$, et ce sont les seules pour $\lambda_1$.


La paire $(C(\hat{w}_k^{-1},\hat{w}_{n-k+2}^{-1},0),\lambda_1)$ apporte l'équation suivante,
\begin{equation}
\label{eq:équation_pairesavechatwkmoins1}
\langle \hat{w}_k^{-1}\lambda_1,x\rangle \leqslant \langle w_0\hat{w}_{n-k+2}^{-1}\lambda_1,\Lambda\rangle.
\end{equation}
De la définition $\hat{w}_{k}^{-1} = s_{k-1}\ldots s_1$ pour $2\leqslant k\leqslant n$, on a donc $\hat{w}_{k}^{-1}\lambda_1 = s_{k-1}\ldots s_1\lambda_1 = \lambda_{k}$. Remarquons, de plus, que l'on a
\[
\langle w_0\hat{w}_{n-k+2}^{-1}\lambda_1,\Lambda\rangle = \langle w_0\hat{w}_{n-k+2}^{-1}w_{\lambda_1}\lambda_1,\Lambda\rangle = \langle \hat{w}_{k-1}^{-1}\lambda_1,\Lambda\rangle = \langle\lambda_{k-1},\Lambda\rangle.
\]
L'équation \eqref{eq:équation_pairesavechatwkmoins1} devient maintenant
\[
\langle \lambda_k,x\rangle\leqslant\langle\lambda_{k-1},\Lambda\rangle,
\]
pour $k\in\{2,\ldots,n\}$. D'où, en faisant le changement de variable $k'= k-1$, on obtient
\[
\langle \lambda_{k+1},x\rangle\leqslant\langle\lambda_{k},\Lambda\rangle
\]
pour tout $k=1,\ldots,n-1$. Finalement, on peut remplacer $\lambda_k$ par sa valeur $ne_k - \sum_{j\neq k}e_j$, ce qui nous donne
\[
\begin{array}{cc}
(B_k): & \sum_{j\neq k+1}\xi_j - n\xi_{k+1} \geqslant  \sum_{j\neq k}\Lambda_j - n\Lambda_k,
\end{array}
\]
pour $k=1,\ldots,n-1$.

En mettant ensemble les équations obtenues pour $-\lambda_n$ puis $\lambda_1$, nous obtenons le théorème qui suit. Il découle évidemment du Théorème \ref{theo:équations_DeltaK_G0Lambda}.

\begin{theo}
Pour $G = SU(n,1)$, le polyèdre moment est égal au polyèdre convexe
\begin{align*}
\Delta_K(\Orb_{\Lambda}) & = \{\xi\in\got{t}^*_+; \, \xi \mbox{ vérifie } (A_k),\,k=1,\ldots,n,\,\mbox{et } \xi \mbox{ vérifie } (B_{k'}),\,k'=1,\ldots,n-1\} \\
& = \{\xi\in\got{t}^*_+; \ \langle\lambda_1,\xi\rangle \geqslant \langle\lambda_1,\Lambda\rangle\geqslant\langle\lambda_2,\xi\rangle\geqslant\ldots\geqslant\langle\lambda_n,\xi\rangle\geqslant\langle\lambda_n,\Lambda\rangle\}.
\end{align*}
\end{theo}

On retrouve bien l'énoncé du Théorème \ref{theo:SU(n,1)_PolyèdreMoment}, obtenu grâce aux calculs appliquant le Théorème de Horn-Klyachko. C'est aussi le même résultat que ce qui a été annoncé au paragraphe \ref{subsection:exemple_utilisation_formule_DHV} pour le groupe $SU(2,1)$.

\begin{figure}[htb]
\begin{center}
\setlength{\unitlength}{1cm}
\begin{pspicture}(10,10)
\definecolor{Couleur1}{rgb}{0,0.5,0.7}
\definecolor{Couleur2}{rgb}{0.6,0.4,0.2}

\pspolygon[linecolor=lightgray,fillstyle=solid,linewidth=0pt,fillcolor=lightgray](2,0)(2,10)(3.5,10)(4.5,9.9)(5,9.8)(6,9.6)(7,9.2)(8,8.6)(8.8,7.8)(9.3,7)(9.7,6.3)(10,5)(9.7,3.7)(9.3,3)(8.8,2.2)(8,1.4)(7,0.8)(6,0.4)(5,0.2)(4.5,0.1)(3.5,0)
\pspolygon[linecolor=gray,fillstyle=solid,linewidth=0pt,fillcolor=gray](2,1)(2,9.8)(3.5,9.8)(4.5,9.7)(5.5,9.4)(6.5,9)(7.6,8.4)(8.5,7.5)(9.1,6.4)(9.4,5.272)
\pspolygon[linecolor=lightgray,fillstyle=solid,linewidth=0pt,fillcolor=lightgray](4,3.5)(2,6.964)(3.6,9.735)(4.4,9.65)(5,9.5)(5.8,9.25)(6.3,9.05)(6.7,8.85)(7,8.696)


\psdot(4,3.5)

\psline[linewidth=1pt](2,0)(2,10)
\psline[linewidth=1pt](0.268,0)(9.9,5.561)
\psline[linewidth=0.5pt](7,8.69)(4,3.5)(2,6.96)(3.6,9.73)

\psline[linewidth=1pt]{->}(2,1)(3,1)
\psline[linewidth=1pt]{->}(2,1)(2.5,1.87)
\psline[linewidth=1pt]{->}(2,1)(1.5,1.87)

\rput(4,3.2){$\Lambda$}
\rput(2.9,0.7){$\alpha$}
\rput(2.8,2){$\beta_{1}$}
\rput(1.2,2){$\beta_{2}$}
\rput(7,2){$\got{t}^*_+$}
\rput(8,5.5){$\Chol$}
\rput(4,7){$\Delta_K(\Orb_{\Lambda})$}

\end{pspicture}
\end{center}
\caption{Polyèdre de la projection d'une orbite régulière holomorphe de $SU(2,1)$}
\end{figure}
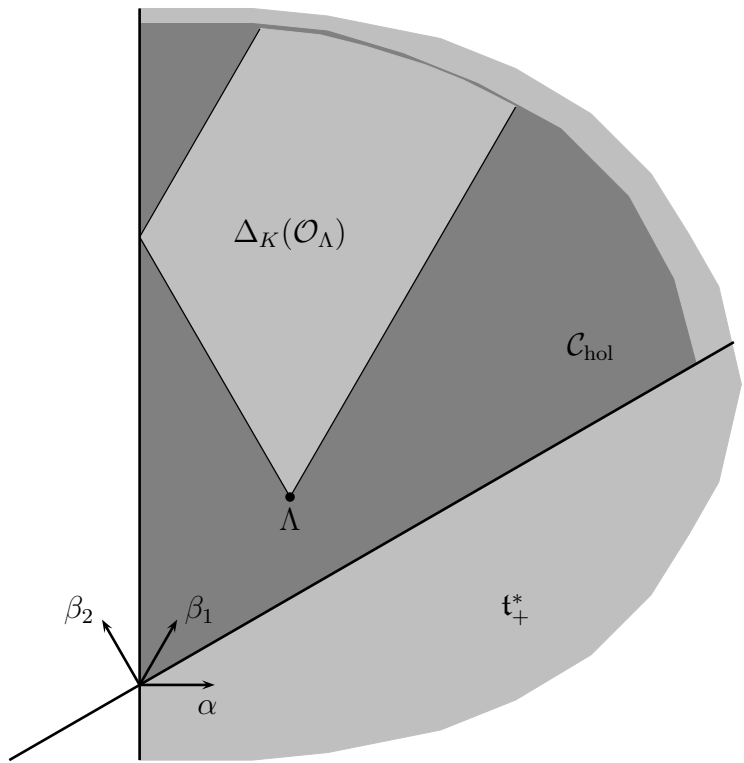

\begin{rema}
\label{rema:repirr_dans_VLambda_otimes_polynomes_sur_Cn}
Presque tous les calculs réalisés dans ce paragraphe sont identiques quand on considère le polyèdre moment $\Delta_{U(n)}(U(n)\cdot\Lambda\times(\C^n)^*)$, où $U(n)$ agit canoniquement sur $(\C^n)^*$ et $\Lambda$ est un élément de  $\wedge_{\Q,+}^*$. La seule différence apparaît dans l'ensemble des sous-groupes à un paramètre dominants indivisibles $(\C^n)^*$-admissibles de $\TC$, puisqu'ici cet ensemble est $\{(1,0,\ldots,0),(0,\ldots,-1)\}$. Par conséquent, le Corollaire \ref{coro:équationsgénérales_DGIT} nous donne
\[
\Delta_{U(n)}\bigl(U(n)\cdot\Lambda\times(\C^n)^*\bigr) = \bigl\{\xi\in\got{t}^*_+; \  \xi_1\geqslant\Lambda_1\geqslant\xi_2\geqslant\ldots\xi_n\geqslant\Lambda_n\bigl\}.
\]
Ceci donne une autre preuve que la représentation irréductible $V_{\mu}$ de $GL_n(\C)$ de plus haut poids $\mu=(\mu_1\geqslant\ldots\geqslant\mu_n)$, apparaît dans la décomposition en somme directe de représentations irréductibles du $GL_n(\C)$-module $V_{\Lambda}\otimes\mathrm{S}(\C^n)$ si et seulement si
\[
\mu_1\geqslant\Lambda_1\geqslant\mu_2\geqslant\ldots\mu_n\geqslant\Lambda_n,
\]
résultat prouvé de manière différente dans \cite{macdonald,brion}, mais qui est bien antérieur et serait dû à Pieri. Nous avons utilisé ici la propriété de saturation de $GL_n(\C)$ relativement au produit tensoriel de ses représentations irréductibles. Nous renvoyons le lecteur à \cite[9.3]{woodward} pour plus de détails concernant ce résultat.
\end{rema}

\subsection{Projections d'orbites coadjointes holomorphes de $SO^*(2n)$, $n\geqslant 3$}
\label{subsection:polyèdremoment_SO*(2n)}

Le groupe $G = SO^*(2n)$ a été défini dans le paragraphe \ref{subsection:Notations_SO*(2n)}. On a vu que les racines compactes seront les $\alpha_{i,j}=e_i^*-e_j^*$, $1\leqslant i<j\leqslant n$, pour la base définie dans ce même paragraphe.

Les racines non compactes sont les $\pm\beta_{i,j} = \pm(e_i^* + e_j^*)$ pour $1\leqslant i< j \leqslant n$, avec $\got{R}_n^+ =\{\beta_{i,j}\,; \, 1\leqslant i< j \leqslant n\}$ et $\got{R}_n^- =\{-\beta_{i,j}\,; \, 1\leqslant i< j \leqslant n\}$. La plus petite racine non compacte négative est $\bmin = -\beta_{1,2} = -e_1^*-e_2^*$.

D'après le Théorème \ref{theo:ssgpe1param_SO*(2n)}, les sous-groupes à un paramètre dominants indivisibles et $\got{p}^-$-admissibles de $\TC$ sont les éléments de
\begin{align*}
\mathcal{A}dm^+_{SO^*(2n)} & = \{(1,0,\ldots,0),(0,\ldots,0,-1)\}\cup\{\lambda_{k,n-k}; k=1,\ldots, n-1\} \\
& \quad \cup\{\lambda_{k,l}; k=1,\ldots, n-4, l=1,\ldots,n-k-3\}.
\end{align*}
lorsque $n\geqslant 4$, et
\[
\mathcal{A}dm^+_{SO^*(6)} = \{(1,-1,-1),(1,1,-1)\}.
\]
Rappelons que les sous-groupes à un paramètre $\lambda_{k,l}$, pour $k=1,\ldots, n-1$, $l=1,\ldots,n-k$, ont été définis par
\[
\lambda_{k,l} = (\underbrace{1,\ldots,1}_{k\ \mbox{termes}},\underbrace{0,\ldots,0}_{n-(k+l)\ \mbox{termes}},\underbrace{-1,\ldots,-1}_{l\ \mbox{termes}}).
\]


Le sous-groupe à un paramètre $\lambda_{1,n-1}=(1,-1,\ldots,-1)$ vérifie $\langle\lambda_{1,n-1},\bmin\rangle = 0$, donc $\got{p}^-_{<0} = 0$ pour $\lambda_{1,n-1}$ et les équations obtenues par les paires couvrantes associées à $\lambda_{1,n-1}$ sont les équations de $\Lambda + \CR(\got{R}_n^+)$. Ce sont les équations suivantes :
\[
\langle w\lambda_1,\xi-\Lambda\rangle \leqslant 0, \mbox{ pour tout } w\in W^{\lambda_{1,n-1}},
\]
ce qui donne
\begin{equation}
\label{eq:SO*(2n)_équations_cônededépart}
\xi_k - \Lambda_k \leqslant \sum_{i\neq k}\xi_i-\Lambda_i, \mbox{ pour tout } k=1,\ldots,n.
\end{equation}

Pour $n\geqslant 4$, un autre sous-groupe à un paramètre vérifie $\langle\lambda,\bmin\rangle = 0$, il s'agit de $\lambda_{0,1}:=(0,0,0,-1)$. Il apporte les équations
\begin{equation}
\label{eq:SO*(2n)_équations_cônededépart2}
\xi_k \geqslant \Lambda_k,\quad\text{pour tout $k=1,\ldots,n$}.
\end{equation}

Pour les sous-groupes à un paramètre $\lambda_{k,n-k}$, cela devient beaucoup plus compliqué. En effet, on voit facilement que, pour $k\in\{1,\ldots,n-1\}$, on a
\[
\langle\lambda_{k,n-k}, -\beta_{i,j}\rangle = \left\{\begin{array}{rl}
-2 & \mbox{si } i<j\leqslant k, \\
0 & \mbox{si } i\leqslant k < j, \\
2 & \mbox{si } k< i< j.
\end{array}\right.
\]
Ceci nous donne que $\dim_{\C}(\got{p}^-_{<0}) = \sharp\{(i,j); 1\leqslant i<j \leqslant k\} = \frac{k(k-1)}{2}$, pour le $\got{p}^-_{<0}$ associé à $\lambda_{k,n-k}$. De plus, le sous-groupe parabolique $P(\lambda_{k,n-k})$ de $GL_n(\C)$ sera le sous-groupe parabolique maximal associé à la racine simple $\alpha_{k,k+1}$, puisque $\langle\lambda_{k,n-k},\alpha_{k,k+1}\rangle = 2$, et $\langle\lambda_{k,n-k},\alpha_{j,j+1}\rangle = 0$ si $j\neq k$.

Nous sommes donc amenés à calculer les valeurs de $\Theta(\prod_{1\leqslant i<j\leqslant k}\beta_{i,j})$, pour $k=2,\ldots, n-1$. Nous pouvons utiliser ici la formule de Chevalley. Pour le cas le plus simple, $k=2$, nous avons simplement $\Theta(\beta_{1,2}) = \sigma_{s_2}^B$. Nous conjecturons, grâce à des calculs numériques en petites dimensions, que, pour $k$ quelconque dans $\{2,\ldots,n-1\}$, on a
\[
\Theta\left(\prod_{1\leqslant i<j\leqslant k}\beta_{i,j}\right) = \sigma^B_{s_2s_4s_3s_6s_5s_4\ldots s_{k-1}s_{2k-2}s_{2k-3}\ldots s_{k+1}s_k}, \mbox{ si } 2k-2 \leqslant n-1,
\]
c'est-à-dire $k\leqslant (n+1)/2$, et $\Theta(\prod_{1\leqslant i<j\leqslant k}\beta_{i,j}) = 0$ sinon.

Il reste aussi les $\lambda_{k,l}$ avec $k=1,\ldots,n-2$ et $l=1,\ldots,n-k-1$. On a pour ces éléments les valeurs
\[
\langle\lambda_{k,n-k}, -\beta_{i,j}\rangle = \left\{\begin{array}{rl}
-2 & \text{si $i<j\leqslant k$}, \\
-1 & \text{si $i\leqslant k <j\leqslant n-l$}, \\
0 & \text{si $i\leqslant k<n-l<j$ ou $k<i<j\leqslant n-l$}, \\
1 & \text{si $k<i\leqslant n-l<j$}, \\
2 & \text{si $n-l< i< j$}.
\end{array}\right.
\]
L'espace vectoriel complexe $\got{p}^-_{<0}$ est dans ce cas de dimension
\begin{align*}
\dim_{\C}(\got{p}^-_{<0}) & = \sharp\{(i,j); 1\leqslant i<j \leqslant k\}+\sharp\{(i,j); 1\leqslant i\leqslant k<j \leqslant n-l\} \\
& = \frac{k(k-1)}{2} + k(n-l-k).
\end{align*}
Il faudrait également connaître $\Theta(\prod_{1\leqslant i<j\leqslant k}\beta_{i,j})\cdot\Theta(\prod_{1\leqslant i'\leqslant k<j'\leqslant n-l}\beta_{i',j'})$, pour tout $k=1,\ldots,n-2$ et tout $l=1,\ldots,n-k-1$.

\bigskip

Ne pouvant pas trouver de formule générale pour tout $n\geqslant 3$ et tout $k=2,\ldots,n-1$, nous allons présenter les résultats pour les dimensions $n=3$ et $n=4$.

\paragraph*{Equations pour le groupe $SO^*(6)$}
Ici, $n=3$. Il ne reste alors qu'à étudier les équations fournies par le sous-groupe à un paramètre $\lambda_{2,1}$, qui apporte les équations $\langle w\lambda_{2,1},\xi\rangle\leqslant\langle w_0w'\lambda_{2,1},\Lambda\rangle$ pour tous $w,w'\in W^{\alpha_{2,1}}$ tels que la paire $(C(w,w',0),\lambda_{2,1})$ soit dans $\mathcal{P}_0(\got{p}^-)$. Une telle paire sera dans $\mathcal{P}_0(\got{p}^-)$ si et seulement si
\[
\sigma_{w_0w}^B\,.\,\sigma_{w_0w'}^B\,.\,\Theta(\beta_{1,2}) = \sigma_{w_0w_{\lambda}}^B,
\]
où $\Theta(\beta_{1,2}) = \sigma_{s_2}^B$, et
\[
\langle w\lambda_{2,1} + w'\lambda_{2,1},\rho\rangle - \sum_{k<0}k\dim_{\C}(\got{p}^-_{\lambda_{2,1},k}) = 0.
\]
Pour $n=3$, nous avons pour sous-groupe parabolique maximal
\[
P(\lambda_{2,1}) = P^{\alpha_{2,3}} = \left(\begin{array}{cc|c}
* & * & * \\
* & * & * \\\hline
0 & 0 & *
\end{array}\right),
\]
et $w_{\lambda} = s_1$. De plus, $w_0 = s_1s_2s_1 = s_2s_1s_2$. Ceci nous ramène à déterminer les $(u,v)\in (W^{\lambda_{2,1}})^2$ tels que $\sigma_{u}^B\,.\,\sigma_v^B\,.\,\sigma_{s_2}^B = \sigma_{s_1s_2}^B$. Cette dernière équation n'est réalisée que pour les couples $(u,v)\in\{(\id,s_2),(s_2,\id)\}$. Nous devons donc prendre $(w,w')\in\{(w_0,s_2s_1),(s_2s_1,w_0)\}$. 
La vérification de l'égalité
\[
\langle u\lambda_{2,1} + v\lambda_{2,1},\rho\rangle - \sum_{k<0}k\dim_{\C}(\got{p}^-_{\lambda_{2,1},k}) = 0.
\]
est aisée pour ces deux couples. Par conséquent, on a deux paires bien couvrantes $(C(w_0,s_2s_1,0),\lambda_{2,1})$ et $(C(s_2s_1,w_0,0),\lambda_{2,1})$, soit deux équations :
\[
\langle w_0\lambda_{2,1},\xi\rangle\leqslant\langle s_2\lambda_{2,1},\Lambda\rangle \quad \mbox{et}\quad  \langle s_2s_1\lambda_{2,1},\xi\rangle\leqslant\langle\lambda_{2,1},\Lambda\rangle.
\]
On remplace $\lambda_2$ par $(1,1,-1)$, $\xi$ par $(\xi_1,\xi_2,\xi_3)$, et $\Lambda$ par $(\Lambda_1,\Lambda_2,\Lambda_3)$, dans les équations ci-dessus. Rajoutées aux équations \eqref{eq:SO*(2n)_équations_cônededépart}, on obtient le théorème suivant.

\begin{theo}
Le polyèdre moment $\Delta_K(\Orb_{\Lambda})$, pour $G = SO^*(6)$, est le polyèdre de $\got{t}^*$ défini par les équations
\[
\left\{\begin{aligned}
-\xi_1+\xi_2+\xi_3 & \geqslant -\Lambda_1+\Lambda_2+\Lambda_3 \\
\xi_1-\xi_2+\xi_3 & \geqslant \Lambda_1-\Lambda_2+\Lambda_3 \\
\xi_1+\xi_2-\xi_3 & \geqslant \Lambda_1+\Lambda_2-\Lambda_3 \\
\xi_1-\xi_2-\xi_3 & \geqslant -\Lambda_1+\Lambda_2-\Lambda_3 \\
-\xi_1+\xi_2-\xi_3 & \geqslant -\Lambda_1-\Lambda_2+\Lambda_3 \\
\xi_1 \geqslant \xi_2 \geqslant \xi_3 &
\end{aligned}\right.
\]
\end{theo}

\begin{rema}
On retrouve bien les équations du polyèdre $\Delta_K(\Orb_{\Lambda})$ obtenues dans le Chapitre \ref{subsection:polyèdremoment_SO*(6)} en utilisant directement le problème de Horn-Klyachko, cf Théorème \ref{theo:SO*(2n)_équationsdupolyèdremoment}.
\end{rema}

\paragraph*{Equations pour le groupe $SO^*(8)$}
Les équations \eqref{eq:SO*(2n)_équations_cônededépart} (resp. équations \eqref{eq:SO*(2n)_équations_cônededépart2}) correspondent à $\lambda_{1,3}$ (resp. à $\lambda_{0,1}=(0,0,0,-1)$). Ici, il faut encore regarder les équations pour $\lambda_{2,2}$, $\lambda_{3,1}$ et $\lambda_{0,1}:=(0,0,0,-1)$, d'après le Théorème \ref{theo:ssgpe1param_SO*(2n)}. On peut vérifier que $\Theta(\beta_{1,2}\beta_{1,3}\beta_{2,3}) = 0$, comme cela a été conjecturé précédemment. De plus, on a également $\Theta(\beta_{1,2}\beta_{1,3}\beta_{1,4})=0$ par le calcul, en utilisant la Formule de Chevalley, ce qui élimine $\lambda_{1,0}$. Donc finalement, seules les équations de $\lambda_{2,2}$ manquent. Le sous-groupe parabolique $P(\lambda_{2,2})$ est le sous-groupe parabolique maximal de $GL_4(\C)$
\[
P(\lambda_{2,2}) = P^{\alpha_{2,3}} = \left(\begin{array}{cc|cc}
* & * & * & * \\
* & * & * & * \\\hline
0 & 0 & * & * \\
0 & 0 & * & *
\end{array}\right), 
\]
le groupe de Weyl $W_{\lambda_{2,2}} = W_{P^{\alpha_{2,3}}}$ est isomorphe au produit direct $S_2\times S_2$ de deux copies du groupe symétrique sur l'ensemble à deux éléments. Son plus long élément est $w_{\lambda_{2,2}} = s_1s_3 = s_3s_1$. L'ensemble $W/W_{\lambda_{2,2}}$ a $4!/4 = 6$ classes. Dans le tableau suivant, $w$ parcourt l'ensemble des représentants de longueur maximale de $W/W_{\lambda_{2,2}}$, c'est-à-dire $w$ parcourt $W^{\lambda_{2,2}}$.
\[
\begin{array}{|c|c|c|c|c|}
\hline
w & l(w) & w_0w & w\lambda_{2,2} & \langle w\lambda_{2,2},\rho\rangle \\ \hline
w_{\lambda_{2,2}} = s_1s_3 & 2 & s_2s_1s_3s_2 & \scriptstyle(1,1,-1,-1) & 4 \\
s_2s_1s_3 & 3 & s_1s_3s_2 & \scriptstyle (1,-1,1,-1) & 2 \\ 
s_1s_2s_1s_3 & 4 & s_1s_2 & \scriptstyle (-1,1,1,-1) & 0 \\ 
s_3s_2s_1s_3 & 4 & s_3s_2 & \scriptstyle (1,-1,-1,1) & 0 \\ 
s_3s_1s_2s_1s_3 & 5 & s_2 & \scriptstyle (-1,1,-1,1) & -2 \\ 
s_2s_1s_3s_2s_1s_3 & 6 & \id & \scriptstyle (-1,-1,1,1) & -4 \\ \hline
\end{array}
\]
Ci-dessus, $\rho$ désigne la demi-somme des racines positives de $U(4)$, il est donc égal ici à $\rho = \frac{1}{2}(3e_1^*+e_2^*-e_3^*-3e_4^*)$.

De plus, nous devons choisir les couples $(w,w')\in (W^{\lambda_{2,2}})^2$ qui vérifient l'équation $\sigma_{w_0w}^B\,.\,\sigma_{w_0w'}^B\,.\,\Theta(\beta_{1,2}) = \sigma_{w_0w_{\lambda_{2,2}}}^B$. Ils doivent donc vérifier l'équation de longueurs suivante
\[
l(w) + l(w') = l(w_0) + l(w_{\lambda_{2,2}}) + \dim(\got{p}^-_{<0}) = 6 + 2 + 1 = 9,
\]
d'après le Corollaire \ref{coro:info_longueurs_éléments_pour_pairebiencouvrante}. Les couples vérifiant cette équation de longueurs sont rangés dans le tableau qui suit.
\[
\begin{array}{|c|c|c|c|c|c|c|}
\hline
w & w' & l(w) & l(w') & w_0w & w_0w' & \langle w\lambda_2+w'\lambda_{2,2},\rho\rangle \\ \hline
s_2s_1s_3 & s_2s_1s_3s_2s_1s_3 & 3 & 6 & s_1s_3s_2 & \id & 2+-4=-2 \\
s_1s_2s_1s_3 & s_3s_1s_2s_1s_3 & 4 & 5 & s_1s_2 & s_2 & 0-2=-2 \\ 
s_3s_2s_1s_3 & s_3s_1s_2s_1s_3 & 4 & 5 & s_3s_2 & s_2 & 0-2=-2 \\ 
s_3s_1s_2s_1s_3 & s_1s_2s_1s_3 & 5 & 4 & s_2 & s_1s_2 & -2+0=-2  \\ 
s_3s_1s_2s_1s_3 & s_3s_2s_1s_3 & 5 & 4 & s_2 & s_3s_2 & -2+0=-2 \\ 
s_2s_1s_3s_2s_1s_3 & s_2s_1s_3 & 6 & 3 & \id & s_1s_3s_2 & -4+2=-2 \\ \hline
\end{array}
\]
Remarquant que $\got{p}^-_{<0} = \got{p}^-_{-\beta_{1,2}}$ est de dimension $1$ et que l'on a également $\langle\lambda_{2,2},-\beta_{1,2}\rangle=-2$, nous aurons, pour tous les couples $(w,w')$ du tableau ci-dessus,
\[
\langle w\lambda + w'\lambda,\rho\rangle = \sum_{k<0}k\dim_{\C}(\got{p}^-_{\lambda_{2,2},k}) = -2.
\]
Nous pouvons calculer plusieurs produits cup par la formule de Chevalley,
\[
\left\{\begin{array}{l}
\sigma_{s_2}\,.\,\sigma_{s_1s_2} = \sigma_{s_1s_3s_2}, \\
\sigma_{s_2}\,.\,\sigma_{s_3s_2} = \sigma_{s_1s_3s_2}, \\
\sigma_{s_2}\,.\,\sigma_{s_1s_3s_2} = \sigma_{s_2s_1s_3s_2}.
\end{array}\right.
\]
On en déduit que, pour tous les couples $(w,w')$ qui apparaissent dans le tableau ci-dessus, nous avons $\sigma_{w_0w}^B\,.\,\sigma_{w_0w'}^B = \sigma_{s_1s_3s_2}^B$ et, par conséquent,
\[
\sigma_{w_0w}^B\,.\,\sigma_{w_0w'}^B\,.\,\Theta(\beta_{1,2}) = \sigma_{s_2}^B\,.\,\sigma_{s_1s_3s_2}^B = \sigma_{s_2s_1s_3s_2}^B = \sigma_{w_0w_{\lambda_{2,2}}}^B.
\]
Finalement, tous les couples cités dans le tableau ci-dessus donnent une paire bien couvrante et une équation pour le polyèdre moment $\Delta_K(\Orb_{\Lambda})$. D'après le Théorème \ref{theo:équations_DeltaK_G0Lambda}, une telle équation est de la forme $\langle w\lambda_{2,2},\xi\rangle\leqslant\langle w_0w'\lambda_{2,2},\Lambda\rangle$. Le tableau qui suit donne les informations nécessaires pour chaque couple $(w,w')$ pour obtenir les équations par rapport aux coordonnées d'un vecteur $\xi=(\xi_1,\xi_2,\xi_3,\xi_4)$ de $\got{t}^*$ dans la base $(e_1^*,e_2^*,e_3^*,e_4^*)$.
\[
\begin{array}{|c|c|c|c|c|c|c|}
\hline
w & w' & w\lambda_2 & w_0w' & w_0w'\lambda_2 \\ \hline
s_2s_1s_3 & s_2s_1s_3s_2s_1s_3 & (1,-1,1,-1) & \id & (1,1,-1,-1) \\
s_1s_2s_1s_3 & s_3s_1s_2s_1s_3 & (-1,1,1,-1) & s_2 & (1,-1,1,-1) \\
s_3s_2s_1s_3 & s_3s_1s_2s_1s_3 & (1,-1,-1,1) & s_2 & (1,-1,1,-1) \\
s_3s_1s_2s_1s_3 & s_1s_2s_1s_3 & (-1,1,-1,1) & s_1s_2 & (-1,1,1,-1) \\
s_3s_1s_2s_1s_3 & s_3s_2s_1s_3 & (-1,1,-1,1) & s_3s_2 & (1,-1,-1,1) \\
s_2s_1s_3s_2s_1s_3 & s_2s_1s_3 & (-1,-1,1,1) & s_1s_3s_2 & (-1,1,-1,1) \\ \hline
\end{array}
\]
Nous obtenons maintenant les équations suivantes,
\begin{equation}
\label{eq:SO*(2n)_équations_pourlambda2,2}
\left\{\begin{aligned}
\xi_1-\xi_2+\xi_3-\xi_4 & \leqslant \Lambda_1+\Lambda_2-\Lambda_3-\Lambda_4 \\
-\xi_1+\xi_2+\xi_3-\xi_4 & \leqslant \Lambda_1-\Lambda_2+\Lambda_3-\Lambda_4 \\
\xi_1-\xi_2-\xi_3+\xi_4 & \leqslant \Lambda_1-\Lambda_2+\Lambda_3-\Lambda_4 \\
-\xi_1+\xi_2-\xi_3+\xi_4 & \leqslant -\Lambda_1+\Lambda_2+\Lambda_3-\Lambda_4 \\
-\xi_1+\xi_2-\xi_3+\xi_4 & \leqslant \Lambda_1-\Lambda_2-\Lambda_3+\Lambda_4 \\
-\xi_1-\xi_2+\xi_3+\xi_4 & \leqslant -\Lambda_1+\Lambda_2-\Lambda_3+\Lambda_4 
\end{aligned}\right.
\end{equation}

D'après \eqref{eq:SO*(2n)_équations_cônededépart} et \eqref{eq:SO*(2n)_équations_cônededépart2}, les équations du cône affine $\Lambda+\CR(\got{R}_n^+)$ sont les équations
\begin{equation}
\label{eq:SO*(2n)_équations_du_cone_des_racines}
\left\{\begin{aligned}
-\xi_1+\xi_2+\xi_3+\xi_4 & \geqslant -\Lambda_1+\Lambda_2+\Lambda_3+\Lambda_4, \\
\xi_1-\xi_2+\xi_3+\xi_4 & \geqslant \Lambda_1-\Lambda_2+\Lambda_3+\Lambda_4, \\
\xi_1+\xi_2-\xi_3+\xi_4 & \geqslant \Lambda_1+\Lambda_2-\Lambda_3+\Lambda_4, \\
\xi_1+\xi_2+\xi_3-\xi_4 & \geqslant \Lambda_1+\Lambda_2+\Lambda_3-\Lambda_4, \\
\xi_1\geqslant \Lambda_1, \ \xi_2\geqslant\Lambda_2, &\  \xi_3\geqslant\Lambda_3,\ \xi_4\geqslant\Lambda_4,
\end{aligned}\right.
\end{equation}

Nous pouvons maintenant rassembler toutes les informations obtenues dans l'énoncé du théorème suivant.

\begin{theo}
Le polyèdre moment $\Delta_K(\Orb_{\Lambda})$, pour $G = SO^*(8)$, est le polyèdre de $\got{t}^*$ défini par les huit équations 
de \eqref{eq:SO*(2n)_équations_du_cone_des_racines} déterminant le cône $\Lambda+\CR(\got{R}_n^+)$, les six équations de \eqref{eq:SO*(2n)_équations_pourlambda2,2} 
et les inégalités $\xi_1\geqslant \xi_2\geqslant\xi_3\geqslant\xi_4$ provenant du choix de la chambre de Weyl.
\end{theo}

\begin{rema}
Parmi les équations obtenues en \eqref{eq:SO*(2n)_équations_pourlambda2,2}, deux sont semblables, à apparition d'un signe \og{}$-$\fg{} près. En effet, nous avons les deux équations
\begin{align*}
-\xi_1+\xi_2-\xi_3+\xi_4 & \leqslant -\Lambda_1+\Lambda_2+\Lambda_3-\Lambda_4 \\
-\xi_1+\xi_2-\xi_3+\xi_4 & \leqslant \Lambda_1-\Lambda_2-\Lambda_3+\Lambda_4 = -(-\Lambda_1+\Lambda_2+\Lambda_3-\Lambda_4)
\end{align*}
En particulier, pour tout $\Lambda$ dominant, l'une de ces deux équations sera toujours redondante par rapport à l'autre, puisque seule l'équation avec le terme de droite négatif apportera une réelle information. Ceci nous indique que l'ensemble des équations obtenues dans le Théorème \ref{theo:équations_DeltaK_G0Lambda} n'est pas forcément minimal, comme cela a pu être le cas pour les groupes $Sp(2n,\R)$ et $SU(n,1)$.
\end{rema}

\begin{rema}
Cet exemple apporte une seconde réponse à une question qui est apparue après le calcul des exemples précédents. Pour les groupes $Sp(2n,\R)$ et $SU(n,1)$, seules les directions des faces du cône $\CR(\got{R}_n^+)$ apparaissaient au final, que ce soit comme équation de face du cône $\Lambda+\CR(\got{R}_n^+)$ ou comme équation de face de réflexion sur un mur de la chambre de Weyl. Mais ici ce n'est plus le cas.

Regardons un exemple avec $\Lambda$ régulier. On choisit $\Lambda = (3,2,1,0)$. Soit $\xi$ un élément central, c'est-à-dire $\xi_1 = \xi_2=\xi_3=\xi_4=x$. Alors $\xi$ est dans $\Lambda+\CR(\got{R}_n^+)$ si et seulement si $x\geqslant 3$. De plus, par hypothèse sur $\xi$, on a $\xi_{i_1}+\xi_{i_2}-\xi_{i_3}-\xi_{i_4} = 0$, quel que soit l'ordre des indices choisis dans $\{1,2,3,4\}$. La dernière équation vaut alors
\[
0\leqslant -\Lambda_1+\Lambda_2-\Lambda_3+\Lambda_4 = -3+2-1=-2,
\]
ce qui est évidemment faux, donc les onze équations du cône $(\Lambda+\CR(\got{R}_n^+))\cap\got{t}_+^*$ ne suffisent pas à décrire $\Delta_K(\Orb_{\Lambda})$. Cela signifie que des équations de \eqref{eq:SO*(2n)_équations_pourlambda2,2} apparaissent nécessairement pour décrire le polyèdre $\Delta_K(\Orb_{\Lambda})$.
\end{rema}

\subsection{Projections d'orbites coadjointes holomorphes de $SU(2,2)$}

Nous terminons ce chapitre en donnant la description du polyèdre moment $\Delta_K(\Orb_{\Lambda})$ pour le cas du groupe $SU(2,2)$.

\subsubsection{Quelques informations pour l'étude du cas général $SU(p,q)$, $p\geqslant q\geqslant 2$}
\label{subsubsection:SU(p,q)_sg1pdominantsgotp-admissibles}

D'après le Théorème \ref{theo:sg1pdominantsadmissibles_de_SU(p,q)}, les sous-groupes à un paramètre dominants indivisibles $\got{p}^-$-admissibles de $\TC$ pour $G = SU(p,q)$ sont les sous-groupes à un paramètre $\lambda_{k,l}$ définis par
\[
\lambda_{k,l} = (\underbrace{a,\ldots,a}_{k\ \mbox{termes}},\underbrace{b,\ldots,b}_{p-k\ \mbox{termes}};\underbrace{a,\ldots,a}_{l\ \mbox{termes}},\underbrace{b,\ldots,b}_{q-l\ \mbox{termes}}),
\]
pour tous $k=1,\ldots,p-1$ et $l=1,\ldots,q-1$, avec $a=\frac{p+q-k-l}{\mathrm{pgcd}(p+q-k-l,k+l)}$ et $b=\frac{-(k+l)}{\mathrm{pgcd}(p+q-k-l,k+l)}$, et les quatre sous-groupes à un paramètre suivants,
\begin{align*}
\lambda_{p,q-1} & := (1,\ldots,1;1,\ldots,1,1-p-q), \\
\lambda_{p-1,q} & := (1,\ldots,1,1-p-q;1,\ldots,1), \\
\lambda_{0,1} & := (-1,\ldots,-1;p+q-1,-1,\ldots,-1), \\
\lambda_{1,0} & := (p+q-1,-1,\ldots,-1;-1,\ldots,-1).
\end{align*}

Les deux sous-groupes à un paramètre $\lambda_{p-1,q}$ et $\lambda_{0,1}$ donnent les équations du cône $\Lambda+\CR(\got{R}_n^+)$, puisque $\langle\lambda_{p-1,q},\bmin\rangle = \langle\lambda_{0,1},\bmin\rangle = 0$. Les équations sont obtenues facilement,
\[
\sum_{i=1}^{p+q}\xi_i - (p+q)\xi_k \leqslant \sum_{i=1}^{p+q}\Lambda_i - (p+q)\Lambda_k, \mbox{ pour tout } k=1,\ldots p,
\]
et
\[
-\sum_{i=1}^{p+q}\xi_i + (p+q)\xi_{p+l} \leqslant -\sum_{i=1}^{p+q}\Lambda_i + (p+q)\Lambda_{p+l}, \mbox{ pour tout } l=1,\ldots q,
\]
si on note $\xi=(\xi_1,\ldots,\xi_{p+q})$ et $\Lambda=(\Lambda_1,\ldots,\Lambda_{p+q})$ les deux éléments de $\got{t}^*$, les composantes étant écrites dans la base $(e_1^*,\ldots,e_{p+q}^*)$ de $(\got{t}\oplus\got{z}(\got{u}(p,q)))^*$. On peut noter que le premier ensemble d'inégalités est exactement le même que celles apparaissant pour $SU(p,1)$, ce sont les équations $(A_k)$ du paragraphe \ref{subsection:exemplespolyèdresmoment_SU(n,1)}.

\bigskip

%
%

Les entiers $\langle\lambda,-\beta_{i,p+j}\rangle$ peuvent se déterminer facilement, pour tout $\lambda$ sous-groupe à un paramètre dominant indivisible $\got{p}^-$-admissible de $\TC$ et $(i,j)\in\{1,\ldots,p\}\times\{1,\ldots,q\}$. En effet,
lorsque l'on considère $\lambda_{k,l}$, avec $k=1,\ldots,p-1$ et $l=1,\ldots,q-1$, on a
\[
\langle\lambda_{k,l},-\beta_{i,p+j}\rangle = \left\{\begin{array}{ll}
b-a<0 & \text{si $i\leqslant k$ et $j>l$}, \\
a-b>0 & \text{si $i>k$ et $j\leqslant l$}, \\
0 & \text{sinon}.
\end{array}\right.
\]
Puis, pour les quatre sous-groupes à un paramètre restants,
\[
\langle\lambda_{p,q-1},-\beta_{i,p+j}\rangle = \left\{\begin{array}{ll}
-p-q<0 & \ \text{si $j=q$}, \\
0 & \ \text{sinon},
\end{array}\right.
\]
\begin{equation}
\label{eq:valeurs_lambda1,0_sur_-betaip+j}
\langle\lambda_{1,0},-\beta_{i,p+j}\rangle = \left\{\begin{array}{ll}
-p-q<0 & \ \text{si $i=1$}, \\
0 & \ \text{sinon},
\end{array}\right.
\end{equation}
\[
\langle\lambda_{0,1},-\beta_{i,p+j}\rangle = \left\{\begin{array}{ll}
p+q>0 & \ \text{si $j=1$}, \\
0 & \ \text{sinon},
\end{array}\right.
\]
\[
\langle\lambda_{p-1,q},-\beta_{i,p+j}\rangle = \left\{\begin{array}{ll}
p+q>0 & \ \text{si $i=p$}, \\
0 & \ \text{sinon}.
\end{array}\right.
\]
On connaît déjà les équations apportées par les deux derniers, $\lambda_{0,1}$ et $\lambda_{p-1,q}$. Pour tous les autres, on voit que $\langle\lambda,\bmin\rangle<0$ et $\got{p}^-_{<0} = (\got{p}^-_{<0})^{\langle\lambda,\bmin\rangle}$. Les sous-groupes paraboliques $P(\lambda_{p,q-1})$ et $P(\lambda_{1,0})$ sont clairement maximaux (ils sont de la forme $P_{\alpha_{i,i+1}}$ pour une certaines racine simple $\alpha_{i,i+1}$ de $S(U(p)\times U(q))$.

\bigskip

L'étude des paires bien couvrantes demande de calculer certaines expressions dans l'anneau de cohomologie $\coh{S(GL_p(\C)\times GL_q(\C))/B_{p,q}}$, où $B_{p,q}$ est le sous-groupe de Borel des matrices triangulaires supérieures de $S(GL_p(\C)\times GL_q(\C))$. Puisque la variété des drapeaux $S(GL_p(\C)\times GL_q(\C))/B_{p,q}$ s'identifie à $SL_p(\C)/B_p\times SL_q(\C)/B_q$, où $B_p$ (resp. $B_q$) est le sous-groupe de Borel des matrices triangulaires supérieures de $SL_p(\C)$ (resp. $SL_q(\C)$), nous travaillerons dans la suite sur l'anneau de cohomologie
\[
\coh{SL_p(\C)/B_p\times SL_q(\C)/B_q}\cong \coh{SL_p(\C)/B_p}\otimes\coh{SL_q(\C)/B_q}.
\]
Evidemment, pour $(w,w')\in W_p\times W_q$, la classe de Schubert $\sigma_{(w,w')}^{B_{p,q}}$ s'identifiera au produit tensoriel $\sigma_w^{B_p}\otimes\sigma_{w'}^{B_q}$, et la famille $(\sigma_w^{B_p}\otimes\sigma_{w'}^{B_q})_{(w,w')\in W_p\times W_q}$ formera une base du $\Z$-module $\coh{SL_p(\C)/B_p\times SL_q(\C)/B_q}$. Ici, $W_n$ désignera le groupe de Weyl de $SL_n(\C)$ pour le tore maximale des matrices diagonales. C'est aussi celui de $GL_n(\C)$ pour le tore maximal des matrices diagonales de $GL_n(\C)$, cette fois-ci. Ensuite, pour deux couples $(w_1,w_1'),(w_2,w_2')$ de $W_p\times W_q$, le produit cup vérifiera la propriété suivante,
\[
(\sigma_{w_1}^{B_p}\otimes\sigma_{w_1'}^{B_q})\,.\,(\sigma_{w_2}^{B_p}\otimes\sigma_{w_2'}^{B_q}) = (\sigma_{w_1}^{B_p}\,.\,\sigma_{w_2}^{B_p})\otimes(\sigma_{w_1'}^{B_q}\,.\,\sigma_{w_2'}^{B_q}).
\]

En réalisant le produit cup de $\Theta(\beta_{i,p+j})$ avec $\sigma_{(\id,\id)}^{B_{p,q}}$, la Formule de Chevalley (Théorème \ref{theo:Chevalley} (\emph{iii})) donne l'égalité
\begin{eqnarray}
\Theta(\beta_{i,p+j}) & = & \sum_{k=1}^{p-1}\beta_{i,j}(e_k-e_{k+1})\sigma_{s_k}^{B_p}\otimes\sigma_{\id}^{B_q} + \sum_{l=1}^{q-1}\beta_{i,p+j}(e_{p+l}-e_{p+l+1})\sigma_{\id}^{B_p}\otimes\sigma_{s_l}^{B_q} \nonumber \\
& = & -\sigma_{s_{i-1}}^{B_p}\otimes\sigma_{\id}^{B_q}+\sigma_{s_{i}}^{B_p}\otimes\sigma_{\id}^{B_q} + \sigma_{\id}^{B_p}\otimes\sigma_{s_{j-1}}^{B_q} - \sigma_{\id}^{B_p}\otimes\sigma_{s_j}^{B_q} \nonumber \\
& = & \bigl(-\sigma_{s_{i-1}}^{B_p}+\sigma_{s_{i}}^{B_p}\bigr)\otimes\sigma_{\id}^{B_q} + \sigma_{\id}^{B_p}\otimes\bigl(\sigma_{s_{j-1}}^{B_q} - \sigma_{s_j}^{B_q}\bigr), \label{eq:Theta(beta_i,p+j)} 
\end{eqnarray}
où on fixe les conventions suivantes:
\[
\sigma_{s_{0}}^{B_p}\otimes\sigma_{\id}^{B_q} = 0, \qquad \sigma_{s_{p}}^{B_p}\otimes\sigma_{\id}^{B_q} = 0, \qquad \sigma_{\id}^{B_p}\otimes\sigma_{s_{0}}^{B_q} = 0 \qquad\text{et}\qquad \sigma_{\id}^{B_p}\otimes\sigma_{s_{q}}^{B_q} = 0.
\]
On aura donc, par exemple, $\Theta(\beta_{1,p+1}) = \sigma_{s_1}^{B_p}\otimes\sigma_{\id}^{B_q} - \sigma_{\id}^{B_p}\otimes\sigma_{s_1}^{B_q}$ et $\Theta(\beta_{1,p+q}) = \sigma_{s_1}^{B_p}\otimes\sigma_{\id}^{B_q} + \sigma_{\id}^{B_p}\otimes\sigma_{s_{q-1}}^{B_q}$.

\subsubsection{Calcul des équations du polyèdre moment $\Delta_K(SU(2,2)\cdot\Lambda)$}

Pour le groupe $SU(2,2)$, les sous-groupes à un paramètre dominants indivisibles $\got{p}^-$-admissibles sont $\lambda_{1,2}$, $\lambda_{0,1}$, $\lambda_{2,1}$, $\lambda_{1,0}$ et $\lambda_{1,1}$.

On sait déjà que les deux premiers donnent les équations des faces du cône $\Lambda+\CR(\got{R}_n^+)$. Les deux sous-groupes à un paramètre suivants ne vont faire intervenir aucune équation. En effet, $\lambda_{2,1}$ (resp. $\lambda_{1,0}$) fait apparaître le terme $\Theta(\beta_{1,4}\beta_{2,4})$ (resp. $\Theta(\beta_{1,3}\beta_{1,4})$) qui est nul. Ceci se calcule facilement en utilisant \eqref{eq:Theta(beta_i,p+j)} et le fait que $(\sigma_{s_1}^{B_2})^2 = 0$.

Il ne reste plus qu'à chercher les équations associées au sous-groupe à un paramètre $\lambda_{1,1}$. Le sous-groupe parabolique associé dans $SL_2(\C)\times SL_2(\C)$ est $P(\lambda_{1,1}) = B_2\times B_2$, le produit de deux copies du Borel des matrices triangulaires supérieures de $SL_2(\C)$.

%
Par le calcul, on montre que les quatre couples
\[
\left\{\bigl((s_1,\id),(s_1,s_1)\bigr),\bigl((\id,s_1),(s_1,s_1)\bigr),\bigl((s_1,s_1),(s_1,\id)\bigr),\bigl((s_1,s_1),(\id,s_1)\bigr)\right\}
\]
donnent toutes les paires bien couvrantes associées à $\lambda_{1,1}$. On obtient les équations suivantes, données dans l'ordre d'apparition des couples dans l'ensemble ci-dessus,
\[
\left\{\begin{aligned}
\bigl\langle(s_1,\id)\lambda_{1,1},\xi-(s_1,\id)\Lambda\bigr\rangle & \leqslant 0, \\
\bigl\langle(\id,s_1)\lambda_{1,1},\xi-(\id,s_1)\Lambda\bigr\rangle & \leqslant 0, \\
\bigl\langle(s_1,s_1)\lambda_{1,1},\xi-(s_1,\id)\Lambda\bigr\rangle & \leqslant 0, \\
\bigl\langle(s_1,s_1)\lambda_{1,1},\xi-(\id,s_1)\Lambda\bigr\rangle & \leqslant 0.
\end{aligned}\right.
\]
Ceci est équivalent au sytème d'équations
\[
\left\{\begin{aligned}
|\xi_1 - \xi_2 - \xi_3 + \xi_4| & \leqslant \Lambda_1 - \Lambda_2 + \Lambda_3 - \Lambda_4, \\
-\xi_1 + \xi_2 - \xi_3 + \xi_4 & \leqslant -|\Lambda_1 - \Lambda_2 - \Lambda_3 + \Lambda_4|.
\end{aligned}\right.
\]

Les équations apportées par $\lambda_{p,1}$ et $\lambda_{1,0}$ sont quant à elles
\[
\left\{\begin{aligned}
-3\xi_1 + \xi_2 + \xi_3 + \xi_4 & \leqslant -3\Lambda_1 + \Lambda_2 + \Lambda_3 + \Lambda_4, \\
\xi_1 - 3\xi_2 + \xi_3 + \xi_4 & \leqslant \Lambda_1 - 3\Lambda_2 + \Lambda_3 + \Lambda_4, \\
-\xi_1 - \xi_2 + 3\xi_3 - \xi_4 & \leqslant -\Lambda_1 - \Lambda_2 + 3\Lambda_3 - \Lambda_4, \\
-\xi_1 - \xi_2 - \xi_3 + 3\xi_4 & \leqslant -\Lambda_1 - \Lambda_2 - \Lambda_3 + 3\Lambda_4, \\
-\xi_1 + \xi_2 + \xi_3 - \xi_4 & \leqslant \Lambda_1 - \Lambda_2 + \Lambda_3 - \Lambda_4.
\end{aligned}\right.
\]
Or, un élément $\xi=(\xi_1,\xi_2,\xi_3,\xi_4)$ de $\got{t}$ doit vérifier la condition de trace $\sum\xi_i=0$. Donc les quatre équations ci-dessus sont équivalentes aux quatre équations
\[
\xi_1\geqslant\Lambda_1,\quad \xi_2\geqslant\Lambda_2, \quad \xi_3\leqslant\Lambda_3, \quad \xi_4\leqslant\Lambda_4.
\]

Nous pouvons maintenant résumer le cas de $SU(2,2)$ dans le théorème qui suit.

\begin{theo}
\label{theo:SU(2,2)_équations_polyèdremoment}
Soit $\Lambda\in\wedge_{\Q}^*\cap\Chol$. Le polyèdre moment $\Delta_{S(U(2)\times U(2))}(SU(2,2)\cdot\Lambda)$ est le polyèdre de $\got{t}^*$ défini par les équations
\[
\left\{\begin{aligned}
\xi_1\geqslant\Lambda_1,\ \xi_2\geqslant\Lambda_2, & \quad \Lambda_3\geqslant\xi_3, \ \Lambda_4\geqslant\xi_4 \\
|\xi_1 - \xi_2 - \xi_3 + \xi_4| & \leqslant \Lambda_1 - \Lambda_2 + \Lambda_3 - \Lambda_4, \\
-\xi_1 + \xi_2 - \xi_3 + \xi_4 & \leqslant -|\Lambda_1 - \Lambda_2 - \Lambda_3 + \Lambda_4|, \\
\xi_1\geqslant\xi_2,& \quad \xi_3\geqslant \xi_4.
\end{aligned}\right.
\]
\end{theo}

Dans cette configuration, la chambre holomorphe est
\[
\Chol = \{\xi=(\xi_1,\xi_2,\xi_3,\xi_4)\in\got{t}^*;\ \xi_1\geqslant\xi_2\geqslant\xi_3\geqslant\xi_4\}.
\]
Lorsque $\Lambda$ appartient à $\wedge_{\Q}^*\cap\Chol$, on vérifie aisément grâce à la première ligne des équations de l'énoncé du Théorème \ref{theo:SU(2,2)_équations_polyèdremoment}, que $\Delta_{S(U(2)\times U(2))}(SU(2,2)\cdot\Lambda)$ est bien contenu dans $\Chol$, comme l'indique la Proposition \ref{prop:polyèdremoment_contenudans_Chol}.

\begin{rema}
Nous pouvons également calculer les équations du cône polyédral $\Delta_{S(U(2)\times U(2))}(\got{p}^-)$. Il suffit en effet de prendre $\Lambda=0$ dans les équations de l'énoncé du Théorème \ref{theo:SU(2,2)_équations_polyèdremoment}. Ce procédé nous donne
\[
\left\{\begin{aligned}
\xi_1\geqslant 0,\ \xi_2\geqslant 0, \ \xi_3&\leqslant 0, \ \xi_4\leqslant 0, \\
\xi_1-\xi_2-\xi_3+\xi_4 & = 0, \\
-\xi_1+\xi_2-\xi_3+\xi_4 & \leqslant 0.
\end{aligned}\right.
\]
Or, on doit avoir $\sum\xi_i=0$, donc $\xi_1+\xi_4=\xi_2+\xi_3=0$. On en déduit que le cône polyédral $\Delta_{S(U(2)\times U(2))}(\got{p}^-)$ est
\[
\Delta_{S(U(2)\times U(2))}(\got{p}^-)=\{(x_1,x_2,-x_2,-x_1); x_1\geqslant x_2\geqslant 0\},
\]
comme indiqué dans \cite[Section 5]{Paradan}.
\end{rema}

\appendix


\chapter{Combinatoire dans le groupe symétrique}
\label{chap:combinatoire_gpdeWeyl_de_GL_r(C)}

Nous rassemblons dans cet appendice une liste de résultats nécessaires pour la démonstration du Théorème \ref{theo:cns_pairebiencouvrante}. La première partie regroupe des lemmes techniques sur la combinatoire de certains éléments du groupe de Weyl de $GL_{r}(\C)$, ce groupe de Weyl n'étant autre que le groupe symétrique $S_r$. La seconde partie est consacrée à la formule de Chevalley et présente le calcul des images par $(f_{\lambda}^B)^*$ des classes de cohomologie $\check{w}_k$. La troisième donne des produits cup généralisant ceux utilisés pour la preuve du Théorème \ref{theo:cns_pairebiencouvrante}.

\section{Quelques propriétés du groupe de Weyl de $GL_{r}(\C)$}
\label{section:propriétés_groupeWeyl_GLr}

Cette section contient plusieurs propriétés sur certains éléments du groupe de Weyl $W_r$ de $GL_r(\C)$ qui nous seront utiles dans la suite de cet appendice. Le groupe de Weyl $W_r$ n'est autre que le groupe symétrique $S_r$. Toutes les propriétés qui seront montrées dans ce paragraphe, seront aussi valables pour le groupe $SL_r(\C)$, car il a le même groupe de Weyl que $GL_r(\C)$.

Nous avons fixé dans les chapitres précédents un tore maximal de $GL_{r}(\C)$, l'ensemble $T_r$ de ses matrices diagonales. L'algèbre de Lie de $GL_{r}(\C)$ est l'ensemble $\glr$ de toutes les matrices de taille $r\times r$. Celle de $T_r$ est l'algèbre de Lie $\got{t}_r$ des matrices diagonales de $\glr$. Les racines de $\glr$ sont les formes linéaires
\[
\alpha_{i,j}\left(\begin{array}{ccc}
h_1 & & \\
& \ddots & \\
& & h_{r}
\end{array}\right) = h_i - h_j,
\]
où $1\leq i,j\leq r$, avec $i\neq j$. L'ensemble des racines de $\glr$ sera noté $\got{R}_r$. Les racines positives sont les racines $\alpha_{i,j}$ avec $i<j$. Le Borel associé à ce système de racines positives est $B_r$ l'ensemble des matrices triangulaires supérieures de $GL_r(\C)$. Les racines simples sont alors les racines $\alpha_{i,i+1}$, où $i\in\{1,\ldots,r-1\}$. L'élément de $W_r$ associé à $\alpha_{i,i+1}$ est $s_i$, la matrice de permutation définie dans le paragraphe \ref{section:notations_et_paramétrage}. Les $s_i$ engendrent $W_r$. Pour $w\in W_r$, on définit la longueur de $w$ comme le nombre de racines positives $\alpha$ telles que $w\alpha$ soit négative. C'est aussi le plus petit entier $k$ tel que $w$ s'écrive comme composée de $k$ permutations simples. Plus généralement, nous noterons $s_{\alpha_{i,j}}$ l'élément de $W_r$ associé à la racine $\alpha_{i,j}$.

Si on parle en termes de groupe symétrique, l'élément $s_{\alpha_{i,j}}$ est la transposition $t_{i,j}$ qui échange les entiers $i$ et $j$, et l'élément $s_i=t_{i,i}$ est la transposition simple qui échange $i$ avec $i+1$. La longueur d'un élément $w\in S_r$ correspond dans ce cadre au nombre d'inversions de $w$, c'est-à-dire, c'est le cardinal de l'ensemble des inversions
\[
I(w) = \{(i,j); \ 1\leq i<j\leq n \text{ et } w(i)>w(j)\}
\]
de $w$. Nous utiliserons à plusieurs reprises la propriété suivante de la longueur.

\begin{lemm}
\label{lemm:l(ws_i)=l(w)+-1}
Soit $w\in S_r$ et $1\leq i\leq r-1$. Alors on a
\[
l(ws_i) = \left\{\begin{array}{lll}
l(w) + 1 & \quad & \text{si $w(i)<w(i+1)$} \\
l(w) - 1 & \quad & \text{si $w(i)>w(i+1)$.}
\end{array}\right.
\]
\end{lemm}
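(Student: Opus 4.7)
Le plan est d'utiliser la caractérisation combinatoire de la longueur rappelée juste avant l'énoncé: $l(w)=|I(w)|$, où $I(w)=\{(j,k);\ 1\leq j<k\leq r,\ w(j)>w(k)\}$. Je comparerais alors directement $I(w)$ et $I(ws_i)$ en exploitant le fait que $ws_i$ coïncide avec $w$ en tout point hors de $\{i,i+1\}$, et échange les valeurs prises en $i$ et $i+1$: on a $ws_i(i)=w(i+1)$, $ws_i(i+1)=w(i)$, et $ws_i(j)=w(j)$ pour $j\notin\{i,i+1\}$.

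La stratégie consiste à partitionner les couples $(j,k)$ avec $j<k$ selon leur intersection avec $\{i,i+1\}$, et à montrer que la différence $|I(ws_i)|-|I(w)|$ ne provient que du couple $(i,i+1)$. Pour les paires disjointes de $\{i,i+1\}$, $w$ et $ws_i$ produisent la même inversion ou la même non-inversion. Pour les paires de la forme $(j,i)$ et $(j,i+1)$ avec $j<i$ (et de même pour les paires $(i,k)$ et $(i+1,k)$ avec $k>i+1$), les contributions se compensent exactement: l'éventuelle inversion $(j,i)$ pour $w$ correspond à l'inversion $(j,i+1)$ pour $ws_i$, et réciproquement, de sorte que la somme des inversions sur ces deux couples reste inchangée. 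Autrement dit, pour chaque $j<i$, le cardinal de $\{k\in\{i,i+1\};\ w(j)>w(k)\}$ est égal au cardinal de $\{k\in\{i,i+1\};\ ws_i(j)>ws_i(k)\}$, puisque l'ensemble $\{w(i),w(i+1)\}$ est le même que $\{ws_i(i),ws_i(i+1)\}$.

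Il ne reste alors plus qu'à examiner la paire $(i,i+1)$ elle-même: elle appartient à $I(w)$ si et seulement si $w(i)>w(i+1)$, et elle appartient à $I(ws_i)$ si et seulement si $ws_i(i)>ws_i(i+1)$, c'est-à-dire $w(i+1)>w(i)$. Ces deux conditions étant exactement opposées, on obtient $|I(ws_i)|=|I(w)|+1$ si $w(i)<w(i+1)$ et $|I(ws_i)|=|I(w)|-1$ si $w(i)>w(i+1)$, ce qui donne les deux cas annoncés. Aucune réelle difficulté n'est à prévoir: le résultat est un classique de combinatoire du groupe symétrique, et le seul point qui mérite une rédaction soigneuse est la compensation exacte des inversions impliquant exactement l'un des indices $i$ ou $i+1$.
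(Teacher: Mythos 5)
Votre preuve est correcte et suit essentiellement la même démarche que celle du texte : comparer les ensembles d'inversions $I(w)$ et $I(ws_i)$ via l'action de $s_i$ sur les couples d'indices, seul le couple $(i,i+1)$ faisant la différence. Votre rédaction par compensation des paires $(j,i)$/$(j,i+1)$ et $(i,k)$/$(i+1,k)$ est même un peu plus soignée que l'argument du texte, qui se contente d'invoquer la stabilité de l'ensemble des couples privé de $(i,i+1)$ sous $s_i$.
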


\begin{proof}
Il suffit de voir que l'ensemble $\{(k,l); \ 1\leq k<l\leq n\}\setminus\{(i,i+1)\}$ est stable par $s_i$. Par conséquent, nous avons deux cas possibles. Soit $w(i)<w(i+1)$, et donc $(i,i+1)\notin I(w)$. Dans ce cas, on aura $ws_i(i) = w(i+1)>w(i)=ws_i(i+1)$. D'où $I(ws_i) = I(w)\cup\{(i,i+1)\}$ et $l(ws_i) = l(w)+1$. Sinon $w(i)>w(i+1)$ et $(i,i+1)\in I(w)$, ce qui donne $I(ws_i) = I(w)\setminus\{(i,i+1)\}$ et $l(ws_i) = l(w)-1$. Le second cas est donné l'opposé, d'où le résultat.
\end{proof}

\begin{rema}
Ce résultat est valable de manière plus générale pour un groupe de Weyl quelconque lorsqu'on remplace $s_i$ par la symétrie associée à une racine simple de l'algèbre de Lie, cf \cite[Lemme 2.71]{knapp}.
\end{rema}

%

La démonstration du Théorème \ref{theo:cns_pairebiencouvrante} nécessite l'utilisation de la Formule de Chevalley aux classes de cohomologie $\sigma_w^{B_r}$ associées à des éléments spécifiques de $w$ du groupe de Weyl $W_r$ de $GL_r(\C)$. Ces éléments sont définis de la manière suivante:
\[
\check{w}_k = s_{r-1}s_{r-2}\ldots s_{k+1}s_k,
\]
\index{$\check{w}_k$}pour $k\in\{1,\ldots,r-1\}$ et $\check{w}_{r} = \id$. Sont intervenus également dans la preuve du Théorème \ref{theo:cns_pairebiencouvrante}, mais dans une moindre mesure, les éléments $\hat{w}_k$, définis par $\hat{w}_1 = \id$ et $\hat{w}_k = s_1\ldots s_{k-1}$\index{$\hat{w}_k$} si $2\leq k\leq r$.

\begin{lemm}
\label{lemm:longueur_wcheck_what}
Pour tout $k=1,\ldots,r$, on a $l(\check{w}_k) = r-k$.
\end{lemm}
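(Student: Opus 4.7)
The plan is to prove the equality by induction on $r - k$, running downward from $k = r$ to $k = 1$, using Lemma~\ref{lemm:l(ws_i)=l(w)+-1} as the main tool.

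The base case $k = r$ is immediate: $\check{w}_r = \id$ has length $0 = r - r$. For the inductive step, I would rewrite $\check{w}_k = \check{w}_{k+1} \cdot s_k$, which follows directly from the definition $\check{w}_k = s_{r-1} s_{r-2} \cdots s_{k+1} s_k$. Assuming $l(\check{w}_{k+1}) = r - k - 1$, I apply Lemma~\ref{lemm:l(ws_i)=l(w)+-1} to the product $\check{w}_{k+1} s_k$: I need to determine whether $\check{w}_{k+1}(k) < \check{w}_{k+1}(k+1)$.

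The key observation is that $\check{w}_{k+1}$, viewed as an element of the symmetric group $S_r$, acts as the cyclic permutation sending $k+1 \mapsto k+2 \mapsto \cdots \mapsto r \mapsto k+1$ and fixing every integer in $\{1, \ldots, k\}$. This can itself be verified by an auxiliary downward induction (or by a direct trace of the successive compositions $s_{r-1}, s_{r-2}, \ldots, s_{k+1}$ acting on the identity sequence). In particular $\check{w}_{k+1}(k) = k$ and $\check{w}_{k+1}(k+1) = k+2$, so $\check{w}_{k+1}(k) < \check{w}_{k+1}(k+1)$. Applying Lemma~\ref{lemm:l(ws_i)=l(w)+-1} then gives
\[
l(\check{w}_k) = l(\check{w}_{k+1} s_k) = l(\check{w}_{k+1}) + 1 = (r - k - 1) + 1 = r - k,
\]
which closes the induction.

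There is no real obstacle in this proof: the inductive machinery and the length-increase lemma do all the work, and the cyclic description of $\check{w}_{k+1}$ is transparent. The only point requiring minor care is the explicit identification of $\check{w}_{k+1}$ as a cycle, but this is a routine verification that can be phrased either by describing the action on $\{1, \ldots, r\}$ or, alternatively, by counting directly the inversions of $\check{w}_k$: the pairs $(i,j)$ with $i < j$ and $\check{w}_k(i) > \check{w}_k(j)$ are precisely the $r - k$ pairs $(i, r)$ for $i \in \{k, k+1, \ldots, r-1\}$, giving $l(\check{w}_k) = r - k$ without any induction at all. Either route yields a short and self-contained proof.
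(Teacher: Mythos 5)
Your proof is correct, but it is organized differently from the paper's. The paper simply identifies $\check{w}_k$ with the cycle $(r\ r-1\ \cdots\ k+1\ k)$ and invokes the well-known fact that this particular cycle on consecutive integers has length $r-k$; your main route instead runs a downward induction on $k$ via the factorization $\check{w}_k=\check{w}_{k+1}s_k$ and Lemme~\ref{lemm:l(ws_i)=l(w)+-1}, which makes the argument self-contained (no appeal to a ``well-known'' length computation), while your closing alternative --- counting the inversions of $\check{w}_k$ directly --- is essentially the verification the paper leaves implicit. One small point of care: with the convention that in the word $s_{r-1}\cdots s_{k+1}$ the rightmost factor acts first, $\check{w}_{k+1}$ is the cycle sending $k+1\mapsto r$ and $j\mapsto j-1$ for $j\geqslant k+2$ (the inverse of the cycle you describe), so $\check{w}_{k+1}(k+1)=r$ rather than $k+2$; this is harmless, since all you use is $\check{w}_{k+1}(k)=k<\check{w}_{k+1}(k+1)$, which holds under either reading, and the inversion count is the same for a permutation and its inverse. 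With that caveat, both your inductive argument and your direct inversion count are valid proofs of $l(\check{w}_k)=r-k$.
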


\begin{proof}
Le cas de $\check{w}_{r} = \id$ est évident. Supposons que $k$ appartienne à $\{1,\ldots,r-1\}$. L'élément $\check{w}_k$ est le cycle $(r\ r-1\ \cdots\ k+1\ k)$. Il est bien connu qu'un tel cycle est de longueur $r-k$.
\end{proof}

%
%

Nous avons un résultat similaire avec les $\hat{w}_k$.

\begin{lemm}
\label{lemm:longueur_what}
Pour tout $k=1,\ldots,r$, on a $l(\hat{w}_k) = k-1$.
\end{lemm}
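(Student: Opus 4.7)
Le plan est de proc�der par r�currence sur $k\in\{1,\ldots,r\}$, en utilisant le Lemme \ref{lemm:l(ws_i)=l(w)+-1} ainsi que la relation de r�currence
\[
\hat{w}_k = \hat{w}_{k-1}\, s_{k-1},
\]
qui d�coule imm�diatement de la d�finition $\hat{w}_k = s_1 s_2 \cdots s_{k-1}$. Le cas de base $k=1$ est trivial, puisque $\hat{w}_1 = \id$ est de longueur $0 = 1-1$.

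Pour le pas de r�currence, je fixerais $k\geqslant 2$ et supposerais que $l(\hat{w}_{k-1}) = k-2$. Pour appliquer le Lemme \ref{lemm:l(ws_i)=l(w)+-1} avec $i=k-1$, il me suffit de montrer l'in�galit� $\hat{w}_{k-1}(k-1) < \hat{w}_{k-1}(k)$. Ceci se v�rifie directement : vu comme permutation, $\hat{w}_{k-1} = s_1\cdots s_{k-2}$ agit sur $\{1,\ldots,r\}$ en laissant fixe tout entier $j\geqslant k$ (aucun des $s_i$ intervenant ne bouge $j$), d'o� $\hat{w}_{k-1}(k) = k$. Par ailleurs, en appliquant successivement les transpositions simples � l'entier $k-1$, on descend par la cha�ne $k-1 \xrightarrow{s_{k-2}} k-2 \xrightarrow{s_{k-3}} \cdots \xrightarrow{s_1} 1$, donc $\hat{w}_{k-1}(k-1) = 1$. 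On a bien $1 = \hat{w}_{k-1}(k-1) < \hat{w}_{k-1}(k) = k$.

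Le Lemme \ref{lemm:l(ws_i)=l(w)+-1} donne alors $l(\hat{w}_k) = l(\hat{w}_{k-1}\, s_{k-1}) = l(\hat{w}_{k-1}) + 1 = (k-2)+1 = k-1$, ce qui conclut la r�currence. Il n'y a pas de vraie difficult� ici : le seul point � v�rifier soigneusement est le calcul de l'action de $\hat{w}_{k-1}$ sur les entiers $k-1$ et $k$, qui est imm�diat. Une alternative, tout aussi rapide, consisterait � remarquer que $\hat{w}_k$ est pr�cis�ment le $k$-cycle $(1\ 2\ \cdots\ k)$, i.e.\ la permutation envoyant $i\mapsto i+1$ pour $1\leqslant i\leqslant k-1$ et $k\mapsto 1$, puis � d�nombrer ses inversions : celles-ci sont exactement les couples $(i,k)$ avec $1\leqslant i\leqslant k-1$, d'o� $l(\hat{w}_k) = k-1$.
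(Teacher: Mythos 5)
Votre preuve est correcte, mais votre route principale diffère de celle du texte. Le texte se contente d'observer, comme pour $\check{w}_k$ au Lemme \ref{lemm:longueur_wcheck_what}, que $\hat{w}_k$ est le cycle $(1\ 2\ \cdots\ k)$ de $S_r$ et d'invoquer le fait bien connu qu'un tel cycle est de longueur $k-1$ ; c'est exactement votre « alternative » finale, où vous explicitez en plus le d\'enombrement des inversions (les couples $(i,k)$, $1\leqslant i\leqslant k-1$), ce que le texte laisse implicite. Votre argument principal, la récurrence sur $k$ via la relation $\hat{w}_k=\hat{w}_{k-1}s_{k-1}$ et le Lemme \ref{lemm:l(ws_i)=l(w)+-1}, est également valide : le calcul $\hat{w}_{k-1}(k-1)=1<k=\hat{w}_{k-1}(k)$ est juste (avec la convention de composition $s_1\circ\cdots\circ s_{k-2}$ utilisée dans le texte), donc la longueur augmente bien de $1$ à chaque étape. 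La récurrence a l'avantage d'être autonome (elle ne repose que sur le Lemme \ref{lemm:l(ws_i)=l(w)+-1} déjà démontré, sans citer de fait « bien connu »), tandis que l'argument du texte est plus court et uniforme avec le traitement de $\check{w}_k$.
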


\begin{proof}
C'est évident pour $\hat{w}_1 = \id$. Pour $k\geq 2$, la permutation $\hat{w}_k$ est le cycle $(1\ 2\ \cdots \ k-1 \ k)$ de $S_r$. Sa longueur est donc $k-1$.
\end{proof}

Dans la section \ref{section:notations_et_paramétrage}, nous avons introduit le sous-groupe parabolique maximal $\hat{Q}$ de $GL_r(\C)$ suivant:
\[
\hat{Q} = \left(\begin{array}{c|ccc}
* & * & \ldots & * \\\hline
0 & * & \ldots & * \\
\vdots & \vdots & \ddots & \vdots \\
0 & * & \ldots & *
\end{array}\right).
\]
On notera $W_{\hat{Q}}$ le groupe de Weyl du sous-groupe de Levi de $\hat{Q}$ associé au tore maximal $T_r$, et $w_{\hat{Q}}$ le plus long élément de $W_{\hat{Q}}$.

\begin{lemm}
\label{lemm:éléments_les_plus_courts_de_W_mod_WQ}
L'ensemble $W_{\hat{Q}}\backslash W_r$ possède $r$ classes, et les $\hat{w}_k$, pour $k$ parcourant l'ensemble $\{1,\ldots,r\}$, sont les éléments les plus courts de chaque classe. Plus exactement, pour tout $w\in W_{\hat{Q}}$, on a $l(w\hat{w}_k) = l(w) + k-1$.
\end{lemm}

\begin{proof}
Le groupe de Weyl $W_r$ est le groupe de Weyl de $GL_{r}(\C)$, il est isomorphe au groupe symétrique $S_{r}$. Il a donc $r!$ éléments. Le groupe $W_{\hat{Q}}$ est le groupe de Weyl du Levi de $\hat{Q}$, il est donc isomorphe à $S_{r-1}$ et son cardinal est $(r-1)!$. L'ensemble $W_r/W_{\hat{Q}}$ a par conséquent $r$ éléments.

Il est clair que $\hat{w}_1 = \id$ est le plus court élément de sa classe. Montrons par récurrence sur $k$ que, pour tout $k\in\{1,\ldots,r-1\}$, on a $l(ws_1\ldots s_k) = l(w) + k$ pour tout $w\in W_{\hat{Q}}$.
%

Posons maintenant $w\in W_{\hat{Q}}$. On voit clairement que $W_{\hat{Q}}$ est engendré par les éléments $s_2,\ldots,s_{r-1}$. Donc $w(1)=1$. Or $ws_1\ldots s_{k-1}(k) = w(1) = 1$ et $ws_1\ldots s_{k-1}(k+1) = w(k+1) > 1$. D'après le Lemme \ref{lemm:l(ws_i)=l(w)+-1}, on en déduit que $l((ws_1\ldots s_{k-1})s_k) = l(ws_1\ldots s_{k-1})+1$ pour $k=2,\ldots,r-1$. De plus, comme $w(2)>1$, on a également $l(ws_1) = l(w) +1$. Une récurrence évidente nous donne le résultat escompté, c'est-à-dire $l(ws_1\ldots s_k) = l(w)+k$ pour tout $k\in\{1,\ldots,r-1\}$.

On en déduit que $\hat{w}_k$ est le plus petit élément de sa classe modulo $W_{\hat{Q}}$, puisque pour tout $w\in W_{\hat{Q}}$, $l(w\hat{w}_k) \geqslant l(w)$. De plus, pour tout $0\leq i<j\leq r-1$, nous avons $l(\hat{w}_i) = i-1 < j-1 = l(\hat{w}_j) \leqslant l(w\hat{w}_j)$ pour tout $w\in W_{\hat{Q}}$. Donc $\hat{w}_i$ et $\hat{w}_j$ sont dans la même classe modulo $W_{\hat{Q}}$ si et seulement si $\hat{w}_i = \hat{w}_j$, c'est-à-dire $i=j$.
\end{proof}

Le plus long élément de $W_{\hat{Q}}\hat{w}_k$ est $w_{\hat{Q}}\hat{w}_k$, car $w_{\hat{Q}}$ est le plus long élément de $W_{\hat{Q}}$. Ceci est clair d'après l'égalité $l(w\hat{w}_k) = l(w)+(k-1)$ obtenue dans la démonstation du lemme ci-dessus. De plus, toujours grâce à cette égalité, nous voyons que
\[
l(w_{\hat{Q}}\hat{w}_{r}) = l(w_{\hat{Q}}s_1\ldots s_{r-1}) = l(w_{\hat{Q}}) + r-1 = l(w_0).
\]
La dernière égalité vient du fait que $l(w_0)$ (resp. $l(w_{\hat{Q}})$) est égal au cardinal de l'ensemble $I(w_0) = \{(i,j); \ 1\leq i<j \leqslant n\}$ (resp. au cardinal de l'ensemble $I(w_{\hat{Q}}) = \{(i,j); \ 2\leq i<j \leqslant n\}$). Par unicité de l'élément le plus long dans $W_r$, on a $w_{\hat{Q}}s_1\ldots s_{r-1} = w_0$ et, par conséquent, $w_0w_{\hat{Q}} = s_{r-1}\ldots s_1$.

\begin{lemm}
\label{lemm:relation_wtilde_wchapeau}
Pour tout $k\in\{1,\ldots,r\}$, on a $w_0w_{\hat{Q}}\hat{w}_k = \check{w}_k$.
\end{lemm}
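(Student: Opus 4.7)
Le plan est d'�tablir l'identit� par r�currence sur $k$, en s'appuyant sur deux ingr�dients d�j� disponibles : l'�galit� $w_0w_{\hat{Q}} = s_{r-1}s_{r-2}\cdots s_1$, �tablie juste avant l'�nonc�, et la relation imm�diate $\hat{w}_{k+1} = \hat{w}_k s_k$, cons�quence directe de la d�finition $\hat{w}_k = s_1\cdots s_{k-1}$.

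Pour l'initialisation ($k=1$), comme $\hat{w}_1 = \id$, il suffit de constater que
\[
w_0w_{\hat{Q}}\hat{w}_1 = w_0w_{\hat{Q}} = s_{r-1}s_{r-2}\cdots s_1 = \check{w}_1.
\]
Pour l'h�r�dit�, supposons que $w_0w_{\hat{Q}}\hat{w}_k = s_{r-1}\cdots s_{k+1}s_k$ pour un entier $k$ dans $\{1,\ldots,r-1\}$. En multipliant � droite par $s_k$ et en utilisant la relation $s_k^2 = \id$, on obtient
\[
w_0w_{\hat{Q}}\hat{w}_{k+1} = \bigl(w_0w_{\hat{Q}}\hat{w}_k\bigr)s_k = s_{r-1}\cdots s_{k+1}s_k s_k = s_{r-1}\cdots s_{k+1} = \check{w}_{k+1},
\]
ce qui �tablit l'h�r�dit� jusqu'� $k=r-1$. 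Le cas extr�me $k=r$ s'obtient par une derni�re it�ration de ce m�canisme : on trouve $w_0w_{\hat{Q}}\hat{w}_r = \check{w}_{r-1}s_{r-1} = s_{r-1}s_{r-1} = \id$, en accord avec la convention $\check{w}_r = \id$.

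Aucune difficult� substantielle n'est � pr�voir : toute la combinatoire d�licate est d�j� encapsul�e dans les lemmes pr�c�dents, et notamment dans l'identification de $w_0w_{\hat{Q}}$ au mot r�duit $s_{r-1}\cdots s_1$ (obtenue par unicit� de l'�l�ment le plus long de $W_r$). La preuve se r�duit alors � un t�lescopage �l�mentaire utilisant l'involutivit� des sym�tries simples $s_i$, sans n�cessiter aucune manipulation de permutations explicites. Le seul point � surveiller est la coh�rence des conventions de composition entre les �critures choisies pour $\hat{w}_k$ (multiplication progressive � droite par $s_{k-1}$) et pour $\check{w}_k$ (comme mot d�croissant en les indices des transpositions simples), qui se lisent directement sur les d�finitions.
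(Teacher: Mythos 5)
Votre preuve est correcte et repose exactement sur le m\^eme m\'ecanisme que celle du texte : l'identit\'e $w_0w_{\hat{Q}} = s_{r-1}\cdots s_1$ \'etablie juste avant, puis l'annulation t\'elescopique des $s_is_i=\id$ dans le produit $s_{r-1}\cdots s_1\, s_1\cdots s_{k-1}$, que vous pr\'esentez simplement sous forme de r\'ecurrence plut\^ot que de calcul direct. Les cas extr\^emes $k=1$ et $k=r$ sont trait\'es de fa\c{c}on \'equivalente dans les deux versions.
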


\begin{proof}
Nous venons de montrer que $w_0w_{\hat{Q}} = s_{r-1}\ldots s_1$. Il suffit alors de voir que
\[
w_0w_{\hat{Q}}\hat{w}_k = w_0w_{\hat{Q}}s_1\ldots s_{k-1} = s_{r-1}\ldots s_1 s_1\ldots s_{k-1} = s_{r-1}\ldots s_k = \check{w}_k,
\]
quel que soit $k\in\{2,\ldots,r-1\}$. De plus, $w_0w_{\hat{Q}}\hat{w}_1 = w_0w_{\hat{Q}} = s_{r-1}\ldots s_1 = \check{w}_1$, et enfin  $w_0w_{\hat{Q}}\hat{w}_{r} = \hat{w}_{r}^{-1}\hat{w}_{r} = \id = \check{w}_{r}$.
\end{proof}

Le lemme suivant découle du Lemme \ref{lemm:éléments_les_plus_courts_de_W_mod_WQ}. Il est utilisé dans le calcul du cas $SU(n,1)$. Ci-dessous, l'élément $\hat{w}_k^{-1}$ est égal à $\hat{w}_k^{-1}=s_{k-1}\ldots s_1$, pour $k=2,\ldots,r$, et $\hat{w}_1^{-1} = \id$.

\begin{lemm}
\label{lemm:éléments_les_plus_courts_de_W/WQ}
L'ensemble $W_r/W_{\hat{Q}}$ possède $r$ classes, et les $\hat{w}_k^{-1}$, pour $k$ parcourant l'ensemble $\{1,\ldots,r\}$, sont les éléments les plus courts de chaque classe. Plus exactement, on a $l(\hat{w}_k^{-1}w) = l(w) + k$, pour tout $w\in W_{\hat{Q}}$. De plus, pour tout $k=1,\ldots,r$, on a $w_0\hat{w}_k^{-1}w_{\hat{Q}} = \hat{w}_{r-k+1}^{-1}$. En particulier, on retrouve $w_0w_{\hat{Q}} = \hat{w}_r^{-1} = s_{r-1}\ldots s_1$.
\end{lemm}

\begin{proof}
Ce résultat découlera directement du Lemme \ref{lemm:éléments_les_plus_courts_de_W_mod_WQ}, en remarquant les deux faits suivants. Tout d'abord, $ W_{\hat{Q}}w =  W_{\hat{Q}}w'$ si et seulement si $w^{-1} W_{\hat{Q}} = w'^{-1}W_{\hat{Q}}$. Ensuite, pour tout $w\in W_r$, on a $l(w^{-1}) = l(w)$.

Le premier est évident, car $W_{\hat{Q}}$ est un groupe. Pour le second, il suffit de voir que si $s_{i_1}\ldots s_{i_k}$ est un décomposition réduite de $w\in W_r$, alors $w^{-1}=s_{i_k}\ldots s_{i_1}$ et donc $l(w)\leqslant l(w^{-1})$. De même, on a $l(w^{-1})\leqslant l(w)$ en inversant les rôles de $w$ et $w^{-1}$, d'où l'égalité $l(w^{-1}) = l(w)$. Ceci est vrai de manière générale pour les groupes de Coxeter.

Prenons maintenant $w\in W_{\hat{Q}}$ et $k\in\{1,\ldots,r-1\}$. Nous avons donc 
\[
l(s_k\ldots s_1w) = l(w^{-1}s_1\ldots s_k) = l(w^{-1}) + k = l(w) + k,
\]
la deuxième égalité provenant du Lemme \ref{lemm:éléments_les_plus_courts_de_W_mod_WQ}, avec $w^{-1}\in W_{\hat{Q}}$. Et comme $\hat{w}_1^{-1} = \id$, les $\hat{w}_k^{-1}$, pour $k=1,\ldots,r$, sont les éléments les plus courts de chaque classe.

Montrons la dernière assertion. Soit $k\in\{1,\ldots,r\}$. Nous regardons ici les classes à droite modulo $ W_{\hat{Q}}$, donc la multiplication à gauche par $w_0$ dans $W_r$ passe au quotient $W_r/W_{\hat{Q}}$, c'est-à-dire $w_0wW_{\hat{Q}}$ est encore une classe et, du fait que $l(w_0\hat{w}_k^{-1}w) = l(w_0) - l(\hat{w}_k^{-1}w) \leqslant l(w_0) - l(\hat{w}_k^{-1}) = l(w_0\hat{w}_k^{-1})$ pour tout $w\in W_{\hat{Q}}$, $w_0\hat{w}_k^{-1}$ sera le plus grand élément de sa classe dans $W_r/W_{\hat{Q}}$. Par conséquent, $w_0\hat{w}_k^{-1}w_{\hat{Q}}$ sera le plus petit élément de sa classe, donc $w_0\hat{w}_k^{-1}w_{\hat{Q}} = \hat{w}_{k'}^{-1}$ pour un certain $k'\in\{1,\ldots,n\}$. La longueur de cet élément doit vérifier l'équation suivante :
\begin{align*}
k'-1 & = l(\hat{w}_{k'}^{-1}) = l(w_0\hat{w}_k^{-1}w_{\hat{Q}}) = l(w_0) - l(\hat{w}_k^{-1}w_{\hat{Q}}) \\
& = (r-1) - (k-1).
\end{align*}
Finalement $k' = r-k+1$. Donc $w_0\hat{w}_k^{-1}w_{\hat{Q}} = \hat{w}_{r-k+1}^{-1}$.
\end{proof}

Les lemmes suivants vont nous permettre de calculer des classes de cohomologies comme produits cup de classes de degré $2$.


\begin{lemm}
\label{lemm:wtilde_plus2}
Si $1\leq i\leq k<j\leq r$ et $(i,j)\neq(k-1,k+1)$, alors on a $l(\check{w}_kt_{i,j}) \geqslant l(\check{w}_k)+2$. De plus, 
$\check{w}_kt_{k-1,k+1} = \check{w}_{k+1}s_{k-1}s_k = s_{r-1}\ldots s_{k+1}s_{k-1}s_k$ est de longueur $l(\check{w}_k t_{k-1,k+1}) = l(\check{w}_k)+1$.
\end{lemm}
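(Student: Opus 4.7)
The strategy is to work directly with the one-line notation of the permutation $\check{w}_k$ and count inversions, using that $\ell(w)$ coincides with $\#I(w)$ (Lemma~\ref{lemm:l(ws_i)=l(w)+-1} and its proof). Recall that $\check{w}_k$ fixes $1,\ldots,k-1$, sends $k\mapsto r$, and sends $k+j\mapsto k+j-1$ for $1\leq j\leq r-k$, so $\ell(\check{w}_k)=r-k$ by Lemma~\ref{lemm:longueur_wcheck_what}. Multiplying on the right by $t_{i,j}$ swaps the images of positions $i$ and $j$, so the one-line form of $\check{w}_k t_{i,j}$ can be written down immediately, and the count of inversions is transparent.

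First I would handle the special case $(i,j)=(k-1,k+1)$. Using the identity $t_{k-1,k+1}=s_{k-1}s_ks_{k-1}$ and the braid relation $s_ks_{k-1}s_ks_{k-1}=s_{k-1}s_k$ in $S_r$, I deduce
\[
\check{w}_k\, t_{k-1,k+1} \;=\; s_{r-1}\cdots s_{k+1}s_ks_{k-1}s_ks_{k-1} \;=\; s_{r-1}\cdots s_{k+1}s_{k-1}s_k \;=\; \check{w}_{k+1}s_{k-1}s_k,
\]
which gives the first asserted equality. To show that this word of length $r-k+1$ is reduced, I compute its one-line image: it fixes $1,\ldots,k-2$, sends $k-1\mapsto k$, $k\mapsto r$, $k+1\mapsto k-1$, and $k+j\mapsto k+j-1$ for $j\geq 2$. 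A direct enumeration then yields exactly three inversions involving the block $\{k-1,k,k+1\}$ (namely $(k-1,k+1)$, $(k,k+1)$, plus the $r-k-1$ inversions $(k,k+j)$ for $j\geq 2$), giving a total of $1+1+(r-k-1)=r-k+1$ inversions, which is the claimed length.

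For the main inequality I would do a case analysis on the position of $(i,j)$ with respect to $k$, exploiting that $t_{i,j}$ is an odd permutation, so $\ell(\check{w}_kt_{i,j})-\ell(\check{w}_k)$ is odd: in particular any bound $\geq \ell(\check{w}_k)+2$ is really a bound $\geq \ell(\check{w}_k)+3$. Writing $\check{w}_kt_{i,j}$ in one-line notation by simply swapping entries $i$ and $j$ of the list $(1,\dots,k-1,r,k,k+1,\dots,r-1)$, I would count how many new inversions appear at positions $i$ and $j$ relative to the original $r-k$ inversions (all of which involve position $k$). The typical case $i<k<j$ contributes: inversions of position $i$ against positions in the descending segment $\{k+1,\ldots,j\}$, plus inversions of position $j$ against the interval $\{k,\ldots,j-1\}$, and one checks that the count is always at least $r-k+3$, with the sole defect occurring precisely when $i=k-1$ and $j=k+1$ (where the two contributions cancel down to a single extra inversion, explaining the exception). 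I expect this case bookkeeping, especially at the boundaries $i=k-1$ with $j>k+1$ and $i<k-1$ with $j=k+1$, to be the main obstacle; the cleanest way will be to isolate the contribution
\[
N(i,j)\;=\;\#\{k<b<j\colon \check{w}_k(b)<i\}+\#\{i<a<k\colon a<\check{w}_k(j)\}+\varepsilon(i,j),
\]
(with $\varepsilon(i,j)$ a small correction from the swapped entries themselves) and verify $N(i,j)\geq 3$ outside the exceptional pair.
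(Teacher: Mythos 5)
Your approach is essentially the paper's: both arguments compute lengths by counting inversions of $\check{w}_kt_{i,j}$ in one-line notation, and your treatment of the exceptional pair is complete and correct — the paper gets the same identity by writing $t_{k-1,k+1}=s_ks_{k-1}s_k$, so that $\check{w}_kt_{k-1,k+1}$ telescopes to $\check{w}_{k+1}s_{k-1}s_k$, and then applies Lemma~\ref{lemm:l(ws_i)=l(w)+-1} twice instead of your explicit inversion count.

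For the main inequality the plan is sound but two details must be repaired before it closes. First, the first term of your $N(i,j)$ has the inequality reversed: for $k<b<j$ one has $\check{w}_k(b)=b-1\geqslant k>i$, so $\{k<b<j:\ \check{w}_k(b)<i\}$ is empty; the pairs $(b,j)$ are inversions precisely because $\check{w}_k(b)>i$. Second, the two contributions named in your ``typical case'' sentence (position $i$ against $\{k+1,\ldots,j\}$, position $j$ against $\{k,\ldots,j-1\}$) yield only $2(j-k)-1$ \emph{new} inversions once the pair $(k,j)$, already among the persisting $r-k$ inversions through position $k$, is not double counted; when $j=k+1$ and $i<k-1$ this is just $1$, so you must also include the $2(k-1-i)$ inversions $(i,a)$ and $(a,j)$ for $i<a<k$. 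With these the count is exact: $l(\check{w}_kt_{i,j})=(r-k)+2(k-1-i)+2(j-k-1)+1$, which is $\geqslant l(\check{w}_k)+3$ except at $(i,j)=(k-1,k+1)$, confirming both the lemma and your parity remark. The paper avoids this bookkeeping by exhibiting a single extra witness inversion besides $(i,j)$ and the pairs $(k,l)$, namely $(k-1,j)$ when $i\leqslant k-2$ and $(i,k+1)$ when $j\geqslant k+2$, which is all the bound $+2$ requires. Finally, note that your plan, like the paper's proof, tacitly assumes $i<k$: for $i=k$ the conclusion fails by Lemma~\ref{lemm:wtilde_moins1}, so the hypothesis should be read as $1\leqslant i<k<j\leqslant r$.
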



\begin{proof}
Pour $l\geq k+1$, nous avons $\check{w}_kt_{i,j}(l) = \check{w}_k(l) = l-1 < r$ si $l\neq r$, et $\check{w}_kt_{i,j}(j) = \check{w}_k(i) = i < r$ également. De plus, $\check{w}_kt_{i,j}(k) = r$, d'où le couple $(k,l)$ appartient à l'ensemble $I(\check{w}_kt_{i,j})$ pour tout $l\in\{k+1,\ldots,r\}$. Nous pouvons voir également que $(i,j)\in I(\check{w}_kt_{i,j})$, car $\check{w}_kt_{i,j}(i) = \check{w}_k(j) = j-1\geq k$, tandis que $\check{w}_kt_{i,j}(j) = \check{w}_k(i) = i < k$.

Dans le cas où $(i,j)\neq(k-1,k+1)$, alors on a soit $i\leq k-2 < k-1$, et on peut regarder le couple $(k-1,j)$, qui est dans $I(\check{w}_kt_{i,j})$ car
\[
\check{w}_kt_{i,j}(k-1) = \check{w}_k(k-1) = k > i = \check{w}_kt_{i,j}(j),
\]
soit $k+1 < k+2 \leqslant j$, et ici on regarde le couple $(i,k+1)$, donnant
\[
\check{w}_kt_{i,j}(i) = \check{w}_k(j) = j-1 > k = \check{w}_k(k+1) = \check{w}_kt_{i,j}(k+1),
\]
et qui est donc dans $I(\check{w}_kt_{i,j})$ dans ce cas-là. Précisons que dans chacun de ces deux cas, les couples donnés ne sont pas apparus dans le paragraphe précédent.

Nous avons donc montré que, dans le cas où $(i,j)\neq (k-1,k+1)$, le cardinal de $I(\check{w}_kt_{i,j})$ est supérieur ou égal à $\sharp\{k+1,\ldots,r\}+2 = l(\check{w}_k) + 2$. On en conclut que $l(\check{w}_k t_{i,j}) \geqslant l(\check{w}_k) + 2$ si $(i,j)\neq (k-1,k+1)$.

Pour $i=k-1$ et $j=k+1$, nous avons $\check{w}_k t_{k-1,k+1} = (\check{w}_{k+1}s_k). (s_k s_{k-1} s_k) = \check{w}_{k+1}s_{k-1}s_k$. On peut facilement vérifier en utilisant deux fois successivement le Lemme \ref{lemm:l(ws_i)=l(w)+-1} que $l(\check{w}_{k+1}s_{k-1}s_k) = l(\check{w}_{k+1})+2 = l(\check{w}_k)+1$.
\end{proof}

\begin{lemm}
\label{lemm:wtilde_moins1}
Si $j\geq k+1$, alors $l(\check{w}_k t_{k,j}) \leqslant l(\check{w}_k) - 1$.
\end{lemm}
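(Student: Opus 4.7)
The plan is to compute $\check{w}_k t_{k,j}$ explicitly as a permutation using the cycle description $\check{w}_k = (r\ r-1\ \cdots\ k+1\ k)$ established in the proof of Lemma \ref{lemm:longueur_wcheck_what}, and then count inversions directly, in the spirit of the proof of Lemma \ref{lemm:wtilde_plus2}. Recall that this cycle description translates to $\check{w}_k(l)=l$ for $l<k$, $\check{w}_k(k)=r$, and $\check{w}_k(l)=l-1$ for $k+1\leqslant l\leqslant r$.

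First I would dispose of the boundary case $j=k+1$: here $t_{k,k+1}=s_k$, and since $\check{w}_k(k)=r>k=\check{w}_k(k+1)$, Lemma \ref{lemm:l(ws_i)=l(w)+-1} immediately gives $l(\check{w}_k s_k)=l(\check{w}_k)-1$.

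For $j\geqslant k+2$, I would describe the permutation $\check{w}_k t_{k,j}$ by the following table: it fixes every $l<k$; it sends $k$ to $\check{w}_k(j)=j-1$; it sends $j$ to $\check{w}_k(k)=r$; and for $l\in\{k+1,\ldots,r\}\setminus\{j\}$ it sends $l$ to $l-1$. From this I would identify all inversions $(a,b)$ with $a<b$ satisfying $\check{w}_kt_{k,j}(a)>\check{w}_kt_{k,j}(b)$. The only contributions come from pairs $(k,b)$ with $k<b<j$ (giving $j-1>b-1$, hence $j-k-1$ inversions) and pairs $(j,b)$ with $j<b\leqslant r$ (giving $r>b-1$, hence $r-j$ inversions); a case-by-case verification rules out any other inversion.

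Summing gives $l(\check{w}_k t_{k,j})=(j-k-1)+(r-j)=r-k-1=l(\check{w}_k)-1$, which is in fact sharper than the stated inequality. There is no real obstacle beyond careful bookkeeping of the cases $a,b$ relative to $k$ and $j$; the argument is a direct transcription of the cycle structure of $\check{w}_k$, and it parallels exactly the style used in Lemma \ref{lemm:wtilde_plus2}.
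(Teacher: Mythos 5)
Your proof is correct, but it takes a genuinely different route from the paper's. The paper's argument is a word computation: it expands $t_{k,j}=s_ks_{k+1}\cdots s_{j-2}s_{j-1}s_{j-2}\cdots s_{k+1}s_k$, cancels against $\check{w}_k=s_{r-1}\cdots s_k$, and reads off the explicit expression $\check{w}_kt_{k,j}=s_{r-1}\cdots s_j\,s_{j-2}\cdots s_k$, a product of $r-k-1$ simple transpositions; since any such expression bounds the length from above, the inequality is immediate with no case analysis. You instead write $\check{w}_kt_{k,j}$ in one-line form (fixing $l<k$, sending $k\mapsto j-1$, $j\mapsto r$, and $l\mapsto l-1$ otherwise) and count inversions exactly: your bookkeeping is right — the only inversions are the pairs $(k,b)$ with $k<b<j$ and $(j,b)$ with $j<b\leqslant r$ — giving $(j-k-1)+(r-j)=r-k-1$, i.e.\ equality $l(\check{w}_kt_{k,j})=l(\check{w}_k)-1$, which is sharper than the stated bound and incidentally shows that the paper's word above is reduced. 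Your separate treatment of $j=k+1$ via Lemma \ref{lemm:l(ws_i)=l(w)+-1} is consistent with the general count (which in fact also covers that case). The trade-off: the paper's cancellation is shorter and avoids case-checking, while your inversion count matches the style of the neighbouring Lemma \ref{lemm:wtilde_plus2} and yields the exact length rather than only an upper bound.
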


\begin{proof}
En effet, on peut aisément vérifier que dans $W_r$, l'élément $s_{\alpha_{k,j}}$ peut se décomposer de la manière suivante :
\[
t_{k,j} = s_k s_{k+1}\ldots s_{j-2} s_{j-1} s_{j-2}\ldots s_{k+1}s_{k},
\]
car $(k,j) = s_ks_{k-1}\ldots s_{j-2}(j-1,j)$. On compose ceci avec $\check{w}_k$ à gauche, ce qui nous donne, en utilisant la définition de $\check{w}_k$,
\[
\check{w}_k t_{k,j} = s_{r-1}\ldots s_k s_k s_{k+1}\ldots s_{j-2} s_{j-1} s_{j-2}\ldots s_{k+1}s_{k} = s_{r-1} \ldots s_j s_{j-2} s_{j-3}\ldots s_k.
\]
Il s'agit d'une décomposition de $\check{w}_k t_{k,j}$ en produit de transpositions simples, et cette décomposition est formée de $l(\check{w}_k) - 1$ facteurs. D'où le résultat. 
\end{proof}

\begin{lemm}
\label{lemm:wtilde_plus2_jégalk}
Soit $k\in\{2,\ldots,r-1\}$. Si $i<k-1$, alors $l(\check{w}_k t_{i,k}) \geqslant l(\check{w}_k) + 2$. Pour $i=k-1$, on a $\check{w}_k t_{k-1,k} = \check{w}_{k-1}$ et $l(\check{w}_k t_{k-1,k}) = l(\check{w}_k) + 1$
\end{lemm}

\begin{proof}
Comme dans les lemmes précédents, nous allons montrer que l'élément $\check{w}_k t_{i,k}$ envoie au moins $l(\check{w}_k) + 2$ racines positives sur des racines négatives.

Pour $l > k$, on a $\check{w}_k t_{i,k}(l) = \check{w}_k(l)=l-1$. Donc $\check{w}_k t_{i,k}(i) = \check{w}_k(k) = r > l-1 = \check{w}_k t_{i,k}(l)$. On en déduit que, pour tout $l\in\{k+1,\ldots,r\}$, le couple $(i,l)$ appartient à l'ensemble $I(\check{w}_k t_{i,k})$. Ceci nous donne déjà $l(\check{w}_k)$ couples dans $I(\check{w}_k t_{i,k})$.

Il reste deux autres couples à trouver. Les deux candidats sont les couples $(i,k)$ et $(k-1,k)$, qui ne font pas partie des couples donnés dans le paragraphe précédent. Tout d'abord, on a
\[
\check{w}_k t_{i,k}(k) = \check{w}_k(i) = i < r = \check{w}_k t_{i,k}(i),
\]
donc $(k,i)\in I(\check{w}_k t_{i,k})$. De plus,
\[
\check{w}_k t_{i,k}(k-1) = \check{w}_k(k-1) = k-1 > i = \check{w}_k t_{i,k}(k),
\]
d'où $(k-1,k)\in I(\check{w}_k t_{i,k})$. Par conséquent, la longueur de $\check{w}_k t_{i,k}$ est supérieure ou égale à $l(\check{w}_k) + 2$.

La deuxième assertion est triviale, puisque l'on a $t_{k-1,k} = s_{k-1}$ ainsi que $\check{w}_k t_{k-1,k} = s_{r-1}\ldots s_{k+1}s_k s_{k-1} =\check{w}_{k-1}$. Le Lemme \ref{lemm:longueur_wcheck_what} permet de conclure.
\end{proof}

%
%

Ce dernier lemme intervient dans la preuve du Lemme \ref{lemm:formule_puissancede_sigma_s_1}.

\begin{lemm}
\label{lemm:wtilde-1_plus1_jégalk+2}
Soit $k\in\{1,\ldots,n-2\}$. Alors, pour tout $j\in\{3,\ldots,n\}$, on a $l(s_k\ldots s_1 t_{1,j}) = l(s_k\ldots s_1) + 1$ si et seulement si $j=k+2$, et alors $s_k\ldots s_1 t_{1,k+2} = s_{k+1}s_k\ldots s_1$.
\end{lemm}

\begin{proof}
Vérifions tout d'abord que $l(s_k\ldots s_1 t_{1,k+2}) = l(s_k\ldots s_1)+1$. Mais ceci est vrai, car $t_{1,k+2} = s_1\ldots s_{k}s_{k+1}s_k\ldots s_1$, et on voit aisément que $s_k\ldots s_1t_{1,k+2} = s_{k+1}\ldots s_1$ est de longueur $k+1 = l(s_k\ldots s_1)+1$.

Montrons maintenant la réciproque. Plusieurs cas sont à distinguer. Il y a tout d'abord un cas spécial, qui vérifie $l(s_k\ldots s_1 t_{1,j}) \leqslant l(s_k\ldots s_1)-1$, c'est le cas lorsque $2\leq j\leq k+1$. En effet, dans ce cas-là, on aura 
\[
s_k\ldots s_1t_{1,j} = (s_k\ldots s_j\ldots s_1)(s_1\ldots s_{j-2}s_{j-1}s_{j-2}\ldots s_1) = s_k\ldots s_{j}s_{j-2}\ldots s_1,
\]
lorsque $j\leq k$, ou $s_k\ldots s_1t_{1,k+1} = s_{k-1}\ldots s_1$ si $j=k+1$. Dans les deux cas, on aura toujours $l(s_k\ldots s_1t_{1,j}) \leqslant l(s_k\ldots s_1)-1$, puisque le nombre de réflexions apparaissant dans ces décompositions est inférieur ou égal à $k-l = l(s_k\ldots s_1)-1$.

Le deuxième cas que l'on regarde concerne les $j>k+2$. Le couple $j=k+2$ a en effet déjà été considéré au début de la preuve. Alors dans ce cas, en itérant le Lemme \ref{lemm:l(ws_i)=l(w)+-1} au terme de droite de l'égalité $s_k\ldots s_1t_{1,j} = s_{k+1}\ldots s_{j-2}s_{j-1}s_{j-2}\ldots s_1$, on voit que la longueur de $s_k\ldots s_1t_{1,j}$ est égale à $2j-k-3\geqslant j$, puisque $j\geqslant k+3$. D'où $l(s_k\ldots s_1t_{1,j})\geqslant k+3 > l(s_k\ldots s_1) + 1$.
\end{proof}

\section{La Formule de Chevalley}
\label{subsection:formule_Chevalley}

\subsection{\'Enoncé de la Formule de Chevalley}
\label{subsubsection:énoncé_formule_Chevalley}

On conserve les notations définies dans le Chapitre \ref{chap:PairesBienCouvrantes}. Nous supposerons ici que $\KC$ est semi-simple et simplement connexe.

L'énoncé de la Formule de Chevalley est l'objet du théorème ci-dessous. Cet énoncé est tiré de \cite{berenstein_sjamaar} (Théorème A.2.1). Voir aussi \cite{chevalley} et \cite{demazure} pour une preuve. Dans l'énoncé suivant, $\alpha^{\vee}$ désigne la coracine de $\alpha$ dans $\got{t}$. De plus, si $\alpha$ est simple, $\pi_{\alpha}$ désigne le poids fondamental associé à $\alpha$. On notera $\Theta : \wedge^*\rightarrow\mathrm{H}^2(\KC/B,\Z)$ le morphisme qui, à un poids $\mu$ du réseau des poids $\wedge^*$ de $\TC$, associe $\Theta(\mu) = c_1(\mathcal{L}_{\mu})$, la première classe de Chern du fibré en droite $\mathcal{L}_{\mu}$ avec poids $\mu$.

\begin{theo}[Formule de Chevalley]
\label{theo:Chevalley}
\begin{enumerate}
\item $\Theta$ est un isomorphisme,
\item $\Theta(\pi_{\alpha}) = \sigma_{s_{\alpha}}$ pour toute racine simple $\alpha$.
\item Pour tout poids $\mu$ de $\wedge^*$,
\begin{equation}
\label{eq:formule_de_Chevalley}
\Theta(\mu).\sigma^{B}_{w} = \sum_{\stackrel{\alpha\in\got{R}^+}{l(w s_{\alpha}) = l(w)+1}}\mu(\alpha^{\vee})\sigma^{B}_{ws_{\alpha}}.
\end{equation}
\end{enumerate}
\end{theo}

Soit $\hKC$ un autre groupe algébrique complexe semi-simple connexe et simplement connexe, et $f:\KC\rightarrow\hKC$ un morphisme de groupes algébriques de noyau fini. L'application $f$ passe au quotient en une immersion fermée $f^B:\KC/B\rightarrow \hKC/\hat{B}$, induisant une application en cohomologie $(f^B)^*:\coh{\hKC/\hat{B}}\rightarrow\coh{\KC/B}$. Par propriété de la première classe de Chern et par la Formule de Chevalley, nous obtenons une description complète de l'application $(f^B)^*$ restreinte au degré $2$. En effet, nous avons le diagramme commutatif
\[
\begin{CD}
   \hat{\wedge}^* @>f^*>> \wedge^* \\
   @V\hat{\Theta}VV @VV\Theta V\\
   \mathrm{H}^2(\hKC/\hat{B},\Z) @>(f^B)^*>> \mathrm{H}^2(\KC/B,\Z)
\end{CD}
\]
Par conséquent, en degré $2$, on obtient la matrice de $(f^B)^*$ dans la base de classes de Schubert, c'est tout simplement la matrice de $f^*$ relativement à la base des poids fondamentaux.

De plus, on considère l'algèbre graduée $\R[\got{t}]$ des fonctions polynômiales sur $\got{t}$. Chaque élément de $\got{t}^*$ est défini comme étant de degré $2$. Notons $J$ l'idéal engendré par les polynômes $W$-invariants de degré strictement positif. Nous avons l'énoncé très connu suivant (cf. \cite{BGG}, \cite{berenstein_sjamaar}).

\begin{theo}[Borel]
\label{theo:Borel_cohomologie}
L'application $\Theta$ s'étend à un morphisme surjectif d'algèbres graduées $\R[\got{t}]\rightarrow \mathrm{H}^*(\KC/B,\R)$, dont le noyau est égal à $J$.
\end{theo}

Ce résultat est valable aussi pour $\hKC$. Nous avons $f^*(\hat{J}) \subset J$, où $\hat{J}$ est l'idéal engendré par les polynômes $\hat{W}$-invariants de degré strictement positif dans $\R[\hat{\got{t}}]$. Par conséquent, $(f^B)^*$ est complètement déterminée par les valeurs de $f^*$ sur $\hat{\wedge}^*$, du moins pour la cohomologie à coefficients dans $\R$.

\bigskip

Dans le cas où $\KC$ n'est plus semi-simple mais seulement réductif, nous pouvons toujours utiliser la formule ci-dessus, du fait que nous avons $\KC/B = [\KC,\KC]/(B\cap [\KC,\KC])$. De plus, $f$ envoie $[\KC,\KC]$ sur $[\hKC,\hKC]$, donc $f^B$ peut être aussi vu comme le plongement injectif induit par le morphisme de groupes $[\KC,\KC]\rightarrow[\hKC,\hKC]$.

\begin{rema}
\label{rema:KC_simplementconnexe}
Lorsque le groupe $\KC$ est semi-simple mais n'est pas simplement connexe, le Théorème \ref{theo:Chevalley} n'est plus vrai. Cependant, on peut tout de même déterminer le morphisme $(f^B)^*$. En effet, d'après \cite[Proposition 7]{chevalley}, il existe un groupe semi-simple simplement connexe $\tilde{K}_{\C}$ du même type que $\KC$ et une isogénie $i$ de $\tKC$ dans $\KC$ (c'est-à-dire $i:\tKC\rightarrow\KC$ est un morphisme surjectif de noyau fini) telle qu'il existe un sous-groupe de Borel $\tilde{B}$ de $\tKC$ vérifiant $i(\tilde{B})=B$, et que $i$ induise par passage au quotient un isomorphisme $\bar{i}$ de la variété $\tKC/\tilde{B}$ sur $\KC/B$. De plus, cet isomorphisme $\bar{i}$ envoie les variétés de Schubert de $\tKC/\tilde{B}$ sur les variétés de Schubert de $\KC/B$.

Posons $g := f\circ i : \tKC\rightarrow \hKC$, morphisme de groupes de noyau fini. On peut facilement voir que $g$ donne par passage au quotient une application $g^{\tilde{B}}:\tKC/\tilde{B}\rightarrow\hKC/\hat{B}$ qui est tout simplement $g^{\tilde{B}} = f^B\circ \bar{i}$. Comme $\tKC$ et $\hKC$ sont simplement connexes, on peut appliquer la méthode précédente pour déterminer $(g^{\tilde{B}})^*$, et donc déterminer $(f^B)^*$.
\end{rema}

\subsection{Calcul de $(f_{\lambda}^B)^*$}
\label{subsection:calcul_flambdaB*}

Soit $\KC$ un groupe algébrique affine réductif complexe. D'après la Remarque \ref{rema:KC_simplementconnexe}, on peut supposer $\KC$ simplement connexe. Comme il a été vu dans le paragraphe \ref{subsubsection:énoncé_formule_Chevalley}, le calcul de l'image par $(f_{\lambda}^B)^*$ de la classe $\sigma_w$, pour un élément $w$ du groupe de Weyl de $[\hKC,\hKC]$ (qui s'identifie au groupe de Weyl de $\hKC$), revient à connaître les images par $f^*$ des poids fondamentaux de $\hKC$, ainsi que la décomposition de $\sigma_w$ en sommes et produits de termes de degré $2$. C'est cette deuxième opération qui est difficile à calculer en général, même si nous avons une manière effective de le réaliser pour chaque groupe.

Cependant, ici, nous travaillons avec le groupe $GL_r(\C)$, ou plus exactement sur sa partie semi-simple $SL_r(\C)$ qui est simplement connexe, ce qui diminue la difficulté. De plus, nous ne nous intéressons pas au calcul de tous les $\sigma_w$. Ceux qui nous intéressent, les éléments $\check{w}_k$, ont une décomposition finalement très simple.

\begin{lemm}
\label{lemm:produitcup_sk_wtildek}
Pour tout $k=2,\ldots,r-1$, on a $\sigma_{s_k}^{B_r}.\sigma_{\check{w}_k}^{B_r} = \sigma^{B_r}_{s_{r-1}\ldots s_{k+2}s_{k+1}s_{k-1}s_k}$. De plus, $\sigma_{s_1}^{B_r}.\sigma_{\check{w}_1}^{B_r} = 0$, et $\sigma_{s_k}^{B_r}.\sigma_{\check{w}_{r}}^{B_r} = \sigma_{s_k}^{B_r}$.
\end{lemm}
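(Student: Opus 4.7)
The plan is to apply Chevalley's formula (Th\'eor\`eme \ref{theo:Chevalley}(iii)) with $\mu = \pi_{\alpha_{k,k+1}}$, since Th\'eor\`eme \ref{theo:Chevalley}(ii) gives $\sigma_{s_k}^{B_r} = \Theta(\pi_{\alpha_{k,k+1}})$. For a positive root $\alpha_{i,j}$ with $i<j$, the coroot is $\alpha_{i,j}^\vee = e_i - e_j$, and a direct computation shows that $\pi_{\alpha_{k,k+1}}(\alpha_{i,j}^\vee)$ equals $1$ when $i \leq k < j$ and $0$ otherwise. Consequently, Chevalley's formula reduces the product $\sigma_{s_k}^{B_r}\,.\,\sigma_{\check{w}_k}^{B_r}$ to a sum indexed by those transpositions $t_{i,j}$ with $1 \leq i \leq k < j \leq r$ for which $l(\check{w}_k t_{i,j}) = l(\check{w}_k) + 1$, each with coefficient $1$.

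The next step is to identify exactly which pairs $(i,j)$ satisfy this length condition. First I would split according to whether $i < k$ or $i = k$. For $i < k$, Lemme \ref{lemm:wtilde_plus2} states that $l(\check{w}_k t_{i,j}) \geq l(\check{w}_k) + 2$ unless $(i,j) = (k-1, k+1)$; in the exceptional case, the same lemma gives $l(\check{w}_k t_{k-1,k+1}) = l(\check{w}_k) + 1$ together with the explicit reduced expression $\check{w}_k t_{k-1,k+1} = s_{r-1}\ldots s_{k+1} s_{k-1} s_k$. For $i = k$ (with $j > k$), Lemme \ref{lemm:wtilde_moins1} gives $l(\check{w}_k t_{k,j}) \leq l(\check{w}_k) - 1$, so these pairs cannot contribute either. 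Thus, for $k \in \{2, \ldots, r-1\}$, the sum reduces to a single term, yielding
\[
\sigma_{s_k}^{B_r}\,.\,\sigma_{\check{w}_k}^{B_r} = \sigma_{s_{r-1}\ldots s_{k+2}s_{k+1}s_{k-1}s_k}^{B_r},
\]
as claimed.

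The two boundary cases are handled separately and are easier. For $k = 1$, the condition $i \leq k-1$ becomes $i \leq 0$ and is vacuous, so no pair with $i < k$ can contribute; and Lemme \ref{lemm:wtilde_moins1} (with $k = 1$) eliminates all pairs with $i = 1$. The Chevalley sum is therefore empty, giving $\sigma_{s_1}^{B_r}\,.\,\sigma_{\check{w}_1}^{B_r} = 0$. For $k = r$, one has $\check{w}_r = \id$, so $\sigma_{\check{w}_r}^{B_r}$ is the unit class $\sigma_{\id}^{B_r} = 1$ in cohomology, and the product $\sigma_{s_k}^{B_r}\,.\,\sigma_{\check{w}_r}^{B_r} = \sigma_{s_k}^{B_r}$ is immediate.

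There is no serious obstacle here: the entire argument is a direct application of Chevalley's formula, the bulk of the combinatorial work having already been carried out in Lemmes \ref{lemm:wtilde_plus2}, \ref{lemm:wtilde_moins1}, and \ref{lemm:wtilde_plus2_j�galk}. The only point requiring some care is verifying that, among the pairs $(i,j)$ with $i \leq k < j$, those with $i = k$ and those with $i < k$ but $(i,j) \neq (k-1, k+1)$ both fail the length condition $l(\check{w}_k t_{i,j}) = l(\check{w}_k) + 1$, which is precisely the content of the cited length lemmas.
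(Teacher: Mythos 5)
Votre preuve est correcte et suit essentiellement la même démarche que celle du texte : application de la formule de Chevalley à $\mu=\pi_{\alpha_{k,k+1}}$ (avec $\pi_{\alpha_{k,k+1}}(\alpha_{i,j}^{\vee})=1$ si et seulement si $i\leqslant k<j$), puis réduction de la somme au seul couple $(k-1,k+1)$ via les Lemmes \ref{lemm:wtilde_plus2} et \ref{lemm:wtilde_moins1}, les cas $k=1$ et $\check{w}_r=\id$ étant traités de la même façon. Votre découpage explicite entre $i<k$ (Lemme \ref{lemm:wtilde_plus2}) et $i=k$ (Lemme \ref{lemm:wtilde_moins1}) est même un peu plus soigné que la rédaction du texte, mais ce n'est qu'une nuance de présentation (et la mention du Lemme \ref{lemm:wtilde_plus2_j�galk} est superflue ici, il ne sert que pour le lemme suivant).
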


\begin{proof}
Les poids fondamentaux de $\got{sl}_{r}$ sont bien connus. Si on note $\hat{e}_i^*$ la forme linéaire sur $\got{t}_r\cap \got{sl}_{r}$ définie par
\[
\hat{e}_i^*\left(\begin{array}{ccc}
h_1 & & \\
& \ddots & \\
& & h_{r}
\end{array}\right) = h_i,
\]
on aura $\pi_{\alpha_{k,k+1}} = \sum_{i=1}^{k}\hat{e}_i^*$. De plus, si $\alpha = \alpha_{i,j}$ est une racine quelconque, alors la coracine $\alpha_{i,j}^{\vee}$ est la matrice diagonale dont tous les éléments diagonaux sont nuls sauf le $i$-ième qui vaut $1$ et le $j$-ième qui vaut $-1$. Par conséquent, nous avons les valeurs suivantes, dans le cas où $\alpha_{i,j}$ est positive, c'est-à-dire $i<j$,
\[
\pi_{\alpha_{k,k+1}}(\alpha_{i,j}^{\vee}) = \left\{\begin{array}{l}
0 \mbox{ si } k<i, \\
1 \mbox{ si } i\leq k <j, \\
0 \mbox{ si } j\leq k. \\
\end{array}\right.
\]
On en déduit que
\begin{equation}
\label{eq:produitcup_lemme1}
\sigma^{B_r}_{s_k}.\sigma^{B_r}_{\check{w}_k} = \sum_{\stackrel{i\leq k< j}{l(\check{w}_k s_{\alpha_{i,j}}) = l(\check{w}_k)+1}}\sigma^{B_r}_{\check{w}_k s_{\alpha_{i,j}}},
\end{equation}
d'après la formule \eqref{eq:formule_de_Chevalley}. Les Lemmes \ref{lemm:wtilde_plus2} et \ref{lemm:wtilde_moins1} nous indiquent que le seul couple $(i,j)$ vérifiant $1\leq i\leq k<j\leq r$ et $l(\check{w}_k s_{\alpha_{i,j}}) = l(\check{w}_k)+1$ est le couple $(k-1,k+1)$. Dans le cas où $2\leq k\leq r-1$, l'équation \eqref{eq:produitcup_lemme1} et la formule $\check{w}_ks_{\alpha_{i,j}} = s_{r-1}\ldots s_{k+1}s_{k-1}s_k$, toujours du Lemme \ref{lemm:wtilde_plus2}, nous permettent de conclure la première assertion du lemme. Pour $k=1$, on a $k-1=0$ n'entre pas en compte, donc $\sigma_{s_1}^{B_r}.\sigma_{\check{w}_1}^{B_r} = 0$. Et la dernière assertion est évidente puisque $\check{w}_{r} = \id$.
\end{proof}

Nous avons un résultat similaire, en décalant d'un cran le $k$ du $\sigma_{s_k}$.

\begin{lemm}
\label{lemm:produitcup_skmoins1_wtildek}
Pour tout $k=2,\ldots,r-1$, on a $\sigma_{s_{k-1}}^{B_r}.\sigma_{\check{w}_k}^{B_r} = \sigma^{B_r}_{s_{r-1}\ldots s_{k+2}s_{k+1}s_{k-1}s_k} + \sigma_{\check{w}_{k-1}}^{B_r}$.
\end{lemm}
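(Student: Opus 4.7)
The plan is to adapt, essentially step by step, the argument used in Lemma~\ref{lemm:produitcup_sk_wtildek}, applying Chevalley's formula (Theorem~\ref{theo:Chevalley}~(\emph{iii})) to the fundamental weight $\pi_{\alpha_{k-1,k}} = \sum_{i=1}^{k-1}\hat{e}_i^*$ of $\got{sl}_r$. As in the previous lemma, one computes
\[
\pi_{\alpha_{k-1,k}}(\alpha_{i,j}^{\vee}) = \begin{cases} 1 & \text{if } i \leq k-1 < j, \\ 0 & \text{otherwise,}\end{cases}
\]
so that Chevalley's formula reduces to
\[
\sigma^{B_r}_{s_{k-1}}\,.\,\sigma^{B_r}_{\check{w}_k} = \sum_{\substack{i \leq k-1 < j \\ l(\check{w}_k s_{\alpha_{i,j}}) = l(\check{w}_k)+1}} \sigma^{B_r}_{\check{w}_k s_{\alpha_{i,j}}}.
\]

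The heart of the proof is then to identify precisely which pairs $(i,j)$ with $i \leq k-1 < j$ satisfy the length condition $l(\check{w}_k s_{\alpha_{i,j}}) = l(\check{w}_k)+1$. Since $i \leq k-1 < k$, the hypothesis $i \leq k$ in Lemma~\ref{lemm:wtilde_plus2} is automatic, so among pairs with $j > k$ the only one for which the length increases by exactly~$1$ is $(k-1,k+1)$, and in that case $\check{w}_k t_{k-1,k+1} = s_{r-1}\ldots s_{k+2}s_{k+1}s_{k-1}s_k$. The new subtlety, compared to Lemma~\ref{lemm:produitcup_sk_wtildek}, is that the index $j = k$ is now allowed; Lemma~\ref{lemm:wtilde_plus2_j�galk} handles exactly this case, yielding length~$\geq l(\check{w}_k)+2$ when $i < k-1$ and length $l(\check{w}_k)+1$ with $\check{w}_k t_{k-1,k} = \check{w}_{k-1}$ when $i = k-1$.

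Combining these two contributions gives the announced equality. The main obstacle is really just the bookkeeping on the pair $j=k$, but the groundwork in Lemma~\ref{lemm:wtilde_plus2_j�galk} makes this essentially immediate, so no serious new difficulty is expected beyond what has already been dealt with for Lemma~\ref{lemm:produitcup_sk_wtildek}.
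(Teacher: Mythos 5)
Your argument is correct and is essentially the proof given in the paper: both apply Chevalley's formula to $\pi_{\alpha_{k-1,k}}$, reduce to the pairs $(i,j)$ with $i\leqslant k-1<j$, and then use Lemme \ref{lemm:wtilde_plus2} to isolate $(k-1,k+1)$ among the pairs with $j>k$ and Lemme \ref{lemm:wtilde_plus2_j�galk} to isolate $(k-1,k)$, yielding the two terms $\sigma^{B_r}_{s_{r-1}\ldots s_{k+1}s_{k-1}s_k}$ and $\sigma^{B_r}_{\check{w}_{k-1}}$. No gap to report.
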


\begin{proof}
Par un raisonnement identique à la preuve du Lemme \ref{lemm:produitcup_sk_wtildek}, nous obtenons la formule suivante,
\begin{equation}
\label{eq:produitcup_lemme2}
\sigma^{B_r}_{s_{k-1}}.\sigma^{B_r}_{\check{w}_k} = \sum_{\stackrel{i\leq k-1< j}{l(\check{w}_k s_{\alpha_{i,j}}) = l(\check{w}_k)+1}}\sigma^{B_r}_{\check{w}_k s_{\alpha_{i,j}}}.
\end{equation}
De la même manière, le seul couple $(i,j)$ vérifiant $1\leq i< k<j\leq r$ et $l(\check{w}_k s_{\alpha_{i,j}}) = l(\check{w}_k)+1$ est le couple $(k-1,k+1)$, grâce au Lemme \ref{lemm:wtilde_plus2}. Puisque $2\leq k \leqslant r-1$, ce couple-là sera toujours présent. Il reste à voir les couples $(i,k)$. Le Lemme \ref{lemm:wtilde_plus2_jégalk} nous affirme que le seul $i\leq k-1$ qui vérifie $l(\check{w}_k s_{\alpha_{i,k}}) = l(\check{w}_k)+1$ est $i=k-1$. L'équation \eqref{eq:produitcup_lemme2} devient $\sigma_{s_{k-1}}^{B_r}.\sigma_{\check{w}_k}^{B_r} = \sigma^{B_r}_{s_{r-1}\ldots s_{k+2}s_{k+1}s_{k-1}s_k} + \sigma_{\check{w}_{k-1}}^{B_r}$, ce que nous voulions démontrer.
\end{proof}

\begin{theo}
\label{theo:formume_sigmawtildek}
Pour tout entier $k\in\{2,\ldots,r-1\}$, on a
\[
\sigma_{\check{w}_{k-1}}^{B_r} = (\sigma_{s_{k-1}}^{B_r}-\sigma_{s_k}^{B_r}).\sigma_{\check{w}_k}^{B_r}.
\]
On peut écrire $\sigma_{\check{w}_{k-1}}^{B_r}$ uniquement en terme des $\sigma_{s_i}^{B_r}$, pour $i=k,\ldots,r-1$, de la manière suivante,
\[
\sigma_{\check{w}_{k-1}}^{B_r} = (\sigma_{s_{k-1}}^{B_r}-\sigma_{s_k}^{B_r}).(\sigma_{s_k}^{B_r}-\sigma_{s_{k+1}}^{B_r}).\cdots.(\sigma_{s_{r-2}}^{B_r}-\sigma_{s_{r-1}}^{B_r}).\sigma_{s_{r-1}}^{B_r}.
\]
\end{theo}

\begin{proof}
Des formules obtenues dans les Lemmes \ref{lemm:produitcup_sk_wtildek} et \ref{lemm:produitcup_skmoins1_wtildek}, l'égalité $\sigma_{\check{w}_{k-1}}^{B_r} = (\sigma_{s_{k-1}}^{B_r}-\sigma_{s_k}^{B_r}).\sigma_{\check{w}_k}^{B_r}$ est claire. La deuxième assertion est une simple récurrence sur $k$, en utilisant le fait que $\check{w}_{r-1} = s_{r-1}$.
\end{proof}

Nous sommes maintenant en mesure de calculer les images par $(f_{\lambda}^B)^*$ des classes de cohomologie $\sigma_{\check{w}_{k-1}}^{B_r}\in\coh{GL_r(\C)/B_r}$, pour $k\in\{2,\ldots,r\}$. Nous regardons ici l'application
\[
\begin{array}{cccc}
f_{\lambda}: & \got{t}\cap[\got{k},\got{k}] & \longrightarrow & \got{t}_r\cap\got{sl}_r \\
& H & \longmapsto & \left(\begin{array}{ccc}
\beta_1(H) & & \\
& \ddots & \\
& & \beta_r(H)
\end{array}\right)
\end{array}
\]
Nous noterons dorénavant $\got{t}_{ss} = \got{t}\cap[\got{k},\got{k}]$. Il est évident que $f_{\lambda}^*(\hat{e}_i) = \beta_i|_{\got{t}_{ss}}$, par conséquent, $f_{\lambda}^*(\pi_{\alpha_{i,i+1}}) = \sum_{j=1}^i \beta_j|_{\got{t}_{ss}}$ et $f_{\lambda}^*(\pi_{\alpha_{i-1,i}}) - f_{\lambda}^*(\pi_{\alpha_{i,i+1}}) = -\beta_i|_{\got{t}_{ss}}$. En appliquant ceci à $\sigma_{\check{w}_{k-1}}^{B_r} = (\sigma_{s_{k-1}}^{B_r} - \sigma_{s_k}^{B_r}).\cdots.(\sigma_{s_{r-2}}^{B_r} - \sigma_{s_{r-1}}^{B_r}).\sigma_{s_{r-1}}^{B_r}$, pour $k\leq r-1$, nous obtenons l'égalité
\[
(f_{\lambda}^B)^*(\sigma_{\check{w}_{k-1}}^{B_r}) = \Theta\left(f_{\lambda}(\pi_{\alpha_{r-1,r}}).\prod_{i=k}^{r-1}\bigl(f_{\lambda}(\pi_{\alpha_{i-1,i}}) - f_{\lambda}(\pi_{\alpha_{i,i+1}})\bigr)\right),
\]
qui nous donne la formule
\begin{equation}
\label{eq:image_par_flambdatilde_de_sigmawtilde}
(f_{\lambda}^B)^*(\sigma_{\check{w}_{k-1}}^{B_r}) = \Theta\left((-\beta_k|_{\got{t}_{ss}}) \ldots (-\beta_{r-1}|_{\got{t}_{ss}}) (\sum_{i=1}^{r-1}\beta_i|_{\got{t}_{ss}})\right).
\end{equation}
Nous avons aussi évidemment
\begin{equation}
\label{eq:image_par_flambdatilde_de_sigmawtilde_rmoins1}
(f_{\lambda}^B)^*(\sigma_{\check{w}_{r-1}}^{B_r}) = \Theta\left(\sum_{i=1}^{r-1}\beta_i|_{\got{t}_{ss}}\right).
\end{equation}
Le lemme suivant va nous permettre de transformer la somme $\sum_{i=1}^{r-1}\beta_i|_{\got{t}_{ss}}$ apparaissant ci-dessus.

\begin{lemm}
Soit $M$ une représentation complexe de $\KC$ et $\delta=\sum_{\beta\in\WT(M)}\beta$ la somme de tous les poids de l'action de $\TC$ sur $M$. Alors $\delta|_{\got{t}_{ss}} = 0$.
\end{lemm}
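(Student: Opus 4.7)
The plan is to identify $\delta$ with the infinitesimal character of $M$ (its trace), and then to invoke the elementary fact that traces of Lie-algebra representations vanish on commutators.

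First, I would note that, by the very definition of the weight-space decomposition $M = \bigoplus_{\beta \in \WT(M)} M_\beta$, any $H \in \got{t}_{\C}$ acts on $M_\beta$ by the scalar $\beta(H)$. Taking traces one obtains
\[
\mathrm{tr}\bigl(d\rho(H)\bigr) = \sum_{\beta \in \WT(M)} (\dim_{\C} M_\beta)\, \beta(H),
\]
where $\rho: \KC \to GL_{\C}(M)$ denotes the representation. Under the convention (imposed by the intended application to $\sum_{i=1}^{r-1}\beta_i|_{\got{t}_{ss}}$, where the weights are counted with multiplicity via the chosen basis $\mathcal{B}_{\lambda}$) this expresses exactly $\delta(H) = \mathrm{tr}(d\rho(H))$ for every $H \in \got{t}_{\C}$.

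Next, I would observe that $H \mapsto \mathrm{tr}(d\rho(H))$ is a Lie-algebra homomorphism from $\got{k}_{\C}$ to the abelian Lie algebra $\C$: indeed $\mathrm{tr}([A,B]) = 0$ for any two endomorphisms of the finite-dimensional space $M$. A Lie-algebra morphism into an abelian target vanishes on the derived ideal, hence $\mathrm{tr}(d\rho(\cdot))$ vanishes identically on $[\got{k}_{\C},\got{k}_{\C}]$. Since $\got{t}_{ss} = \got{t} \cap [\got{k},\got{k}]$ is by definition contained in $[\got{k}_{\C},\got{k}_{\C}]$, combining the two previous steps yields $\delta(H) = 0$ for every $H \in \got{t}_{ss}$, which is the desired conclusion.

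There is no real obstacle in this argument; it is a one-line consequence of $\mathrm{tr}[A,B]=0$. The only small care needed is to fix the multiplicity convention for $\delta$: if instead one reads the sum as ranging once over the set $\WT(M)$, the same conclusion follows by a parallel Weyl-group argument, because $\WT(M)$ is a $W$-stable finite subset of $\got{t}^*$, each $W$-orbit sums to a $W$-invariant functional, and the $W$-invariant functionals on $\got{t}$ are precisely those vanishing on $\got{t}_{ss}$ (the Cartan of the semisimple part $[\got{k},\got{k}]$ has no non-zero $W$-invariant element).
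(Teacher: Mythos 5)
Your argument is correct and is essentially the paper's own proof in infinitesimal form: the paper passes through the determinant character $\chi$ of $\KC$ on $\det M$ and uses that $d\chi_e$ is a Lie-algebra morphism into the abelian algebra $\C$, which is exactly your observation that $H\mapsto\mathrm{tr}(d\rho(H))$ kills $[\got{k}_{\C},\got{k}_{\C}]\supset\got{t}_{ss}$. Your closing remark on the multiplicity convention (handled either by the trace identity or by $W$-invariance of the weight set) is a sound additional precision, but the core argument coincides with the paper's.
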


\begin{proof}
La représentation $M$ de $\KC$ induit une représentation de $\KC$ sur le déterminant $\det M = \bigwedge^{\dim_{\C} M} M$ de $M$. Ce dernier est une droite vectorielle, donc on obtient un caractère $\chi:\KC\rightarrow\C^*$. Sa dérivée $d\chi_e:\got{k}_{\C}\rightarrow\C$ est un morphisme d'algèbres de Lie. Or, ici, $\C$ est une algèbre de Lie abélienne, donc, pour tout $X,Y\in\got{k}_{\C}$, nous aurons $d\chi_e([X,Y]) = 0$. Et il est clair que $\delta = \sum_{\beta\in\got{Wt}_T(E)}\beta = id\chi_e|_{\got{t}_{\C}^*}$. Puisque $\got{t}_{ss} = \got{t}\cap[\got{k},\got{k}]$, on en conclut que $\delta|_{\got{t}_{ss}} = 0$.
\end{proof}

Dans notre cas, nous avons numéroté, avec redondance, les poids de l'action de $\TC$ sur $M$, de sorte que $\delta = \sum_{i=1}^r\beta_i$. Ainsi, $-\beta_r|_{\got{t}_{ss}} = \sum_{i=1}^{r-1}\beta_i|_{\got{t}_{ss}}$. Nous pouvons remplacer cette valeur dans les équations \eqref{eq:image_par_flambdatilde_de_sigmawtilde} et \eqref{eq:image_par_flambdatilde_de_sigmawtilde_rmoins1}, ce qui donne
\begin{equation}
\label{eq:image_par_flambdatilde_de_sigmawtilde_finale}
(f_{\lambda}^B)^*\left(\sigma_{\check{w}_{k-1}}^{\tilde{B}}\right) = \Theta\left((-\beta_k|_{\got{t}_{ss}}) \ldots (-\beta_{r}|_{\got{t}_{ss}})\right).
\end{equation}
pour tout entier $k\in\{2,\ldots,r\}$.

\chapter{Classe fondamentale des variétés de Schubert}
\label{chap:classefondamentale}

Pour définir les classes de Schubert, le passage délicat est de bien définir la classe fondamentale d'une variété de Schubert, même quand celle-ci n'est pas lisse. Cette définition doit bien sûr coïncider avec la définition standard de classe fondamentale lorsque la variété est lisse.

Pour ce faire, nous présentons ici la méthode très élégante utilisant l'homologie de Borel-Moore. Tous les détails de cette construction se trouvent dans \cite[Appendice B]{fulton_youngtableaux} et \cite[Appendice]{manivel}, dont les paragraphes suivants s'inspirent largement.

\section{Classe fondamentale d'une variété projective irréductible lisse}

Toute variété projective irréductible lisse $X$ de dimension $n$ peut être vue comme une variété différentiable réelle compacte de dimension $2n$. De plus, la structure complexe confère à $X$ une orientation. Il est bien connu que le groupe d'homologie de degré maximal $H_{2n}(X)$ est alors canoniquement isomorphe à $\Z$, où $H_{i}(X):=H_{i}(X,\Z)$ désigne le $i$\ieme{} groupe d'homologie singulière sur $X$. Le choix d'une orientation revient ici à choisir un générateur de $H_{2n}(X)$. La variété lisse $X$ possède donc une \emph{classe fondamentale}, notée $[X]$, telle que
\[
H_{2n}(X) = \Z\cdot[X].
\]
Cette classe fondamentale permet de définir un isomorphisme entre les groupes d'homologie et de cohomologie de la variété $X$, la \emph{dualité de Poincaré}, donné par l'application
\[
\begin{array}{cccc}
\bullet\cap [X] : & H^i(X) & \longrightarrow & H_{2n-i}(X) \\
& \alpha & \longmapsto & \alpha \cap [X],
\end{array}
\]
réalisant le produit cap avec la classe fondamentale de $X$. La dualité de Poincaré est évidemment valable pour toute variété différentiable compacte connexe orientée.

\section{Homologie de Borel-Moore}

Soit $X$ un espace topologique et $A$ une partie de $X$ (qui sera le plus souvent ouvert dans $X$). Les groupes de cohomologie $H^i(X,A)$ sont définis comme groupes de cohomologie du complexe $C^*(X,A)$ des cochaînes singulières (c'est-à-dire fonctions sur les chaînes singulières à valeurs dans $\Z$) sur $X$ qui s'annulent sur $A$.

Si $X$ est homéomorphe à un fermé d'un espace affine $\R^n$, alors on définit les groupes d'homologie de Borel-Moore
\[
\overline{H}_i(X) = H^{n-i}(\R^n,\R^n\setminus X).
\]
Le premier point à noter est que cette définition est indépendante du choix de l'homéomorphisme dans un espace affine \cite[Appendice B Lemme 1]{fulton_youngtableaux}.

De plus, on peut remplacer l'espace affine par une variété différentiable orientée quelconque. En effet, d'après \cite[Appendice B Lemme 1]{fulton_youngtableaux}, si $X$ est homéomorphe à une partie fermée d'une variété différentiable orientée $M$, alors il existe un isomorphisme canonique entre $\overline{H}_i(X)$ et $H^{m-i}(M,M\setminus X)$, où $m$ désigne la dimension de la variété $M$. En particulier, si $X$ est une variété différentiable orientée connexe de dimension $n$, on a
\[
\overline{H}_i(M) = H^{n-i}(M,M\setminus M) = H^{n-i}(M).
\]
Ainsi, $\overline{H}_i(M) = 0$ dès que $i>n$, et $\overline{H}_n(X) = H^0(M) = \Z$. Il existe donc une classe fondamentale $[X]$ de sorte que
\[
\overline{H}_n(X) = \Z\cdot [X].
\]
Si $X$ est également compacte, alors par dualité de Poincaré, les groupes d'homologie $\overline{H}_i(X)$ sont égaux aux groupes d'homologie singulière $H_i(X)$ ordinaires.

\bigskip

Soit $U$ un ouvert d'un espace topologique $X$ qui se plonge dans une variété orientée $M$. Alors il existe une \emph{application de restriction} canonique de $\overline{H}_i(X)$ dans $\overline{H}_i(U)$. En effet, $U$ est un fermé dans la variété orientée $N = M\setminus(X\setminus U)$, d'où le morphisme suivant, donné par l'application de restriction en cohomologie,
\[
\overline{H}_i(X) = H^{n-i}(M,M\setminus X) \longrightarrow H^{n-i}(N,N\setminus U) = \overline{H}_i(U),
\]
où $n=\dim M = \dim N$. En découle la suite exacte longue (\cite[Appendice B Lemme 3]{fulton_youngtableaux})
\[
\cdots \rightarrow \overline{H}_i(X\setminus U) \rightarrow \overline{H}_i(X) \rightarrow \overline{H}_i(U) \rightarrow \overline{H}_{i-1}(X\setminus U) \rightarrow \cdots .
\]

Enfin, même si l'image directe entre les groupes d'homologie $\overline{H}_i$ n'est pas définie pour toute application continue, elle l'est pour les applications continues propres.

\section{Classe fondamentale des sous-variétés algébriques irréductibles}

Soient $X$ une variété projective irréductible lisse de dimension complexe $n$, et $Y$ une sous-variété algébrique irréductible fermée de $X$. Alors l'ensemble de ses points singuliers $\mathrm{Sing}(Y)$ est une sous-variété algébrique de $Y$ de codimension complexe au moins un, donc de codimension réelle au moins deux. De plus, comme $Y$ est irréductible, $Y\setminus\mathrm{Sing}(Y)$ est une variété lisse connexe et, par une récurrence sur la dimension de $Y$ \cite[Appendice B Lemme 4]{fulton_youngtableaux}, on montre que $\overline{H}_q(Y) = 0$ si $q>2\dim_{\C} Y$, et
\[
\overline{H}_{2\dim_{\C}Y}(Y) \simeq \overline{H}_{2\dim_{\C}Y}(Y\setminus\mathrm{Sing}(Y)) \simeq \Z.
\]
Ceci permet de définir une classe fondamentale $[Y]$ dans $\overline{H}_{2\dim_{\C}Y}(Y)$, image de celle de $Y\setminus\mathrm{Sing}(Y)$ par l'isomorphisme ci-dessus. L'injection $i:Y\hookrightarrow X$, étant continue et propre, permet de définir l'image de la classe fondamentale de $Y$ dans le groupe d'homologie $H_{2(\dim_{\C}X-\dim_{\C}Y)}(X)$, que l'on notera également $[Y]$.

\bigskip

Supposons que $X$ admet une décomposition cellulaire finie $X = \coprod_{i\in I}Y_i$, où les cellules $Y_i$ sont isomorphes à des espaces affines $\C^{n_i}$, et vérifient des conditions de bord
\[
\overline{Y_i}\setminus Y_i = \coprod_{j \in J_i} Y_j,
\]
avec $n_j<n_i$ si $j\in J_i$. Alors, chaque $\overline{Y_i}$ a sa classe fondamentale dans $H^{2(\dim_{\C}X - n_i)}(X)$, et on a
\[
H^{2q}(X) = \bigoplus_{n_i = n - q}\Z\cdot [\overline{Y_i}]
\]
et $H^{2q+1}(X) = 0$, pour tout $q=0,\ldots,n$.

\bigskip

Soit $f:X\rightarrow Z$ un morphisme entre deux variétés projectives irréductibles lisses. Pour toute sous-variété irréductible fermée $Y$ de $X$ de dimension $k$, $f(Y)$ est une sous-variété fermée irréductible de $Z$ de dimension au plus $k$. De plus, si $f(Y)$ est de dimension $k$, alors il existe un ouvert de Zariski $U$ de $f(Y)$ tel que l'application de $Y$ dans $f(Y)$ détermine un revêtement fini de $f^{-1}(U)\cap Y$ sur $U$. Le nombre de feuillets de ce revêtement est appelé \emph{degré} de $Y$ sur $f(Y)$. On a alors la propriété suivante :
\[
f_*([Y]) = \left\{\begin{array}{ll}
0 & \text{si $\dim f(Y) < \dim Y$} \\
d[f(Y)] & \text{si $Y$ est de degré $d$ sur $f(Y)$.}
\end{array}\right.
\]

\bigskip

Pour terminer, soient $Y$ et $Y'$ deux sous-variétés irréductibles fermées de la variété projective irréductible lisse $X$. Supposons que l'intersection $Y\cap Y'$ est l'union de sous-variétés irréductibles $Z_1,\ldots,Z_r$ de $X$.

On dira que l'intersection $Y\cap Y'$ est \emph{propre} si la codimension de chaque $Z_i$ dans $X$ est égale à la somme des codimensions de $Y$ et $Y'$ dans $X$. On dira également que $Y$ et $Y'$ s'intersectent transversalement si pour tout point $z$ dans un ouvert de Zariski de chaque $Z_i$, on a l'égalité suivante sur les espaces tangents
\[
T_zZ_i = T_zY\cap T_zY' \subset T_zX.
\]
Alors, si $Y$ et $Y'$ s'intersectent proprement et transversalement, le produit cup des classes fondamentales $[Y]$ et $[Y']$ est donné par
\[
[Y]\,.\,[Y'] = [Z_1] + \ldots + [Z_r].
\]

\chapter[Exemples de triplets de $T_r^n$]{Exemples de triplets de $T_r^n$ non triviaux intervenant dans le problème de Horn}
\label{chap:exemples_de_triplets_pour_Horn}

Dans ce court appendice, nous explicitons des triplets non triviaux de l'ensemble $T_r^n$ défini au paragraphe \ref{subsection:problème_de_Horn}. Ces triplets interviennent dans la preuve du Théorème \ref{theo:SU(n,1)_PolyèdreMoment} donnant les équations du polyèdre $\Delta_K(\Orb_{\Lambda})$ pour une orbite coadjointe holomorphe pour le groupe $SU(n,1)$.

\bigskip

\section{Les triplets du type $(I,\overline{I},L^n_r)$}

Soit $I \subset\{1,\ldots,n\}$ avec $|I| = r\leqslant n-1$. Notons $\overline{I} = \{n-i+1; \ i\in I\}$ et $L^n_r = \{n-r+1< n-r+2 < \ldots < n-1 < n\}$. Les trois sous-ensembles $I$, $\overline{I}$ et $L^n_r$ de $\{1,\ldots,n\}$ sont de même cardinal $r$.

\begin{prop}
\label{prop:SU(n,1)_I_Ibar_Lnr}
Le triplet $(I,\overline{I},L^n_r)$ appartient à $T^n_r$.
\end{prop}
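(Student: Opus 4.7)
The strategy is to apply Theorem \ref{theo:Fultonthm2}, which identifies triplets of $T_r^n$ with spectral data of Hermitian matrices $A$, $B$, $C$ of size $r\times r$ satisfying $A+B=C$. The key observation is that the chosen third set $L^n_r=\{n-r+1,\ldots,n\}$ is very special: its associated partition $\lambda(L^n_r)$ is the \emph{scalar} $r$-tuple $(n-r,n-r,\ldots,n-r)$, which comes from the scalar Hermitian matrix $C=(n-r)\,\mathrm{Id}_r$. This makes the Hermitian realization completely trivial to construct.

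First, I would check the numerical membership $(I,\overline{I},L^n_r)\in U_r^n$, which amounts to verifying
\[
\sum_{i\in I} i + \sum_{\overline{i}\in\overline{I}} \overline{i} = \sum_{\ell\in L^n_r} \ell + \frac{r(r+1)}{2}.
\]
Writing $\overline{I}=\{n-i+1;i\in I\}$, the left-hand side equals $r(n+1)$, while the right-hand side equals $r(2n-r+1)/2 + r(r+1)/2 = r(n+1)$, so the equality holds.

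Next, by Theorem \ref{theo:Fultonthm2}, it suffices to realize the partitions $\lambda(I)$, $\lambda(\overline{I})$, $\lambda(L^n_r)$ as spectra of three Hermitian matrices $A$, $B$, $C$ of size $r\times r$ with $C=A+B$. I take $C=(n-r)\,\mathrm{Id}_r$, whose spectrum is $\lambda(L^n_r)=(n-r,\ldots,n-r)$, and I let $B$ be any Hermitian matrix whose spectrum is $\lambda(\overline{I})$ (for example, the diagonal matrix with these entries). Then $A:=C-B$ is automatically Hermitian with $A+B=C$, and its eigenvalues are $(n-r)-\lambda(\overline{I})_{r-k+1}$ for $k=1,\ldots,r$.

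The only point to verify is that these eigenvalues, once reordered decreasingly, give exactly $\lambda(I)$. Writing $I=\{i_1<\cdots<i_r\}$, so that $\overline{I}=\{n-i_r+1<\cdots<n-i_1+1\}$, a direct index computation yields
\[
\lambda(\overline{I})_k = (n-i_k+1)-(r-k+1) = n-i_k-r+k,
\]
hence
\[
(n-r)-\lambda(\overline{I})_{r-k+1} = i_{r-k+1}-(r-k+1) = \lambda(I)_k,
\]
which is the desired equality. This is the whole computation: there is no real obstacle, only bookkeeping of indices. Applying Theorem \ref{theo:Fultonthm2} then concludes that $(I,\overline{I},L_r^n)\in T_r^n$.
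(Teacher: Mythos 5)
Your proof is correct and follows essentially the same route as the paper: both realize $\lambda(I)$, $\lambda(\overline{I})$, $\lambda(L^n_r)$ as spectra of explicit diagonal matrices with $A+B=C$ (the scalar matrix $(n-r)\,\mathrm{Id}_r$) and then invoke Theorem \ref{theo:Fultonthm2}; your preliminary check that the triplet lies in $U^n_r$ is redundant, since the theorem's equivalence already subsumes it, but it is harmless.
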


\begin{proof}
ici, $I\subset\{1,\ldots,n\}$ est quelconque, de cardinal $r$, $J = \overline{I}$ et $L = L^n_r$. Numérotons les éléments de $I$ par ordre croissant: $I=\{i_1<\cdots<i_r\}$. On a alors $\overline{I} = \{n-i_r+1<\ldots<n-i_1+1\}$. La partition $\lambda(I)$ est définie au paragraphe \ref{subsection:problème_de_Horn} par $\lambda(I) = (i_r-r\geqslant i_{r-1}-(r-1)\geqslant\ldots\geqslant i_2-2\geqslant i_1-1)$, et les autres sont
\[
\lambda(\overline{I})= (n-i_1-r+1\geqslant n-i_2-r+2\geqslant \ldots\geqslant n-i_k-r+k\geqslant \ldots\geqslant n-i_{r-1}-1\geqslant n-i_r)
\]
et
\[
\lambda(L^n_r)= (n-r\geqslant n-r\geqslant \ldots \geqslant n-r\geqslant n-r).
\]
Prenons maintenant les trois matrices diagonales réelles, donc hermitiennes, de spectres respectifs $\lambda(I)$, $\lambda(\overline{I})$ et $\lambda(L^n_r)$:
\[
\begin{array}{l}
A = \diag(i_r-r,\ldots,i_{r-k+1}-(r-k+1),\ldots,i_1-1), \\
B = \diag(n-i_r,\ldots,n-i_{r-k+1}-(k-1),\ldots,n-i_1-(r-1)), \\
C = \diag(n-r,\ldots,n-r\ldots,n-r).
\end{array}
\]
On a clairement $A+B = C$. Le Théorème \ref{theo:Fultonthm2} permet alors de conclure que le triplet  $(I,\overline{I},L^n_r)$ appartient à $T^n_r$.
\end{proof}

\section{Les triplets du type $(I,\overline{I^*},\tilde{L}^n_r)$}

Soit à nouveau $I\subset\{1,\ldots,n\}$ avec $|I| = r\leqslant n-1$. Supposons maintenant $1\in I$. Nous notons $I^* = \left\{i-1; \ i\in I\backslash\{1\}\right\} \cup \{n\}$ et $\tilde{L}^n_r = \{1<n-r+2<n-r+3<\ldots<n-1<n\}$. On indexe les éléments de $I$ par ordre croissant, $I=\{1<i_2<\ldots<i_r\}$, ce qui nous donne pour les autres ensembles, $I^* = \{i_2-1<\ldots<i_r-1<n\}$ et $\overline{I^*} = \{1<n-i_r+2<\ldots<n-i_2+2\}$.

\begin{prop}
\label{prop:SU(n,1)_IJL_1dansL}
Le triplet $(I,\overline{I^*},\tilde{L}^n_r)$ appartient à $T^n_r$.
\end{prop}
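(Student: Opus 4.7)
Le plan est d'imiter la preuve de la Proposition \ref{prop:SU(n,1)_I_Ibar_Lnr} : on utilisera le Théorème \ref{theo:Fultonthm2} en exhibant trois matrices hermitiennes diagonales $A$, $B$ et $C$ de taille $r\times r$, de spectres respectifs $\lambda(I)$, $\lambda(\overline{I^*})$ et $\lambda(\tilde{L}^n_r)$, qui vérifient $A+B=C$.

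La première étape consistera à calculer explicitement les trois partitions. L'hypothèse $i_1 = 1$ donne immédiatement $\lambda(I) = (i_r - r \geqslant i_{r-1} - (r-1) \geqslant \ldots \geqslant i_2 - 2 \geqslant 0)$. En ordonnant $\overline{I^*} = \{1 < n-i_r+2 < n-i_{r-1}+2 < \ldots < n-i_2+2\}$, un calcul direct fournit $\lambda(\overline{I^*}) = (n - i_2 - r + 2 \geqslant n - i_3 - r + 3 \geqslant \ldots \geqslant n - i_r \geqslant 0)$, et celui de $\tilde{L}^n_r$ produit la partition quasi-constante $\lambda(\tilde{L}^n_r) = (n-r, n-r, \ldots, n-r, 0)$, avec $n-r$ apparaissant $r-1$ fois. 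L'observation-clé est que chacune de ces trois partitions possède exactement une composante nulle, conséquence directe de l'appartenance commune de $1$ aux trois ensembles $I$, $\overline{I^*}$ et $\tilde{L}^n_r$.

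Cela suggère de poser
\[
\begin{array}{l}
A = \diag\bigl(0,\ i_2 - 2,\ i_3 - 3,\ \ldots,\ i_r - r\bigr), \\
B = \diag\bigl(0,\ n-r-i_2+2,\ n-r-i_3+3,\ \ldots,\ n-r-i_r+r\bigr), \\
C = \diag\bigl(0,\ n-r,\ n-r,\ \ldots,\ n-r\bigr).
\end{array}
\]
Les spectres de ces trois matrices coïncident respectivement avec $\lambda(I)$, $\lambda(\overline{I^*})$ et $\lambda(\tilde{L}^n_r)$ (pour $B$, on vérifie que le multi-ensemble $\{n-r-i_j+j\}_{j=2,\ldots,r}$ est bien égal à $\{n-i_j-r+j\}_{j=2,\ldots,r}$), et la relation $A+B=C$ se lit entrée par entrée : la somme du $j$-ième coefficient diagonal donne $(i_j-j) + (n-r-i_j+j) = n-r$ pour $j \geqslant 2$, et $0$ pour $j=1$. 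Le Théorème \ref{theo:Fultonthm2} permettra alors de conclure que $(I,\overline{I^*},\tilde{L}^n_r)$ appartient à $T^n_r$.

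Le seul point demandant un peu d'attention est le bon appariement des coefficients diagonaux : contrairement à la Proposition \ref{prop:SU(n,1)_I_Ibar_Lnr}, où tous les spectres étaient strictement décroissants et s'appariaient donc naturellement position par position, ici la partition $\lambda(\tilde{L}^n_r)$ n'est pas strictement décroissante. Il faut donc veiller à placer les trois composantes nulles à la même position (la première), faute de quoi la sommation diagonale ferait apparaître des termes incompatibles avec le spectre constant $n-r$ attendu pour $C$ sur les autres positions.
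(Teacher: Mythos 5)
Votre preuve est correcte et suit essentiellement la même démarche que celle du texte : calcul explicite des trois partitions $\lambda(I)$, $\lambda(\overline{I^*})$, $\lambda(\tilde{L}^n_r)$, construction de trois matrices diagonales réelles de ces spectres vérifiant $A+B=C$, puis application du Théorème \ref{theo:Fultonthm2}. La seule différence (placer les zéros en première position plutôt qu'en dernière et apparier les coefficients dans l'ordre croissant des indices) est purement cosmétique.
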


\begin{proof}
On utilise à nouveau le Théorème \ref{theo:Fultonthm2}. Cette fois-ci, on a $\lambda(I) = (i_r-r\geqslant \ldots\geqslant i_2-2\geqslant 0)$, ainsi que
\[
\lambda(\overline{I^*})= (n-i_2+2-r\geqslant \ldots\geqslant n-i_k+2-(r-k+1)\geqslant \ldots\geqslant n-i_r\geqslant 0)
\]
et
\[
\lambda(\overline{L^n_r})= (n-r\geqslant n-r\geqslant \ldots \geqslant n-r\geqslant 0).
\]
On peut prendre comme matrices hermitiennes, de spectres respectifs ces trois $r$-uplets, les matrices diagonales réelles suivantes:
\[
\begin{array}{l}
A=\diag(i_r-r,\ldots,i_{r-k+1}-(r-k+1),\ldots,i_2-2,0), \\
B=\diag(n-i_r,\ldots,n-i_{r-k+1}-(k-1),\ldots,n-i_2-r+2,0), \\
C=\diag(n-r,\ldots,n-r\ldots,n-r,0).
\end{array}
\]
On obtient l'égalité attendue: $A+B=C$. Donc le triplet $(I,\overline{I^*},\tilde{L}^n_r)$ appartient bien à $T^n_r$ d'après le Théorème \ref{theo:Fultonthm2}.
\end{proof}

\backmatter

\bibliographystyle{smfalpha}
\bibliography{biblio}

\printindex


\newpage
~
\thispagestyle{empty}
\newpage
~
\thispagestyle{empty}


\newpage
~
\thispagestyle{empty}
\vfill
\begin{center}
{\bf R\'ESUM\'E}
\end{center}

\vspace{8pt}

L'objet de cette thèse est l'étude de la structure symplectique des orbites coadjointes holomorphes, et de leurs projections. 

Une orbite coadjointe holomorphe $\Orb_{\Lambda}$ est une orbite coadjointe elliptique d'un groupe de Lie $G$ réel semi-simple connexe non compact à centre fini provenant d'un espace symétrique hermitien $G/K$, telle que $\Orb_{\Lambda}$ puisse être naturellement munie d'une structure kählérienne $G$-invariante canonique. Ces orbites coadjointes sont une généralisation de l'espace symétrique hermitien $G/K$.

Dans cette thèse, nous prouvons que le symplectomorphisme de McDuff se généralise aux orbites coadjointes holomorphes. Ce symplectomorphisme est ensuite utilisé pour déterminer les équations de la projection de l'orbite $\Orb_{\Lambda}$ relative au sous-groupe compact maximal $K$ de $G$, en faisant intervenir des résultats récents de Ressayre en Théorie Géométrique des Invariants. On donne également un critère cohomologique pour calculer l'ensemble des équations du polyèdre moment.

\vspace{16pt}

\begin{center}
{\bf MOTS-CL\'ES}
\end{center}

\vspace{8pt}

\noindent Orbite coadjointe, projection d'orbite, symplectomorphisme de McDuff, GIT, cône ample.

\vspace{16pt}

\begin{center}
{\bf ABSTRACT}
\end{center}

\vspace{8pt}

This thesis studies the symplectic structure of holomorphic coadjoint orbits, and their projections.

A holomorphic coadjoint orbit $\Orb_{\Lambda}$ is an elliptic coadjoint orbit which is endowed with a natural invariant Kählerian structure. These coadjoint orbits are defined for real semi-simple connected non compact Lie group $G$ with finite center such that $G/K$ is a Hermitian symmetric space, where $K$ is a maximal compact subgroup of $G$. Holomorphic coadjoint orbits are a generalization of the Hermitian symmetric space $G/K$.

In this thesis, we prove that the McDuff's symplectomorphism, available for Hermitian symmetric spaces, has an analogous for holomorphic coadjoint orbits. Then, using this symplectomorphism and recent GIT arguments from Ressayre, we compute the equations of the projection of the orbit $\Orb_{\Lambda}$, relatively to the maximal compact subgroup $K$.

\vspace{16pt}

\begin{center}
{\bf KEYWORDS}
\end{center}

\vspace{8pt}

\noindent Coadjoint orbit, orbit projection, McDuff's symplectomorphism, GIT, ample cone.

\vspace{16pt}

\begin{center}
{\bf CLASSIFICATION MATH\'EMATIQUE}
\end{center}

\vspace{8pt}

\noindent 53D20, 53C55, 53C57, 58F05, 14L24.
\vfill

\end{document}